\tikzstyle{shaded}=[fill=red!10!blue!20!gray!30!white]
\tikzstyle{shaded line}=[double=red!10!blue!20!gray!30!white, double distance=1.5mm, draw=black]
\tikzstyle{unshaded}=[fill=white]
\tikzstyle{unshaded line}=[double=white, double distance=1.5mm, draw=black]
\tikzstyle{Tbox}=[circle, draw, thick, fill=white, opaque,]
\tikzstyle{empty box}=[circle, draw, thick, fill=white, opaque, inner sep=2mm]
\tikzstyle{background rectangle}= [fill=red!10!blue!20!gray!40!white,rounded corners=2mm] 
\tikzstyle{on}=[very thick, red!50!blue!50!black]
\tikzstyle{off}=[gray]
\tikzstyle{traces}=[scale=.2, inner sep=1mm]
\tikzstyle{quadratic}=[scale=.4, inner sep=1mm, baseline]
\tikzstyle{annular}=[scale=.7, inner sep=1mm, baseline]
\tikzstyle{make triple edge size}= [scale=.4, inner sep=1mm,baseline] 
\tikzstyle{icosahedron network}=[scale=.3, inner sep=1mm, baseline]
\tikzstyle{ATLsix}=[scale=.25, baseline]
\tikzstyle{TL12}=[scale=.15,baseline]
\tikzstyle{PAdefn}=[scale=.7,baseline]
\tikzstyle{TLEG}=[scale=.5,baseline]
\newcommand{\E}{\ensuremath{\mathbb{E}}}
\newcommand{\N}{\ensuremath{\mathbb{N}}}
\newcommand{\B}{\mathrm{B}} 
\newcommand{\F}{\mathcal{F}} 
\let\H\relax 
\newcommand{\H}{\mathrm{H}}
\newcommand{\I}{\mathrm{I}} 
\newcommand{\II}{\mathrm{II}} 
\newcommand{\KP}{\ensuremath{\mathbb{KP}}}
\let\L\relax 
\newcommand{\L}{\mathrm{L}}
\let\O\relax 
\newcommand{\O}{\mathrm{O}}
\newcommand{\Q}{\mathrm{Q}}
\newcommand{\scr}{\mathscr}
\newcommand{\SU}{\mathrm{SU}}
\newcommand{\Aut}{\mathrm{Aut}}
\newcommand{\QG}{\ensuremath{\mathbb{G}}} 
\newcommand{\QH}{\ensuremath{\mathbb{H}}} 
\newcommand{\M}{\mathrm{M}}
\newcommand{\CB}{\mathrm{CB}} 
\newcommand{\HS}{\mathrm{HS}}
\newcommand{\can}{\mathrm{can}} 
\let\P\relax 
\newcommand{\P}{\mathcal{P}} 
\let\cal\relax
\newcommand{\cal}{\mathcal}
\newcommand{\Z}{\ensuremath{\mathbb{Z}}}
\newcommand{\R}{\ensuremath{\mathbb{R}}}
\newcommand{\C}{\mathrm{C}}
\newcommand{\T}{\ensuremath{\mathbb{T}}}
\newcommand{\ZZ}{\mathbb{Z}}
\newcommand{\W}{\mathrm{W}}
\newcommand{\vect}{\ensuremath{\mathop{\mathrm{span}\,}\nolimits}}
\newcommand{\Id}{\mathrm{Id}}
\newcommand{\Mult}{\mathrm{Mult}}
\newcommand{\Mod}{\mathrm{Mod}} 
\newcommand{\VN}{\mathrm{VN}} 
\newcommand{\la}{\langle}
\newcommand{\ra}{\rangle}
\newcommand{\PPT}{\mathrm{PPT}}
\renewcommand{\leq}{\ensuremath{\leqslant}}
\renewcommand{\geq}{\ensuremath{\geqslant}}
\newcommand{\qed}{\hfill \vrule height6pt  width6pt depth0pt}
\newcommand{\bnorm}[1]{ \big\| #1  \big\|}
\newcommand{\bgnorm}[1]{ \bigg\| #1  \bigg\|}
\newcommand{\norm}[1]{\left\Vert#1\right\Vert}
\newcommand{\xra}{\xrightarrow}
\newcommand{\co}{\colon}
\newcommand{\otp}{\widehat{\ot}}
\newcommand{\otpb}{\hat{\ot}}
\newcommand{\ot}{\otimes}
\newcommand{\ovl}{\overline}
\newcommand{\otvn}{\ovl\ot}
\newcommand{\D}{\mathrm{D}}
\newcommand{\dec}{\mathrm{dec}}
\newcommand{\cb}{\mathrm{cb}}
\newcommand{\ccb}{\mathrm{ccb}}
\newcommand{\SO}{\mathrm{SO}}
\newcommand{\QWEP}{\mathrm{QWEP}}
\newcommand{\dsp}{\displaystyle}
\newcommand{\op}{\mathrm{op}} 
\let\i\relax 
\newcommand{\i}{\mathrm{i}}
\newcommand{\ov}{\overset}
\newcommand{\sa}{\mathrm{sa}}
\newcommand{\JW}{\mathrm{JW}}
\newcommand{\Fix}{\mathrm{Fix}}
\newcommand{\EA}{\mathrm{EA}}
\newcommand{\WH}{\mathrm{WH}} 
\newcommand{\MD}{\mathrm{MD}} 
\newcommand{\LMD}{\mathrm{LMD}} 
\newcommand{\RMD}{\mathrm{RMD}} 
\newcommand{\w}{\mathrm{w}} 
\newcommand{\flip}{\mathrm{flip}}
\newcommand{\epsi}{\varepsilon}
\renewcommand{\d}{\mathop{}\mathopen{}\mathrm{d}} 
\newcommand{\e}{\mathrm{e}} 
\renewcommand{\d}{\mathop{}\mathopen{}\mathrm{d}}
\newcommand{\Pol}{\mathrm{Pol}} 
\DeclareMathOperator{\tr}{Tr} 
\DeclareMathOperator{\Ran}{Ran} 
\DeclareMathOperator{\dom}{dom} 
\let\Re\relax 
\DeclareMathOperator{\Re}{Re} 
\DeclareMathOperator{\Irr}{Irr} 
\DeclareMathOperator{\Ind}{Ind} 
\DeclareMathOperator{\Sp}{Sp} 
\newtheorem{thm}{Theorem}[section]
\newtheorem{defi}[thm]{Definition}
\newtheorem{prop}[thm]{Proposition}
\newtheorem{cor}[thm]{Corollary}
\newtheorem{lemma}[thm]{Lemma}
\newtheorem{remark}[thm]{Remark}
\newtheorem{example}[thm]{Example}
\newenvironment{proof}[1][]{\noindent {\it Proof #1} : }{\hbox{~}\qed
\smallskip
}
\numberwithin{equation}{section}
\let\OLDthebibliography\thebibliography
\renewcommand\thebibliography[1]{
  \OLDthebibliography{#1}
  \setlength{\parskip}{0pt}
  \setlength{\itemsep}{0pt plus 0.3ex}
}
\newcommand\reallywidehat[1]{\arraycolsep=0pt\relax%
\begin{array}{c}
\stretchto{
  \scaleto{
    \scalerel*[\widthof{\ensuremath{#1}}]{\kern-.5pt\bigwedge\kern-.5pt}
    {\rule[-\textheight/2]{1ex}{\textheight}} 
  }{\textheight} %
}{0.5ex}\\           
#1\\                 
\rule{-1ex}{0ex}
\end{array}
}
\begin{document}
\selectlanguage{english}
\title{\bfseries{Quantum information theory via Fourier multipliers on quantum groups}}
\date{}
\author{\bfseries{C\'edric Arhancet}}

\maketitle


\begin{abstract}
In this paper, we compute the exact values of the minimum output entropy and the completely bounded minimal entropy of very large classes of quantum channels acting on matrix algebras $\mathrm{M}_n$. Our new and simple approach relies on the theory of locally compact quantum groups and our results use a new and precise description of bounded Fourier multipliers from $\mathrm{L}^1(\mathbb{G})$ into $\mathrm{L}^p(\mathbb{G})$ for $1 < p \leq \infty$ where $\mathbb{G}$ is a co-amenable locally compact quantum group and on the automatic completely boundedness of these multipliers that this description entails. Indeed, our approach even allows to use convolution operators on quantum hypergroups. This enable us to connect equally the topic of computation of entropies and capacities to subfactor planar algebras. We also give a upper bound of the classical capacity of each considered quantum channel which is already sharp in the commutative case. Quite surprisingly, we  observe by direct computations that some Fourier multipliers identifies to direct sums of classical examples of quantum channels (as dephasing channel or depolarizing channels). Indeed, we show that the study of unital qubit channels can be seen as a part of the theory of Fourier multipliers on the von Neumann algebra of the quaternion group $\mathbb{Q}_8$. Unexpectedly, we also connect ergodic actions of (quantum) groups to this topic of computation, allowing some transference to other channels. We also connect the Quantum Harmonic analysis of Werner. Finally, we investigate entangling breaking and $\mathrm{PPT}$ Fourier multipliers and we characterize conditional expectations which are entangling breaking. 
\end{abstract}




\makeatletter
 \renewcommand{\@makefntext}[1]{#1}
 \makeatother
 \footnotetext{
\noindent {\it Mathematics subject classification:}
 20G42, 43A22, 46L07, 46L52, 81P45.
\\
{\it Key words and phrases}: noncommutative $\L^p$-spaces, quantum entropy, quantum capacities, compact quantum groups, Fourier multipliers, quantum information theory.}

\tableofcontents

\section{Introduction}

\hspace{0.4cm} One of the most fundamental questions in quantum information explicitly stated in \cite{Sho95} concerns with the amount of information that can be transmitted reliably through a quantum channel. For that, many capacities was introduced for describing the capability of the channel for delivering information from the sender to the receiver. Determine these quantities is central to characterizing the limits of the quantum channel's ability to transmit information reliably. 

In the Sch\"odinger picture, a quantum channel is a trace preserving completely positive map $T \co S^1_n\to S^1_n$ acting on the Schatten trace class space $S^1_n$. The minimum output entropy of a quantum channel is defined by
\begin{equation}
\label{Def-minimum-output-entropy-ini}
\H_{\min}(T)
\ov{\mathrm{def}}{=} \min_{x \geq 0, \tr x =1} \H(T(x))
\end{equation}
where $\H(\rho) \ov{\mathrm{def}}{=} -\tr(\rho\log \rho)$ denotes the von Neumann entropy. Recall that $\H$ is the quantum analogue of the Shannon entropy and can be seen as a uncertainty measure of $\rho$. See for example \cite{Pet01} for a short survey of this notion. 
Note that $\H(\rho)$ is always positive, takes its maximum values when $\rho$ is maximally mixed and its minimum value zero if $\rho$ is pure. So the minimum output entropy can be seen as a measure of the deterioration of the purity of states. The minimum output entropy of a specific channel is difficult to calculate in general.

In this work, we investigate more generally quantum channels acting on a finite-dimensional noncommutative $\L^1$-space $\L^1(\cal{M})$ associated with a finite-dimensional von Neumann algebra\footnote{\thefootnote. Such a algebra $\cal{M}$ is isomorphic to a direct
sum of matrix algebras $\M_{n_1} \oplus \cdots \oplus \M_{n_K}$.} $\cal{M}$ equipped with a faithful trace $\tau$. The first aim of this paper is to compute the quantity
\begin{equation}
\label{Def-minimum-output-entropy}
\H_{\min}(T)
\ov{\mathrm{def}}{=} \min_{x \geq 0, \tau(x)=1} \H(T(x))
\end{equation} 
for any $T \co \L^1(\mathcal{M})\to \L^1(\cal{M})$ belonging to a large class of explicit quantum channels. Here $\H(\rho) \ov{\mathrm{def}}{=} -\tau(\rho \log \rho)$ denotes the Segal entropy introduced in \cite{Seg60}. Note that if the trace $\tau$ is \textit{normalized} and if $\rho$ is a positive element of $\L^1(\cal{M})$ with $\tau(\rho)=1$ then $\H(\rho)$ belongs to $[-\infty,0]$ with $\H(\rho)=0$ if and only if $\rho=1$. We refer \cite{LuP17}, \cite{LPS17}, \cite{LuP19}, \cite{LoW22}, \cite{NaU61}, \cite{OcS78} and \cite{Rus73} for more information on the Segal entropy.


The considered channels are the (trace preserving completely positive) Fourier multipliers on finite quantum groups, see e.g. \cite{Daw10} and \cite{Daw11} for more information. Such a channel $T \co \L^1(\QG) \to \L^1(\QG)$ acts on a finite-dimensional noncommutative $\L^1$-space $\L^1(\QG)$ and is a noncommutative variant of a classical Fourier multiplier $M_{\varphi} \co \L^1(\T) \to \L^1(\T)$, $\sum_k a_k \e^{2\pi \i k \cdot} \mapsto \sum_k \varphi_k a_k \e^{2\pi \i k \cdot}$ where $\varphi=(\varphi_k)$ is a suitable sequence of $\ell^\infty_\Z$. Here $\T \ov{\mathrm{def}}{=} \{z \in \C : |z|=1\}$ is the one-dimensional torus and we refer to \cite{Gra14} for the classical theory of multipliers on $\T$. Loosely speaking, a finite quantum group is a finite-dimensional $\mathrm{C}^*$-algebra $A$ equipped with a suitable ``coproduct'' map $\Delta \co A \to A \ot A$ that is a noncommutative variant of the commutative $\mathrm{C}^*$-algebra $\mathrm{C}(G)$ of continuous functions on a compact group $G$ equipped with the map $\Delta \co \mathrm{C}(G)\to\mathrm{C}(G) \ot \mathrm{C}(G)$  where $\Delta(f)(s,t)=f(st)$ for $f \in \mathrm{C}(G)$ and $s,t \in G$. It is possible to do harmonic analysis and representation theory in this setting. See \cite{FSS17}, \cite{KuV00}, \cite{MaD98} and \cite{Maj06} for short presentations.

Note that we can see our channels as acting on matrix algebras by using a suitable $*$-homomorphism $J$ and a conditional expectation $\E$, see Section \ref{Sec-Examples}.

Our approach relies on an observation of the authors of \cite{ACN00}. In the case $\mathcal{M}=\M_n$, they observed that we can express the minimum output entropy as a derivative of suitable operators norms:
\begin{equation}
\label{Smin-as-derivative-intro}
\H_{\min}(T)
=-\frac{\d}{\d p} \norm{T}_{\L^1(\mathcal{M}) \to \L^p(\mathcal{M})}^p|_{p=1}. 
\end{equation}
See Section \ref{Appendix} for a proof of this formula for the slightly more general case of finite-dimensional von Neumann algebras. So it suffices to compute the operator norm $\norm{\cdot}_{\L^1(\mathcal{M}) \to \L^p(\mathcal{M})}$ in order to compute the minimum output entropy. Unfortunately, in general, the calculation of these operator norms is not much easier than the direct computation of the minimum output entropy. However, the first purpose of this paper is to show that the computation of $\norm{T}_{\L^1(\QG) \to \L^p(\QG)}$ is possible in the case of an arbitrary Fourier multiplier $T$ on a finite quantum group $\QG$ and more generally on a co-amenable compact quantum group of Kac type. 

Moreover, we prove that for these channels the minimum output entropy is equal to the completely bounded minimal entropy (reverse coherent information). This notion was introduced for $\cal{M}=\M_n$ in \cite{DJKRB06} and rediscovered in \cite{GPLS09}, see also \cite{YHW19}. We can use the following definition which will be justified in Theorem \ref{thm-entropy-concrete}
\begin{equation}
\label{Def-intro-Scb-min}
\H_{\cb,\min}(T)
\ov{\mathrm{def}}{=} -\frac{\d}{\d p} \norm{T}_{\cb,\L^1(\mathcal{M}) \to \L^p(\mathcal{M})}^p|_{p=1} 
\end{equation} 
where the subscript $\cb$ means completely bounded and is natural in operator space theory, see the books \cite{BLM04}, \cite{EfR00}, \cite{Pau02}, \cite{Pis98} and \cite{Pis03}. We refer again to Theorem \ref{thm-entropy-concrete} for a concrete expression of $\H_{\cb,\min}(T)$. Indeed, we will show that $\norm{T}_{\cb,\L^1(\QG) \to \L^p(\QG)}=\norm{T}_{\L^1(\QG) \to \L^p(\QG)}$ for any bounded Fourier multiplier $T$ on a co-amenable compact quantum group of Kac type $\QG$ (hence in particular on finite quantum groups) and any $1 \leq p \leq \infty$ (see Theorem \ref{Thm-description-multipliers-2}) and we will immediately deduce the equality $\H_{\min}(T)=\H_{\cb,\min}(T)$. Note that if $\cal{M}$ is a \textit{non-abelian} von Neumann algebra, we will prove in Section \ref{Bounded-vs-cb} that the completely bounded norm $\norm{\cdot}_{\cb,\L^1(\mathcal{M}) \to \L^p(\mathcal{M})}$ and the classical operator norm $\norm{\cdot}_{\L^1(\mathcal{M}) \to \L^p(\mathcal{M})}$ are not equal. So the previous equality is specific to the Fourier multipliers on co-amenable compact quantum groups of Kac type. Finally, we refer to \cite{GuW15} for another application of these completely bounded norms.

Note that the completely bounded minimal entropy $\H_{\cb,\min}$ is additive (see Theorem \ref{Additivity-Scb-min}) in sharp contrast with the minimum output entropy \cite{Has09}. 

Fundamental characteristics of a quantum channel are the different capacities. We refer to the survey \cite{GIN18} for more information on the large diversity of quantum channel capacities. We refer to the survey \cite{GIN18} for more information on the large diversity of quantum channel capacities and to \cite{FuW07}, and \cite{WiY16} for some related papers. Despite of the significant progress in recent years, the computation of most capacities remains largely open. The Holevo capacity of a quantum channel $T \co \L^1(\cal{M}) \to  \L^1(\cal{M})$ can be defined by the formula
\begin{equation}
\label{Def-Holevo-capacity}
\chi(T)
\ov{\mathrm{def}}{=} \sup \bigg\{ \H\bigg(\sum_{i=1}^{N} \lambda_iT(\rho_i)\bigg)-\sum_{i=1}^{N} \lambda_i \H(T(\rho_i))\bigg\}
\end{equation}
where the supremum is taken over all $N \in \mathbb{N}$, all probability distributions $(\lambda_1,\ldots,\lambda_N)$ and all states $\rho_1,\ldots,\rho_N$ of $\L^1(\cal{M})$, see e.g. \cite[p.~21]{GIN18} and \cite[p.~82]{Key02} for the case $\cal{M}=\M_n$. It is known that the classical capacity $\C(T)$ can be expressed by the asymptotic formula
\begin{equation}
\label{Classical-capacity}
\C(T)
=\lim_{n \to +\infty} \frac{\chi(T^{\ot n})}{n}.
\end{equation}
This quantity is the rate at which one can reliably send \textit{classical} information through $T$, see Definition \ref{Def-capacities}. Using our computation of $\H_{\min}(T)$, 
we are able to determine a \textit{sharp} upper bound of $\C(T)$ for any Fourier multiplier $T$ acting on a finite quantum group $\QG$ (see Theorem \ref{thm-capacity}). It is the second aim of this paper. 
Recall that the classical capacity is always bigger 
than the quantum capacity $\Q(T)$
which the the rate at which one can reliably send \textit{quantum} information through $T$, see Definition \ref{Def-capacities}.

We refer to \cite{GJL18b}, \cite{GJL18a}, \cite{JuP15} and \cite{JuP16} for papers using operator space theory for estimating quantum capacities and to the books \cite{GMS15}, \cite{Hol19}, \cite{KhW20}, \cite{NiC10}, \cite{Pet08}, \cite{Wat18} and \cite{Wil17} for more information on quantum information theory. 

To conclude, we also hope that this connection between quantum information theory and the well-established theory of Fourier multipliers on operator algebras will be the starting point of other progress on the problems of quantum information theory but also in noncommutative harmonic analysis. We leave open an important number of implicit questions. We equally refer to \cite{BCLY20}, \cite{CrN13} and \cite{LeY22} for papers on quantum information theory in link with noncommutative harmonic analysis. See also \cite{ACN20} and \cite{NSSS21} for very recent paper on Fourier multipliers on quantum groups.

\paragraph{Structure of the paper} The paper is organized as follows. Section \ref{Sec-preliminaries} gives background on noncommutative $\L^p$-spaces, compact quantum groups and operator space theory. These notions are fundamental for us. Section \ref{sec-Frobenius} introduce Frobenius von Neumann algebras which are some generalizations of quantum groups, equipped with a convolution. These structures appear naturally in theory of subfactor planar algebras.

In Section \ref{Sec-convolution}, we prove that the convolution induces a \textit{completely} bounded map on some noncommutative $\L^p$-spaces associated to quantum hypergroups. In Section \ref{Sec-bounded}, we describe bounded multipliers from $\L^1(\cal{M})$ into $\L^p(\cal{M})$ where $\cal{M}$ is a co-amenable quantum hypergroup (e.g. a co-amenable compact quantum group of Kac type). We deduce from Section \ref{Sec-convolution} that these multipliers are necessarily completely bounded with the same completely bounded norm. We also describe by duality the (completely) bounded multipliers $\L^p(\cal{M}) \to \L^\infty(\cal{M})$ which will be useful for obtaining the result on the classical capacities. In Section \ref{completely-bounded-description}, we describe some completely bounded multipliers on compact quantum groups of Kac type, beyond the coamenable case. In Section \ref{Bounded-vs-cb}, we compare the spaces of bounded maps and of completely bounded maps from $\L^1(\cal{M})$ into $\L^\infty(\cal{M})$ in the case of a von Neumann algebra $\cal{M}$. We show that in the case of a nonabelian von Neumann algebra $\mathcal{M}$, these norms are different. We equally examine the completely bounded Schur multiplies from $S^1$ into the space $\B(\ell^2)$ of bounded operators acting on the Hilbert space $\ell^2$. 
In Section \ref{summing-multipliers}, we examine the $\ell^p(S^p_d)$-summing (left or right) Fourier multipliers acting from $\L^\infty(\cal{M})$ into $\L^\infty(\cal{M})$ where $\cal{M}$ is a co-amenable Frobenius von Neumann algebra (e.g. a co-amenable compact quantum group of Kac type). 

In Section \ref{Entropy-capacity-I}, we investigate the relations between the capacities of a channel $T \co \L^1(\cal{M}) \to \L^1(\cal{M})$ and its extension $JT\E \co \L^1(\cal{N}) \to \L^1(\cal{N})$ where $J \co \cal{M} \to \cal{N}$ is a trace preserving unital normal $*$-injective homomorphism and where $\E$ is the associated conditional expectation. In Section \ref{Entropy-capacity}, we compute the minimal output entropy and we obtain the equality with the completely bounded minimal entropy. Finally, we obtain our sharp upper bound on the classical capacity. 

In Section \ref{Sec-Kac-Paljukin} and Section \ref{Sec-Dihedral}, we illustrate our results with the Kac-Paljutkin quantum group and dihedral groups.

In Section \ref{Sec-actions-Transference} and the next sections, we show how our results on quantum channels on operator algebras allows us to recover by transference some results of \cite{GJL18b} and \cite{GJL18a} for quantum channels $T \co S^1_n \to S^1_n$ acting on the Schatten trace class space $S^1_n$.



In Section \ref{Sec-Multiplicative}, we give a description of multiplicative domains of Markov maps (=quantum channels) and describe multiplicative domains of Fourier multipliers. In section \ref{Sec-PPT}, we characterize PPT conditional expectations and entangling breaking PPT conditional expectations (Corollary \ref{Cor-esp-cond}). We also give a necessary condition on PPT Fourier multipliers.

In Section \ref{Appendix}, we give a proof of the formula \eqref{Smin-as-derivative-intro} which describes the minimum output entropy $\H_{\min}(T)$ as a derivative of operator norms. In Section \ref{Appendix2}, we give a  proof of the multiplicativity of the completely bounded norms $\norm{\cdot}_{\cb,\L^q \to \L^p}$ if $1 \leq q \leq p \leq \infty$. We deduce the additivity of $\H_{\cb,\min}(T)$. 


\section{Preliminaries}
\label{Sec-preliminaries}

\paragraph{Noncommutative $\L^p$-spaces}
Here we recall some facts on noncommutative $\L^p$-spaces associated with semifinite von Neumann algebras. We refer to \cite{Dix81} \cite{Tak02} \cite{Sak98} \cite{Str81} for the theory of von Neumann algebras and to \cite{PiX03} for more information on noncommutative $\L^p$-spaces. Let $\cal{M}$ be a semifinite von Neumann algebra equipped with a normal semifinite faithful trace $\tau$. We denote by $\cal{M}_+$ the positive part of $\cal{M}$. Let $\cal{S}_+$ be the set of all $x \in \cal{M}_+$ whose support projection have a finite trace. Then any $x \in \cal{S}_+$ has a finite trace. Let $\cal{S} \subset \cal{M}$ be the complex linear span of $\cal{S}_+$, then $\cal{S}$ is a weak*-dense $*$-subalgebra of $\cal{M}$.

Let $1 \leq p <\infty$. For any $x \in \cal{S}$, the operator $\vert x\vert^p$
belongs to $\cal{S}_+$ and we set
\begin{equation}
\label{Def-norm-Lp}
\norm{x}_{\L^p(\cal{M})}
\ov{\mathrm{def}}{=} \bigl(\tau(\vert x\vert^p)\bigr)^{\frac{1}{p}}.
\end{equation}
Here $\vert x \vert \ov{\mathrm{def}}{=}(x^*x)^{\frac{1}{2}}$ denotes the modulus of $x$. It turns out that $\norm{\cdot}_{\L^p(\cal{M})}$ is a norm on $\cal{S}$. By definition, the noncommutative $\L^p$-space $\L^p(\cal{M})$ associated with $(\cal{M},\tau)$ is the completion of $(\cal{S},\norm{\cdot}_{\L^p(\cal{M})})$. For convenience, we also set $\L^{\infty}(\cal{M}) \ov{\mathrm{def}}{=} \cal{M}$ equipped with its operator norm. Note that by definition, $\L^p(\cal{M}) \cap \cal{M}$ is dense in $\L^p(\cal{M})$ for any $1 \leq p < \infty$.


Furthermore, $\tau$ uniquely extends to a bounded linear functional on $\L^1(\cal{M})$, still denoted by $\tau$. Indeed, we have 
\begin{equation}
\label{trace-continuity}
\vert\tau(x)\vert 
\leq \norm{x}_{\L^1(\cal{M})}, 
\quad x \in \L^1(\cal{M}).
\end{equation}
We recall the noncommutative H\"older's inequality. If $1 \leq p,q,r \leq \infty$ satisfy $\frac{1}{r}=\frac{1}{p}+\frac{1}{q}$ then
\begin{equation}
\label{Holder}
\norm{xy} _{\L^r(\cal{M})}
\leq \norm{x}_{\L^p(\cal{M})} \norm{y}_{\L^q(\cal{M})},\qquad x\in \L^p(\cal{M}), y \in \L^q(\cal{M}).
\end{equation}
Conversely for any $z \in \L^r(\cal{M})$, by using the same ideas than the proof of \cite[Theorem 4.20]{Ray03} there exist $x \in \L^p(\cal{M})$ and $y \in \L^q(\cal{M})$ such that 
\begin{equation}
\label{inverse-Holder}
z
=xy
\quad \text{with} \quad 
\norm{z} _{\L^r(\cal{M})}
=\norm{x}_{\L^p(\cal{M})} \norm{y}_{\L^q(\cal{M})}.
\end{equation}
For any $1 \leq p < \infty$, let $p^* \ov{\mathrm{def}}{=} \frac{p}{p-1}$ be the conjugate number of $p$. Applying \eqref{Holder} with $q=p^*$ and $r=1$ together with \eqref{trace-continuity}, we obtain a map $\L^{p^*}(\cal{M}) \to (\L^p(\cal{M}))^*$, $y \mapsto \tau(xy)$ which induces an isometric isomorphism
\begin{equation}
(\L^p(\cal{M}))^* 
=\L^{p^*}(\cal{M}),\qquad 1 \leq p <\infty,\quad \frac{1}{p}
+\frac{1}{p^*}
=1.
\end{equation}
In particular, we may identify $\L^1(\cal{M})$ with the unique predual $\cal{M}_*$ of $\cal{M}$. Finally, for any positive element in $\L^p(\cal{M})$, recall that there exists a positive element $y \in \L^{p^*}(\cal{M})$ such that
\begin{equation}
\label{dual-by-positive}
\norm{y}_{\L^{p^*}(\cal{M})}=1
\quad \text{and} \quad
\norm{x}_{\L^p(\cal{M})}
=\tau(xy).
\end{equation}

The following lemma is folklore.

\begin{lemma}
\label{lemma-trace-preserving} 
Let $\cal{M}$ and $\cal{N}$ be von Neumann algebras equipped with normal semifinite faithful traces. Let $T \co \cal{M} \to \cal{N}$  be a trace preserving unital weak* continuous positive map. Suppose that $1 \leq p < \infty$. Then $T$ induces a contraction $T \co \L^p(\cal{M}) \to \L^p(\cal{N})$. Moreover, if $T \co \cal{M} \to \cal{N}$ is, in addition, an injective $*$-homomorphism, $T$ induces an isometry $T \co \L^p(\cal{M}) \to \L^p(\cal{N})$.
\end{lemma}

\paragraph{Locally compact quantum groups}
Let us recall some well-known definitions and properties concerning compact quantum groups. We refer to \cite{Cas17}, \cite{EnS92}, \cite{FSS17}, \cite{Kus05}, \cite{MaD98}, \cite{NeT13}, \cite{Tus22}, \cite{VDa14} and references therein for more details.

A locally compact quantum group is a pair $\QG=(\L^\infty(\QG),\Delta)$ with the following properties:
\begin{enumerate}
\item $\L^\infty(\QG)$ is a von Neumann algebra,

\item $\Delta \co \L^\infty(\QG) \to \L^\infty(\QG) \otvn \L^\infty(\QG)$ is a co-multiplication,
that is an injective faithful normal unital $*$-homomorphism which is co-associative: 
\begin{equation}
\label{Co-associativity}
(\Delta \ot \Id)\Delta
=(\Id \ot \Delta)\Delta,
\end{equation}

\item there exist normal semifinite faithful weights $\varphi,\psi$ on $\L^\infty(\QG)$, called called the left and right Haar weights satisfying
\[
\begin{split} 
& \varphi((\omega \ot \Id)\Delta(x))=\omega(1)\varphi(x)\text{ for all } \omega \in \L^\infty(\QG)_{*}^{+},x \in \L^\infty(\QG)_{+}
\text{ such that }\varphi(x) < \infty,\\
 & \psi((\Id \ot \omega)\Delta(x))=\omega(1)\psi(x)
\text{ for all }\omega \in \L^\infty(\QG)_{*}^{+},x \in \L^\infty(\QG)_+ 
\text{ such that }\psi(x) <\infty.
\end{split}
\]
 \end{enumerate}

Let $\QG$ be a locally compact quantum group. By \cite[Theorem 5.2]{Kus05}, the left and right Haar weights, whose existence is assumed, are unique up to scaling. If the left and right Haar weights $\varphi$ and $\psi$ of $\QG$ coincide then we say that $\QG$ is unimodular.

Let $\mathfrak{n}_\varphi \ov{\mathrm{def}}{=} \{x \in \L^\infty(\QG) : \varphi(x^*x) <\infty \} $. By \cite[Theorem 5.4]{Kus05}, there exists a unique $\sigma$-strongly* closed linear operator $S \co \dom S \subset \L^\infty(\QG) \to \L^\infty(\QG)$ such that the linear space
\begin{equation}
\label{}
\big\langle (\Id \ot \varphi)(\Delta(z)(1 \ot y)) : y,z \in \frak{n}_\varphi \big\rangle
\end{equation}
is a core for $S$ with respect to the $\sigma$-strong* topology and satisfying
\begin{equation}
\label{Antipode-Magic-relation-4}
S\big((\Id \ot \varphi)(\Delta(z)(1 \ot y))\big)
=(\Id \ot \varphi)\big((1 \ot z)\Delta(y)\big), \quad y,z \in \frak{n}_\varphi.
\end{equation}
We say that $S$ is the antipode of $\L^\infty(\QG)$. By \cite[Proposition 5.5]{Kus05} there exists a unique $*$-antiautomorphism $R \co \L^\infty(\QG) \to \L^\infty(\QG)$ and a unique weak* continuous group $\tau=(\tau_t)_{t \in \R}$ of $*$-automorphisms such that\footnote{\thefootnote. Note that the equation $S=R\tau_{-\frac{\i}{2}}$ has to be understood in the following sense: when $x \in \L^\infty(\QG)$ is \textit{analyti}c with respect to $\tau$, then $x$ belongs to the domain of $S$ and $S(x)=R\tau_{-\frac{\i}{2}}(x)$.}
\begin{equation}
\label{relation-SR}
S=R\tau_{-\frac{\i}{2}}, \quad R^2=\Id 
\quad \text{and} \quad \tau_{t}R=R\tau_{t}, \quad \text{for any }\quad t \in \R.
\end{equation}
We say that $R$ is the unitary antipode and that $\tau$ is the scaling group of $\QG$. If $\flip \co \L^\infty(\QG) \otvn \L^\infty(\QG) \to \L^\infty(\QG) \otvn \L^\infty(\QG)$, $x \ot y \mapsto y \ot x$ is the flip, by \cite[Proposition 5.6]{Kus05} we have
\begin{equation}
\label{R-and-Delta}
\Delta \circ R
=\flip \circ (R \ot R) \circ \Delta.
\end{equation}
Since $\varphi \circ R$ is a right invariant weight, we can suppose that 
\begin{equation}
\label{varphi-R}
\psi
=\varphi \circ R.
\end{equation}
If $(\sigma_t)_{t \in \R}$ is the modular automorphism group associated to the left Haar weight $\varphi$, the locally compact quantum group $\QG$ is said to be of Kac type (or a Kac algebra) if $\QG$ has trivial scaling group and $R\sigma_t = \sigma_{-t}R$ for any $t \in \R$. In this case, we have
\begin{equation}
\label{equality-antipodes}
S=R.
\end{equation}
We refer to \cite[p.~7]{VaV03} for other characterizations of Kac algebras.

Following \cite[Proposition 3.1]{Run08}, we say that a locally compact quantum group $\QG$ is compact if the left Haar weight $\varphi$ of $\QG$ is finite. Such group is always unimodular and the corresponding Haar weight can always be chosen to be a state $h \co \L^\infty(\QG) \to \mathbb{C}$ (called the Haar state). In this case, by \eqref{varphi-R} $R$ preserves the finite trace $h$, i.e.~$h \circ R=h$. Moreover, we have by essentially \cite[p.~14]{KuV03}
\begin{equation}
\label{Def-haar-state}
(h \ot \Id) \circ \Delta(x)
=h(x)1
=(\Id \ot h) \circ \Delta(x), \quad x \in \L^\infty(\QG).
\end{equation}
For a compact quantum group $\QG$ being of Kac type is equivalent to the traciality of the Haar state $h$, often denoted by $\tau$. This is equivalent to the fact that its dual $\hat{\QG}$ defined in \cite[Theorem 5.4]{Kus05} is unimodular.

We say that $\QG$ is finite if the algebra $\L^\infty(\QG)$ is finite-dimensional. Such a compact quantum group is of Kac type by \cite{VDa97}.


\paragraph{Representations} 
Suppose that $\QG$ is a compact quantum group. A unitary matrix $u=[u_{ij}]_{1 \leq i,j \leq n}$ element of $\M_n(\L^\infty(\QG))$ with entries $u_{ij} \in \L^\infty(\QG)$ is called a $n$-dimensional unitary representation of $\QG$ if for any $i,j=1,\ldots,n$ we have 
\begin{equation}
\label{Def-rep}
\Delta(u_{ij})
=\sum_{k=1}^{n} u_{ik} \ot u_{kj}.
\end{equation}

We say that $u$ is irreducible if $\{x \in \M_n : (x \ot 1)u = u(x \ot 1)\} = \mathbb{C}1$. 
We denote by $\Irr(\QG)$ the set of unitary equivalence \textit{classes} of irreducible finite-dimensional unitary representations of $\QG$. For each $\pi \in \Irr(\QG)$, we fix a representative $u^{(\pi)}\in \L^\infty(\QG) \ot \M_{n_\pi}$ of the class $\pi$.



We let 
\begin{equation}
\label{}
\Pol(\QG)
\ov{\mathrm{def}}{=} \mathrm{span}\, \big\{u_{ij}^{(\pi)} : u^{(\pi)}=[u_{ij}^{(\pi)}]_{i,j=1}^{n_{\pi}},\pi \in \Irr(\QG)\big\}.
\end{equation}
This is a weak* dense subalgebra of $\L^\infty(\QG)$.

\paragraph{Antipodes on compact quantum groups of Kac type}

If $\QG$ is a compact quantum group of Kac type, it is known that the antipode of $\QG$ is an $*$-antiautomorphism $R \co \L^\infty(\QG) \to \L^\infty(\QG)$ determined by 
$$
R\big(u_{ij}^{(\pi)}\big)
=\big(u_{ji}^{(\pi)}\big)^*
,\quad \pi \in \Irr(\QG), 1\leq i,j \leq n_\pi.
$$



\paragraph{Fourier transforms}



Suppose that $\QG$ is a compact quantum group of Kac type. For a linear functional $\varphi$ on $\Pol(\QG)$, we define the Fourier transform $\hat{\varphi}=(\hat{\varphi}(\pi))_{\pi \in \Irr(\QG)}$, which is an element of $\ell^\infty(\hat{\QG}) \ov{\mathrm{def}}{=} \oplus_{\pi} \M_{n_\pi}$, by 
$$
\hat{\varphi}(\pi)
\ov{\mathrm{def}}{=} (\varphi \ot \Id)\big((u^{(\pi)})^{*}\big),\quad \pi \in \Irr(\QG).
$$
In particular, any $x \in \L^{\infty}(\QG)$ induces a linear functional $h(\cdot\,x) \co \Pol(\QG) \to \C$, $y \mapsto h(yx)$ on $\Pol(\QG)$ and the Fourier transform $\hat{x}=(\hat{x}(\pi))_{\pi \in \Irr(\QG)}$ of $x$ is defined by 
$$
\hat{x}(\pi)
\ov{\mathrm{def}}{=} (h(\cdot x) \ot \Id)\big((u^{(\pi)})^{*}\big)
,\quad\pi \in \Irr(\QG).
$$
Let $\varphi_{1},\varphi_{2}$ be linear functionals on $\Pol(\QG)$. Consider their convolution product $\varphi_{1} * \varphi_{2} \ov{\mathrm{def}}{=} (\varphi_{1} \ot \varphi_{2}) \circ \Delta$. 
We recall that
\begin{equation}
\label{conv-L1-et-transfo-fourier}
\widehat{\varphi_{1} * \varphi_{2}}(\pi)
=\hat{\varphi}_{2}(\pi)\hat{\varphi}_{1}(\pi).
\end{equation}

The following result is elementary and well-known.

\begin{prop}
\label{prop-L1-into-c0}
The Fourier transform $\mathcal{F} \co \L^{\infty}(\QG)^{*} \to \ell^{\infty}(\hat{\QG})$, $f \mapsto \hat{f}$ is a contraction, and moreover $\mathcal{F}$ sends $\L^1(\QG)$ injectively into $\ell^{\infty}(\hat{\QG})$.
\end{prop}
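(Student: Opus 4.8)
The plan is to verify the three asserted properties — $\mathcal{F}$ is well-defined on $\L^\infty(\QG)^*$ with values in $\ell^\infty(\hat\QG)$, it is a contraction, and it is injective on $\L^1(\QG)$ — essentially by unwinding the definitions and using that each $u^{(\pi)}$ is a unitary in $\M_{n_\pi}(\L^\infty(\QG))$. First I would fix $f \in \L^\infty(\QG)^*$ and $\pi \in \Irr(\QG)$ and show that $\hat f(\pi) = (f \ot \Id)((u^{(\pi)})^*) \in \M_{n_\pi}$ has norm at most $\norm{f}$. The cleanest way is to view $(u^{(\pi)})^*$ as an element of $\L^\infty(\QG) \ot \M_{n_\pi} = \M_{n_\pi}(\L^\infty(\QG))$ of operator norm $1$ (since $u^{(\pi)}$ is unitary), and to note that $f \ot \Id_{\M_{n_\pi}} \co \M_{n_\pi}(\L^\infty(\QG)) \to \M_{n_\pi}$ has norm $\norm{f}$ when $f$ is a bounded functional — this is the standard fact that amplifying a bounded functional by a matrix algebra does not increase the norm beyond a dimensional factor, but in fact for a \emph{bounded functional} (not merely c.b.) on a $\mathrm{C}^*$-algebra one has $\norm{f \ot \Id_{\M_{n}}}_{\M_n(A) \to \M_n} = \norm{f}$; alternatively, since here we only need a contraction, decompose $f$ into four positive functionals or simply estimate $\norm{\hat f(\pi)} = \sup \norm{(f \ot \Id)((u^{(\pi)})^*) \xi}$ over unit vectors $\xi \in \C^{n_\pi}$ against a unit vector $\eta$, writing the pairing as $f$ applied to a contraction $\sum_{i,j} \bar\eta_i \xi_j (u^{(\pi)}_{ji})^*$ whose norm is $\le 1$ because $(u^{(\pi)})^*$ is a partial isometry-valued matrix. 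This gives $\norm{\hat f(\pi)} \le \norm{f}$ for every $\pi$, hence $\hat f = (\hat f(\pi))_\pi \in \ell^\infty(\hat\QG)$ with $\norm{\hat f}_{\ell^\infty(\hat\QG)} = \sup_\pi \norm{\hat f(\pi)} \le \norm{f}$, i.e. $\mathcal{F}$ is a contraction. Linearity of $\mathcal{F}$ is immediate from linearity of $f \mapsto (f \ot \Id)((u^{(\pi)})^*)$.

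For injectivity on $\L^1(\QG)$, suppose $x \in \L^1(\QG)$ with $\hat x = 0$, i.e. $(h(\cdot\, x) \ot \Id)((u^{(\pi)})^*) = 0$ in $\M_{n_\pi}$ for every $\pi \in \Irr(\QG)$. Reading this entrywise gives $h(u^{(\pi)}_{ji} x) = 0$ for all $\pi$ and all $i,j$ (up to the adjoint/transpose bookkeeping: $((u^{(\pi)})^*)_{ij} = (u^{(\pi)}_{ji})^*$, so one gets $h((u^{(\pi)}_{ji})^* x) = 0$; since $\{(u^{(\pi)}_{ji})^*\}$ and $\{u^{(\pi)}_{ij}\}$ span the same space as $\pi$ ranges over $\Irr(\QG)$ — because $\overline{\pi} \in \Irr(\QG)$ as well, or more elementarily because $R$ permutes these spans — this is equivalent to $h(y x) = 0$ for all $y \in \Pol(\QG)$). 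Now $\Pol(\QG)$ is weak* dense in $\L^\infty(\QG)$, and the functional $y \mapsto h(yx)$ extends to a normal (weak* continuous) functional on $\L^\infty(\QG)$ via the identification $\L^1(\QG) = \L^\infty(\QG)_*$; since it vanishes on a weak* dense subspace it vanishes identically, so $h(yx) = 0$ for all $y \in \L^\infty(\QG)$. By faithfulness of the Haar state $h$ (equivalently, nondegeneracy of the pairing $\L^\infty(\QG) \times \L^1(\QG) \to \C$, $(y,x) \mapsto h(yx)$, which is exactly the statement $\L^1(\QG) = \L^\infty(\QG)_*$) this forces $x = 0$. Hence $\mathcal{F}|_{\L^1(\QG)}$ is injective.

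The only mildly delicate point is the contractivity estimate $\norm{(f \ot \Id)((u^{(\pi)})^*)} \le \norm{f}$; everything else is a matter of unwinding the Fourier transform definition and invoking weak* density of $\Pol(\QG)$ together with faithfulness of $h$, both recalled in the Preliminaries. I expect the main obstacle — really just a bookkeeping nuisance rather than a genuine difficulty — to be keeping track of which indices get transposed and conjugated when passing between $u^{(\pi)}$, $(u^{(\pi)})^*$, and their matrix entries, and in identifying $\mathrm{span}\{(u^{(\pi)}_{ij})^* : \pi, i, j\}$ with $\Pol(\QG)$; this is handled by the description of the antipode $R$ recalled above ($R(u^{(\pi)}_{ij}) = (u^{(\pi)}_{ji})^*$) and the fact that $R$ is a bijection of $\Pol(\QG)$. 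Since the proposition is flagged as ``elementary and well-known,'' I would keep the write-up short, citing $\L^1(\QG) = \L^\infty(\QG)_*$ and weak* density of $\Pol(\QG)$ without belaboring them.
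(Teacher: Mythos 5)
Your proof is correct, and it is the standard argument: the paper itself gives no proof of this proposition (it is merely flagged as ``elementary and well-known''), so there is nothing to diverge from. Both halves check out — the contraction follows from $\norm{(u^{(\pi)})^*}_{\M_{n_\pi}(\L^\infty(\QG))}=1$ together with the fact that a bounded functional is automatically completely bounded with the same norm (or your direct compression estimate), and the injectivity on $\L^1(\QG)$ follows from $\mathrm{span}\{(u^{(\pi)}_{ji})^*\}=\Pol(\QG)^*=\Pol(\QG)$, weak* density of $\Pol(\QG)$, normality of $y \mapsto h(yx)$ for $x \in \L^1(\QG)=\L^\infty(\QG)_*$, and non-degeneracy of the trace pairing.
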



\begin{example} \normalfont
\label{Ex-quantum-group-classical}
Let $G$ be a locally compact group and define
\begin{equation}
\label{Delta_commutatif}
\Delta(f)(s,t)
\ov{\mathrm{def}}{=} f(st),\quad f \in \mathrm{C}(G),\quad s,t\in G.
\end{equation} 
Then $\Delta$ admits a weak* continuous extension $\Delta \co \L^\infty(G) \to \L^\infty(G) \otvn \L^\infty(G)$. If $\mu_G$ is a left Haar measure on $G$, then a left Haar weight $\varphi$ on the abelian von Neumann algebra $\L^\infty(G)$ is given by $\varphi(f)=\int_G f \d \mu_G$ and similarly we obtain a right Haar weight $\psi$ with a right Haar measure. In this situation, $\L^\infty(G)$ equipped with $\Delta$, $\varphi$ and $\psi$ is a locally compact quantum group which is compact if and only if the group $G$ is compact. The antipode $S$ is given by $(Sf)(s)=f(s^{-1})$ where $s \in G$. If $G$ is compact, for any $f \in \L^\infty(G)$, we can show that 
$$
\hat{f}(\pi)
=\int_G \pi(s)^*f(s) \d\mu_G(s),\quad \pi \in \Irr(G).
$$ 
We direct the reader to \cite[Section 13.2]{Tus22} for a more complete presentation.
\end{example}

\begin{example} \normalfont
\label{example-QG}
Let $G$ be a locally compact group with neutral element $e$ and $\VN(G)$ its associated von Neumann algebra generated by the left translation operators $\{ \lambda_s : s \in G\}$ on $\L^2(G)$. The ``dual'' $\QG=\hat{G}$ of $G$ is a locally compact quantum group defined by $\VN(G)$ equipped with the coproduct $\Delta \co \VN(G) \to \VN(G) \otvn \VN(G)$ defined by
\begin{equation}
\label{coproduct-VNG}
\Delta(\lambda_s)
=\lambda_s \ot \lambda_s, \quad s \in G.
\end{equation}
The left and right Haar weights of $\hat{G}$ are both equal to the Plancherel
weight of $G$ \cite[Section VII.3]{Tak03}.

In the case where $G$ is discrete, this quantum group is compact and the Haar state is defined as the unique trace $\tau$ on $\VN(G)$ such that $\tau(1)=1$ and $\tau(\lambda_s)=0$ for $s \in G \setminus \{e\}$. For any $f \in \VN(G)$, we have
$$
\hat f(s)
= \tau(f\lambda(s)^*),\quad s \in G.
$$
The antipode $R \co \VN(G) \to \VN(G)$ is defined by $R(\lambda_s)=\lambda_{s^{-1}}$, where $s \in G$. See \cite[Section 13.2]{Tus22} for more information.
\end{example}

\begin{example} \normalfont
Consider the case of a compact abelian group $G$. For any character $\chi$ of $G$, we have
\begin{equation}
\label{delta-epsi}
\Delta(\chi)
=\chi \ot \chi, \quad \chi \in  \hat{G}.
\end{equation}
\end{example}

\begin{example} \normalfont
For $\SU_q(2)$, we refer to the paper \cite{KrS16}. Recall that the compact quantum group $\SU_q(2)$ is Kac if and only if $q=-1$ or $q=1$ and coamenable if $q \not=0$. See also \cite{VaV03} for other examples of Kac algebras.
\end{example}

\paragraph{Operator spaces} 
We refer to the books \cite{BLM04}, \cite{EfR00}, \cite{Pau02} and \cite{Pis03} for background on operators spaces. We denote by $\ot_{\min}$ and $\widehat{\ot}$ the injective and the projective tensor product of operator spaces. If $E$ and $F$ are operator spaces, we let $\CB(E,F)$ for the space of all completely bounded maps endowed with the norm
$$
\norm{T}_{\cb, E \to F}
=\sup_{n \geq 1} \bnorm{\Id_{\M_n} \ot T}_{\M_n(E) \to \M_n(F)}.
$$
If $E$ and $F$ are operator spaces, \cite[(1.32)]{BLM04} we have a complete isometry
\begin{equation}
\label{belle-injection-2}
E \ot_{\min} F 
\subset \CB(F^*,E)
\end{equation}
and each element of $x \ot y$ of $E \ot_{\min} F$ identifies to a weak*-weak continuous map (see \cite[p.~48]{DeF93}. Moreover, recall the complete isometry of \cite[p.~27]{BLM04}
\begin{equation}
\label{belle-injection}
E^* \ot_{\min} F 
\subset \CB(E,F).
\end{equation}
We have
\begin{equation}
\label{otp-duality}
(E \otp F)^*
=\CB(E,F^*).
\end{equation}
Let $\cal{M}$ and $\cal{N}$ be von Neumann algebras. Recall that by \cite[Theorem 2.5.2]{Pis03} \cite[p.~40]{BLM04} the map
\begin{equation}
\label{Ident-magic}
\begin{array}{cccc}
    \Psi  \co &  \cal{M} \otvn \cal{N} &  \longrightarrow   & \CB(\cal{M}_*,\cal{N})   \\
           &   z  &  \longmapsto       &  f \mapsto (f \ot \Id)(z) \\
\end{array}
\end{equation}
is a completely isometric isomorphism. Let $X$ be a Banach space and $E$ be an operator space. For any bounded linear map $T \co X \to E$, we have by \cite[(1.12)]{BLM04}
\begin{equation}
\label{max-et-cb}
\norm{T}_{\cb,\max(X) \to E}
=\norm{T}_{X \to E}. 
\end{equation}
For any bounded linear map $T \co E \to X$, we have by \cite[(1.10)]{BLM04}
\begin{equation}
\label{min-et-cb}
\norm{T}_{\cb,E \to \min(X)}
=\norm{T}_{E \to X}. 
\end{equation}
Recall that by \cite[Exercise 3.10 (ii)]{Pau02} if $T \co E \to F$ is a bounded map between operator spaces we have 
\begin{equation}
\label{estimate-in-d}
\norm{\Id_{\M_d} \ot T}_{\M_d(E) \to \M_d(F)} 
\leq d\norm{T}_{E \to F}.
\end{equation}

\paragraph{Stein's interpolation theorem}
We recall an <<abstract version>> of Stein's interpolation theorem for compatible couples of Banach spaces. See \cite[Theorem 1]{CwJ84}, \cite[Theorem 2.7 p.~52]{Lun18} and \cite[Theorem 2.1]{Voi92} for the proof. Here, we use the open strip $S \ov{\mathrm{def}}{=} \{z \in \mathbb{C} : 0 < \Re z < 1 \}$.

\begin{thm}
\label{thm-Stein}
Let $(X_0,X_1)$ and $(Y_0,Y_1)$ be two compatible couples of Banach spaces such that $X_0$ is separable. Let $(T_z)_{z \in \ovl{S}}$ be a family of bounded operators from $X_0 \cap X_1$ into $Y_0+Y_1$ such that 
\begin{enumerate}
	
\item for any $x \in X_0 \cap X_1$ and any $y \in (Y_0+Y_1)^*$ the function $z \mapsto \la T_z(x),y\ra_{Y_0+Y_1,(Y_0+Y_1)^*}$ is continuous and bounded on $\ovl{S}$ and analytic on $S$,
	
\item for any $x \in X_0 \cap X_1$, the functions $t \to T_{\i t}(x)$ and $t \mapsto T_{1+\i t}(x)$ take values in $Y_0$ and $Y_1$,
	
\item there exist positive constants $M_0$ and $M_1$ such that for any $x \in X_0 \cap X_1$
$$
\sup_{t \in \R} \norm{T_{\i t}(x)}_{Y_0}
\leq M_0 \norm{x}_{X_0}
\quad \text{and} \quad
\sup_{t \in \R} \norm{T_{1+\i t}(x)}_{Y_1}
\leq M_0 \norm{x}_{X_1}.
$$
\end{enumerate}
Then for any $ 0< \theta < 1$, $T_\theta$ admits a unique extension as a bounded linear map from the space $(X_0,X_1)_\theta$ into the space $(Y_0,Y_1)_\theta$ and
$$
\norm{T_\theta}_{(X_0,X_1)_\theta \to (Y_0,Y_1)_\theta}
\leq M_0^{1-\theta}M_1^{\theta}.
$$
\end{thm}

\paragraph{Vector-valued noncommutative $\L^p$-spaces} 
The theory of vector-valued noncommutative $\L^p$-spaces was initiated by Pisier \cite{Pis98} for the case where the underlying von Neumann algebra $\cal{M}$ is \textit{hyperfinite} and equipped with a normal semifinite faithful trace. Under these assumptions, according to \cite[pp.~37-38]{Pis98}, for any operator space $E$, the operator spaces $\cal{M} \ot_{\min} E$ and $\cal{M}_*^{\op} \widehat{\ot} E$ can be injected into a common topological vector space. This compatibility in the sense of interpolation theory, explained in \cite[p.~37]{Pis98} and \cite[p.~139]{Pis03} relies heavily on the fact that the von Neumann algebra $\cal{M}$ is \textit{hyperfinite}. Suppose $1 \leq p \leq \infty$. Then we can define by complex interpolation the operator space
\begin{equation}
\label{Def-vector-valued-Lp-non-com}
\L^p(\cal{M},E)
\ov{\mathrm{def}}{=} \big(\cal{M} \ot_{\min} E, \cal{M}_*^{\op} \widehat{\ot} E\big)_{\frac{1}{p}}.
\end{equation}
When $E=\mathbb{C}$, we get the noncommutative $\L^p$-space $\L^p(\cal{M})$ defined by \eqref{Def-norm-Lp}.

Let $\cal{M}$ be a hyperfinite von Neumann algebra. Let $E$ and $F$ be operator spaces. If a map $T \co E \to F$ is completely bounded then by \cite[(3.1) p.~39]{Pis98} it induces a completely bounded map $\Id_{\L^p(\cal{M})} \ot T \co \L^p(\cal{M},E) \to \L^p(\cal{M},F)$ and we have 
\begin{equation}
\label{ine-tensorisation-os}
\norm{\Id_{\L^p(\cal{M})} \ot T}_{\cb, \L^p(\cal{M},E) \to \L^p(\cal{M},F)} 
\leq \norm{T}_{\cb,E \to F}.
\end{equation}
If $\cal{M}$ is hyperfinite, we will use  sometimes the following notation of \cite[Definition 3.1]{Pis98}\footnote{\thefootnote. It would be more correct to use the notation $\L^\infty_0(\cal{M},E)$}:
\begin{equation}
\label{Def-L0-infty}
\L^\infty(\cal{M},E)
\ov{\mathrm{def}}{=} \cal{M} \ot_{\min} E.
\end{equation}

Let $\cal{M}$ and $\cal{N}$ be hyperfinite von Neumann algebras equipped with normal semifinite faithful traces. Suppose $1 \leq q \leq p \leq \infty$. We have a completely bounded map $\flip \co \L^q(\cal{M},\L^p(\cal{N})) \to \L^p(\cal{N},\L^q(\cal{M}))$, $x \ot y \mapsto y \ot x$ with
\begin{equation}
\label{flip}
\norm{\flip}_{\cb, \L^q(\cal{M},\L^p(\cal{N})) \to \L^p(\cal{N},\L^q(\cal{M}))} 
\leq 1.
\end{equation}

If $\cal{M}$ and $\cal{N}$ are von Neumann algebras equipped with normal semifinite faithful traces and if $1 \leq q \leq p \leq \infty$ and $\frac{1}{q}=\frac{1}{p}+\frac{1}{r}$ , we introduce the norm on $\L^p(\cal{M}) \ot \L^q(\cal{N})$ \cite[p.~71]{JuP10}
\begin{equation}
\label{Norm-LpLq}
\norm{x}_{\L^p(\cal{M},\L^q(\cal{N}))}
=\sup_{\norm{a}_{\L^{2r}(\cal{M})},\norm{b}_{\L^{2r}(\cal{M})} \leq 1} \bnorm{(a \ot 1) x (b \ot 1)}_{\L^q(\cal{M} \otvn \cal{N})}.
\end{equation}
If $\cal{M}$ is hyperfinite, this norm coincide with the norm of the space $\L^p(\cal{M},\L^q(\cal{N}))$ defined by \eqref{Def-vector-valued-Lp-non-com} with $E=\L^q(\cal{N})$. If $x \geq 0$, we can take the infimum with $a=b$. In particular, if $1 \leq p \leq \infty$, we have
\begin{equation}
\label{LinftyLq-norms}
\norm{x}_{\L^\infty(\cal{M},\L^p(\cal{N}))}
=\sup_{\norm{a}_{\L^{2p}(\cal{M})},\norm{b}_{\L^{2p}(\cal{M})} \leq 1} \bnorm{(a \ot 1) x (b \ot 1)}_{\L^p(\cal{M} \otvn \cal{N})}.
\end{equation}

By \cite[lemma 3.11]{GJL20} and \cite[Lemma 4.9]{JuP10}, if $\E= \Id \ot \tau \co \cal{M} \otvn \cal{N} \to \cal{M}$ is the canonical trace preserving normal conditional expectation, we have
\begin{equation}
\label{formula-Junge}
\norm{x}_{\L^\infty(\cal{M},\L^1(\cal{N}))}
=\inf_{x=x_1x_2} \bnorm{\E(x_1x_1^*)}_{\L^\infty(\cal{M})}^{\frac{1}{2}}\bnorm{\E(x_2^*x_2)}_{\L^\infty(\cal{M})}^{\frac{1}{2}}.
\end{equation}


Moreover, if $\cal{N}$ is another approximately finite-dimensional von Neumann algebra equipped with a normal semifinite faithful trace, recall Fubini's theorem \cite[(3.6) p.~40]{Pis98}, which states that for any $1 \leq p<\infty$, there exists a canonical completely isometric isomorphism
\begin{equation}
\label{Fubini}
\L^p(\cal{M},\L^p(\cal{N}))
=\L^p(\cal{M} \otvn \cal{N}),
\end{equation}
where the von Neumann algebra $\cal{M} \otvn \cal{N}$ is endowed with the product of traces.

\paragraph{Tensorizations of linear maps} 
Let $E$ and $F$ be operator spaces. Suppose $1 \leq p \leq \infty$. By \cite[Corollary 1.2 p.~19]{Pis98}, a linear map $T \co E \to F$ is completely bounded if and only if the linear map $\Id_{S^p} \ot T$ extends to a bounded operator $\Id_{S^p} \ot T \co S^p(E) \to S^p(F)$. In this case, the completely bounded norm $\norm{T}_{\cb,E \to F}$ is given by
\begin{equation}
\label{defnormecb}
\norm{T}_{\cb,E \to F}
=\norm{\Id_{S^p} \ot T}_{S^p(E) \to S^p(F)}.
\end{equation}
Let $\cal{M}$ be an approximately finite-dimensional von Neumann algebra equipped with a normal semifinite faithful trace. If a linear map $T \co E \to F$ is completely bounded then by \cite[(3.1) p.~39]{Pis98} it induces a completely bounded map $\Id_{\L^p(\cal{M})} \ot T \co \L^p(\cal{M},E) \to \L^p(\cal{M},F)$ and we have 
\begin{equation}
\label{ine-tensorisation-os}
\norm{\Id_{\L^p(\cal{M})} \ot T}_{\cb, \L^p(\cal{M},E) \to \L^p(\cal{M},F)} 
\leq \norm{T}_{\cb,E \to F}.
\end{equation}

\paragraph{Multipliers and bimodules}

Let $A$ be a Banach algebra and let $X$ be a Banach space equipped with a structure of $A$-bimodule. We say that a pair of maps $(L,R)$ from $A$ into $X$ is a multiplier if
\begin{equation}
\label{multiplier-bimodule}
aL(b)
=R(a)b,\quad a,b \in A.
\end{equation}
We say that $(L,R)$ is a \textit{bounded} multiplier if $L$ and $R$ are bounded. In this case, we let
\begin{equation}
\label{}
\norm{(L,R)}
\ov{\mathrm{def}}{=} \max \big\{\norm{L}_{A \to X}, \norm{R}_{A \to X}\big\}.
\end{equation}
If $X$ is $A$-right module, a left multiplier is a map $L \co A \to X$ such that 
\begin{equation}
\label{left-multiplier-bimodule}
L(ab)
=L(a)b
,\quad a,b \in A.
\end{equation}
Similarly, if $X$ is $A$-left module, a right multiplier is a map $R \co A \to X$ such that
\begin{equation}
\label{right-multiplier-bimodule}
R(ab)
=aR(b)
,\quad a,b \in A.
\end{equation}
We refer to \cite{Daw10} for more information.

\paragraph{Matrix normed modules}
Let $A$ be a Banach algebra equipped with an operator space structure. We need to consider the left $A$-modules $X$ that correspond to completely contractive homomorphisms $A \to \CB(X)$ for an operator space $X$. Following \cite[(3.1.4)]{BLM04}, we call these matrix normed left $A$-modules. It is easy to see that these are exactly the left $A$-modules $X$ satisfying for all $m, n \in \mathbb{N}$, any $[a_{ij}] \in \M_n(A)$ and any $[x_{kl}] \in \M_m(X)$
\begin{equation}
\label{Inequality-module}
\bnorm{[a_{ij}x_{kl}]}_{\M_{nm}(X)}
\leq \bnorm{[a_{ij}]}_{\M_n(A)} \bnorm{[x_{kl}]}_{\M_m(X)}. 
\end{equation}
A reformulation of \eqref{Inequality-module} is that the module action on $X$ extends to a complete contraction $A \otp X \to X$ where $\otp$ denotes the operator space projective tensor product. Similar results hold for matrix normed right modules, e.g. the right analogue of \eqref{Inequality-module} is 
\begin{equation}
\label{Inequality-module-right}
\bnorm{[x_{kl}a_{ij}]}_{\M_{nm}(X)}
\leq \bnorm{[a_{ij}]}_{\M_n(A)} \bnorm{[x_{kl}]}_{\M_m(X)}. 
\end{equation}
If $X$ is a matrix normed left $A$-module, then by \cite[3.1.5 (2)]{BLM04} the dual $X^*$ is a matrix normed \textit{right} $A$-module for the dual action defined by
\begin{equation}
\label{dual-action}
\langle \varphi a, x \rangle_{X^*,X}
\ov{\mathrm{def}}{=} \langle \varphi , a x\rangle_{X^*,X}, \quad \varphi \in X^*, a \in A, x \in X.
\end{equation}
There exists a similar result for matrix normed right $A$-modules.

\paragraph{Entropy}
Let $g \co \R^+ \to \R$ be the function defined by $g(0) \ov{\mathrm{def}}{=} 0$ and $g(t) \ov{\mathrm{def}}{=} t\ln(t)$ if $t > 0$. Let $\cal{M}$ be a von Neumann algebra with a normal faithful \textit{finite} trace $\tau$. If $\rho$ is positive element of $\L^1(\cal{M})$, by a slight abuse, we use the notation $\rho\log \rho \ov{\mathrm{def}}{=} g(\rho)$. In this case, if in addition $\norm{\rho}_{\L^1(\cal{M})}=1$ we can define the Segal entropy 
\begin{equation}
\label{Segal-entropy}
\H(x) \ov{\mathrm{def}}{=} -\tau(x \log_2 x),
\end{equation}
introduced in \cite{Seg60}. If we need to specify the trace $\tau$, we will use the notation $\H_\tau(\rho)$. Note that if the trace $\tau$ is \textit{normalized} and if $x$ is a positive element in the space $\L^1(\cal{M})$ with $\tau(x)=1$ then by \cite[Proposition 2.2]{LoW22} the entropy $\H(x)$ belongs to $[-\infty,0]$ with
\begin{equation}
\label{carac-entropy-max}
\H(x)=0 
\text{ if and only if } x=1.
\end{equation} Recall that by \cite[p.~78]{OcS78}, if $\rho_1, \rho_2$ are positive elements of $\L^1(\cal{M})$ with norm 1 (i.e.~states), we have
\begin{equation}
\label{entropy-tensor-product}
\H(\rho_1 \ot \rho_2)
= \H(\rho_1)+\H(\rho_2).
\end{equation}
Recall that for any $t>0$ and any state $\rho \in \L^1(\cal{M},\tau)$ and any state $\rho_1 \in \L^1(\cal{M},t\tau)$, we have
\begin{equation}
\label{changement-de-trace}
\H_\tau(\rho)
=\H_{t\tau}\bigg(\frac{1}{t}\rho\bigg)-\log t
\quad \text{and} \quad
\H_{\tau}(t\rho_1)
=\H_{t\tau}(\rho_1)-\log t.
\end{equation}

\paragraph{Elementary results}
Let $f \co [0,+\infty) \to \R^+$ be a function with $\lim_{q \to \infty} f(q)=1$. By \cite[Lemma 2.6]{JuP15}, $\frac{\d}{\d p} f(p^*) |_{p=1}$ exists if and only if $\lim_{q \to \infty} q\ln f(q)$ exists. In this case, we have\footnote{\thefootnote. Recall that $p^*=\frac{p}{p-1}$.} 
\begin{equation}
\label{elem-lim}
\frac{\d}{\d p} f(p^*) |_{p=1}
=\lim_{q \to \infty} q\ln f(q).
\end{equation}
Consider a net $(f_\alpha)$ of continuous functions $f_\alpha \co K \to \R$ on a compact topological space $K$. Suppose that $(f_\alpha)$ converges uniformly. Then it is easy to check that
\begin{equation}
\label{minimum-uniform-convergence}
\lim_{\alpha} \min_{x \in K} f_\alpha(x) 
=\min_{x \in K} \lim_{\alpha} f_\alpha(x).
\end{equation}

\begin{prop}
\label{prop-existence-conditional-expectation}
Let $\cal{M}$ be a von Neumann algebra equipped with a normal finite faithful trace $\tau$. For any unital von Neumann subalgebra $\cal{N}$ of $\cal{M}$, there exists a unique faithful normal conditional expectation $\E \co \cal{M} \to \cal{M}$ on $\cal{N}$ such that $\tau \circ \E = \tau$.
\end{prop}
In this situation, it is well-known \cite[Theorem 7 p.~151]{Kad04} that 
\begin{equation}
\label{Esp-dual}
\tau(x\E(y))
=\tau(xy), \quad x \in \cal{N}, y \in \cal{M}.
\end{equation}

\section{Quantum hypergroups and subfactor planar algebras}
\label{sec-Frobenius}
\subsection{Definition of quantum hypergroups}


Recall that a hypergroup \cite{BlH95} \cite{Jew75} is a locally compact space $X$ equipped with a group-like structure which allows the bounded measures on $X$ to convolve in a similar way to convolution on a locally compact group. Note that the authors of \cite{KPC10} use axioms which are different from the ones of \cite{BlH95}.

The idea to define a compact quantum hypergroup by a unital $\C^*$-algebra equipped with a coassociative \textit{completely positive} coproduct instead of a coassociative $*$-homomorphism and some axioms is contained in the paper \cite{ChV99} (see also \cite{DVD11a} and \cite{DVD11b} for a related approach). Here, we use a notion of quantum hypergroup for von Neumann algebras which is a very slight variation of \cite[Definition 2.1]{HuT22} where we use complete positivity of the coproduct. Some axioms are rather different of the ones of \cite{ChV99}. 


The importance of this notion is that there are examples arising from subfactor theory. Indeed, one of the goals of the paper is to connect these structures to the exact computation of minimum output entropies and classical capacities of quantum channels on finite-dimensional algebras. The link between this notion and the one of \cite{ChV99} (and also the one of \cite{DVD11a}) remains to be clarified at the present moment. We refer also to \cite{DFW21}, \cite{FrS09}, \cite{Kal01}, \cite{Wan13} and \cite{Zha20} for other papers on quantum hypergroups. Finally, that the notion of \textit{classical} hypergroups already appears in subfactor theory, see e.g. \cite{BDG21}.  
 
\begin{defi}
\label{def-comproduct}
Let $\mathcal{M}$ be a von Neumann algebra. A normalized coproduct is a normal unital completely positive map $\Delta \co \cal{M} \to \cal{M} \otvn \cal{M}$ such that $(\Delta \ot \Id)\Delta=(\Id \ot \Delta)\Delta$ (co-associativity). A coproduct is a multiple $k\Delta$ of a normalized coproduct $\Delta$ with $k \in \R^+$. Sometimes, we need to take account the norm of $\Delta$ and we say that $\Delta$ is a $\norm{\Delta}$-coproduct.
\end{defi}

By \cite[Theorem 7.2.4]{EfR00} we have $(\cal{M} \otvn \cal{M})_*=\cal{M}_* \otp\cal{M} _*$ where $\otp$ denotes the operator space projective tensor product. Since $\Delta$ is completely bounded, the preadjoint map $\Delta_* \co \cal{M}_* \otp \cal{M}_* \to \cal{M}_*$ is also completely bounded with the same completely bounded norm. We will use the following notation
\begin{equation}
\label{Convolution-L1}
\omega_1 * \omega_2
\ov{\mathrm{def}}{=} \Delta_*(\omega_1 \ot \omega_2) 
= (\omega_1 \ot \omega_2)\circ \Delta
,\quad  \omega_1, \omega_2 \in \cal{M}_*.
\end{equation}
This gives us a completely bounded Banach algebra structure on the predual $\cal{M}_*$ since we have the inequality
\begin{align}
\label{Young inequality-1}
\norm{\omega_1 * \omega_2}_{\cal{M}_*}
\leq \norm{\Delta}\norm{\omega_1}_{\cal{M}_*} \norm{\omega_2}_{\cal{M}_*},\quad \omega_1 \in \cal{M}_*, \omega_2 \in \cal{M}_*.
\end{align} 

In particular, we have
\begin{align}
\label{def-convolution-1}
\langle \omega_1 * \omega_2, z \rangle_{\cal{M}_*, \cal{M}}
=\big\langle \omega_1 \ot \omega_2,\Delta(x)\big\rangle_{\cal{M}_* \otp \cal{M}_*,\cal{M} \otvn \cal{M}},\quad \omega_1 \in \cal{M}_*, \omega_2 \in \cal{M}_*, z \in \cal{M}.
\end{align}

With the property $(\Delta \ot \Id)\Delta=(\Id \ot \Delta)\Delta$, an elementary computation\footnote{\thefootnote. We have $
[((\omega_1 \ot \omega_2) \circ \Delta)  \ot \omega_3] \circ \Delta
=[\omega_1 \ot ((\omega_2 \ot \omega_3) \circ \Delta)] \circ \Delta
$.} shows that $(\omega_1 * \omega_2) * \omega_3=\omega_1 * (\omega_2* \omega_3)$ for any $\omega_1,\omega_2,\omega_3 \in \cal{M}_*$. Indeed, another small computation shows that the associativity of the convolution product is equivalent to the coassociativity.

Let $\cal{M}$ be a von Neumann algebra equipped with a normal semifinite faithful trace $\tau$. trace-preserving involutive $*$-antiautomorphism $R \co \cal{M} \to \cal{M}$. Recall that such a map $R$ is normal and induces an isometry $R \co \L^1(\cal{M}) \to \L^1(\cal{M})$. We identify the Banach space $\L^1(\cal{M})$ with the predual $\cal{M}_*$ with the isometry $i \co \L^1(\cal{M}) \to \cal{M}_*$, $f \mapsto \tau(f\,R(\cdot))$ and we equip $\L^1(\cal{M})$ with the induced operator space structure. This means that the map $i$ is completely isometric and that the bracket duality is
\begin{equation}
\label{bracket-1}
\langle f,h\rangle_{\L^1(\cal{M}),\L^\infty(\cal{M})}
\ov{\mathrm{def}}{=} \tau(fR(h)), \quad f \in \L^1(\cal{M}), h \in \L^\infty(\cal{M}).
\end{equation}
Similarly, we let
\begin{equation}
\label{bracket-1-ot}
\langle f,g \rangle_{\L^1(\cal{M} \otvn \cal{M}),\cal{M} \otvn \cal{M}}
\ov{\mathrm{def}}{=} (\tau \ot \tau)\big[f(R\ot R)(h)\big], \quad f \in \L^1(\cal{M} \otvn \cal{M}), g \in \cal{M} \otvn \cal{M}.
\end{equation}
The use of the antipode $R$ allows us to avoid the opposite operator space structure and gives the beautiful formula \eqref{Frob-recip-bis}. See \cite[p.~139]{Pis03} for a discussion of the opposite structure in the context of noncommutative $\L^p$-spaces. For any $f,g \in \L^1(\cal{M})$, we let 
\begin{equation}
\label{convol-1-1}
f *_{1,1} g
\ov{\mathrm{def}}{=} i^{-1}(i(g)*i(f)).
\end{equation}

A simple computation shows that \eqref{def-convolution-1} translates into the following formula. 

\begin{prop}
\label{prop-convl-bis-65}
For any $h,f \in \L^1(\QG)$ and any $g \in \L^\infty(\QG)$, we have
\begin{equation}
\label{def-convolution-2}
\langle f *_{1,1} g, h \rangle_{\L^1(\QG), \L^\infty(\QG)}
=\big\langle g \ot f,\Delta(h)\big\rangle_{\L^1(\QG) \otp \L^1(\cal{M}),\L^\infty(\QG) \otvn \L^\infty(\QG)}
,
\end{equation}
where the right bracket is defined as in \eqref{bracket-1} with $R \ot R$.
\end{prop}

\begin{proof}
We have
\begin{align*}
\MoveEqLeft
\la f *_{1,1} g,h\ra_{\L^1(\QG),\L^\infty(\QG)}
=\la i^{-1}(i(g)*i(f)),h\ra_{\L^1(\QG),\L^\infty(\QG)}
=\la i(g)*i(f),h\ra_{\L^\infty(\QG)_*,\L^\infty(\QG)} \\
&\ov{\eqref{def-convolution-1}}{=} \la i(g) \ot i(f), \Delta(h) \ra_{\L^\infty(\QG)_* \otp \L^\infty(\QG)_*,\L^\infty(\QG) \otvn \L^\infty(\QG)} 
=(\tau \ot \tau)\big[ (R(g) \ot R(f))\Delta(h) \big] \\
&=(\tau \ot \tau)\big[ (R \ot R)(g \ot f) \Delta(h) \big]
=\la g \ot f,\Delta(h) \ra_{\L^1(\QG \otvn \QG)), \L^\infty(\QG)\otvn \L^\infty(\QG)}.
\end{align*}

\end{proof}


\begin{example} \normalfont
Let $G$ be a locally compact group equipped with a left Haar measure $\mu_G$. In the case of the locally compact quantum group defined by \eqref{Delta_commutatif}, it is easy to check that we obtain for almost all $t \in G$ the classical convolution\footnote{\thefootnote. We have
\begin{align*}
\MoveEqLeft            
\int_{G \times G} g(s)f(t) h(s^{-1}t^{-1}) \d \mu_G(s) \d \mu_G(t)
=\int_{G \times G} g(s)f(t) h\big((ts)^{-1}\big) \d \mu_G(s) \d \mu_G(t) \\
&=\int_G\bigg(\int_G f(t)g(t^{-1}s) \d \mu_G(t)\bigg) h(s^{-1})\d \mu_G(s)
=\int_G (f*g)(s) h(s^{-1})\d \mu_G(s).
\end{align*}
}
\begin{equation}
\label{Convol-usuel}
(f * g)(s)
=\int_G f(t)g(t^{-1}s) \d \mu_G(t), \quad f,g \in \L^1(G).
\end{equation}
\end{example}

\begin{defi}
\label{def-antipode}
Let $\mathcal{M}$ be a von Neumann algebra equipped with a normal semifinite faithful trace $\tau$ and a coproduct $\Delta$. A trace-preserving involutive $*$-antiautomorphism $R \co \cal{M} \to \cal{M}$ is called an antipode with respect to $\tau$ and $\Delta$ if 
\begin{align}
\label{Frob-recip}
\tau\big[(h *_{1,1} f)R(g)\big]
=\tau\big[(f *_{1,1} g)R(h)\big]
,\quad f,g,h \in \frak{n}_\tau.
\end{align}
\end{defi}
Here $\mathfrak{n}_\tau$ is the left ideal of all $x \in \cal{M}$ such that $\tau(|x|^2)<\infty$. Note that this property is an abstract reformulation of \cite[VIII.35]{Bou04} and \cite[(14.10.9)]{Dieu68} (observe that the use of the antipode simplifies the formulas of these references). Moreover, with \eqref{bracket-1}, the equality \eqref{Frob-recip} is equivalent to
\begin{equation}
\label{Frob-recip-bis}
\langle g,h *_{1,1} f\rangle_{\L^\infty(\cal{M}),\L^1(\cal{M})}
=\langle f *_{1,1} g, h \rangle_{\L^1(\cal{M}),\L^\infty(\cal{M})},\quad f,g,h \in \frak{n}_\tau.
\end{equation}

\begin{defi}
\label{def-Frob-vN}
A quantum hypergroup is a quadruple $(\L^\infty(\QH),\tau,\Delta,R)$ where $\L^\infty(\QH)$ is a von Neumann algebra equipped with a normal semifinite faithful trace $\tau$, a coproduct $\Delta$ and an antipode $R$. We will also use the term $\norm{\Delta}$-quantum hypergroup. If $\norm{\Delta}=1$, we say that $\L^\infty(\QH)$ is normalized.
\end{defi}

We write $\QH$ to denote the quantum hypergroup. Finally, note that\footnote{\thefootnote.We have
\begin{align*}
\tau(x *_{1,1} y)
\ov{\eqref{bracket-1}}{=} \left\langle x *_{1,1} y,1\right\rangle
\ov{\eqref{def-convolution-1}}{=} \big\langle x \ot y,\Delta(1)\big\rangle
=\langle x \ot y,\norm{\Delta}1 \rangle
=\norm{\Delta}\tau(x)\tau(y).
\end{align*}}  
\begin{equation}
\label{relation-trace}
\tau(x *_{1,1} y)
=\norm{\Delta} \tau(x)\tau(y), \quad x \in \L^1(\QH), y \in \L^1(\QH).
\end{equation}  

\begin{remark} \normalfont
We think that the following definition could be an alternative to Definition \ref{def-Frob-vN} for finite von Neumann algebras. Let $\cal{M}$ be a finite von Neumann algebra. We will call $(\cal{M},\tau,\Delta,J)$ a tracial \textit{compact} hypergroup structure on $\cal{M}$ if the following properties are satisfied.
\begin{enumerate}
\item $\Delta \co \cal{M} \to \cal{M} \otvn \cal{M}$ is a normal unital completely positive map such that $(\Delta \ot \Id)\Delta =(\Id \ot \Delta)\Delta$.

\item $J \co \cal{M} \to \cal{M}$ is an involutive anti-linear anti-$*$-homomorphism such that 
$
\Delta \circ J
=\Sigma \circ (J \ot J) \circ \Delta$ where $\Sigma$ is the flip on $\cal{M} \otvn \cal{M}$, i.e.~$\Sigma(x \ot y) = y \ot x$.

\item $\tau$ is a normal finite faithful trace on $\cal{M}$ such that \eqref{Def-haar-state} holds for any $x \in \cal{M}$ satisfying
$$ 
R\big((\Id \ot h)(\Delta(z)(1 \ot y))\big)
=(\Id \ot h)\big((1 \ot z)\Delta(y)\big), \quad x,y \in \cal{M}
$$ 
where $R(x) \ov{\mathrm{def}}{=} J(x)^*$.
\end{enumerate}
It is not clear if we must add the existence of a linear map $\epsi \co \cal{M} \to \mathbb{C}$ is a linear map such that $
(\epsi \ot \Id)\Delta
=(\Id \ot\epsi)\Delta
=\Id$ satisfying $
\epsi(xy)
=\epsi(x) \epsi(y)$ for any $x,y \in \cal{M}$.
\end{remark}


\subsection{Examples of quantum hypergroups}

\begin{example} \normalfont
\label{ex-vNF-algebra-from-subfactor}
Suppose $\mathscr{P}_{\bullet}$  is a subfactor planar algebra with finite index $\delta^2$, $\delta>0$. Recall that the convolution on the 2-box spaces is defined as in Figure 1.

\begin{center}
\raisebox{2cm}{$x*y=$}\hspace{0.2cm}\begin{tikzpicture}[scale=.7]
	\clip (0,0) circle (3cm);
	
	\draw[shaded] (0,0) ellipse (1.2cm and 4cm);
	
	\draw[fill=white] (0,0) ellipse (1cm and 1.5cm);

	\begin{scope}[shift=(10:1cm)]	
		\node at (0,-0.15) [Tbox, inner sep=2mm] {$y$}; 
	\end{scope}
	
	\node at (-35:-2.8cm) {$\star$}; 
	\node at (-2:-1cm) [Tbox, inner sep=2mm] {$x$};
	\node at (0:-1.8cm) {$\star$};
	\node at (0cm:0.1cm) {$\star$};
	
	\draw[very thick] (0,0) circle (3cm); 
\end{tikzpicture}

Figure 1: convolution
\end{center}

We consider the map $\Delta \co \scr{P}_{2,\pm} \to \scr{P}_{2,\pm} \ot \mathscr{P}_{2,\pm}$ defined as the adjoint operator of $*$ with respect to the unnormalized faithful Markov trace $\tr$:
\begin{align}
\label{eq:definition of comultiplication on subfactor}
\left\langle \Delta(z),x \ot y\right\rangle
=\left\langle z,x\ast y\right\rangle,\quad x, y,z \in \scr{P}_{2,\pm}.
\end{align}
By \cite[Theorem 2.4]{HuT22}, the map $\Delta \co \scr{P}_{2,\pm} \to \scr{P}_{2,\pm} \ot \mathscr{P}_{2,\pm}$ is positive. Indeed, the proof shows that this map is \textit{even completely positive} since $\Delta$ is a composition of completely positive maps. Moreover, the same result says that if $\mathscr{P}_{\bullet}$ is irreducible then $\Delta$ is a $\delta^{-1}$-coproduct. The associativity of the convolution map is illustrated by \cite[Figure 22]{BiJ00}.

For any $x \in \scr{P}_{2,\pm}$, let $R(x)$ be the contragredient of $x$. Then by \cite[Lemma 3.4]{JLJ16} $R \co \scr{P}_{2,\pm} \to \scr{P}_{2,\pm}$ is an antipode. The quadruple $(\scr{P}_{2,\pm},\tr,*,R)$ is a $\delta^{-1}$-quantum hypergroup.
\end{example}

\begin{example} \normalfont 
Let $(\cal{A},\cal{B},d,\tau,\cal{F})$ be a fusion bi-algebra \cite{LPW21}. That means that $\cal{A}$ and $\cal{B}$ are two finite-dimensional $\C^*$-algebras equipped with faithful traces $d$ and $\tau$ respectively, that $\cal{A}$ is commutative, and that $\F \co \cal{A} \to \cal{B}$ is a unitary transformation preserving $2$-norms, i.e.~$\tau(|\F(x)|^2)=d(|x|^2)$ for any $x \in \cal{A}$ satisfying the following conditions.
\begin{enumerate}
\item The unitary $\mathcal{F}$ induces a convolution map $* \co \cal{A} \ot \cal{A} \to \cal{A}$, $x \ot y \mapsto x*y$ defined by $x * y \ov{\mathrm{def}}{=} \cal{F}^{-1}(\cal{F}(x)\cal{F}(y))$ such that $x*y \geq 0$ for any positive elements $x,y$ of $\cal{A}$.
\item The map $J \co \cal{A} \to \cal{A}$, $x \mapsto \F^{-1}(\F(x)^*)$ is an anti-linear, $*$-isomorphism on $\cal{A}$.
\item The operator $\F^{-1}(1)$ is a positive multiple of a minimal projection in $\cal{A}$.
\end{enumerate}
In this case, we can introduce a coproduct $\Delta \co \cal{A} \to \cal{A} \ot \cal{A}$ as the adjoint operator of the convolution with respect to the trace $d$. For any $x \in \cal{A}$, let $R(x) \ov{\mathrm{def}}{=} J(x)^*$. The quadruple $(\cal{A},d,\Delta,R)$ is a normalized quantum hypergroup. Indeed, the coproduct $\Delta \co \cal{A} \to \cal{A} \ot \cal{A}$ is positive by \cite[Example 3.2]{HuT22}, hence completely positive since the $\C^*$-algebra $\cal{A}$ is abelian. The co-associativity is obvious. Note that $
\Delta \circ J
=\Sigma \circ (J \ot J) \circ \Delta$ is satisfied.
%
%
%
%
\end{example}

\begin{remark}\normalfont 
We are aware that the spin planar algebra of Remark \ref{Rem-spin-planar-algebra} satisfies a \textit{non-unital} version of Definition \ref{def-comproduct} (by the way the coproduct is a $\sqrt{n}$-coproduct). Note that \eqref{Def-haar-state} is false in this context. Some of the arguments of Section \ref{sec-convolution-by-lp} can be used with the convolution product of this planar algebra.
\end{remark}

\begin{example} \normalfont
A locally compact quantum group of Kac type such that the left Haar weight is tracial defines a normalized quantum hypergroup. It is apparent by a simple computation that the formula \eqref{Antipode-Magic-relation-4} implies \eqref{Frob-recip-bis}.
\end{example}

Recall that a locally compact quantum group $\QG$ is co-amenable if and only if the Banach algebra $\L^1(\QG)$ has a bounded approximate unit with norm not more than 1. See \cite[Theorem 3.1]{BeT03} and \cite{Bra17} for more information. A known class of examples of coamenable locally compact quantum groups is given by finite quantum groups. Let us define a similar notion for quantum hypergroups.

\begin{defi}
We say that a quantum hypergroup $(\L^\infty(\QH),\tau,\Delta,R)$ is co-amenable if and only the Banach algebra $\L^1(\QH)$ has a bounded approximate unit with norm not more than 1.
\end{defi}

\section{Convolutions by elements of $\L^p(\QH)$} 
\label{sec-convolution-by-lp}
\subsection[Convolutions by elements of $\L^p(\QH)$ and c.b. maps from $\L^1(\QH)$ into $\L^p(\QH)$]{Convolutions by elements of $\L^p(\QH)$ and completely bounded maps from $\L^1(\QH)$ into $\L^p(\QH)$} 
\label{Sec-convolution}

If $\QH=(\L^\infty(\QH),\tau,\Delta,R)$ is a (finite) quantum hypergroup and if $1 \leq p \leq \infty$, the authors of \cite[Theorem 3.7]{HuT22} obtained a Young's inequality
\begin{equation}
\label{Convolution-inequality-1-p}
\norm{f*g}_{\L^p(\QH) }
\leq \norm{f}_{\L^1(\QH) }\norm{g}_{\L^p(\QH)}, \quad f \in \L^1(\QH), g \in \L^p(\QH)
\end{equation}
where $*$ is a suitable extension of the previous convolution \eqref{Convolution-L1}. In this section, we prove a completely bounded version of this inequality. 

Let $\QH=(\L^\infty(\QH),\tau,\Delta,R)$ is a (semifinite) quantum hypergroup. Note that \eqref{Convolution-L1} induces a right action and a left action of $\L^\infty(\QH)_*$ on the von Neumann algebra $\L^\infty(\QH)$. Indeed, we can define the right dual action $\star$ of $\L^\infty(\QH)_*$ on $\L^\infty(\QH)$ by
\begin{equation}
\label{dual-action-L1}
\langle g \star \omega_1, \omega_2 \rangle_{\L^\infty(\QH),\L^\infty(\QH)_*}
\ov{\eqref{dual-action}}{=} \langle g, \omega_1*\omega_2 \rangle_{\L^\infty(\QH),\cal{M}_*}, \quad \omega_1,\omega_2 \in \L^\infty(\QH)_*,g \in \L^\infty(\QH)
\end{equation}
and similary the left dual action $\star$ of $\L^\infty(\QH)_*$ on $\L^\infty(\QH)$ by
\begin{equation}
\label{dual-action-L1-bis}
\langle \omega_1 \star g, \omega_2 \rangle_{\L^\infty(\QH),\L^\infty(\QH)_*}
=\langle g, \omega_2*\omega_1 \rangle_{\L^\infty(\QH),\L^\infty(\QH)_*}, \quad \omega_1,\omega_2 \in \L^\infty(\QH)_*,g \in \L^\infty(\QH).
\end{equation}

\begin{lemma}
\label{Lemma-action-concrete}
Let $(\L^\infty(\QH),\tau,\Delta,R)$ be a quantum hypergroup. For any $g \in \L^\infty(\QH)$ and any $\omega \in \L^\infty(\QH)_*$, we have
\begin{equation}
\label{action-module}
g \star \omega
=(\omega \ot \Id) \circ \Delta(g)
\quad \text{and} \quad
\omega \star g
=(\Id \ot \omega) \circ \Delta(g).
\end{equation}
\end{lemma}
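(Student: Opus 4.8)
The plan is to unwind the definitions \eqref{dual-action-L1} and \eqref{dual-action-L1-bis} of the left and right dual actions and compare them with the explicit formulas. First I would fix $g \in \L^\infty(\QG)$ and $\omega, \omega_1, \omega_2 \in \L^\infty(\QG)_*$ and compute, for the right action,
\[
\langle g \star \omega_1, \omega_2 \rangle
= \langle g, \omega_1 * \omega_2 \rangle
= \langle g, (\omega_1 \ot \omega_2) \circ \Delta \rangle
= (\omega_1 \ot \omega_2)(\Delta(g)).
\]
Now $\Delta(g) \in \L^\infty(\QG) \otvn \L^\infty(\QG)$, and by the standard slice-map identity $(\omega_1 \ot \omega_2)(\Delta(g)) = \omega_2\big((\omega_1 \ot \Id)(\Delta(g))\big)$. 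Since $(\omega_1 \ot \Id)(\Delta(g))$ is an element of $\L^\infty(\QG)$, this reads $\langle (\omega_1 \ot \Id)(\Delta(g)), \omega_2 \rangle$. As this holds for every $\omega_2 \in \L^\infty(\QG)_*$, we conclude $g \star \omega_1 = (\omega_1 \ot \Id)(\Delta(g))$, i.e. the first formula in \eqref{action-module} (renaming $\omega_1$ as $\omega$).

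For the left action, I would proceed symmetrically: from \eqref{dual-action-L1-bis},
\[
\langle \omega_1 \star g, \omega_2 \rangle
= \langle g, \omega_2 * \omega_1 \rangle
= (\omega_2 \ot \omega_1)(\Delta(g))
= \omega_2\big((\Id \ot \omega_1)(\Delta(g))\big),
\]
using the slice identity in the other leg, namely $(\omega_2 \ot \omega_1)(z) = \omega_2\big((\Id \ot \omega_1)(z)\big)$. Again since $\omega_2$ is arbitrary in $\L^\infty(\QG)_*$, the separation of points gives $\omega_1 \star g = (\Id \ot \omega_1)(\Delta(g))$, which is the second formula in \eqref{action-module}.

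The only technical points to check are that the slice maps $(\omega \ot \Id)$ and $(\Id \ot \omega)$ on $\L^\infty(\QG) \otvn \L^\infty(\QG)$ are well-defined normal maps into $\L^\infty(\QG)$ — this is the standard theory of slice maps on von Neumann algebra tensor products — and the compatibility identity $(\omega_1 \ot \omega_2)(z) = \omega_2((\omega_1 \ot \Id)(z)) = \omega_1((\Id \ot \omega_2)(z))$ for $z$ in the von Neumann tensor product, which is immediate on elementary tensors and extends by weak* density and normality. There is no real obstacle here; the lemma is essentially a bookkeeping exercise identifying the abstract dual module action with its concrete slice-map description. The mild subtlety worth a sentence is making sure the pairings are taken in the correct order, since the convolution is noncommutative: the right action $g \star \omega_1$ slices off $\omega_1$ against the \emph{first} leg because $\omega_1$ appears first in $\omega_1 * \omega_2$, whereas the left action $\omega_1 \star g$ slices off $\omega_1$ against the \emph{second} leg because in $\omega_2 * \omega_1$ it is $\omega_1$ that occupies the second slot.
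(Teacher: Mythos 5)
Your proof is correct and is essentially identical to the paper's: both unwind the definitions \eqref{dual-action-L1} and \eqref{dual-action-L1-bis}, rewrite the pairing via \eqref{Convolution-L1} as $(\omega\ot\omega_0)(\Delta(g))$, apply the slice-map identity, and conclude by the separation of points of $\L^\infty(\QG)_*$. Your extra remarks on the normality of slice maps and on which leg gets sliced in each case are accurate but not a departure from the paper's argument.
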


\begin{proof}
For any $g \in \L^\infty(\QH)$ and any $\omega,\omega_0 \in \L^\infty(\QH)_*$, we have
\begin{align*}
\MoveEqLeft
\langle g \star \omega, \omega_0 \rangle_{\L^\infty(\QH),\L^\infty(\QH)_*}
\ov{\eqref{dual-action-L1}}{=} \langle g, \omega*\omega_0\rangle_{\L^\infty(\QH),\L^\infty(\QH)_*}  
\ov{\eqref{Convolution-L1}}{=} (\omega \ot \omega_0) \circ \Delta(g) \\
&= \omega_0\big((\omega \ot \Id)\circ \Delta(g)\big) 
=\big\langle (\omega \ot \Id) \circ \Delta(g), \omega_0 \big\rangle_{\L^\infty(\QH),\L^\infty(\QH)_*}
\end{align*} 
and similarly
\begin{align*}
\MoveEqLeft
\langle \omega \star g, \omega_0 \rangle_{\L^\infty(\QH),\L^\infty(\QH)_*}
\ov{\eqref{dual-action-L1-bis}}{=} \langle g , \omega_0 * \omega\rangle_{\L^\infty(\QH),\L^\infty(\QH)_*}            
\ov{\eqref{Convolution-L1}}{=} (\omega_0 \ot \omega) \circ \Delta(g) \\
&=\omega_0\big((\Id \ot \omega)\circ \Delta(g)\big)
=\big\langle (\Id \ot \omega) \circ \Delta(g), \omega_0 \big\rangle_{\L^\infty(\QH),\L^\infty(\QH)_*}.
\end{align*} 
We deduce \eqref{action-module} by duality. 
\end{proof}

For any $f \in \L^1(\cal{M})$ and any $g \in \L^\infty(\QH)$, we define the element $f *_{1,\infty} g$ of $\L^\infty(\QH)$ by
\begin{equation}
\label{1-infty}
f *_{1,\infty} g
\ov{\mathrm{def}}{=} g \star \tau(fR(\cdot)) 
\quad \text{that is} \quad 
f *_{1,\infty} g
\ov{\eqref{action-module}}{=} \big(\tau(fR(\cdot)) \ot \Id \big) \circ \Delta(g).
\end{equation}
For any $f \in \L^\infty(\QH)$ and any $g \in \L^1(\QH)$, we define the element $f *_{\infty,1} g$ of $\L^\infty(\QH)$ by
\begin{equation}
\label{Def-*1-infty}
f *_{\infty,1} g
\ov{\mathrm{def}}{=} \tau(gR(\cdot)) \star f
\quad \text{that is} \quad 
f *_{\infty,1} g
\ov{\eqref{action-module}}{=} \big(\Id \ot \tau(gR(\cdot)) \big) \circ \Delta(f).
\end{equation}

\begin{example} \normalfont
Let $G$ be a locally compact group equipped with a left Haar measure $\mu_G$. In the case of the locally compact quantum group defined by \eqref{Delta_commutatif}, it is easy to check that we obtain for almost all $t \in G$ the classical convolution\footnote{\thefootnote. If $\Delta_G \co G \to \R$ is the modular function and if $\nu_G=\Delta^{-1}\mu_G$ is the associated right Haar measure, the second formula can be proved by 
\begin{align*}
\MoveEqLeft
(f *_{\infty,1} g)(t)            
=\int_G f(ts^{-1})g(s) \d \nu_G(s) 
=\int_G f(ts^{-1})g(s) \Delta_G(s)^{-1}\d \mu_G(s)\\
&=\int_G f(ts)g(s^{-1}) \d \mu_G(s)
=\int_G f(s)g(s^{-1}t) \d \mu_G(s).
\end{align*} 
}
\begin{equation}
\label{Convol-usuel}
(f *_{1,\infty} g)(t)
=\int_G f(s)g(s^{-1}t) \d \mu_G(s), \quad f \in \L^1(G), g \in \L^\infty(G).
\end{equation}
and
$$
(f *_{\infty,1} g)(t)
=\int_G f(s)g(s^{-1}t) \d \mu_G(s), \quad f \in \L^\infty(G), g \in \L^1(G).
$$
\end{example}

Suppose that $\cal{M}$ is a finite von Neumann algebra. Recall that, we have a canonical injective bounded map $j\co \cal{M} \to \L^1(\cal{M})$, $x \mapsto x$. So the pair $(\cal{M},\L^1(\cal{M}))$ of Banach spaces is an interpolation couple. In the next result, we show the compatibility of the convolution maps. 


\begin{prop}
\label{Prop-compatible}
Let $(\L^\infty(\QH),\tau,\Delta,R)$ be a (finite) quantum hypergroup. We have the following commutative diagram.
$$
 \xymatrix @R=1cm @C=3cm{
       \L^1(\QH)  \otp \L^1(\QH) \ar[r]^{*_{1,1}}& \L^1(\QH)  \\
        \L^\infty(\QH) \ot \L^\infty(\QH) \ar[r]_{*_{1,\infty}} \ar@{^{(}->}[u]^{j \ot j} & 
				\L^\infty(\QH) \ar@{^{(}->}[u]_{j} \\
}
$$
\end{prop}

\begin{proof}
For any $f,g,h \in \L^\infty(\QH)$, we have on the one hand
\begin{align*}
\MoveEqLeft
\big\langle j(f *_{1,\infty} g), h \big\rangle_{\L^1(\QH),\L^\infty(\QH)}
= \big\langle f *_{1,\infty} g, h \big\rangle_{\L^1(\QH),\L^\infty(\QH)}  
\ov{\eqref{1-infty}}{=} \big\langle \big(\tau(fR(\cdot) \ot \Id) \big) \circ \Delta(g), h \big\rangle_{\L^1(\QH),\L^\infty(\QH)} \\
&\ov{\eqref{bracket-1}}{=} \tau \big(\big[\big(\tau(fR(\cdot) \ot \Id) \big) \circ \Delta(g)\big)\big]R(h) \big) 
=\big(\tau(R(f)\,\cdot) \ot \tau(R(h)\,\cdot) \big) \circ \Delta(g).
\end{align*}

On the other hand, we have 
\begin{align*}
\MoveEqLeft
\big\langle j(f) *_{1,1} j(g), h \big\rangle_{\L^1(\QH),\L^\infty(\QH)} 
=\big\langle f *_{1,1} g, h \big\rangle_{\L^1(\QH),\L^\infty(\QH)} 
\ov{\eqref{Frob-recip-bis}}{=} \big\langle h *_{1,1} f, g \big\rangle_{\L^1(\QH),\L^\infty(\QH)}\\
&\ov{\eqref{def-convolution-2}}{=} \big\langle f \ot h,\Delta(g)\big\rangle_{\L^1(\QH) \otp \L^1(\QH),\L^\infty(\QH) \otvn \L^\infty(\QH)}
=\big(\tau(R(f)\,\cdot) \ot \tau(R(h)\,\cdot) \big) \circ \Delta(g).
\end{align*}
\end{proof}

\begin{thm}
\label{Th-conv-cb}
Let $(\L^\infty(\QH),\tau,\Delta,R)$ be a normalized quantum hypergroup. Suppose $1 \leq p \leq \infty$. The map $*_{1,\infty}$ induces a completely contractive map $\L^1(\QH) \otp \L^p(\QH) \to \L^p(\QH)$, $f \ot g \mapsto f*g$. The map $*_{\infty,1}$ induces a completely contractive map $\L^p(\QH) \otp \L^1(\QH) \to \L^p(\QH)$, $f \ot g \mapsto f*g$.
\end{thm}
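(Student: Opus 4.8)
The plan is to establish the two endpoint cases $p=1$ and $p=\infty$ directly, and then obtain the full range $1<p<\infty$ by complex interpolation. For $p=1$ I would quote Proposition \ref{Prop-compatible}: its commutative diagram says that, on the dense subspace $\L^1(\QG)\ot(\L^\infty(\QG)\cap\L^1(\QG))$ of $\L^1(\QG)\otp\L^1(\QG)$, the map $*_{1,\infty}$ is conjugate, via the identification $i\co\L^1(\QG)\to\L^\infty(\QG)_*^{\op}$, to the convolution product $*=\Delta_*$. Now $i$ is a completely isometric \emph{isomorphism} (the Haar trace identifies $\L^1(\QG)$ with $\L^\infty(\QG)_*$), hence so is $i\otp i$ by functoriality of $\otp$; and $\Delta_*$ is completely contractive as a map $\L^\infty(\QG)_*^{\op}\otp\L^\infty(\QG)_*^{\op}\to\L^\infty(\QG)_*^{\op}$, since $\Delta$ is equally a normal unital $*$-homomorphism between the opposite von Neumann algebras. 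Writing $*_{1,\infty}=i^{-1}\circ\Delta_*\circ(i\otp i)$ on the dense subspace therefore shows it is completely contractive there, hence everywhere.

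For $p=\infty$ I would start from the identity $f*_{1,\infty}g=g\star\varphi(fR(\cdot))$ of \eqref{1-infty}, where $\star$ is the right dual action. Since $\L^\infty(\QG)_*$ is a completely contractive Banach algebra it is a matrix normed left module over itself, so by \cite[3.1.5 (2)]{BLM} its dual $\L^\infty(\QG)$ is a matrix normed right $\L^\infty(\QG)_*$-module for the action \eqref{dual-action-L1}; equivalently $\star$ extends to a complete contraction $\L^\infty(\QG)\otp\L^\infty(\QG)_*\to\L^\infty(\QG)$. It remains to see that $\rho\co\L^1(\QG)\to\L^\infty(\QG)_*$, $f\mapsto\varphi(fR(\cdot))$ is a complete contraction — in fact a complete isometry, which I would get by factoring $\rho$ as $i$ followed by the preadjoint of the $*$-isomorphism $R\co\L^\infty(\QG)^{\op}\to\L^\infty(\QG)$, using $R^2=\Id$ and $(\L^\infty(\QG)^{\op})_*=\L^\infty(\QG)_*^{\op}$. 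Precomposing $\star$ with $\Id\otp\rho$ and using the symmetry of $\otp$ then yields the completely contractive map $\L^1(\QG)\otp\L^\infty(\QG)\to\L^\infty(\QG)$, $f\ot g\mapsto f*_{1,\infty}g$.

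Both endpoint maps are restrictions of the single map defined on $\L^1(\QG)\ot(\L^\infty(\QG)\cap\L^1(\QG))$ by formula \eqref{1-infty}. Using $\L^p(\QG)=(\L^\infty(\QG),\L^1(\QG))_{1/p}$ completely isometrically together with the compatibility of the operator space projective tensor product with complex interpolation, i.e. $\L^1(\QG)\otp\L^p(\QG)=(\L^1(\QG)\otp\L^\infty(\QG),\L^1(\QG)\otp\L^1(\QG))_{1/p}$, functoriality of complex interpolation promotes the two endpoint complete contractions to a complete contraction $\L^1(\QG)\otp\L^p(\QG)\to\L^p(\QG)$, $f\ot g\mapsto f*g$, for every $1\le p\le\infty$. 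When $\QG$ is moreover unimodular, the statement for $*_{\infty,1}$ is proved symmetrically: by \eqref{Def-*1-infty} one has $f*_{\infty,1}g=\psi(gR(\cdot))\star f$, the left dual action (for which $\L^\infty(\QG)$ is a matrix normed left $\L^\infty(\QG)_*$-module via \eqref{dual-action-L1-bis}), and $g\mapsto\psi(gR(\cdot))$ is a complete isometry precisely because $\psi=\varphi=\tau$ — this is the only place unimodularity enters, since otherwise the norm of $\psi(gR(\cdot))$ would compute $\|g\|_{\L^1(\QG,\psi)}$ instead of $\|g\|_{\L^1(\QG,\varphi)}$; the $p=1$ endpoint then follows from the $p=1$ case of $*_{1,\infty}$ because $f*_{\infty,1}g=f*_{1,\infty}g$ on $(\L^\infty(\QG)\cap\L^1(\QG))\ot(\L^\infty(\QG)\cap\L^1(\QG))$, and one interpolates as above.

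I expect the main obstacle to be the bookkeeping of opposite operator space structures — $\L^\infty(\QG)_*^{\op}$ versus $\L^\infty(\QG)_*$ and their interaction with the anti-automorphism $R$ — needed to make $\rho$ and the two module actions completely isometric/contractive with the right variances; and, on the external side, the fact that $\L^p(\QG)=(\L^\infty(\QG),\L^1(\QG))_{1/p}$ holds at the operator space level and that complex interpolation commutes with $\otp$, neither of which is purely formal for the (possibly non-hyperfinite) von Neumann algebra $\L^\infty(\QG)$.
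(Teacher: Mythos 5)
Your two endpoint arguments are exactly the paper's: for $p=\infty$ you use that the right dual action makes $\L^\infty(\QG)$ a matrix normed right $\L^\infty(\QG)_*$-module together with the complete isometry $f\mapsto\tau(fR(\cdot))$ (the paper phrases this as the matrix-norm inequality \eqref{Inequality-module}, which is equivalent to your complete contraction $\L^\infty(\QG)\otp\L^\infty(\QG)_*\to\L^\infty(\QG)$), and for $p=1$ you transport $\Delta_*$ through the opposite operator space structure via $(E\otp F)^\op=E^\op\otp F^\op$; Proposition \ref{Prop-compatible} supplies the compatibility of the two endpoint maps, as in the paper.

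The one step that does not go through as written is the interpolation. You assert the complete isometry $\L^1(\QG)\otp\L^p(\QG)=(\L^1(\QG)\otp\L^\infty(\QG),\L^1(\QG)\otp\L^1(\QG))_{1/p}$ and then apply functoriality. The operator space projective tensor product is not known to commute with complex interpolation (and this identity is not available in the literature); you flag it yourself as ``not purely formal,'' but your argument genuinely depends on it. The repair is the route the paper takes: one does \emph{bilinear} interpolation at the level of matrix norms. Fixing $[f_{ij}]\in\M_n(\L^1(\QG))$, the two endpoint estimates give a bilinear map that is bounded from $\M_n(\L^1)\times\M_m(\L^\infty)$ to $\M_{nm}(\L^\infty)$ and from $\M_n(\L^1)\times\M_m(\L^1)$ to $\M_{nm}(\L^1)$ with constant $1$; since $\M_m(\L^p(\QG))=(\M_m(\L^\infty(\QG)),\M_m(\L^1(\QG)))_{1/p}$ holds by the very definition of the operator space interpolation \cite[p.~54]{Pis2}, ordinary bilinear complex interpolation yields $\bnorm{[f_{ij}*g_{kl}]}_{\M_{nm}(\L^p(\QG))}\leq\bnorm{[f_{ij}]}_{\M_n(\L^1(\QG))}\bnorm{[g_{kl}]}_{\M_m(\L^p(\QG))}$, and the universal property of $\otp$ (joint complete contractivity) then produces the completely contractive map $\L^1(\QG)\otp\L^p(\QG)\to\L^p(\QG)$. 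Alternatively, your formulation can be salvaged by noting that only the one-sided complete contraction $\L^1\otp\L^p\to(\L^1\otp\L^\infty,\L^1\otp\L^1)_{1/p}$ is needed, and that map is itself obtained by the same bilinear interpolation---so the matrix-level argument cannot really be avoided. The remaining points (the identification of $f\mapsto\tau(fR(\cdot))$ as $i$ followed by the preadjoint of $R$, and the role of unimodularity in the $*_{\infty,1}$ case) are correct.
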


\begin{proof}
We only prove the finite case. The semifinite case is similar using \cite{Ter82} and classical arguments left to the reader. Since the convolution $*$ of \eqref{Convolution-L1} induces a structure of matrix normed left $\L^\infty(\QH)_*$-module on $\L^\infty(\QH)$, the right dual action $\star$ of \eqref{dual-action-L1} induces a structure of matrix normed right $\L^\infty(\QH)_*$-module on $\L^\infty(\QH)$. Moreover, the map $R \co \L^\infty(\QH) \to \L^\infty(\QH)$ is an anti-$*$-automorphism of the von Neumann algebra $\L^\infty(\QH)$.  
For any $[f_{ij}] \in \M_n(\L^1(\QH))$ and any $[g_{kl}] \in \M_m(\L^\infty(\QH))$, we deduce that
\begin{align*}
\MoveEqLeft
\bnorm{[f_{ij} *_{1,\infty} g_{kl}]}_{\M_{nm}(\L^\infty(\QH))}           
\ov{\eqref{1-infty}}{=} \bnorm{[g_{kl} \star \tau(f_{ij}R(\cdot))]}_{\M_{nm}(\L^\infty(\QH))}   \\
&\ov{\eqref{Inequality-module}}{\leq} \bnorm{[\tau(f_{ij}R(\cdot))]}_{\M_n(\L^1(\QH))} \bnorm{[g_{kl}]}_{\M_m(\QH)}
=\bnorm{[f_{ij}]}_{\M_n(\L^1(\QH))} \bnorm{[g_{kl}]}_{\M_m(\QH)}.
\end{align*} 
Note that we have a completely contractive linear map $\Delta_* \co \L^1(\QH) \otp \L^1(\QH) \to \L^1(\QH)$, $f \ot g \mapsto f *_{1,1} g$.
By \cite[p.~126]{EfR00}, for any $[f_{ij}] \in \M_n(\L^1(\QH))$ and any $[g_{kl}] \in \M_m(\L^1(\QH))$ we deduce that 
$$
\bnorm{[f_{ij} * g_{kl}]}_{\M_{nm}(\L^1(\QH))}   
\leq \bnorm{[f_{ij}]}_{\M_n(\L^1(\QH))} \bnorm{[g_{kl}]}_{\M_m(\L^1(\QH))}.
$$
Recall that $\M_n(\L^p(\QH))=(\M_n(\L^\infty(\QH)),\M_n(\L^1(\QH)))_{\frac{1}{p}}$ by \cite[p.~54]{Pis03}. So using Proposition \ref{Prop-compatible},  we obtain by bilinear interpolation \cite[p.~57]{Pis03} the inequality
$$
\bnorm{[f_{ij}*g_{kl}]}_{\M_{nm}(\L^p(\QH))}   
\leq \bnorm{[f_{ij}]}_{\M_n(\L^1(\QH))}\bnorm{[g_{kl}]}_{\M_m(\L^p(\QH))}.
$$
We deduce by \cite[p.~126]{EfR00} a well-defined completely contractive map $\L^1(\QH) \otp \L^p(\QH) \to \L^p(\QH)$, $f \ot g \mapsto f*g$. 

The second part is left to the reader.
\end{proof}

\begin{cor}
\label{Cor-convolution}
Let $(\L^\infty(\QH),\tau,\Delta,R)$ be a normalized quantum hypergroup. Suppose $1 \leq p \leq \infty$. Let $g \in \L^p(\QH)$. The map\footnote{\thefootnote. Here $*$ is an extension of $*_{1,\infty}$.} $T \co \L^1(\QH) \to \L^p(\QH)$, $f \mapsto f * g$ is completely bounded with
\begin{equation}
\label{borne-cb}
\norm{T}_{\cb,\L^1(\QH) \to \L^p(\QH)} 
\leq \norm{g}_{\L^p(\QH)}. 
\end{equation}  
\end{cor}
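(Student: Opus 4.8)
The plan is to deduce Corollary \ref{Cor-convolution} as a direct consequence of Theorem \ref{Th-conv-cb} by fixing one variable in the completely contractive bilinear map and reading off the resulting linear map. Concretely, Theorem \ref{Th-conv-cb} provides a completely contractive map $m \co \L^1(\QG) \otp \L^p(\QG) \to \L^p(\QG)$, $f \ot g \mapsto f*g$, where $\otp$ is the operator space projective tensor product. The standard functoriality of $\otp$ (see \cite[page 126]{ER}) identifies completely contractive maps out of $\L^1(\QG) \otp \L^p(\QG)$ with jointly completely contractive bilinear maps $\L^1(\QG) \times \L^p(\QG) \to \L^p(\QG)$, and in particular with complete contractions $\L^1(\QG) \to \CB(\L^p(\QG),\L^p(\QG))$.

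First I would fix $g \in \L^p(\QG)$ and consider the linear map $T \co \L^1(\QG) \to \L^p(\QG)$, $f \mapsto f*g$. This is the composition of $m$ with the complete contraction $\L^1(\QG) \to \L^1(\QG) \otp \L^p(\QG)$, $f \mapsto f \ot g$, which has cb-norm at most $\norm{g}_{\L^p(\QG)}$ (the cb-norm of $x \mapsto x \ot \xi$ for a fixed vector $\xi$ in an operator space equals $\norm{\xi}$, by definition of $\otp$, see \cite[page 126]{ER}). Hence $T = m \circ (\Id \ot g)$ is completely bounded with $\norm{T}_{\cb,\L^1(\QG) \to \L^p(\QG)} \leq \norm{m}_{\cb} \cdot \norm{g}_{\L^p(\QG)} \leq \norm{g}_{\L^p(\QG)}$, which is exactly \eqref{borne-cb}.

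Alternatively, and perhaps more transparently, one can argue at the level of matrices directly: for $[f_{ij}] \in \M_n(\L^1(\QG))$ one has $\norm{[T(f_{ij})]}_{\M_n(\L^p(\QG))} = \norm{[f_{ij}*g]}_{\M_n(\L^p(\QG))}$, and applying the matrix inequality established inside the proof of Theorem \ref{Th-conv-cb} (with $m=1$, i.e. the single element $g \in \L^p(\QG)$ viewed in $\M_1(\L^p(\QG))$) gives $\norm{[f_{ij}*g]}_{\M_n(\L^p(\QG))} \leq \norm{[f_{ij}]}_{\M_n(\L^1(\QG))} \norm{g}_{\L^p(\QG)}$. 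Taking the supremum over $n$ yields the claim. I would present the short tensor-product version as the main argument.

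There is essentially no obstacle here: the corollary is a formal specialization of the theorem, and the only point requiring the smallest amount of care is the remark in the footnote that the convolution $*$ appearing in the statement is the extension of $*_{1,\infty}$ to $\L^1(\QG) \times \L^p(\QG)$ furnished by Theorem \ref{Th-conv-cb} (so that $f*g$ is well-defined for $f \in \L^1(\QG)$ and $g \in \L^p(\QG)$, not merely for $g \in \L^\infty(\QG)$). Once that identification is in place the proof is a one-line consequence of complete contractivity of $m$ and of the isometric embedding of vectors into the projective tensor product.
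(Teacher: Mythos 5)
Your proof is correct and follows essentially the same route as the paper: the corollary is obtained as a formal specialization of the complete contraction $\L^1(\QG) \otp \L^p(\QG) \to \L^p(\QG)$ of Theorem \ref{Th-conv-cb} via the universal property of the operator space projective tensor product (the paper phrases this through the complete isometry $\CB(E \otp F, G) \cong \CB(F,\CB(E,G))$, while you compose with the insertion $f \mapsto f \ot g$; these are the same adjunction read in two equivalent ways). Your matrix-level alternative is likewise just the inequality already established inside the proof of Theorem \ref{Th-conv-cb}, so no new content is needed.
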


\begin{proof}
By \cite[Proposition 7.1.4]{EfR00} and \cite[Proposition 7.1.2]{EfR00}, we have an (complete) isometry 
\begin{equation*}
\CB(\L^1(\QH) \otp \L^p(\QH) , \L^p(\QH)) \to \CB(\L^p(\QH) \otp \L^1(\QH), \L^p(\QH)) \to \CB(\L^p(\QH),\CB(\L^1(\QH),\L^p(\QH)))  \\
\end{equation*}
which maps the completely contractive map $\L^1(\QH) \otp \L^p(\QH) \to \L^p(\QH)$, $f \ot g \mapsto f * g$ to the map $\L^p(\QH) \to \CB(\L^1(\QH),\L^p(\QH))$, $g \mapsto (f \mapsto f*g)$. The conclusion is obvious.
\end{proof}

\begin{remark} \normalfont
Let $G$ be a locally compact group with modular function $\Delta_G \co G \to \R$, equipped with a \textit{left} Haar measure. Suppose $1 \leq p,q,r \leq \infty$ with $1+\frac{1}{r}=\frac{1}{q}+\frac{1}{p}$. Here the convolution product $*$ is defined as in \eqref{Convol-usuel}. By \cite[p.~519]{Rob91} (see also \cite[Remark 2.2 p.~183]{KlR78}, we have Young's inequality
\begin{equation}
\label{Young-Lr}
\bnorm{f*\Delta_G^{\frac{1}{q^*}}g}_{\L^r(G)} 
\leq \norm{f}_{\L^q(G)} \bnorm{g}_{\L^p(G)}, \quad f \in \L^q(G), g \in \L^p(G).
\end{equation} 
If $q=1$ then $r=p$ and we recover the inequality $\norm{f*g}_{\L^p(G)} \leq \norm{f}_{\L^1(G)} \norm{g}_{\L^p(G)}$ of \cite[Corollary 20.14 p.~293]{HeR79}. If $p=1$ and if we replace $q$ and $r$ by $p$, we obtain the inequality $\bnorm{f*\Delta_G^{\frac{1}{p^*}}g}_{\L^p(G)} \leq \norm{f}_{\L^p(G)} \norm{g}_{\L^1(G)}$. So the non-unimodularity makes appear a ``twist''. 

Let $X$ be a Banach space. Suppose that $f$ belongs to the Bochner space $\L^p(G,X)$ and that $g\in \L^q(G)$. Using Young's inequality \eqref{Young-Lr} with the functions $\norm{f}_X$ and $|g|$, we see that $\big(\norm{f}_X *\Delta_G^{\frac{1}{q^*}}|g|\big)(t)$ exists almost everywhere, that $\norm{f}_X * \Delta_G^{\frac{1}{q^*}}|g|$ belongs to $\L^r(G)$ and that
\begin{equation}
\label{Inter-3000}
\bnorm{\norm{f}_X*\Delta_G^{\frac{1}{q^*}}|g|}_{\L^r(G)} 
\leq \norm{f}_{\L^q(G,X)} \norm{g}_{\L^p(G)}.
\end{equation}
If $\big(\norm{f}_X * \Delta_G^{\frac{1}{q^*}}|g|\big)(t)$ exists, then clearly $\big(f * \Delta_G^{\frac{1}{q^*}}g\big)(t)$ exists and
\begin{align*}
\MoveEqLeft
\bnorm{(f * \Delta_G^{\frac{1}{q^*}}g)(t)}_X
\ov{\eqref{Convol-usuel}}{=} \norm{\int_G f(s)\Delta_G^{\frac{1}{q^*}}g(s^{-1}t) \d \mu_G(s)}_X \\
&\leq \int_G \norm{f(s)}_X \Delta_G^{\frac{1}{q^*}}|g(s^{-1}t)| \d \mu_G(s)
\ov{\eqref{Convol-usuel}}{=} \big(\norm{f}_X * \Delta_G^{\frac{1}{q^*}} |g|\big)(t).            
\end{align*} 
Raising to power $r$ and integrating on $G$, 
we obtain $\bnorm{f*\Delta_G^{\frac{1}{q^*}}g}_{\L^r(G,X)} \leq \bnorm{\norm{f}_X*\Delta_G^{\frac{1}{q^*}}|g|}_{\L^r(G)} $. Combining with \eqref{Inter-3000}, we finally obtain the vector-valued Young's inequality 
$$
\bnorm{f*\Delta_G^{\frac{1}{q^*}}g}_{\L^r(G,X)} 
\leq \norm{f}_{\L^q(G,X)} \norm{g}_{\L^p(G)}.
$$
In particular, we have $\norm{f*g}_{\L^p(G,X)} \leq \norm{f}_{\L^1(G,X)} \norm{g}_{\L^p(G)}$. We deduce that any function $g \in \L^p(G)$ induces a bounded convolution operator $T \co \L^1(G,X) \to \L^p(G,X)$, $f \mapsto f*g$ satisfying
\begin{equation}
\label{}
\norm{T}_{\L^1(G,X) \to \L^p(G,X)} 
\leq \norm{g}_{\L^p(G)}. 
\end{equation} 
By \cite[p.~86]{DeF93}, this means that a convolution operator $T \co \L^1(G) \to \L^p(G)$ is regular. Actually, each operator $T \co \L^1(\Omega) \to \L^p(\Omega')$ between classical $\L^p$-spaces is regular by \cite[Proposition 2.1.1 p.~68 and p.~156]{HvNVW16} or \cite[Corollary 5.33 p.~207]{AbA02}. Unfortunately, it seems that we cannot obtain the complete boundedness of $T$ from this fact.
\end{remark}

\begin{remark} \normalfont
\label{Rem-Young-pqr}
Suppose $1 \leq p,q,r \leq \infty$ with $\frac{1}{q}+\frac{1}{p}=1+\frac{1}{r}$. We could extend the result \eqref{borne-cb} to the case of a convolution operator $T \co \L^q(\QH) \to \L^r(\QH)$, $f \mapsto f * g$ with $f \in \L^p(\QH)$. Indeed, rewriting a right version of \eqref{borne-cb} by duality with \eqref{Frob-recip-bis} we obtain a completely bounded map $\L^{p^*}(\QH) \to \L^\infty(\QH)$, $f \mapsto f*g$ with completely bounded norm less than $\norm{g}_{\L^p(\QH)}$. We conclude by a suitable interpolation\footnote{\thefootnote. Take $\theta=\frac{p}{q^*}$. We have $\frac{1-\theta}{1}+\frac{\theta}{p^*}=1-\theta(1-\frac{1}{p^*})=\frac{1}{q}$ and $\frac{1-\theta}{p}+\frac{\theta}{\infty}=\frac{1}{p}-\frac{1}{q^*}=\frac{1}{p}-\big(1-\frac{1}{q}\big)=\frac{1}{r}$.} between this map and \eqref{borne-cb} that we have a completely bounded map $\L^q(\QH) \to \L^r(\QH)$, $f \mapsto f*g$ with completely bounded norm less than $\norm{g}_{\L^p(\QH)}$.
However, we does not have interesting application at the present moment. 
\end{remark}

\begin{remark} \normalfont
Of course, we are aware that Theorem \ref{Th-conv-cb} and \ref{Cor-convolution} admits generalizations to some classes of locally compact quantum groups equipped with non-tracial weights. It is apparent that it should be true for a locally compact quantum $\QG$ of Kac type  (although the associated Young's inequality is stated in \cite{LWW17} only for tracial weights), in particular for the locally compact quantum groups defined by \eqref{Delta_commutatif} or \eqref{coproduct-VNG}. A generalization of the previous proof maybe could be work for the more general case of locally compact quantum groups whose scaling group is trivial. We will explore this in a new version of this preprint.

It does not appear possible to drop the assumption ``whose scaling group is trivial''  since it is stated in \cite{LWW17} that the inequality $\norm{f*g}_{\L^\infty(\QG)} \leq \norm{f}_{\L^1(\QG)} \norm{g}_{\L^\infty(\QG)}$ is not true for the compact quantum group $\QG=\SU_q(2)$ with $q \in ]-1,1[-\{0\}$. However, a modified Young's inequality $\bnorm{f*\rho_{\frac{-\i}{p^*}}(g)}_{\L^r(\QG)} \leq \norm{f}_{\L^q(\QG)} \bnorm{g}_{\L^p(\QG)}$ is stated in \cite[Theorem 3.4]{LWW17} for $1 \leq p,q,r \leq 2$  with $\frac{1}{q}+\frac{1}{p}=1+\frac{1}{r}$. A completely bounded version could be useful in quantum information theory since we only need such inequality for $q=1$ and $p=r$ close to 1. Consequently, other investigations will be necessary.  
\end{remark}

\subsection{Bounded versus completely bounded Fourier multipliers from $\L^1(\QH)$ into $\L^p(\QH)$}
\label{Sec-bounded}

Let $(\L^\infty(\QH),\tau,\Delta,R)$ be a normalized quantum hypergroup. Suppose $1 \leq p < \infty$. By Theorem \ref{Th-conv-cb}, we have a structure of $\L^1(\QH)$-bimodule on $\L^p(\QH)$ for any $1 \leq p \leq \infty$. So we can consider the associated multipliers, see the first paragraph of Section \ref{Sec-preliminaries}. We denote by $\frak{M}^{1,p}(\QH)$ the space of multipliers from $\L^p(\QH)$ into $\L^q(\QH)$. In particular, a left  multiplier is a map $T \co \L^1(\QH) \to \L^p(\QH)$ satisfying 
\begin{equation}
\label{Def-left-multiplier-L1}
T(x*y)
=T(x) *y, \quad x,y \in \L^1(\QH).
\end{equation}
We denote by $\frak{M}_l^{1,p}(\QH)$ the space of left bounded multipliers.


Suppose $1 \leq p \leq \infty$. In accordance with \eqref{bracket-1}, we will use the following duality bracket
\begin{equation}
\label{bracket}
\langle x,y\rangle_{\L^p(\QH),\L^{p^*}(\QH)}
\ov{\mathrm{def}}{=} \tau(xR(y))
=\tau(yR(x)), \quad x \in \L^p(\QH), y \in \L^{p^*}(\QH).
\end{equation}


We will use the following lemma which is essentially a consequence of \eqref{Frob-recip-bis}.

\begin{lemma}
\label{Lemma-Dieudonne}
Let $(\L^\infty(\QH),\tau,\Delta,R)$ be a normalized quantum hypergroup. Suppose $1 \leq p < \infty$. If $x \in \L^1(\QH)$, $y \in \L^{p^*}(\QH)$ and $z \in \L^p(\QH)$, we have 
\begin{equation}
\label{convolution-et-crochet-QG}
\langle z*x,y \rangle_{\L^p(\QH),\L^{p^*}(\QH)}
=\langle z,x*y \rangle_{\L^p(\QH),\L^{p^*}(\QH)}
\end{equation}
\end{lemma}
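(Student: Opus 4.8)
The plan is to unwind both sides of \eqref{convolution-et-crochet-QG} using the definition \eqref{bracket} of the duality bracket together with the concrete formula \eqref{1-infty} for the convolution $*_{1,\infty}$ (which extends to all the relevant $\L^p$-pairings by Theorem \ref{Th-conv-cb}), and then reduce the identity to the antipode/coproduct relation \eqref{Antipode-Magic-relation-4} exactly as in the computation preceding Proposition \ref{Prop-compatible} (the one showing $f*_{\infty,1}g = f*_{1,\infty}g$) and in the proof of that proposition. By density it suffices to prove the identity for $x, y, z$ lying in $\Pol(\QG)$ (or in $\L^\infty(\QG)\cap\L^1(\QG)$), where all expressions are genuine elements of $\Pol(\QG)$ and the Haar state is a finite trace, so Fubini-type manipulations and the core property of the antipode are unproblematic; the general case then follows from the continuity/boundedness statements of Theorem \ref{Th-conv-cb} and \eqref{trace-continuity}.

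First I would write out the left-hand side:
\[
\langle z*x,y\rangle_{\L^p(\QG),\L^{p^*}(\QG)}
=\tau\big((z*_{1,\infty}x)\,R(y)\big)
=\tau\Big(\big[\big(\tau(zR(\cdot))\ot\Id\big)\circ\Delta(x)\big]\,R(y)\Big),
\]
so that, bringing the outer $\tau(\cdot\, R(y))$ inside the second leg, this equals $\big(\tau(zR(\cdot))\ot\tau(R(y)\,\cdot)\big)\circ\Delta(x)$. Likewise the right-hand side is
\[
\langle z,x*y\rangle_{\L^p(\QG),\L^{p^*}(\QG)}
=\tau\big(z\,R(x*_{1,\infty}y)\big),
\]
and here one uses that $R$ is an anti-$*$-automorphism with $\tau\circ R=\tau$ together with \eqref{R-and-Delta} to rewrite $R(x*_{1,\infty}y)$ and express the whole thing again as a functional of the form $(\cdot\ot\cdot)\circ\Delta(x)$. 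The two resulting expressions should then be matched using \eqref{Antipode-Magic-relation-4} — precisely the step $(\Id\ot\tau)((1\ot R(g))\Delta(f)) = R((\Id\ot\tau)(\Delta(R(g))(1\ot f)))$ — combined with $R^2=\Id$, tracility of $\tau$ and the invariance $h\circ R=h$.

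The main obstacle I expect is bookkeeping with the antipode: one has to be careful that the duality bracket \eqref{bracket} was deliberately built with $R$ inside precisely so that this ``associativity of convolution against the pairing'' holds, and getting the legs of the coproduct and the placement of $R$ to line up correctly requires applying \eqref{R-and-Delta} and \eqref{Antipode-Magic-relation-4} in the right order — a single misapplied flip $\Sigma$ will produce the wrong side. A secondary, more technical point is justifying that $*_{1,\infty}$ extends so that all three bilinear pairings appearing here (namely $\L^p*\L^1$ paired with $\L^{p^*}$, and $\L^p$ paired with $\L^1*\L^{p^*}$) are simultaneously meaningful and agree with the $\Pol(\QG)$-level formulas; this is handled by invoking Theorem \ref{Th-conv-cb} (which gives $\L^1\otp\L^p\to\L^p$ and, in the unimodular case, $\L^p\otp\L^1\to\L^p$) together with Hölder's inequality \eqref{Holder} and \eqref{trace-continuity} to see that both sides are separately continuous in $x,y,z$, reducing everything to the dense polynomial subalgebra.
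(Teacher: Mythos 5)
Your proposal is correct and follows essentially the same route as the paper's proof: reduce by density to elements of $\L^\infty(\QG)\cap\L^1(\QG)$, unwind both sides via the explicit formulas \eqref{1-infty}--\eqref{Def-*1-infty} and the bracket \eqref{bracket}, and match them using \eqref{Antipode-Magic-relation-4}, \eqref{R-and-Delta}, $R^2=\Id$, traciality and $\tau\circ R=\tau$. The only cosmetic difference is that you expand both sides and meet in the middle (relying on the already-established identity $f*_{\infty,1}g=f*_{1,\infty}g$ to rewrite $z*x$), whereas the paper runs a single chain of equalities from $\langle z*_{\infty,1}x,y\rangle$ to $\langle z,x*_{1,\infty}y\rangle$; the ingredients and the density step are identical.
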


Now, we will show that the bounded Fourier multipliers from $\L^1(\QH)$ into $\L^p(\QH)$ on a \textit{co-amenable} quantum hypergroup are convolution operators. The faithfulness of the convolution for locally compact quantum groups is proved in \cite[Proposition 1]{HNR10}.

\begin{thm}
\label{Thm-description-multipliers-2}
Let $(\QH,\tau,\Delta,R)$ be a co-amenable normalized quantum hypergroup. Suppose $1 < p \leq \infty$. Let $T \co \L^1(\QH) \to \L^p(\QH)$ be a map.  
\begin{enumerate}
\item The operator $T$ is a bounded left multiplier if and only if there exists $f \in \L^p(\QH)$ such that 
\begin{equation}
\label{T-as-convolution-QG}
T(x)
=f*x,\quad x \in \L^1(\QH).
\end{equation}

\item The operator $T$ is a bounded right multiplier if and only if there exists $f \in \L^p(\QH)$ such that 
\begin{equation}
\label{T-as-convolution-right}
T(x)
=x*f,\quad x \in \L^1(\QH).
\end{equation}

\item If $*$ is faithful, the map $\L^p(\QH) \to \frak{M}^{1,p}(\QH)$, $\mu \mapsto (f*\cdot,\cdot*f)$ is a surjective isometry. 
\end{enumerate}

In the situation of the first point or the second point and if $*$ is faithful, $f$ is unique, the map is completely bounded and we have
\begin{equation}
\label{norm-equality-QG}
\norm{f}_{\L^p(\QH)}
=\norm{T}_{\L^1(\QH) \to \L^p(\QH)}
=\norm{T}_{\cb,\L^1(\QH) \to \L^p(\QH)}.
\end{equation}
\end{thm}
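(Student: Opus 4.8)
The plan is to establish the two converse directions (the forward ones are easy) and then assemble part~3. If $T(x)=f*x$ for some $f\in\L^p(\QG)$, then $T$ is a bounded left Fourier multiplier by associativity of convolution (which follows from co-associativity of $\Delta$ on dense subalgebras, extended by continuity), and Corollary~\ref{Cor-convolution} shows it is completely bounded with $\norm{T}_{\cb,\L^1(\QG)\to\L^p(\QG)}\le\norm{f}_{\L^p(\QG)}$; the symmetric statement handles right multipliers and the reconstruction $\mu\mapsto(f*\cdot,\cdot*f)$. So the real content is that every bounded left (resp. right) Fourier multiplier has this form, that $f$ is unique, and that the three norms coincide. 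A key preliminary observation is that the approximate identity $(e_\alpha)$ of $\L^1(\QG)$ supplied by co-amenability, which we take with $\norm{e_\alpha}_{\L^1(\QG)}\le1$, also acts as a two-sided approximate identity on $\L^p(\QG)$ for every $1\le p<\infty$: the maps $g\mapsto e_\alpha*g$ are uniformly contractive on $\L^p(\QG)$ by Theorem~\ref{Th-conv-cb}, and for $g\in\L^1(\QG)\cap\L^\infty(\QG)$ one uses the interpolation bound $\norm{h}_{\L^p(\QG)}\le\norm{h}_{\L^1(\QG)}^{1/p}\norm{h}_{\L^\infty(\QG)}^{1-1/p}$ together with $e_\alpha*g\to g$ in $\L^1(\QG)$ and $\sup_\alpha\norm{e_\alpha*g}_{\L^\infty(\QG)}\le\norm{g}_{\L^\infty(\QG)}$; a density argument then gives convergence on all of $\L^p(\QG)$.

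Now let $T\colon\L^1(\QG)\to\L^p(\QG)$ be a bounded left Fourier multiplier. The net $(T(e_\alpha))$ is bounded in $\L^p(\QG)$, which (since $1<p\le\infty$) is a dual Banach space with predual $\L^{p^*}(\QG)$ via the bracket $\langle\cdot,\cdot\rangle_{\L^p(\QG),\L^{p^*}(\QG)}$, so after passing to a subnet we may assume $T(e_\alpha)\to f$ weak$^*$ for some $f\in\L^p(\QG)$ with $\norm{f}_{\L^p(\QG)}\le\norm{T}$. For $x\in\L^1(\QG)$ and $y\in\L^{p^*}(\QG)$ one has $x*y\in\L^{p^*}(\QG)$ by Theorem~\ref{Th-conv-cb}, whence $\langle T(e_\alpha)*x,y\rangle=\langle T(e_\alpha),x*y\rangle\to\langle f,x*y\rangle=\langle f*x,y\rangle$ by Lemma~\ref{Lemma-Dieudonne}, i.e. $T(e_\alpha)*x\to f*x$ weak$^*$; on the other hand $e_\alpha*x\to x$ in $\L^1(\QG)$, so $T(e_\alpha)*x=T(e_\alpha*x)\to T(x)$ in $\L^p(\QG)$, and uniqueness of weak$^*$ limits gives $T(x)=f*x$. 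Young's inequality then yields $\norm{T}\le\norm{f}_{\L^p(\QG)}$, hence $\norm{T}=\norm{f}_{\L^p(\QG)}$, and together with Corollary~\ref{Cor-convolution} this forces $\norm{T}=\norm{T}_{\cb,\L^1(\QG)\to\L^p(\QG)}=\norm{f}_{\L^p(\QG)}$. For uniqueness: if $g\in\L^p(\QG)$ satisfies $g*x=0$ for all $x\in\L^1(\QG)$, then $\langle g,e_\alpha*y\rangle=\langle g*e_\alpha,y\rangle=0$ for all $y\in\L^{p^*}(\QG)$, and since $p^*<\infty$ the preliminary observation gives $e_\alpha*y\to y$ in $\L^{p^*}(\QG)$, so $\langle g,y\rangle=0$ and $g=0$. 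This proves part~1; part~2 follows by the symmetric argument, or more quickly by conjugating with the unitary antipode $R$, which is isometric on each $\L^q(\QG)$ and reverses convolution products, hence exchanges left and right Fourier multipliers.

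For part~3, set $\Phi(f)=(f*\cdot,\,\cdot*f)$; this is a multiplier by associativity of the convolutions, and $\norm{\Phi(f)}=\norm{f}_{\L^p(\QG)}$ by parts~1--2 (the lower bound for $f*\cdot$ and $\cdot*f$ also follows by testing on $e_\alpha$, using weak$^*$ lower semicontinuity of the norm when $p=\infty$). Injectivity of $\Phi$ is the uniqueness just proved. For surjectivity, let $(L,R)$ be a bounded multiplier; from the defining relation $a*L(b)=R(a)*b$, associativity, and once more the fact that $(e_\alpha)$ is an approximate identity on $\L^p(\QG)$, one checks that $L$ is a bounded left Fourier multiplier and $R$ a bounded right Fourier multiplier (e.g. $z*L(x*y)=R(z)*(x*y)=(R(z)*x)*y=(z*L(x))*y=z*(L(x)*y)$ for all $z$, then pass $z=e_\alpha$ to the limit), so by parts~1--2 there is $f\in\L^p(\QG)$ with $L(x)=f*x$; then $(a*f)*b=a*L(b)=R(a)*b$ for all $a,b\in\L^1(\QG)$ forces $R(a)=a*f$ by uniqueness, so $(L,R)=\Phi(f)$.

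I expect the main obstacle to be soft-analysis bookkeeping rather than any single hard estimate: one must (a) verify carefully that the bounded approximate identity of $\L^1(\QG)$ really is an approximate identity for the $\L^1(\QG)$-module $\L^p(\QG)$ when $1\le p<\infty$, which is what makes both the uniqueness argument and the identification of $L$ and $R$ in part~3 go through and is exactly where the hypothesis $p>1$ (so that $p^*<\infty$) enters, and (b) keep track of which convolution — and hence which half of Theorem~\ref{Th-conv-cb}, together with the unimodularity hypothesis — is used at each step, as well as the associativity of these mixed convolutions, which should be reduced to co-associativity of $\Delta$ on $\Pol(\QG)$-type dense subalgebras and then propagated by continuity.
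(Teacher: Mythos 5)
Your proposal is correct and follows essentially the same route as the paper: a contractive approximate identity from co-amenability, a weak${}^*$-convergent subnet of $(T(e_\alpha))$ to produce $f$, the duality identity $\langle z*x,y\rangle=\langle z,x*y\rangle$ to identify $T(x)=f*x$, and lower semicontinuity of the norm plus Corollary \ref{Cor-convolution} for the norm equalities. The only (harmless) deviations are that you prove the faithfulness/uniqueness statement and the multiplier property of $L$ in part~3 by hand via the approximate-identity action on $\L^{p^*}(\QG)$, where the paper instead cites \cite{HNR1} and \cite{Daw1}.
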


\begin{proof}
1. $\Leftarrow$: Suppose that $f$ belongs to $\L^p(\QH)$. The completely boundedness of $T$ is a consequence of Corollary \ref{Cor-convolution} which in addition says that
\begin{equation}
\label{First-inequality-QG}
\norm{T}_{\cb,\L^1(\QH) \to \L^p(\QH)}
\ov{\eqref{borne-cb}}{\leq} \norm{f}_{\L^p(\QH)}.
\end{equation}
Moreover, for any $x,y \in \L^1(\QH)$, we have
$$
T(x*y)
\ov{\eqref{T-as-convolution-QG}}{=} f*(x*y)
=(f*x)*y
\ov{\eqref{T-as-convolution-QG}}{=} T(x)*y.
$$
Hence \eqref{Def-left-multiplier-L1} is satisfied, we conclude that $T$ is a bounded left multiplier.

$\Rightarrow$: Suppose that $T \co \L^1(\QH) \to \L^p(\QH)$ is a bounded left multiplier. Since $\QH$ is co-amenable, there exists a contractive approximative unit $(b_i)$ of the algebra $\L^1(\QH)$. For any $x \in \L^1(\QH)$, we have in $\L^1(\QH)$
\begin{equation}
\label{Approx-Fourier-algebra-QG}
b_i* x
\xra[i]{} x.
\end{equation}
For any $i$, we have
\begin{align}
\MoveEqLeft
\label{eq-inter-658-QG}
\norm{T(b_i)}_{\L^p(\QH)} 
\leq \norm{T}_{\L^1(\QH) \to \L^p(\QH)} \norm{b_i}_{\L^1(\QH)} 
\leq \norm{T}_{\L^1(\QH) \to \L^p(\QH)}.
\end{align}
This implies that the net $(T(b_i))$ lies in a norm bounded subset of the Banach space $\L^p(\QH)$. Note that if $1< p < \infty$, the Banach space $\L^p(\QH)$ is reflexive. By \cite[Theorem 4.2 p.~132]{Con90} if $p<\infty$ and Alaoglu's theorem if $p=\infty$ there exist a subnet $(T(b_j))$ of $(T(b_i))$ and $f \in \L^p(\QH)$ such that the net $(T(b_j))$ converges to $f$ for the weak topology of $\L^p(\QH)$ if $p<\infty$ and for the weak* topology of $\L^\infty(\QH)$ if $p=\infty$, i.e.~for any $z \in \L^{p^*}(\QH)$ we have
\begin{equation}
\label{Limite-faible-QG}
\big\langle T(b_j),z\big\rangle_{\L^p(\QH),\L^{p^*}(\QH)}
\xra[j]{} \langle f, z\rangle_{\L^p(\QH),\L^{p^*}(\QH)}.
\end{equation}

\begin{lemma}
For any $x \in \L^1(\QH)$, we have in $\L^p(\QH)$
\begin{equation}
\label{Limite-utile-QG}
T(x)
=\lim_{i} \big(T(b_j)*x\big).
\end{equation}
\end{lemma}

\begin{proof}
For any $x \in \L^1(\QH)$ and any $j$, we have
\begin{align*}
\MoveEqLeft
\norm{T(x)-T(b_j)*x}_{\L^p(\QH)}            
\ov{\eqref{Def-left-multiplier-L1}}{=} \norm{T(x)-T(b_j*x)}_{\L^p(\QH)} \\
&=\norm{T(x-b_j*x)}_{\L^p(\QH)}
\leq \norm{T}_{\L^1(\QH) \to \L^p(\QH)} \norm{x-b_j*x}_{\L^1(\QH)}
\xra[\, j\, ]{\eqref{Approx-Fourier-algebra-QG}} 0.
\end{align*}
\end{proof}

For any $x \in \L^1(\QH)$ any $y \in \L^{p^*}(\QH)$, we have
\begin{align*}
\MoveEqLeft
\big\langle T(x),y \big\rangle_{\L^p(\QH),\L^{p^*}(\QH)}            
\ov{\eqref{Limite-utile-QG}}{=} \big\langle \lim_j T(b_j)*x,y \big\rangle_{\L^p(\QH),\L^{p^*}(\QH)}    \\ 
&=\lim_{j} \big\langle T(b_j)*x,y \big\rangle_{\L^p(\QH),\L^{p^*}(\QH)} 
\ov{\eqref{convolution-et-crochet-QG}}{=} \lim_{j} \big\langle T(b_j),x*y \big\rangle_{\L^p(\QH),\L^{p^*}(\QH)} \\
&\ov{\eqref{Limite-faible-QG}}{=} \big\langle f,x*y \big\rangle_{\L^p(\QH),\L^{p^*}(\QH)} 
\ov{\eqref{convolution-et-crochet-QG}}{=} \langle  f*x,y \rangle_{\L^p(\QH),\L^{p^*}(\QH)}.
\end{align*} 
So we obtain \eqref{T-as-convolution-QG}. 

Using \cite[Theorem 2.5.21]{Meg98} if $p<\infty$ and \cite[Theorem 2.5.21]{Meg98} if $p=\infty$ in the first inequality, we obtain
\begin{align}
\label{Mysterious-estimate}
\MoveEqLeft
\norm{f}_{\L^p(\QH)}
=\bnorm{\w^*-\lim_j T(b_j)}_{\L^p(\QH)}
\leq \liminf_j \norm{T(b_j)}_{\L^p(\QH)} \\
&\ov{\eqref{eq-inter-658-QG}}{\leq} \norm{T}_{\L^1(\QH) \to \L^p(\QH)}
\leq \norm{T}_{\cb,\L^1(\QH) \to \L^{p}(\QH)}. \nonumber            
\end{align}
Combined with \eqref{First-inequality-QG}, we obtain \eqref{norm-equality-QG}. The uniqueness is essentially a consequence of the faithfulness of the convolution.

2. The proof is similar to the proof of the point 1.

3. Let $(L,R)$ be a bounded multiplier. Note that by \cite[Lemma 2.1]{Daw10}, the map $R \co \L^1(\QH) \to \L^p(\QH)$ is a bounded right multiplier. Then by the point 2, there exists $f \in \L^p(\QH)$ such that $R(x)=x*f$. For any $x,y \in \L^1(\QH)$, we have
\begin{equation}
\label{}
x*L(y)
\ov{\eqref{multiplier-bimodule}}{=} R(x) * y
=(x*f)*y
=x*(f*y).
\end{equation}
Since the convolution $*$ is faithful, we deduce that for any $y \in \L^1(\QH)$ we have $L(y)=f*y$. Conversely, for any $x,y \in \L^1(\QH)$, we have
$$
x*(f*y)
=(x*f) * y.
$$
So by \eqref{multiplier-bimodule}, the couple $(f *\cdot,\cdot*f)$ is a multiplier. The proof is complete.
\end{proof}

\begin{remark} \normalfont
The case $p=1$ for  co-amenable locally compact quantum groups is described in \cite[Proposition 3.1]{HNR11} which says that in this case we have $\frak{M}^{1,1}(\QG)=\M(\QG)$ where $\M(\QG)$ is the measure algebra of $\QG$. See also \cite[Corollary 5.7]{Cra17} and \cite[Theorem 5.2]{Daw12}. 
%
%
%
%
\end{remark}

\begin{remark} \normalfont
A bibliographic search shows that a similar result is proved in \cite[Theorem 1]{Edw55} for locally compact \textit{abelian} groups. H\"ormander also states\footnote{\thefootnote. We reproduce the \textit{complete} proof here: ``follows from the fact that $\L^{p'}$ is the dual space of $\L^p$ when $p < \infty$'' which is essentially the first sentence of the proof of \cite[Theorem 1]{Edw55}.} this result in the particular case of the group $\R$ in its classical paper \cite[Theorem 1.4]{Hor60} without citing Edwards. 
\end{remark}

\begin{remark} \normalfont
The isometric map $\Phi \co \L^p(\QH) \to \CB(\L^1(\QH),\L^p(\QH))$, $f \mapsto f * \cdot$ is completely contractive by the proof of Corollary \ref{Cor-convolution}. It could be possible that this map is even completely isometric, probably by generalizing the argument of \eqref{Mysterious-estimate}. If $p=1$, it is stated without proof in \cite[Proposition 3.1]{HNR11} for any co-amenable locally compact quantum group. 
\end{remark}

\begin{remark} \normalfont
\label{remark-necessary}
In the spirit of \cite[Proposition 3.1]{HNR11}, we conjecture that if $1 < p \leq \infty$ the co-amenability is necessary in order to have the isometry $\frak{M}^{1,p}(\QH)=\L^p(\QH)$. See also the end of Section \ref{completely-bounded-description} for the group case.
\end{remark}

Now, we justify the name ``Fourier multipliers'' of our multipliers in the case of locally compact quantum groups. Of course, we have a similar result for right multipliers which is left to the reader. See \cite[Theorem 4 p.~772]{AMR18} for a related result.

\begin{prop}
\label{prop-bounded-Fourier-multipliers-L1-Lp}
Let $\QG$ be a unimodular co-amenable compact quantum group of Kac type such that the left Haar weight is tracial. Suppose $1<p \leq \infty$. Let $T \co \L^1(\QG) \to \L^p(\QG)$ be a linear transformation. Then $T$ is a bounded left multiplier if and only if there exists a unique $a \in \ell^\infty(\hat{\QG})$ such that
\begin{equation}
\label{En-forme-Fourier}
\widehat{T(x)}(\pi)
=a_\pi\widehat{x}(\pi), \quad \pi \in \Irr(\QG),x \in \L^1(\QG).
\end{equation}
\end{prop}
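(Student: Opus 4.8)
The plan is to translate the convolution characterization of Theorem \ref{Thm-description-multipliers-2} into Fourier-transform language using the formula \eqref{conv-L1-et-transfo-fourier}, which says that the Fourier transform turns convolution into (reversed) pointwise multiplication on the blocks $\M_{n_\pi}$. First I would handle the forward direction: assume $T \co \L^1(\QG) \to \L^p(\QG)$ is a bounded left Fourier multiplier. By Theorem \ref{Thm-description-multipliers-2}(1) there is a unique $f \in \L^p(\QG)$ with $T(x) = f * x$ for all $x \in \L^1(\QG)$. Applying the Fourier transform and using \eqref{conv-L1-et-transfo-fourier} (with the appropriate bookkeeping of which factor multiplies on which side — here $\widehat{f * x}(\pi) = \widehat{x}(\pi)\widehat{f}(\pi)$ or $\widehat{f}(\pi)\widehat{x}(\pi)$ depending on the convention fixed by \eqref{bracket} and the discussion after Lemma \ref{Lemma-action-concrete}), we set $a_\pi \ov{\mathrm{def}}{=} \widehat{f}(\pi)$. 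One must check that $a = (a_\pi)_\pi$ indeed lies in $\ell^\infty(\hat\QG)$: since $f \in \L^p(\QG) \subseteq \L^\infty(\QG)^*$ (as $p > 1$ and the Haar state is a state, so $\L^p(\QG) \hookrightarrow \L^1(\QG) \hookrightarrow \L^\infty(\QG)^*$ contractively), Proposition \ref{prop-L1-into-c0} gives that $\mathcal{F}(f) \in \ell^\infty(\hat\QG)$, so $a \in \ell^\infty(\hat\QG)$.

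For the converse, suppose $a \in \ell^\infty(\hat\QG)$ and $T$ satisfies \eqref{En-forme-Fourier}. The idea is that \eqref{En-forme-Fourier} already forces $T$ to be a left Fourier multiplier at the level of Fourier transforms: for $x, y \in \L^1(\QG)$, one computes $\widehat{T(x*y)}(\pi) = a_\pi \widehat{x*y}(\pi) = a_\pi \widehat{y}(\pi)\widehat{x}(\pi)$ (using \eqref{conv-L1-et-transfo-fourier}) and on the other hand $\widehat{T(x)*y}(\pi) = \widehat{y}(\pi)\widehat{T(x)}(\pi) = \widehat{y}(\pi) a_\pi \widehat{x}(\pi)$, and since the blocks $\M_{n_\pi}$ are matrix algebras these two expressions agree. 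Hence $\widehat{T(x*y)} = \widehat{T(x)*y}$, and since the Fourier transform is injective on $\L^1(\QG)$ (Proposition \ref{prop-L1-into-c0}) — and $T(x*y), T(x)*y$ both lie in $\L^p(\QG) \subseteq \L^1(\QG)$ because the Haar state is a state — we conclude $T(x*y) = T(x)*y$, i.e. \eqref{Def-left-multiplier-L1} holds and $T$ is a left Fourier multiplier. Boundedness is part of the hypothesis (or alternatively, one notes that by Theorem \ref{Thm-description-multipliers-2} any such multiplier is automatically of convolution form, hence bounded, once we know it is a Fourier multiplier — but if $T$ is merely assumed linear we should keep ``bounded'' in the statement and just verify it is a Fourier multiplier in the sense of \eqref{Def-left-multiplier-L1}).

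Finally, uniqueness of $a$: if $a, a'$ both satisfy \eqref{En-forme-Fourier} for the same $T$, then $(a_\pi - a'_\pi)\widehat{x}(\pi) = 0$ for all $x \in \L^1(\QG)$ and all $\pi$. Since the Fourier transforms $\widehat{x}(\pi)$, as $x$ ranges over $\L^1(\QG)$ (indeed over $\Pol(\QG)$), span all of $\M_{n_\pi}$ for each fixed $\pi$ — one can exhibit $x = u^{(\pi)}_{ij}$-type elements or invoke the Peter--Weyl/Plancherel structure of $\L^2(\QG)$ to see the image of $\mathcal{F}$ restricted to each block is all of $\M_{n_\pi}$ — it follows that $a_\pi - a'_\pi$ annihilates $\M_{n_\pi}$ on the left, hence $a_\pi = a'_\pi$; alternatively this is immediate from the uniqueness of $f$ in Theorem \ref{Thm-description-multipliers-2} together with injectivity of $\mathcal{F}$ on $\L^1(\QG)$.

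The main obstacle I anticipate is purely bookkeeping rather than conceptual: getting the side conventions right. The duality bracket \eqref{bracket} involves the antipode $R$, the convolution $*$ on $\L^p(\QG)$ is the extension of $*_{1,\infty}$ (and of $*_{\infty,1}$), and the Fourier-transform/convolution identity \eqref{conv-L1-et-transfo-fourier} has the factors in reversed order; reconciling all of these so that ``$T$ a \emph{left} multiplier'' corresponds exactly to ``$\widehat{T(x)}(\pi) = a_\pi \widehat{x}(\pi)$'' with the multiplication by $a_\pi$ on the \emph{left} requires care. The one genuinely substantive point to nail down is that $\{\widehat{x}(\pi) : x \in \L^1(\QG)\} = \M_{n_\pi}$ for each $\pi$ (needed for uniqueness and to rule out degeneracies), which follows from the orthogonality relations for matrix coefficients of a compact quantum group of Kac type — concretely, $\widehat{(u^{(\pi)}_{kl})^*}$ and its relatives produce a basis of $\M_{n_\pi}$ after suitable normalization.
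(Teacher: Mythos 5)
Your forward direction, the algebraic half of the converse, and the uniqueness argument all coincide in substance with the paper's proof: obtain $f\in\L^p(\QG)$ from Theorem \ref{Thm-description-multipliers-2}, set $a\ov{\mathrm{def}}{=}\hat f$, use the contractive inclusion $\L^p(\QG)\subset\L^1(\QG)$ (finiteness of $\L^\infty(\QG)$) together with Proposition \ref{prop-L1-into-c0} to see $a\in\ell^\infty(\hat\QG)$; and for the converse compare $\mathcal{F}(T(x)*y)$ with $\mathcal{F}(T(x*y))$ blockwise and invoke the injectivity of $\mathcal{F}$ on $\L^1(\QG)$. The side-convention worries you raise about \eqref{conv-L1-et-transfo-fourier} are legitimate but do not affect the structure of the argument.

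The genuine gap is in the converse direction. The statement takes $T$ to be merely a \emph{linear} transformation, and the claim is that \eqref{En-forme-Fourier} with $a\in\ell^\infty(\hat\QG)$ forces $T$ to be a \emph{bounded} left Fourier multiplier; boundedness is therefore a conclusion, not a hypothesis, and your proposal never establishes it. Your parenthetical fallback --- that Theorem \ref{Thm-description-multipliers-2} shows any such multiplier is of convolution form and hence bounded --- is circular: point 1 of that theorem characterizes \emph{bounded} left Fourier multipliers, so it cannot be applied to a multiplier not yet known to be bounded. The paper closes this step with the closed graph theorem: if $x_n\to x$ in $\L^1(\QG)$ and $T(x_n)\to y$ in $\L^p(\QG)$, then combining \eqref{En-forme-Fourier}, the contractivity of $\mathcal{F}\co\L^1(\QG)\to\ell^\infty(\hat\QG)$, the bound $\norm{a}_{\ell^\infty(\hat\QG)}<\infty$ and the contractive inclusion $\L^p(\QG)\subset\L^1(\QG)$, one gets $\widehat{T(x)}=\widehat{y}$, hence $T(x)=y$ by injectivity of $\mathcal{F}$, and the closed graph theorem yields boundedness. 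You need to supply this (or an equivalent) argument for the proof to be complete.
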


\begin{proof}
$\Rightarrow$: By the first point of Theorem \ref{Thm-description-multipliers-2}, there exists a unique $f \in \L^p(\QG)$ such that \eqref{T-as-convolution-QG} is satisfied. Since the von Neuman algebra $\L^\infty(\QG)$ is finite we have the contractive inclusion $\L^p(\QG) \subset \L^1(\QG)$. So note that by Proposition \ref{prop-L1-into-c0}, the element $a \ov{\mathrm{def}}{=} \widehat{f}$ belongs to $\ell^\infty(\widehat{\QG})$. Moreover, for any $x \in \L^1(\QG)$ and any $\pi \in \Irr(\QG)$, we have
\begin{align*}
\MoveEqLeft
\widehat{T(x)}(\pi)
=\widehat{f*x}(\pi)            
=\widehat{f}(\pi) \widehat{x}(\pi)
=a_\pi \widehat{x}(\pi).
\end{align*}
We have obtained \eqref{En-forme-Fourier}.

$\Leftarrow$: For any $x,y \in \L^1(\QG)$ and any $\pi \in \Irr(\QG)$, we have
\begin{equation}
\label{}
\reallywidehat{T(x)*y}(\pi)
=\widehat{T(x)}(\pi)\widehat{y}(\pi)
\ov{\eqref{En-forme-Fourier}}{=} a_\pi\widehat{x}(\pi)\widehat{y}(\pi)
\ov{\eqref{conv-L1-et-transfo-fourier}}{=} a_\pi\widehat{x*y}(\pi)
\ov{\eqref{En-forme-Fourier}}{=} \reallywidehat{T(x*y)}(\pi).
\end{equation}
Hence $\reallywidehat{T(x)*y}=\reallywidehat{T(x*y)}$. By Proposition \ref{prop-L1-into-c0}, we infer that $T(x)*y=T(x*y)$. By \eqref{Def-left-multiplier-L1}, we deduce that $T \co \L^1(\QG) \to \L^p(\QG)$ is a left multiplier. It remains to show that this map is bounded. Suppose that $(x_n)$ is a sequence of elements $\L^1(\QG)$ converging to some element $x$ of $\L^1(\QG)$ and that the sequence $(T(x_n))$ converges to some $y$ in $\L^p(\QG)$. Using Proposition \ref{prop-L1-into-c0} in the second inequality and the last inequality (together with the contractive inclusion $\L^p(\QG) \subset \L^1(\QG)$), we obtain
\begin{align*}
\MoveEqLeft
\label{equa-divers-145}
\bnorm{\widehat{T(x)}-\widehat{y}}_{\ell^\infty(\hat{\QG})}
\leq \bnorm{\widehat{T(x)}-\widehat{T(x_n)}}_{\ell^\infty(\hat{\QG})}+\bnorm{\widehat{T(x_n)}-\widehat{y}}_{\ell^\infty(\hat{\QG})} \\   
&= \bnorm{\reallywidehat{T(x-x_n)}}_{\ell^\infty(\hat{\QG})}+\bnorm{\reallywidehat{T(x_n)-y}}_{\ell^\infty(\hat{\QG})} 
\ov{\eqref{En-forme-Fourier}}{\leq} \bnorm{a(\widehat{x}-\widehat{x_n})}_{\ell^\infty(\hat{\QG})}+  \norm{T(x_n)-y}_{\L^1(\QG)} \nonumber \\
&\leq \norm{a}_{\ell^\infty(\hat{\QG})}\bnorm{\widehat{x}-\widehat{x_n}}_{\ell^\infty(\hat{\QG})}+\norm{T(x_n)-y}_{\L^p(\QG)} \\
&\leq \norm{a}_{\ell^\infty(\hat{\QG})}\norm{x_n-x}_{\L^1(\QG)}+\norm{T(x_n)-y}_{\L^p(\QG)}. \nonumber
\end{align*}
Passing to the limit, we infer that $\widehat{T(x)}=\widehat{y}$, hence $T(x)=y$ by the injectivity of the Fourier transform given by Proposition \ref{prop-L1-into-c0}. By the closed graph theorem, we conclude that the operator $T \co \L^1(\QG) \to \L^p(\QG)$ is bounded.

The uniqueness is left to the reader.
\end{proof}

\begin{remark} \normalfont
We could probably extend this result to the case of co-amenable \textit{locally} compact quantum group of Kac type if $1 \leq p \leq 2$ by using the Hausdorff-Young inequality \cite{Coo10}.
\end{remark}

\begin{defi}
Let $\QG$ be a unimodular co-amenable compact quantum group of Kac type such that the left Haar weight is tracial. Suppose $1<p \leq \infty$. Let $T \co \L^1(\QG) \to \L^p(\QG)$ be a bounded left multiplier. The unique $a \in \ell^\infty(\hat{\QG})$ satisfying \eqref{En-forme-Fourier} is called the symbol of $T$. Moreover, we let $M_a \ov{\mathrm{def}}{=} T$.
\end{defi}

Let us state the particular case of co-commutative compact quantum groups. Here, we use the classical notations, e.g.  see \cite{ArK23} for background. Note that the case $p=1$ is well-known by the combination of \cite[Corollary 5.4.11. (ii)]{KaL18} and \cite[Lemma 6.4]{ArK23}.

\begin{cor}
\label{cor-L1-Lp}
Let $G$ be an amenable discrete group. Suppose $1 \leq p \leq \infty$. Any bounded Fourier multiplier $M_\varphi \co \L^1(\VN(G)) \to \L^p(\VN(G))$ is completely bounded and we have
\begin{equation}
\label{cb=bounded}
\norm{M_{\varphi}}_{\cb, \L^1(\VN(G)) \to \L^p(\VN(G))}
=\norm{M_{\varphi}}_{\L^1(\VN(G)) \to \L^p(\VN(G))}.
\end{equation}
\end{cor}

Now, we return to the general case. A map $T \co \L^p(\QH) \to \L^\infty(\QH)$ is a left  multiplier if $T(x*y)=T(x) *y$ for any $x \in \L^p(\QH)$ and any $y \in \L^1(\QH)$. A simple duality argument gives the following result.

\begin{thm}
\label{Thm-description-multipliers-3}
Let $(\QH,\tau,\Delta,R)$ be a co-amenable normalized quantum hypergroup. Suppose $1 \leq p < \infty$. 
\begin{enumerate}
	\item A map $T \co \L^p(\QH) \to \L^\infty(\QH)$ is a bounded left multiplier if and only if there exists $f \in \L^{p^*}(\QH)$ such that 
\begin{equation}
\label{T-as-convolution-QG-2}
T(x)
=f*x,\quad x \in \L^p(\QH).
\end{equation}

\item A map $T \co \L^p(\QH) \to \L^\infty(\QH)$ is a bounded right multiplier if and only if there exists $f \in \L^{p^*}(\QH)$ such that 
\begin{equation}
\label{T-as-convolution-right-2}
T(x)
=x*f,\quad x \in \L^p(\QH).
\end{equation}

\end{enumerate}

If $*$ is faithful, in the situation of the first point or the second point 2, $f$ is unique and we have
\begin{equation}
\label{norm-equality-QG-2}
\norm{f}_{\L^{p^*}(\QH)}
=\norm{T}_{\cb,\L^p(\QH) \to \L^\infty(\QH)}
=\norm{T}_{\L^p(\QH) \to \L^\infty(\QH)}.
\end{equation}
\end{thm}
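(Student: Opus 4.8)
The statement is designed to follow from Theorem~\ref{Thm-description-multipliers-2} by transposition --- this is the announced ``duality argument''. First I would form the Banach space transpose $T^{*}\co \L^1(\QG)\to\L^{p^*}(\QG)$ of $T$, well defined via the isometric identification $\L^{p^*}(\QG)=(\L^p(\QG))^{*}$ and the pairing \eqref{bracket}, i.e. characterised by
\[
\langle T^{*}(z),x\rangle_{\L^{p^*}(\QG),\L^p(\QG)}
=\langle T(x),z\rangle_{\L^\infty(\QG),\L^1(\QG)},\qquad z\in\L^1(\QG),\ x\in\L^p(\QG).
\]
Since $\L^\infty(\QG)=(\L^1(\QG))^{*}$ one gets $\norm{T^{*}}_{\L^1(\QG)\to\L^{p^*}(\QG)}=\norm{T}_{\L^p(\QG)\to\L^\infty(\QG)}$, and since passing to adjoints preserves complete boundedness (\cite{ER}), $\norm{T^{*}}_{\cb}=\norm{T}_{\cb}$. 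The first point is that if $T$ is a \emph{left} Fourier multiplier then $T^{*}$ is a \emph{right} Fourier multiplier: starting from $T(x*y)=T(x)*y$, using the multiplier relation a second time with $y$ replaced by a suitable element of $\L^1(\QG)$, and moving the convolution factors across the bracket by two applications of Lemma~\ref{Lemma-Dieudonne} (with parameters $\infty$ and $p$, via \eqref{convolution-et-crochet-QG}), one obtains $\langle T^{*}(z*y),x\rangle=\langle z*T^{*}(y),x\rangle$ for all $x\in\L^p(\QG)$, hence $T^{*}(z*y)=z*T^{*}(y)$, which is precisely \eqref{right-multiplier-bimodule}.

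By the second point of Theorem~\ref{Thm-description-multipliers-2} (applicable since $1<p^*\leq\infty$) there is then a unique $f\in\L^{p^*}(\QG)$ with $T^{*}(z)=z*f$ for all $z\in\L^1(\QG)$, and $\norm{f}_{\L^{p^*}(\QG)}=\norm{T^{*}}_{\L^1(\QG)\to\L^{p^*}(\QG)}=\norm{T^{*}}_{\cb,\L^1(\QG)\to\L^{p^*}(\QG)}$. It remains to transpose back, i.e. to recognise $x\mapsto f*x$ as the map whose transpose is $z\mapsto z*f$. The point here is the identity
\[
\langle z*f,x\rangle_{\L^{p^*}(\QG),\L^p(\QG)}=\langle z,f*x\rangle_{\L^1(\QG),\L^\infty(\QG)},\qquad z\in\L^1(\QG),\ f\in\L^{p^*}(\QG),\ x\in\L^p(\QG),
\]
where $f*x\in\L^\infty(\QG)$ by the Young inequality dual to \eqref{borne-cb} (cf.\ the Remark following Corollary~\ref{Cor-convolution}). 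For $z,f,x\in\L^\infty(\QG)\subseteq\L^1(\QG)$ this identity is exactly the case $p=\infty$ of Lemma~\ref{Lemma-Dieudonne}; the general case follows by approximating $z$, $f$, $x$ in the $\L^1$-, $\L^{p^*}$- and $\L^p$-norm respectively by elements of $\L^\infty(\QG)$ and using the (complete) contractivity of the convolution maps of Theorem~\ref{Th-conv-cb}, Corollary~\ref{Cor-convolution} and their dual forms. Combining this with the description of $T^{*}$ gives $\langle T(x),z\rangle=\langle T^{*}(z),x\rangle=\langle z*f,x\rangle=\langle z,f*x\rangle$ for every $z\in\L^1(\QG)$, whence $T(x)=f*x$, that is \eqref{T-as-convolution-QG-2}.

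The converse implication is straightforward: for $f\in\L^{p^*}(\QG)$ the map $x\mapsto f*x$ is a completely bounded map $\L^p(\QG)\to\L^\infty(\QG)$ with cb-norm at most $\norm{f}_{\L^{p^*}(\QG)}$ (same dual Young inequality), and $f*(x*y)=(f*x)*y$ by associativity of the convolution, so it is a bounded left Fourier multiplier. The norm identities \eqref{norm-equality-QG-2} then follow from the chain $\norm{f}_{\L^{p^*}(\QG)}\geq\norm{T}_{\cb,\L^p(\QG)\to\L^\infty(\QG)}\geq\norm{T}_{\L^p(\QG)\to\L^\infty(\QG)}=\norm{T^{*}}_{\L^1(\QG)\to\L^{p^*}(\QG)}=\norm{f}_{\L^{p^*}(\QG)}$ (equivalently from $\norm{T}_{\cb}=\norm{T^{*}}_{\cb}=\norm{f}_{\L^{p^*}(\QG)}$), and uniqueness of $f$ from the faithfulness of the convolution (\cite[Proposition 1]{HNR1}), exactly as in Theorem~\ref{Thm-description-multipliers-2}. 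The second assertion is obtained symmetrically, interchanging left and right convolutions and invoking the first point of Theorem~\ref{Thm-description-multipliers-2}. The only step that is more than bookkeeping is the transposition-back identity displayed above --- specifically, checking that each convolution product that appears lies in the claimed $\L^{r}$-space, which is precisely where unimodularity (so that $*_{\infty,1}$ is defined on all of $\L^\infty(\QG)$, cf.\ Theorem~\ref{Th-conv-cb}) and the dual Young inequalities enter; the rest is just transposing the commutative diagram of Theorem~\ref{Thm-description-multipliers-2}.
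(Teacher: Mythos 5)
Your proposal is correct and follows essentially the same route as the paper: both arguments pass to the (pre)adjoint $T_*\co \L^1(\QG)\to\L^{p^*}(\QG)$, use Lemma \ref{Lemma-Dieudonne} to move convolution factors across the duality bracket \eqref{bracket} and thereby show that $T_*$ is a bounded right Fourier multiplier, invoke point 2 of Theorem \ref{Thm-description-multipliers-2} to produce the unique $f\in\L^{p^*}(\QG)$, and transpose back (the paper identifies $T$ with the adjoint of $y\mapsto y*f$ via \cite[Proposition 3.2.2]{ER}, which also yields the complete boundedness and the norm equalities \eqref{norm-equality-QG-2}). The only cosmetic difference is that you verify the transposition-back pairing identity by density from the $\L^\infty$ case, where the paper reads it off directly from \eqref{convolution-et-crochet-QG}; this changes nothing of substance.
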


\begin{proof}
1. $\Rightarrow$: Suppose that there exists $f \in \L^{p^*}(\QH)$ such that \eqref{T-as-convolution-QG-2} is satisfied. By the point 2 of Theorem \ref{Thm-description-multipliers-2}, we have a completely bounded operator $U \co \L^1(\QH) \to \L^{p^*}(\QH)$, $y \mapsto y *f$. For any $x \in \L^p(\QH)$ and any $y \in \L^1(\QH)$, we have
\begin{equation*}
\label{}
\big\langle U(y), x \big\rangle_{\L^{p^*}(\QH),\L^p(\QH)} 
=\langle y *f, x \rangle_{\L^p(\QH),\L^{p^*}(\QH)} 
\ov{\eqref{convolution-et-crochet-QG}}{=}\langle y, f*x \rangle_{\L^p(\QH),\L^{p^*}(\QH)}.
\end{equation*}
By \cite[Proposition 3.2.2]{EfR00}, if $p>1$ we infer that $T \co \L^p(\QH) \to \L^\infty(\QH)$, $x \mapsto f*x$ is a well-defined completely bounded map satisfying \eqref{norm-equality-QG-2}. If $p=1$, it is easy to adapt the argument.

$\Leftarrow$: Suppose that $T \co \L^p(\QH) \to \L^\infty(\QH)$ is a bounded left multiplier. If $p>1$, since $\L^p(\QH)$ is reflexive, it is apparent e.g. from \cite[Lemma 10.1 (3)]{Daw10} that $T$ is weak* continuous. So we can consider the preadjoint $T_* \co \L^1(\QH) \to \L^{p^*}(\QH)$ on $\L^1(\QH)$ of $T$ (if $p=1$ consider the restriction on $\L^1(\QH)$ of the adjoint of $T$ instead of $T_*$). For any $x \in \L^p(\QH)$ and any $y,z \in \L^1(\QH)$ we have
\begin{align*}
\MoveEqLeft
\big\langle x, T_*(z*y) \big\rangle_{\L^p(\QH),\L^{p^*}(\QH)} 
=\big\langle T(x), z*y \big\rangle_{\L^\infty(\QH),\L^1(\QH)}
\ov{\eqref{convolution-et-crochet-QG}}{=} \big\langle T(x)* z, y \big\rangle_{\L^\infty(\QH),\L^1(\QH)} \\
&\ov{\eqref{left-multiplier-bimodule}}{=} \big\langle T(x*z), y \big\rangle_{\L^\infty(\QH),\L^1(\QH)}
=\big\langle x*z, T_*(y) \big\rangle_{\L^p(\QH),\L^{p^*}(\QH)}
\ov{\eqref{convolution-et-crochet-QG}}{=} \big\langle x, z*T_*(y) \big\rangle_{\L^p(\QH),\L^{p^*}(\QH)}.
\end{align*}
Hence for any $y,z \in \L^1(\QH)$ we deduce that 
$$
T_*(z*y)
=z*T_*(y).
$$
Consequently by \eqref{right-multiplier-bimodule} the operator $T_* \co \L^1(\QH) \to \L^{p^*}(\QH)$ is a bounded right multiplier. We infer by the second point of Theorem \ref{Thm-description-multipliers-2} that there exists a $f \in \L^{p^*}(\QH)$ such that $T_*(y)=y*f$ for any $y \in \L^1(\QH)$. For any $x \in \L^p(\QH)$ and any $y \in \L^1(\QH)$, we obtain
\begin{equation*}
\label{}
\big\langle T(x), y \big\rangle_{\L^\infty(\QH),\L^1(\QH)} 
=\big\langle x, T_*(y) \big\rangle_{\L^p(\QH),\L^{p^*}(\QH)} 
=\langle x, y*f \rangle_{\L^p(\QH),\L^{p^*}(\QH)}  
=\langle f*x, y \rangle_{\L^\infty(\QH),\L^1(\QH)}.
\end{equation*}
Therefore $T(x)=f*x$ for any $x \in \L^p(\QH)$.


2. The proof is similar to the proof of the point 1.

The other assertions are very easy and left to the reader.
\end{proof}

\begin{defi}
Let $\QG$ be a unimodular co-amenable compact quantum group of Kac type such that the left Haar weight is tracial. Suppose $1 \leq p \leq \infty$. Let $T \co \L^p(\QG) \to \L^\infty(\QG)$ be a bounded left multiplier. We say that the element $a \ov{\mathrm{def}}{=} \hat{f}$ of  $\ell^\infty(\hat{\QG})$ is the symbol of $T$. We let $M_a \ov{\mathrm{def}}{=} T$.
\end{defi}

\subsection[Another approach on c.~b.~Fourier multipliers from $\L^1(\QG)$ into $\L^p(\QG)$]{Another approach on completely bounded Fourier multipliers from $\L^1(\QG)$ into $\L^p(\QG)$} 
\label{completely-bounded-description}

In this section, we investigate the completely bounded Fourier multipliers from $\L^1(\QG)$ into $\L^p(\QG)$ under weaker assumptions. We will use the following lemma which is an extension of \cite[(3.20)]{DJKRB06} which states a similar result for positive elements of matrix spaces. 


\begin{lemma}
Let $\cal{M}$ and $\cal{N}$ be von Neumann algebras equipped with normal semifinite faithful traces. Suppose $1 \leq p \leq \infty$. For any positive element $x$ of $\L^p(\cal{M},\L^1(\cal{N}))$, we have
\begin{equation}
\label{Norm-LpL1-pos}
\norm{x}_{\L^p(\cal{M},\L^1(\cal{N}))}
=\norm{(\Id \ot \tau)(x)}_{\L^p(\cal{M})}
\end{equation}
where $\tau$ is the trace of $\cal{N}$.
\end{lemma}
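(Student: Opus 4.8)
The plan is to prove the two inequalities in the claimed equality separately, working with the concrete norm \eqref{Norm-LpLq}. Here the inner exponent is $q=1$, so the relation $\frac1q=\frac1p+\frac1r$ forces $r=p^*$, and the norm reads $\norm{x}_{\L^p(\cal{M},\L^1(\cal{N}))}=\sup\bnorm{(a\ot1)x(b\ot1)}_{\L^1(\cal{M}\otvn\cal{N})}$, the supremum being over $a,b$ in the unit ball of $\L^{2p^*}(\cal{M})$. Throughout I write $\E\ov{\mathrm{def}}{=}\Id\ot\tau\co\cal{M}\otvn\cal{N}\to\cal{M}$ for the canonical trace preserving conditional expectation, which is positive (so that $\E(x)\geq0$) and an $\cal{M}$-bimodule map, and I factor the positive element $x$ as $x=x^{\frac12}x^{\frac12}$ with $x^{\frac12}\geq0$.

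For the inequality $\norm{\E(x)}_{\L^p(\cal{M})}\leq\norm{x}_{\L^p(\cal{M},\L^1(\cal{N}))}$, I would apply the positive trace duality \eqref{dual-by-positive} to $\E(x)$: there is a positive $y\in\L^{p^*}(\cal{M})$ with $\norm{y}_{\L^{p^*}(\cal{M})}=1$ and $\norm{\E(x)}_{\L^p(\cal{M})}=\tau_{\cal{M}}(\E(x)y)$ (with a supremum over such $y$ if $p=\infty$). Setting $a\ov{\mathrm{def}}{=}y^{\frac12}$, so that $\norm{a}_{\L^{2p^*}(\cal{M})}=\norm{y}_{\L^{p^*}(\cal{M})}^{1/2}=1$, traciality of $\tau_{\cal{M}}$ and bimodularity of $\E$ turn $\tau_{\cal{M}}(\E(x)y)$ into $(\tau_{\cal{M}}\ot\tau)\big((a\ot1)x(a\ot1)\big)$, and since $(a\ot1)x(a\ot1)\geq0$ this equals $\bnorm{(a\ot1)x(a\ot1)}_{\L^1(\cal{M}\otvn\cal{N})}$, which is $\leq\norm{x}_{\L^p(\cal{M},\L^1(\cal{N}))}$ by the norm formula above taken with $b=a$.

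For the reverse inequality, I would fix $a,b$ in the unit ball of $\L^{2p^*}(\cal{M})$ and estimate, by the noncommutative Cauchy--Schwarz inequality (H\"older's inequality \eqref{Holder} with exponents $2,2$) in $\L^1(\cal{M}\otvn\cal{N})$,
\[
\bnorm{(a\ot1)x(b\ot1)}_{\L^1(\cal{M}\otvn\cal{N})}
\leq\bnorm{(a\ot1)x^{\frac12}}_{\L^2(\cal{M}\otvn\cal{N})}\bnorm{x^{\frac12}(b\ot1)}_{\L^2(\cal{M}\otvn\cal{N})}.
\]
Each factor is then computed using traciality and bimodularity of $\E$: one gets $\bnorm{(a\ot1)x^{\frac12}}_{\L^2(\cal{M}\otvn\cal{N})}^2=\tau_{\cal{M}}(a^*a\,\E(x))\leq\norm{a^*a}_{\L^{p^*}(\cal{M})}\norm{\E(x)}_{\L^p(\cal{M})}=\norm{a}_{\L^{2p^*}(\cal{M})}^2\norm{\E(x)}_{\L^p(\cal{M})}\leq\norm{\E(x)}_{\L^p(\cal{M})}$, and symmetrically $\bnorm{x^{\frac12}(b\ot1)}_{\L^2(\cal{M}\otvn\cal{N})}^2=\tau_{\cal{M}}(bb^*\,\E(x))\leq\norm{\E(x)}_{\L^p(\cal{M})}$. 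Hence $\bnorm{(a\ot1)x(b\ot1)}_{\L^1(\cal{M}\otvn\cal{N})}\leq\norm{\E(x)}_{\L^p(\cal{M})}$, and taking the supremum over $a,b$ gives $\norm{x}_{\L^p(\cal{M},\L^1(\cal{N}))}\leq\norm{\E(x)}_{\L^p(\cal{M})}$.

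The computations above are written as if $x$ were a bounded operator, and the main (rather mild) obstacle is to make them rigorous when $\cal{M}$ and $\cal{N}$ are merely semifinite, so that $x$ is an unbounded positive $\tau$-measurable operator affiliated with $\cal{M}\otvn\cal{N}$ and $\E$ is an operator valued weight: one must justify the factorization $x=x^{\frac12}x^{\frac12}$, the membership of $(a\ot1)x^{\frac12}$ and $x^{\frac12}(b\ot1)$ in $\L^2(\cal{M}\otvn\cal{N})$, and the cyclicity and bimodularity identities. I would dispose of this by first proving the identity for positive $x$ in a weak* dense $*$-subalgebra on which every step is literal (for instance $x$ positive in $(\cal{M}\cap\L^p(\cal{M}))\ot(\cal{N}\cap\L^1(\cal{N}))$, or after a spectral truncation of $x$), and then passing to general positive $x$ by continuity, the first inequality already showing that $\E$ is contractive from $\L^p(\cal{M},\L^1(\cal{N}))$ into $\L^p(\cal{M})$ on the positive cone. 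When $\cal{M}$ is hyperfinite and $p=\infty$ the statement can alternatively be read off \eqref{formula-Junge} by taking $x_1=x_2^*=x^{\frac12}$, but the argument above has the advantage of needing neither hyperfiniteness nor $p<\infty$.
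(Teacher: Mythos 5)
Your proof is correct, and it runs on the same engine as the paper's: the identity $(\tau\ot\tau)\big((a\ot1)x(a\ot1)\big)=\tau_{\cal{M}}\big(a^*a\,(\Id\ot\tau)(x)\big)$ combined with the positive trace duality \eqref{dual-by-positive}. The one genuine difference is organizational: the paper writes a single chain of equalities, which requires first restricting the supremum in \eqref{Norm-LpLq} to $a=b\geq0$ — a reduction it only asserts in the remark following \eqref{Norm-LpLq} and never proves. You instead split the statement into two inequalities and handle arbitrary $a,b$ in the unit ball via Cauchy--Schwarz, $\bnorm{(a\ot1)x(b\ot1)}_{\L^1}\leq\bnorm{(a\ot1)x^{\frac12}}_{\L^2}\bnorm{x^{\frac12}(b\ot1)}_{\L^2}$, so your argument in effect supplies the missing justification for that reduction rather than relying on it. This makes your write-up slightly longer but more self-contained; the measurable-operator technicalities you flag at the end are present in both versions and your density argument is an acceptable way to dispose of them.
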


\begin{proof}
Here, we denotes the traces of $\cal{M}$ and $\cal{N}$ by the same letter $\tau$. For any positive $x \in \L^p(\cal{M},\L^1(\cal{N}))$, we have using Hahn-Banach theorem with \eqref{dual-by-positive} in the last equality
\begin{align*}
\MoveEqLeft
\norm{x}_{\L^p(\cal{M},\L^1(\cal{N}))}
\ov{\eqref{Norm-LpLq}}{=} \sup_{\norm{a}_{2p^{*}} = 1, a \geq 0} \bnorm{ (a \ot 1) x (a \ot 1)}_{\L^1(\cal{M} \otvn \cal{N})} \\
&= \sup_{\norm{a}_{2p^{*}} = 1, a \geq 0} (\tau \ot \tau)\big( (a \ot 1) x (a \ot 1) \big) 
= \sup_{\norm{a}_{2p^{*}} = 1, a \geq 0} (\tau \ot \tau)\big( (a^2\ot 1) x \big)  \\
&= \sup_{\norm{a}_{2p^{*}} = 1, a \geq 0} \tau \Big[(\Id\ot \tau)\big( (a^2\ot 1)x  \big)\Big] 
=\sup_{\norm{a^2}_{p^{*}} = 1, a \geq 0} \tau\Big[ a^2 (\Id \ot \tau)(x) \Big] \\
&=\sup_{\norm{b}_{p^{*}} = 1, b \geq 0} \tau \Big[ b (\Id \ot \tau)(x) \Big]  
\ov{\eqref{dual-by-positive}}{=} \bnorm{(\Id \ot \tau)(x)}_{\L^p(\cal{M})}. 
\end{align*}
\end{proof}

\begin{remark} \normalfont
\label{Rem-Haagerup-Lp}
If $\cal{M}$ and $\cal{N}$ are equipped with normal states, we could use Haagerup's noncommutative $\L^p$-spaces \cite{Ter81}. Indeed, it seems to the author that we can replace the trace $\tau$ by the Haagerup trace in the previous proof.
\end{remark}

The following lemma is crucial here and in Section \ref{summing-multipliers}. 

\begin{lemma}
\label{Lemma-useful}
Suppose $1 \leq p \leq \infty$. Let $\QG$ be a compact quantum group of Kac type. The coproduct $\Delta$ map induces an isometry $\Delta \co \L^p(\QG) \to \L^\infty(\QG,\L^p(\QG))$ and a bounded map $\Delta \co \L^1(\QG) \to \L^p(\QG,\L^1(\QG))$ with norm less than $\tau(1)^{\frac{1}{p}}$ where $\tau$ is a Haar trace.
\end{lemma}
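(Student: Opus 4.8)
Here is how I would approach the proof. Since $\QG$ is of Kac type, the Haar state $h=\tau$ is tracial, so $\tau\ot\tau$ is a finite normal faithful trace on $\L^\infty(\QG)\otvn\L^\infty(\QG)$, and the coproduct $\Delta\co\L^\infty(\QG)\to\L^\infty(\QG)\otvn\L^\infty(\QG)$ is an injective normal unital $*$-homomorphism with $(\tau\ot\tau)\circ\Delta=\tau$ by \eqref{Def-haar-state}. By the trace-preservation, $\Delta$ extends to an isometry $\Delta\co\L^q(\QG)\to\L^q(\QG\otvn\QG)$ for every $1\leq q\leq\infty$, still $*$-preserving and multiplicative in the natural sense. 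I intend to deduce both assertions from this together with the single auxiliary estimate
\begin{equation}
\tag{$\star$}
\norm{(a\ot1)\Delta(y)}_{\L^q(\QG\otvn\QG)}\leq\norm{a}_{\L^q(\QG)}\norm{y}_{\L^q(\QG)},\qquad a,y\in\L^q(\QG),
\end{equation}
valid for $2\leq q\leq\infty$, together with its mirror version where $(a\ot1)$ stands to the right of $\Delta(y)$. What is worth stressing is that the naive bound $\norm{(a\ot1)z(b\ot1)}_{\L^p(\QG\otvn\QG)}\leq\norm{a}_{\L^{2p}(\QG)}\norm{z}_{\L^p(\QG\otvn\QG)}\norm{b}_{\L^{2p}(\QG)}$ is \emph{false} for a general $z$, so the structure $z=\Delta(x)$ must genuinely be exploited.

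To prove $(\star)$ I would work on the weak$^*$-dense $*$-subalgebra $\Pol(\QG)$ and interpolate. For $q=\infty$ the inequality is immediate, since $\Delta$ is isometric on $\L^\infty(\QG)$ and $\norm{a\ot1}_{\L^\infty(\QG\otvn\QG)}=\norm{a}_{\L^\infty(\QG)}$. For $q=2$ it is in fact an equality: using that $\tau\ot\tau$ is tracial and that $(\Id\ot\tau)\Delta(w)=\tau(w)1$ (see \eqref{Def-haar-state}) one computes
\[
\norm{(a\ot1)\Delta(y)}_{\L^2(\QG\otvn\QG)}^2=(\tau\ot\tau)\big((a^*a\ot1)\Delta(yy^*)\big)=\tau\big(a^*a\,(\Id\ot\tau)\Delta(yy^*)\big)=\tau(a^*a)\,\tau(yy^*),
\]
and similarly for the mirror version. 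Hence the bilinear map $(a,y)\mapsto(a\ot1)\Delta(y)$ is contractive both $\L^\infty(\QG)\times\L^\infty(\QG)\to\L^\infty(\QG\otvn\QG)$ and $\L^2(\QG)\times\L^2(\QG)\to\L^2(\QG\otvn\QG)$; bilinear complex interpolation, using $(\L^\infty,\L^2)_\theta=\L^q$ with $\theta=2/q$ (on $\QG$ and on $\QG\otvn\QG$), then gives $(\star)$ for all $q\in[2,\infty]$.

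Granting $(\star)$, the first statement follows from \eqref{LinftyLq-norms}. For the lower bound $\norm{\Delta(x)}_{\L^\infty(\QG,\L^p(\QG))}\geq\norm{x}_{\L^p(\QG)}$ I would take $a=b=1$ there, which is admissible because $\QG$ is compact, so $\norm{1}_{\L^{2p}(\QG)}=\tau(1)^{1/2p}=1$, and then use that $\Delta$ is an isometry on $\L^p$. For the reverse inequality I would factor $x=x_1x_2$ with $x_1,x_2\in\L^{2p}(\QG)$ and $\norm{x}_{\L^p(\QG)}=\norm{x_1}_{\L^{2p}(\QG)}\norm{x_2}_{\L^{2p}(\QG)}$ via \eqref{inverse-Holder}, write $(a\ot1)\Delta(x)(b\ot1)=\big((a\ot1)\Delta(x_1)\big)\big(\Delta(x_2)(b\ot1)\big)$, and apply Hölder \eqref{Holder} in $\L^p(\QG\otvn\QG)$ (with $\tfrac1p=\tfrac1{2p}+\tfrac1{2p}$) followed by $(\star)$ with $q=2p$; taking the supremum over $a,b$ of norm $\leq1$ yields $\norm{\Delta(x)}_{\L^\infty(\QG,\L^p(\QG))}\leq\norm{x}_{\L^p(\QG)}$. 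The second statement is handled the same way but using \eqref{Norm-LpLq} with inner index $1$ (so the relevant exponent is $2p^*$): one factors $x=x_1x_2$ with $x_i\in\L^2(\QG)$ and $\norm{x}_{\L^1(\QG)}=\norm{x_1}_{\L^2(\QG)}\norm{x_2}_{\L^2(\QG)}$, uses Cauchy--Schwarz ($\tfrac11=\tfrac12+\tfrac12$) in $\L^1(\QG\otvn\QG)$ and then $(\star)$ with $q=2$, absorbing the extra factors through the contractive inclusion $\L^{2p^*}(\QG)\subseteq\L^2(\QG)$, which holds because $\QG$ is compact and $2p^*\geq2$.

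The main obstacle is $(\star)$: it is the only point where the coproduct is used essentially and where a direct application of Hölder in $\L^p(\QG\otvn\QG)$ fails, so the factorization-plus-interpolation argument, whose $q=2$ endpoint rests on the Kac condition and \eqref{Def-haar-state}, seems unavoidable. The remaining ingredients are routine: extending $\Delta$ from $\Pol(\QG)$ by density, and checking that the various products land in the stated $\L^r$-spaces so that the Hölder steps are legitimate.
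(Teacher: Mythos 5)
Your proof is correct, and it takes a genuinely different route from the paper's. The paper proves the single endpoint estimate $\norm{\Delta(x)}_{\L^\infty(\QG,\L^1(\QG))}\leq\norm{x}_{\L^1(\QG)}$ -- first for positive $x$ via \eqref{Norm-LpL1-pos} and the invariance \eqref{Def-haar-state}, then for general $x$ by factoring $x=yz$ with $y,z\in\L^2(\QG)$ and invoking the factorization formula \eqref{formula-Junge} -- and then deduces both assertions by interpolating the operator $\Delta$ between vector-valued couples such as $\big(\L^\infty(\QG)\otvn\L^\infty(\QG),\,\L^\infty(\QG,\L^1(\QG))\big)$ and $\big(\L^1(\QG,\L^1(\QG)),\,\L^\infty(\QG,\L^1(\QG))\big)$. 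You instead isolate the one-sided module estimate $(\star)$ for all $q\in[2,\infty]$, obtained by bilinear interpolation between the trivial case $q=\infty$ and the exact $q=2$ identity $\norm{(a\ot1)\Delta(y)}_{\L^2}=\norm{a}_{\L^2}\norm{y}_{\L^2}$ (which is the same use of \eqref{Def-haar-state} as the paper's positive-element computation, in $\L^2$ disguise), and then read both claims directly off the concrete sup-formulas \eqref{LinftyLq-norms} and \eqref{Norm-LpLq} by factoring $x$ through \eqref{inverse-Holder} and applying H\"older. What your route buys is that the only interpolation performed is between scalar noncommutative $\L^p$-spaces attached to $\QG$ and $\QG\otvn\QG$, so you never need the vector-valued scale $\L^\infty(\QG,\L^p(\QG))$ to behave well under complex interpolation (a point which, in Pisier's theory, rests on hyperfiniteness of the underlying algebra); the paper's route is shorter once \eqref{formula-Junge} is granted. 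Both arguments ultimately hinge on the same two ingredients, namely the traciality of the Haar state (Kac type) and its invariance property \eqref{Def-haar-state}, and both leave to a routine density argument the multiplicativity $\Delta(x_1x_2)=\Delta(x_1)\Delta(x_2)$ for $x_1,x_2$ in an $\L^{2p}$-space.
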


\begin{proof}
We can write in the case of a \textit{positive} element $x$ of $\L^1(\QG)$
\begin{align}
\MoveEqLeft
\label{divers-56678}
\norm{\Delta(x)}_{\L^\infty(\QG,\L^1(\QG))}              
\ov{\eqref{Norm-LpL1-pos}}{=} \bnorm{(\Id \ot \tau)(\Delta(x))}_{\L^\infty(\QG)} 
\ov{\eqref{Def-haar-state}}{=} \norm{\tau(x)1}_{\L^\infty(\QG)} \\
&=|\tau(x)| \norm{1}_{\L^\infty(\QG)}
=|\tau(x)|
\ov{\eqref{trace-continuity}}{\leq} \norm{x}_{\L^1(\QG)}. \nonumber
\end{align} 
Now, consider an \textit{arbitrary} element $x$ of $\L^1(\QG)$. By \eqref{inverse-Holder}, there exists $y,z \in \L^2(\QG)$ such that $x=yz$ and $\norm{x}_{\L^1(\QG)}=\norm{y}_{\L^2(\QG)} \norm{z}_{\L^2(\QG)}$. Note that a small approximation argument gives $\Delta(x)=\Delta(yz)=\Delta(y)\Delta(z)$. Moreover, we have
\begin{align*}
\MoveEqLeft
\bnorm{(\Id \ot \tau)(\Delta(y)\Delta(y)^*)}_{\L^\infty(\QG)}^{\frac{1}{2}} \bnorm{(\Id \ot \tau)(\Delta(z)^*\Delta(z))}_{\L^\infty(\QG)}^{\frac{1}{2}}  \\
&\ov{\eqref{Norm-LpL1-pos}}{=}\norm{\Delta(y)\Delta(y)^*}_{\L^\infty(\QG,\L^1(\QG))}^{\frac{1}{2}} \norm{\Delta(z)^*\Delta(z)}_{\L^\infty(\QG,\L^1(\QG))}^{\frac{1}{2}} \\
&=\norm{\Delta(yy^*)}_{\L^\infty(\QG,\L^1(\QG))}^{\frac{1}{2}} \norm{\Delta(z^*z)}_{\L^\infty(\QG,\L^1(\QG))}^{\frac{1}{2}} \\      
&\ov{\eqref{divers-56678}}{\leq} \norm{yy^*}_{\L^1(\QG)}^{\frac{1}{2}}\norm{z^*z}_{\L^1(\QG)}^{\frac{1}{2}} 
= \norm{y}_{\L^2(\QG)} \norm{z}_{\L^2(\QG)}
=\norm{x}_{\L^1(\QG)}.
\end{align*}  
By \eqref{formula-Junge}, we deduce that $\norm{\Delta(x)}_{\L^\infty(\QG,\L^1(\QG))} \leq \norm{x}_{\L^1(\QG)}$. So we have a contraction $\Delta \co \L^1(\QG) \to \L^\infty(\QG,\L^1(\QG))$. By interpolation with the compatible map $\Delta \co \L^\infty(\QG) \to \L^\infty(\QG) \otvn \L^\infty(\QG)$, we see that $\Delta$ induces a contraction $\Delta \co \L^p(\QG) \to \L^\infty(\QG,\L^p(\QG))$. 

Until the end of the proof, we can clearly suppose that $\tau$ is normalized. Since $\Delta$ is a trace preserving normal unital injective $*$-homomorphism, it induces a (complete) isometry $\Delta \co \L^p(\QG) \to \L^p(\L^\infty(\QG) \otvn \L^\infty(\QG))=\L^p(\QG,\L^p(\QG))$. By interpolation of $\Delta \co \L^1(\QG) \to \L^1(\QG,\L^1(\QG))$ and $\Delta \co \L^1(\QG) \to \L^\infty(\QG,\L^1(\QG))$, we deduce a contraction $\Delta \co \L^1(\QG) \to \L^p(\QG,\L^1(\QG))$.

Finally, since the von Neumann algebra $\L^\infty(\QG)$ is finite and that the trace $\tau$ is normalized, we have for any $x \in \L^\infty(\QG)$
\begin{equation}
\label{Eq-sans-fin-34}
\norm{\Delta(x)}_{\L^\infty(\QG,\L^p(\QG))}
\geq \norm{\Delta(x)}_{\L^p(\QG,\L^p(\QG))}
=\norm{x}_{\L^p(\QG)}.
\end{equation}
\end{proof}

\begin{remark} \normalfont
Let $G$ be a discrete group. It is possible to show that $\Delta$ induces a contractive map $\Delta \co \L^2(\VN(G)) \to \L^\infty(\VN(G),\L^2(\VN(G)))$ by the following argument. If $a,b$ belongs to the unit ball of $\L^4(\VN(G))$ and if $x=\sum_{s \in G} \alpha_s \lambda_s$ is an element of $\Pol(\hat{G})$ with $\alpha_s \in \mathbb{C}$, we have using the orthonormality of the family $(\lambda_s)_{s \in G}$ and the argument of the proof of \cite[Proposition 3.9]{Pis98} in the fourth equality
\begin{align*}
\MoveEqLeft
\bnorm{(a \ot 1)\Delta(x)(b \ot 1)}_{\L^2(\VN(G) \otvn \VN(G))}
=\norm{(a \ot 1)\Delta\bigg(\sum_{s \in G} \alpha_s \lambda_s\bigg)(b \ot 1)}_{\L^2(\VN(G) \otvn \VN(G))} \\
&\ov{\eqref{coproduct-VNG}}{=}\norm{(a \ot 1)\bigg(\sum_{s \in G} \alpha_s \lambda_s \ot \lambda_s\bigg)(b \ot 1)}_{\L^2(\VN(G) \otvn \VN(G))}  \\          
&=\norm{\sum_{s \in G} \alpha_s a\lambda_sb \ot \lambda_s}_{\L^2(\VN(G) \otvn \VN(G))} \\
&=\bigg(\sum_{s \in G} \alpha_s^2\norm{ a\lambda_sb}_{\L^2(\VN(G)}^2\bigg)^{\frac{1}{2}} 
\ov{\eqref{Holder}}{\leq} \bigg(\sum_{s \in G} \alpha_s^2\norm{a}_{\L^4(\VN(G)}^2\norm{\lambda_s}_{\infty}^2 \norm{b}_{\L^4(\VN(G)}^2\bigg)^{\frac{1}{2}}\\
&\leq \bigg(\sum_{s \in G} \alpha_s^2\bigg)^{\frac{1}{2}}\norm{a}_{\L^4(\VN(G)}  \norm{b}_{\L^4(\VN(G)}
\leq \norm{\sum_{s \in G} \alpha_s \lambda_s}_{\L^2(\VN(G))}
=\norm{x}_{\L^2(\VN(G))}.
\end{align*}
Passing to the supremum on $a$ and $b$, we obtain by \eqref{LinftyLq-norms}
$$
\norm{\Delta(x)}_{\L^\infty(\VN(G),\L^2(\VN(G))}   
\leq \norm{x}_{\L^2(\VN(G))}.
$$ 
\end{remark}



Now, we give a complement to Theorem \ref{Thm-description-multipliers-2}. 

\begin{prop}
\label{Prop-cb-L1-Lp}
Let $\QG$ be a compact quantum group of Kac type. Suppose $1 < p \leq \infty$. We suppose that $\L^\infty(\QG)$ is hyperfinite if $p<\infty$. For any $x \in \L^p(\QG)$, we have
\begin{equation}
\label{norm-cb-Lp}
\norm{x * \cdot}_{\cb, \L^1(\QG) \to \L^p(\QG)}
=\norm{x}_{\L^p(\QG)}.
\end{equation}
\end{prop}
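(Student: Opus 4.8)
The plan is to realise the operator $T \ov{\mathrm{def}}{=} x * \cdot \co \L^1(\QG) \to \L^p(\QG)$ as a composition $T = \Theta \circ j$, where $j$ is a completely isometric isomorphism and $\Theta$ has completely bounded norm exactly $\norm{x}_{\L^p(\QG)}$; then \eqref{norm-cb-Lp} follows at once. The starting point is to identify $T$ as a ``slice of the coproduct''. A compact quantum group is unimodular and, being of Kac type, has tracial Haar state $h = \tau$ with $\tau \circ R = \tau$; hence $\psi = \varphi = \tau$, and \eqref{Def-*1-infty} gives $x *_{\infty,1} y = (\Id \ot \omega_y)\circ \Delta(x)$ where $\omega_y \ov{\mathrm{def}}{=} \tau(yR(\cdot)) \in \L^\infty(\QG)_*$. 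Since $\Delta \co \L^p(\QG) \to \L^\infty(\QG,\L^p(\QG))$ is bounded by Lemma \ref{Lemma-useful}, both sides of this identity depend boundedly on $x \in \L^p(\QG)$, so by density of $\L^\infty(\QG)$ in $\L^p(\QG)$ the identity persists for all $x \in \L^p(\QG)$; applying the flip $\Sigma$ we get $T(y) = (\Id \ot \omega_y)\Delta(x) = (\omega_y \ot \Id)(\Sigma\Delta(x))$, i.e. $T = \Theta \circ j$ with $\Theta \co \L^\infty(\QG)_* \to \L^p(\QG)$, $\omega \mapsto (\omega \ot \Id)(\Sigma\Delta(x))$ and $j \co \L^1(\QG) \to \L^\infty(\QG)_*$, $y \mapsto \omega_y$.

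Next I would check that $j$ is a completely isometric isomorphism. By the very definition of the operator space structure on $\L^1(\QG)$, the map $i \co \L^1(\QG) \to \L^\infty(\QG)_*^\op$, $y \mapsto \tau(y\,\cdot)$, is a completely isometric isomorphism. The antipode $R \co \L^\infty(\QG) \to \L^\infty(\QG)$ is an involutive ($R^2 = \Id$) normal $*$-isomorphism onto $\L^\infty(\QG)^\op$, hence a completely isometric bijection; consequently its preadjoint $R_* \co (\L^\infty(\QG)^\op)_* = \L^\infty(\QG)_*^\op \to \L^\infty(\QG)_*$ is also a completely isometric bijection. Since $j = R_* \circ i$, it is a completely isometric isomorphism, and therefore $\norm{T}_{\cb,\L^1(\QG)\to\L^p(\QG)} = \norm{\Theta}_{\cb,\L^\infty(\QG)_*\to\L^p(\QG)}$.

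It then remains to compute $\norm{\Theta}_{\cb}$. The co-opposite $\QG^{\mathrm{cop}} = (\L^\infty(\QG),\Sigma\Delta)$ is again a compact quantum group of Kac type, with the same underlying von Neumann algebra and the same tracial (hence $R$-invariant) Haar state, so Lemma \ref{Lemma-useful} applied to $\QG^{\mathrm{cop}}$ gives that $\Sigma\Delta \co \L^p(\QG) \to \L^\infty(\QG,\L^p(\QG))$ is an isometry, i.e. $\norm{\Sigma\Delta(x)}_{\L^\infty(\QG,\L^p(\QG))} = \norm{x}_{\L^p(\QG)}$. If $p < \infty$, the hyperfiniteness of $\L^\infty(\QG)$ yields the complete isometry $\L^\infty(\QG,\L^p(\QG)) = \L^\infty(\QG) \ot_{\min} \L^p(\QG) = (\L^\infty(\QG)_*)^* \ot_{\min} \L^p(\QG)$, and \eqref{belle-injection} embeds this completely isometrically into $\CB(\L^\infty(\QG)_*,\L^p(\QG))$, carrying the element $\Sigma\Delta(x) = \sum a_i \ot b_i$ precisely to the map $\omega \mapsto \sum \omega(a_i)b_i = (\omega\ot\Id)(\Sigma\Delta(x)) = \Theta(\omega)$. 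If $p = \infty$ one uses instead that $\L^\infty(\QG,\L^\infty(\QG)) = \L^\infty(\QG) \otvn \L^\infty(\QG)$ and the complete isometry \eqref{Ident-magic} $\L^\infty(\QG) \otvn \L^\infty(\QG) = \CB(\L^\infty(\QG)_*,\L^\infty(\QG))$, which again sends $\Sigma\Delta(x)$ to $\Theta$. In either case $\norm{\Theta}_{\cb} = \norm{\Sigma\Delta(x)}_{\L^\infty(\QG,\L^p(\QG))} = \norm{x}_{\L^p(\QG)}$, and the proof is complete.

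The main obstacle is the operator space bookkeeping: one has to keep careful track of which leg of $\Delta(x)$ the functional $\omega_y$ is slicing (which is what forces the flip $\Sigma\Delta$, and hence the passage to the co-opposite quantum group in the last step) and of the interaction between the opposite operator space structure on $\L^1(\QG)$ and the $R$-twist built into $\omega_y$, in order to be certain that $j$ is genuinely completely isometric and that $\Sigma\Delta(x)$ corresponds to $\Theta$ under \eqref{belle-injection} resp. \eqref{Ident-magic}. The extension of the formula $x *_{\infty,1} y = (\Id \ot \omega_y)\Delta(x)$ from $x \in \L^\infty(\QG)$ to $x \in \L^p(\QG)$ is a further (routine) point. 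Note that no co-amenability assumption is needed here, in contrast with Theorem \ref{Thm-description-multipliers-2}; moreover the inequality $\norm{T}_{\cb} \leq \norm{x}_{\L^p(\QG)}$ can alternatively be read off directly from the unimodular part of Theorem \ref{Th-conv-cb} (as in Corollary \ref{Cor-convolution}).
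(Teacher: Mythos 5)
Your proof is correct and follows essentially the same route as the paper's: factor the convolution operator through the completely isometric identification $\L^1(\QG)\cong\L^\infty(\QG)_*$, $y\mapsto\tau(yR(\cdot))$, and identify it under \eqref{belle-injection} (resp.\ \eqref{Ident-magic} for $p=\infty$) with the element $\Delta(x)$ of $\L^\infty(\QG,\L^p(\QG))$, whose norm equals $\norm{x}_{\L^p(\QG)}$ by Lemma \ref{Lemma-useful}. The only difference is that you track explicitly which leg of $\Delta(x)$ the functional slices, inserting the flip $\Sigma$ and invoking Lemma \ref{Lemma-useful} for the co-opposite quantum group — a bookkeeping point the paper's proof passes over silently.
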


\begin{proof}
If $p=\infty$, using \cite[Proposition 1.2.4]{BLM04}, we have a complete isometry 
$$
\begin{array}{ccccccc}
\L^\infty(\QG) & \longrightarrow  & \L^\infty(\QG) \otvn\L^\infty(\QG) &\ov{\eqref{Ident-magic}}{\longrightarrow} & \CB(\L^\infty(\QG)_*,\L^\infty(\QG)) \\
    x  &       \longmapsto  &    \Delta(x)                     & \longmapsto & (f \mapsto \big(f \ot \Id\big)(\Delta(x)) \ov{\eqref{action-module}}{=} x \star f \\
\end{array}.
$$
Recall that by \eqref{bracket} we have the completely isometric identification $\L^1(\QG) \to \L^\infty(\QG)_*$, $y \to \tau(yR(\cdot))$. 


Suppose $1 < p < \infty$. By Lemma \ref{Lemma-useful}, we have an isometry 
$$
\begin{array}{ccccccc}
\L^p(\QG) & \longrightarrow  & \L^\infty(\QG,\L^p(\QG)) \ov{\eqref{Def-L0-infty}}{=} \L^\infty(\QG) \ot_{\min} \L^p(\QG) &\ov{\eqref{belle-injection}}{\longrightarrow} & \CB(\L^\infty(\QG)_*,\L^p(\QG)) \\
    x  &       \longmapsto  &    \Delta(x)      & \longmapsto & (f \mapsto \big(f \ot \Id\big)(\Delta(x)) \ov{\eqref{action-module}}{=} x \star f \\
\end{array}
$$
which maps $f$ on the convolution operator $f * \cdot$. 
\end{proof}

\begin{remark} \normalfont
\label{Remark-complementation}
Let $\QG$ be a co-amenable compact quantum group of Kac type. Consider the isometric map $\Phi \co \L^p(\QG) \to \CB(\L^1(\QG),\L^p(\QG))$, $f \mapsto f * \cdot$ which is completely contractive. We will explain here why the range $\Ran \Phi$ of $\Phi$ is a completely contractively complemented subspace of $\CB(\L^1(\QG),\L^p(\QG))$. Let $\E$ be the canonical trace preserving normal conditional expectation associated with the coproduct $\Delta \co \L^\infty(\QG) \to \L^\infty(\QG) \otvn \L^\infty(\QG)$. A completely contractive projection from $\CB(\L^1(\QG),\L^p(\QG))$ onto $\Ran \Phi$ is given by $P \co \CB(\L^1(\QG),\L^p(\QG)) \to \CB(\L^1(\QG),\L^p(\QG))$, $T \mapsto \E(\Id_{\L^p(\QG)} \ot T)\Delta$ where $\Delta \co \L^1(\QG) \to \L^p(\QG,\L^1(\QG))$ and $\E \co \L^p(\QG,\L^p(\QG)) \to \L^p(\QG)$. Indeed, note that these two last maps are completely contractive\footnote{\thefootnote. It suffices by interpolation to prove that $\Delta \co \L^1(\QG) \to \L^\infty(\QG,\L^1(\QG))$ is completely contractive by a reiteration argument (repeat the proof of Lemma \ref{Lemma-useful}).}. Now, it suffices to use \cite[Lemma 2.1]{Arh11} and a variant of \cite[Proposition 3.5]{Arh11}.
\end{remark}


\begin{remark} \normalfont
If $\QG$ is a compact quantum group of Kac type such that $\L^\infty(\QG)$ is $\QWEP$ it is \textit{maybe} possible to adapt the proof of Proposition \ref{Prop-cb-L1-Lp} in the case $1 < p <\infty$. The difficulty is that the paper \cite{Jun1} introducing vector-valued noncommutative $\L^p$-spaces $\L^p(\cal{M},E)$ in the case of a $\QWEP$ von Neumann algebra $\cal{M}$ is not definitively finished. 
%
\end{remark}

\begin{remark} \normalfont
It is possible to give variants of Theorem \ref{Thm-description-multipliers-2} for the quantum torus $\T^d_{\theta}$ \cite{CXY13} or more generally for a twisted group von Neumann algebra $\VN(G,\sigma)$ by using the twisted coproducts $\Delta_{\sigma,1} \co \VN(G,\sigma) \to \VN(G,\sigma) \otvn \VN(G)$, $\lambda_{\sigma,s} \mapsto \lambda_{\sigma,s} \ot \lambda_{s}$ and $\Delta_{1,\sigma} \co \VN(G,\sigma) \to \VN(G)\otvn \VN(G,\sigma)$, $\lambda_{\sigma,s} \mapsto \lambda_{s} \ot \lambda_{\sigma,s}$ of \cite[(4.5)]{ArK23} which is a unital injective normal $*$-homomorphism. Indeed, these coproducts induce by preduality an obvious structure of $\L^1(\VN(G))$-bimodule on $\L^p(\VN(G,\sigma))$. So we can consider the  Fourier multipliers from the space $\L^1(\VN(G,\sigma))$ into $\L^p(\VN(G,\sigma))$. If $\VN(G,\sigma)$ is hyperfinite and if $G$ is amenable, we obtain with the same method an isometry $\frak{M}_\cb^{1,p}(G,\sigma) = \L^p(\VN(G))$. In particular, for the quantum tori we have an isometry $\frak{M}_\cb^{1,p}(\T^d_{\theta}) = \L^p(\T)$.
\end{remark}


Let $\cal{M}$ be a von Neumann algebra and $H$ be a Hilbert space. The $H$-valued noncommutative $\L^p$-spaces $\L^\infty(\cal{M},H_c)$ and $\L^\infty(\cal{M},H_r)$ are defined in \cite[Section 2]{JMX06}. Recall that the tensor product $\cal{M} \ot H$ is weak* dense in both spaces. For any element $\sum_{k=1}^n x_k \ot a_k$ of $\cal{M} \ot H$, we have by \cite[(2.9)]{JMX06}
\begin{equation}
\label{Def-LinftyL2c}
\bgnorm{\sum_{k=1}^n x_k \ot a_k}_{\L^\infty(\cal{M},H_c)}
=\bgnorm{\bigg(\sum_{i,j=1}^n \la a_j,a_i \ra_H x_i^*x_j\bigg)^{\frac{1}{2}}}_{\cal{M}}.
\end{equation}
In particular if $H=\L^2(\cal{N})$ is a noncommutative $\L^2$-space then for any $z=\sum_{k=1}^n x_k \ot a_k$  of $\cal{M} \ot \L^2(\cal{N})$ we have
\begin{align}
\MoveEqLeft
\label{normLinfty-L2}
\norm{z}_{\L^\infty(\cal{M},\L^2(\cal{N})_c)}
\ov{\eqref{Def-LinftyL2c}}{=} \bgnorm{\bigg(\sum_{i,j=1}^n \la a_j,a_i \ra_{\L^2(\cal{N})} x_i^*x_j\bigg)^{\frac{1}{2}}}_{\cal{M}}            
=\bgnorm{\sum_{i,j=1}^n \tau(a_i^*a_j) x_i^*x_j}_{\cal{M}}^{\frac{1}{2}} \\
&=\bgnorm{(\Id_{\cal{M}} \ot \tau)\bigg(\sum_{i,j=1}^n a_i^*a_j \ot x_i^*x_j\bigg)}_{\cal{M}}^{\frac{1}{2}}
=\bnorm{(\Id_{\cal{M}} \ot \tau)(z^*z)}_{\cal{M}}^{\frac{1}{2}}. \nonumber
\end{align} 

\begin{prop}
\label{Prop-coproduct-column}
Let $\QG$ be a compact quantum group of Kac type. The coproduct induces a completely isometric map $\Delta \co \L^2(\QG)_c \to \L^\infty(\QG,\L^2(\QG)_c)$ (and similarly for the row case) and a completely contractive map $\Delta \co \L^2(\QG) \to \L^\infty(\QG,\L^2(\QG))$.
\end{prop}

\begin{proof}
Let $x=[x_{ij}]$ be an element of $\M_n(\L^\infty(\QG))$. Since we have the inclusion $\L^\infty(\QG) \subset \L^2(\QG)$, each element $\Delta(x_{ij})$ belongs to $\L^\infty(\QG) \ot \L^2(\QG)$. We see that
\begin{align*}
\MoveEqLeft
\bnorm{(\Id \ot \Delta)(x)}_{\M_n(\L^\infty(\QG,\L^2(\QG)_c)))}             
=\bnorm{[\Delta(x_{ij})]}_{\M_n(\L^\infty(\QG,\L^2(\QG)_c))} 
=\bnorm{[\Delta(x_{ij})]}_{\L^\infty(\M_n(\L^\infty(\QG)), \L^2(\QG)_c)} \\
&\ov{\eqref{normLinfty-L2}}{=} \bnorm{(\Id_{\M_n(\L^\infty(\QG))} \ot \tau)([\Delta(x_{ij})]^*[\Delta(x_{ij})])}_{\L^\infty(\M_n(\L^\infty(\QG)), \L^2(\QG)_c)}^{\frac{1}{2}}\\
&=\bnorm{(\Id_{\M_n(\L^\infty(\QG))} \ot \tau)(\Id_{\M_n} \ot \Delta)([x_{ij}]^*[x_{ij}])}_{\M_n(\L^\infty(\QG))}^{\frac{1}{2}}
\ov{\eqref{Def-haar-state}}{=} 
\bnorm{(\Id_{\M_n} \ot \tau)(x^*x)}_{\M_n}^{\frac{1}{2}} \\
&\ov{\eqref{normLinfty-L2}}{=} \norm{x}_{\M_n(\L^2(\QG)_c)}.
\end{align*}
We conclude by density. The last part is obtained by interpolation with \cite[Corollary 7.11]{Pis03}.
\end{proof}

The following result is a generalization of \cite[Theorem 2.3]{GJP17} and \cite[Remark 2.4]{GJP17}. Here $\L^2(\QG)_r$ is the Hilbert space $\L^2(\QG)$ equipped with the row operator space structure and $\L^2(\QG)_c$ is the same Hilbert space equipped with the column operator space structure. Note that \cite{FHS11} essentially contains a proof of the complete contractivity in the case of group von Neumann algebras with anther argument. Proposition \ref{Prop-coproduct-column} closes a (small) gap in the proof of \cite[Theorem 2.3]{GJP17} and we complete a (small) gap in \cite[Remark 2.4]{GJP17} in the proof of the third part. See also \cite[Lemma 4.5 and Lemma 4.6]{CrN22} for identities which are maybe connected to Theorem \ref{Th-cb-L2-Multiplier}. 

\begin{thm}
\label{Th-cb-L2-Multiplier}
Let $\QG$ be a compact quantum group of Kac type.
\begin{enumerate}
    \item The map $\Phi \co \L^2(\QG)_c \to \CB(\L^2(\QG)_r,\L^\infty(\QG))$, $x \mapsto \cdot *x $ is a complete isometry. 
    \item The map $\Phi \co \L^2(\QG)_r \to \CB(\L^2(\QG)_c,\L^\infty(\QG))$, $x \mapsto \cdot *x $ is a complete isometry.
		\item The map $\Phi \co \L^2(\QG) \to \CB(\L^2(\QG),\L^\infty(\QG))$, $x \mapsto \cdot *x $ is a complete isometry.
\end{enumerate}
\end{thm}

\begin{proof}
1. We will show that we have the following commutative diagram of complete isometries.
\begin{equation*}
  \xymatrix{
    \L^2(\QG)_c \ar[rr]^{\Phi} \ar[d]_{\Delta} & & \CB(\L^2(\QG)_r,\L^\infty(\QG))\\
   \L^\infty(\QG,\L^2(\QG)_c)  \ar@{^{(}->}[rr]_{i} & & \big(\L^1(\QG) \otp \L^2(\QG)_r \big)^* \ar[u]_{\mathcal{I}^{-1}}
  }
\end{equation*} 
The first map $\Delta$ is given by Proposition \ref{Prop-coproduct-column}. According to \cite[pp. 134-135]{EfR00}, we have a completely isometric map\footnote{\thefootnote. Note that is the ``$\subset$'' is actually an equality by \cite[Theorem 2.4]{EKR93} since it is elementary to check that the operator space $\L^2(\QG)_{r}$ has the operator approximation property.}
$$
i \co \L^\infty(\QG,\L^2(\QG)_c) 
=\L^\infty(\QG) \otvn \L^2(\QG)_c \subset
\L^\infty(\QG) \otvn_{\cal{F}} \L^2(\QG)_c
= \big(\L^1(\QG) \otp \L^2(\QG)_r\big)^* 
$$ 
where $\otvn$ is the normal Fubini tensor product. With the identification \eqref{otp-duality}, we can use the canonical completely isometric map $\mathcal{I} \co \CB(\L^2(\QG)_c, \L^\infty(\QG)) \to \big(\L^2(\QG)_{c} \otp \L^1(\QG)\big)^*$, $T \mapsto \big( w \ot y \mapsto \la y,T(w) \ra_{\L^1(\QG),\L^\infty(\QG)}\big)$. If $x,w \in \L^2(\QG)$ note that by Remark \ref{Rem-Young-pqr} the convolution $w*x$ belongs to the space $\L^\infty(\QG)$. For any $x,w,y \in \L^\infty(\QG)$, we obtain
\begin{align*}
\MoveEqLeft
\big(\mathcal{I}(\cdot *x)\big)(w \ot y)             
= \la y,w*x \ra_{\L^1(\QG),\L^\infty(\QG)}
\ov{\eqref{convolution-et-crochet-QG}}{=} \la y*w,x \ra_{\L^1(\QG),\L^\infty(\QG)} 
\ov{\eqref{def-convolution-2}}{=} \la w \ot y,\Delta x \ra.
\end{align*}  
We obtain that the convolution operator $\mathcal{I}(\Phi(x))=\mathcal{I}(\cdot* x)$ is equal to $\la \cdot,\Delta(x) \ra=i(\Delta(x))$.

2. The second part is similar.

3. By \cite[Lemma 0.2]{Har99}, we have a complete contraction $(\CB(E_0,F_0),\CB(E_1,F_1))_\theta \subset \CB(E_\theta,F_\theta)$ for each $0<\theta<1$ with obvious notations. So by interpolation, we obtain a complete contraction $\Phi \co \L^2(\QG) \to \CB(\L^2(\QG),\L^\infty(\QG))$, $x \mapsto \cdot *x $. Consider the ``average'' map $P \co \CB(\L^2(\QG),\L^\infty(\QG)) \to \CB(\L^2(\QG),\L^\infty(\QG))$, $T\mapsto \E (\Id_{\L^\infty(\QG)} \ot T)\Delta$ where the completely contractive map $\Delta \co \L^2(\QG) \to \L^\infty(\QG,\L^2(\QG))$ is defined in Proposition \ref{Prop-coproduct-column} and where $\E \co \L^\infty(\QG,\L^\infty(\QG)) \to \L^\infty(\QG)$ is the conditional expectation associated to the coproduct, see \cite[Theorem 7.5]{DFSW16}. Using \cite[Lemma 2.1]{Arh11} and a variant of \cite[Proposition 3.5]{Arh11}, we see that this map is completely contractive. Now, it suffices to use \cite[1.2.7]{BLM04} to see that the map $\Phi \co \L^2(\QG) \to \CB(\L^2(\QG),\L^\infty(\QG))$ is a complete isometry.
\end{proof}



Recall that a finitely generated discrete group $G$ has rapid decay of order $r \geq 0$ with respect to a length function if we have an estimate
\begin{equation}
\label{Rapid-decay}
\norm{x}_{\VN(G)}
\lesssim (n+1)^r \norm{x}_{\L^2(\VN(G))}
\end{equation}
for any $x=\sum_{|s| \leq n} a_s \lambda_s$. This property has its origin in \cite{Haa78}. We refer to the survey \cite{Cha17} and to \cite[Section 2.2]{Bat21} for more information. We essentially generalize in the next proof an idea of \cite[Remark 2.5]{GJP17}. We refer to \cite{BrN06} for an extensive list of examples where the following result can be applied.  

\begin{prop}
\label{Prop-amenable}
Let $G$ be a discrete group satisfying the rapid decay property with $r=1$ and with respect to a length function which is conditionally negative. Suppose that there exist $K \geq 0$ such that $\norm{M_\varphi}_{\cb,\L^1(\VN(G)) \to \L^2(\VN(G))} \leq K\norm{M_\varphi}_{\L^1(\VN(G)) \to \L^2(\VN(G))}$ for any bounded multiplier $M_\varphi \co \L^1(\VN(G)) \to \L^2(\VN(G))$. Then the group $G$ is amenable.
\end{prop}

\begin{proof}
Since the length function is conditionally negative, we can consider the markovian semigroup $(T_t)_{t \geq 0}$ of completely positive Fourier multipliers on $\VN(G)$ defined by $T_t(\lambda_s)=\e^{-t|s|}\lambda_s$ for any $s \in G$ and any $t \geq 0$. Now, we follow a well-known argument for the free group. For any finite sum $x=\sum_{s \in G} a_s \lambda_s$ of the von Neumann algebra $\VN(G)$, we have using Cauchy-Schwarz inequality
\begin{align*}
\MoveEqLeft 
\label{HaagerupIneq}
\norm{T_t(x)}_\infty 
=\norm{T_t\bigg(\sum_{n \geq 0} \sum_{|s|=n} a_s \lambda_s\bigg) }_\infty 
=\norm{\sum_{n \geq 0} \sum_{|s|=n} \e^{-tn} a_s \lambda_s }_\infty \\
&\leq \sum_{n \geq 0} \e^{-tn}\norm{ \sum_{|s|=n} a_s \lambda_s }_\infty
\ov{\eqref{Rapid-decay}}{\leq} \sum_{n \geq 0} \e^{-tn} (n+1) \bigg( \sum_{|s|=n} |a_s|^2 \bigg)^{\frac12} \\ 
&\leq \bigg(\sum_{n \geq 0}  \e^{-2tn}(n+1)^2\bigg)^{\frac12}\bigg(\sum_{n \geq 0}\sum_{|s|=n} |a_s|^2 \bigg)^{\frac12} \\
&= \bigg(\sum_{n \geq 0}  \e^{-2tn}(n+1)^2\bigg)^{\frac12} \norm{x}_2 
= \bigg(\frac{1+\e^{-2t}}{(1-\e^{-2t})^3}\bigg)^{\frac12} \norm{x}_2.
\end{align*}
Moreover, we have
$$
\bigg(\frac{1+\e^{-2t}}{(1-\e^{-2t})^3}\bigg)^{\frac12}
\underset{0}{\sim} \bigg(\frac{1}{(1-\e^{-2t})^3}\bigg)^{\frac12}
\underset{0}{\sim} \frac{1}{(2t)^{\frac{3}{2}}}.
$$
We deduce the estimate
\begin{equation}
\label{}
\norm{T_t}_{\L^2(\VN(G)) \to \L^\infty(\VN(G))} 
\lesssim \frac{1}{t^{\frac{3}{2}}}, \quad 0 < t < 1. 
\end{equation}
This means that the Coulhon-Varopoulos dimension of the semigroup $(T_t)_{t \geq 0}$ in the sense of \cite{Arh24b} \cite{Are04} is less than $\frac{3}{2}$. 
By duality and the assumption, we deduce that
\begin{equation}
\label{}
\norm{T_t}_{\cb,\L^1(\VN(G)) \to \L^2(\VN(G))} 
\lesssim \frac{1}{t^{\frac{3}{2}}}, \quad 0 < t < 1. 
\end{equation}
For any $t>0$, note that by Theorem \ref{Th-cb-L2-Multiplier} and duality, we have 
$$
\norm{\varphi_t}_{\ell^2_G}
=\norm{T_t}_{\cb,\L^1(\VN(G)) \to \L^2(\VN(G))} <\infty.
$$ 
The weak* continuity of the semigroup $(T_t)_{t \geq 0}$ on the von Neumann algebra $\VN(G)$ implies that for any $s \in G$ we have  $
\varphi_t(s) 
\xra[t \to 0]{} 1$. 
Since each map $T_t$ is completely positive, the function $\varphi_t \ov{\mathrm{def}}{=} \e^{-t|\cdot|}$ is a continuous positive definite function with $\varphi_t(1)$ since each Fourier multiplier $T_t \co \VN(G) \to \VN(G)$ is unital.

Recall that by a combination of \cite[Proposition 18.3.5 p.~357]{Dix77} and \cite[Proposition 18.3.6 p.~358]{Dix77} a locally compact group $G$ is amenable if and only if the function 1 is the uniform limit over every compact set of square-integrable continuous positive-definite functions\footnote{\thefootnote. We can replace ``square-integrable continuous positive-definite functions'' by ``continuous positive definite functions of compact support''.}. So we conclude that $G$ is amenable.
%
\end{proof}

%

\begin{remark} \normalfont
In a subsequent publication, we will give examples of bounded Fourier multipliers on some groups  from $\L^1(\VN(G))$ into $\L^p(\VN(G))$ which are not completely bounded with $p>1$. Note, that it is possible to give an rather elementary proof of this fact for the free group with $p=2$ without the proof of Proposition \ref{Prop-amenable}.
\end{remark}


\subsection{Bounded versus completely bounded operators from $\L^1(\cal{M})$ into $\L^p(\cal{M})$} 
\label{Bounded-vs-cb}

In this section, we prove Theorem \ref{Th-subhomo} that in general the bounded norm and the completely bounded norm are different for operators from $\L^1(\cal{M})$ into $\L^p(\cal{M})$ where $\cal{M}$ is a semifinite von Neumann algebra in sharp contrast with the case of Fourier multipliers on co-amenable compact quantum groups of Kac type. This result in the case $p=\infty$ can be seen as a von Neumann algebra generalization of \cite{Los84} (see also \cite[pp. 118-120]{KaL18}). See also \cite{HuT83}, \cite{Osa91} and \cite{Smi83} for strongly related papers.

Recall that if $T \co X \to Y$ is a bounded operator between Banach spaces we have by \cite[(A.2.3) p. 333]{EfR00}:
\begin{equation}
\label{isom-quotient-mappings}
T \text{ is a quotient mapping }
\iff T^* \text{ is an isometry}.
\end{equation}

Recall that a $\mathrm{C}^*$-algebra $A$ is $n$-subhomogeneous \cite[Definition 2.7.6]{BrO08} if all its irreducible representations have dimension at most $n$ and subhomogeneous if $A$ is $n$-subhomogeneous for some $n$. It is known, e.g. \cite[Lemma 2.4]{ShU1}, that a von Neumann algebra $\cal{M}$ is $n$-subhomogeneous if and only if there exist some distinct integers $n_1,\ldots,n_k \leq n$, some \textit{abelian} von Neumann algebras $A_i \not=\{0\}$ and a $*$-isomorphism 
\begin{equation}
\label{AVN-subh}
\cal{M}
=\M_{n_1}(A_1) \oplus \cdots \oplus \M_{n_k}(A_k).
\end{equation} 

The following generalize \cite[Lemma 2.3]{LeZ22}. Note that the proof is simpler in the case where $\cal{M}$ is finite since in this case we have an inclusion $\cal{M} \subset \L^1(\cal{M})$.

\begin{prop}
\label{prop-embed-subhom}
Let $\cal{M}$ be a semifinite von Neumann algebra equipped with a normal semifinite faithful trace $\tau$. Suppose that $\cal{M}$ is not $n$-subhomogeneous. There exists a non zero $*$-homomorphism $\gamma \co \M_{n+1} \to \cal{M}$ valued in $\cal{M} \cap \L^1(\cal{M})$. 
\end{prop}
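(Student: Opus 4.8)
The plan is to build inside $\cal{M}$ a system of $n+1$ matrix units $(e_{ij})_{1\leq i,j\leq n+1}$ whose diagonal entries $e_{ii}$ have finite trace; then the linear extension $\gamma\co\M_{n+1}\to\cal{M}$, $[a_{ij}]\mapsto\sum_{i,j}a_{ij}e_{ij}$, is a nonzero $*$-homomorphism, and since $\tau(|e_{ij}|)=\tau\big((e_{ji}e_{ij})^{\frac12}\big)=\tau(e_{jj})<\infty$ its range lies in $\cal{M}\cap\L^1(\cal{M})$. So everything reduces to producing such matrix units.

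First I would find $n+1$ mutually orthogonal, mutually equivalent, nonzero projections $f_1,\dots,f_{n+1}\in\cal{M}$ (not yet worrying about their traces). Since $\cal{M}$ carries a faithful normal semifinite trace it has no type $\mathrm{III}$ summand, so its type decomposition reads $\cal{M}=\cal{M}_{\mathrm{II}}\oplus\bigoplus_\kappa\cal{M}_{\mathrm{I}_\kappa}$ with $\cal{M}_{\mathrm{I}_\kappa}$ homogeneous of type $\mathrm{I}_\kappa$. By the description \eqref{AVN-subh} of $n$-subhomogeneous von Neumann algebras, the hypothesis forces either $\cal{M}_{\mathrm{I}_\kappa}\neq 0$ for some $\kappa\geq n+1$, or $\cal{M}_{\mathrm{II}}\neq 0$. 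In the first case the unit of $\cal{M}_{\mathrm{I}_\kappa}$ is an orthogonal sum of $\kappa\geq n+1$ mutually equivalent abelian projections and I keep $n+1$ of them. In the second case semifiniteness yields a nonzero finite-trace projection $e\in\cal{M}_{\mathrm{II}}$; then $e\cal{M}e$ is of type $\mathrm{II}_1$, and using its center-valued trace $T$ I construct inductively orthogonal projections $f_1,\dots,f_{n+1}\leq e$ with $T(f_i)=\frac{1}{n+1}e$ — possible since at the $i$-th step the remaining ``mass'' $\frac{n+2-i}{n+1}e$ still dominates $\frac{1}{n+1}e$ — and these $f_i$ are mutually equivalent because equal center-valued trace implies Murray--von Neumann equivalence in a finite von Neumann algebra.

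Next I would cut down to finite trace and assemble the matrix units. Fix partial isometries $w_i$ with $w_i^*w_i=f_1$ and $w_iw_i^*=f_i$ (and $w_1=f_1$). By semifiniteness of $\tau$ the corner $f_1\cal{M}f_1$ contains a nonzero projection $e_1$ with $\tau(e_1)<\infty$ (take a spectral projection of some $0\leq y\leq f_1$ with $0<\tau(y)<\infty$). Put $v_i\ov{\mathrm{def}}{=} w_ie_1$ and $e_{ij}\ov{\mathrm{def}}{=} v_iv_j^*$. From $e_1\leq f_1$ one gets $v_i e_1=v_i$ and $v_i^*v_i=e_1$, and from $w_j^*w_k=\delta_{jk}f_1$ one gets $v_j^*v_k=\delta_{jk}e_1$; hence $e_{ij}^*=e_{ji}$, $e_{ij}e_{kl}=\delta_{jk}e_{il}$, and $e_{ii}=w_ie_1w_i^*\leq f_i$ with $\tau(e_{ii})=\tau(w_i^*w_ie_1)=\tau(e_1)<\infty$. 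This is the desired system of matrix units, and defining $\gamma$ as in the first paragraph (with $\gamma(E_{11})=e_1\neq 0$ showing $\gamma\neq 0$, and $\tau(|e_{ij}|)=\tau(e_{jj})<\infty$ showing $\gamma(\M_{n+1})\subseteq\cal{M}\cap\L^1(\cal{M})$) completes the proof.

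The one genuinely non-formal ingredient is the von Neumann algebraic step producing the $n+1$ mutually equivalent orthogonal projections, especially in the type $\mathrm{II}$ case: it rests on the exact-range property of the center-valued trace on a finite von Neumann algebra (the ``divisibility by $n+1$'' of the unit in type $\mathrm{II}_1$) together with the fact that on projections of a finite von Neumann algebra equality of center-valued traces is equivalent to Murray--von Neumann equivalence. Everything else — the type decomposition, cutting down by a finite-trace projection using semifiniteness, transporting a projection along partial isometries, and the matrix-unit bookkeeping — is routine comparison theory.
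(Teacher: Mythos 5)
Your proof is correct and follows essentially the same route as the paper: decompose $\cal{M}$ into its type $\I$ and type $\II$ summands, produce $n+1$ mutually orthogonal equivalent projections in whichever summand witnesses the failure of $n$-subhomogeneity, and cut down by a nonzero finite-trace projection before assembling the embedding. The only cosmetic differences are that you divide the unit inside a finite corner via the center-valued trace (rather than applying the division lemma to the whole type $\II$ summand, as the paper does via \cite[Lemma 6.5.6]{KaR2}) and that you package the conclusion as an explicit system of matrix units instead of a tensor-product isomorphism.
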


\begin{proof}
By \cite[Section V]{Tak02}, we can consider the direct sum decomposition $\cal{M} = \cal{M}_\I \oplus \cal{M}_\II$ of $\cal{M}$ into a type I summand $\cal{M}_\I$ and a type II summand $\cal{M}_\II$.

\vspace{0.2cm}

Assume that $\cal{M}_\II \not= \{0\}$. According to \cite[Lemma 6.5.6]{KaR97}, there exist $n+1$ equivalent mutually orthogonal projections $e_1,\ldots,e_{n+1}$ in $\cal{M}_\II$ such that $e_1+\cdots+e_{n+1}=1$. Then  by \cite[Proposition V.1.22]{Tak02} and its proof, we have a $*$-isomorphism $\cal{M}_\II = \M_{n+1} \otvn (e_1 \cal{M}_\II e_1)$. Let $f$ be a non zero projection of $e_1 \cal{M}_\II e_1$ with finite trace. For any positive matrix $x \in \M_{n+1}$, we have
$$
\tau_{\cal{M}_\II}(x \ot f)
=(\tr \ot \tau_{e_1 \M_\II e_1})(x \ot f)
=\tr(x) \tau_{e_1 \M_\II e_1} (f) 
<\infty.
$$ 
Hence the mapping $\gamma \co \M_{n+1} \to \cal{M}_\II \subset \cal{M}$, $x \mapsto x \ot f$ is a non zero $*$-homomorphism taking values in $\L^1(\cal{M})$.

If $\cal{M}_\II =\{0\}$, then $\cal{M}=\cal{M}_\I$ is of type I. Since $\cal{M}$ is not subhomogeneous, it follows from the structure result \eqref{AVN-subh} and \cite[Theorem V.1.27 p. 299]{Tak02} that there exists a Hilbert space $H$ with  $\dim(H) \geq n+1$ and an abelian von Neumann algebra $A$ such that $\cal{M}$ contains $\B(H) \otvn A$ as a summand. Let $e \in \B(H)$ be a rank one projection and define $\tau_{A} \co A_+ \to [0,\infty]$ by 
$$
\tau_{A}(z) 
\ov{\mathrm{def}}{=} \tau_{\cal{M}}(e \ot z).
$$ 
Then $\tau_{A}$ is a normal semifinite faithful trace and $\tau_{\cal{M}}$ coincides with $\tr \ot \tau_{A}$ on $\B(H)_+ \ot A_+$. Let $f \in A$ be a non zero projection with finite trace. For any finite rank $a \in \B(H)_+$, it follows from the foregoing that 
$$
\tau_\cal{M}(a \ot f)
<\infty.
$$ 
Now let $(e_1,\ldots,e_{n+1})$ be an orthonormal family in $H$. Then the mapping $\gamma \co \M_{n+1} \to \cal{M}_\II$, $[a_{ij}] \to \sum_{i,j=1}^{n+1} a_{ij} \ovl{e_j} \ot e_i \ot f$ is a non zero $*$-homomorphism and the restriction of $\tau_{\cal{M}}$ to the positive part of its range is finite. Hence $\gamma$ is valued in $\L^1(\cal{M})$.
\end{proof}

Below, $\ot_\epsi$ denotes the injective tensor product of \cite[Chapter 3]{Rya02}, $\otpb$ is the projective tensor product of \cite[Chapter 2]{Rya02}, $\otpb$  denotes the operator space projective tensor product. Moreover $\C_n$ and $\mathrm{R}_n$ denotes a Hilbert column space or a Hilbert row space of dimension $n$ defined in \cite[1.2.23]{BLM04}. For any $z \in X \ot Y$, we recall that 
\begin{equation}
\label{def-injective-norm}
\norm{z}_{X \ot_\epsi Y}
\ov{\mathrm{def}}{=} \sup \big\{|\langle z, x' \ot y' \rangle | : \norm{x'}_{X^*} \leq 1, \norm{y'}_{Y^*} \leq 1 \big\}.
\end{equation}
For any Banach spaces $X$ and $Y$ and any operator space $E$ and $F$, we have isometries
\begin{equation}
\label{dual-proj}
(X \otpb Y)^*
=\B(X,Y^*)
\quad \text{and} \quad
(E \otpb F)^*
=\CB(E,F^*).
\end{equation}
The following lemma is stated in the case $p=\infty$ in \cite{Los84} with a more complicated proof.
\begin{lemma}
\label{Elementary-lemma}
Let $n \geq 1$ be an integer. Suppose $1 \leq p \leq \infty$. The canonical map $\Phi_{n,p} \co \M_n \ot_{\min} S^p_n \to \M_n \ot_\epsi S^p_n$, $x \ot y \to x \ot y$ is bijective and satisfies
$$
\norm{\Phi_{n,p}^{-1}} 
\geq n^{1-\frac{1}{2p}}.
$$
\end{lemma}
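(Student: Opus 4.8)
The plan is to exhibit a single explicit tensor $z \in \M_n \ot S^p_n$ on which the injective norm is small while the minimal norm is large, and then use \eqref{isom-quotient-mappings} / duality only insofar as it is convenient; in fact a direct computation is cleanest. The natural candidate is the ``identity'' tensor $z = \sum_{i,j=1}^n e_{ij} \ot e_{ij} \in \M_n \ot S^p_n$, i.e. the element corresponding to the identity map under $\M_n \ot S^p_n \subset \CB(S^{p^*}_n, \M_n) $ (or under the Banach-space identification $\M_n \ot_\epsi S^p_n = \B(S^{p^*}_n, \M_n)$ via \eqref{def-injective-norm} and the appropriate predual pairing). First I would compute $\norm{z}_{\M_n \ot_{\min} S^p_n}$. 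Using the interpolation definition $\L^p(\M_n, E) = (\M_n \ot_{\min} E, S^1_n \ot_{\proj} E)_{1/p}$ from \eqref{Def-vector-valued-Lp-non-com} together with \eqref{Def-L0-infty}, $\norm{z}_{\M_n \ot_{\min} S^p_n}$ equals $\norm{z}_{\L^p(\M_n, S^p_n)}$; but $z$ is (up to the flip) the canonical maximally entangled-type vector, and for it one has $\norm{z}_{\L^p(\M_n,S^p_n)} = \norm{z}_{S^p_{n^2}}$ computed with respect to the operator-space structure — more concretely, $z = \sum_{ij} e_{ij}\ot e_{ij}$ viewed in $S^p_n(S^p_n) = S^p_{n^2}$ has all $n$ singular values equal to... one must be a little careful, but the standard fact (same computation as in the Effros--Ruan or Pisier books) gives $\norm{z}_{\M_n\ot_{\min}S^p_n} = n^{1/p'}\cdot n^{?}$; in any case it is of order $n^{1-1/(2p)}$ up to at most a constant — this is the quantity one actually needs for the stated inequality, so I would compute it precisely via the column/row decomposition $z = \sum_j C_j \ot R_j$ and the fact that $\norm{\mathrm{C}_n}$ in $S^p$ scales like $n^{1/2}$ on one side.

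Next I would bound $\norm{z}_{\M_n \ot_\epsi S^p_n}$ from above by $1$ (or a small constant). By \eqref{def-injective-norm}, $\norm{z}_{\M_n \ot_\epsi S^p_n} = \sup\{ |\la z, x'\ot y'\ra| : \norm{x'}_{(\M_n)^*}\le 1, \norm{y'}_{(S^p_n)^*}\le 1\}$. Identifying $(\M_n)^* = S^1_n$ with the trace pairing and $(S^p_n)^* = S^{p^*}_n$, we get $\la z, x'\ot y'\ra = \sum_{ij}\la e_{ij}, x'\ra \la e_{ij}, y'\ra = \tr(x'^{\mathsf t} y')$ (a suitable transpose), so $|\la z, x'\ot y'\ra| \le \norm{x'}_{S^1_n}\norm{y'}_{S^\infty_n} \le \norm{x'}_{S^1_n}\norm{y'}_{S^{p^*}_n}\le 1$ since $S^{p^*}_n \hookrightarrow S^\infty_n$ contractively. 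Hence $\norm{z}_{\M_n \ot_\epsi S^p_n}\le 1$. Then
\[
\norm{\Phi_{n,p}^{-1}}
\ge \frac{\norm{\Phi_{n,p}^{-1}(z)}_{\M_n\ot_{\min}S^p_n}}{\norm{z}_{\M_n\ot_\epsi S^p_n}}
= \frac{\norm{z}_{\M_n\ot_{\min}S^p_n}}{\norm{z}_{\M_n\ot_\epsi S^p_n}}
\ge n^{1-\frac{1}{2p}},
\]
which is the claim. (Bijectivity of $\Phi_{n,p}$ is automatic since both spaces are the finite-dimensional vector space $\M_n\ot S^p_n$ and $\Phi_{n,p}$ is the formal identity, which is clearly a linear bijection.)

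The main obstacle is the exact evaluation of $\norm{z}_{\M_n \ot_{\min} S^p_n}$: one must pin down the operator-space structure of $S^p_n$ and justify that the minimal tensor norm of the identity tensor is exactly $n^{1-1/(2p)}$ (and not merely $\gtrsim$). I would handle this by interpolating between the endpoints: at $p=\infty$, $\norm{z}_{\M_n\ot_{\min}\M_n} = \norm{\Id_{\M_n}}_{\cb} \cdot$(normalization) $= n$ by the standard computation with matrix units (this is the $p=\infty$ case already recorded in \cite{Los1}); at $p=1$, $\norm{z}_{\M_n\ot_{\min}S^1_n} = \norm{z}_{S^1_n(S^1_n)}$-type quantity equal to $n^{1/2}$, using that $z$ as an operator $S^\infty_n \to S^1_n$ has completely bounded norm $n^{1/2}$ (the row/column decomposition $z = \sum_j |e_j\ra\la e_j| \ot |e_j\ra\la e_j|$, or rather the standard rank-one-sum estimate). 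Then Calderón interpolation with $\theta = 1/p$ between $n^{1/2}$ and $n$ gives exactly $n^{1/2 + (1-1/2)/p'} \cdot$... one checks the arithmetic: $n^{(1-\theta)/2}\cdot n^{\theta} = n^{(1+\theta)/2} = n^{(1+1/p)/2}$, hmm, so I would instead organize the endpoints so the exponents interpolate to $1 - \tfrac{1}{2p}$, namely value $n^{1/2}$ at $p=1$ is wrong and should be reconsidered — the correct endpoint values are $n$ at $p=\infty$ and $n^{1/2}$ at $p=1$ would give $1-\tfrac1{2p}$ only if... $\tfrac12\cdot\tfrac1p + 1\cdot(1-\tfrac1p)\ne 1-\tfrac1{2p}$; rather one wants endpoints $n^{1/2}$ and $n$ with the \emph{geometric} interpolation $n^{1/2}$ at $p=2$ — so in the write-up I would simply compute $\norm{z}_{\M_n\ot_{\min}S^p_n}$ directly for general $p$ via the singular-value/duality computation rather than by interpolation, since the direct route avoids the bookkeeping; interpolation serves only as a sanity check. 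This final numerology is the one genuinely delicate point and deserves the care the paper presumably gives it.
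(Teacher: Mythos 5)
Your overall strategy --- exhibit a single tensor whose $\epsi$-norm is at most $1$ and whose min-norm is at least $n^{1-\frac{1}{2p}}$ --- is exactly the paper's, and your upper bound $\norm{z}_{\M_n \ot_\epsi S^p_n} \leq 1$ is correct. The gap is the lower bound on the min-norm, which you explicitly leave open, and for your choice of witness it in fact fails. The tensor $z=\sum_{i,j}e_{ij}\ot e_{ij}$ corresponds, under the complete isometry \eqref{belle-injection} with $E=S^1_n$, to the identity map $S^1_n \to S^p_n$, so $\norm{z}_{\M_n\ot_{\min}S^p_n}=\norm{\Id}_{\cb,S^1_n\to S^p_n}$, and one can compute this exactly: by the identity $\norm{x}_{\M_n(E)}=\sup\{\norm{a\cdot x\cdot b}_{S^p_n(E)}:\norm{a}_{S^{2p}_n}\leq 1,\norm{b}_{S^{2p}_n}\leq 1\}$ (recorded in the paper around \eqref{d-norm-1}), the compression $\sum_{ij}ae_{ij}b\ot e_{ij}=\big(\sum_i ae_i\ot e_i\big)\big(\sum_j b^*e_j\ot e_j\big)^*$ is a rank-one element of $S^p_{n^2}$ of norm $\norm{a}_{S^2_n}\norm{b}_{S^2_n}$, whose supremum over the unit balls of $S^{2p}_n$ is $\big(n^{\frac{1}{2}-\frac{1}{2p}}\big)^2=n^{1-\frac{1}{p}}$. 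Since $\norm{z}_{\M_n\ot_\epsi S^p_n}=1$, your witness yields only $\norm{\Phi_{n,p}^{-1}}\geq n^{1-\frac{1}{p}}$, which is strictly smaller than $n^{1-\frac{1}{2p}}$ for every finite $p$; the two agree only at $p=\infty$, the one case where your argument is complete (and which is already Losert's theorem). So the ``delicate numerology'' you flag is not a bookkeeping issue but fatal for this particular witness.

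The paper instead takes the column-supported tensor $\sum_{i=1}^n e_{i1}\ot e_{i1}$, so that the min-norm computation takes place inside $\mathrm{C}_n\ot_{\min}\mathrm{C}^p_n$, identified with a Schatten class by interpolating the endpoint identities $\mathrm{C}_n\ot_{\min}\mathrm{C}_n=S^2_n$ and $\mathrm{C}_n\ot_{\min}\mathrm{R}_n=S^\infty_n$ (this is \eqref{R-C-Cp}), while the $\epsi$-norm restricts, by injectivity of $\ot_\epsi$, to $\ell^2_n\ot_\epsi\ell^2_n$ and is killed by Cauchy--Schwarz. The structural point is that the witness must be chosen so that its min-norm is governed by an $S^{2p}$-type quantity rather than the $S^p$-type behaviour of the full identity tensor; a column tensor does this, the identity tensor does not. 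Two smaller remarks. First, your interpolation ``sanity check'' fails only because of an arithmetic slip: the exponent $\tfrac12$ must sit at the $p=1$ endpoint, and then $n^{1\cdot(1-\theta)}\,n^{\theta/2}=n^{1-\frac{1}{2p}}$ with $\theta=\tfrac1p$; but even corrected it could not replace the computation, since complex interpolation produces upper bounds and one cannot deduce a lower bound for $\norm{\Phi_{n,p}^{-1}}$ by interpolating lower bounds at the endpoints. Second, be warned that the exponent accounting in this lemma is genuinely treacherous (note for instance that $\norm{\I_n}_{S^{2p}_n}=n^{\frac{1}{2p}}$, not $n^{1-\frac{1}{2p}}$), so it is worth redoing the paper's chain line by line before relying on the constant $n^{1-\frac{1}{2p}}$ in Theorem \ref{Th-subhomo}.
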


\begin{proof}
Note that using \cite[Corollary 2.7.7]{Pis03} in the second equality, we have isometrically
\begin{align}
\label{R-C-Cp}
\MoveEqLeft
\mathrm{C}_n \ot_{\min} \mathrm{C}_n^p
=\mathrm{C}_n \ot_{\min} (\mathrm{C}_n,\mathrm{R}_n)_{\frac{1}{p}} 
=(\mathrm{C}_n \ot_{\min} \mathrm{C}_n,\mathrm{C}_n \ot_{\min} \mathrm{R}_n)_{\frac{1}{p}} 
=\big(S^2_n,S^\infty_n\big)_{\frac{1}{p}}
=S^{2p}_n.
\end{align} 
We deduce that
$$
\norm{\sum_{i=1}^{n} e_{i1} \ot e_{i1}}_{\M_n \ot_{\min} S^p_n}
=\norm{\sum_{i=1}^{n}  e_i \ot e_i}_{\mathrm{C}_n \ot_{\min} \mathrm{C}^p_n}
=\norm{\I_n}_{S^{2p}_n}
\ov{\eqref{R-C-Cp}}{=} n^{1-\frac{1}{2p}}.
$$
On the other hand, using the injectivity property \cite[Proposition 4.3]{DeF93} of $\ot_\epsi$ in the first equality and the Cauchy-Schwarz inequality we have
\begin{align*}
\MoveEqLeft
\norm{\sum_{i=1}^{n} e_{i1} \ot e_{i1}}_{\M_n \ot_\epsi S^p_n}
=\norm{\sum_{i=1}^{n} e_{i} \ot e_{i}}_{\ell^2_n \ot_\epsi \ell^2_n}
\ov{\eqref{def-injective-norm}}{=} \sup\left\{\bigg|\sum_{i=1}^{n} y_i z_i\bigg|: \norm{y}_{\ell^2_n} \leq 1,\norm{z}_{\ell^2_n} \leq 1 \right\} 
\leq 1.
\end{align*} 
\end{proof}

With this lemma, we can prove the following result.

\begin{thm}
\label{Th-subhomo}
Let $\cal{M}$ be a semifinite von Neumann algebra. Suppose $1 \leq p \leq \infty$. The following are equivalent.
\begin{enumerate}
	\item The canonical map $\Theta \co \CB(\L^1(\cal{M}),\L^p(\cal{M})) \to \B(\L^1(\cal{M}),\L^p(\cal{M}))$ is an isomorphism.
	\item $\cal{M}$ is subhomogeneous (in particular $\cal{M}$ is of type $\I$).
\end{enumerate}
In this case, if $C$ is a positive constant such that $\norm{T}_{\L^1(\cal{M}) \to \L^p(\cal{M})} \geq C\norm{T}_{\cb,\L^1(\cal{M}) \to \L^p(\cal{M})}$ for any completely bounded map $T \co \L^1(\cal{M}) \to \L^p(\cal{M})$ then $\cal{M}$ is $\big \lceil{  (\frac{1}{C})^{\frac{2p}{2p-1}} }\big \rceil$-subhomogeneous.
\end{thm}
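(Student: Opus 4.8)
The plan is to establish first the quantitative bound — which already yields $(1)\Rightarrow(2)$ via the open mapping theorem — and then the implication $(2)\Rightarrow(1)$. A preliminary step is to recast Lemma~\ref{Elementary-lemma} in operator-theoretic terms. Since $S^1_m$ is finite-dimensional, \eqref{belle-injection} is an onto complete isometry and $(S^1_m)^*=\M_m$, so $\M_m\ot_{\min}S^p_m=\CB(S^1_m,S^p_m)$ isometrically; likewise the Banach-space identity $(S^1_m)^*\ot_\epsi S^p_m=\B(S^1_m,S^p_m)$ holds. Under these identifications the element $\sum_{i=1}^m e_{i1}\ot e_{i1}$ becomes a linear map $u_m\co S^1_m\to S^p_m$ with
\[
\norm{u_m}_{\cb,S^1_m\to S^p_m}\;\geq\; m^{1-\frac{1}{2p}}\,\norm{u_m}_{S^1_m\to S^p_m},
\]
and this inequality is unchanged if $S^p_m$ is replaced by $\L^p(\M_m,t\,\tr)$ for any $t>0$, since rescaling the trace multiplies both the bounded and the completely bounded norm by $t^{1/p}$.

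For the quantitative statement, fix $C>0$ with $\norm{T}_{\L^1(\cal{M})\to\L^p(\cal{M})}\geq C\norm{T}_{\cb,\L^1(\cal{M})\to\L^p(\cal{M})}$ for every completely bounded $T$, and suppose, towards a contradiction, that $\cal{M}$ is not $N$-subhomogeneous for some integer $N\geq(1/C)^{\frac{2p}{2p-1}}$. By Proposition~\ref{prop-embed-subhom} there is a nonzero $*$-homomorphism $\gamma\co\M_{N+1}\to\cal{M}$ with range in $\cal{M}\cap\L^1(\cal{M})$; as $\M_{N+1}$ is simple, $\gamma$ is a $*$-isomorphism onto its image, and since $\gamma$ carries positive elements into $\L^1(\cal{M})$ the trace $\tau$ restricts on $\gamma(\M_{N+1})$ to $t\,\tr\circ\gamma^{-1}$ for some $t>0$ (the $\gamma(e_{kk})$ being equivalent projections of finite, hence equal, trace). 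Thus $\gamma$ induces complete isometries $\gamma_q\co\L^q(\M_{N+1},t\,\tr)\to\L^q(\cal{M})$ for $1\leq q\leq\infty$, while the standard existence theorem for trace-preserving conditional expectations provides a normal $\E\co\cal{M}\to\gamma(\M_{N+1})$, inducing complete contractions $\E_q\co\L^q(\cal{M})\to\L^q(\M_{N+1},t\,\tr)$ with $\E_q\gamma_q=\Id$. Viewing $u_{N+1}$ as acting $\L^1(\M_{N+1},t\,\tr)\to\L^p(\M_{N+1},t\,\tr)$ and setting $T:=\gamma_p\circ u_{N+1}\circ\E_1$, complete contractivity of $\gamma_p,\E_1$ gives $\norm{T}_{\L^1\to\L^p}\leq\norm{u_{N+1}}$, and the identity $u_{N+1}=\E_p\circ T\circ\gamma_1$ (valid since $\E_q\gamma_q=\Id$) together with complete contractivity of $\E_p,\gamma_1$ gives $\norm{T}_{\cb}\geq\norm{u_{N+1}}_{\cb}$. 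The hypothesis then forces $\norm{u_{N+1}}\geq C\norm{u_{N+1}}_{\cb}\geq C(N+1)^{1-\frac{1}{2p}}\norm{u_{N+1}}$, so $(N+1)^{1-\frac{1}{2p}}\leq 1/C$, i.e. $N+1\leq(1/C)^{\frac{2p}{2p-1}}$, contradicting $N\geq(1/C)^{\frac{2p}{2p-1}}$. Hence $\cal{M}$ is $N$-subhomogeneous for every such $N$, in particular for $N=\bigl\lceil(1/C)^{\frac{2p}{2p-1}}\bigr\rceil$. Finally, $\Theta$ is always a contractive injection, so if $\Theta$ is an isomorphism the open mapping theorem furnishes such a $C$, and then $\cal{M}$ is subhomogeneous, a fortiori of type $\I$; this is $(1)\Rightarrow(2)$.

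For $(2)\Rightarrow(1)$, write $\cal{M}=\M_{n_1}(A_1)\oplus\cdots\oplus\M_{n_k}(A_k)$ as in \eqref{AVN-subh}, with $k\leq n$, each $n_i\leq n$, each $A_i$ abelian. Given a bounded $T\co\L^1(\cal{M})\to\L^p(\cal{M})$, I would decompose it along the $\ell^1$- and $\ell^p$-direct sums into its $k^2$ blocks $\L^1(\M_{n_j}(A_j))\to\L^p(\M_{n_i}(A_i))$, and each block into its at most $n_i^2n_j^2$ corner components obtained by cutting with the matrix units $e_{kl}\ot 1$; all the block and corner projections and inclusions are complete contractions, and each corner of $\L^q(\M_{n_i}(A_i))$ is completely isometric to $\L^q$ of an abelian von Neumann algebra. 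This reduces matters to bounding $\norm{S}_{\cb,\L^1(A)\to\L^p(A')}$ for bounded maps $S$ between $\L^p$-spaces of abelian von Neumann algebras, each such $S$ having bounded norm at most $\norm{T}$. Since the operator space structure of $\L^1$ of an abelian von Neumann algebra is the maximal one, \eqref{max-et-cb} gives $\norm{S}_{\cb,\L^1(A)\to\L^p(A')}=\norm{S}_{\L^1(A)\to\L^p(A')}\leq\norm{T}$. Reassembling yields $\norm{T}_{\cb,\L^1(\cal{M})\to\L^p(\cal{M})}\leq c_n\,\norm{T}_{\L^1(\cal{M})\to\L^p(\cal{M})}$ for a constant $c_n$ depending only on $n$, and with the trivial reverse bound this makes $\Theta$ a bijection with bounded inverse, hence an isomorphism.

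I expect the main obstacle to lie in the bookkeeping of the second paragraph: correctly matching the completely bounded and bounded operator norms of $u_m$ with the minimal and injective tensor norms of Lemma~\ref{Elementary-lemma}, and then verifying that pre- and post-composition with the complete isometry $\gamma_p$ and the conditional expectation $\E$ transports the ratio $\norm{u_m}_{\cb}/\norm{u_m}$ to $T$ with no loss — which forces one to check that $\E$ is genuinely a left inverse of $\gamma$ at every $\L^q$-level and that the scalar from the non-normalized trace on $\gamma(\M_{N+1})$ really cancels. The corner/block decomposition in the last paragraph and the use of the maximality of $\L^1$ of an abelian algebra are comparatively routine.
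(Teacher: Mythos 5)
Your argument is correct and reaches the stated quantitative bound, but by a route dual to the paper's. The paper stays on the predual side: it identifies $\CB(\L^1(\cal{M}),\L^p(\cal{M}))$ and $\B(\L^1(\cal{M}),\L^p(\cal{M}))$ with the duals of $\cal{M}_*\otp\L^{p^*}(\cal{M})$ and $\cal{M}_*\otpb\L^{p^*}(\cal{M})$, observes via \eqref{isom-quotient-mappings} that the preadjoints of the complete isometries $J_q=k^{-1/q}\gamma$ are quotient maps, that the Banach projective tensor product of quotient maps is again a quotient map, and dualizes to plant an isometric copy of $\M_n\ot_\epsi S^p_n$ inside $\B(\L^1(\cal{M}),\L^p(\cal{M}))$, so that Lemma~\ref{Elementary-lemma} is invoked directly as a statement about the two tensor norms. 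You instead read the lemma as producing a concrete operator $u_m\co S^1_m\to S^p_m$ with $\norm{u_m}_{\cb}\geq m^{1-\frac{1}{2p}}\norm{u_m}$ and transport it into $\cal{M}$ by conjugating with $\gamma$ and a conditional expectation; this is the primal version of the same computation and yields the identical numerical conclusion, while your block/corner decomposition together with $\L^1(A)=\max\L^1(A)$ for abelian $A$ and \eqref{max-et-cb} supplies the implication $2.\Rightarrow 1.$ that the paper leaves to the reader. One step you must make explicit: $\gamma(\M_{N+1})$ is a \emph{non-unital} subalgebra of $\cal{M}$ (its unit is the finite-trace projection $e=\gamma(1)$, which differs from $1_{\cal{M}}$ whenever $\tau$ is infinite), so the standard existence theorem for trace-preserving conditional expectations does not apply verbatim. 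The fix is to first compress by $e$, which is a complete contraction on every $\L^q(\cal{M})$ since $\norm{e}_{\infty}=1$, and then apply the conditional expectation for the unital, finite-trace inclusion $\gamma(\M_{N+1})\subset e\cal{M}e$; the composite still satisfies $\E_q\gamma_q=\Id$, so nothing is lost, but this is precisely the point that the paper's quotient-map formulation is designed to sidestep. A last cosmetic remark: rescaling the trace on both source and target multiplies the $\L^1\to\L^p$ norms by $t^{\frac{1}{p}-1}$ rather than $t^{\frac{1}{p}}$; this is harmless since only the ratio of the completely bounded norm to the bounded norm enters your argument.
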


\begin{proof}
Recall that we have canonical isometries 
$$
\CB(\L^1(\cal{M}),\L^p(\cal{M}))
\ov{\eqref{dual-proj}}{=} \big(\cal{M}_* \otp \L^{p^*}(\cal{M})\big)^*
\quad \text{and} \quad
\B(\L^1(\cal{M}),\L^p(\cal{M}))
\ov{\eqref{dual-proj}}{=} \big(\cal{M}_* \otpb \L^{p^*}(\cal{M})\big)^*.
$$

1. $\Rightarrow$ 2: Suppose that there exists a constant $C > 0$ such that 
\begin{equation}
\label{ine-iso}
\norm{\Theta(x)}_{(\cal{M}_* \otpb \L^{p^*}(\cal{M}))^*} 
\geq C \norm{x}_{(\cal{M}_* \otp \L^{p^*}(\cal{M}))^*},\quad x \in \CB(\L^1(\cal{M}),\L^p(\cal{M})).
\end{equation}
We define $n \ov{\mathrm{def}}{=} \big\lfloor{ (\frac{1}{C})^{\frac{2p}{2p-1}} } \big\rfloor +1$. We have $(\frac{1}{C})^{\frac{2p}{2p-1}} < \big\lfloor{ (\frac{1}{C})^{\frac{2p}{2p-1}} } \big\rfloor+1
=n$. Hence 
\begin{equation}
\label{Estimate-impossible}
\frac{1}{C}
<n^{\frac{2p-1}{2p}}
=n^{1-\frac{1}{2p}}.
\end{equation}
Suppose that $\cal{M}$ is not $\big \lfloor{ (\frac{1}{C})^{\frac{2p}{2p-1}} }\big \rfloor$-subhomogeneous. By Proposition \ref{prop-embed-subhom}, there exists a non zero $*$-homomorphism $\gamma \co \M_n \to \cal{M}$ taking values in $\cal{M} \cap \L^1(\cal{M})$. Let $\tau'=\tau \circ \gamma \co \M_n \to \C$. Then $\tau'$ is a non zero trace on the factor $\M_n$ hence there exists $k > 0$ such that $\tau' =k\tr$.  We deduce a complete isometry $J_p\ov{\mathrm{def}}{=}k^{-\frac{1}{p}}\gamma \co S^p_n \to \L^p(\cal{M})$. Since $\M_n$ is finite-dimensional, 
 the map $J_\infty$ is weak* continuous in the case $p=\infty$. The canonical embedding of $J_\infty \ot J_p \co \M_n \ot_{\min} S^p_n \to \cal{M} \ot_{\min} \L^p(\cal{M})$ is an (complete) isometry. By \eqref{isom-quotient-mappings}, the preadjoint map $(J_p)_* \co \L^{p^*}(\cal{M}) \to S^{p^*}_n$ is a quotient map. By \cite[Proposition 2.5]{Rya02}, we deduce that the tensor product $(J_\infty)_* \ot (J_p)_* \co \cal{M}_* \otpb \L^{p^*}(\cal{M}) \to S^1_n \otpb S^{p^*}_n$ is also a quotient map. From \eqref{isom-quotient-mappings}, we obtain that the dual map
$$
\big((J_\infty)_* \ot (J_p)_*\big)^* \co \big(S^1_n \otpb S^{p^*}_n\big)^* \to \big(\cal{M}_* \otpb \L^{p^*}(\cal{M})\big)^* 
$$
is an isometry.

Now, we will show that the following diagram commutes where $j$ is defined by \eqref{belle-injection}:
$$
 \xymatrix @R=1cm @C=2cm{
       \cal{M} \ot_{\min} \L^p(\cal{M})  \ar@{^{(}->}[r]^{j}& \CB(\L^1(\cal{M}),\L^p(\cal{M})) \ar@{^{(}->}[r]^{\Theta}&\big(\cal{M}_* \otpb \L^{p^*}(\cal{M})\big)^*\\
       \M_n \ot_{\min} S^p_n   \ar[u]^{J_\infty \ot J_p} \ar[rr]_{\Phi_{n,p}} & &\big(S^1_n \otpb S^{p^*}_n\big)^*=\M_n \ot_\epsi S^p_n \ar[u]_{((J_\infty)_* \ot (J_p)_*)^*}
}
$$
For any $x \in \M_n$ and any $y \in S^p_n$, we have
\begin{align*}
\MoveEqLeft
\Theta \circ j\circ (J_\infty \ot J_p)(x \ot y)            
=\Theta \circ j(J_\infty(x) \ot J_p(y)) \\
&=\Theta\big(\langle J_\infty(x),\cdot \rangle_{\cal{M},\L^1(\cal{M})} J_p(y)\big)
=\big\langle J_\infty(x), \cdot \big\rangle_{\cal{M},\L^1(\cal{M})} \ot \big\langle J_p(y), \cdot \big\rangle_{\L^p(\cal{M}),\L^{p^*}(\cal{M})}
\end{align*} 
and
\begin{align*}
\MoveEqLeft
((J_\infty)_* \ot (J_p)_*)^*\Phi_{n,p}(x \ot y)            
=((J_\infty)_* \ot (J_p)_*)^*\big(\langle x, \cdot \rangle_{\M_n,S^1_n}\langle y, \cdot \rangle_{\M_n,S^1_n}\big) \\
&=\big\langle x, (J_\infty)_*(\cdot) \big\rangle_{\M_n,S^1_n} \ot \big\langle y, (J_p)_*(\cdot) \big\rangle_{\M_n,S^1_n}.
\end{align*} 
For any $z \in M \ot \L^p(M)$, we deduce that
\begin{align*}
\MoveEqLeft
\bnorm{\Phi_{n,p}(z)}_{\M_n \ot_\epsi S^p_n}            
=\bnorm{\big((J_\infty)_* \ot (J_p)_*\big)^*\Phi_{n,p}(z)}_{(\cal{M}_* \otpb \L^{p^*}(\cal{M}))^*}  
=\bnorm{\Theta \circ j(J_\infty \ot J_p)(z)}_{(\cal{M}_* \otpb \L^{p^*}(\cal{M}))^*}  \\
&\ov{\eqref{ine-iso}}{\geq} C \bnorm{j(J_\infty \ot J_p)(z)}_{\cal{M} \ot_{\min} \L^p(\cal{M})}
= C \norm{z}_{\cal{M} \ot_{\min} \L^p(\cal{M})}.
\end{align*}
So
$$
\norm{\Phi_{n,p}^{-1}(z)}_{\M_n \ot_{\min} S^p_n}
\leq \frac{1}{C}\norm{z}_{\M_n \ot_\epsi S^p_n}
$$
We deduce that $\norm{\Phi_{n,p}^{-1}}_{\M_n \ot_\epsi S^p_n \to \M_n \ot_{\min} S^p_n} \leq \frac{1}{C}$. By Lemma \ref{Elementary-lemma}, we obtain $n^{1-\frac{1}{2p}} \leq \frac{1}{C}$. It is impossible by \eqref{Estimate-impossible}.

2. $\Rightarrow$ 1: It is not difficult with \eqref{AVN-subh} and left to the reader.
\end{proof}

\begin{remark} \normalfont
We can generalize almost verbatim the result to the case of maps from $\L^q(\cal{M})$ into $\L^p(\cal{M})$ using a folklore generalization of \eqref{R-C-Cp}.
\end{remark}

\begin{remark} \normalfont
For the type III case, we can use a completely isometric version of \cite[Theorem 3.5]{PiX03} combined with Theorem \ref{Th-subhomo} in the case of the unique separable hyperfinite factor of type $\II_1$.
\end{remark}

Now, we will examine when the von Neumann algebra $\VN(G)$ of a locally compact group $G$ is subhomogeneous in some particular classes of groups. Let $G$ be a topological group. Recall that a representation $\pi$ of $G$ is said of type I if the von Neumann algebra $\pi(G)''$ is of type I, see \cite[Theorem 7.C.1]{BeH20}. We say thatthe group $G$ is type I \cite[Definition 6D.1]{BeH20} if all its representations are type I. 

By a well-known result of Thoma \cite[Theorem 7.D.1]{BeH20}, a discrete group $G$ is of type I if and only if $G$ is virtually abelian, i.e.~$G$ has an abelian subgroup of finite index. Moreover, it is known by \cite{Kan69} and \cite[Theorem 2]{Smi1} that the von Neumann algebra $\VN(G)$ of a discrete group is of type $\I$ if and only if $G$ is virtually abelian. We deduce that if the von Neumann $\VN(G)$ satisfies the equivalent properties of Theorem \ref{Th-subhomo}, then $G$ is virtually abelian. We will show the converse.

Recall that a locally compact group $G$ is a Moore group \cite[Definition 12.4.14]{Pal01} if all continuous topologically irreducible unitary representations of $G$ are finite-dimensional. If $H$ is a Hilbert space, we denote by $\ovl{H}$ the conjugate Hilbert space of $H$. Suppose that $G$ is unimodular postliminal second countable  locally compact group. Let $\chi \mapsto \mathcal{H}_\chi$ be the canonical field of Hilbert spaces on $\widehat{G}$. By \cite[Theorem p.~368]{Dix77}, there exists a positive measure $\mu$ on $\widehat{G}$ and an isometric isomorphism $W \co  \L^2(G) \to \int_{\hat{G}}^{\oplus}\big(\mathcal{H}_\chi \ot \ovl{\mathcal{H}_\zeta}\big) \d \mu(\chi)$ which implies an identification 
\begin{equation}
\label{Identification-AVN-direct}
\VN(G)
=\int_{\hat{G}}^{\oplus} \big(\B(\mathcal{H}_\chi) \ot \mathbb{C}\big) \d \mu(\chi)
\end{equation}
(the book \cite[18.9.2 p.~371]{Dix77} refers to \cite[p.~260]{Dix3} for the non-second countable case but it is not clear).

\begin{prop}
\label{prop-alpha-unimodular}
Let $G$ be a second countable locally compact group such that there exists an open abelian subgroup of finite index in $G$. Then the von Neumann algebra $\VN(G)$ is subhomogeneous. 
\end{prop}

\begin{proof}
By \cite[Theorem 1]{Moo72} (see also \cite[Theorem 12.4.26]{Pal01}) there exists an integer $N$ such that $\deg \pi \leq N$ for any $\pi \in \widehat{G}$. In particular, $G$ is a Moore-group. By \cite[Theorem 12.4.16]{Pal01}, we infer that $G$ is of type I and unimodular. By \cite[Theorem 8.F.3]{BeH20}, we infer that $G$ is a unimodular postliminal (i.e.~GCR group by \cite[Definition 6.E.2.]{BeH20}), second countable locally compact group. By \eqref{Identification-AVN-direct}, we deduce a $*$-isomorphism $\VN(G)=\int_{\hat{G}}^{\oplus} \B(\mathcal{H}_\chi) \d \mu(\chi)$ where $\dim \mathcal{H}_\zeta \leq N$ for some integer $N \geq 1$. 
Now, we infer that $\VN(G)$ is a subalgebra of $\big(\int_{\hat{G}}^{\oplus} \M_N \d \mu(\chi)\big)_{\L^p}$ which is $*$-isomorphic to $\L^\infty(\hat{G},\mu,\M_N)$. Since the algebra $\L^\infty(\hat{G},\mu,\M_N)$ is subhomogeneous, we conclude that $\VN(G)$ is also subhomogeneous by \cite[Proposition 2.3 (a)]{ShU1}.
\end{proof}

For discrete groups, the converse is available. Note that the assumption ``second countable'' can be removed if the previous Plancherel formula works for non-second countable groups. 

\begin{prop}
\label{th-vir-alpha}
Let $G$ be countable discrete group. Then the von Neumann algebra $\VN(G)$ is subhomogeneous if and only if there exists an abelian subgroup of finite index in $G$, that is if $G$ is virtually abelian.
\end{prop}

\begin{proof}
$\Rightarrow$: Suppose that $\VN(G)$ is subhomogeneous, hence of type $\I$. We conclude with \cite{Kan69} or \cite[Theorem 2]{Smi1} that the group $G$ has an abelian subgroup of finite index, that is $G$ is virtually abelian.

$\Leftarrow$: That is a particular case of Proposition \ref{prop-alpha-unimodular}.
\end{proof}

Let $G$ be a compact group. Recall that we have a $*$-isomorphism
\begin{equation}
\label{Identif-VNG-compact}
\VN(G) \to \bigoplus_{\pi \in \hat{G}} \M_{\deg \pi}, \quad \lambda_s \mapsto (\ovl{\pi}(s))_{s \in G}.
\end{equation}

\begin{prop}
\label{Prop-Lp-NC-compact-alpha}
Let $G$ be a second countable compact group. Then $\VN(G)$ is subhomogeneous if and only if there exists an open abelian subgroup of finite index in $G$.
\end{prop}


\begin{proof}
$\Rightarrow$: Suppose that $\VN(G)$ is subhomogeneous. We infer that the degrees of the representations $\pi$ of $\hat{G}$ are necessarily bounded. Now, we conclude with \cite[Theorem 1]{Moo72} that there exists an open abelian subgroup of finite index in $G$.

$\Leftarrow$: It suffices to use Proposition \ref{prop-alpha-unimodular}. We can give another argument. Suppose that there exists an open abelian subgroup of finite index in $G$. By \cite[Theorem 1]{Moo72}, there is an integer $N$ such that $\deg \pi \leq N$ for any representation $\pi \in \widehat{G}$. So $\VN(G)$ is a subalgebra of $\bigoplus_{\pi \in \hat{G}} \M_N=\ell^\infty_{\hat{G}}(\M_N)$. Since this last algebra is subhomogeneous, we conclude that $\VN(G)$ is also subhomogeneous.
\end{proof}

\begin{remark}
\normalfont
It would be interesting to examine the case of locally compact quantum groups. At present time, no attempt was made. Note that it is known \cite[Theorem 6.3]{KrS18} that a compact quantum group all whose irreducible representations have dimension bounded by a fixed constant must be of Kac type. By \cite[Lemma 4.9]{ACN20}, if $\QG$ is a locally compact quantum group such that its dual $\hat{\QG}$ has bounded degree then the algebra $\L^\infty(\hat{\QG})$ is subhomogeneous. If in addition $\hat{\QG}$ has trivial scaling group, then by \cite[Lemma 4.8]{ACN20} $\QG$ is necessarily co-amenable.
\end{remark}

\begin{cor}
\label{Cor-abelian-AVN}
Let $\cal{M}$ be a semifinite von Neumann algebra. Suppose $1<p<\infty$. The following are equivalent.
\begin{enumerate}
	\item The canonical map $\CB(\L^1(\cal{M}),\L^p(\cal{M})) \to \B(\L^1(\cal{M}),\L^p(\cal{M}))$ is an isometry.
	\item $M$ is abelian.
\end{enumerate}
\end{cor}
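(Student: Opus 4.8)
The plan is to deduce this from Theorem \ref{Th-subhomo} together with Lemma \ref{Elementary-lemma}, tracking constants carefully to get an isometry (constant $C=1$) rather than just an isomorphism.

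First I would treat the direction $2\Rightarrow 1$. If $\cal{M}$ is abelian, then $\L^1(\cal{M})=\L^1(X,\mu)$ and $\L^p(\cal{M})=\L^p(X,\mu)$ are equipped with their minimal (= maximal, since commutative) operator space structure in the relevant sense; more precisely $\L^p(\cal{M})=\min(\L^p(\cal{M}))$ for $p\geq 2$ but in any case the key point is \eqref{min-et-cb} and \eqref{max-et-cb}. Using $\L^1(\cal{M})=\max(\L^1(\cal{M}))$ (the predual of a commutative von Neumann algebra, hence an $L^1$-space, carries the maximal operator space structure) and \eqref{max-et-cb}, for any bounded $T\co \L^1(\cal{M})\to \L^p(\cal{M})$ we get $\norm{T}_{\cb,\L^1(\cal{M})\to\L^p(\cal{M})}=\norm{T}_{\cb,\max(\L^1(\cal{M}))\to\L^p(\cal{M})}=\norm{T}_{\L^1(\cal{M})\to\L^p(\cal{M})}$. (One should double-check which of \eqref{min-et-cb}/\eqref{max-et-cb} applies; the clean statement is that $L^1$ of a measure space is maximal as an operator space, which gives exactly \eqref{max-et-cb}.) So the canonical map is an isometry.

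For $1\Rightarrow 2$: assume the canonical map $\CB(\L^1(\cal{M}),\L^p(\cal{M}))\to \B(\L^1(\cal{M}),\L^p(\cal{M}))$ is an isometry, i.e. $C=1$ in the language of Theorem \ref{Th-subhomo}. By the last sentence of that theorem, $\cal{M}$ is $\lceil (1/C)^{2p/(2p-1)}\rceil$-subhomogeneous $=\lceil 1\rceil$-subhomogeneous $=1$-subhomogeneous. But a $1$-subhomogeneous von Neumann algebra is, by the structure result \eqref{AVN-subh} with all $n_i=1$, a finite direct sum of abelian von Neumann algebras, hence abelian. This gives $\cal{M}$ abelian. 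The restriction $1<p<\infty$ is presumably imposed so that the exponent $\tfrac{2p}{2p-1}$ is finite and the $\lceil\cdot\rceil$-subhomogeneity statement is non-vacuous (at $p=\infty$ one still gets $\lceil 1\rceil=1$, but the reflexivity of $\L^p$ used in Theorem \ref{Thm-description-multipliers-2}'s neighborhood and in the duality arguments is cleanest for $1<p<\infty$); I would simply cite Theorem \ref{Th-subhomo} as a black box.

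The only genuinely delicate point is the $2\Rightarrow 1$ direction: one must be sure that $\L^1(\cal{M})$ for $\cal{M}$ abelian really does carry the maximal operator space structure and that $\L^p(\cal{M})$ is handled correctly. The safe route, avoiding any subtlety about whether $\L^p(\cal{M})$ is minimal, is to invoke only \eqref{max-et-cb}: since $\L^1(\cal{M})=\max(\L^1(\cal{M}))$ for abelian $\cal{M}$ (this is classical — $L^1$-spaces are maximal operator spaces, see e.g. the references on operator spaces in the excerpt), \eqref{max-et-cb} applied to any target operator space $E=\L^p(\cal{M})$ gives $\norm{T}_{\cb}=\norm{T}$ for all $T\co\L^1(\cal{M})\to\L^p(\cal{M})$, which is exactly the isometry statement. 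So I expect the proof to be short: invoke Theorem \ref{Th-subhomo} and \eqref{AVN-subh} for one direction, and \eqref{max-et-cb} plus maximality of $L^1$ for the other.
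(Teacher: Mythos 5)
Your proof is correct and follows the same route as the paper: the direction $2\Rightarrow 1$ uses the complete isometry $\L^1(\cal{M})=\max \L^1(\cal{M})$ for abelian $\cal{M}$ together with \eqref{max-et-cb}, and the direction $1\Rightarrow 2$ applies the quantitative conclusion of Theorem \ref{Th-subhomo} with $C=1$ to get $1$-subhomogeneity, hence commutativity via \eqref{AVN-subh}.
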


\begin{proof}
2. $\Rightarrow$ 1.: Suppose that the von Neumann algebra $\cal{M}$ is abelian. In this case, we have by \cite[p.~72]{Pis03} a complete isometry $\L^1(\cal{M})=\max \L^1(\cal{M})$. Then the point 1 is a consequence of \eqref{max-et-cb}.

1. $\Rightarrow$ 2.: This is a consequence of Theorem \ref{Th-subhomo}.
\end{proof}

A direct consequence is the following corollary. 

\begin{cor}
Let $G$ be a locally compact group. The following are equivalent.
\begin{enumerate}
	\item We have a canonical isometry $\CB(\L^1(\VN(G)),\VN(G))=\B(\L^1(\VN(G)),\VN(G))$.
	\item The group $G$ is abelian.
\end{enumerate}
\end{cor}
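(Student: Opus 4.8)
The plan is to deduce this from the endpoint case $p=\infty$ of Theorem~\ref{Th-subhomo} (where $\L^\infty(\cal{M})=\cal{M}$ and $\L^1(\cal{M})=\cal{M}_*$), exactly as Corollary~\ref{Cor-abelian-AVN} handles the range $1<p<\infty$, together with the elementary equivalence that $\VN(G)$ is abelian if and only if $G$ is abelian.

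For $2\Rightarrow 1$ I would argue verbatim as in the proof of Corollary~\ref{Cor-abelian-AVN}: if $G$ is abelian then $\VN(G)$ is an abelian von Neumann algebra, so by \cite[page 72]{Pis2} there is a complete isometry $\L^1(\VN(G))=\max\L^1(\VN(G))$, and then \eqref{max-et-cb} gives $\norm{T}_{\cb,\L^1(\VN(G))\to\VN(G)}=\norm{T}_{\L^1(\VN(G))\to\VN(G)}$ for every bounded $T$, which is precisely the asserted canonical isometry $\CB(\L^1(\VN(G)),\VN(G))=\B(\L^1(\VN(G)),\VN(G))$.

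For $1\Rightarrow 2$, I would first note that the canonical map $\Theta\co\CB(\L^1(\VN(G)),\VN(G))\to\B(\L^1(\VN(G)),\VN(G))$ is always an injective contraction, so assuming it is isometric amounts to assuming $\norm{T}_{\L^1(\VN(G))\to\VN(G)}\geq\norm{T}_{\cb,\L^1(\VN(G))\to\VN(G)}$ for every completely bounded $T$; this is exactly the hypothesis of the quantitative part of Theorem~\ref{Th-subhomo} with $C=1$ and $p=\infty$. Hence $\VN(G)$ is $\big\lceil(1/C)^{\frac{2p}{2p-1}}\big\rceil=\lceil 1\rceil=1$-subhomogeneous, so by the structure description \eqref{AVN-subh} of subhomogeneous von Neumann algebras it is of the form $\M_1(A)=A$ with $A$ abelian, i.e.\ $\VN(G)$ is abelian. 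Finally, $\VN(G)$ abelian forces $G$ abelian: the map $s\mapsto\lambda_s$ is injective and $\lambda_s\lambda_t=\lambda_{st}$, so commutativity of the generating set $\{\lambda_s:s\in G\}$ of $\VN(G)$ is equivalent to $st=ts$ for all $s,t\in G$.

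The only (mild) obstacle is bookkeeping: one must check that Theorem~\ref{Th-subhomo} and its quantitative refinement are genuinely valid at the endpoint $p=\infty$ — they are, the statement is given for $1\leq p\leq\infty$ — and that a $1$-subhomogeneous von Neumann algebra is abelian, which is immediate from \eqref{AVN-subh}. One could alternatively prove $1\Rightarrow 2$ using only the qualitative part of Theorem~\ref{Th-subhomo} to get that $\VN(G)$ is subhomogeneous, hence of type $\I$, hence (by the remark identifying such groups as virtually abelian) $G$ virtually abelian; but that is weaker, whereas the quantitative route with $C=1$ lands directly on abelianness.
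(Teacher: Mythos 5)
Your proposal is correct and follows essentially the same route as the paper, which simply records this as a direct consequence of Theorem~\ref{Th-subhomo} / Corollary~\ref{Cor-abelian-AVN} applied to $\cal{M}=\VN(G)$. You rightly notice that Corollary~\ref{Cor-abelian-AVN} is only stated for $1<p<\infty$ and instead invoke the quantitative part of Theorem~\ref{Th-subhomo} at $p=\infty$ with $C=1$ (yielding $1$-subhomogeneity, hence abelianness via \eqref{AVN-subh}), together with $\L^1=\max\L^1$ and \eqref{max-et-cb} for the converse and the faithfulness of $s\mapsto\lambda_s$ to pass from $\VN(G)$ abelian to $G$ abelian — all of which is exactly the intended argument.
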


We close this Section by investigating the Schur multipliers from the Schatten trace class $S^1_I$ into the space $\B(\ell^2_I)$ of bounded operators on the Hilbert space $\ell^2_I$ where $I$ is an index set. 



\begin{prop}
\label{prop-cb-Schur-mult}
Let $I$ be an index set. Any $A \in \B(\ell^2_I)$ induces a completely bounded Schur multiplier $M_A \co S^1_I \to \B(\ell^2_I)$ and we have
\begin{equation}
\label{Schur-S1-to-Sinfty}
\norm{M_A}_{\cb, S^1_I \to \B(\ell^2_I)}
=\norm{A}_{\B(\ell^2_I)}.
\end{equation}
\end{prop}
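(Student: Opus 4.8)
I would prove the two inequalities $\norm{M_A}_{\cb,S^1_I\to\B(\ell^2_I)}\leq\norm{A}_{\B(\ell^2_I)}$ and $\norm{M_A}_{\cb,S^1_I\to\B(\ell^2_I)}\geq\norm{A}_{\B(\ell^2_I)}$ separately. The second one needs no complete boundedness at all: $\norm{M_A}_{\cb}\geq\norm{M_A}_{S^1_I\to\B(\ell^2_I)}$ trivially, so the whole point of the statement — and, in contrast with Theorem \ref{Th-subhomo}, the reason this particular class of maps behaves well — is the automatic completely bounded bound $\leq\norm{A}_{\B(\ell^2_I)}$.

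\textbf{Upper bound.} I would argue exactly as in the proof of Theorem \ref{Th-conv-cb}, realizing $M_A$ as the action of $A$ coming from a matrix normed $\B(\ell^2_I)$-module structure. Namely, $\B(\ell^2_I)=\L^\infty(\B(\ell^2_I))$ acts on $S^1_I=\L^1(\B(\ell^2_I))$ by multiplication, and by the noncommutative Hölder inequality \eqref{Holder} this action is completely contractive, i.e. it satisfies the module inequality \eqref{Inequality-module}: for $[a_{rs}]\in\M_n(\B(\ell^2_I))$ and $[x_{kl}]\in\M_m(S^1_I)$ one has $\norm{[a_{rs}x_{kl}]}_{\M_{nm}(S^1_I)}\leq\norm{[a_{rs}]}_{\M_n(\B(\ell^2_I))}\,\norm{[x_{kl}]}_{\M_m(S^1_I)}$. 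Composing with the completely contractive inclusion $S^1_I=\L^1(\B(\ell^2_I))\hookrightarrow\B(\ell^2_I)$ (trace class operators are bounded with $\norm{\cdot}_{\B}\leq\norm{\cdot}_{S^1}$, and this is completely contractive — for $I$ finite it is the flip operator of norm $1$ under $\CB(S^1_I,\B(\ell^2_I))=(S^1_I\otpb S^1_I)^{*}$, and one passes to general $I$ by a routine limit) one reads off $\norm{M_A}_{\cb,S^1_I\to\B(\ell^2_I)}\leq\norm{A}_{\B(\ell^2_I)}$. An equivalent route is through the identifications $\CB(S^1_I,\B(\ell^2_I))\overset{\eqref{dual-proj}}{=}(S^1_I\otpb S^1_I)^{*}=\big(S^1(\ell^2_I\otimes_2\ell^2_I)\big)^{*}=\B(\ell^2_I\otimes_2\ell^2_I)$ (the operator space projective tensor product of trace classes being $S^1$ of the Hilbertian tensor product, cf.\ \cite{ER}, \cite{Pis1}), after which one checks that $M_A$ corresponds to a multiplication operator by $A$ on $\ell^2_I\otimes_2\ell^2_I$, whose operator norm is $\norm{A}_{\B(\ell^2_I)}$; this also re-gives the bounded-equals-cb equality.

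\textbf{Lower bound and main obstacle.} For the lower bound it suffices, given $\epsi>0$, to exhibit $x\in S^1_I$ with $\norm{x}_{S^1_I}\leq1$ and $\norm{M_A(x)}_{\B(\ell^2_I)}\geq\norm{A}_{\B(\ell^2_I)}-\epsi$. Pick unit vectors $\xi,\eta\in\ell^2_I$ with $\norm{A\xi}\geq\norm{A}_{\B(\ell^2_I)}-\epsi$ and take $x=\ket{\xi}\bra{\eta}$ (a finitely supported truncation is harmless, since $A$ may first be replaced by a finite compression up to $\epsi$); a one-line computation gives $\norm{M_A(\ket{\xi}\bra{\eta})}_{\B(\ell^2_I)}\geq\norm{A\xi}$, whence $\norm{M_A}_{\cb,S^1_I\to\B(\ell^2_I)}\geq\norm{M_A}_{S^1_I\to\B(\ell^2_I)}\geq\norm{A}_{\B(\ell^2_I)}$. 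The only delicate point is the upper bound: one must correctly match the concrete description of $M_A$ with the module action (equivalently, with the operator on $\ell^2_I\otimes_2\ell^2_I$) and, when $I$ is infinite, reduce the operator space tensor identity $S^1_I\otpb S^1_I=S^1(\ell^2_I\otimes_2\ell^2_I)$ to the finite-dimensional case where it is transparent; the lower bound is entirely routine.
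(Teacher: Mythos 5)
Your proposal has two genuine gaps, one of them fatal. The fatal one is the lower bound. It is \emph{not} true that $\norm{M_A}_{S^1_I \to \B(\ell^2_I)} \geq \norm{A}_{\B(\ell^2_I)}$, so you cannot get the lower bound ``for free'' from the plain operator norm: testing on rank-one elements $x=\ket{\xi}\bra{\eta}$ gives $M_A(\ket{\xi}\bra{\eta})=D_\xi A D_{\bar\eta}$ with $D_\xi,D_{\bar\eta}$ diagonal, whence $\norm{M_A(\ket{\xi}\bra{\eta})}_{\B(\ell^2_I)}\leq \norm{\xi}_{\ell^\infty}\norm{\eta}_{\ell^\infty}\norm{A}_{\B(\ell^2_I)}$, and by bilinearity/extreme points one finds $\norm{M_A}_{S^1_I\to\B(\ell^2_I)}=\sup_{i,j}|A_{ij}|$. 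Your claimed inequality $\norm{M_A(\ket{\xi}\bra{\eta})}\geq\norm{A\xi}$ fails already for $A$ the normalized Fourier matrix on $\ell^2_n$ and $\xi=\eta=e_1$ (left side $n^{-1/2}$, right side $1$), or for $A=\I_n$ and $\xi=\eta=n^{-1/2}(1,\dots,1)$ (left side $n^{-1}$, right side $1$). This is exactly the bounded-versus-cb gap that Theorem \ref{Th-subhomo} quantifies, so your closing remark that the argument ``re-gives the bounded-equals-cb equality'' is also incorrect: for these Schur multipliers the two norms genuinely differ. The lower bound must be extracted from matrix amplifications. The second gap is in your first upper-bound route: the Schur (entrywise) product is not the operator product, so ``multiplication by $A$ on $S^1_I$ followed by the inclusion $S^1_I\hookrightarrow\B(\ell^2_I)$'' is a different map from $M_A$ (e.g.\ $A=e_{11}$, $x=e_{12}$ gives $Ax=e_{12}$ but $M_A(x)=0$); the module-action argument of Theorem \ref{Th-conv-cb} does not transplant this way.

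Your ``equivalent route'' is the one that can be salvaged, and it is essentially the paper's proof: the paper composes the complete isometry $\Psi\co\B(\ell^2_I)\otvn\B(\ell^2_I)\to\CB(S^1_I,\B(\ell^2_I))$ of \eqref{Ident-magic} with the injective normal $*$-homomorphism $\Delta\co\B(\ell^2_I)\to\B(\ell^2_I)\otvn\B(\ell^2_I)$, $e_{ij}\mapsto e_{ij}\ot e_{ij}$, checks on matrix units that $\Psi(\Delta(A))=M_A$, and concludes $\norm{M_A}_{\cb}=\norm{\Delta(A)}=\norm{A}$ in one stroke, upper and lower bound together. The operator corresponding to $M_A$ on $\ell^2_I\ot_2\ell^2_I$ is therefore $\sum_{ij}A_{ij}\,e_{ij}\ot e_{ij}$, which is unitarily equivalent to $A\oplus 0$ (it acts as $A$ on the ``diagonal'' subspace $\overline{\mathrm{span}}\{e_k\ot e_k\}$ and vanishes on its complement) --- not a ``multiplication operator by $A$'' in any literal sense, though its norm is indeed $\norm{A}_{\B(\ell^2_I)}$. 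Had you carried out this identification explicitly you would have obtained both inequalities at once and noticed that your separate rank-one lower-bound argument was both unnecessary and false.
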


\begin{proof}
Recall that we have a normal injective $*$-homomorphism $\Delta \co \B(\ell^2_I) \to \B(\ell^2_I) \otvn \B(\ell^2_I) $, $e_{ij} \mapsto e_{ij} \ot e_{ij}$. Using \cite[Proposition 1.2.4]{BLM04}, we obtain a complete isometry 
$$
\begin{array}{ccccccc}
\Psi \co\B(\ell^2_I) & \longrightarrow  & \B(\ell^2_I) \otvn \B(\ell^2_I) &\ov{\eqref{Ident-magic}}{\longrightarrow} & \CB(S^1_I,\B(\ell^2_I)) \\
    A  &       \longmapsto  &    \Delta(A)                     & \longmapsto & (B \mapsto \big(\la \cdot , B \ra_{\B(\ell^2_I),S^1_I} \ot \Id\big)(\Delta(A))  \\
\end{array}.
$$
 For any $i,j,k,l \in I$, we have
\begin{align*}
\MoveEqLeft
\Psi(e_{ij})(e_{kl})           
\ov{\eqref{Ident-magic}}{=} \langle e_{ij}, e_{kl} \rangle_{\B(\ell^2_I),S^1_I} e_{ij}
=\delta_{i=k}\delta_{j=l} e_{ij}
=M_{e_{ij}}(e_{kl}).
\end{align*} 
Hence $\Psi(e_{ij})=M_{e_{ij}}$. It is not difficult to conclude that $\Psi(A)=M_A$ for any $A \in \B(\ell^2_I)$. 
\end{proof}

\subsection{$\ell^p(S^p_d)$-summing Fourier multipliers on $\L^\infty(\QG)$}
\label{summing-multipliers}

Suppose $1 \leq p <\infty$ and let $d$ be an element of $\N \cup\{\infty\}$. Let $E$ and $F$ be operator spaces and let $T \co E \to F$ be a linear map. Following \cite[p.~58]{Pis98}, we say that $T$ is $\ell^p(S^p_d)$-summing if $T$ induces a bounded map $\Id_{\ell^p(S^p_d)} \ot T \co \ell^p(S^p_d) \ot_{\min} E \to \ell^p(S^p_d(F))$. In this case, we let
\begin{equation}
\label{Def-ellp-Spd-summing-norm}
\norm{T}_{\pi_{p,d},E \to F}
\ov{\mathrm{def}}{=} \bnorm{\Id_{\ell^p(S^p_d)} \ot T}_{\ell^p(S^p_d) \ot_{\min} E \to \ell^p(S^p_d(F))}.
\end{equation}
If $d=\infty$, it is not difficult to see that the $\ell^p(S^p)$-summing maps coincide with the completely $p$-summing maps\footnote{\thefootnote. Recall that a map $T \co E \to F$ is completely $p$-summing \cite[p.~51]{Pis98} if $T$ induces a bounded map $\Id_{S^p} \ot T \co S^p \ot_{\min} E \to S^p(F)$.} and we have 
\begin{equation}
\label{}
\norm{T}_{\pi_{p,\infty},E \to F}
=\norm{T}_{\pi_p^\circ,E \to F}.
\end{equation}

Let $T \co E \to F$ be a $\ell^p(S^p_d)$-summing map. Since we have the isometric inclusion $S^p_d(E) \subset S^p_d \ot_{\min} E \subset \ell^p(S^p_d) \ot_{\min} E$ and the isometric inclusion $S^p_d(F) \subset \ell^p(S^p_d(F))$, the map $T$ is bounded and
\begin{equation}
\label{d-bounded}
\bnorm{\Id_{S^p_d} \ot T}_{S^p_d(E) \to S^p_d(F)}
\leq \norm{T}_{\pi_{p,d},E \to F}.
\end{equation}


We need the following easy lemma.

\begin{lemma}
\label{Lemma-is-bounded}
Suppose $1 \leq p <\infty$ and let $d$ be an element of $\N \cup\{\infty\}$. Let $E,F,G,H$ be operator spaces. Let $T \co E \to F$ be a $\ell^p(S^p_d)$-summing map. If the maps $T_1 \co F \to G$ and $T_2 \co H \to E$ are completely bounded then the map $T_1 T T_2$ is $\ell^p(S^p_d)$-summing and we have
\begin{equation}
\label{Ideal-ellp}
\norm{T_1 T T_2}_{\pi_{p,d}, H \to G}
\leq \norm{T_1}_{\cb, F \to G} \norm{T}_{\pi_{p,d}, E \to F} \norm{T_2}_{\cb,H \to E}.
\end{equation}
\end{lemma}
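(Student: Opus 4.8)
\noindent The plan is the standard ideal/factorization argument for operator-space summing norms. Recall from \eqref{Def-ellp-Spd-summing-norm} that $T$ being $\ell^p(S^p_d)$-summing means precisely that $\Id_{\ell^p(S^p_d)} \ot T$ extends to a bounded map $\ell^p(S^p_d) \ot_{\min} E \to \ell^p(S^p_d(F))$ of norm $\norm{T}_{\pi_{p,d},E \to F}$. To deal with $T_1TT_2$ I would factor its amplification as the composition
$$
\ell^p(S^p_d) \ot_{\min} H \xra{\Id \ot T_2} \ell^p(S^p_d) \ot_{\min} E \xra{\Id \ot T} \ell^p(S^p_d(F)) \xra{\Id \ot T_1} \ell^p(S^p_d(G)),
$$
and then observe that this composite coincides with $\Id_{\ell^p(S^p_d)} \ot (T_1TT_2)$ on the dense linear span of elementary tensors $\xi \ot h$ (with $\xi \in \ell^p(S^p_d)$, $h \in H$), hence is exactly the bounded extension whose norm has to be estimated.

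Next I would bound the two outer arrows. Since $T_2 \co H \to E$ is completely bounded and the minimal operator space tensor product is injective (functorial in each variable), the map $\Id_{\ell^p(S^p_d)} \ot T_2 \co \ell^p(S^p_d) \ot_{\min} H \to \ell^p(S^p_d) \ot_{\min} E$ is completely bounded with cb-norm at most $\norm{T_2}_{\cb,H \to E}$ (see e.g. \cite{ER}). For the last arrow, note that $\ell^p(S^p_d(\cdot))$ is the vector-valued noncommutative $\L^p$-space $\L^p(\cal{N},\cdot)$ associated with the hyperfinite von Neumann algebra $\cal{N}$ which is the $\ell^\infty$-direct sum of countably many copies of $\B(\ell^2_d)$ (where $\ell^2_d$ is $d$-dimensional if $d \in \N$ and separable infinite-dimensional if $d=\infty$); hence by \eqref{ine-tensorisation-os} the complete boundedness of $T_1 \co F \to G$ gives that $\Id_{\L^p(\cal{N})} \ot T_1 \co \L^p(\cal{N},F) \to \L^p(\cal{N},G)$ is completely bounded with cb-norm at most $\norm{T_1}_{\cb,F \to G}$. (Alternatively one may argue one summand at a time, applying \eqref{ine-tensorisation-os} with $\cal{M}=\B(\ell^2_d)$ on each level $S^p_d(F) \to S^p_d(G)$ and then using the stability of $\ell^p$-direct sums.)

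Composing the three estimates and invoking \eqref{Def-ellp-Spd-summing-norm} for the middle arrow then yields
$$
\norm{T_1TT_2}_{\pi_{p,d},H \to G} \leq \norm{T_1}_{\cb,F \to G}\,\norm{T}_{\pi_{p,d},E \to F}\,\norm{T_2}_{\cb,H \to E},
$$
which is \eqref{Ideal-ellp}. I do not expect a real obstacle here; the only point requiring a little care is that the intermediate space $\ell^p(S^p_d(F))$ is \emph{not} of the form $\ell^p(S^p_d) \ot_{\min} F$, so one must make sure that the definition of the summing norm is used precisely on the $\Id \ot T$ step, while the hyperfinite vector-valued $\L^p$-machinery of \eqref{ine-tensorisation-os} is used on the $\Id \ot T_1$ step, and that the displayed composition really is the amplification of $T_1TT_2$.
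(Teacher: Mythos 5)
Your proposal is correct and follows essentially the same route as the paper: factor the amplification of $T_1TT_2$ through $\Id\ot T_2$ on the minimal tensor product (functoriality of $\ot_{\min}$, i.e.\ \cite[Proposition 8.1.5]{ER}), the defining bounded extension of $\Id\ot T$, and $\Id\ot T_1$ on the vector-valued $\L^p$-level via \eqref{ine-tensorisation-os}, then compose the three norm estimates. The extra care you take in identifying $\ell^p(S^p_d(\cdot))$ as $\L^p(\cal{N},\cdot)$ for a hyperfinite $\cal{N}$ and in checking the composite agrees with the amplification on elementary tensors is exactly the (implicit) content of the paper's argument.
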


\begin{proof}
By \cite[Proposition 8.1.5]{EfR00}, we have a well-defined (completely) bounded map $\Id_{\ell^p(S^p_d)} \ot  T_2\co \ell^p(S^p_d) \ot_{\min} H \to \ell^p(S^p_d) \ot_{\min} E$ with
\begin{equation}
\label{}
\bnorm{\Id_{\ell^p(S^p_d)} \ot T_2}_{\ell^p(S^p_d) \ot_{\min} H \to \ell^p(S^p_d) \ot_{\min} E}
\leq \norm{T_2}_{\cb,H \to E}.
\end{equation} 
Since $T \co E \to F$ is a $\ell^p(S^p_d)$-summing map, we have by definition a well-defined bounded map $\Id_{\ell^p(S^p_d)} \ot T \co \ell^p(S^p_d) \ot_{\min} E \to \ell^p(S^p_d(F))$. Moreover, we have
\begin{equation}
\label{}
\bnorm{\Id_{\ell^p(S^p_d)} \ot T_1}_{\ell^p(S^p_d(F)) \to \ell^p(S^p_d(G))}
\ov{\eqref{ine-tensorisation-os}}{\leq} \norm{T_1}_{\cb, F \to G}.
\end{equation}
We obtain the result by composition. 
\end{proof}

The following proposition gives a crucial example of $\ell^p(S^p_d)$-summing map for the sequel.

\begin{prop}
\label{Prop-inj-finite-avn}
Suppose $1 \leq p < \infty$ and let $d$ be an element of $\N \cup\{\infty\}$. If $\cal{M}$ is a hyperfinite finite von Neumann algebra equipped with a normal finite faithful trace $\tau$ then the canonical inclusion $i_p \co \cal{M} \hookrightarrow \L^p(\cal{M})$ is $\ell^p(S^p_d)$-summing and
\begin{equation}
\label{norm-ip}
\norm{i_p}_{\pi_{p,d},\cal{M} \to \L^p(\cal{M})}
=(\tau(1))^{\frac{1}{p}}.
\end{equation}
\end{prop}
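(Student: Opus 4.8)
The plan is to prove the two bounds $\norm{i_p}_{\pi_{p,d},\cal{M} \to \L^p(\cal{M})}\geq (\tau(1))^{\frac1p}$ and $\leq (\tau(1))^{\frac1p}$ separately. The lower bound is elementary: I would test the amplification $\Id_{\ell^p(S^p_d)}\ot i_p$ on the elementary tensor $1_{\cal M}\ot e$, where $e\in\ell^p(S^p_d)$ is the image, in the first summand, of a rank one matrix unit of $S^p_d$, so that $\norm{e}_{\ell^p(S^p_d)}=1$. Since the minimal tensor norm and the norm of $\ell^p(S^p_d(\,\cdot\,))$ are both cross norms on elementary tensors, $\norm{1_{\cal M}\ot e}_{\ell^p(S^p_d)\ot_{\min}\cal M}=\norm{1_{\cal M}}_{\cal M}\norm{e}_{\ell^p(S^p_d)}=1$, while $\norm{(\Id\ot i_p)(1_{\cal M}\ot e)}_{\ell^p(S^p_d(\L^p(\cal M)))}=\norm{1_{\cal M}}_{\L^p(\cal M)}\norm{e}_{\ell^p(S^p_d)}=(\tau(1))^{\frac1p}$; this gives the lower bound, valid for every $d$.

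For the upper bound, set $\mathcal Q\ov{\mathrm{def}}{=}\ell^\infty_\N\otvn\B(\ell^2_d)$, a hyperfinite von Neumann algebra with $\L^p(\mathcal Q)=\ell^p(S^p_d)$. The key observation is that, up to canonical identifications, $\Id_{\ell^p(S^p_d)}\ot i_p$ is nothing but the inclusion map of $\L^\infty(\cal M,\L^p(\mathcal Q))$ into $\L^p(\cal M\otvn\mathcal Q)$. Indeed, via the flip isometry and \eqref{Def-L0-infty} the domain is $\ell^p(S^p_d)\ot_{\min}\cal M=\cal M\ot_{\min}\L^p(\mathcal Q)=\L^\infty(\cal M,\L^p(\mathcal Q))$; by Pisier's Fubini theorem for vector-valued noncommutative $\L^p$-spaces \cite{Pis1} the codomain is $\ell^p(S^p_d(\L^p(\cal M)))=\L^p(\mathcal Q,\L^p(\cal M))$, which by \eqref{flip} (applied in both directions with $q=p$) equals $\L^p(\cal M,\L^p(\mathcal Q))$ and hence, by the $q=p$ instance of \eqref{Norm-LpLq} (where the supremum runs over the unit ball of $\cal M$ and is therefore attained at the unit), equals $\L^p(\cal M\otvn\mathcal Q)$. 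All of these identifications are the identity on the algebraic tensor product, so $\Id_{\ell^p(S^p_d)}\ot i_p$ becomes exactly the canonical map $w\mapsto w$ from $\L^\infty(\cal M,\L^p(\mathcal Q))$ to $\L^p(\cal M\otvn\mathcal Q)$. Now \eqref{LinftyLq-norms} expresses $\norm{w}_{\L^\infty(\cal M,\L^p(\mathcal Q))}$ as the supremum over $\norm{a}_{\L^{2p}(\cal M)},\norm{b}_{\L^{2p}(\cal M)}\leq 1$ of $\norm{(a\ot1)w(b\ot1)}_{\L^p(\cal M\otvn\mathcal Q)}$; choosing the admissible pair $a=b=(\tau(1))^{-\frac1{2p}}1_{\cal M}$ (legitimate since $\norm{1_{\cal M}}_{\L^{2p}(\cal M)}=(\tau(1))^{\frac1{2p}}$, $\tau$ being finite) shows this supremum is $\geq (\tau(1))^{-\frac1p}\norm{w}_{\L^p(\cal M\otvn\mathcal Q)}$. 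Hence $\norm{w}_{\L^p(\cal M\otvn\mathcal Q)}\leq (\tau(1))^{\frac1p}\norm{w}_{\L^\infty(\cal M,\L^p(\mathcal Q))}$ for every $w$, i.e. $\norm{i_p}_{\pi_{p,d},\cal M\to\L^p(\cal M)}\leq (\tau(1))^{\frac1p}$, which together with the lower bound gives the claimed equality.

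I expect the main technical point to be the bookkeeping of the identifications in the second paragraph: one has to check that the minimal tensor product, Pisier's vector-valued $\L^p$-norm, the flip and the Fubini isomorphism all match up on the dense algebraic tensor product, and this is precisely where the hyperfiniteness hypothesis on $\cal M$ is used — it is what makes $\L^\infty(\cal M,\L^p(\mathcal Q))$ and $\L^p(\cal M,\L^p(\mathcal Q))$ a compatible interpolation pair and makes \eqref{LinftyLq-norms} available. Once that is in place the estimate itself is just Hölder's inequality pushed through the unit of $\cal M$. (A slightly longer alternative would avoid Fubini by reducing, via the trace-preserving conditional expectations onto an increasing net of finite-dimensional subalgebras together with $\L^p$-martingale convergence, to $\cal M$ finite-dimensional and then to $\cal M=\M_n$, where the analogous "factorise through the identity matrix" trick inside $S^p_n(\ell^p(S^p_d))$ gives the same bound; but the direct argument above is cleaner.)
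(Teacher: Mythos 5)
Your proof is correct, and the overall skeleton matches the paper's (lower bound by evaluating on the unit of $\cal{M}$; upper bound by showing the formal identity $\ell^p(S^p_d) \ot_{\min} \cal{M} \to \ell^p(S^p_d(\L^p(\cal{M})))$ has norm at most $(\tau(1))^{1/p}$), but your proof of the upper bound takes a genuinely different route. The paper first establishes, for an \emph{arbitrary} operator space $E$, the contractive inclusion $\cal{M} \ot_{\min} E \hookrightarrow \L^p(\cal{M},E)$ by interpolating the endpoint contraction $\cal{M} \ot_{\min} E \hookrightarrow \L^1(\cal{M},E)$ of \cite[page 37]{Pis1} against the identity at $p=\infty$, and only then specializes $E=\ell^p(S^p_d)$ and invokes Fubini \cite[(3.6)]{Pis1}; the normalization $\tau(1)=1$ is imposed at the outset so no constant appears. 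You instead specialize immediately to $E=\L^p(\cal{Q})$ with $\cal{Q}=\ell^\infty_\N \otvn \B(\ell^2_d)$, identify both sides concretely as $\L^\infty(\cal{M},\L^p(\cal{Q}))$ and $\L^p(\cal{M}\otvn\cal{Q})$, and read the inequality off the Junge--Parcet sup formula \eqref{LinftyLq-norms} by testing at $a=b=(\tau(1))^{-1/(2p)}1$. What the paper's argument buys is generality (the contraction $\cal{M}\ot_{\min}E \hookrightarrow \L^p(\cal{M},E)$ for every operator space $E$, which is reused elsewhere, e.g.\ in Lemma \ref{Lemma-useful}) at the cost of an interpolation step; what yours buys is that the estimate becomes a one-line H\"older-type computation with the constant $(\tau(1))^{1/p}$ appearing transparently, at the cost of relying on the nontrivial coincidence (for hyperfinite $\cal{M}$) of the interpolation norm with the sup formula \eqref{Norm-LpLq} — a fact the paper does assert, so your argument is legitimately self-contained within the stated preliminaries. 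Your lower bound via the elementary tensor $1_{\cal{M}}\ot e$ is the same in substance as the paper's chain $\norm{i_p}_{\pi_{p,d}} \geq \norm{\Id_{S^p_d}\ot i_p} \geq \norm{i_p}_{\cal{M}\to\L^p(\cal{M})}$.
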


\begin{proof}
We can suppose that $\tau(1)=1$. Recall that if $E$ is an operator space, we have essentially\footnote{\thefootnote. The result is stated with completely bounded instead of completely contractive.} by \cite[p.~37]{Pis98} a contractive inclusion $\cal{M} \ot_{\min} E \hookrightarrow \L^1(\cal{M},E)$. Moreover, we have $\L^p(\cal{M},E)=(\cal{M} \ot_{\min} E,\L^1(\cal{M},E))_{\frac{1}{p}}$ by definition. By \cite[Proposition 2.4]{Lun18}, we deduce a contractive inclusion $\cal{M} \ot_{\min} E \hookrightarrow \L^p(\cal{M},E)$. Taking $E=\ell^p(S^p_d)$ and using \cite[(3.6)]{Pis98} in the last equality, we obtain a contractive inclusion
$$
\ell^p(S^p_d) \ot_{\min} \cal{M}
\cong \cal{M} \ot_{\min} \ell^p(S^p_d) 
\hookrightarrow \L^p(\cal{M},\ell^p(S^p_d)) 
\cong \ell^p(S^p_d(\L^p(\cal{M})).
$$
We conclude by \eqref{Def-ellp-Spd-summing-norm} that the inclusion map $i_p \co \cal{M} \hookrightarrow \L^p(\cal{M})$ is $\ell^p(S^p_d)$-summing with $\norm{i_p}_{\pi_{p,d},\cal{M} \to \L^p(\cal{M})} \leq 1$. Note that $\norm{1}_{\L^p(\cal{M})}=1$. Since 
$$
\norm{i_p}_{\pi_{p,d},\cal{M} \to \L^p(\cal{M})} \ov{\eqref{d-bounded}}{\geq} \bnorm{\Id_{S^p_d} \ot i_p}_{S^p_d(\cal{M}) \to S^p_d(\L^p(\cal{M}))} \geq \norm{i_p}_{\cal{M} \to \L^p(\cal{M})},
$$
we obtain the reverse inequality.  
\end{proof}

\begin{remark} \normalfont
If $\mathbb{F}_n$ is the free group with $n$ generators where $n \geq 2$ is an integer then it is stated in \cite[Remark 3.2.2.5]{Jun99} that the canonical contractive inclusion $i_p \co \VN(\mathbb{F}_n) \hookrightarrow \L^p(\VN(\mathbb{F}_n))$ is not completely $p$-summing, i.e.~is not $\ell^p(S^p)$-summing. So the hyperfiniteness assumption in Proposition \ref{Prop-inj-finite-avn} cannot be removed. It would be interesting to characterize the finite von Neumann algebras such that the canonical inclusion $i_p \co  \cal{M}\hookrightarrow \L^p(\cal{M})$ is $\ell^p(S^p_d)$-summing.
\end{remark}

Note that in the case of a co-amenable compact quantum group of Kac type, the assumption ``hyperfinite'' is useless by \cite[Theorem 4.5]{Rua96}.

\begin{prop}
\label{prop-carac-Fourier}
Let $(\cal{M},\tau,\Delta,R)$ is a hyperfinite finite quantum hypergroup. Suppose $1 \leq p <\infty$ and let $d$ be an element of $\N \cup\{\infty\}$. If $T \co \L^p(\cal{M}) \to \L^\infty(\cal{M})$ is a completely bounded left multiplier then $T$ induces a $\ell^p(S^p_d)$-summing left multiplier $T \co \L^\infty(\cal{M}) \to \L^\infty(\cal{M})$. In this case, we have
\begin{equation}
\label{Fourier-sommant}
\norm{T}_{\pi_{p,d},\L^\infty(\cal{M}) \to \L^\infty(\cal{M})}
\leq (\tau(1))^{\frac{1}{p}}\norm{T}_{\cb,\L^p(\cal{M}) \to \L^\infty(\cal{M})}.
\end{equation}
\end{prop}

\begin{proof}
By Proposition \ref{Prop-inj-finite-avn}, the map $i_p \co \L^\infty(\cal{M}) \xhookrightarrow{} \L^p(\cal{M})$ is $\ell^p(S^p_d)$-summing and the norm is $\norm{i_p}_{\pi_{p,d},\L^\infty(\cal{M}) \to \L^p(\cal{M})}=(\tau(1))^{\frac{1}{p}}$. By considering the composition
$$
\L^\infty(\cal{M}) \xhookrightarrow{i_p} \L^p(\cal{M}) \xra{T} \L^\infty(\cal{M})
$$
we deduce by the ideal property \eqref{Ideal-ellp} that $T \co \L^\infty(\cal{M}) \to \L^\infty(\cal{M})$ is $\ell^p(S^p_d)$-summing and that
\begin{align*}
\MoveEqLeft
\norm{T}_{\pi_{p,d}, \L^\infty(\cal{M}) \to \L^\infty(\cal{M})}
\ov{\eqref{Ideal-ellp}}{\leq} \norm{T}_{\cb,\L^p(\cal{M}) \to \L^\infty(\cal{M})}\norm{i_p}_{\pi_{p,d}, \L^\infty(\cal{M}) \to \L^p(\cal{M})} \\
&\ov{\eqref{norm-ip}}{=} (\tau(1))^{\frac{1}{p}}\norm{T}_{\cb,\L^p(\cal{M}) \to \L^\infty(\cal{M})}.            
\end{align*} 
\end{proof}

\begin{remark} \normalfont
If $G$ is an \textit{abelian} compact group $G$ equipped with its \textit{normalized} Haar measure and if $\varphi \co \hat{G} \to \mathbb{C}$ is a complex function, we have the equalities
\begin{equation}
\label{All-equal}
\norm{M_{\varphi}}_{\pi_p^\circ, \L^\infty(G) \to \L^\infty(G)}
= \norm{M_{\varphi}}_{\pi_p, \L^\infty(G) \to \L^\infty(G)}
=\norm{M_{\varphi}}_{\L^p(G) \to \L^\infty(G)}
=\norm{M_{\varphi}}_{\cb,\L^p(G) \to \L^\infty(G)}
\end{equation}
where $\norm{\cdot}_{\pi_p, \L^\infty(G) \to \L^\infty(G)}$ is the classical $p$-summing norm. 
Indeed, the first equality is essentially \cite[Remark 3.7]{JuP15} (see also \cite[Remark 5.13]{Pis98}). The third equality is \eqref{min-et-cb} since $\L^\infty(G)=\min \L^\infty(G)$ as operator space by \cite[p.~72]{Pis03}. The second equality is stated in \cite[Proposition 2]{BeM73}\footnote{\thefootnote. The mathscinet review contains useful remarks.} but the proof or the idea is not given. In respect to this beautiful tradition of not providing proofs, we does not give the argument here. However, we (really) confirm that a rather elementary proof exists\footnote{\thefootnote. At least in the case $b=\infty$ of \cite[Proposition 2]{BeM73}.}.
\end{remark}

The following easy lemma is left to the reader.

\begin{lemma}
Let $\QG$ be a co-amenable compact quantum group of Kac type. Suppose $1<p \leq \infty$. Let $T \co \L^1(\QG) \to \L^p(\QG)$ be a completely bounded Fourier multiplier. Then
\begin{equation}
\label{Equation-Delta}
\Delta T
=(\Id_{} \ot T) \Delta
\end{equation}
where we use $\Delta \co \L^p(\QG) \to \L^p(\QG \otvn \QG)$ (on the left) and $\Delta \co \L^1(\QG) \to \L^p(\QG,\L^1(\QG))$ (on the right ; see Lemma \ref{Lemma-useful}).
\end{lemma}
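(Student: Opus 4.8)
The plan is to reduce the identity \eqref{Equation-Delta} to an algebraic one on the dense $*$-subalgebra $\Pol(\QG)$ and then extend it by continuity, the only genuine subtlety being the bookkeeping of which tensor leg is involved.

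First I would make $T$ concrete. By Theorem~\ref{Thm-description-multipliers-2} (or Proposition~\ref{prop-bounded-Fourier-multipliers-L1-Lp}) the completely bounded Fourier multiplier $T$ is a convolution operator: there is a unique $f\in\L^p(\QG)$ with $T(x)=x*f$ for all $x\in\L^1(\QG)$, and, by Lemma~\ref{Lemma-action-concrete} together with \eqref{Def-*1-infty}, on $\Pol(\QG)$ this is the slice map $T(x)=(\Id\ot\omega_f)\circ\Delta(x)$ with $\omega_f=\tau(fR(\cdot))$ a functional on $\Pol(\QG)$. (I have written $T$ here as a \emph{right} Fourier multiplier; for a left one $T(x)=f*x=(\omega_f\ot\Id)\circ\Delta(x)$, and the computation below yields $\Delta T=(T\ot\Id)\Delta$ instead, with the two legs exchanged throughout. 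So one first matches the convention ``Fourier multiplier'' of the statement with the leg on which $\Id\ot T$ acts in \eqref{Equation-Delta}.)

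Next, for $x\in\Pol(\QG)$ I would check \eqref{Equation-Delta} directly. Since $\Delta(\Pol(\QG))\subseteq\Pol(\QG)\ot\Pol(\QG)$ and $\Delta$ is a coassociative homomorphism, a slice map commutes with $\Delta$ in the remaining leg, so writing $\Delta(x)=\sum x_{(1)}\ot x_{(2)}$ in Sweedler notation,
\begin{align*}
\Delta\big(T(x)\big)
&=\big(\Delta\ot\omega_f\big)\big(\Delta(x)\big)
=\big(\Id\ot\Id\ot\omega_f\big)\big((\Delta\ot\Id)\Delta(x)\big)\\
&\ov{\eqref{Co-associativity}}{=}\big(\Id\ot\Id\ot\omega_f\big)\big((\Id\ot\Delta)\Delta(x)\big)
=\sum x_{(1)}\ot\big(\Id\ot\omega_f\big)\Delta\big(x_{(2)}\big)
=(\Id\ot T)\Delta(x).
\end{align*}
Equivalently, on a matrix coefficient $x=u^{(\pi)}_{ij}$ this is a one-line computation combining $\Delta(u^{(\pi)}_{ij})=\sum_k u^{(\pi)}_{ik}\ot u^{(\pi)}_{kj}$ with the action of $T$ on $u^{(\pi)}_{ij}$ read off from its symbol via the injectivity of the Fourier transform (Proposition~\ref{prop-L1-into-c0}).

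Finally, I would extend the identity to all of $\L^1(\QG)$. By Lemma~\ref{Lemma-useful} the coproduct is a contraction $\Delta\co\L^1(\QG)\to\L^p(\QG,\L^1(\QG))$ and an isometry $\Delta\co\L^p(\QG)\to\L^p(\QG\otvn\QG)=\L^p(\QG,\L^p(\QG))$, and by \eqref{ine-tensorisation-os} — using that co-amenability forces $\L^\infty(\QG)$ to be hyperfinite — together with the complete boundedness of $T$, the map $\Id_{\L^p(\QG)}\ot T$ is bounded from $\L^p(\QG,\L^1(\QG))$ into $\L^p(\QG,\L^p(\QG))$. Hence $\Delta T$ and $(\Id\ot T)\Delta$ are bounded operators $\L^1(\QG)\to\L^p(\QG\otvn\QG)$ agreeing on the norm-dense subspace $\Pol(\QG)$, so they coincide for $1<p<\infty$. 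The case $p=\infty$ follows by the same scheme (now with the weak$^*$ topology) or reduces to $1<q<\infty$ via the contractive injective inclusions $\L^\infty(\QG)\hookrightarrow\L^q(\QG)$ and $\L^\infty(\QG\otvn\QG)\hookrightarrow\L^q(\QG\otvn\QG)$, since then $T$ is also a completely bounded Fourier multiplier into $\L^q(\QG)$ and both sides of \eqref{Equation-Delta} already take values in $\L^\infty(\QG\otvn\QG)$. I do not expect a real obstacle — this is why the lemma is advertised as easy — the only steps deserving attention being the leg-matching above and the routine verification that slice maps commute with the normal homomorphism $\Delta$ on $\Pol(\QG)$.
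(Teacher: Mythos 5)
Your proof is correct. The paper offers no argument here (the lemma is explicitly ``left to the reader''), so there is nothing to compare against; what you supply is exactly the expected argument: identify $T$ with a slice of $\Delta$ via Theorem \ref{Thm-description-multipliers-2}, verify the identity on $\Pol(\QG)$ by coassociativity, and extend by density using the boundedness of $\Delta\co\L^1(\QG)\to\L^p(\QG,\L^1(\QG))$ from Lemma \ref{Lemma-useful} and of $\Id\ot T$ from \eqref{ine-tensorisation-os} (with hyperfiniteness coming from co-amenability). Your flagging of the left/right convention is a genuine point rather than pedantry: as literally written, $\Delta T=(\Id\ot T)\Delta$ is the identity satisfied by a \emph{right} multiplier $x\mapsto x*f$, whereas a left multiplier satisfies $\Delta T=(T\ot\Id)\Delta$, and the paper is silent on which is meant; your proof handles both cases once the convention is fixed.
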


Let $\cal{M}$ be a von Neumann algebra equipped with a normal semifinite faithful trace. We denote by $\Mult_{a,b} \co \cal{M} \to \L^p(\cal{M})$, $x \mapsto axb$ the multiplication map.

The following is a slight variation of \cite[Theorem 2.4]{JuP15}, \cite[Theorem 2.4]{JuP15} and \cite[Remark 2.2]{JuP15}. We skip the details.

\begin{thm}
\label{thm-factorization-positivity}
Let $\cal{M}$ and $\cal{N}$ be hyperfinite von Neumann algebras such that $\cal{M}$ is equipped with a normal finite faithful trace. Let $T \co \cal{M} \to \cal{N}$ be a map. Then, the following assertions are equivalent:
\begin{enumerate}
\item $\norm{T}_{\pi_{p,d},\cal{M} \to \cal{N}} \leq C$.

\item There exists elements $a,b \in \cal{M}$ satisfying $\norm{a}_{\L^{2p}(\cal{M})} \leq 1$, $\norm{b}_{\L^{2p}(\cal{M})} \leq 1$ such that for every $x \in S^p_d \ot_{\min} \cal{M}$ we have 
\begin{equation}
\label{Ine-lp-summing}
\norm{\big(\Id_{S^p_d} \ot T\big)(x)}_{S^p(\cal{N})}
\leq C\norm{(1 \ot a)x(1 \ot b)}_{S^p(\L^p(\cal{M}))}.
\end{equation}

\item There exist elements $a,b \in \cal{M}$ satisfying $\norm{a}_{\L^{2p}(\cal{M})} \leq 1$, $\norm{b}_{\L^{2p}(\cal{M})} \leq 1$ and a bounded map $\tilde{T} \co \L^p(\cal{M}) \to \cal{N}$ such that
$$
T
=\tilde{T} \circ \Mult_{a,b}
\quad \text{and} \quad 
\norm{\tilde{T}}_{\M_d(\L^p(\cal{M})) \to \M_d(\cal{N})} 
\leq C.
$$
\end{enumerate}
Furthermore, $
\norm{T}_{\pi_{p,d},\cal{M} \to \cal{N}}=\inf\big\{C : C \text{ satisfies any of the previous conditions}\big\}$.
\end{thm}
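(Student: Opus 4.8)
The plan is to adapt the Pietsch-type factorization arguments for completely $p$-summing maps (as in \cite{Pis1} and the versions in \cite{JuP1}) to the $\ell^p(S^p_d)$-summing setting. The cyclic implications $(3) \Rightarrow (2) \Rightarrow (1) \Rightarrow (3)$ are the cleanest route. For $(3) \Rightarrow (2)$: given the factorization $T = \tilde T \circ \Mult_{a,b}$ with the stated norm bounds, one simply applies $\Id_{S^p_d} \ot T$ to $x \in S^p_d \ot_{\min} \cal{M}$, writes it as $(\Id_{S^p_d} \ot \tilde T)\big((1 \ot a) x (1 \ot b)\big)$, and estimates using $\norm{\tilde T}_{\M_d(\L^p(\cal{M})) \to \M_d(\cal{N})} \leq C$ together with the isometric inclusion $S^p_d(\L^p(\cal{M})) \subset \M_d(\L^p(\cal{M}))$ in the relevant sense (here one must be slightly careful that the $S^p_d$-valued norm on the right-hand side of \eqref{Ine-lp-summing} is controlled by the $\M_d$-norm — this uses that $(1 \ot a) x (1 \ot b)$ already lies in $S^p_d(\L^p(\cal{M}))$ since $x \in S^p_d \ot_{\min} \cal{M}$ and $a,b \in \cal{M}$). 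The implication $(2) \Rightarrow (1)$ is essentially by definition: inequality \eqref{Ine-lp-summing} bounds $\norm{(\Id_{S^p_d} \ot T)(x)}_{S^p(\cal{N})}$ by $C \norm{x}_{S^p_d \ot_{\min} \cal{M}}$ after using Hölder with $\norm{a}_{\L^{2p}}, \norm{b}_{\L^{2p}} \leq 1$ and the contractive inclusion $S^p_d \ot_{\min} \cal{M} \hookrightarrow S^p_d(\L^p(\cal{M}))$ coming from Proposition \ref{Prop-inj-finite-avn} applied at the level of $d \times d$ matrices; passing to the supremum over $x$ gives $\norm{T}_{\pi_{p,d}} \leq C$.

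The substantive implication is $(1) \Rightarrow (3)$, which is where a Pietsch-domination / Hahn--Banach separation argument is needed. First I would reduce to the case $\tau(1) = 1$ by rescaling the trace (using \eqref{changement-de-trace}-style homogeneity, or simply absorbing the constant into $C$). Then, assuming $\norm{T}_{\pi_{p,d},\cal{M} \to \cal{N}} \leq C$, one considers the relevant convex sets in the predual picture: for each finite family coming from an element $x \in S^p_d \ot_{\min} \cal{M}$ one has $\norm{(\Id_{S^p_d}\ot T)(x)}_{S^p_d(\cal{N})} \leq C \norm{x}_{S^p_d \ot_{\min}\cal{M}}$, and by the description of the $S^p_d \ot_{\min} \cal{M}$ norm for a finite (hyperfinite) setting one wants to dominate $\norm{x}_{S^p_d \ot_{\min}\cal{M}}$ by $\inf_{a,b}\norm{(1\ot a)x(1\ot b)}_{S^p_d(\L^p(\cal{M}))}$ over density matrices $a,b$. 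This is exactly the content of the factorization theorems in \cite[Theorem 2.4]{JuP1} and \cite[Remark 2.2]{JuP1}: the hyperfiniteness of $\cal{M}$ is used to get the compatibility of $\cal{M} \ot_{\min} E$ and $\L^1(\cal{M},E)$ inside a common space, and hyperfiniteness of $\cal{N}$ is used so that $S^p_d(\cal{N})$ behaves well. One then applies a minimax / Ky Fan argument over the (weak*-compact, convex) set of pairs of density matrices $(a,b)$ in the unit ball of $\L^{2p}(\cal{M})$ to extract a single pair $(a,b)$ working uniformly, and defines $\tilde T$ on the (dense, then closed) range of $\Mult_{a,b}$ by $\tilde T(axb) \ov{\mathrm{def}}{=} T(x)$, checking well-definedness and the matrix-norm bound $\norm{\tilde T}_{\M_d(\L^p(\cal{M})) \to \M_d(\cal{N})} \leq C$ from the domination inequality. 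Finally, the identity $\norm{T}_{\pi_{p,d}} = \inf\{C : \ldots\}$ follows since $(1)$ gives ``$\leq$'' for the infimum and $(2) \Rightarrow (1)$ (or $(3)\Rightarrow(1)$) gives ``$\geq$''.

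The main obstacle I expect is the minimax step in $(1) \Rightarrow (3)$: one must justify passing from a family of pairs $(a_x, b_x)$ depending on the test element $x$ to a single universal pair, which requires a compactness argument in the weak* topology on the unit ball of $\L^{2p}(\cal{M})$ together with convexity of the relevant functionals — precisely the point where one cites \cite[Theorem 2.4]{JuP1} and \cite[Remark 2.2]{JuP1} rather than reproving it. A secondary technical point is bookkeeping the distinction between the $S^p_d(\cdot)$-norm and the $\M_d(\cdot)$-norm on vector-valued $\L^p$ spaces, and making sure the inclusions $S^p_d \ot_{\min}\cal{M} \hookrightarrow S^p_d(\L^p(\cal{M})) \hookrightarrow \ell^p(S^p_d(\L^p(\cal{M})))$ used in \eqref{d-bounded} and \eqref{Def-ellp-Spd-summing-norm} are applied with the correct (contractive, isometric) constants; since the paper says ``we skip the details'' and flags this as a slight variation of \cite{JuP1}, in the write-up I would state the reduction to $\tau(1)=1$, indicate the three implications, and defer the minimax extraction to the cited results.
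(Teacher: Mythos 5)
Your plan is sound and matches the paper's treatment: the paper gives no proof at all, stating only that the result is a slight variation of \cite[Theorem 2.4]{JuP1} and \cite[Remark 2.2]{JuP1} and that the details are skipped, and your cyclic scheme $(3)\Rightarrow(2)\Rightarrow(1)\Rightarrow(3)$ with the Pietsch domination/minimax step deferred to those same references is exactly the intended adaptation. The only bookkeeping points you flag (the $S^p_d$ versus $\M_d$ norms, handled by the identity $\norm{\Id_{S^p_d}\ot \tilde{T}}_{S^p_d(E)\to S^p_d(F)}=\norm{\Id_{\M_d}\ot \tilde{T}}_{\M_d(E)\to\M_d(F)}$ proved later in the paper, and the passage from a single $S^p_d$ block to $\ell^p(S^p_d)$) are indeed the right ones and are standard.
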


Now, we complete Proposition \ref{prop-carac-Fourier}. This result gives implicitly the exact value of the  entanglement-assisted classical capacity of quantum channels which are Fourier multipliers.

\begin{thm}
\label{thm-carac-Fourier}
Let $\QG$ be a co-amenable compact quantum group of Kac type. We equip $\L^\infty(\QG)$ with its normalized normal finite faithful trace. Suppose $1 \leq p<\infty$. The element $a$ of $\ell^\infty(\hat{\QG})$ induces a completely $p$-summing Fourier multiplier $M_{a} \co \L^\infty(\QG) \to \L^\infty(\QG)$ if and only if it induces a completely bounded Fourier multiplier $M_{a} \co \L^p(\QG) \to \L^\infty(\QG)$. In this case, we have
\begin{equation}
\label{pidcirc}
\norm{M_a}_{\pi_p^\circ, \L^\infty(\QG) \to \L^\infty(\QG)}
=\norm{M_a}_{\cb,\L^p(\QG) \to \L^\infty(\QG)}
=\norm{M_a}_{\L^p(\QG) \to \L^\infty(\QG)}.
\end{equation}
\end{thm}

\begin{proof}
$\Rightarrow$: Suppose that we have a completely $p$-summing Fourier multiplier $M_{a_1} \co \L^\infty(\QG) \to \L^\infty(\QG)$. By Theorem \ref{thm-factorization-positivity}, there exist $a,b \in \L^{2p}(\QG)$ satisfying $\norm{a}_{\L^{2p}(\QG)} \leq 1$ and $\norm{b}_{\L^{2p}(\QG)} \leq 1$ and a completely bounded map $\tilde{M}_{a_1} \co \L^p(\QG) \to \L^\infty(\QG)$ such that 
\begin{equation}
\label{Facto-Fourier-multiplier}
M_{a_1}
=\tilde{M}_{a_1} \circ \Mult_{a,b}
\quad \text{and} \quad 
\norm{\tilde{M}_{a_1}}_{\cb,\L^p(\QG) \to \L^\infty(\QG)} 
\leq \norm{M_{a_1}}_{\pi_p^\circ, \L^\infty(\QG) \to \L^\infty(\QG)} 
\end{equation}
where $\Mult_{a,b} \co \L^\infty(\QG) \to \L^p(\QG)$. We can suppose that $a$ and $b$ are positive elements with full supports. Let $\Delta \co \L^\infty(\QG) \to \L^\infty(\QG) \otvn \L^\infty(\QG)$ be the coproduct defined by \eqref{coproduct-VNG}. Let $\E \co \L^\infty(\QG) \otvn \L^\infty(\QG) \to \L^\infty(\QG)$ be the canonical trace preserving normal faithful conditional expectation associated with $\Delta$. By using Lemma \ref{Lemma-useful}, we have a contraction $\Delta \co \L^1(\QG) \to \L^{p^*}(\QG,\L^1(\QG))$. By duality, we deduce that we have a well-defined contraction $\E \co \L^p(\QG,\L^\infty(\QG)) \to \L^\infty(\QG)$. 

Since $\tilde{M}_{a_1} \co \L^p(\QG) \to \L^\infty(\QG)$ is completely bounded, we deduce by \eqref{ine-tensorisation-os} that the map $\Id_{\L^p} \ot \tilde{M}_{a_1} \co \L^p(\QG,\L^p(\QG)) \to \L^p(\QG,\L^\infty(\QG))$ is completely bounded and that we have
\begin{align}
\MoveEqLeft
\label{Equa-138676}
\norm{\Id_{\L^p} \ot \tilde{M}_{a_1}}_{\cb,\L^p(\QG,\L^p(\QG)) \to \L^p(\QG,\L^\infty(\QG))}            
\ov{\eqref{ine-tensorisation-os}}{\leq} \norm{\tilde{M}_{a_1}}_{\cb,\L^p(\QG) \to \L^\infty(\QG)} 
\ov{\eqref{Facto-Fourier-multiplier}}{\leq}  \norm{M_{a_1}}_{\pi_p^\circ, \L^\infty(\QG) \to \L^\infty(\QG)}. 
\end{align}
By Lemma \ref{Lemma-useful} applied with the opposite quantum group $\QG^\op$ \cite[Section 4]{KuV03}, we have an isometry $\Delta^\op \co \L^2(\QG^\op) \to \L^\infty(\QG^\op,\L^2(\QG^\op))=\L^\infty(\QG,\L^2(\QG))$, i.e.~for any $x \in \L^2(\QG)=\L^2(\QG^\op)$ we have
$$
\norm{\Delta^\op(x)}_{\L^\infty(\QG,\L^2(\QG))}   
= \norm{x}_{\L^2(\QG)}.
$$ 
By \eqref{Norm-LpLq} applied with $p=\infty$ and $q=r=2$, we deduce that for any elements $c,d$ in the unit ball of $\L^4(\QG)$
\begin{equation}
\label{}
\bnorm{(c \ot 1)\Delta^\op(x)(d \ot 1)}_{\L^2(\QG \otvn \QG)}   
\leq \norm{x}_{\L^2(\QG)}.
\end{equation}
We infer that the map
\begin{equation}
\label{Theta-magic}
\L^2(\QG) \to \L^2(\QG \otvn \QG),\,  
x \mapsto (c \ot 1) \Delta^\op(x) (d \ot 1).
\end{equation}
is a well-defined contraction. Since $\Delta=\Sigma\Delta^\op$, by Fubini's theorem the map
\begin{equation}
\label{Theta-magic}
\L^2(\QG) \to \L^2(\QG \otvn \QG),\,  
x \mapsto (1 \ot c) \Delta(x) (1 \ot d).
\end{equation}
is also a contraction.

\begin{prop}
Suppose $1 \leq p \leq \infty$. The map
\begin{equation}
\label{Theta-bbis}
\Theta \co \L^p(\QG) \to \L^p(\QG \otvn \QG),\,  
x \mapsto (1 \ot a) \Delta(x) (1 \ot b).
\end{equation}
is a well-defined complete contraction.
\end{prop} 

\begin{proof}
Suppose $2 \leq p \leq \infty$. We will use Stein interpolation \cite{CwJ84} \cite[Theorem 2.1]{Voi92} \cite[Theorem 2.7]{Lun18}. For any complex number $z$ of the closed strip $\ovl{S}=\{z \in \mathbb{C} : 0 \leq \Re z \leq 1 \}$, we consider the map
$$
T_z \co \Pol(\QG) \to \L^1(\QG),\quad x \mapsto \big(1 \ot a^{\frac{pz}{2}}\big) \Delta(x) \big(1 \ot b^{\frac{pz}{2}}\big)
$$
where $\Pol(\QG)$ is the space of polynomials in $\L^\infty(\QG)$. Since $a,b$ are elements of the unit ball of $\L^{2p}(\QG)$, the elements $a^{\frac{p}{2}}$ and $b^{\frac{p}{2}}$ are elements of the unit ball of $\L^4(\QG)$. For any $t \in \R$ and any $x \in \Pol(\QG)$, we deduce that
\begin{align*}
\MoveEqLeft
\norm{T_{1+\i t}(x)}_{\L^2(\QG \otvn \QG)}            
=\bnorm{\big(1 \ot a^{\frac{p+\i t p}{2}}\big) \Delta(x) \big(1 \ot b^{\frac{p+\i t p}{2}}\big)}_{\L^2(\QG \otvn \QG)} \\
&\leq \bnorm{\big(1 \ot a^{\frac{p}{2}}\big) \Delta(x) \big(1 \ot b^{\frac{p}{2}}\big)}_{\L^2(\QG \otvn \QG)}
\ov{\eqref{Theta-magic}}{\leq} \norm{x}_{\L^2(\QG)}.
\end{align*}
We infer that the map  $T_{1+\i t} \co \L^2(\QG) \to \L^2(\QG \otvn \QG)$ is contractive. Note that by \cite[p.~139]{Pis03}, the operator space $\L^2(\QG)$ is an operator Hilbert space. We conclude that this map is even completely contractive by \cite[Proposition 7.2 (ii)]{Pis03}.

For any $t \in \R$ and any $x \in \Pol(\QG)$, we have
\begin{align*}
\MoveEqLeft
\norm{T_{\i t}(x)}_{\L^\infty(\QG) \otvn \L^\infty(\QG)}            
=\bnorm{\big(1 \ot a^{\frac{\i t p}{2}}\big) \Delta(x) \big(1 \ot b^{\frac{\i t p}{2}}\big)}_{\L^\infty(\QG) \otvn \L^\infty(\QG)}
=\norm{\Delta(x)}_{\L^\infty(\QG) \otvn \L^\infty(\QG)}.
\end{align*} 
We deduce that the map $T_{\i t} \co \L^\infty(\QG) \to \L^\infty(\QG) \otvn \L^\infty(\QG)$ is completely contractive. Moreover, for any $x \in \Pol(\QG)$, the function $\ovl{S} \to \L^1(\QG)$, $z \mapsto T_z(x)$ is analytic on $S$ and continuous and bounded on $\ovl{S}$. Finally, the functions $\R \to \L^2(\QG \otvn \QG)$, $t \mapsto T_{1+\i t}(x)$ and $\R \to \L^\infty(\QG) \otvn \L^\infty(\QG)$, $t \mapsto T_{\i t}(x)$ are continuous. By Stein's interpolation, we conclude by taking $z=\frac{2}{p}$ that the map \eqref{Theta-bbis} is a complete contraction.

The case $1 \leq p \leq 2$ is similar. 
\end{proof}

We have
\begin{align*}
\MoveEqLeft
\Delta M_{a_1}
\ov{\eqref{Equation-Delta}}{=} (\Id \ot M_{a_1}) \Delta
\ov{\eqref{Facto-Fourier-multiplier}}{=}(\Id \ot \tilde{M}_{a_1} \Mult_{a,b}) \Delta \\
&=(\Id \ot \tilde{M}_{a_1}) (\Id \ot \Mult_{a,b}) \Delta
\ov{\eqref{Theta-bbis}}{=} (\Id \ot \tilde{M}_{a_1}) \Theta.           
\end{align*} 
Since $\E \Delta=\Id_{\L^\infty(\QG)}$, we deduce that $M_{a_1}=\E \Delta M_{a_1}=\E (\Id \ot M_{a_1}) \Theta$. That means that we have the following commutative diagram.
$$
\xymatrix @R=1.5cm @C=2cm{
\L^p(\QG,\L^p(\QG)) \ar[r]^{\Id_{\L^p} \ot \tilde{M}_{a_1}}   & \ar[d]^{\E} \L^p(\QG,\L^\infty(\QG)) \\
\L^p(\QG) \ar[r]_{M_{a_1}} \ar[u]^{\Theta}   & \L^\infty(\QG)\\
  }
$$
We conclude that ${a_1}$ induces a bounded multiplier $M_{a_1} \co \L^p(\QG) \to \L^\infty(\QG)$. Since $\QG$ is co-amenable, by Theorem \ref{Thm-description-multipliers-3} this multiplier is completely bounded and we have
\begin{align*}
\MoveEqLeft
\norm{M_{a_1}}_{\cb,\L^p(\QG) \to \L^\infty(\QG)}
=\norm{M_{a_1}}_{\L^p(\QG) \to \L^\infty(\QG)} 
=\norm{\Theta(\Id_{\L^p} \ot \tilde{M}_{a_1})\E}_{\L^p(\QG) \to \L^\infty(\QG)} \\
&\leq \norm{\Theta} \norm{\Id_{\L^p} \ot \tilde{M}_{a_1}}_{\L^p(\L^p) \to \L^p(\L^\infty)} \norm{\E} 
\ov{\eqref{Equa-138676}}{\leq} \norm{M_{a_1}}_{\pi_p^\circ, \L^\infty(\QG) \to \L^\infty(\QG)}.
\end{align*}
\end{proof}


In the end of this section, we need the following which is a slight deepening of \cite[(1.7')]{Pis98}.

\begin{lemma}
Let $E,F$ be operator spaces and $T \co E \to F$ be a bounded map. If $n \geq 1$ is an integer, we have
\begin{equation}
\label{d-norm-1}
\norm{\Id_{S^p_n} \ot T}_{S^p_n(E) \to S^p_n(F)} 
=\norm{\Id_{\M_n} \ot T}_{\M_n(E)\to \M_n(F)}.
\end{equation}
\end{lemma}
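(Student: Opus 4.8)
The plan is to prove the two inequalities $\norm{\Id_{S^p_n}\ot T}_{S^p_n(E)\to S^p_n(F)}\leq\norm{\Id_{\M_n}\ot T}_{\M_n(E)\to\M_n(F)}$ and its reverse separately (the case $p=\infty$ is immediate since then $S^\infty_n=\M_n$, so assume $1\leq p<\infty$). The only inputs are two standard structural facts about finite vector-valued Schatten classes, contained in \cite[\S1]{Pis1}: \textbf{(i)} the factorisation $S^{q_0}_n(G)=S^{q_1}_n\cdot S^{q}_n(G)\cdot S^{q_2}_n$, isometrically and as an infimum over factorisations $w=(a\ot 1_G)y(b\ot 1_G)$, valid for any operator space $G$ whenever $\frac1{q_0}=\frac1{q_1}+\frac1{q}+\frac1{q_2}$ — in particular its ``easy half'', the contractive module map $\norm{(a\ot 1_G)y(b\ot 1_G)}_{S^{q_0}_n(G)}\leq\norm{a}_{S^{q_1}_n}\norm{y}_{S^{q}_n(G)}\norm{b}_{S^{q_2}_n}$; and \textbf{(ii)} the trace duality $S^q_n(G)^*=S^{q^*}_n(G^*)$ for any operator space $G$ and finite $n$, whose pairing is a trace pairing and is therefore compatible with left and right multiplication by elements of $\M_n$ in the usual cyclic way.

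For ``$\leq$'' I would take $x\in S^p_n(E)$ with $\norm{x}_{S^p_n(E)}<1$, choose by (i) a factorisation $x=(a\ot 1_E)y(b\ot 1_E)$ with $\norm{a}_{S^{2p}_n}\norm{y}_{\M_n(E)}\norm{b}_{S^{2p}_n}<1$, observe that $\Id_{S^p_n}\ot T$ acts only on the coefficient leg so that $(\Id_{S^p_n}\ot T)(x)=(a\ot 1_F)\bigl((\Id_{\M_n}\ot T)(y)\bigr)(b\ot 1_F)$, and bound the $S^p_n(F)$-norm of the right-hand side, by the module inequality, by $\norm{a}_{S^{2p}_n}\norm{(\Id_{\M_n}\ot T)(y)}_{\M_n(F)}\norm{b}_{S^{2p}_n}<\norm{\Id_{\M_n}\ot T}_{\M_n(E)\to\M_n(F)}$. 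Taking the supremum over $x$ gives this direction.

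The reverse inequality is the substance of the lemma and the step I expect to be the main obstacle: a naive ``compress by rank-one projections'' argument only recovers the $\M_n(F)$-norm as a supremum over \emph{product} vectors and can lose a factor as large as $n$, so one must argue by duality. Given $x\in\M_n(E)$ with $\norm{x}_{\M_n(E)}<1$, set $z:=(\Id_{\M_n}\ot T)(x)\in\M_n(F)$ and use $\M_n(F)^*=S^1_n(F^*)$ to write $\norm{z}_{\M_n(F)}=\sup\{\,|\la z,\zeta\ra| : \zeta\in S^1_n(F^*),\ \norm{\zeta}_{S^1_n(F^*)}\leq1\,\}$. The key trick is to factorise $\zeta$ through $S^{p^*}_n(F^*)$ rather than through $\M_n(F^*)$: since $\frac1{2p}+\frac1{p^*}+\frac1{2p}=1$, fact (i) gives $S^1_n(F^*)=S^{2p}_n\cdot S^{p^*}_n(F^*)\cdot S^{2p}_n$, so $\zeta=(c\ot 1_{F^*})\mu(d\ot 1_{F^*})$ with $\norm{c}_{S^{2p}_n}\leq1$, $\norm{d}_{S^{2p}_n}\leq1$ and $\norm{\mu}_{S^{p^*}_n(F^*)}$ arbitrarily close to $\norm{\zeta}$. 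Moving $c$ and $d$ across the trace pairing turns $\la z,\zeta\ra$ into $\la (d\ot 1_F)z(c\ot 1_F),\mu\ra_{S^p_n(F),\,S^{p^*}_n(F^*)}$; now $(d\ot 1_F)z(c\ot 1_F)=(\Id_{S^p_n}\ot T)\bigl((d\ot 1_E)x(c\ot 1_E)\bigr)$, whose argument has $S^p_n(E)$-norm at most $\norm{d}_{S^{2p}_n}\norm{x}_{\M_n(E)}\norm{c}_{S^{2p}_n}<1$ by the module inequality, so the image has $S^p_n(F)$-norm at most $\norm{\Id_{S^p_n}\ot T}_{S^p_n(E)\to S^p_n(F)}$; Hölder for the pairing then yields $|\la z,\zeta\ra|\leq\norm{\Id_{S^p_n}\ot T}\,\norm{\mu}_{S^{p^*}_n(F^*)}$. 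Letting $\norm{\mu}\to\norm{\zeta}\leq1$ and taking suprema over $\zeta$ and then over $x$ gives $\norm{\Id_{\M_n}\ot T}_{\M_n(E)\to\M_n(F)}\leq\norm{\Id_{S^p_n}\ot T}_{S^p_n(E)\to S^p_n(F)}$, which together with the first step proves the lemma. The only points requiring care are the exponent matching in the two invocations of (i) and the observation that the three trace pairings used ($\M_n(F)$ with $S^1_n(F^*)$, and $S^p_n(F)$ with $S^{p^*}_n(F^*)$) are compatible restrictions of a single trace pairing, differing only in which norm estimates are available.
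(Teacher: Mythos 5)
Your proof is correct. The first direction (bounding the $S^p_n(E)\to S^p_n(F)$ norm by the $\M_n(E)\to\M_n(F)$ norm via the factorisation $x=(a\ot 1)y(b\ot 1)$ and the infimum formula) is exactly the paper's argument, quoting the same facts from \cite[Theorem 1.5, Lemma 1.6]{Pis1}. For the reverse direction the paper is shorter: it simply invokes Pisier's formula \cite[(1.7)]{Pis1}, which states directly that $\norm{y}_{\M_n(F)}=\sup\{\norm{a\cdot y\cdot b}_{S^p_n(F)}:\norm{a}_{S^{2p}_n}\leq 1,\ \norm{b}_{S^{2p}_n}\leq 1\}$, applies it to $y=(\Id_{\M_n}\ot T)(x)$, commutes $a,b$ past $\Id\ot T$, and uses the module inequality on $a\cdot x\cdot b$. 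Your duality detour --- pairing $z$ against the unit ball of $S^1_n(F^*)$, factorising the functional through $S^{p^*}_n(F^*)$, and transporting $c,d$ across the trace pairing --- is a correct re-derivation of essentially that same supremum formula, so you reach the same place by a longer but self-contained route; your worry about losing a factor of $n$ through naive compressions is well-founded, and both the paper's citation of (1.7) and your duality argument are legitimate ways around it. One small point of care in your version: $\M_n(F)$ need not be a dual space, but the canonical complete isometry $\M_n(F)\hookrightarrow\M_n(F^{**})=(S^1_n(F^*))^*$ guarantees that the pairing with the unit ball of $S^1_n(F^*)$ does compute the $\M_n(F)$-norm, so this step is sound.
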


\begin{proof}
Let $x \in \M_n(E)$. Consider some $a \in S^{2p}_n$, $b \in S^{2p}_n$ with $\norm{a}_{S^{2p}_n} \leq 1$, $\norm{b}_{S^{2p}_n} \leq 1$. Using \cite[(1.7)]{Pis98}, in the last equality, we obtain
\begin{align*}
\MoveEqLeft
\norm{a \cdot ((\Id_{\M_n} \ot T)(x)) \cdot b}_{S^p_n(F)}
=\norm{(\Id_{S^p_n} \ot T)(a \cdot x \cdot b)}_{S^p_n(F)} \\
&\leq \bnorm{\Id_{S^p_n} \ot T}_{S^p_n(E) \to S^p_n(F)} \norm{a \cdot x \cdot b}_{S^p_n(E)} \leq \bnorm{\Id_{S^p_n} \ot T}_{S^p(E) \to S^p_n(F)} \norm{x}_{\M_n(E)}.
\end{align*}
Taking the supremum and using again \cite[(1.7)]{Pis98}, we obtain
$$
\norm{(\Id_{\M_n} \ot T)(x)}_{\M_n(F)} 
\leq  \bnorm{(\Id_{S^p_n} \ot T)}_{S^p_n(E) \to S^p_n(F)} \norm{x}_{\M_n(E)}.
$$
Hence
$$  
\bnorm{\Id_{\M_n} \ot T}_{\M_n(E) \to \M_n(F)} 
\leq \bnorm{\Id_{S^p_n} \ot T}_{S^p_n(E) \to S^p_n(F)}.
$$
Let $x \in S^p_n(E)$. Let $a \in S^{2p}_n$, $b \in S^{2p}_n$ et $y \in \M_n(E)$ such that $x=a\cdot y\cdot b$. Using \cite[Lemma 1.6 (ii)]{Pis98} in the first inequality, we obtain
\begin{align*}
\MoveEqLeft
\norm{(\Id_{S^p_n} \ot T)(x)}_{S^p_n(F)}
=\norm{(\Id_{S^p_n} \ot T)(a\cdot y \cdot b)}_{S^p_n(F)} 
=\norm{a \cdot (\Id_{\M_n} \ot T)(y)) \cdot b}_{S^p_n(F)} \\
&\leq \norm{a}_{S^{2p}_n} \bnorm{(\Id_{\M_n} \ot T)(y)}_{\M_n(F)} \norm{b}_{S^{2p}_n} 
\leq \norm{\Id_{\M_n} \ot T}_{\M_n(E) \to \M_n(F)} \norm{a}_{S^{2p}_n} \norm{y}_{\M_n(E)} \norm{b}_{S^{2p}_n}.
\end{align*}
Passing to the infimum with \cite[Theorem 1.5]{Pis98}, we obtain that
$$
\bnorm{(\Id_{S^p_n} \ot T)(x)}_{S^p_n(F)} 
\leq \bnorm{\Id_{\M_n} \ot T}_{\M_n(E) \to \M_n(F)} \norm{x}_{S^p_n(E)}. 
$$
Finally
$$ 
\norm{\Id_{S^p_n} \ot T}_{S^p_n(E) \to S^p_n(F)} 
\leq \norm{\Id_{\M_n} \ot T}_{\M_n(E)\to \M_n(F)}. 
$$
\end{proof}

\begin{remark} \normalfont
\label{Remark-avec-d}
Let $\QG$ be a finite quantum group. We equip $\L^\infty(\QG)$ with its normalized normal finite faithful trace. We let $d_{\QG} \ov{\mathrm{def}}{=} \max_{\pi \in \hat{\QG}} d_\pi$. Suppose $1 \leq p<\infty$ and $d \geq d_{\QG}$. If $a$ of $\ell^\infty(\hat{\QG})$  we have
\begin{equation}
\label{}
\norm{M_a}_{\pi_{p,d}, \L^\infty(\QG) \to \L^\infty(\QG)}
=\norm{M_a}_{\cb,\L^p(\QG) \to \L^\infty(\QG)}
=\norm{M_a}_{\L^p(\QG) \to \L^\infty(\QG)}.
\end{equation}
%
The proof is similar. We have a $*$-isomorphism $\L^\infty(\QG) = \oplus_{\pi \in \hat{\QG}} \M_{d_\pi}$. The point is that we need to control the norm of $\Id_{\L^p(\QG)} \ot \tilde{M}_{a_1}$ by the $\ell^p(S^p_d)$-summing norm of $M_{a_1}$. We have
\begin{equation}
\label{}
\norm{\tilde{M}_{a_1}}_{S^p_d(\L^p(\QG)) \to S^p_d(\L^\infty(\QG))}
\ov{\eqref{d-norm-1}}{=} \norm{\tilde{M}_{a_1}}_{\M_d(\L^p(\QG)) \to \M_d(\L^\infty(\QG))}
\leq \norm{M_{a_1}}_{\pi_{p,d},\L^\infty(\QG) \to \L^\infty(\QG)}.
\end{equation}
Moreover, we have
$$
\Id_{\L^p(\QG)} \ot \tilde{M}_{a_1}
=\Id_{\L^p(\oplus_{\pi \in \hat{\QG}} \M_{d_\pi})} \ot \tilde{M}_{a_1}
=\Id_{\oplus_{\pi \in \hat{\QG}} S^p_{d_\pi}} \ot \tilde{M}_{a_1}
=\oplus_{\pi \in \hat{\QG}} \big(\Id_{S^p_{d_\pi}} \ot \tilde{M}_{a_1}\big).
$$ 
\end{remark}

If $d=1 < d_{\QG}$, we can prove the following result similarly with \eqref{estimate-in-d}.

\begin{thm}
\label{thm-carac-Fourier-2}
Let $\QG$ be a finite quantum group. We equip $\L^\infty(\QG)$ with its normalized normal finite faithful trace. Suppose $1 \leq p<\infty$. If $a \in \ell^\infty(\hat{\QG})$ we have
\begin{equation}
\label{}
\frac{1}{d_{\QG}}\norm{M_a}_{\L^p(\QG) \to \L^\infty(\QG)} 
\leq \norm{M_a}_{\pi_{p,1}, \L^\infty(\QG) \to \L^\infty(\QG)}
\leq \norm{M_a}_{\L^p(\QG) \to \L^\infty(\QG)}.
\end{equation}
\end{thm}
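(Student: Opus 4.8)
The plan is to replay the argument of Theorem \ref{thm-carac-Fourier} with $d=1$, keeping track of the fact that with $d=1$ the factorization theorem only produces a \emph{bounded} (not completely bounded) intermediate map, which is exactly where the constant $d_{\QG}$ appears.

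For the right-hand inequality, recall that a finite quantum group is co-amenable and of Kac type, and $\L^\infty(\QG)=\oplus_{\pi\in\hat{\QG}}\M_{d_\pi}$ is finite-dimensional, hence hyperfinite. If $\norm{M_a}_{\L^p(\QG)\to\L^\infty(\QG)}=\infty$ the inequality is trivial; otherwise Theorem \ref{Thm-description-multipliers-3} gives that $M_a\co\L^p(\QG)\to\L^\infty(\QG)$ is completely bounded with $\norm{M_a}_{\cb,\L^p(\QG)\to\L^\infty(\QG)}=\norm{M_a}_{\L^p(\QG)\to\L^\infty(\QG)}$, and then Proposition \ref{prop-carac-Fourier} applied with $d=1$ and the \emph{normalized} trace ($\tau(1)=1$) yields $\norm{M_a}_{\pi_{p,1},\L^\infty(\QG)\to\L^\infty(\QG)}\leq\norm{M_a}_{\L^p(\QG)\to\L^\infty(\QG)}$.

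For the left-hand inequality, put $C\ov{\mathrm{def}}{=}\norm{M_a}_{\pi_{p,1},\L^\infty(\QG)\to\L^\infty(\QG)}$ and assume $C<\infty$. Applying Theorem \ref{thm-factorization-positivity} to $T=M_a\co\L^\infty(\QG)\to\L^\infty(\QG)$ with $d=1$, there exist $c,e\in\L^\infty(\QG)$ with $\norm{c}_{\L^{2p}(\QG)},\norm{e}_{\L^{2p}(\QG)}\leq 1$ and a bounded map $\tilde M_a\co\L^p(\QG)\to\L^\infty(\QG)$ with $M_a=\tilde M_a\circ\Mult_{c,e}$ and $\norm{\tilde M_a}_{\L^p(\QG)\to\L^\infty(\QG)}\leq C$. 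Exactly as in the proof of Theorem \ref{thm-carac-Fourier}: the map $\Theta\co\L^p(\QG)\to\L^p(\QG\otvn\QG)=\L^p(\QG,\L^p(\QG))$, $x\mapsto(1\ot c)\Delta(x)(1\ot e)$ is a complete contraction; the canonical trace-preserving conditional expectation $\E\co\L^p(\QG,\L^\infty(\QG))\to\L^\infty(\QG)$ is a contraction (by Lemma \ref{Lemma-useful} and duality); and combining \eqref{Equation-Delta} with the factorization of $M_a$ gives the identity $M_a=\E\,(\Id_{\L^p(\QG)}\ot\tilde M_a)\,\Theta$. It then remains to bound $\norm{\Id_{\L^p(\QG)}\ot\tilde M_a}_{\L^p(\QG,\L^p(\QG))\to\L^p(\QG,\L^\infty(\QG))}$. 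Since $\L^\infty(\QG)=\oplus_{\pi}\M_{d_\pi}$, for any operator space $E$ one has $\L^p(\QG,E)=\oplus_{\pi}S^p_{d_\pi}(E)$ ($\ell^p$-direct sum), so $\Id_{\L^p(\QG)}\ot\tilde M_a=\oplus_{\pi}(\Id_{S^p_{d_\pi}}\ot\tilde M_a)$ and its norm is $\sup_{\pi}\norm{\Id_{S^p_{d_\pi}}\ot\tilde M_a}_{S^p_{d_\pi}(\L^p(\QG))\to S^p_{d_\pi}(\L^\infty(\QG))}$; by \eqref{d-norm-1} each summand equals $\norm{\Id_{\M_{d_\pi}}\ot\tilde M_a}_{\M_{d_\pi}(\L^p(\QG))\to\M_{d_\pi}(\L^\infty(\QG))}$, which by \eqref{estimate-in-d} is at most $d_\pi\norm{\tilde M_a}_{\L^p(\QG)\to\L^\infty(\QG)}\leq d_{\QG}C$. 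Hence
$$
\norm{M_a}_{\L^p(\QG)\to\L^\infty(\QG)}\leq\norm{\E}\,\norm{\Id_{\L^p(\QG)}\ot\tilde M_a}\,\norm{\Theta}\leq d_{\QG}C ,
$$
which is the asserted inequality $\tfrac{1}{d_{\QG}}\norm{M_a}_{\L^p(\QG)\to\L^\infty(\QG)}\leq\norm{M_a}_{\pi_{p,1},\L^\infty(\QG)\to\L^\infty(\QG)}$.

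The point requiring care — in contrast with Remark \ref{Remark-avec-d}, where $d\geq d_{\QG}$ allows one to retain complete boundedness and absorb everything into the completely bounded norm via \eqref{ine-tensorisation-os} — is that the choice $d=1$ delivers only a bounded factorization map $\tilde M_a$, and the passage from $\tilde M_a$ to $\Id_{\L^p(\QG)}\ot\tilde M_a$ then genuinely costs the factor $\max_{\pi}d_\pi=d_{\QG}$ through \eqref{estimate-in-d}; the only mildly technical step is the routine identification $\L^p(\QG,E)=\oplus_{\pi}S^p_{d_\pi}(E)$ combined with \eqref{d-norm-1}, needed to reduce the tensorized estimate to the matrix-amplification estimate \eqref{estimate-in-d} block by block.
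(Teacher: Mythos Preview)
Your proof is correct and follows precisely the route the paper intends: it reruns the factorization argument of Theorem~\ref{thm-carac-Fourier} and Remark~\ref{Remark-avec-d} with $d=1$, using the block decomposition $\Id_{\L^p(\QG)}\ot\tilde{M}_a=\oplus_\pi(\Id_{S^p_{d_\pi}}\ot\tilde{M}_a)$ together with \eqref{d-norm-1} and then \eqref{estimate-in-d} to pick up the factor $d_{\QG}$, exactly as the paper's one-line hint ``similarly with \eqref{estimate-in-d}'' indicates. Your write-up is in fact more detailed than what the paper supplies; the only superfluous remarks are the finiteness caveats, since on a finite quantum group all these norms are automatically finite.
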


\subsection{$\ell^p(S^p_d)$-summing Schur multipliers on $\M_n$}
\label{summing-Schur-multipliers}

We refer to Theorem \ref{Th-CEA-Herz-Schur} for an application of the results of this section.

\begin{prop}
\label{prop-carac-Schur}
Suppose $1 \leq p <\infty$ and let $d$ be an element of $\N \cup\{\infty\}$. If $M_A \co \M_n \to \M_n$ is a Schur multiplier then we have
\begin{equation}
\label{Schur-sommant}
\norm{M_A}_{\pi_{p,d},\M_n \to \M_n}
\leq n^{\frac{1}{p}} \norm{M_A}_{\cb,S^p_n \to \M_n}.
\end{equation}
\end{prop}

\begin{proof}
By Proposition \ref{Prop-inj-finite-avn}, the linear map $i_p \co \M_n \xhookrightarrow{} S^p_n$ satisfies $\norm{i_p}_{\pi_{p,d},\M_n \to S^p_n}=n^{\frac{1}{p}}$. By considering the composition
$$
\M_n \xhookrightarrow{i_p} S^p_n \xra{M_A} \M_n,
$$
we deduce by the ideal property \eqref{Ideal-ellp} that
\begin{align*}
\MoveEqLeft
\norm{M_A}_{\pi_{p,d}, \M_n \to \M_n}
\ov{\eqref{Ideal-ellp}}{\leq} \norm{M_A}_{\cb,S^p_n \to \M_n} \norm{i_p}_{\pi_{p,d}, \M_n \to S^p_n} 
\ov{\eqref{norm-ip}}{=} n^{\frac{1}{p}}\norm{M_A}_{\cb,S^p_n \to \M_n}.            
\end{align*} 
\end{proof}

In the same spirit than Theorem \ref{thm-carac-Fourier}, we can prove the following result. The second inequality is sharp.

\begin{prop}
\label{thm-carac-Schur}
Suppose $1<p<\infty$. Consider some integer $n \geq 0$. If $M_A \co \M_n \to \M_n$ is a Schur multiplier then we have
\begin{equation}
\label{Summing-Schur}
\norm{M_A}_{S^p_n \to \M_n} \leq \norm{M_A}_{\pi_p^\circ, \M_n \to \M_n}
\leq n^{\frac{1}{p}}\norm{M_A}_{\cb,S^p_n \to \M_n}.
\end{equation}
\end{prop}

\begin{proof}
Suppose that the matrix $A$ induces a completely $p$-summing Schur multiplier $M_{A} \co \M_n \to \M_n$. By Theorem \ref{thm-factorization-positivity}, there exist $a,b \in S^{2p}_n$ satisfying $\norm{a}_{S^{2p}_n} \leq 1$ and $\norm{b}_{S^{2p}_n} \leq 1$ and a map $T \co S^p_n \to \M_n$ such that 
\begin{equation}
\label{Facto-Schur-multiplier}
M_A
=T \circ \Mult_{a,b}
\text{ and } 
\norm{T}_{\cb,S^p_n \to \M_n} 
\leq \norm{M_{A}}_{\pi_p^\circ, \M_n \to \M_n} 
\end{equation}
where $\Mult_{a,b} \co \M_n \to S^p_n$. It is easy to check that we can suppose that $a$ and $b$ are positive elements with full supports. Consider the map $\Delta \co \M_n \to \M_n \otvn \M_n$, $e_{ij} \mapsto e_{ij} \ot e_{ij}$. This map is a trace preserving $*$-homomorphism. Let $\E \co \M_n \otvn \M_n \to \M_n$ be the canonical trace preserving normal faithful conditional expectation associated with $\Delta$. The map $\Delta$ induces a contraction $\Delta \co S^1_n \to S^1_n(S^1_n)$. By composition, we deduce a contraction $\Delta \co S^1_n \to S^{p^*}_n(S^1_n)$. By duality, we deduce that we have a well-defined contraction $\E \co S^p_n(\M_n) \to \M_n$. 

Since $T \co S^p_n \to \M_n$ is completely bounded, we deduce by \eqref{ine-tensorisation-os} that the map $\Id_{S^p_n} \ot T \co S^p_n(S^p_n) \to S^p_n(\M_n)$ is completely bounded and that we have
\begin{align}
\MoveEqLeft
\label{Equa-138676-Schur}
\norm{\Id_{S^p_n} \ot T}_{\cb,S^p_n(S^p_n) \to S^p_n(\M_n)}            
\ov{\eqref{ine-tensorisation-os}}{\leq} \norm{T}_{\cb,S^p_n \to \M_n} 
\ov{\eqref{Facto-Schur-multiplier}}{\leq} \norm{M_A}_{\pi_p^\circ, \M_n \to \M_n}. 
\end{align}
We have a contraction $\Delta \co S^2_n \to S^2_n(S^2_n)$, hence by composition a contraction $\Delta \co S^2_n \to \M_n(S^2_n)$, i.e.~for any $x \in S^2_n$ we have
$$
\norm{\Delta(x)}_{\M_n(S^2_n)}   
\leq \norm{x}_{S^2_n}.
$$ 
By the formula \eqref{Norm-LpLq} applied with $p=\infty$ and $q=r=2$, we deduce that for any elements $c,d$ in the unit ball of $S^4_n$
\begin{equation*}
\norm{(c \ot 1)\Delta(x)(d \ot 1)}_{S^2_n(S^2_n)}   
\leq \norm{x}_{S^2_n}.
\end{equation*}
We infer that the map
\begin{equation*}
S^2_n \to S^2_n(S^2_n),\,  
x \mapsto (c \ot 1) \Delta(x) (d \ot 1).
\end{equation*}
is a well-defined contraction and thus by Fubini's theorem the map
\begin{equation}
\label{Theta-magic-Schur}
S^2_n \to S^2_n(S^2_n),\,  
x \mapsto (1 \ot c) \Delta(x) (1 \ot d).
\end{equation}
is also a contraction.

\begin{prop}
Suppose $1 \leq p \leq \infty$. The map
\begin{equation}
\label{Theta-bis-Schur}
\Theta \co S^p_n \to S^p_n(S^p_n),\,  
x \mapsto (1 \ot a) \Delta(x) (1 \ot b).
\end{equation}
is a well-defined complete contraction.
\end{prop} 

\begin{proof}
Suppose $2 \leq p \leq \infty$. We will use Stein interpolation \cite{CwJ84} \cite[Theorem 2.1]{Voi92} \cite[Theorem 2.7]{Lun18}. For any complex number $z$ of the closed strip $\ovl{S}=\{z \in \mathbb{C} : 0 \leq \Re z \leq 1 \}$, we consider the map
$$
T_z \co \M_n \to \M_n,\quad x \mapsto \big(1 \ot a^{\frac{pz}{2}}\big) \Delta(x) \big(1 \ot b^{\frac{pz}{2}}\big).
$$
Note that the elements $a^{\frac{p}{2}}$ and $b^{\frac{p}{2}}$ have unit norms in $S^4_n$. For any $t \in \R$ and any $x \in \M_n$, we deduce that
\begin{align*}
\MoveEqLeft
\norm{T_{1+\i t}(x)}_{S^2_n(S^2_n)}            
=\bnorm{\big(1 \ot a^{\frac{p+\i t p}{2}}\big) \Delta(x) \big(1 \ot b^{\frac{p+\i t p}{2}}\big)}_{S^2_n(S^2_n)} \\
&\leq \bnorm{\big(1 \ot a^{\frac{p}{2}}\big) \Delta(x) \big(1 \ot b^{\frac{p}{2}}\big)}_{S^2_n(S^2_n)}
\ov{\eqref{Theta-magic-Schur}}{\leq} \norm{x}_{S^2_n}.
\end{align*}
We infer that the map  $T_{1+\i t} \co S^2_n \to S^2_n(S^2_n)$ is contractive. Note that by \cite[p.~139]{Pis03}, the operator space $S^2_n$ is an operator Hilbert space. We conclude that this map is even completely contractive by \cite[Proposition 7.2 (ii)]{Pis03}.

For any $t \in \R$ and any $x \in \M_n$, we have
\begin{align*}
\MoveEqLeft
\norm{T_{\i t}(x)}_{\M_n \otvn \M_n}            
=\bnorm{\big(1 \ot a^{\frac{\i t p}{2}}\big) \Delta(x) \big(1 \ot b^{\frac{\i t p}{2}}\big)}_{\M_n \otvn \M_n}
=\norm{\Delta(x)}_{\M_n \otvn \M_n}.
\end{align*} 
We deduce that the map $T_{\i t} \co \M_n \to \M_n \otvn \M_n$ is completely contractive. Moreover, for any $x \in \M_n$, the function $\ovl{S} \to S^1_n$, $z \mapsto T_z(x)$ is analytic on $S$ and continuous and bounded on $\ovl{S}$. Finally, the functions $\R \to S^2_n(S^2_n)$, $t \mapsto T_{1+\i t}(x)$ and $\R \to \M_n \otvn \M_n$, $t \mapsto T_{\i t}(x)$ are continuous. By Stein's interpolation, we conclude by taking $z=\frac{2}{p}$ that the map \eqref{Theta-bis-Schur} is a complete contraction.

The case $1 \leq p \leq 2$ is similar. 
\end{proof}

We have
\begin{align*}
\MoveEqLeft
\Delta M_A
=(\Id \ot M_A) \Delta
\ov{\eqref{Facto-Schur-multiplier}}{=}(\Id \ot T \Mult_{a,b}) \Delta
=(\Id \ot T) (\Id \ot \Mult_{a,b}) \Delta
\ov{\eqref{Facto-Schur-multiplier}}{=} (\Id \ot T) \Theta.           
\end{align*} 
Since $\E \Delta=\Id_{\M_n}$, we deduce that $M_{A}=\E \Delta M_A=\E (\Id \ot T) \Theta$. That means that we have the following commutative diagram.
$$
\xymatrix @R=1.5cm @C=2cm{
S^p_n(S^p_n) \ar[r]^{\Id_{S^p_n} \ot T}   & \ar[d]^{\E} S^p_n(\M_n) \\
S^p_n \ar[r]_{M_A} \ar[u]^{\Theta}   & \M_n \\
  }
$$
We conclude that
\begin{align*}
\MoveEqLeft
\norm{M_A}_{S^p_n\to \M_n}
=\norm{\E(\Id_{S^p_n} \ot T)\Theta}_{S^p_n \to \M_n} \\
&\leq \norm{\E} \norm{\Id_{S^p_n} \ot T}_{S^p_n(S^p_n) \to S^p_n(\M_n)}  \norm{\Theta} 
\ov{\eqref{Equa-138676-Schur}}{\leq} \norm{M_{A}}_{\pi_p^\circ, \M_n \to \M_n}.
\end{align*}

The second inequality is a particular case of Proposition \ref{prop-carac-Schur}.
\end{proof}

\begin{remark}\normalfont
At the time of the writing, it is not clear if there is an equality in the second inequality. 
\end{remark}


\begin{remark}\normalfont 
\label{Rem-spin-planar-algebra}
The spin planar algebra $\mathscr{P}_{\bullet}$ on the finite set $\{1,\ldots,n\}$ was introduced by Jones in \cite[Example 4.2]{Jon00} \cite[Example 2.8]{Jon21}. See \cite{KSSS19} for a rigorous definition and presentation. This planar algebra (with modulus $\sqrt{n}$) is not a subfactor planar algebra. Its 2-box spaces $\scr{P}_{2,+}$ and $\scr{P}_{2,-}$ identifies to $\M_n$ and to $\ell^\infty_{\{1,\ldots,n\} \times \{1,\ldots,n\}}$.  It is left to the reader to check that the convolution $A*B$ of two matrices $A,B$ of $\scr{P}_{2,+}=\M_n$ identifies to $\sqrt{n}A \circ B$ where $\circ$ is the Hadamard product, defined by $[A \circ B]_{ij}=a_{ij}b_{ij}$. 

\end{remark}

The following is a variant of a result of \cite{Arh11}.

\begin{prop}
Suppose $1 \leq p \leq \infty$. If $I$ is an index set then the bilinear map
$$
\begin{array}{cccc}
    &   S^p_{I} \times S^{p^*}_{I}  &  \longrightarrow   & S^\infty_{I}   \\
    &   (A,B)  &  \longmapsto       &  A*B  \\
\end{array}
$$
is jointly completely contractive.
\end{prop}

\begin{proof}
We denote by $\beta \co \ell^2_I \to \ell^{\infty}_I$ the canonical contractive map. By \cite[(1.10)]{BLM04}, we have
$$
\norm{\beta}_{\cb,\mathrm{R}_I \rightarrow \ell^{\infty}_I} 
=\norm{\beta}_{\ell^{2}_I \rightarrow \ell^{\infty}_I}
\leq 1 
\ \ \ \text{and} \ \ \ 
\norm{\beta}_{\cb,\C_I \rightarrow {\ell^{\infty}_I}} 
=\norm{\beta}_{\ell^{2}_I \rightarrow \ell^{\infty}_I} 
\leq 1.
$$
By \cite[1.5.5 p.~31]{BLM04}, the tensor product $\alpha \ot \beta \co \mathrm{R}_I \ot_h \C_I \rightarrow \ell^{\infty}_I \ot_h \ell^{\infty}_I$ is completely contractive. Now, recall that we have a completely isometric canonical map $\ell^{\infty}_I \ot_h \ell^{\infty}_I \rightarrow \frak{M}_{\infty}^{I}$. Hence the map
$$
\begin{array}{ccccccc}
 S^1_{I}=\mathrm{R}_I \ot_h \C_I &  \longrightarrow  &   \ell^{\infty}_I \ot_h \ell^{\infty}_I  &  \longrightarrow   &  \frak{M}_{\infty}^{I} \\
   e_{ij}  &       \longmapsto  &    e_i   \ot e_j                &  \longmapsto       &     M_{e_{ij}}   \\
\end{array}
$$
is completely contractive. This means that the map $A\mapsto M_A$ from $S^1_{I}$ into $\CB(S^\infty_{I})$ is completely contractive. Then the map $(A,B) \mapsto A*B$ from $S^1_{I} \times S^\infty_{I}$ into $S^\infty_{I}$ is completely jointly contractive. By the commutativity of $*$ and $\otp$, the map from $S^1_{I} \times
S^{\infty}_{I}$ into $S^\infty_{I}$ is also completely jointly contractive. Finally, we obtain the result by bilinear interpolation.
\end{proof}

By \cite[Proposition 7.1.2]{EfR00}, we deduce a completely contractive map $S^p_I \to \CB(S^{p^*}_I,S^\infty_I)$, $A \mapsto M_A$. Finally, we obtain by duality a completely contractive map $S^{p}_I \to \CB(S^1_I,S^p_I)$, $A \mapsto M_A$. In particular
\begin{equation}
\label{Schur-Sp}
\norm{M_A}_{\cb,S^1_I \to S^p_I}
\leq \norm{A}_{S^p_I}.
\end{equation}

\begin{remark} \normalfont
\label{Schur-not-completely-bounded}
Note that it is easy to check that the identity $\Id \co S^1_n \to S^p_n$ is a Schur multiplier satisfying 
$$
\norm{\Id}_{\cb,S^1_n \to S^p_n} 
= n^{1-\frac{1}{p}} 
\not= 1
= \norm{\Id}_{S^1_n \to S^p_n}.
$$
We may note along the way that $\H_{\cb,\min}(\Id)=-\log(n)$ and $\H_{\min}(\Id)=0$.
\end{remark}

\section{Entropies and capacities}
\label{sec-entropies}
\subsection{Definitions and transference}
\label{Entropy-capacity-I}

Generalizing the case of matrix algebras, it is natural to introduce the following definition.

\begin{defi}
\label{Def-capacities}
Let $\cal{M}$ and $\cal{N}$ be finite von Neumann algebra equipped with normal finite faithful  traces. Let $T \co \L^1(\cal{M}) \to \L^1(\cal{N})$ be a quantum channel.

\begin{enumerate}
	\item A value $\alpha \geq 0$ is an achievable rate for classical information transmission
through $T$ if 
\begin{enumerate}
	\item $\alpha = 0$, or
	\item $\alpha > 0$ and the following holds for every positive real number $\epsi> 0$: for all but finitely many positive integers $n$ and for $m = \lfloor \alpha n \rfloor$, there exist two quantum channels $\Phi_1 \co \ell^1_{2^m} \to \L^1(\cal{M})$ and $\Phi_2 \co \L^1(\cal{N}) \to \ell^1_{2^m}$ such that
\begin{equation}
\label{emulation-1}
\bnorm{\Id_{\ell^1_{2^m}}-\Phi_2 T^{\ot n} \Phi_1}_{\cb,\ell^1_{2^m} \to \ell^1_{2^m}} 
< \epsi.
\end{equation}
\end{enumerate}

\item The classical capacity of $T$, denoted $\C(T)$, is the supremum value of all achievable rates for classical information transmission through $T$.
\end{enumerate}
Replacing $\ell^1_{2^m}$ by the Schatten space $S^1_{2^m}$, we define similarly the quantum capacity $\Q(T)$ of $T$.
\end{defi}

\begin{remark} \normalfont
Using the diagonal embedding $J \co \ell^1_{2^m} \to S^1_{2^m}$ and the canonical conditional expectation $\E \co S^1_{2^m} \to \ell^1_{2^m}$, it is easy to check that
\begin{equation}
\label{C-leq-Q}
\Q(T)
\leq \C(T).
\end{equation}
\end{remark}
\begin{remark} \normalfont 
With obvious notations and standard arguments, we have
\begin{equation}
\label{capacity-composition}
\C(T_1T_2) 
\leq \min\big\{\C(T_1), \C(T_2) \big\}
\quad \text{and} \quad
\Q(T_1T_2) 
\leq \min\big\{\Q(T_1), \Q(T_2) \big\}.
\end{equation}
Moreover, we have 
\begin{equation}
\label{capacity-sum}
\C(T_1 \ot T_2) 
\geq \C(T_1)+\C(T_2)
\quad \text{and} \quad
\Q(T_1 \ot T_2) 
\geq \Q(T_1)+\Q(T_2).
\end{equation}
\end{remark}

Generalizing again the case of matrix algebras by replacing the von Neumann entropy by the Segal entropy, we equally introduce the following definition.

\begin{defi}
Let $\cal{M}$ be a finite von Neumann algebra equipped with a normal finite faithful trace. Let $T \co \L^1(\cal{M}) \to \L^1(\cal{M})$ be a quantum channel. We define the Holevo capacity of $T$ by
\begin{equation}
\label{Def-Holevo-capacity-bis}
\chi(T)
\ov{\mathrm{def}}{=} \sup \bigg\{ \H\bigg(\sum_{i=1}^{N} \lambda_iT(\rho_i)\bigg)-\sum_{i=1}^{N} \lambda_i \H(T(\rho_i))\bigg\}
\end{equation}
where the supremum is taken over all $N \in \mathbb{N}$, all probability distributions $(\lambda_1,\ldots,\lambda_N)$ and all states $\rho_1,\ldots,\rho_N$ of $\L^1(\cal{M})$,
\end{defi}

\begin{remark} \normalfont
\label{replace-trace}
A simple computation with \eqref{changement-de-trace} shows that the Holevo capacity $\chi(T)$ is invariant if we replace the trace $\tau$ by the trace $t\tau$ where $t>0$. 
\end{remark}

\begin{remark} \normalfont
\label{remark-maj-Smin}
If $\tau$ is \textit{normalized}, we have for all $N \in \mathbb{N}$, all probability distributions $(\lambda_1,\ldots,\lambda_N)$ and all states $\rho_1,\ldots,\rho_N$ of $\L^1(\cal{M})$
\begin{align*}
\MoveEqLeft
\H\bigg(\sum_{i=1}^{N} \lambda_iT(\rho_i)\bigg)-\sum_{i=1}^{N} \lambda_i \H(T(\rho_i))            
\leq -\sum_{i=1}^{N} \lambda_i \H(T(\rho_i)) 
\ov{\eqref{Def-minimum-output-entropy-ini}}{\leq} -\H_{\min}(T).
\end{align*} 
Hence taking the supremum, we obtain the following observation 
\begin{equation}
\label{Holevo-Smin}
\chi(T) 
\leq -\H_{\min}(T).
\end{equation}
\end{remark}

\begin{remark} \normalfont
\label{replace-trace}
In \cite{Arh25}, we will prove that the classical capacity $\C(T)$ of a quantum channel $T \co \L^1(\cal{M}) \to \L^1(\cal{M})$ is \eqref{Classical-capacity} generalizing the case of matrix algebras. The author did not try to obtain a generalization of the formula $\Q(T)
=\lim_{n \to +\infty} \frac{\Q^{(1)}(T^{\ot n})}{n}$ (which seems more delicate) to von Neumann algebras.
\end{remark}

%

Following \cite[Definition 1]{Pis12}, we say that a linear map $T \co E \to F$ between operator spaces is co-completely bounded if it is completely bounded as a mapping from $E$ into the opposite operator space $F^{\op}$. We let
\[
\norm{T}_{\ccb, E \to F} 
\ov{\mathrm{def}}{=} \norm{T}_{\cb, E \to F^{\op}}
\]
(possibly equal to $\infty$ if $T$ is not co-completely bounded). It is well known that the transposition on $\M_n$ has completely bounded norm equal to $n$ (cf. e.g. \cite[pp. 418-419]{Pis03}). So by duality $\norm{\Id_{S^1_n}}_{\ccb,S^1_n \to S^1_n} = n$.

%
%

Now, we generalize the observation of \cite{HoW01} which gives an upper bound on the quantum capacity of a quantum channel.

\begin{prop}
\label{Estimate-with-ccb}
Let $\cal{M}$ be a finite von Neumann algebra equipped with a normal finite faithful trace. Let $T \co \L^1(\cal{M}) \to \L^1(\cal{M})$ be a quantum channel. Then
\begin{equation}
\Q(T) 
\leq \log_2 \norm{T}_{\ccb,\L^1(\cal{M}) \to \L^1(\cal{M})}.
\end{equation}
\end{prop}

\begin{proof}
Suppose that $m_i/n_i \to c$ with $c \leq \Q(T)$ and $i$ large enough such that $\bnorm{\Id_2^{m_i} - E_i T^{\ot n_i} D_i} \leq \epsi$ with appropriate encodings and decodings $E_i, D_i$. We obtain
\begin{align*}
\MoveEqLeft
2^{m_i}
=\bnorm{\Id_{S^1_{2^{m_i}}} }_\ccb 
\leq  \bnorm{\Id_{S^1_{2^{m_i}}} - E_i T^{\ot n_i} D_i}_\ccb + \norm{E_i T^{\ot n_i} D_i}_\ccb \\
& \leq \bnorm{\Id_{S^1_{2^{m_i}}}}_\ccb  \bnorm{\Id_{S^1_{2^{m_i}}} - E_i T^{\ot n_i} D_i}_\cb + \norm{E_i}_{\cb} \norm{ T^{\ot n_i}}_\ccb \norm{D_i}_\cb 
\leq 2^{N_i} \epsi + \norm{T}_\ccb^{n_i}.
\end{align*}
Hence $
\big[2^{m_i} (1-\epsi)\big]^{\frac{1}{n_i}}
\leq \norm{T}_\ccb$. Taking on both sides the logarithm $\log_2$ we get
\begin{equation}
\frac{m_i}{n_i} + \frac{\log_2(1 - \epsi)}{n_i} 
\leq \log_2 \norm{T}_\ccb.
\end{equation}
Letting $i \to \infty$ this implies $c \leq \log_2\norm{T}_\ccb$. Consequently, we conclude that $\Q(T) \leq \log_2\norm{T}_\ccb$.
\end{proof}

%
%

Of course, this estimate does not give in general sharp estimates on the quantum capacities. 

\begin{example} \normalfont
\label{ex-estimating-quantum}
Consider a Schur multiplier $M_A \co S^1_n \to S^1_n$ such that the matrix $A$ is positive and all its diagonal entries are equal to one. Such map is a quantum channel (also known as generalized dephasing or Hadamard channel). Then using \cite[Theorem 4]{Pis12}, we have by duality $
\norm{M_A}_{\ccb, S^1_n \to S^1_n}
=\norm{M_A}_{\ccb, \M_n \to \M_n}
=\bnorm{[|a_{ij}|]}_{\M_n}$. In particular, we have the estimate $\Q(M_A) \leq \log_2 \bnorm{[|a_{ij}|]}_{\M_n}$.
\end{example}


\begin{lemma}
Let $\cal{M}$ and $\cal{N}$ be finite von Neumann algebra equipped with normal faithful finite traces. Let $J \co \cal{M} \to \cal{N}$ be a trace preserving normal injective $*$-monomorphism. Let $x \in \L^1(\cal{M})$ be a positive element with $\norm{x}_1=1$. If $J_1 \co \L^1(\cal{M}) \to \L^1(\cal{N})$ is the canonical extension of $J$ then we have
\begin{equation}
\label{Preservation-entropy}
\H(J_1(x))
=\H(x).
\end{equation}
\end{lemma}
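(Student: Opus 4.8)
The plan is to show that a trace-preserving normal injective $*$-monomorphism $J \co \cal{M} \to \cal{N}$ extends to an isometric embedding of $\L^1(\cal{M})$ into $\L^1(\cal{N})$ which is compatible with the functional calculus, so that the Segal entropy of a state is literally unchanged. First I would recall that since $J$ is a normal unital (hence $*$-preserving) injective homomorphism between finite von Neumann algebras and it preserves the traces, it extends to a complete isometry $J_p \co \L^p(\cal{M}) \to \L^p(\cal{N})$ for every $1 \leq p \leq \infty$; this is the standard fact used already in the proof of Lemma~\ref{Lemma-useful} (``$\Delta$ is a trace preserving normal unital injective $*$-homomorphism, so it induces a complete isometry''). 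In particular $J_1 \co \L^1(\cal{M}) \to \L^1(\cal{N})$ is the $\L^1$-extension referred to in the statement.

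The key point is then that $J_1$ intertwines the continuous functional calculus on positive elements: for any continuous function $g \co \R^+ \to \R$ with $g(0)=0$ and any positive $x \in \L^1(\cal{M})$, one has $J_1(g(x)) = g(J_1(x))$, where $g(x)$ is defined via the Borel functional calculus (as in the \emph{Entropy} paragraph, using $g(t)=t\ln t$). I would establish this first for $x \in \cal{M}_+$ — here it is immediate because $J$ is a $*$-homomorphism of von Neumann algebras, hence commutes with bounded Borel functional calculus, and $x$ and $J(x)$ have the same spectrum — and then extend to general positive $x \in \L^1(\cal{M})$ by a truncation/approximation argument: write $x = \lim_n x_n$ with $x_n = x \mathbf{1}_{[0,n]}(x) \in \cal{M}_+ \cap \L^1(\cal{M})_+$, observe $g(x_n) \to g(x)$ and $g(J_1(x_n)) = g(J(x_n)) \to g(J_1(x))$ in $\L^1$ (using that $J_1$ is $\L^1$-continuous and that $g$ is $\L^1$-continuous on bounded-below positive elements by dominated-convergence-type arguments for the Segal entropy, as in \cite{OcS1}), and pass to the limit.

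Once the intertwining identity $J_1(x\log x) = J_1(x)\log J_1(x)$ is in hand, the conclusion is a one-line computation: since $J$ preserves the trace, its $\L^1$-extension $J_1$ satisfies $\tau_{\cal{N}}(J_1(y)) = \tau_{\cal{M}}(y)$ for all $y \in \L^1(\cal{M})$ (this is exactly the statement that $J_1$ is trace-preserving, which follows from $\tau_{\cal{N}} \circ J = \tau_{\cal{M}}$ on $\cal{M}$ and density), so
\begin{equation}
\H(J_1(x))
= -\tau_{\cal{N}}\big(J_1(x)\log J_1(x)\big)
= -\tau_{\cal{N}}\big(J_1(x\log x)\big)
= -\tau_{\cal{M}}(x\log x)
= \H(x).
\end{equation}
(One also checks $\norm{J_1(x)}_1 = \norm{x}_1 = 1$ since $J_1$ is isometric, so the entropy of $J_1(x)$ is defined.)

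The main obstacle is the approximation step justifying $J_1(g(x)) = g(J_1(x))$ for unbounded positive $x \in \L^1(\cal{M})$: one must make sure the functional calculus commutes with the $\L^1$-isometric extension, i.e. that $g$ applied to a positive $\L^1$-element can be computed as an $\L^1$-limit of $g$ applied to bounded truncations, and that $J_1$ of this limit equals the limit of $J$ applied to the truncations. This is where the finiteness of the trace and the specific growth of $g(t) = t\ln t$ near $0$ and $\infty$ matter; I expect it to go through cleanly using the same kind of estimates that underlie the well-definedness of the Segal entropy (e.g.\ \cite{OcS1}, \cite{Rus1}), but it is the only genuinely non-formal ingredient — everything else is bookkeeping about $J_1$ being a trace-preserving complete isometry.
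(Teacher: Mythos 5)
Your argument is correct, but it takes a genuinely different route from the paper. The paper's proof is a two-line computation: it invokes the formula $\H(y)=-\frac{\d}{\d p}\norm{y}_{\L^p}^p|_{p=1}$ of \eqref{deriv-norm-p} and the fact (from \cite[Lemma 4.1]{Arh3}) that $J$ induces an isometry $J_p \co \L^p(\cal{M}) \to \L^p(\cal{N})$ for every $p$, so that $p \mapsto \norm{J_p(x)}_{\L^p(\cal{N})}^p$ and $p \mapsto \norm{x}_{\L^p(\cal{M})}^p$ are the \emph{same} function and hence have the same derivative at $p=1$. You instead prove the intertwining identity $J_1(x\log x)=J_1(x)\log J_1(x)$ and conclude by trace preservation. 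Each approach has a cost: the paper's is essentially free given the machinery already in place, but it silently requires $x$ to lie in $\L^p$ for $p$ slightly above $1$ (automatic in finite dimensions, not in general — the paper itself flags this limitation of Proposition \ref{prop-deriv-norm-p} in a remark) and relies on the derivative formula, which is only established there for finite-dimensional algebras. Your functional-calculus argument avoids both issues and is arguably the "right" proof in the general finite von Neumann algebra setting, at the price of the truncation step for unbounded positive $\L^1$-elements that you correctly identify as the only non-formal ingredient; that step does go through (split $g=g_+ - g_-$, note $g_-$ is bounded so its contribution is handled by dominated convergence against the finite trace, and use monotone convergence for $g_+$ along the spectral truncations $x_n = x\mathbf{1}_{[0,n]}(x)$, together with the fact that the extension of $J$ to $\tau$-measurable operators commutes with Borel functional calculus). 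So your proof is sound and somewhat more robust, just longer than the paper's.
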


\begin{proof}
Suppose $1<p<\infty$. By Lemma \ref{lemma-trace-preserving}, note that $J$ induces an (not necessarily onto) isometry $J_p \co \L^p(\cal{M}) \to \L^p(\cal{N})$. Hence if $x \in \L^1(\cal{M})_+$ with $\norm{x}_1=1$, we have
\begin{equation}
\H(J_1(x))
\ov{\eqref{deriv-norm-p}}{=} -\frac{\d}{\d p} \norm{J_p(x)}_{\L^p(\cal{N})}^p|_{p=1}
=-\frac{\d}{\d p} \norm{x}_{\L^p(\cal{M})}^p|_{p=1}
\ov{\eqref{deriv-norm-p}}{=} \H(x).
\end{equation}
\end{proof}

\begin{prop}
Let $\cal{M}$ and $\cal{N}$ be finite von Neumann algebra equipped with normal faithful finite traces. Suppose that $J \co \cal{M} \to \cal{N}$ is a normal injective $*$-homomorphism and that $\E \co \cal{N} \to \cal{M}$ is the associated trace preserving normal conditional expectation. Let $T \co \L^1(\cal{M}) \to \L^1(\cal{M})$ be a quantum channel. We have 
\begin{equation}
\label{EJ-first}
\H_{\min}(T)=\H_{\min}(J_1T\E_1), \quad \chi(T)=\chi(J_1T\E_1), \quad \C(T)=\C(J_1T\E_1), \quad \Q(T)=\Q(J_1T\E_1)
\end{equation}
and if $\cal{M},\cal{N}$ are finite-dimensional
\begin{equation}
\label{EJ-second}
\H_{\cb,\min}(T)=\H_{\cb,\min}(J_1T\E_1).
\end{equation}
\end{prop}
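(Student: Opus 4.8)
The plan is to reduce the statement to a handful of elementary facts plus bookkeeping, the result being of a transference nature. The facts I would use are: (i) the conditional expectation satisfies $\E\circ J=\Id_{\cal{M}}$, hence $\E_1\circ J_1=\Id_{\L^1(\cal{M})}$ and, by density and interpolation, $\E_p\circ J_p=\Id_{\L^p(\cal{M})}$ for every $1\leq p\leq\infty$; (ii) $J_1$ and $\E_1$ are quantum channels, since $\E_1$ is the preadjoint of the unital completely positive map $J$ and $J_1$ is the preadjoint of the unital completely positive conditional expectation $\E$ (this is read off from the defining relation $\tau_{\cal{N}}(J(x)y)=\tau_{\cal{M}}(x\,\E(y))$), and in particular both are completely contractive at the $\L^1$-level; (iii) the trace preserving normal unital injective $*$-homomorphism $J$ induces a complete isometry $J_p\co\L^p(\cal{M})\to\L^p(\cal{N})$ — argue as for $\Delta$ in the proof of Lemma \ref{Lemma-useful}, or apply \cite[Lemma 4.1]{Arh3} to $\Id_{\M_n}\ot J$ — while $\E$ induces completely contractive maps $\E_p\co\L^p(\cal{N})\to\L^p(\cal{M})$. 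Passing to tensor powers I would use $\L^1(\cal{M})^{\ot n}=\L^1(\cal{M}^{\otvn n})$, under which $J_1^{\ot n}$ is the canonical $\L^1$-extension of the $*$-homomorphism $J^{\ot n}$; thus $(J_1 T\E_1)^{\ot n}=J_1^{\ot n}\,T^{\ot n}\,\E_1^{\ot n}$, the maps $J_1^{\ot n}$, $\E_1^{\ot n}$ are again channels, and $\E_1^{\ot n}J_1^{\ot n}=\Id$.

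For $\H_{\min}$ and $\chi$ I would argue on states. Given a state $\rho$ of $\L^1(\cal{N})$, put $\sigma\ov{\mathrm{def}}{=}\E_1(\rho)$, a state of $\L^1(\cal{M})$; then $(J_1 T\E_1)(\rho)=J_1\big(T(\sigma)\big)$ is the image under $J_1$ of the state $T(\sigma)$, so \eqref{Preservation-entropy} gives $\H\big((J_1 T\E_1)(\rho)\big)=\H(T(\sigma))$, and similarly $\H$ is unchanged for any convex combination $\sum_i\lambda_i T(\sigma_i)$. Since $\E_1 J_1=\Id$ and $\E_1$ maps states to states, $\rho\mapsto\E_1(\rho)$ carries the states of $\L^1(\cal{N})$ \emph{onto} the states of $\L^1(\cal{M})$; hence the minimum in \eqref{Def-minimum-output-entropy} and the supremum in \eqref{Def-Holevo-capacity-bis} take the same value for $T$ and for $J_1 T\E_1$, which is the first two equalities.

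For $\C$ and $\Q$ I would show directly that $T$ and $J_1 T\E_1$ have the same achievable rates in the sense of Definition \ref{Def-capacities}. From channels $\Phi_1,\Phi_2$ realising \eqref{emulation-1} for $T^{\ot n}$ one forms the channels $J_1^{\ot n}\Phi_1$ and $\Phi_2\E_1^{\ot n}$; using $\E_1^{\ot n}J_1^{\ot n}=\Id$ twice, $\big(\Phi_2\E_1^{\ot n}\big)(J_1 T\E_1)^{\ot n}\big(J_1^{\ot n}\Phi_1\big)=\Phi_2 T^{\ot n}\Phi_1$, so they realise \eqref{emulation-1} for $(J_1 T\E_1)^{\ot n}$ with the \emph{same} cb-distance. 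Conversely, from channels $\Phi_1,\Phi_2$ realising \eqref{emulation-1} for $(J_1 T\E_1)^{\ot n}$ one forms $\E_1^{\ot n}\Phi_1$ and $\Phi_2 J_1^{\ot n}$, and the tensor-power identity gives $\big(\Phi_2 J_1^{\ot n}\big)T^{\ot n}\big(\E_1^{\ot n}\Phi_1\big)=\Phi_2(J_1 T\E_1)^{\ot n}\Phi_1$, realising \eqref{emulation-1} for $T^{\ot n}$ with the same cb-distance. Since $\ell^1_{2^m}$ (resp.\ $S^1_{2^m}$) is left untouched, the sets of achievable classical (resp.\ quantum) rates coincide, so $\C(T)=\C(J_1 T\E_1)$ and $\Q(T)=\Q(J_1 T\E_1)$.

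Finally, for the finite-dimensional statement on $\H_{\cb,\min}$, by \eqref{Def-intro-Scb-min} it suffices to show $\norm{J_1 T\E_1}_{\cb,\L^1(\cal{N})\to\L^p(\cal{N})}=\norm{T}_{\cb,\L^1(\cal{M})\to\L^p(\cal{M})}$ for $1\leq p\leq\infty$. Reading $T$ as a map into $\L^p(\cal{M})$, one has $J_1 T\E_1=J_p\circ T\circ\E_1$, so the complete isometry of $J_p$ and the complete contractivity of $\E_1$ give the inequality $\norm{J_1 T\E_1}_{\cb}\leq\norm{T}_{\cb}$; conversely $\E_p J_p=\Id$ and $\E_1 J_1=\Id$ yield $T=\E_p\circ(J_1 T\E_1)\circ J_1$, whence the complete contractivity of $\E_p$ and $J_1$ gives the reverse inequality. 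Applying $-\tfrac{\d}{\d p}(\cdot)^p|_{p=1}$ to the common value then produces \eqref{EJ-second}. The only point that is at all delicate is purely notational: checking that $J_1=\E_*$ and $\E_1=J_*$ (which is what makes them channels and completely contractive) and that the tensor-power identifications behave as asserted; there is no genuine analytic obstacle.
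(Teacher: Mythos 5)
Your proof is correct and follows essentially the same route as the paper: entropy preservation under $J_1$ (the paper's \eqref{Preservation-entropy}) together with the fact that $\E_1$ maps states onto states handles $\H_{\min}$ and $\chi$, and the encoding/decoding trick with $\E_1^{\ot n}J_1^{\ot n}=\Id$ handles $\C$ and $\Q$, exactly as in the paper. For \eqref{EJ-second} the paper leaves the argument to the reader, and your sandwich $J_1T\E_1=J_p\circ T\circ\E_1$, $T=\E_p\circ(J_1T\E_1)\circ J_1$, using the complete isometry of $J_p$ and the complete contractivity of $\E_p$ and $\E_1$, is a correct and complete way to fill that gap.
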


\begin{proof}
1) We have
\begin{equation}
\label{}
\H_{\min}(J_1T\E_1)
\ov{\eqref{Def-minimum-output-entropy}}{=} \min_{\rho \geq 0, \tau (\rho) = 1} \H(J_1T\E_1(\rho))
\ov{\eqref{Preservation-entropy}}{=} \min_{\rho \geq 0, \tau(\rho) = 1} \H(T\E_1(\rho)).
\end{equation}
Note that when $\rho$ describe the states of $\L^1(\cal{N})$, the element $\E_1(\rho)$ describe the states of $\L^1(\cal{M})$. Hence, we conclude that
$$
\H_{\min}(J_1T\E_1)
=\min_{\rho' \geq 0, \tau(\rho') = 1} \H(T(\rho'))
\ov{\eqref{Def-minimum-output-entropy}}{=} \H_{\min}(T).
$$
 
2) We have
\begin{align}
\MoveEqLeft
\label{Eq-sans-fin-prime}
\chi(J_1T\E_1) \ov{\eqref{Def-Holevo-capacity}}{=} \sup \bigg\{ \H\bigg(\sum_{i=1}^{N} \lambda_i J_1T\E_1(\rho_i)\bigg)-\sum_{i=1}^{N} \lambda_i \H(J_1T\E_1(\rho_i))\bigg\}
\end{align}
where the supremum is taken over all $N \in \N$, all probability distributions $(\lambda_1,\ldots,\lambda_N)$ and all states $\rho_1,\ldots,\rho_N$ of $\L^1(\cal{N})$. Note that for any $(\lambda_1,\ldots,\lambda_N)$ and any states $\rho_1,\ldots,\rho_N$, we have
\begin{equation}
\label{}
\H\bigg(\sum_{i=1}^{N} \lambda_i J_1T\E_1(\rho_i)\bigg)
=\H\bigg(J_1\bigg(\sum_{i=1}^{N} \lambda_i T\E_1(\rho_i)\bigg)\bigg)
\ov{\eqref{Preservation-entropy}}{=} \H\bigg(\sum_{i=1}^{N} \lambda_i T\E_1(\rho_i)\bigg)
\end{equation}
and $\H(J_1T\E_1(\rho_i)) \ov{\eqref{Preservation-entropy}}{=} \H(T\E_1(\rho_i))$. So we have
\begin{equation}
\label{}
\chi(J_1T\E_1)
=\sup \bigg\{ \H\bigg(\sum_{i=1}^{N} \lambda_i T\E_1(\rho_i)\bigg)-\sum_{i=1}^{N} \lambda_i \H(T\E_1(\rho_i))\bigg\}
\end{equation}
where the supremum is taken as before. Note that when $\rho$ describe the states of $\L^1(\cal{N})$, the element $\E_1(\rho)$ describe the states of $\L^1(\cal{M})$. Hence, we conclude that
$$
\chi(J_1T\E_1)
=\sup \bigg\{ \H\bigg(\sum_{i=1}^{N} \lambda_i T(\rho_i')\bigg)-\sum_{i=1}^{N} \lambda_i \H(T(\rho_i'))\bigg\}
$$
where the supremum is taken over all $N \in \N$, all probability distributions $(\lambda_1,\ldots,\lambda_N)$ and all states $\rho_1',\ldots,\rho_N'$ of $\L^1(\cal{M})$. Hence $\chi(J_1T\E_1)=\chi(T)$. 

3) Suppose that \eqref{emulation-1} is true with $\cal{M}=\cal{N}$. Since $\E_1 J_1=\Id_{\L^1(\cal{M})}$, we have $\E_1^{\ot n} J_1^{\ot n}=\Id_{\L^1(\cal{M}^{\ot n})}$. We infer that
\begin{align*}
\MoveEqLeft
\bnorm{\Id_{\ell^1_{2^m}}-\Phi_2\E_1^{\ot n} (J_1 T \E_1)^{\ot n} J_1^{\ot n} \Phi_1}_{\cb,\ell^1_{2^m} \to \ell^1_{2^m}} \\
&=\bnorm{\Id_{\ell^1_{2^m}}-\Phi_2\E_1^{\ot n} J_1^{\ot n} T^{\ot n} \E_1^{\ot n} J_1^{\ot n} \Phi_1}_{\cb,\ell^1_{2^m} \to \ell^1_{2^m}} 
=\bnorm{\Id_{\ell^1_{2^m}}-\Phi_2 T^{\ot n} \Phi_1}_{\cb,\ell^1_{2^m} \to \ell^1_{2^m}}
< \epsi.            
\end{align*}
So $\alpha$ is an achievable rate for classical information transmission through $J_1 T\E_1$. Conversely, if $\bnorm{\Id_{\ell^1_{2^m}}-\Phi_2 (J_1 T\E_1)^{\ot n} \Phi_1}_{\cb,\ell^1_{2^m} \to \ell^1_{2^m}}
< \epsi$ then
$$
\bnorm{\Id_{\ell^1_{2^m}}-\Phi_2 J_1^{\ot n} T^{\ot n}\E_1^{\ot n} \Phi_1}_{\cb,\ell^1_{2^m} \to \ell^1_{2^m}}
< \epsi.
$$  
So using the encoding channel $\E_1^{\ot n} \Phi_1$ and the decoding channel $\Phi_2 J_1^{\ot n}$, we see that $\alpha$ is an achievable rate for classical information transmission through $T$. 

4) The proof is similar to the point 3.

5) It is left to the reader.
%
\end{proof}

\begin{example} \normalfont
Consider the finite-dimensional von Neumann algebras $\cal{M}=\M_{n_1} \oplus \cdots \oplus \M_{n_K}$ and $\cal{N}=\M_{n_1+ \cdots +n_K}$. Using the injective map $J \co \cal{M} \to \cal{N}$, the conditional expectation $\E \co \cal{N} \to \cal{M}$ and \cite{FuW07} \cite[Proposition 5]{GJL18a}, we obtain $\C(\Id_{\L^1(\cal{M})}) \ov{\eqref{EJ-first}}{=} \C(J_1\Id_{\L^1(\cal{M})}\E_1)=\log \big(\sum_{i=1}^{K} n_i\big)$ and $\Q(\Id_{\L^1(\cal{M})}) \ov{\eqref{EJ-first}}{=} \Q(J_1\Id_{\L^1(\cal{M})}\E_1)=\log(\max_{1 \leq i \leq K} n_i)$  where we use that $\E$ is degradable.
\end{example}

\subsection{Entropies and classical capacities of multipliers on quantum hypergroups}
\label{Entropy-capacity}


Let $\cal{M}$ be a finite von Neumann algebra equipped with a normal finite faithful trace $\tau$. Recall that a quantum channel is a trace preserving completely positive map $T \co \L^{1}(\mathcal{M}) \to \L^{1}(\cal{M})$. 

Let $(\L^\infty(\QH),\tau,\Delta,R)$ be a normalized quantum hypergroup. By \eqref{relation-trace} and the complete positivity of the coproduct, if $f \in \L^1(\cal{M})$ then the convolution map $T \co \L^1(\QH) \to \L^1(\QH)$, $x \mapsto f * x$ is a quantum channel if and and only if $f$ is positive and $\norm{f}_{\L^1(\QH)}=1$ (see \cite[Theorem 2.1]{SkV19} for the case of locally compact quantum groups).

If the trace $\tau$ is normalized, note that the numerical value $\H(f) \ov{\mathrm{def}}{=} -\tau(f\log f)$ is negative.

Note that the compact quantum group $\QG$ is of Kac type by \cite{VDa97} and it is well-known that a finite quantum group is co-amenable. So the next result can used with these quantum groups.

\begin{thm}
\label{thm-entropy-cb}
Let $(\L^\infty(\QH),\tau,\Delta,R)$ be a finite-dimensional normalized quantum hypergroup.  Let $f$ be a element of $\L^\infty(\QH)$. Suppose that the convolution map $T \co \L^1(\cal{M}) \to \L^1(\cal{M})$, $x \mapsto f * x$ is a quantum channel. Then
\begin{equation}
\label{Scbmin}
\H_{\cb,\min,\tau}(T)
=\H_{\min,\tau}(T)
=-\tau(f\log f)
=\H(f).
\end{equation}
\end{thm}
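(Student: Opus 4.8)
The plan is to combine the formula \eqref{Smin-as-derivative-intro} (or its finite-dimensional analogue from Section \ref{Appendix}, giving $\H_{\min}(T)=-\frac{\d}{\d p}\|T\|_{\L^1(\QG)\to\L^p(\QG)}^p|_{p=1}$ and $\H_{\cb,\min}(T)=-\frac{\d}{\d p}\|T\|_{\cb,\L^1(\QG)\to\L^p(\QG)}^p|_{p=1}$ by \eqref{Def-intro-Scb-min}) with the exact computation of the relevant operator norms coming from Theorem \ref{Thm-description-multipliers-2}. The key observation is that a finite quantum group is automatically co-amenable, unimodular, of Kac type with tracial Haar state, so Theorem \ref{Thm-description-multipliers-2} applies: for the convolution channel $T=f*\cdot$ we have, for every $1<p\le\infty$,
\begin{equation}
\label{plan-norm}
\|T\|_{\L^1(\QG)\to\L^p(\QG)}
=\|T\|_{\cb,\L^1(\QG)\to\L^p(\QG)}
=\|f\|_{\L^p(\QG)}.
\end{equation}
(For $p=1$ trivially $\|T\|_{\L^1\to\L^1}\le\|f\|_{\L^1}=1=\tau(f)$ since $T$ is trace preserving and $f\ge0$; in fact $\|f\|_{\L^p}\to\|f\|_{\L^1}=1$ as $p\to1^+$, so the normalization at $p=1$ is consistent.)

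Granting \eqref{plan-norm}, the bounded and completely bounded $\L^1\to\L^p$ norms of $T$ agree \emph{for all $p$ in a neighbourhood of $1$}, hence their $p$-th powers agree, hence their right derivatives at $p=1$ agree, which gives $\H_{\cb,\min}(T)=\H_{\min}(T)$ immediately. It then remains to evaluate the single quantity $-\frac{\d}{\d p}\|f\|_{\L^p(\QG)}^p|_{p=1}$. Since $f\ge0$ with $\tau(f)=1$, write $\|f\|_{\L^p(\QG)}^p=\tau(f^p)$, and differentiate under the trace: $\frac{\d}{\d p}\tau(f^p)=\tau(f^p\log f)$ (this is legitimate in finite dimensions, or more carefully via the spectral decomposition $f=\sum_j \lambda_j e_j$ so that $\tau(f^p)=\sum_j\lambda_j^p\,\tau(e_j)$ is a genuine finite sum of analytic functions of $p$). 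Evaluating at $p=1$ yields $\tau(f\log f)$, and therefore
\begin{equation}
\H_{\min}(T)
=-\frac{\d}{\d p}\tau(f^p)\Big|_{p=1}
=-\tau(f\log f)
=\H(f).
\end{equation}

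The one point requiring care — and the main (mild) obstacle — is the passage from ``the norms agree for $p>1$'' to ``the right derivatives at $p=1$ agree'', together with the justification that these one-sided derivatives exist. Existence is not a problem because $\|f\|_{\L^p(\QG)}^p=\tau(f^p)=\sum_j\lambda_j^p\tau(e_j)$ is real-analytic in $p$ on $(0,\infty)$ (a finite positive combination of exponentials $p\mapsto\lambda_j^p$); in particular it is differentiable at $p=1$ and its value and derivative there are $\tau(f)=1$ and $\tau(f\log f)$ respectively. For the cb-norm one could alternatively invoke \eqref{estimate-in-d} or the hyperfiniteness of $\L^\infty(\QG)$ (finite-dimensional) plus \eqref{ine-tensorisation-os} to see $\|T\|_{\cb,\L^1\to\L^p}$ is also finite and, by \eqref{plan-norm}, equals $\tau(f^p)^{1/p}$ for each $p>1$; since two functions that coincide on $(1,1+\delta)$ and are both continuous at $1$ have the same one-sided derivative there whenever one of them does, we conclude equality of $\H_{\cb,\min}(T)$ and $\H_{\min}(T)$. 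Finally one should note that $-\tau(f\log f)=\H(f)$ is exactly the Segal entropy of the state $f$ by definition (with $\tau$ normalized, as is automatic for a finite quantum group), which closes the chain of equalities in \eqref{Scbmin}.
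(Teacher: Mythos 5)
Your proposal is correct and follows essentially the same route as the paper: invoke the fact that a finite quantum group is a co-amenable compact quantum group of Kac type so that Theorem \ref{Thm-description-multipliers-2} gives $\norm{T}_{\L^1(\QG)\to\L^p(\QG)}=\norm{T}_{\cb,\L^1(\QG)\to\L^p(\QG)}=\norm{f}_{\L^p(\QG)}$, then plug this into the derivative formulas \eqref{Smin-as-derivative-intro} and \eqref{Def-intro-Scb-min} and evaluate $-\frac{\d}{\d p}\norm{f}_{\L^p(\QG)}^p|_{p=1}=-\tau(f\log f)$ exactly as in Proposition \ref{prop-deriv-norm-p}. Your extra care about the existence and one-sided nature of the derivative at $p=1$ is a welcome but minor elaboration of what the paper handles via \eqref{deriv-norm-p}.
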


\begin{proof}
 Hence, we can use Theorem \ref{Thm-description-multipliers-2} or Proposition \ref{Prop-cb-L1-Lp}. Using $\norm{f}_{\L^1(\cal{M})}=1$ in the third equality, we obtain
\begin{align*}
\MoveEqLeft
\H_{\cb,\min,\tau}(T)
\ov{\eqref{Def-intro-Scb-min}}{=} -\frac{\d}{\d p} \norm{T}_{\cb,\L^1(\QH) \to \L^p(\QH)}^p|_{p=1} 
\ov{\eqref{norm-equality-QG}}{=} -\frac{\d}{\d p} \norm{f}_{\L^p(\QH)}^p|_{p=1} 
\ov{\eqref{deriv-norm-p}}{=} -\tau(f\log f)
=\H(f).
\end{align*}
By \eqref{Smin-as-derivative-intro}, the same computation is true without the subscript ``$\cb$''. So we equally have $\H_{\min,\tau}(T)=\H(f)$. The proof is complete.
\end{proof}

\begin{remark} \normalfont
In particular, $\H_{\min,\tau}$ is additive on convolution operators, i.e.~$\H_{\min,\tau}(T_1 \ot T_2)=\H_{\min,\tau}(T_1)+\H_{\min,\tau}(T_2)$ if $T_1$ and $T_2$ are convolution operators.
\end{remark}

Let $n \geq 1$ be an integer. Given a quantum channel $T \co S^1_n \to S^1_n$,  by \cite[pp.~4-5]{JuP15} the Holevo bound is given by
\begin{equation}
\label{chi-as-C1}
\chi(T)
=\frac{\d}{\d p}\norm{T^*}_{\pi_{p^*,1}, \M_n \to \M_n}|_{p=1}
\end{equation}
where we refer to \eqref{Def-Holevo-capacity} for the definition of $\chi(T)$. 

We extend this equality to maps acting on finite-dimensional vo Neumann algebras by using the method of approximation of the proof of \cite[Theorem 3.24]{ArK23}.

\begin{prop}
Let $\cal{M}=\M_{n_1} \oplus \cdots \oplus \M_{n_K}$ be a finite-dimensional von Neumann algebra equipped with a faithful trace $\tau=\lambda_1 \tr_{n_1} \oplus \cdots \oplus \lambda_K \tr_{n_K}$ where $\lambda_1,\ldots,\lambda_K \in \mathbb{Q}$. Let $T \co \L^1(\cal{M}) \to \L^1(\cal{M})$ be a quantum channel. Then we have
\begin{equation}
\label{chiT-as-derivative}
\chi(T)
=\frac{\d}{\d p}\norm{T^*}_{\pi_{p^*,1}, \cal{M} \to \cal{M}}|_{p=1}.
\end{equation}
\end{prop}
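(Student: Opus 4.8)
The plan is to reduce the identity to the already-established matrix-algebra formula \eqref{chi-as-C1} by a trace-preserving embedding of $(\cal{M},\tau)$ into a full matrix algebra, combined with the transference results of Section \ref{Entropy-capacity-I}. Since the weights $\lambda_i$ are rational, write $\lambda_i = a_i/b$ with $a_i,b \in \N$, set $N \ov{\mathrm{def}}{=} \sum_{i=1}^K a_i n_i$, and consider the block-diagonal unital normal injective $*$-homomorphism
$$
J \co \cal{M} = \M_{n_1} \oplus \cdots \oplus \M_{n_K} \longrightarrow \M_N, \qquad (x_1,\ldots,x_K) \longmapsto \bigoplus_{i=1}^K (\Id_{a_i} \ot x_i).
$$
One checks that $(\frac{1}{b}\tr_N)(J(x)) = \tau(x)$, so $J$ is trace preserving from $(\cal{M},\tau)$ into $(\M_N, \frac{1}{b}\tr_N)$; let $\E \co \M_N \to \cal{M}$ be the associated trace-preserving normal conditional expectation, so that $\E J = \Id_{\cal{M}}$, and let $J_1, \E_1$ denote the canonical $\L^1$-extensions. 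Then $J_1 T \E_1 \co \L^1(\M_N) \to \L^1(\M_N)$ is again a quantum channel.

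Next I would chain together the following. First, $\chi(T) = \chi(J_1 T \E_1)$ by the transference identity \eqref{EJ-first}. Second, applying the matrix-algebra formula \eqref{chi-as-C1} to the channel $J_1 T \E_1$ on $\M_N$ gives $\chi(J_1 T \E_1) = \frac{\d}{\d p}\norm{(J_1 T \E_1)^*}_{\pi_{p^*,1}, \M_N \to \M_N}|_{p=1}$. Third, because $J$ is trace preserving, its $\L^1$-extension satisfies $(J_1)^* = \E$ and $(\E_1)^* = J$ (a routine computation with the duality $\L^1(\cal{M})^* = \cal{M}$), so that $(J_1 T \E_1)^* = (\E_1)^* T^* (J_1)^* = J T^* \E$. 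It then remains to show that this sandwiching does not change the $\ell^{p^*}(S^{p^*}_1)$-summing norm: since a unital injective $*$-homomorphism and a conditional expectation are completely contractive, the ideal property \eqref{Ideal-ellp} applied first to $J T^* \E$ and then, using $\E J = \Id_{\cal{M}}$, to $T^* = \E (J T^* \E) J$, yields $\norm{(J_1 T \E_1)^*}_{\pi_{p^*,1}, \M_N \to \M_N} = \norm{T^*}_{\pi_{p^*,1}, \cal{M} \to \cal{M}}$ for every $1 < p \leq \infty$. Differentiating at $p = 1$ gives the claim.

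The least routine point — and where I would follow the approximation technique in the proof of \cite[Theorem 3.24]{ArK} — is that \eqref{chi-as-C1} is stated for channels on the Schatten class $S^1_n$ with its canonical trace, whereas $J_1 T \E_1$ lives on $\M_N$ equipped with the rescaled trace $\frac{1}{b}\tr_N$. Here I would note that neither side of \eqref{chi-as-C1} is affected by rescaling the trace of $\M_N$: the Holevo capacity is rescaling-invariant by Remark \ref{replace-trace}, while the $\pi_{p^*,1}$-summing norm of a map $\M_N \to \M_N$ depends only on the operator space structure of $\L^\infty(\M_N) = \M_N$ and not on the trace. The rationality hypothesis on the $\lambda_i$ is precisely what makes the embedding into a matrix algebra exact; for general real weights one would instead approximate $\tau$ by rational-weight traces and pass to the limit using the uniform-convergence statements \eqref{minimum-uniform-convergence} and \eqref{elem-lim}. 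Apart from this normalization bookkeeping, the argument is formal: the only substantial input, the formula \eqref{chi-as-C1} itself, is quoted from \cite{JuP1}, so I expect no serious obstacle beyond getting the trace normalizations right.
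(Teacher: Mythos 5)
Your proposal is correct and follows essentially the same route as the paper: embed $(\cal{M},\tau)$ trace-preservingly into a matrix algebra via the rational weights, transfer the Holevo capacity through $J_1 T \E_1$ using \eqref{EJ-first}, identify $(J_1T\E_1)^*=JT^*\E$, equate the $\pi_{p^*,1}$-norms via the ideal property \eqref{Ideal-ellp} together with $\E J=\Id_{\cal{M}}$, and invoke \eqref{chi-as-C1}. The only cosmetic difference is that you fold the paper's two cases (integer weights, then rational weights by rescaling the trace as in Remark \ref{replace-trace}) into a single step by working directly with the rescaled trace $\frac{1}{b}\tr_N$ on $\M_N$.
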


\begin{proof}
\noindent
\textit{Case 1: All $\lambda_k$ belong to $\N$.}
Then let $n \ov{\mathrm{def}}{=} \sum_{k = 1}^K \lambda_k n_k$. We consider the normal unital trace preserving $*$-homomorphism $J \co \cal{M} \to \M_n$ defined by
\[
J(x_1 \oplus \cdots \oplus x_K) 
= \begin{bmatrix} x_1 & & & & & &  \\ 
& \ddots & & & & & \\ & & x_1 & & &0 & \\ 
& & & \ddots & & & \\ & & & & x_K & & \\ 
& 0& & & & \ddots & \\ 
 & & & & & & x_K 
\end{bmatrix} ,
\]
where $x_k$ appears $\lambda_k$ times on the diagonal, $k = 1,\ldots, K$. Let $\E \co \M_n \to \cal{M}$ be the associated normal faithful conditional expectation. Since $\E J=\Id_{\cal{M}}$, we have $T^*=\E J T^* \E J$. Hence
\begin{align*}
\MoveEqLeft
\norm{T^*}_{\pi_{p,1},\cal{M} \to \cal{M}}            
=\bnorm{\E J T^* \E J}_{\pi_{p,1},\cal{M} \to \cal{M}}
\ov{\eqref{Ideal-ellp}}{\leq} \norm{\E}_{\cb} \norm{JT^*\E}_{\pi_{p,1}, \M_n \to \M_n} \norm{J}_{\cb}
\leq \norm{JT^*\E}_{\pi_{p,1}, \M_n \to \M_n}.
\end{align*}
In a similar manner, we have 
$$
\norm{JT^*\E}_{\pi_{p,1}, \M_n \to \M_n} 
\ov{\eqref{Ideal-ellp}}{\leq} \norm{J}_{\cb} \norm{T^*}_{\pi_{p,1},\M_n \to \M_n} \norm{\E}_{\cb} 
\leq \norm{T^*}_{\pi_{p,1},\cal{M} \to \cal{M}}.
$$ 
Hence, we conclude that
\begin{equation}
\label{Divers-12567}
\norm{T^*}_{\pi_{p,1},\cal{M} \to \cal{M}} 
=\norm{J T^* \E}_{\pi_{p,1},\M_n \to \M_n}.
\end{equation}
Consequently, we have
\begin{align*}
\MoveEqLeft
\frac{\d}{\d p}\norm{T^*}_{\pi_{p^*,1}, \cal{M} \to \cal{M}}|_{p=1}
\ov{\eqref{Divers-12567}}{=} \frac{\d}{\d p}\norm{JT^*\E}_{\pi_{p^*,1}, \M_n \to \M_n}|_{p=1} 
=\frac{\d}{\d p}\norm{(J_1T\E_1)^*}_{\pi_{p^*,1},\M_n \to \M_n}|_{p=1} \\
&\ov{\eqref{chi-as-C1}}{=} \chi(J_1T\E_1) 
\ov{\eqref{EJ-first}}{=} \chi(T). 
\end{align*}

\noindent
\textit{Case 2: All $\lambda_k$ belong to $\mathbb{Q}_{+}$.}
Then there exists a common denominator of the $\lambda_k$'s, that means that there exists $t \in \N$ such that $\lambda_k = \frac{\lambda_k'}{t}$ for some integers $\lambda_k'$. Note that the right-hand side of \eqref{chiT-as-derivative} is invariant if we multiply the trace $\tau$ by $t$. Moreover, Remark \eqref{replace-trace} says that the left-hand side of \eqref{chiT-as-derivative} is also invariant if we replace the trace $\tau$ by $t\tau$. Thus, Case 1.2 follows from Case 1.1. 
%
\end{proof}

\begin{remark} \normalfont
Undoubtedly, with a painful approximation argument in the spirit of the proof of \cite[Theorem 3.24]{ArK23}, we can extend the previous result to the case of an arbitrary trace $\tau$.
\end{remark}


Now, we obtain a bound of the classical capacity of multipliers.

\begin{thm}
\label{thm-capacity}
Let $(\cal{M},\tau,\Delta,R)$ be a finite-dimensional normalized quantum hypergroup such $\tau$ is normalized. Let $f$ be a element of $\L^\infty(\QG)$. Suppose that the convolution map $T \co \L^1(\cal{M}) \to \L^1(\cal{M})$, $x \mapsto f * x$ is a quantum channel. Then 
\begin{equation}
\label{CT-Fourier}
\C(T)
\leq -\H(f)=\tau(f\log f)
\end{equation}
\end{thm}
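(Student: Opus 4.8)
The plan is to combine the additivity of the minimum output entropy on convolution operators (Theorem \ref{thm-entropy-cb}) with the asymptotic formula \eqref{Classical-capacity} for the classical capacity and the Holevo bound \eqref{Holevo-Smin}. First I would observe that since $T\co\L^1(\QG)\to\L^1(\QG)$, $x\mapsto f*x$ is a quantum channel, the $n$-fold tensor power $T^{\ot n}$ acts on $\L^1(\QG^{\ot n})$ and is again the convolution operator associated with $f^{\ot n}\in\L^\infty(\QG^{\ot n})$ (where $\QG^{\ot n}$ is the finite quantum group obtained by taking the $n$-fold tensor product of the Hopf--von Neumann algebra structure). Indeed $\Delta_{\QG^{\ot n}}$ is the tensor product of the coproducts up to the flip identifications, so convolution by $f^{\ot n}$ is $T^{\ot n}$; moreover $f^{\ot n}$ is positive with $\norm{f^{\ot n}}_{\L^1}=1$, hence $T^{\ot n}$ is a quantum channel.

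Next I would apply Remark \ref{remark-maj-Smin}, i.e. inequality \eqref{Holevo-Smin}, to $T^{\ot n}$: since the normalized trace on $\QG^{\ot n}$ is again normalized, we get $\chi(T^{\ot n})\leq -\H_{\min}(T^{\ot n})$. Then by Theorem \ref{thm-entropy-cb} applied to $T^{\ot n}$ (which is convolution by $f^{\ot n}$), we have $\H_{\min}(T^{\ot n})=\H(f^{\ot n})$, and by the tensorization property \eqref{entropy-tensor-product} of the Segal entropy, $\H(f^{\ot n})=n\H(f)$. Therefore $\chi(T^{\ot n})\leq -n\H(f)$, whence
\begin{equation}
\label{CT-Fourier-proof}
\C(T)
=\lim_{n\to+\infty}\frac{\chi(T^{\ot n})}{n}
\leq \lim_{n\to+\infty}\frac{-n\H(f)}{n}
=-\H(f)
=\tau(f\log f).
\end{equation}
Here I invoke \eqref{Classical-capacity}, which (as noted in Remark after Definition \ref{Def-capacities}, referring to the forthcoming \cite{Arh4}) holds in this finite-dimensional von Neumann algebra setting; alternatively one can argue directly from Definition \ref{Def-capacities} using the composition inequality \eqref{capacity-composition} and a standard coding argument.

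The only genuinely delicate point is the claim that $T^{\ot n}$ is again a convolution operator on a finite quantum group, so that Theorem \ref{thm-entropy-cb} applies to it. This requires checking that the tensor product of finite quantum groups is a finite quantum group and that the associated Fourier/convolution structure tensorizes compatibly — both are standard but should be stated carefully; one uses that $\L^\infty(\QG^{\ot n})=\L^\infty(\QG)^{\otvn n}$ with coproduct $\Sigma_{2,3,\dots}\circ(\Delta^{\ot n})$ and that the Haar state is $h^{\ot n}$. Granting this, the rest is immediate from the quoted results. I would also remark that, combining with \eqref{Scbmin}, the bound can equivalently be written $\C(T)\leq\H_{\cb,\min}(T)\cdot(-1)=-\H_{\min}(T)$, and that equality holds in \eqref{CT-Fourier-proof} in the cases where $\chi(T)=-\H_{\min}(T)$, e.g. when $f$ is a suitable projection, which is discussed in the examples sections.
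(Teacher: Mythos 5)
Your proposal is correct and follows essentially the same route as the paper: the paper also reduces to the bound $\chi(T^{\ot n})\leq -\H(f^{\ot n})=-n\H(f)$ by viewing $f^{\ot n}*\cdot$ as a convolution operator on the product quantum group $\QG^n$, and then divides by $n$ using the HSW formula \eqref{HSW}. The only cosmetic difference is that the paper's primary derivation of the single-copy bound $\chi\leq -\H(f)$ goes through the $\ell^p(S^p_d)$-summing norm machinery \eqref{chiT}, but it explicitly records your shortcut $\chi(T)\ov{\eqref{Holevo-Smin}}{\leq}-\H_{\min}(T)\ov{\eqref{Scbmin}}{=}-\H(f)$ as an equivalent alternative argument.
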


\begin{proof}
1. Note that the proof of Theorem \ref{Thm-description-multipliers-3} contains the description of the adjoint $T^*$. Moreover, observe that for any element $d$ of $\N \cup\{\infty\}$, we have 
$$
\lim_{q \to \infty} \norm{T^*}_{\pi_{q,d}, \L^\infty(\cal{M}) \to \L^\infty(\cal{M})}
=\norm{\Id_{\M_d} \ot T^*}_{\M_d(\L^\infty(\cal{M})) \to \M_d(\L^\infty(\cal{M}))}
=1.
$$ 
We let $a \ov{\mathrm{def}}{=} \widehat{f}$. We have
\begin{align}
\MoveEqLeft
\label{chiT}
\chi(T)
\ov{\eqref{chiT-as-derivative}}{=} \frac{\d}{\d p}\norm{T^*}_{\pi_{p^*,1}, \L^\infty(\cal{M}) \to \L^\infty(\cal{M})}|_{p=1} 
=\frac{\d}{\d p}\norm{f*\cdot}_{\pi_{p^*,1}, \L^\infty(\cal{M}) \to \L^\infty(\cal{M})} |_{p=1} \\
&\ov{\eqref{elem-lim}}{=} \lim_{q \to +\infty} q \ln \norm{f*\cdot}_{\pi_{q,1}, \L^\infty(\cal{M}) \to \L^\infty(\cal{M})} 
\ov{\eqref{Fourier-sommant}}{\leq} \lim_{q \to +\infty} q \ln \norm{f*\cdot}_{\cb,\L^q(\cal{M}) \to \L^\infty(\cal{M})} \nonumber \\
&=\frac{\d}{\d p} \norm{f*\cdot}_{\cb,\L^{p^*}(\cal{M}) \to \L^\infty(\cal{M})} |_{p=1}
\ov{\eqref{norm-equality-QG-2}}{=} \frac{\d}{\d p} \norm{f}_{\L^{p}(\cal{M})} |_{p=1}
\ov{\eqref{deriv-norm-p-bis}}{=} -\H(f). \nonumber
\end{align} 
Another argument is the inequality $\chi(T) \ov{\eqref{Holevo-Smin}}{\leq} -\H_{\min}(T) \ov{\eqref{Scbmin}}{=} -\H(f)$.

%

We can write $\L^\infty(\QG)=\M_{n_1} \oplus \cdots \oplus \M_{n_K}$ and its faithful trace satisfies the assumptions of Proposition \ref{chiT-as-derivative}. Now, with the notations of the proof of Proposition \ref{chiT-as-derivative}, we have the following ``HSW formula''
\begin{equation}
\label{HSW}
\C(T)
\ov{\eqref{EJ-first}}{=} \C(J_1T\E_1)
\ov{\eqref{Classical-capacity}}{=} \lim_{n \to +\infty} \frac{\chi((J_1T\E_1)^{\ot n})}{n} 
=\lim_{n \to +\infty} \frac{\chi(J_1^{\ot n}T^{\ot n}\E_1^{\ot n})}{n}
\ov{\eqref{EJ-first}}{=}\lim_{n \to +\infty} \frac{\chi(T^{\ot n})}{n}. 
\end{equation}
By observing that the map $f^{\ot n} * \cdot \co \L^1(\cal{M}^{\ot^n}) \to \L^1(\cal{M}^{\ot^n})$ is a convolution operator on the quantum hypergroup $\cal{M} \otvn \cdots \otvn \cal{M}$ (see \cite[Section 2.5]{Tro16} for the case of quantum groups), we deduce that
\begin{align*}
\MoveEqLeft
\C(T)
\ov{\eqref{HSW}}{=} \lim_{n \to +\infty} \frac{\chi(T^{\ot n})}{n}     
=\lim_{n \to +\infty} \frac{\chi(f^{\ot n} * \cdot)}{n} 
\ov{\eqref{chiT}}{\leq} \lim_{n \to +\infty} \frac{-\H(f^{\ot n})}{n} \\
&\ov{\eqref{entropy-tensor-product}}{=} \lim_{n \to +\infty} \frac{-n \H(f)}{n} 
=-\H(f).
\end{align*} 
\end{proof}

\begin{remark} \normalfont
Note that we can write $-\H(f)=\tau(f\log f)=\cal{S}(\tau(f\,\cdot),\tau)$ \cite{LoW22} where $\cal{S}(\tau(f\,\cdot),\tau)$ is the relative entropy between the states $\tau(f\,\cdot)$ and $\tau$. We deduce by \cite[Corollary 5.6]{OhP93} that $-\H(f)=0$ if and only if $\tau(f\,\cdot)=\tau$. 

If $f=1$, we have $-\H(f)=0$ so $\C(T)=0$. It is not really surprising since for any state $\varphi$ and the Haar state $\tau$ we have $\tau * \varphi = \tau$. So all the information of $\varphi$ is lost by applying $T$. 
\end{remark}

\begin{remark} \normalfont
\label{Remark-commutative}
Let $G$ be a finite (classical) group and consider a measure of probability $\nu$ on $G$. It is elementary to check that the convolution operator $T \co \ell^1_G \to \ell^1_G$, $g \mapsto \nu*g$ where 
$$
(\nu*g)(t)
\ov{\mathrm{def}}{=} \sum_{s \in G} g\big(s^{-1}t\big)\nu(s)
,\quad t \in G,
$$
is classical channel. By \cite[p.~104]{CST08}, the matrix of $T$ is given\footnote{\thefootnote. For any $t,r \in G$, we have $[T]_{t,r}=(\nu*e_r)(t)=\sum_{s \in G} e_r(s^{-1}t)\nu(s)=\nu(t r^{-1})$.}  by the Toeplitz matrix $[\nu(t r^{-1})]_{t,r \in G}$. Observe that the rows of such a matrix are permutations of each other. So such a convolution operator $T \co \ell^1_G \to \ell^1_G$ is symmetric in the sense of \cite[p.~190]{CoT06}. We deduce by \cite[(7.25)]{CoT06} that its classical capacity is given by
\begin{equation}
\label{classical-of-classical-convolution}
\C(T)
=\log |G|-\H(\nu)
\end{equation} 
(and is achieved by a uniform distribution on the input) where $\H(\nu)$ is the Shannon entropy of $\nu$. If $\mu_G$ is the \textit{normalized} Haar measure of $G$ (i.e.~the uniform probability measure on $G$) and if $\mathrm{D}$ denotes the relative entropy (Kullback-Leibler divergence) \cite[p.~19]{CoT06}, a simple computation left to the reader shows that
$$
\C(T)
=\mathrm{D}(\nu \| \mu_G).
$$
Finally, we can also obtain the formula \eqref{classical-of-classical-convolution} by a combination of the formula $\mathrm{C}(T)=\frac{\d}{\d p}\norm{T^*}_{\pi_{p^*}, \ell^\infty_G \to \ell^\infty_G}|_{p=1}$ of \cite[(1.2)]{JuP15}\footnote{\thefootnote. If $X$ and $Y$ are Banach spaces, recall that here $\norm{\cdot}_{\pi_p,X \to Y}$ denotes the $p$-summing norm $\pi_p$ of \cite[p.~31]{DJT95}.} and Theorem \ref{Thm-description-multipliers-3}. Indeed, first we consider the function $f \co G \to \mathbb{C}$, $s \mapsto f(s)=|G|\nu(s)$. Note that $\norm{f}_{\L^1(G,\mu_G)}=\frac{1}{|G|}\sum_{s \in G} f(s)=\frac{1}{|G|}\sum_{s \in G} \nu(s)=1$. Now, we have\footnote{\thefootnote. Recall that $(f*g)(t)=\int_G g(s^{-1}t)f(s)\d\mu_G(s)=\frac{1}{|G|} \sum_{s \in G} g(s^{-1}t) f(s)$.}
\begin{align*}
\MoveEqLeft
\mathrm{C}(T)
=\frac{\d}{\d p}\norm{T^*}_{\pi_{p^*},\ell^\infty_G \to \ell^\infty_G}|_{p=1} 
=\frac{\d}{\d p}\norm{\nu*\cdot}_{\pi_{p^*}, \ell^\infty_G \to \ell^\infty_G} |_{p=1} \\
&=\frac{\d}{\d p}\norm{f*\cdot}_{\pi_{p^*}, \ell^\infty_G \to \ell^\infty_G} |_{p=1}
\ov{\eqref{All-equal}}{=} \frac{\d}{\d p} \norm{f * \cdot }_{\ell^{p^*}_G \to \ell^\infty_G} |_{p=1} \\
&\ov{\eqref{norm-equality-QG-2}}{=} \frac{\d}{\d p} \norm{f}_{\ell^p_G} |_{p=1}
\ov{\eqref{deriv-norm-p-bis}}{=}
-\H_{\mu_G}(f) 
\ov{\eqref{changement-de-trace}}{=} \log |G|-\H\bigg(\frac{1}{|G|}f\bigg) 
=\log |G|-\H(\nu) 
\end{align*}
where we recall that $\H(\cdot,\mu_G)$ is the normalized entropy. So the bound of Theorem \ref{thm-capacity} is \textit{sharp}.
\end{remark}

\section{Examples}
\label{Sec-Examples}


\subsection{Fourier multipliers on the dihedral group $\mathbb{D}_3$ and dephasing channels on $\M_2$}
\label{Sec-Dihedral}


Recall that if $G$ is a finite (classical) group, the von Neumann algebra $\VN(G)$ identifies to a finite sum of matrix algebras. More precisely, we have by essentially \cite[Theorem 5.5.6 p.~66]{Ste12} a $*$-isomorphism $\VN(G) \to \oplus_{\pi \in \hat{G}} \M_{d_\pi}$, $\lambda_s \mapsto \oplus_{\pi \in \hat{G}} \pi(s)$. 
Note that the dihedral group $\mathbb{D}_3$ can be identified with the symmetric group $\mathrm{S}_3=\{e,(123),(132),(12),(23),(13)\}$. Recall that $\mathrm{S}_3$ has three irreducible unitary representations: $1$, $\mathrm{sign}$ and a two dimensional representation $\pi \co \mathrm{S}_3 \to \B(\mathbb{C}^2)$ defined by
$$
\pi(e)=\begin{bmatrix}
  1  & 0  \\
  0  & 1  \\
\end{bmatrix},\quad \pi(123)=\begin{bmatrix}
  \e^{\frac{\i 2\pi}{3}}   & 0  \\
  0   &  \e^{-\frac{\i 2\pi}{3}} \\
\end{bmatrix},\quad \pi(132)
=\begin{bmatrix}
  \e^{-\frac{\i 2\pi}{3}}   & 0  \\
  0   & \e^{\frac{\i 2\pi}{3}}  \\
\end{bmatrix}
$$
and
$$
\pi(12)=\begin{bmatrix}
  0  & 1  \\
  1  & 0  \\
\end{bmatrix},\quad \pi(23)=\begin{bmatrix}
  0   &  \e^{-\frac{\i 2\pi}{3}} \\
 \e^{\frac{\i 2\pi}{3}}    & 0  \\
\end{bmatrix}, \quad \pi(13)=\begin{bmatrix}
   0  &  \e^{\frac{\i 2\pi}{3}} \\
   \e^{-\frac{\i 2\pi}{3}}  & 0  \\
\end{bmatrix}.
$$
In particular, we have a $*$-isomorphism $
\Phi \co \VN(\mathbb{D}_3)
\mapsto \mathbb{C} \oplus \mathbb{C} \oplus \M_2(\mathbb{C})$ defined by
\[
\begin{split}
&\Phi(1)
= 1 \oplus 1 \oplus \begin{bmatrix}
  1   & 0  \\
  0   & 1 \\
\end{bmatrix}, \qquad
\Phi\big(\lambda_{(12)}\big)
=1 \oplus -1 \oplus \begin{bmatrix}
  0   &  1 \\
  1   &  0 \\
\end{bmatrix},\\
&
\Phi\big(\lambda_{(123)}\big)
= 1 \oplus 1 \oplus \begin{bmatrix}
  \e^{\frac{\i 2\pi}{3}}   & 0  \\
   0  &  \e^{-\frac{\i 2\pi}{3}} \\
\end{bmatrix}, \quad
\Phi\big(\lambda_{(23)}\big)
= 1 \oplus -1 \oplus \begin{bmatrix}
  0   &  \e^{-\frac{\i 2\pi}{3}} \\
 \e^{\frac{\i 2\pi}{3}}    &  0 \\
\end{bmatrix}, \\
&\Phi\big(\lambda_{(132)}\big)
=1 \oplus 1 \oplus \begin{bmatrix}
  \e^{-\frac{\i 2\pi}{3}}   &  0 \\
   0  &  \e^{\frac{\i 2\pi}{3}} \\
\end{bmatrix},
\quad
\Phi\big(\lambda_{(13)}\big)
= 1 \oplus -1 \oplus \begin{bmatrix}
  0   &  \e^{\frac{\i 2\pi}{3}} \\
  \e^{-\frac{\i 2\pi}{3}}   &  0 \\
\end{bmatrix}.\\
\end{split} 
\]
We will use the basis $(e_1,e_2,e_{11},e_{12},e_{21},e_{22})$\footnote{\thefootnote. Here we have $e_2=0 \oplus 0 \oplus 1 \oplus 0 \oplus \begin{bmatrix}
  0   & 0  \\
  0   & 0  \\
		\end{bmatrix}$ and $e_{21}=0 \oplus 0 \oplus \begin{bmatrix}
  0   & 0  \\
  1   & 0  \\
		\end{bmatrix}$.}. 
Using the computer code of Section \ref{Appendix3}, we obtain
\[
\begin{split}
&\Delta(e_1)
=e_{1} \ot e_{1} + e_{2} \ot e_{2} +\frac{1}{2}e_{11} \ot e_{22} +\frac{1}{2}e_{12} \ot e_{21}+\frac{1}{2}e_{21} \ot e_{12}+\frac{1}{2}e_{22} \ot e_{11}, \\
&
\Delta(e_2)
=e_{1} \ot e_{2} + e_{2} \ot e_{1} -\frac{1}{2}e_{12} \ot e_{21}-\frac{1}{2}e_{21} \ot e_{12}+\frac{1}{2}e_{11} \ot e_{22} +\frac{1}{2}e_{22} \ot e_{11}, \\
&
\Delta(e_{11})
=e_{1} \ot e_{11} + e_{2} \ot e_{11} +e_{11} \ot e_{1} +e_{11} \ot e_{2}+e_{22} \ot e_{22},\\
&
\Delta(e_{12})
=e_{1} \ot e_{12} + e_{12} \ot e_{1} +e_{21} \ot e_{21} - e_2 \ot e_{12} -e_{12} \ot e_2, \\
&
\Delta(e_{21})
=e_{1} \ot e_{21} + e_{12} \ot e_{12} +e_{21} \ot e_{1} - e_2 \ot e_{21}- e_{21} \ot e_2, \\
&
\Delta(e_{22})
=e_{1} \ot e_{22} + e_{2} \ot e_{22} +e_{11} \ot e_{11} +e_{22} \ot e_{1}+e_{22} \ot e_{2}.
\end{split}
\]
The Haar state identifies to
\begin{equation}
\label{Haar-D3}
h\bigg(x_1 \oplus x_2 \oplus \begin{bmatrix}
 a_{11}    &  a_{12} \\
  a_{21}   &  a_{22} \\
\end{bmatrix}\bigg)
=\frac{1}{6}\bigg(x_1+x_2+2\tr\begin{bmatrix}
 a_{11}    &  a_{12} \\
  a_{21}   &  a_{22} \\
\end{bmatrix}\bigg)
=\frac{1}{6}x_1+\frac{1}{6}x_2+\frac{1}{3}(a_{11}  + a_{22}).
\end{equation}
A state on $\L^\infty(\VN(\mathbb{D}_3))$ identifies to a convex combination of two complex numbers and a state of $\M_2(\mathbb{C})$. Recall that by \cite[p.~5]{Pet08} any state $\psi$ of $\M_2(\mathbb{C})$ can be written
\begin{equation}
\label{State-M2}
\psi(A)
=\tr\big(A\rho(x,y,z)\big)
\quad \text{where} \quad 
\rho(x,y,z)
=\frac{1}{2}\begin{bmatrix}
 1+z    & x+\i y  \\
 x-\i y    & 1-z  \\
\end{bmatrix}
\end{equation}
with $A \in \M_2(\mathbb{C})$, $x,y,z \in \R$ and $x^2+y^2+z^2 \leq 1$. So a state of $\L^\infty(\VN(\mathbb{D}_3))$ identifies to a sum 
$
\dsp f
=6\mu_1 \oplus 6\mu_2 \oplus \frac{3\mu_3}{2}\begin{bmatrix}
 1+z    &  x+ \i y \\
  x - \i y   &  1-z \\
\end{bmatrix}
$ where $\mu_1, \mu_2, \mu_3 \geq 0$, $x,y,z \in \R$ with $\mu_1 + \mu_2 + \mu_3=1$ and $x^2+y^2+z^2 \leq 1$. The duality bracket \eqref{bracket} is given by
\begin{align*}
\MoveEqLeft
\left\langle 6\mu_1 \oplus 6\mu_2 \oplus \frac{3\mu_3}{2}\begin{bmatrix}
 1+z    &  x+ \i y \\
  x - \i y   &  1-z \\
\end{bmatrix}, x_1 \oplus x_2 \oplus \begin{bmatrix}
 a_{11}    &  a_{12} \\
  a_{21}   &  a_{22} \\
\end{bmatrix}\right\rangle_{\L^1(\VN(\mathbb{D}_3)),\L^\infty(\VN(\mathbb{D}_3))}  \\ 
&\ov{\eqref{bracket}}{=} x_1\mu_1+x_2\mu_2+\frac{\mu_3}{2}\tr\bigg(
\begin{bmatrix}
 a_{11}    &  a_{12} \\
  a_{21}   &  a_{22} \\
\end{bmatrix}\begin{bmatrix}
 1+z    &  x+ \i y \\
  x - \i y   &  1-z \\
\end{bmatrix}^T\bigg) \\         
&=x_1\mu_1+x_2\mu_2+\frac{\mu_3}{2}\big[(1+z)a_{11}+(x-\i y)a_{21}+(x+\i y)a_{12}+(1-z)a_{22}\big].
\end{align*}
We deduce that $f(e_1)=\mu_1, f(e_2)=\mu_2, f(e_{11})=\frac{\mu_3}{2}(1+z),f(e_{12})= \frac{\mu_3}{2}(x-\i y),f(e_{21})= \frac{\mu_3}{2}(x+\i y), f(e_{22})=\frac{\mu_3}{2}(1-z) 
$. If $f$ is a state, we infer that
\begin{align*}
\MoveEqLeft
(f \ot \Id) \circ \Delta(e_1) \\
&=(f \ot \Id)\Big(e_{1} \ot e_{1} + e_{2} \ot e_{2} +\frac{1}{2}e_{11} \ot e_{22} +\frac{1}{2}e_{12} \ot e_{21}+\frac{1}{2}e_{21} \ot e_{12}
+\frac{1}{2}e_{22} \ot e_{11} \Big) \\ 
&=f(e_{1}) \ot e_{1} + f(e_{2}) \ot e_{2} +\frac{1}{2}f(e_{11}) \ot e_{22} +\frac{1}{2}f(e_{12}) \ot e_{21}+\frac{1}{2}f(e_{21}) \ot e_{12}+\frac{1}{2}f(e_{22}) \ot e_{11} \\
&=\mu_1 e_1 +\mu_2 e_2 +\frac{\mu_3}{4}\big[(1+z)e_{22} +(x-\i y)e_{21}+(x+\i y)e_{12} + (1-z)e_{22} \big], \\
\end{align*}
\\[-1.9cm]
\begin{align*}
\MoveEqLeft
(f \ot \Id) \circ \Delta(e_2) \\
&=(f \ot \Id)\Big(e_{1} \ot e_{2} + e_{2} \ot e_{1} -\frac{1}{2}e_{12} \ot e_{21}-\frac{1}{2}e_{21} \ot e_{12}+\frac{1}{2}e_{11} \ot e_{22} +\frac{1}{2}e_{22} \ot e_{11}\Big)\\        
&=f(e_{1}) \ot e_{2} + f(e_{2}) \ot e_{1} -\frac{1}{2}f(e_{12}) \ot e_{21}-\frac{1}{2}f(e_{21}) \ot e_{12}+\frac{1}{2}f(e_{11}) \ot e_{22}  +\frac{1}{2}f(e_{22}) \ot e_{11} \\
&=\mu_1e_2+\mu_2e_1+\frac{\mu_3}{4}\big[ (x-\i y)e_{21}-(x+\i y)e_{12}+  (1+z)e_{22} +(1-z) e_{11}\big], \\
\end{align*}
\\[-1.7cm]
\begin{align*}
\MoveEqLeft
(f \ot \Id) \circ \Delta(e_{11}) 
=(f \ot \Id)(e_{1} \ot e_{11} + e_{2} \ot e_{11} +e_{11} \ot e_{1} +e_{11} \ot e_{2}+e_{22} \ot e_{22})\\     
&=f(e_{1}) \ot e_{11} + f(e_{2}) \ot e_{11} +f(e_{11}) \ot e_{1} +f(e_{11}) \ot e_{2}+f(e_{22}) \ot e_{22} \\
&=\mu_1e_{11}+\mu_2e_{11}+\frac{\mu_3}{2}\big[(1+z) e_{1} + (1+z)e_{2}+ (1-z)e_{22}\big],\\
\end{align*}
\\[-1.9cm]
\begin{align*}
\MoveEqLeft
(f \ot \Id) \circ \Delta(e_{12}) 
=(f \ot \Id)(e_{1} \ot e_{12} + e_{12} \ot e_{1} +e_{21} \ot e_{21} - e_2 \ot e_{12} -e_{12} \ot e_2)\\       
&=f(e_{1}) \ot e_{12} - f(e_2) \ot e_{12}+ f(e_{12}) \ot e_{1} +f(e_{21}) \ot e_{21}  -f(e_{12}) \ot e_2 \\
&=\mu_1 e_{12}-\mu_2 e_{12}+\frac{\mu_3}{2}\big[ (x-\i y)e_{1} + (x+\i y)e_{21} - (x-\i y)e_2\big],\\
\end{align*}
\\[-1.9cm]
\begin{align*}
\MoveEqLeft
(f \ot \Id) \circ \Delta(e_{21}) 
=(f \ot \Id)(e_{1} \ot e_{21} + e_{12} \ot e_{12} +e_{21} \ot e_{1} - e_2 \ot e_{21}- e_{21} \ot e_2)\\       
&=f(e_{1}) \ot e_{21} - f(e_2) \ot e_{21}+ f(e_{12}) \ot e_{12} +f(e_{21}) \ot e_{1} - f(e_{21}) \ot e_2 \\
&=\mu_1e_{21}-\mu_2e_{21}+\frac{\mu_3}{2}\big[ (x-\i y)e_{12} + (x+\i y)e_{1} -(x+\i y) e_2\big], \\
\end{align*}
\\[-1.9cm]
\begin{align*}
\MoveEqLeft
(f \ot \Id) \circ \Delta(e_{22}) 
=(f \ot \Id)(e_{1} \ot e_{22} + e_{2} \ot e_{22} +e_{11} \ot e_{11} +e_{22} \ot e_{1}+e_{22} \ot e_{2})\\     
&= f(e_{1}) \ot e_{22} + f(e_{2}) \ot e_{22} +f(e_{11}) \ot e_{11} +f(e_{22}) \ot e_{1}+f(e_{22}) \ot e_{2} \\
&=\mu_1 e_{22}+\mu_2e_{22}+\frac{\mu_3}{2}\big[(1+z)e_{11} +(1-z)e_{1}+(1-z)e_{2}\big].
\end{align*}
With respect to the basis $(e_1,e_2,e_{11},e_{12},e_{21},e_{22})$ of $\L^\infty(\VN(\mathbb{D}_3))$, with \eqref{action-module} we conclude that the matrix of the convolution operator $T \co \L^\infty(\VN(\mathbb{D}_3)) \mapsto \L^\infty(\VN(\mathbb{D}_3))$, $y \to y*f$ is 
\begin{equation}
\label{direct-sum}
\begin{bmatrix}
\mu_1 & \mu_2 & \frac{\mu_3}{2}(1+z) & \frac{\mu_3}{2}(x-\i y) & \frac{\mu_3}{2}(x+\i y) & \frac{\mu_3}{2}(1-z) \\
\mu_2 & \mu_1 & \frac{\mu_3}{2}(1+z) & \frac{\mu_3}{2}(x-\i y) & \frac{\mu_3}{2}(x+\i y) & \frac{\mu_3}{2}(1-z)  \\
0 & \frac{\mu_3}{4}(1-z) & \mu_1+\mu_2 & 0 & 0 & \frac{\mu_3}{2}(1+z) \\
\frac{\mu_3}{2}(x+\i y)& \frac{\mu_3}{4}(x+\i y) & 0 & \mu_1-\mu_2 & \frac{\mu_3}{2}(x-\i y) & 0\\
\frac{\mu_3}{2}(x-\i y)& \frac{\mu_3}{4}(x-\i y) & 0 & \frac{\mu_3}{2}(x+\i y) & \mu_1-\mu_2 & 0 \\
\mu_3& \frac{\mu_3}{2}(1+z) & \frac{\mu_3}{2}(1-z) & 0 & 0 & \mu_1+\mu_2 \\
\end{bmatrix}.
\end{equation}

If $\mu_3=0$, note that the subspaces $\vect \{e_{1},e_{2}\}$ and $\vect \{e_{11},e_{12},e_{21},e_{22}\}$ are stable under this convolution operator. Furthermore, this operator identifies to a direct sum $T_1 \oplus T_2 \co \ell^\infty_2 \oplus \M_2 \to \ell^\infty_2 \oplus \M_2$. 

Since $\mu_1+\mu_2=1$, the restriction $T_2$ of $T$ on the second subspace identifies to the Schur multiplier $M_A \co \M_2 \to \M_2$ associated to the matrix $
A=\begin{bmatrix}
  1 & \mu_1-\mu_2  \\
  \mu_1-\mu_2   &  1 \\
\end{bmatrix}$.
We recover that this map is a quantum channel by essentially \cite[Lemma 3.1 p.~27]{Pau02} since $|\mu_1-\mu_2| \leq \mu_1+\mu_2=1$. Moreover, if $\mu_1=1-t$ and $\mu_2=t$ for some $0 \leq t \leq 1$, we obtain $A=\begin{bmatrix}
  1 & 1-2t  \\
  1-2t   &  1 \\
\end{bmatrix}$. This channel is the dephasing channel (phase damping channel) of \cite[Example~2.14 p.~19]{Pet08} (see also \cite[p.~155]{Wil17}). The restriction $T_1$ of $T$ on the first subspace identifies to a binary symmetric channel \cite[p.~187]{CoT06} (or \cite[Example~7.2 p.~93]{Pet08}). 


Consequently the capacities and entropies of $T,T_1$ and $T_2$ are related. For example, consider the canonical maps $J \co \M_2 \to \VN(\VN(\mathbb{D}_3))$, $x \mapsto 0 \oplus 0 \oplus x$ and $\E \co \VN(\VN(\mathbb{D}_3)) \to \M_2$. If $x \in S^1_2$, we have 
\begin{equation}
\label{inter-987}
\norm{J(x)}_{\L^1(\VN(\mathbb{D}_3))} 
=\norm{0 \oplus 0 \oplus x}_{\L^1(\VN(\mathbb{D}_3))} 
\ov{\eqref{Haar-D3}}{=} \frac{1}{3}\norm{x}_{S^1_2}.
\end{equation}
Moreover, if $y$ belongs to $\L^\infty(\VN(\mathbb{D}_3))$ we have 
$$
\norm{\E(y)}_{S^p_2}
=3^{\frac{1}{p}} \norm{0 \oplus 0 \oplus \E(y)}_{\L^p(\VN(\mathbb{D}_3))} 
\leq 3^{\frac{1}{p}} \norm{y}_{\L^p(\VN(\mathbb{D}_3))}.
$$ 
We deduce that
\begin{align*}
\MoveEqLeft
\norm{T_2}_{\cb,S^1_2 \to S^p_2}
=\norm{\E T J}_{\cb,S^1_2 \to S^p_2} \\
&\leq \norm{\E}_{\cb,\L^p(\VN(\mathbb{D}_3)) \to S^p_2} \norm{T}_{\cb,\L^1(\VN(\mathbb{D}_3)) \to \L^p(\VN(\mathbb{D}_3))} \norm{J}_{\cb, S^1_2 \to \L^1(\VN(\mathbb{D}_3))}  \\
&\leq 3^{\frac{1}{p}} \frac{1}{3} \norm{T}_{\cb,\L^1(\VN(\mathbb{D}_3)) \to \L^p(\VN(\mathbb{D}_3))} 
\ov{\eqref{norm-equality-QG}}{=} 3^{\frac{1}{p}-1} \bnorm{(6\mu_1,6\mu_2) \oplus 0}_{\L^p(\VN(\mathbb{D}_3))} \\
&\ov{\eqref{Haar-D3}}{=} 3^{\frac{1}{p}-1} \frac{6}{6^{\frac{1}{p}}} \bnorm{(\mu_1,\mu_2)}_{\ell^p_3} 
=\frac{1}{2^{\frac{1}{p}-1}}  \big(\mu_1^p+\mu_2^p\big)^{\frac{1}{p}}            
= \frac{1}{2^{\frac{1}{p}-1}} \big((1-t)^p+t^p\big)^{\frac{1}{p}}.
\end{align*}     
So by differentiation, we are able to recover a sharp estimate on the completely bounded minimal entropy $\H_{\cb,\min}(T_2)$ of the dephasing channel $T_2$ which is equal to $-t\log_2(t)-(1-t)\log_2(1-t)-1$, see \eqref{Hcbmin-HS}.

That the classical capacity of $T_1$ is known. Using the methods of \cite{Sto07} \cite[Proposition 1]{FuW07}, we see that the knowledge of the classical capacity of $T_2$ (which is also known) is equivalent to the knowledge of the classical capacity of $T$. So we can compute the classical capacity of $T$. Theorem \ref{thm-carac-Fourier} combined with \cite[Theorem 1.1]{JuP15} gives the entanglement-assisted classical capacity $\C_{\EA}(T)$ of the Fourier multiplier $T$. We can also recover this result from $T_1$ and $T_2$.

\begin{remark}\normalfont
If the matrix algebra $\M_6$ is equipped with its normalized trace, we consider the normal unital trace preserving $*$-homomorphism $J \co \L^\infty(\VN(\mathbb{D}_3)) \to \M_6$ defined by
\[
J \bigg( x_1 \oplus x_2 \oplus\begin{bmatrix}
  a_{11}   & a_{12}  \\
  a_{12}   & a_{22} \\
\end{bmatrix}\bigg) 
= \begin{bmatrix} 
x_1 & 0&0 &0 &0 &0  \\ 
0 & x_2 & 0& 0& 0& 0& \\ 
0 &0 & a_{11} & a_{12} & 0&0  \\ 
0 &0 & a_{12} &  a_{22} & 0&0 \\ 
0 &0 &0 &0 &  a_{11} & a_{12} \\
0 &0 &0 & 0&  a_{21} & a_{22} \\
\end{bmatrix}
\]
and the associated canonical trace preserving normal conditional expectation $\E \co \M_6 \to \L^\infty(\VN(\mathbb{D}_3)$. Now, we can consider the quantum channel $JT\E \co \M_6 \to \M_6$ and combine the results of Section \ref{Entropy-capacity-I} and Section \ref{Entropy-capacity}.
\end{remark}

$$
\frac{1}{2}\begin{bmatrix}
  1 & 0 & 0 & 0 & 0 & 0 & 0 & 1\\
 0 & 0 & 0 & 0 & 0 & 0 & 0 & 0\\
 0 & 0 & 0 & 0 & 0 & 0 & 0 & 0\\
 0 & 0 & 0 & 0 & 0 & 0 & 0 & 0\\
 0 & 0 & 0 & 0 & 0 & 0 & 0 & 0\\
 0 & 0 & 0 & 0 & 0 & 0 & 0 & 0\\
 0 & 0 & 0 & 0 & 0 & 0 & 0 & 0\\
 1 & 0 & 0 & 0 & 0 & 0 & 0 & 1\\
\end{bmatrix}
$$

\subsection{Fourier multipliers on the quaternion group $\mathbb{Q}_8$ and unital qubit channels on $\M_2$}
\label{Sec-quaternion}

Recall that $\mathbb{Q}_8=\{\pm 1, \pm \i, \pm j, \pm k \}$. The unique two-dimensional irreducible complex representation is given by 
$$
\pi(\pm 1)
=\begin{bmatrix}
 \pm 1   &  0 \\
   0 &  \pm 1 \\
\end{bmatrix},\quad 
\pi(\pm  \i)
=\begin{bmatrix}
  \pm  \i  &  0 \\
   0  &  \mp  \i \\
\end{bmatrix},\quad 
\pi(\pm j)
=\begin{bmatrix}
   0  &  \pm 1 \\
  \mp 1  &  0 \\
\end{bmatrix}
\textrm{ and }
\pi(\pm k)
=\begin{bmatrix}
  0  & \pm \i  \\
 \pm \i  & 0  \\
\end{bmatrix}.
$$
In particular, we have a $*$-isomorphism $
\Phi \co \VN(\mathbb{Q}_8)
\mapsto \mathbb{C} \oplus \mathbb{C} \oplus \mathbb{C} \oplus \mathbb{C} \oplus \M_2(\mathbb{C})$ defined by
\[
\begin{split}
&\Phi(1)
= 1 \oplus 1 \oplus 1 \oplus 1 \oplus 
\begin{bmatrix}
  1   & 0  \\
  0   & 1 \\
\end{bmatrix}, \qquad
\Phi(-1)
= 1 \oplus 1 \oplus 1 \oplus 1 \oplus 
\begin{bmatrix}
   -1  & 0  \\
   0  & -1  \\
\end{bmatrix},\\
&
\Phi\big(\lambda_{\i}\big)
= 1 \oplus 1 \oplus -1 \oplus -1 \oplus 
\begin{bmatrix}
   \i  &  0 \\
   0  & -\i \\
\end{bmatrix}, \quad
\Phi\big(\lambda_{-\i}\big)
= 1 \oplus 1 \oplus -1 \oplus -1  \oplus 
\begin{bmatrix}
  -\i   &  0 \\
 0  &  \i \\
\end{bmatrix}, \\
&\Phi\big(\lambda_{j}\big)
= 1 \oplus -1 \oplus 1 \oplus -1 \oplus 
\begin{bmatrix}
  0  &  1 \\
  -1   & 0  \\
\end{bmatrix},
\quad
\Phi(\lambda_{-j})
= 1 \oplus -1 \oplus 1 \oplus -1 \oplus 
\begin{bmatrix}
   0  & -1 \\
   1 & 0  \\
\end{bmatrix}.\\
&\Phi(\lambda_{k})
= 1 \oplus -1 \oplus -1 \oplus 1 \oplus 
\begin{bmatrix}
  0  &  \i \\
   \i  &  0 \\
\end{bmatrix},
\quad
\Phi(\lambda_{-k})
=  1 \oplus -1 \oplus -1 \oplus 1 \oplus 
\begin{bmatrix}
   0  & -\i \\
  -\i  & 0  \\
\end{bmatrix}.\\
\end{split} 
\]
We will use the basis $(e_1,e_2,e_3,e_4,e_{11},e_{12},e_{21},e_{22})$. Using the computer code of Section \ref{Appendix3}, we obtain the following formulas for the coproduct
\[
\begin{split}
&\Delta(e_1)
=e_{1} \ot e_{1}+e_{2} \ot e_{2}+e_{3} \ot e_{3}+e_{4} \ot e_{4}+\frac{1}{2}(e_{11} \ot e_{22}-e_{12} \ot e_{21}-e_{21} \ot e_{12}+e_{22} \ot e_{11}), \\
&
\Delta(e_2)
=e_{1} \ot e_{2}+e_{2} \ot e_{1}+e_{3} \ot e_{4}+e_{4} \ot e_{3}+\frac{1}{2}(e_{11} \ot e_{22}+e_{12} \ot e_{21}+e_{21} \ot e_{12}+e_{22} \ot e_{11}), \\
&\Delta(e_3)
=e_{1} \ot e_{3}+e_{2} \ot e_{4}+e_{3} \ot e_{1}+e_{4} \ot e_{2}+\frac{1}{2}(e_{11} \ot e_{11}+e_{12} \ot e_{12}+e_{21} \ot e_{21}+e_{22} \ot e_{22}), \\
&\Delta(e_4)
=e_{1} \ot e_{4}+e_{2} \ot e_{3}+e_{3} \ot e_{2}+e_{4} \ot e_{1}+\frac{1}{2}(e_{11} \ot e_{11}-e_{12} \ot e_{12}-e_{21} \ot e_{21}+e_{22} \ot e_{22}), \\
&
\Delta(e_{11})
=e_{1} \ot e_{11}+e_{2} \ot e_{11}+e_{3} \ot e_{22}+e_{4} \ot e_{22}+e_{11} \ot e_{1}+e_{11} \ot e_{2}+e_{22} \ot e_{3}+e_{22} \ot e_{4},\\
&
\Delta(e_{12})
=e_{1} \ot e_{12}-e_{2} \ot e_{12}-e_{3} \ot e_{21}+e_{4} \ot e_{21}+e_{12} \ot e_{1}-e_{12} \ot e_{2}+e_{21} \ot e_{3}+e_{21} \ot e_{4}, \\
&
\Delta(e_{21})
=e_{1} \ot e_{21}-e_{2} \ot e_{21}-e_{3} \ot e_{12}+e_{4} \ot e_{12}-e_{12} \ot e_{3}+e_{12} \ot e_{4}+e_{21} \ot e_{1}-e_{21} \ot e_{2}, \\
&
\Delta(e_{22})
= e_{1} \ot e_{22}+e_{2} \ot e_{22}+e_{3} \ot e_{11}+e_{4} \ot e_{11}+e_{11} \ot e_{3}+e_{11} \ot e_{4}+e_{22} \ot e_{1}+e_{22} \ot e_{2}.
\end{split}
\]
The Haar state $h$ identifies to
\begin{align}
\MoveEqLeft
\label{Haar-Q8}
h\bigg(x_1 \oplus x_2 \oplus x_3 \oplus x_4 \oplus \begin{bmatrix}
 a_{11}    &  a_{12} \\
  a_{21}   &  a_{22} \\
\end{bmatrix}\bigg)
=\frac{1}{8}\bigg(x_1+ x_2 + x_3 + x_4 + 2\tr\begin{bmatrix}
 a_{11}    &  a_{12} \\
  a_{21}   &  a_{22} \\
\end{bmatrix}\bigg) \\
&=\frac{1}{8}x_1+\frac{1}{8}x_2+\frac{1}{8}x_3+\frac{1}{8}x_4+\frac{1}{4}(a_{11}  + a_{22}). \nonumber
\end{align}
A state on $\L^\infty\VN(\mathbb{Q}_8))$ identifies to a convex combination of four complex numbers and a state of $\M_2(\mathbb{C})$. Using \eqref{State-M2} a state of $\L^\infty(\VN(\mathbb{Q}_8))$ identifies to a sum
\begin{equation}
\label{state-f}
f
=8\mu_1 \oplus 8\mu_2 \oplus 8\mu_3 \oplus 8\mu_4 \oplus 
2\mu_5\begin{bmatrix}
 1+z    &  x+ \i y \\
  x - \i y   &  1-z \\
\end{bmatrix}
\end{equation}
where $\mu_1, \mu_2, \mu_3, \mu_4, \mu_5 \geq 0$, $x,y,z \in \R$ with $\mu_1 + \mu_2 + \mu_3 + \mu_4 + \mu_5=1$ and $x^2+y^2+z^2 \leq 1$. The duality bracket \eqref{bracket} is given by
\begin{align*}
\MoveEqLeft
\left\langle x_1 \oplus x_2 \oplus x_3 \oplus x_4 \oplus \begin{bmatrix}
 a_{11}    &  a_{12} \\
  a_{21}   &  a_{22} \\
\end{bmatrix}, 8\mu_1 \oplus 8\mu_2 \oplus 8\mu_3 \oplus 8\mu_4 \oplus 2\mu_5 \begin{bmatrix}
 1+z    &  x+ \i y \\
  x - \i y   &  1-z \\
\end{bmatrix}\right\rangle_{\L^\infty(\VN(\mathbb{Q}_8)),\L^1(\VN(\mathbb{Q}_8))}  \\ 
&=x_1\mu_1+x_2\mu_2+x_3\mu_3+x_4\mu_4+\frac{\mu_5}{2}\tr\bigg(
\begin{bmatrix}
 a_{11}    &  a_{12} \\
  a_{21}   &  a_{22} \\
\end{bmatrix}\begin{bmatrix}
 1+z    &  x+ \i y \\
  x - \i y   &  1-z \\
\end{bmatrix}^T\bigg) \\         
&=x_1\mu_1+x_2\mu_2+x_3\mu_3+x_4\mu_4 +\frac{\mu_5}{2}\big[(1+z)a_{11}+(x-\i y)a_{21}+(x+\i y)a_{12}+(1-z)a_{22}\big].
\end{align*} 
We have $
f(e_1)=\mu_1, f(e_2)=\mu_2,f(e_3)=\mu_3,f(e_4)=\mu_4, f(e_{11})=\frac{\mu_5}{2}(1+z),f(e_{12})= \frac{\mu_5}{2}(x-\i y),f(e_{21})= \frac{\mu_5}{2}(x+\i y), f(e_{22})=\frac{\mu_5}{2}(1-z) 
$. We deduce that{\small
\begin{align*}
\MoveEqLeft
(f \ot \Id) \circ \Delta(e_{1}) \\
&=(f \ot \Id)\Big(e_{1} \ot e_{1}+e_{2} \ot e_{2}+e_{3} \ot e_{3}+e_{4} \ot e_{4}+\frac{1}{2}(e_{11} \ot e_{22}-e_{12} \ot e_{21}-e_{21} \ot e_{12}+e_{22} \ot e_{11})\Big) \\        
&=f(e_{1}) \ot e_{1}+f(e_{2}) \ot e_{2}+f(e_{3}) \ot e_{3}+f(e_{4}) \ot e_{4}+\frac{1}{2}\big(f(e_{11}) \ot e_{22}-f(e_{12}) \ot e_{21}\\
&-f(e_{21}) \ot e_{12}+f(e_{22}) \ot e_{11}\big) \\
&= \mu_1 e_{1}+ \mu_2 e_{2}+ \mu_3 e_{3}+ \mu_4 e_{4}+\frac{\mu_5}{4}(1+z) e_{22}- \frac{\mu_5}{4}(x-\i y) e_{21}- \frac{\mu_5}{4}(x+\i y) e_{12}+\frac{\mu_5}{4}(1-z) e_{11}.
\end{align*} 
\\[-1cm]
\begin{align*}
\MoveEqLeft
(f \ot \Id) \circ \Delta(e_{2}) \\
&=(f \ot \Id)\Big(e_{1} \ot e_{2}+e_{2} \ot e_{1}+e_{3} \ot e_{4}+e_{4} \ot e_{3}+\frac{1}{2}\big(e_{11} \ot e_{22}+e_{12} \ot e_{21}+e_{21} \ot e_{12}+e_{22} \ot e_{11}\big)\Big)\\        
&=f(e_{1}) \ot e_{2}+f(e_{2}) \ot e_{1}+f(e_{3}) \ot e_{4}+f(e_{4}) \ot e_{3}+\frac{1}{2}\big(f(e_{11}) \ot e_{22}+f(e_{12}) \ot e_{21}+f(e_{21}) \ot e_{12}\\
&+f(e_{22}) \ot e_{11} \big)\\
&= \mu_1 e_{2}+ \mu_2 e_{1}+ \mu_3 e_{4}+ \mu_4 e_{3}+\frac{\mu_5}{4}(1+z) e_{22}+ \frac{\mu_5}{4}(x-\i y) e_{21}+ \frac{\mu_5}{4}(x+\i y) e_{12}+\frac{\mu_5}{4}(1-z) e_{11}.
\end{align*} 
\\[-1cm]
\begin{align*}
\MoveEqLeft
(f \ot \Id) \circ \Delta(e_{3}) \\
&=(f \ot \Id)\Big( e_{1} \ot e_{3}+e_{2} \ot e_{4}+e_{3} \ot e_{1}+e_{4} \ot e_{2}+\frac{1}{2}(e_{11} \ot e_{11}+e_{12} \ot e_{12}+e_{21} \ot e_{21}+e_{22} \ot e_{22})\Big)\\        
&= f(e_{1}) \ot e_{3}+f(e_{2}) \ot e_{4}+f(e_{3}) \ot e_{1}+f(e_{4}) \ot e_{2}+\frac{1}{2}\big(f(e_{11}) \ot e_{11}+f(e_{12}) \ot e_{12}\\
&+f(e_{21}) \ot e_{21}+f(e_{22}) \ot e_{22}\big) \\
&= \mu_1 e_{3}+ \mu_2 e_{4}+ \mu_3 e_{1}+ \mu_4 e_{2}+\frac{\mu_5}{4}(1+z) e_{11}+ \frac{\mu_5}{4}(x-\i y) e_{12}+ \frac{\mu_5}{4}(x+\i y) e_{21}+ \frac{\mu_5}{4}(1-z) e_{22}.
\end{align*} 
\\[-1cm]
\begin{align*}
\MoveEqLeft
(f \ot \Id) \circ \Delta(e_{4}) \\
&=(f \ot \Id)\Big(e_{1} \ot e_{4}+e_{2} \ot e_{3}+e_{3} \ot e_{2}+e_{4} \ot e_{1}+\frac{1}{2}(e_{11} \ot e_{11}-e_{12} \ot e_{12}-e_{21} \ot e_{21}+e_{22} \ot e_{22})\Big)\\        
&=f(e_{1}) \ot e_{4}+f(e_{2}) \ot e_{3}+f(e_{3}) \ot e_{2}+f(e_{4}) \ot e_{1}+\frac{1}{2}\big(f(e_{11}) \ot e_{11}-f(e_{12}) \ot e_{12}\\
&-f(e_{21}) \ot e_{21}+f(e_{22}) \ot e_{22}\big) \\
&=\mu_1 e_{4}+ \mu_2 e_{3}+ \mu_3 e_{2}+ \mu_4 e_{1}+\frac{\mu_5}{4}(1+z) e_{11}- \frac{\mu_5}{4}(x-\i y) e_{12}- \frac{\mu_5}{4}(x+\i y) e_{21}+\frac{\mu_5}{4}(1-z) e_{22}.
\end{align*} 
\\[-1cm]
\begin{align*}
\MoveEqLeft
(f \ot \Id) \circ \Delta(e_{11}) \\
&=(f \ot \Id)(e_{1} \ot e_{11}+e_{2} \ot e_{11}+e_{3} \ot e_{22}+e_{4} \ot e_{22}+e_{11} \ot e_{1}+e_{11} \ot e_{2}+e_{22} \ot e_{3}+e_{22} \ot e_{4}) \\        
&=f(e_{1}) \ot e_{11}+f(e_{2}) \ot e_{11}+f(e_{3}) \ot e_{22}+f(e_{4}) \ot e_{22}+f(e_{11}) \ot e_{1}+f(e_{11}) \ot e_{2}+f(e_{22}) \ot e_{3} \\
&+f(e_{22}) \ot e_{4} \\
&= \mu_1 e_{11}+ \mu_2 e_{11}+ \mu_3 e_{22}+ \mu_4 e_{22}+ \frac{\mu_5}{2}(1+z) e_{1}+ \frac{\mu_5}{2}(1+z) e_{2}+ \frac{\mu_5}{2}(1-z) e_{3}+ \frac{\mu_5}{2}(1-z)  e_{4}.
\end{align*} 
\\[-1cm]
\begin{align*}
\MoveEqLeft
(f \ot \Id) \circ \Delta(e_{12}) \\ 
&=(f \ot \Id)(e_{1} \ot e_{12}-e_{2} \ot e_{12}-e_{3} \ot e_{21}+e_{4} \ot e_{21}+e_{12} \ot e_{1}-e_{12} \ot e_{2}+e_{21} \ot e_{3}+e_{21} \ot e_{4}) \\        
&=f(e_{1}) \ot e_{12}-f(e_{2}) \ot e_{12}-f(e_{3}) \ot e_{21}+f(e_{4}) \ot e_{21}+f(e_{12}) \ot e_{1}-f(e_{12}) \ot e_{2}+f(e_{21}) \ot e_{3}\\
&+f(e_{21}) \ot e_{4}  \\
&= \mu_1 e_{12}- \mu_2 e_{12}- \mu_3 e_{21}+ \mu_4 e_{21}+ \frac{\mu_5}{2}(x-\i y) e_{1}-\frac{\mu_5}{2}(x-\i y)  e_{2}+ \frac{\mu_5}{2}(x+\i y) e_{3}+ \frac{\mu_5}{2}(x+\i y) e_{4}.
\end{align*} 
\\[-1cm]
\begin{align*}
\MoveEqLeft
(f \ot \Id) \circ \Delta(e_{21}) \\
&=(f \ot \Id)(e_{1} \ot e_{21}-e_{2} \ot e_{21}-e_{3} \ot e_{12}+e_{4} \ot e_{12}-e_{12} \ot e_{3}+e_{12} \ot e_{4}+e_{21} \ot e_{1}-e_{21} \ot e_{2})\\        
&=f(e_{1}) \ot e_{21}-f(e_{2}) \ot e_{21}-f(e_{3}) \ot e_{12}+f(e_{4}) \ot e_{12}-f(e_{12}) \ot e_{3}+f(e_{12}) \ot e_{4}\\
&+f(e_{21}) \ot e_{1}-f(e_{21}) \ot e_{2} \\
&= \mu_1 e_{21}- \mu_2 e_{21}- \mu_3 e_{12}+ \mu_4 e_{12}- \frac{\mu_5}{2}(x-\i y) e_{3}+ \frac{\mu_5}{2}(x-\i y) e_{4}+\frac{\mu_5}{2}(x+\i y) e_{1}-\frac{\mu_5}{2}(x+\i y) e_{2}.
\end{align*} 
\\[-1cm]
\begin{align*}
\MoveEqLeft
(f \ot \Id) \circ \Delta(e_{22}) \\
&=(f \ot \Id)\big(e_{1} \ot e_{22}+e_{2} \ot e_{22}+e_{3} \ot e_{11}+e_{4} \ot e_{11}+e_{11} \ot e_{3}+e_{11} \ot e_{4}+e_{22} \ot e_{1}+e_{22} \ot e_{2}\big)
\\
&=f(e_{1}) \ot e_{22}+f(e_{2}) \ot e_{22}+f(e_{3}) \ot e_{11}+f(e_{4}) \ot e_{11}+f(e_{11}) \ot e_{3}+f(e_{11}) \ot e_{4}+f(e_{22}) \ot e_{1}\\
&+f(e_{22}) \ot e_{2}\\        
&= \mu_1 e_{22}+ \mu_2 e_{22}+ \mu_3 e_{11}+ \mu_4 e_{11}+ \frac{\mu_5}{2}(1+z) e_{3}+ \frac{\mu_5}{2}(1+z) e_{4}+ \frac{\mu_5}{2}(1-z) e_{1}+ \frac{\mu_5}{2}(1-z) e_{2}.
\end{align*} }
\noindent With respect to the basis $(e_1,e_2,e_3,e_4,e_{11},e_{12},e_{21},e_{22})$ of $\L^\infty(\VN(\mathbb{Q}_8))$, we deduce that the matrix of the convolution operator $T \co \L^\infty(\VN(\mathbb{Q}_8)) \mapsto \L^\infty(\VN(\mathbb{Q}_8))$, $y \to f*y$ is 
$$
\begin{bmatrix}
\mu_1 & \mu_2 & \mu_3 & \mu_4 & \frac{\mu_5(1+z)}{2} & \frac{\mu_5(x-\i y)}{2} & \frac{\mu_5(x+\i y)}{2}& \frac{\mu_5(1-z)}{2}\\
\mu_2 & \mu_1 & \mu_4 & \mu_3 & \frac{\mu_5(1+z)}{2} & -\frac{\mu_5(x-\i y)}{2} & -\frac{\mu_5(x+\i y)}{2} & \frac{\mu_5(1-z)}{2} \\
\mu_3 & \mu_4 & \mu_1 & \mu_2 & \frac{\mu_5(1-z)}{2} & \frac{\mu_5(x+\i y)}{2} & - \frac{\mu_5(x-\i y)}{2} & \frac{\mu_5(1+z)}{2} \\
\mu_4 & \mu_3 & \mu_2 &\mu_1 & \frac{\mu_5(1-z)}{2} & \frac{\mu_5(x+\i y)}{2} & \frac{\mu_5(x-\i y)}{2} & \frac{\mu_5(1+z)}{2} \\
\frac{\mu_5(1-z)}{4} & \frac{\mu_5(1-z)}{4} & \frac{\mu_5(1+z)}{4} & \frac{\mu_5(1+z)}{4} & \mu_1+\mu_2 & 0 & 0 & \mu_3+\mu_4 \\
- \frac{\mu_5(x+\i y)}{4} & \frac{\mu_5(x+\i y)}{4} & \frac{\mu_5(x-\i y)}{4} & - \frac{\mu_5(x-\i y)}{4} & 0 & \mu_1-\mu_2 & \mu_4-\mu_3 & 0 \\
- \frac{\mu_5(x-\i y)}{4} & \frac{\mu_5(x-\i y)}{4} & \frac{\mu_5(x+\i y)}{4} & - \frac{\mu_5(x+\i y)}{4} & 0 & \mu_4-\mu_3 & \mu_1-\mu_2 & 0 \\
\frac{\mu_5(1+z)}{4} & \frac{\mu_5(1+z)}{4} & \frac{\mu_5(1-z)}{4} & \frac{\mu_5(1-z)}{4} & \mu_3+\mu_4 & 0 & 0 & \mu_1+\mu_2 \\
\end{bmatrix}.
$$
If $\mu_5=0$, the subspaces $\vect \{e_{1},e_{2},e_{3},e_{4}\}$ and $\vect \{e_{11},e_{12},e_{21},e_{22}\}$ are obviously stable under the convolution operator $T$ and this operator identifies to a direct sum $T_1 \oplus T_2 \co \ell^\infty_4\oplus\M_2 \to \ell^\infty_4 \oplus \M_2$. In this case, the entropy is given by
\begin{align*}
\MoveEqLeft
\H(8\mu_1 \oplus 8\mu_2 \oplus 8\mu_3 \oplus 8\mu_4 \oplus 0) \\         
&=-h[8\mu_1\log(8\mu_1) \oplus 8\mu_2\log(8\mu_2) \oplus 8\mu_3\log( 8\mu_3) \oplus 8\mu_4\log( 8\mu_4) \oplus 0]\\
&=-\mu_1\log( 8\mu_1) -\mu_2\log( 8\mu_2) -\mu_3\log( 8\mu_3) -\mu_4\log( 8\mu_4)  \\
&=-\mu_1\log\mu_1 -\mu_2\log \mu_2 -\mu_3\log\mu_3 -\mu_4\log\mu_4-\log_2 8.
\end{align*}
Observe that if we consider
\begin{equation}
\label{mu-DEP}
\mu_1=1-\frac{3t}{4},\quad \mu_2=\frac{t}{4},\quad \mu_3=\frac{t}{4},\quad \mu_4=\frac{t}{4}
\end{equation}
for some $0 \leq t \leq 1$ then the matrix of $T_2$ is given by 
$
\begin{bmatrix}
   1-\frac{t}{2}   & 0  & 0 &  \frac{t}{2}  \\
  0   &  t& 0  & 0  \\
  0   &  0  & t  & 0  \\
	\frac{t}{2}    & 0  & 0 &   1-\frac{t}{2} \\
\end{bmatrix}
$. Of course, the reader will notice with a small computation that it is the matrix of the depolarizing channel $
T_2 \co S^1_2 \to S^1_2, x \mapsto (1-t)x+t\tr(x)\frac{\I_2}{2}$ of \cite[Example 2.13 p.~18]{Pet08} with $n=2$ and with $1-t$ instead of $p$. 

Note that the classical channel $T_1 \co \ell^1_4 \to \ell^1_4$ is of course entanglement breaking. Using \cite[(7.25)]{CoT06} (see \eqref{classical-of-classical-convolution}), we know that
\begin{align*}
\MoveEqLeft
\chi(T_1)
=\C(T_1)
=\log_2(4)-\H(\mu_1,\mu_2,\mu_3,\mu_4) \\
&=\log_2(4)+\mu_1\log_2(\mu_1)+\mu_2\log_2(\mu_2)+\mu_3\log_2(\mu_3)+\mu_4\log_2(\mu_4) \\
&=2+\bigg(1-\frac{3t}{4}\bigg)\log_2\bigg(1-\frac{3t}{4}\bigg)+\frac{3t}{4}\log_2\bigg(\frac{t}{4}\bigg).
\end{align*}
Finally, the completely bounded norm of $T_1 \co \ell^1_4 \to \ell^p_4$ is equal to the norm of $T_1$ (see Corollary \ref{Cor-abelian-AVN}) and this last quantity is easy to compute.

By \cite[p.~84]{Key02}  \cite[p.~100]{Pet08} \cite[Theorem 20.4.3 p.~561]{Wil17}, the Holevo capacity of the depolarizing channel $T_2$ is given by
\begin{equation}
\label{}
\chi(T_2)
=\C(T_2)
=1-\H_{\min}(T_2)
=1+\bigg(1-\frac{t}{2}\bigg)\log_2\bigg(1-\frac{t}{2}\bigg)+\frac{t}{2}\log_2\bigg(\frac{t}{2}\bigg).
\end{equation}
It is equivalent to say that $\H_{\min}(T_2)=-\big(1-\frac{t}{2}\big)\log_2\big(1-\frac{t}{2}\big)-\frac{t}{2}\log_2\big(\frac{t}{2}\big)$ (for the non-normalized trace). By \cite[p.~84]{Key02} \cite[p.~597]{Wil17}, its entanglement-assisted classical capacity is equal to
\begin{equation}
\label{CEA-depo}
\C_{\EA}(T_2)
=2+\bigg(1-\frac{3t}{4}\bigg)\log_2\bigg(1-\frac{3t}{4}\bigg)+\frac{3t}{4}\log_2\bigg(\frac{t}{4}\bigg).
\end{equation}
Finally, the value of the opposite of the completely bounded minimal entropy is
\begin{equation}
\label{cbmin-depo}
-\H_{\cb,\min}(T_2)
=1+\bigg(1-\frac{3t}{4}\bigg)\log_2\bigg(1-\frac{3t}{4}\bigg)+\frac{3t}{4}\log_2\bigg(\frac{t}{4}\bigg)
\end{equation}
since we have by \cite[(5.6) p.~53]{DJKRB06} the formula
\begin{equation}
\label{cb-depo-norm}
\norm{T_2}_{\cb,S^1_2 \to S^p_2}
=\frac{1}{2^{1+\frac{1}{p}}} \big[(4-3t)^p+3 t^p\big]^{\frac{1}{p}}
=\frac{1}{2^{\frac{1}{p}-1}} \bigg(\Big(1-\frac{3t}{4}\Big)^p+\frac{3 t^p}{4^p}\bigg)^{\frac{1}{p}}.
\end{equation}
Note that $-\H_{\cb,\min}(T_2)$ is equal to $\Q^{(1)}(T_2)$ by \cite[p.~662]{Wil17}.

Our results allows us to recover some (sharp) inequalities provided by these formulas. Indeed, For example, consider the canonical maps $J \co \M_2 \to \VN(\mathbb{Q}_8)$, $x \mapsto 0 \oplus 0 \oplus 0 \oplus 0 \oplus x$ and $\E \co \VN(\mathbb{Q}_8) \to \M_2$. If $x \in S^1_2$, we have 
\begin{equation}
\label{inter-987}
\norm{J(x)}_{\L^1(\VN(\mathbb{Q}_8))} 
=\norm{0 \oplus 0 \oplus 0 \oplus 0 \oplus x}_{\L^1(\VN(\mathbb{Q}_8))} 
\ov{\eqref{Haar-Q8}}{=} \frac{1}{4}\norm{x}_{S^1_2}.
\end{equation}
Moreover, if $y$ belongs to $\L^\infty(\VN(\mathbb{Q}_8))$ we have 
$$
\norm{\E(y)}_{S^p_2}
=4^{\frac{1}{p}} \norm{0 \oplus 0 \oplus 0 \oplus 0 \oplus \E(y)}_{\L^p(\VN(\mathbb{Q}_8))} 
\leq 4^{\frac{1}{p}} \norm{y}_{\L^p(\VN(\mathbb{Q}_8))}.
$$ 
We deduce that
\begin{align*}
\MoveEqLeft
\norm{T_2}_{\cb,S^1_2 \to S^p_2}
=\norm{\E T J}_{\cb,S^1_2 \to S^p_2} \\
&\leq \norm{\E}_{\cb,\L^p(\VN(\mathbb{Q}_8)) \to S^p_2} \norm{T}_{\cb,\L^1(\VN(\mathbb{Q}_8)) \to \L^p(\VN(\mathbb{Q}_8))} \norm{J}_{\cb, S^1_2 \to \L^1(\VN(\mathbb{Q}_8))}  \\
&\leq 4^{\frac{1}{p}} \frac{1}{4} \norm{T}_{\cb,\L^1(\VN(\mathbb{Q}_8)) \to \L^p(\VN(\mathbb{Q}_8))} 
\ov{\eqref{norm-equality-QG}}{=} 4^{\frac{1}{p}-1} \bnorm{(8\mu_1,8\mu_2,8\mu_3,8\mu_4) \oplus 0}_{\L^p(\VN(\mathbb{Q}_8))} \\
&\ov{\eqref{Haar-Q8}}{=} 4^{\frac{1}{p}-1} \frac{8}{8^{\frac{1}{p}}} \bnorm{(\mu_1,\mu_2,\mu_3,\mu_4)}_{\ell^p_4} 
=\frac{1}{2^{\frac{1}{p}-1}} \bigg(\sum_{i=1}^4 \mu_i^p\bigg)^{\frac{1}{p}}            
\ov{\eqref{mu-DEP}}{=} \frac{1}{2^{\frac{1}{p}-1}} \bigg(\Big(1-\frac{3t}{4}\Big)^p+\frac{3 t^p}{4^p}\bigg)^{\frac{1}{p}}.
\end{align*}   
So by differentiation we recover a sharp estimate on $-\H_{\cb,\min}(T_2)$\footnote{\thefootnote. The author believes that we can obtain an equality since $\H_{\cb,\min}(T) \not=\H_{\cb,\min}(T_1)$}.

Using the methods of \cite{Sto07} \cite[Proposition 1]{FuW07}, we see that the knowledge of the classical capacity of $T_2$ is equivalent to the knowledge of the classical capacity of $T$. So we can compute the classical capacity of $T$. 

Observe that Theorem \ref{thm-carac-Fourier} combined with \cite[Theorem 1.1]{JuP15} gives the entanglement-assisted classical capacity $\C_{\EA}(T)$ of the Fourier multiplier $T$. We can recover this result with \eqref{CEA-depo}.

\begin{remark} \normalfont
By \cite[Theorem 2.2]{ChL23}, for any unital qubit channel $\Phi \co\M_2 \to \M_2$, there exist unitaries $u,v \in \M_2$ such that the Choi matrix of the map $\Phi_{u,v} \co \M_2 \to \M_2$, $x \mapsto v^*\Phi(uxu^*)v$ is  
\begin{equation*}
\label{qubit-abc}
\frac{1}{2} {\small 
\begin{bmatrix} 
\lambda_1 + \lambda_2 & 0 & 0 & \lambda_1-\lambda_2 \\
0 & \lambda_3 + \lambda_4 & \lambda_3 - \lambda_4 & 0 \\
0 & \lambda_3-\lambda_4 & \lambda_3+\lambda_4 & 0 \\
\lambda_1-\lambda_2 & 0 & 0 & \lambda_1+\lambda_2 \\ 
\end{bmatrix}}
\end{equation*}
where $\lambda_1 \geq \cdots \geq \lambda_4 \geq 0$ are the eigenvalues of the Choi matrix of $\Phi$, which satisfy $\lambda_1 + \lambda_2 + \lambda_3 + \lambda_4=2$. That means that the matrix of $\Phi_{u,v}$ with respect to the basis $(e_{11},e_{12},e_{21},e_{22})$ is given by
\begin{equation*}
\label{qubit-bis}
\frac{1}{2} {\small 
\begin{bmatrix} 
\lambda_1 + \lambda_2 & 0 & 0 & \lambda_3+\lambda_4 \\
0 & \lambda_1 - \lambda_2 & \lambda_3 - \lambda_4 & 0 \\
0 & \lambda_3-\lambda_4 & \lambda_1-\lambda_2 & 0 \\
 \lambda_3 + \lambda_4 & 0 & 0 & \lambda_1+\lambda_2 \\ 
\end{bmatrix}}.
\end{equation*}
Looking at the matrix of the convolution operator, we conclude that \textit{any} unital qubit channel is unitarily equivalent to a summand $T_1$ of a Fourier multiplier $T=T_1 \oplus T_2$ on the quaternion group $\mathbb{Q}_8$. We refer to \cite{RSW02} and \cite{Rus03} more information on unital qubit channels.

We plan to investigate finite groups with irreducible representations of higher degree in the future.
\end{remark}

\subsection{Fourier multipliers on the dihedral group $\mathbb{D}_4$ and transpose depolarizing channels}
\label{Sec-Dihedral-bis}

With \eqref{rep-dihedral}, we have a $*$-isomorphism $
\Phi \co \VN(\mathbb{D}_4)
\mapsto \mathbb{C} \oplus \mathbb{C} \oplus \mathbb{C} \oplus \mathbb{C} \oplus \M_2(\mathbb{C})$ defined by
\[
\begin{split}
&\Phi(1)
= 1 \oplus 1 \oplus 1 \oplus 1 \oplus 
\begin{bmatrix}
   1  &  0 \\
   0  & 1 \\
\end{bmatrix}, \qquad
\Phi(\lambda_s)
= 1 \oplus -1 \oplus -1 \oplus 1 \oplus 
\begin{bmatrix}
   0  & 1  \\
   1  &  0 \\
\end{bmatrix},\\
&
\Phi\big(\lambda_{r}\big)
= 1 \oplus 1 \oplus -1 \oplus -1 \oplus 
\begin{bmatrix}
  \i   &  0 \\
  0   & -\i \\
\end{bmatrix}, \quad
\Phi\big(\lambda_{sr}\big)
= 1 \oplus -1 \oplus 1 \oplus -1 \oplus 
\begin{bmatrix}
  0  & -\i  \\
  \i & 0  \\
\end{bmatrix}, \\
&\Phi\big(\lambda_{r^2}\big)
= 1 \oplus 1 \oplus 1 \oplus 1 \oplus 
\begin{bmatrix}
  -1  & 0 \\
   0  & -1  \\
\end{bmatrix},
\quad
\Phi(\lambda_{sr^2})
= 1 \oplus -1 \oplus -1 \oplus 1 \oplus 
\begin{bmatrix}
   0  & -1 \\
   -1 & 0  \\
\end{bmatrix}.\\
&\Phi(\lambda_{r^3})
= 1 \oplus 1 \oplus -1 \oplus -1 \oplus 
\begin{bmatrix}
  -\i  & 0  \\
   0  & \i  \\
\end{bmatrix},
\quad
\Phi(\lambda_{sr^3})
= 1 \oplus -1 \oplus 1 \oplus -1 \oplus 
\begin{bmatrix}
   0  & \i \\
   -\i & 0  \\
\end{bmatrix}.\\
\end{split} 
\]
We will use the basis $(e_1,e_2,e_3,e_4,e_{11},e_{12},e_{21},e_{22})$. Using the computer code of Section \ref{Appendix3}, we obtain
\[
\begin{split}
&\Delta(e_1)
=e_{1} \ot e_{1}+e_{2} \ot e_{2}+e_{3} \ot e_{3}+e_{4} \ot e_{4}+\frac{1}{2}\big(e_{11} \ot e_{22}+e_{12} \ot e_{21}+e_{21} \ot e_{12}+e_{22} \ot e_{11}\big), \\
&
\Delta(e_2)
=e_{1} \ot e_{2}+e_{2} \ot e_{1}+e_{3} \ot e_{4}+e_{4} \ot e_{3}+\frac{1}{2}(e_{11} \ot e_{22}-e_{12} \ot e_{21}-e_{21} \ot e_{12}+e_{22} \ot e_{11}), \\
&\Delta(e_3)
=e_{1} \ot e_{3}+e_{2} \ot e_{4}+e_{3} \ot e_{1}+e_{4} \ot e_{2}+\frac{1}{2}(e_{11} \ot e_{11}-e_{12} \ot e_{12}-e_{21} \ot e_{21}+e_{22} \ot e_{22}), \\
&\Delta(e_4)
=e_{1} \ot e_{4}+e_{2} \ot e_{3}+e_{3} \ot e_{2}+e_{4} \ot e_{1}+\frac{1}{2}(e_{11} \ot e_{11}+e_{12} \ot e_{12}+e_{21} \ot e_{21}+e_{22} \ot e_{22}), \\
&
\Delta(e_{11})
=e_{1} \ot e_{11}+e_{2} \ot e_{11}+e_{3} \ot e_{22}+e_{4} \ot e_{22}+e_{11} \ot e_{1}+e_{11} \ot e_{2}+e_{22} \ot e_{3}+e_{22} \ot e_{4},\\
&
\Delta(e_{12})
=e_{1} \ot e_{12}-e_{2} \ot e_{12}-e_{3} \ot e_{21}+e_{4} \ot e_{21}+e_{12} \ot e_{1}-e_{12} \ot e_{2}-e_{21} \ot e_{3}+e_{21} \ot e_{4}, \\
&
\Delta(e_{21})
=e_{1} \ot e_{21}-e_{2} \ot e_{21}-e_{3} \ot e_{12}+e_{4} \ot e_{12}-e_{12} \ot e_{3}+e_{12} \ot e_{4}+e_{21} \ot e_{1}-e_{21} \ot e_{2}, \\
&
\Delta(e_{22})
=e_{1} \ot e_{22}+e_{2} \ot e_{22}+e_{3} \ot e_{11}+e_{4} \ot e_{11}+e_{11} \ot e_{3}+e_{11} \ot e_{4}+e_{22} \ot e_{1}+e_{22} \ot e_{2}.
\end{split}
\]
If $f$ is a state as in \eqref{state-f}, we have $
f(e_1)=\mu_1, f(e_2)=\mu_2,f(e_3)=\mu_3,f(e_4)=\mu_4, f(e_{11})=\frac{\mu_5}{2}(1+z),f(e_{12})= \frac{\mu_5}{2}(x-\i y),f(e_{21})= \frac{\mu_5}{2}(x+\i y), f(e_{22})=\frac{\mu_5}{2}(1-z) 
$.  We deduce that{\small
\begin{align*}
\MoveEqLeft
(f \ot \Id) \circ \Delta(e_{1}) \\
&=(f \ot \Id)\Big(e_{1} \ot e_{1}+e_{2} \ot e_{2}+e_{3} \ot e_{3}+e_{4} \ot e_{4}+\frac{1}{2}\big(e_{11} \ot e_{22}+e_{12} \ot e_{21}+e_{21} \ot e_{12}+e_{22} \ot e_{11}\big)\Big) \\        
&=f(e_{1}) e_{1}+f(e_{2}) e_{2}+f(e_{3}) e_{3}+f(e_{4}) e_{4}+\frac{1}{2}\big(f(e_{11}) e_{22}+f(e_{12}) e_{21}+f(e_{21}) e_{12}+f(e_{22}) e_{11}\big) \\
&=\mu_1 e_{1}+\mu_2 e_{2}+\mu_3 e_{3}+\mu_4 e_{4}+\frac{\mu_5}{4}(1+z) e_{22}+\frac{\mu_5}{4}(x-\i y) e_{21}+\frac{\mu_5}{4}(x+\i y) e_{12}+\frac{\mu_5}{4}(1-z) e_{11}.
\end{align*} 
\\[-1cm]
\begin{align*}
\MoveEqLeft
(f \ot \Id) \circ \Delta(e_{2}) \\
&=f(e_{1}) e_{2}+f(e_{2}) e_{1}+f(e_{3}) e_{4}+f(e_{4}) e_{3}+\frac{1}{2}\big(f(e_{11}) e_{22}-f(e_{12}) e_{21}-f(e_{21}) e_{12}+f(e_{22}) e_{11}) \big)\\        
&=f(e_1)  e_{2}+f(e_2) e_{1}+f(e_3) e_{4}+f(e_4) e_{3}+\frac{1}{2}\big(f(e_{11}) e_{22}-f(e_{12}) e_{21}-f(e_{21}) e_{12}+f(e_{22}) e_{11} \big)\\
&=\mu_1 e_{2}+\mu_2 e_{1}+\mu_3 e_{4}+\mu_4 e_{3}+\frac{\mu_5}{4}(1+z) e_{22}-\frac{\mu_5}{4}(x-\i y) e_{21}-\frac{\mu_5}{4}(x+\i y) e_{12}+\frac{\mu_5}{4}(1-z) e_{11}.
\end{align*} 
\\[-1cm]
\begin{align*}
\MoveEqLeft
(f \ot \Id) \circ \Delta(e_{3}) \\
&=(f \ot \Id)\Big(e_{1} \ot e_{3}+e_{2} \ot e_{4}+e_{3} \ot e_{1}+e_{4} \ot e_{2}+\frac{1}{2}(e_{11} \ot e_{11}-e_{12} \ot e_{12}-e_{21} \ot e_{21}+e_{22} \ot e_{22})\Big)\\        
&= f(e_{1}) e_{3}+f(e_{2}) e_{4}+f(e_{3}) e_{1}+f(e_{4}) e_{2}+\frac{1}{2}\big(f(e_{11}) e_{11}-f(e_{12}) e_{12}-f(e_{21}) e_{21}+f(e_{22}) e_{22})\big)\\
&=\mu_1 e_{3}+\mu_2 e_{4}+\mu_3 e_{1}+\mu_4 e_{2}+\frac{\mu_5}{4}(1+z) e_{11}-\frac{\mu_5}{4}(x-\i y) e_{12}-\frac{\mu_5}{4}(x+\i y) e_{21}+\frac{\mu_5}{4}(1-z) e_{22}.
\end{align*} 
\\[-1cm]
\begin{align*}
\MoveEqLeft
(f \ot \Id) \circ \Delta(e_{4}) \\
&=(f \ot \Id)\Big(e_{1} \ot e_{4}+e_{2} \ot e_{3}+e_{3} \ot e_{2}+e_{4} \ot e_{1}+\frac{1}{2}(e_{11} \ot e_{11}+e_{12} \ot e_{12}+e_{21} \ot e_{21}+e_{22} \ot e_{22})\Big)\\        
&=f(e_{1}) e_{4}+f(e_{2}) e_{3}+f(e_{3}) e_{2}+f(e_{4}) e_{1}+\frac{1}{2}\big(f(e_{11}) e_{11}+f(e_{12}) \ot e_{12}+f(e_{21}) e_{21}+f(e_{22}) e_{22}\big)\\
&=\mu_1 e_{4}+\mu_2 e_{3}+\mu_3 e_{2}+\mu_4 e_{1}+\frac{\mu_5}{4}(1+z) e_{11}+\frac{\mu_5}{4}(x-\i y) e_{12}+\frac{\mu_5}{4}(x+\i y) e_{21}+\frac{\mu_5}{4}(1-z) e_{22}.
\end{align*} 
\\[-1cm]
\begin{align*}
\MoveEqLeft
(f \ot \Id) \circ \Delta(e_{11}) \\
&=(f \ot \Id)(e_{1} \ot e_{11}+e_{2} \ot e_{11}+e_{3} \ot e_{22}+e_{4} \ot e_{22}+e_{11} \ot e_{1}+e_{11} \ot e_{2}+e_{22} \ot e_{3}+e_{22} \ot e_{4}) \\        
&=f(e_{1}) e_{11}+f(e_{2}) e_{11}+f(e_{3}) e_{22}+f(e_{4}) e_{22}+f(e_{11}) e_{1}+f(e_{11}) e_{2}+f(e_{22}) e_{3}+f(e_{22}) e_{4} \\
&=\mu_1  e_{11}+\mu_2  e_{11}+\mu_3  e_{22}+\mu_4  e_{22}+\frac{\mu_5}{2}(1+z) e_{1}+\frac{\mu_5}{2}(1+z) e_{2}+\frac{\mu_5}{2}(1-z) e_{3}+\frac{\mu_5}{2}(1-z) e_{4} .
\end{align*} 
\\[-1cm]
\begin{align*}
\MoveEqLeft
(f \ot \Id) \circ \Delta(e_{12}) \\ 
&=(f \ot \Id)(e_{1} \ot e_{12}-e_{2} \ot e_{12}-e_{3} \ot e_{21}+e_{4} \ot e_{21}+e_{12} \ot e_{1}-e_{12} \ot e_{2}-e_{21} \ot e_{3}+e_{21} \ot e_{4}) \\        
&=f(e_{1}) e_{12}-f(e_{2}) e_{12}-f(e_{3}) e_{21}+f(e_{4}) e_{21}+f(e_{12}) e_{1}-f(e_{12}) e_{2}-f(e_{21}) e_{3}+f(e_{21}) e_{4}\\
&=\mu_1  e_{12}-\mu_2 e_{12}-\mu_3  e_{21}+\mu_4  e_{21}+\frac{\mu_5}{2}(x-\i y) e_{1}-\frac{\mu_5}{2}(x-\i y) e_{2}-\frac{\mu_5}{2}(x+\i y) e_{3}+\frac{\mu_5}{2}(x+\i y) e_{4} .
\end{align*} 
\\[-1cm]
\begin{align*}
\MoveEqLeft
(f \ot \Id) \circ \Delta(e_{21}) \\
&=(f \ot \Id)(e_{1} \ot e_{21}-e_{2} \ot e_{21}-e_{3} \ot e_{12}+e_{4} \ot e_{12}-e_{12} \ot e_{3}+e_{12} \ot e_{4}+e_{21} \ot e_{1}-e_{21} \ot e_{2})\\        
&=f(e_{1}) e_{21}-f(e_{2}) e_{21}-f(e_{3}) e_{12}+f(e_{4}) e_{12}-f(e_{12}) e_{3}+f(e_{12}) e_{4}+f(e_{21}) e_{1}-f(e_{21}) e_{2}\\
&=\mu_1  e_{21}-\mu_2 e_{21}-\mu_3 e_{12}+\mu_4 e_{12}-\frac{\mu_5}{2}(x-\i y) e_{3}+\frac{\mu_5}{2}(x-\i y) e_{4}+\frac{\mu_5}{2}(x+\i y) e_{1}-\frac{\mu_5}{2}(x+\i y) e_{2} .
\end{align*} 
\\[-1cm]
\begin{align*}
\MoveEqLeft
(f \ot \Id) \circ \Delta(e_{22}) \\
&=(f \ot \Id)\big(e_{1} \ot e_{22}+e_{2} \ot e_{22}+e_{3} \ot e_{11}+e_{4} \ot e_{11}+e_{11} \ot e_{3}+e_{11} \ot e_{4}+e_{22} \ot e_{1}+e_{22} \ot e_{2}\big)
\\
&=f(e_{1}) e_{22}+f(e_{2}) e_{22}+f(e_{3}) e_{11}+f(e_{4}) e_{11}+f(e_{11}) e_{3}+f(e_{11}) e_{4}+f(e_{22}) e_{1}+f(e_{22}) e_{2}\\       
&=\mu_1  e_{22}+\mu_2 e_{22}+\mu_3 e_{11}+\mu_4 e_{11}+\frac{\mu_5}{2}(1+z) e_{3}+\frac{\mu_5}{2}(1+z) e_{4}+\frac{\mu_5}{2}(1-z)  e_{1}+\frac{\mu_5}{2}(1-z)  e_{2} .
\end{align*} 
}
With respect to the basis $(e_1,e_2,e_3,e_4,e_{11},e_{12},e_{21},e_{22})$ of $\L^\infty(\VN(\mathbb{D}_4))$, we deduce that the matrix of the convolution operator $T \co \L^\infty(\VN(\mathbb{D}_4)) \mapsto \L^\infty(\VN(\mathbb{D}_4))$, $y \to f*y$ is 
$$
\begin{bmatrix}
\mu_1 & \mu_2 & \mu_3 & \mu_4 & \frac{\mu_5(1+z)}{2} & \frac{\mu_5(x-\i y)}{2} & \frac{\mu_5(x+\i y)}{2} & \frac{\mu_5(1-z)}{2}\\
\mu_2 & \mu_1 & \mu_4 & \mu_3  & \frac{\mu_5(1+z)}{2} & -\frac{\mu_5(x-\i y)}{2} & -\frac{\mu_5(x+\i y)}{2} & \frac{\mu_5(1-z)}{2} \\
\mu_3 & \mu_4 & \mu_1 & \mu_2 & \frac{\mu_5(1-z)}{2} & -\frac{\mu_5(x+\i y)}{2}  & -\frac{\mu_5(x-\i y)}{2} & \frac{\mu_5(1+z)}{2} \\
\mu_4 & \mu_3 & \mu_2 & \mu_1 & \frac{\mu_5(1-z)}{2} & \frac{\mu_5(x+\i y)}{2} & \frac{\mu_5(x-\i y)}{2} & \frac{\mu_5(1+z)}{2} \\
\frac{\mu_5(1-z)}{4} & \frac{\mu_5(1-z)}{4} & \frac{\mu_5(1+z)}{4} & \frac{\mu_5(1+z)}{4} & \mu_1+\mu_2 & 0 & 0 & \mu_3+\mu_4 \\
\frac{\mu_5(x+\i y)}{4} & -\frac{\mu_5(x+\i y)}{4} & -\frac{\mu_5(x-\i y)}{4} & \frac{\mu_5(x-\i y)}{4} & 0 & \mu_1-\mu_2 & \mu_4-\mu_3& 0 \\
 -\frac{\mu_5(x-\i y)}{4} & -\frac{\mu_5(x-\i y)}{4} & -\frac{\mu_5(x+\i y)}{4} & \frac{\mu_5(x+\i y)}{4} & 0 & \mu_4-\mu_3 & \mu_1-\mu_2 & 0 \\
\frac{\mu_5(1+z)}{4} & \frac{\mu_5(1+z)}{4} & \frac{\mu_5(1-z)}{4} & \frac{\mu_5(1-z)}{4} & \mu_3+\mu_4 & 0 & 0 & \mu_1+\mu_2 \\
\end{bmatrix}.
$$
Observe that if $\mu_1=\frac{1+t}{4}$, $\mu_2=\frac{1+t}{4}$, $\mu_3=\frac{1-3t}{4}$, $\mu_4=\frac{1+t}{4}$ then the matrix of $T_2$ is given by 
$
\begin{bmatrix}
   \frac{1+t}{2}   & 0  & 0 &  \frac{1-t}{2}  \\
  0   &  0& t  & 0  \\
  0   &  t  & 0  & 0  \\
	\frac{1-t}{2}    & 0  & 0 &   \frac{1+t}{2} \\
\end{bmatrix}.
$
An easy computation that it is the matrix of the transpose depolarizing channel $T_2 \co S^1_2 \to S^1_2$, $x \mapsto tx^T+(1-t)\tr(x)\frac{\I_2}{2}$ of \cite[Example 2.18 p.~21]{Pet08} with $n=2$.

The Werner-Holevo channel $T_{\WH} \co S^1_2 \to S^1_2$ was investigated in \cite{WeH02} and is a particular case with $t=-1$. It is defined by $T_{\WH}(x) = (\tr x) \I_2 - x^T $, see also \cite[p.~21]{Pet08}. It is known \cite[p.~54]{DJKRB06} that 
$$
\norm{T_{\WH}}_{\cb, S^1_2 \to S^p_2} 
=2^{1- \frac{1}{p}}. 
$$
In particular, we have $\H_{\cb,\min}(T_{\WH}) = -1$.

With the same method than the one of Section \ref{Sec-Dihedral-bis}, we can recover some information. For example, with $J \co \M_2 \to \VN(\mathbb{Q}_8)$, $x \mapsto 0 \oplus 0 \oplus 0 \oplus 0 \oplus x$ and $\E \co \VN(\mathbb{Q}_8) \to \M_2$, we obtain the estimate
\begin{align*}
\MoveEqLeft
\norm{T_2}_{\cb,S^1_2 \to S^p_2}
=\norm{\E T J}_{\cb,S^1_2 \to S^p_2} \\
&\leq \norm{\E}_{\cb,\L^p(\VN(\mathbb{Q}_8)) \to S^p_2} \norm{T}_{\cb,\L^1(\VN(\mathbb{Q}_8)) \to \L^p(\VN(\mathbb{Q}_8))} \norm{J}_{\cb, S^1_2 \to \L^1(\VN(\mathbb{Q}_8))}  \\
&\leq 4^{\frac{1}{p}} \frac{1}{4} \norm{T}_{\cb,\L^1(\VN(\mathbb{Q}_8)) \to \L^p(\VN(\mathbb{Q}_8))} 
\ov{\eqref{norm-equality-QG}}{=} 4^{\frac{1}{p}-1} \bnorm{(8\mu_1,8\mu_2,8\mu_3,8\mu_4) \oplus 0}_{\L^p(\VN(\mathbb{Q}_8))} \\
&\ov{\eqref{Haar-Q8}}{=} 4^{\frac{1}{p}-1} \frac{8}{8^{\frac{1}{p}}} \bnorm{(\mu_1,\mu_2,\mu_3,\mu_4)}_{\ell^p_4} 
=\frac{1}{2^{\frac{1}{p}-1}} \bigg(\sum_{i=1}^4 \mu_i^p\bigg)^{\frac{1}{p}}            
\ov{\eqref{mu-DEP}}{=} \frac{1}{2^{\frac{1}{p}-1}}
=2^{1- \frac{1}{p}}.
\end{align*}   
We refer to \cite{DHS04} \cite{DHS06} \cite{FHMV04} for more information on this quantum channel.

%

\subsection{Fourier multipliers on the Kac-Paljutkin quantum group $\mathbb{KP}$}
\label{Sec-Kac-Paljukin}

Now, we describe the Kac-Paljutkin quantum group $\KP$ introduced in \cite{KaP66} (see also \cite{FrG06} for some information). This is the smallest finite quantum group that is neither commutative nor cocommutative. We consider the finite-dimensional von Neumann algebra $\L^\infty(\KP) \ov{\mathrm{def}}{=} \mathbb{C} \oplus \mathbb{C} \oplus \mathbb{C} \oplus \mathbb{C} \oplus \M_2(\mathbb{C})$. We will use the basis $(e_1,e_2,e_3,e_4,e_{11},e_{12},e_{21},e_{22})$\footnote{\thefootnote. Here we have $e_3=0 \oplus 0 \oplus 1 \oplus 0 \oplus \begin{bmatrix}
  0   & 0  \\
  0   & 0  \\
		\end{bmatrix}$ and $e_{21}=0 \oplus 0 \oplus 0 \oplus 0 \oplus \begin{bmatrix}
  0   & 0  \\
  1   & 0  \\
		\end{bmatrix}$.}. The formulas
\[
\begin{split}
&\Delta(e_1)
=e_1 \ot e_1+e_2 \ot e_2+e_3 \ot e_3+e_4 \ot e_4 
+\frac{1}{2}(e_{11} \ot e_{11}+e_{12} \ot e_{12}+e_{21} \ot e_{21}+e_{22} \ot e_{22}) \\
&
\Delta(e_2)
=e_1 \ot e_2+e_2 \ot e_1+e_3 \ot e_4+e_4 \ot e_3
+\frac{1}{2}(e_{11} \ot e_{22}+e_{22} \ot e_{11}+\i e_{21} \ot e_{12}-\i e_{12} \ot e_{21}) \\
&
\Delta(e_3)
=e_1 \ot e_3+e_3 \ot e_1+e_2 \ot e_4+e_4 \ot e_2
+\frac{1}{2}(e_{11} \ot e_{22}+e_{22} \ot e_{11}-\i e_{21} \ot e_{12}+\i e_{12} \ot e_{21}) \\
&
\Delta(e_4)
=e_1 \ot e_4+e_4 \ot e_1+e_2 \ot e_3+e_3 \ot e_2
+\frac{1}{2}(e_{11} \ot e_{11}+e_{22} \ot e_{22}-e_{12} \ot e_{12}-e_{21} \ot e_{21}) \\
&
\Delta(e_{11})
=e_1 \ot e_{11}+e_{11} \ot e_1+e_2 \ot e_{22}+e_{22} \ot e_2
+e_3 \ot e_{22}+e_{22} \ot e_3+e_4 \ot e_{11}+ e_{11} \ot e_4 \\
&
\Delta(e_{12})
=e_1 \ot e_{12}+e_{12} \ot e_1+\i e_2 \ot e_{21}-\i e_{21} \ot e_2
-\i e_3 \ot e_{21}+\i e_{21} \ot e_3-e_4 \ot e_{12}-e_{12} \ot e_4 \\
&
\Delta(e_{21})
=e_1 \ot e_{21}+e_{21} \ot e_1-\i e_2 \ot e_{12}+\i e_{12} \ot e_2
+\i e_3 \ot e_{12}-\i e_{12} \ot e_3-e_4 \ot e_{21}-e_{21} \ot e_4 \\
&
\Delta(e_{22})
=e_1 \ot e_{22}+e_{22} \ot e_1+ e_2 \ot e_{11}+e_{11} \ot e_2
+e_3 \ot e_{11}+e_{11} \ot e_3+e_4 \ot e_{22}+e_{22} \ot e_4
\end{split} 
\]
define the coproduct $\Delta \co \L^\infty(\KP)\to \L^\infty(\KP) \ot \L^\infty(\KP)$. The Haar state $\tau$ on $\L^\infty(\KP)$ is given by
\begin{equation}
\label{}
\tau\bigg(x_1 \oplus x_2 \oplus x_3 \oplus x_3 \oplus x_4 \oplus \begin{bmatrix}
  a_{11}   & a_{12}  \\
  a_{12}   & a_{22} \\
\end{bmatrix}\bigg)
\ov{\mathrm{def}}{=} \frac{1}{8}(x_1 + x_2 + x_3 + x_3 + x_4+2a_{11}+2a_{22}).
\end{equation}
The antipode $R \co \L^\infty(\KP) \to \L^\infty(\KP)$ is given by 
$$
R(e_k)
\ov{\mathrm{def}}{=} e_k,\quad 
R(e_{ij}) 
\ov{\mathrm{def}}{=} e_{ji}
$$
where $1 \leq k \leq 4$ and $1 \leq i,j \leq 2$. We refer to \cite{EnV96} for another construction of this finite quantum group.

We can consider the convolution operator $T \co \L^\infty(\KP) \mapsto \L^\infty(\mathbb{KP})$, $g \to f*g$ defined in \eqref{Def-*1-infty}. By \cite[p.~30]{FrG06}\footnote{\thefootnote. We warn the reader that for obscure reasons the authors of \cite{FrG06} use another basis which did not simplify anything. Moreover, we use the antipode in our bracket. So we replaced $y$ by $-y$.}, with respect to the basis $(e_1,e_2,e_3,e_4,e_{11},e_{12},e_{21},e_{22})$ of $\L^\infty(\KP)$, its matrix is given by 
$$
\begin{bmatrix}
\mu_1 & \mu_2 & \mu_3 & \mu_4 & \frac{1+z}{2}\mu_5 & \frac{x-\i y}{2}\mu_5&\frac{x+\i y}{2}\mu_5&\frac{1-z}{2}\mu_5 \\
\mu_2 & \mu_1 & \mu_4 & \mu_3 & \frac{1-z}{2}\mu_5 & \frac{-\i x+ y}{2}\mu_5& \frac{ \i x +y}{2}\mu_5& \frac{1+z}{2}\mu_5 \\
\mu_3 & \mu_4 & \mu_1 & \mu_2 & \frac{1-z}{2}\mu_5&\frac{\i x-y}{2}\mu_5 & \frac{-\i x-y}{2}\mu_5 & \frac{1+z}{2}\mu_5 \\
\mu_4 & \mu_3 & \mu_2 & \mu_1 & \frac{1+z}{2} \mu_5&\frac{-x+\i y}{2}\mu_5& \frac{-x-\i y}{2}\mu_5 & \frac{1-z}{2}\mu_5 \\
\frac{1+z}{4}\mu_5   & \frac{1-z}{4}\mu_5     & \frac{1-z}{4}\mu_5     & \frac{1+z}{4}\mu_5&\mu_1+\mu_4 & 0 & 0 & \mu_2+\mu_3\\
\frac{x-\i y}{4}\mu_5&\frac{\i x-y}{4}\mu_5   & \frac{-\i x+y}{4}\mu_5 &\frac{-x+\i y}{4}\mu_5 & 0 & \mu_1-\mu_4 &-\i\mu_2+\i\mu_3 & 0\\
\frac{x+\i y}{4}\mu_5& \frac{-\i x-y}{4}\mu_5 & \frac{\i x+y}{4}\mu_5  & \frac{-x-\i y}{4}\mu_5 & 0 & \i\mu_2-\i\mu_3 & \mu_1-\mu_4 & 0\\
\frac{1-z}{4}\mu_5	 & \frac{1+z}{4}\mu_5     & \frac{1+z}{4}\mu_5     & \frac{1-z}{2}\mu_5 & \mu_2+\mu_3 & 0 & 0 & \mu_1+\mu_4\\
\end{bmatrix}.
$$
If $\mu_5=0$, the subspaces $\vect \{e_{1},e_{2},e_{3},e_{4}\}$ and $\vect \{e_{11},e_{12},e_{21},e_{22}\}$ are obviously stable under the convolution operator $T$ and this operator identifies to a direct sum $T_1 \oplus T_2 \co \ell^\infty_4 \oplus \M_2 \to \ell^\infty_4 \oplus \M_2$.

\begin{remark} \normalfont
A similar analysis could be done for the Sekine quantum groups \cite{Sek96}.
\end{remark}

\section{The principle of transference in noncommutative analysis}

The idea of this section is to start with a convolution operator $T_f \co \L^1(\QG) \to \L^1(\QG)$, $x \mapsto f * x$ on a quantum group $\QG$ and to use a right action $\alpha$ of $\QG$ on a von Neumann algebra $\cal{M}$ (e.g. the matrix algebra $\M_n$ or a direct sum $\M_{n_1} \oplus \cdots \oplus \M_{n_K}$ of matrix algebras) to construct a quantum channel $T_{f,\alpha} \co \L^1(\cal{M}) \to \L^1(\cal{M})$. These maps are related by the intertwining relation \eqref{commute-transference} and we can often obtain some information on the channel $T_{f,\alpha}$ from the convolution operator $T_f$.

\subsection{Actions of locally compact quantum groups}
\label{Sec-actions-Transference}


\paragraph{Group actions}  Recall the following notion of \cite[Definition I.1]{Eno77} and \cite[Definition 1.1]{Vae01}. We also refer to \cite{DeC17} for more information on actions of \textit{compact} quantum groups. 

\begin{defi}
\label{Defi-action}
A right action of a locally compact quantum group $\QG$ on a von Neumann algebra $\cal{M}$ is a normal unital injective $*$-homomorphism $\alpha \co \cal{M} \to \cal{M} \otvn \L^\infty(\QG)$ such that
\begin{equation}
\label{Def-corep}
(\alpha \ot \Id) \circ \alpha
=(\Id \ot \Delta) \circ \alpha.
\end{equation}
\end{defi}

Similarly, a left action of a locally compact quantum group $\QG$ on a von Neumann algebra $\cal{M}$ is a normal unital injective $*$-homomorphism $\beta \co \cal{M} \to \L^\infty(\QG) \otvn \cal{M}$ such that
\begin{equation}
\label{Def-corep-left}
(\Id \ot \beta) \circ \beta
=(\Delta \ot \Id) \circ \beta.
\end{equation}
Note that we can define verbatim the notion of an action of a quantum hypergroup and a part of the results of this section generalize to this setting. Unfortunately, we does not know interesting examples of actions of quantum hypergroups which are not quantum groups.

\begin{defi}
We say that two right actions $\alpha_1 \co \cal{M} \to \cal{M} \otvn \L^\infty(\QG)$ and $\alpha_2 \co \cal{N} \to \cal{N} \otvn \L^\infty(\QG)$ are conjugate if there exists a $*$-isomorphism $\theta \co \cal{M} \to \cal{N}$ such that
\begin{equation}
\label{conjugate-actions}
\alpha_2 \circ \theta
=(\theta \ot \Id) \circ \alpha_1.
\end{equation}
The conjugacy class of $\alpha$ is denoted $[\alpha]$. 
\end{defi}

\begin{remark} \normalfont
\label{lien-actions-droite-gauche}
Let us recall that there is a simple passage from right to left actions (and conversely). Indeed, if $\flip \co \cal{M} \otvn \L^\infty(\QG) \to \L^\infty(\QG) \otvn \cal{M}$, $x \ot y \mapsto y \ot x$ denotes the flip map it is easy to check that a map $\alpha \co \cal{M} \to \cal{M} \otvn \L^\infty(\QG)$ is a right action of $\QG$ on $\cal{M}$ if and only if $\beta \ov{\mathrm{def}}{=} \flip \circ \alpha \co \cal{M} \to \L^\infty(\QG) \otvn \cal{M}$ is a left action of the opposite quantum group $\QG^\op$ \cite[Section 4]{KuV03}, i.e.~the locally compact quantum group defined by $\L^\infty(\QG^\op) \ov{\mathrm{def}}{=} \L^\infty(\QG)$ and the reversed coproduct $\Delta_{\QG^\op} \ov{\mathrm{def}}{=}\flip \circ \Delta_\QG$ where $\flip$ denotes again a suitable flip map. This allows simple translation of many results about right actions to left actions.
\end{remark}

\begin{remark} \normalfont
\label{Rem-classical-action}
\label{Ex-classical-action}
Let $\cal{M}$ be a von Neumann algebra and $G$ be a locally compact group. Suppose that we have a group homomorphism $\Phi \co G \to \Aut(\cal{M})$ such that for each $x \in \cal{M}$ the function $G \to \cal{M}$, $s \mapsto \Phi_s(x)$ is continuous if $\cal{M}$ is endowed with the weak* topology. This homomorphism can be translated to a map $\alpha \co \cal{M} \to \cal{M} \otvn \L^\infty(G)=\L^\infty(G,\cal{M})$, $x \mapsto \alpha(x)$ where 
\begin{equation}
\label{classical-useful}
\alpha(x)(s) 
\ov{\mathrm{def}}{=} \Phi_s(x).
\end{equation}
By \cite[p.~263]{Str81} or \cite[Proposition I.3]{Eno77}, the map $\alpha$ is a right action of the group $G$ on the von Neumann algebra $\cal{M}$. A reverse procedure is described in \cite[p.~265]{Str81} and \cite[Proposition I.3]{Eno77} (see also \cite[Proposition 2.1]{NaT79} and \cite[Proposition 15.2.2 p.~439]{Tus22}). So for classical locally compact groups, there is a bijective correspondence between actions of the group $G$ on von Neumann algebras and actions of the quantum group $(\L^\infty(G),\Delta)$ on von Neumann algebras.

In this case, two ergodic actions $\Phi \co G \to \Aut(\cal{M})$ and $\Psi \co G \to \Aut(\cal{M})$ are conjugate if there is a $*$-isomorphism $\theta \co \cal{M} \to \cal{N}$ such that
\begin{equation}
\label{}
\theta \circ \Phi_s \circ \theta^{-1} 
= \Psi_s, \quad s \in G.
\end{equation}
\end{remark}


Recall \cite[Definition 6.1.1 p.~93]{Vae03} 
that a measurable unitary $2$-cocycle on a locally compact quantum group $\QG$ is a unitary element $\Omega \in \L^{\infty}(\QG) \otvn \L^{\infty}(\QG)$ such that
\begin{equation}
\label{equation-2-cocycle-quantum}
(\Omega \ot 1)(\Delta \ot \Id)(\Omega)
=(1 \ot \Omega)(\Id \ot \Delta)(\Omega).
\end{equation}

\begin{example} \normalfont
\label{2-cocycle-locally-compact}
If $\QG$ is a locally compact group $G$ then a measurable unitary 2-cocycle identifies to a measurable map $\sigma \co G \times G \to \T$ such that 
\begin{equation}
\label{equation-2-cocycle}
\sigma(s,t)\sigma(st,r)
=\sigma(s,tr)\sigma(t,r), \quad s,t,r \in G.
\end{equation}
For example, the 2-cocycles of the group $G=\R^n$ are described in \cite[Theorem 7.38 p.~276]{Var85}.
\end{example}

Given a $2$-cocycle $\Omega$ on a locally compact quantum group $\mathbb{G}$ we can introduce (see \cite{DMN22}) the right and left twisted co-multiplications with respect to $\Omega$
$$
\begin{array}{rcclcrccl}
{}_{\Omega}\Delta \co &\L^{\infty}(\QG)& \longrightarrow & \L^{\infty}(\QG) \otvn \L^{\infty}(\QG) &,\ & \Delta_{\Omega^*} \co & \L^{\infty}(\QG)& \longrightarrow & \L^{\infty}(\QG) \otvn \L^{\infty}(\QG)\\
&x& \longmapsto & \Omega\cdot \Delta(x) & & &x& \longmapsto & \Delta(x)\cdot \Omega^*
\end{array}.
$$
Let $\QG$ be a compact quantum group and $\Omega$ be a $2$-cocycle on $\QG$. A measurable unitary $\Omega$-representation of $\QG$ on a complex Hilbert space $H$ is a unitary element $u \in \B(H)\otvn \L^{\infty}(\QG)$ such that $
(\Id \ot{}_{\Omega}\Delta)(u)
=u_{12}u_{13}$.  
A measurable unitary $\Omega^*$-representation on $H$  is a unitary element $u \in \B(H) \otvn \L^{\infty}(\QG)$ satisfying $(\Id \ot \Delta_{\Omega^*})(u)=u_{12}u_{13}$.	


\begin{example}[Projective representations] \normalfont
\label{projective-quantum-groups}
Following \cite[Definition 3.1.1]{DMN22}, we say that a right action $\alpha \co \B(H) \rightarrow \B(H) \otvn \L^{\infty}(\QG)$ of a compact quantum group $\QG$ on the von Neumann algebra $\B(H)$, where $H$ is a complex Hilbert space, is a measurable right projective representation. In this situation, by the combination of \cite[Theorem 3.1.15]{DMN22} and \cite[Proposition 3.1.19]{DMN22} there exists a $2$-cocycle $\Omega \in \L^{\infty}(\QG) \otvn \L^{\infty}(\QG)$ and a unitary $\Omega^*$-representation  $u \in \B(H) \overline{\otimes} \L^{\infty}(\QG)$ such that 
$$
\alpha(x) 
= u(x \ot 1)u^*, \quad x \in \B(H).
$$
Conversely, if $u$ is a unitary $\Omega^*$-representation, we can construct a measurable right coaction
\begin{equation}
\label{eq.deltau2}
\alpha_u \co \B(H) \to  \B(H) \otvn \L^{\infty}(\QG), \qquad \alpha_u(a) = u(a \ot 1)u^*,\qquad a\in \B(H),
\end{equation}
where the coaction property follows immediately from the $\Omega^*$-representation property of $u$. Similarly, any unitary $\Omega$-representation $u$ provides a measurable left coaction
\begin{equation}
\label{eq.deltau}
\beta_u \co \B(H) \to \L^{\infty}(\QG) \otvn \B(H), \qquad 
\beta_u(a) 
= \flip (u^*(a \ot 1)u),\qquad a \in \B(H).
\end{equation}


\end{example}

\begin{example} \normalfont
\label{rep-action}
A measurable projective unitary representation of a locally compact group $G$ on a complex Hilbert space $H$ is a measurable $u \co G \to \B(H)$ such that
\begin{equation}
\label{projective-rep}
u(s)u(t)
=\sigma(s,t)u(st), \quad s,t \in G
\end{equation}
for some measurable 2-cocycle $\sigma \co G \times G \to \T$. This concept originates from Schur's works \cite{Sch04}, \cite{Sch07}, and \cite{Sch11} on finite groups. We refer to \cite[pp.~166-172]{Cur99} for a concise historical summary of Schur's work. Generalizing slightly \cite[p.~238]{Tak03}, it is easy to check
\footnote{\thefootnote. For any $s,t \in G$ and any $x \in \B(H)$, we have
$$
\tilde{\alpha}_s(\tilde{\alpha}_t(x))
=u(s)u(t)xu(t)^*u(s)^*
=u(s)u(t)x (u(s)u(t))^*
\ov{\eqref{projective-rep}}{=} \sigma(s,t)u(st) x \ovl{\sigma(s,t)}u(st)^*
=u(st) x u(st)^*
=\tilde{\alpha}_{st}(x).
$$} 
that the map $\tilde{\alpha} \co G \to \Aut(\B(H))$, $s \mapsto u(s)xu(s)^*$ is an action of the group $G$ on the von Neumann algebra $\B(H)$. Combining with Example \ref{Ex-classical-action}, we can obtain a lot of actions on matrix algebras $\M_n$ by considering finite-dimensional projective unitary representations of groups.
We refer to the books \cite{Kar85} and \cite{Mor17} for more information on projective unitary representations and the classical papers \cite{Wig39} and \cite{Bar54}. 
\end{example}

\begin{example} \normalfont
Consider a right action $\alpha \co \cal{M} \to \cal{M} \otvn \L^\infty(\QG)$ of a locally compact quantum group $\QG$ on a von Neumann algebra $\cal{M}$. Following \cite[Definition 2.1 p.~434]{Vae01}, the crossed product $\cal{M} \rtimes_\alpha \QG$ of $\cal{M}$ by the action $\alpha$ of $\QG$ is by definition the von Neumann subalgebra of $\cal{M} \otvn \B(\L^2(\QG))$ generated by $\alpha(\cal{M})$ and $1 \ot \L^\infty(\hat{\QG})$. By \cite[Proposition 2.2 p.~434]{Vae01}, this crossed product admits a distinguished action of the locally compact quantum group $\hat{\QG}^\op$, called the dual action. This action $\hat{\alpha}\co \cal{M}\rtimes_\alpha \QG \to( \cal{M} \rtimes_\alpha \QG)\otvn \L^\infty(\hat{\QG}^\op)$ is defined by
\[
\begin{aligned}
\hat{\alpha}(\alpha(x))&=\alpha(x) \ot 1,&\quad{x} \in \L^\infty(\QG),\\
\hat{\alpha}(1 \ot y)&= 1 \ot \Delta_{\hat{\QG}^\op}(y),&\quad{y} \in \L^\infty(\hat{\QG}).
\end{aligned}
\]
\end{example}

\begin{example} \normalfont
The conjugation action of $\SU(2)$ on the matrix algebra $\M_2$ induces an action of the group $\SO(3)$ on $\M_2$. Note that the special orthogonal group $\SO(3)$ acts on the von Neumann algebra $\M_2$. Indeed, by \cite[Theorem 1.1]{Sol10} this compact group is even the quantum automorphism group of $(\M_2,\tr)$. More generally, if $0 < q \leq 1$ and if we consider the faithful state\footnote{\thefootnote. Up to conjugation by a unitary $2 \times 2$ matrix these are all the faithful states on the matrix algebra $\M_2$.} $\psi_q \co [a_{ij}] \to \frac{1}{1+q^2}(a_{11}+q^2 a_{22})$ on the matrix algebra $\M_2$, then Soltan has proved \cite[Theorem 1.1]{Sol10} that the quantum automorphism group of $(\M_2,\psi_q)$ is isomorphic to the quantum group $\SO_q(3)$ of Podle\'s. See also \cite{Mat21} for a related paper.
%
%
%
%
%
\end{example}

If $\omega \in \L^\infty(\QG)_*$ and if $\alpha \co \cal{M} \to \cal{M} \otvn \L^\infty(\QG)$ is a right action of a locally compact group $\QG$ on a von Neumann algebra $\cal{M}$, we introduce the maps
\begin{equation}
\label{convol-def-bis}
T_\omega
\ov{\mathrm{def}}{=} (\Id \ot \omega) \circ \Delta \co \L^\infty(\QG) \to \L^\infty(\QG)
\quad \text{and} \quad
T_{\omega,\alpha}
\ov{\mathrm{def}}{=} (\Id \ot \omega) \circ \alpha \co \cal{M} \to \cal{M}
\end{equation}
and if $\beta$ is a left action
\begin{equation}
\label{convol-def-left}
R_\omega
\ov{\mathrm{def}}{=} (\omega \ot \Id) \circ \Delta \co \L^\infty(\QG) \to \L^\infty(\QG)
\quad \text{and} \quad
R_{\omega,\beta}
\ov{\mathrm{def}}{=} (\omega \ot \Id) \circ \beta \co \cal{M} \to \cal{M}.
\end{equation}
If $\omega$ is a normal state, note that the maps $T_{\omega,\alpha}$ and $R_{\omega,\beta}$ are unital and completely positive. If $\QG$ is compact, it is obvious with \eqref{Def-haar-state} that $T_\omega$ preserves the Haar state $h_{\QG}$, i.e.~$h_{\QG} \circ T_\omega = h_{\QG}$.


We have the following relation between these maps. 

\begin{prop}
\label{prop-commuting}
Let $\QG$ be a locally compact quantum group and $\cal{M}$ be a von Neumann algebra. Let $\alpha \co \cal{M} \to \cal{M} \otvn \L^\infty(\QG)$ be a right action and $\beta \co \cal{M} \to \L^\infty(\QG) \otvn \cal{M}$ be a left action. If $\omega \in \L^\infty(\QG)_*$, we have
\begin{equation}
\label{commute-transference}
\alpha \circ T_{\omega,\alpha}
=(\Id_{\cal{M}} \ot T_\omega) \circ \alpha
\quad \text{and} \quad
\beta \circ R_{\omega,\beta}
=(R_\omega \ot \Id_{\cal{M}}) \circ \beta.
\end{equation}
\end{prop}

\begin{proof}
We have
\begin{align*}
\MoveEqLeft
(\Id \ot T_\omega) \circ \alpha           
\ov{\eqref{convol-def-bis}}{=} (\Id \ot \Id \ot \omega)(\Id \ot \Delta) \circ \alpha
\ov{\eqref{Def-corep}}{=} (\Id \ot \Id \ot \omega)(\alpha \ot \Id) \circ \alpha \\
&=(\alpha \ot \omega) \circ \alpha
=\alpha \circ (\Id \ot \omega) \circ \alpha 
\ov{\eqref{convol-def-bis}}{=} \alpha \circ T_{\omega,\alpha}.
\end{align*}
and
\begin{align*}
\MoveEqLeft
(R_\omega \ot \Id) \circ \beta           
\ov{\eqref{convol-def-left}}{=} (\omega \ot \Id \ot \Id)(\Delta \ot \Id) \circ \beta
\ov{\eqref{Def-corep-left}}{=} (\omega \ot \Id \ot \Id)(\Id \ot \beta) \circ \beta \\
&=(\omega \ot \beta) \circ \beta
=\beta \circ (\omega \ot \Id) \circ \beta
\ov{\eqref{convol-def-left}}{=} \beta \circ R_{\omega,\beta}.
\end{align*} 
\end{proof}

The following observation was also stated in \cite[Theorem 2]{Kal13}. We were not aware of this result prior to writing the first version of this paper. For the sake of completeness, we give the proof. 

\begin{prop}
\label{prop-theta-alpha}
Let $T \co \L^\infty(\QG) \to \L^\infty(\QG)$ be a completely bounded left Fourier multiplier. There exists a unique linear map $\Theta_{\alpha,T} \co \cal{M} \to \cal{M}$ such that
\begin{equation}
\label{a1}
\alpha \circ \Theta_{\alpha,T}
= (\Id \ot T) \circ \alpha.
\end{equation}
Moreover, $\Theta_{\alpha,T}$ is normal and completely bounded. Finally, if $\omega \in \L^1(\QG)$ then we have 
\begin{equation}
\label{theta-467}
\Theta_{\alpha,T_\omega}
= T_{\omega, \alpha}.
\end{equation}
\end{prop}

\begin{proof}
We have $\Delta \circ T \ov{\eqref{covariance-left}}{=} (\Id \ot T) \circ \Delta$. Hence
\begin{align*}
\MoveEqLeft
(\Id \ot \Delta) (\Id \ot T) \alpha 
= (\Id \ot \Id \ot T) (\Id \ot \Delta) \alpha \\
&\ov{\eqref{Def-corep}}{=} (\Id \ot \Id \ot T) (\alpha \ot \Id )\alpha 
= (\alpha \ot \Id) (\Id \ot T) \alpha.
\end{align*}
Consequently for any $x \in \cal{M}$, we deduce that by \cite[Theorem 2.7 p.~437]{Vae01} the element $(\Id \ot T) \alpha(x)$ belongs to the algebra $\alpha(\cal{M})$. Hence we can consider the map $\Theta_{\alpha,T}
\ov{\mathrm{def}}{=} \alpha^{-1} (T \ot \Id) \alpha \co \cal{M} \to \cal{M}$. Since $\alpha$ is an injective $*$-homomorphism and $T$ is a completely bounded normal map, we conclude that $\Theta_{\alpha,T}$ is a completely bounded normal map. Uniqueness is obvious. The last formula is a consequence of uniqueness and Proposition \ref{prop-commuting}.
\end{proof}

Of course, we have a similar result for left actions. 
%
Now, we highlight how to obtain quantum channels.

\begin{prop}
Let $\QG$ be a compact quantum group with Haar state $h_{\QG}$ and $\cal{M}$ be a von Neumann algebra equipped with a normal finite faithful trace $\tau_{\cal{M}}$. Let $\alpha \co \cal{M} \to \cal{M} \otvn \L^\infty(\QG)$ be a state preserving right action. If $\omega \in \L^\infty(\QG)_*$ then $T_{\omega,\alpha} \co \cal{M} \to \cal{M}$ is trace preserving, hence induces a quantum channel $T_{\omega,\alpha} \co \L^1(\cal{M}) \to \L^1(\cal{M})$ if $\omega$ is a state. 
\end{prop}

\begin{proof}
Using the preservation of the Haar state $h_{\QG}$ by the operator $T_\omega \co \L^\infty(\QG) \to \L^\infty(\QG)$ in the third equality, we obtain
\begin{align*}
\MoveEqLeft
\tau_{\cal{M}} \circ T_{\omega,\alpha}            
=(\tau_{\cal{M}} \ot h_{\QG}) \circ \alpha \circ T_{\omega,\alpha} 
\ov{\eqref{commute-transference}}{=} (\tau_{\cal{M}} \ot h_{\QG}) \circ (\Id \ot T_\omega) \circ \alpha 
=(\tau_{\cal{M}} \ot h_{\QG}) \circ \alpha 
=\tau_{\cal{M}}.
\end{align*} 
\end{proof}

Of course, we have a similar result for trace preserving left actions.

\subsection{Generalization of Werner's Quantum Harmonic Analysis}
\label{sec-Werner}

Werner introduced in \cite{Wer84} the framework of Quantum Harmonic Analysis. This framework provides a new analytical point of view that benefit multiple areas of analysis offering a new perspective on classic problems. We refer to the papers \cite{BBF24}, 
\cite{BBLS22}, 
\cite{BGNT21}, 
\cite{DLS24}, 
\cite{FuH24}, 
\cite{FuR23}, 
\cite{FLW24}, 
\cite{Ful20}, 
\cite{FulG23}, 
\cite{Hal23}, 
\cite{LuS21},  
\cite{LuS18a}, 
\cite{LuS19}, 
\cite{LuS20} and 
\cite{LuS18b}
\cite{Skr20}, 
for more information and some applications. 

The starting point is a continuous square-integrable\footnote{\thefootnote. A continuous unitary representation $u$ of $G$ on $H$ is said to be square-integrable if for any vectors $\xi,\eta \in H$  the coefficient $\la u(\cdot)\xi,\eta \ra$ is square-integrable on $G$, i.e. $\int_G |\la u(s)\xi,\eta \ra|^2\d\mu_G(s) <\infty$.} unitary representation continuous $u \co G \to \B(H)$ of a locally compact group $G$ on a complex Hilbert space $H$. The central objects in quantum harmonic analysis are a \textit{variant} of the convolution $*$ on the locally compact group $G$ defined in \eqref{Convol-usuel}, the <<function-operator>> and the <<operator-operator convolution>>. More precisely, assuming that the group $G$ is compact for simplicity, it is defined in the paper \cite{Hal23} the function-operator convolution
\begin{equation}
\label{convol-Ha}
f * x 
\ov{\mathrm{def}}{=} \int_G f(s) u_s^* x u_s \d \mu_G(s), \quad f \in \L^1(G),  x \in S^1(H),
\end{equation}
where $\mu_G$ is the normalized Haar measure, and the operator-operator convolution
\begin{equation}
\label{convol-op-op}
(x * y)(s) 
\ov{\mathrm{def}}{=} \tr(x u_s^* y u_s),\quad x \in \B(H), y \in S^1(H), s \in G.
\end{equation}
By \cite[Proposition 3.12 p.~15]{Hal23}, we have the following remarkable <<associativity>> properties
\begin{equation}
\label{asso-1}
f*(g*x)
=(f*_{\mathrm{Hal}} g) * x, \quad f,g \in \L^1(G),x \in S^1(H)
\end{equation}
and
\begin{equation}
\label{asso-1}
f*(x*y)
=(f*x)*y) \quad f \in \L^1(G),x \in S^1(H), y \in \B(H).
\end{equation}
and where the convolution
\begin{equation}
\label{convol-Hal}
(f*_{\mathrm{Hal}}g)(s)
\ov{\mathrm{def}}{=} \int_G f(t)g(s t^{-1}) \d \mu_G(t).
\end{equation}
is not the standard convolution defined in \eqref{Convol-usuel}. We will demonstrate in this section that the concepts introduced in the previous section encompass the setting of Quantum Harmonic Analysis. Fortunately, the terminology is consistent: Werner's Quantum Harmonic Analysis is a part of the harmonic analysis of quantum groups.

Let $\QG$ be a locally compact quantum group. Recall that a linear map $T \co \L^1(\QG) \to \L^1(\QG)$ is called a left centralizer of $\L^1(\QG)$ if it satisfies $T(f \star g) 
= T(f)\star g$ for all $f, g \in \L^1(\QG)$. We denote by $\C_{\cb}^{\ell}(\L^1(\QG))$ the space of all
completely bounded left centralizers of $\L^1(\QG)$. A weak* continuous completely bounded map $T \co \L^\infty(\QG) \to \L^\infty(\QG)$ is said to be a left Fourier multiplier (or a left covariant map) if it satisfies
\begin{equation}
\label{covariance-left}
\Delta \circ T
=(\Id \ot T) \circ \Delta.
\end{equation}
We denote by $\frak{M}^\infty_{\cb,\ell}(\QG)$ the algebra of all normal completely bounded Fourier multipliers on $\L^\infty(\QG)$. A map $T \co \L^1(\QG) \to \L^1(\QG)$ belongs to $\C_{\cb}^\ell(\L^1(\QG))$ if and only if its adjoint $T^* \co \L^\infty(\QG) \to \L^\infty(\QG)$ belongs to $\frak{M}^\infty_{\cb,\ell}(\QG)$, see e.g.~\cite[Proposition 2.3 p.~7]{DFSW16}. 

Consider the universal measure algebra $\M_u(\QG)$ and the measure algebra $\M(\QG) \ov{\mathrm{def}}{=} \C_0(\QG)^*$, which contains $\L^\infty(\QG)_*$ as a norm closed two-sided ideal via the embedding $\L^\infty(\QG)_*\to \M(\QG)$, $\omega \mapsto \omega_{|\C_0(\QG)}$. We have canonical inclusions 
$$
\L^\infty(\QG)_* \subset \M(\QG) \subset \M_u(\QG) \subset \C_\cb^\ell(\L^1(\QG)).
$$ 
Moreover, it is known \cite[Proposition 3.1]{HNR10} that the canonical map $\M(\QG) \to \C_\cb^\ell(\L^1(\QG))$, $\mu \mapsto \mu *\, \cdot$ is a completely isometric algebra isomorphism if and only if $\QG$ is co-amenable, i.e.~if the algebra $\L^\infty(\QG)_*$ has a contractive (or bounded) approximate identity.

\begin{defi}
\label{def-function-operator-convolution}
Consider a right action $\alpha \co \cal{M} \to \cal{M} \otvn \L^\infty(\QG)$ of a locally compact quantum group $\QG$ on a von Neumann algebra $\cal{M}$. If $T \co \L^\infty(\QG) \to \L^\infty(\QG)$ is a left Fourier multiplier and $x \in \L^\infty(\cal{M})$, we introduce the element
\begin{equation}
\label{funct-op}
T *_\QG x
\ov{\mathrm{def}}{=} \Theta_{\alpha,T}(x)
\end{equation}
in the von Neumann algebra $\L^\infty(\cal{M})$, where we use the notation of Proposition \ref{prop-theta-alpha}. 
\end{defi}

Recall that each $\omega \in \L^\infty(\QG)_*$ defines a Fourier multiplier $T_\omega \co \L^\infty(\QG) \to \L^\infty(\QG)$, introduced in \eqref{convol-def-bis}. For any $x \in \L^\infty(\cal{M})$, we let $\omega *_\QG x \ov{\mathrm{def}}{=} T_\omega*_\QG x$. We have in particular
\begin{equation}
\label{funct-op-bis}
\omega *_\QG x
\ov{\eqref{funct-op}}{=} \Theta_{\alpha,T_\omega}(x)
\ov{\eqref{theta-467}}{=} T_{\omega, \alpha}
\ov{\eqref{convol-def-bis}}{=} (\Id_{\cal{M}} \ot \omega) \circ \alpha(x), \quad x \in \L^\infty(\cal{M}).
\end{equation}
In other words, with the notations of \eqref{convol-def-bis}, we have $T_{\omega,\alpha}(x)=\omega *_\QG x$. It is rather easy to obtain the following associativity property.

\begin{prop}[first associativity]
\label{prop-asso-1}
For any $\omega_1,\omega_2 \in \L^\infty(\QG)_*$ and any $x \in \L^\infty(\cal{M})$, we have the equality
\begin{equation}
\label{asso-magic}
\omega_1 *_\QG ( \omega_2 *_\QG x)
= (\omega_1 * \omega_2 ) *_\QG x.
\end{equation}
\end{prop}

\begin{proof}
On the one hand, we have
\begin{align*}
\MoveEqLeft       
\omega_1 *_\QG (\omega_2 *_\QG x)  
\ov{\eqref{funct-op-bis}}{=} (\Id  \ot \omega_1) \circ \alpha \big((\Id \ot \omega_2) \circ \alpha(x)\big) \\
&= (\Id \ot \omega_1) \circ (\alpha \ot \omega_2) \circ \alpha(x) 
=\big[\Id \ot (\omega_1 \ot \omega_2) \big](\alpha \ot \Id) \circ \alpha(x).
\end{align*}
On the other hand, we have
\begin{align*}
\MoveEqLeft
(\omega_1 * \omega_2 ) *_\QG x 
\ov{\eqref{funct-op}}{=} \big[\Id \ot(\omega_1 * \omega_2)   \big] \circ \alpha(x) 
\ov{\eqref{Convolution-L1}}{=} \big[\Id \ot ((\omega_1 \ot \omega_2) \circ \Delta) \big] \circ \alpha(x) \\
&=\big[\Id \ot (\omega_1 \ot \omega_2) \big](\Id \ot \Delta) \circ \alpha(x) 
\ov{\eqref{Def-corep}}{=} \big[ \Id \ot (\omega_1 \ot \omega_2) \big](\alpha \ot \Id) \circ \alpha(x).
\end{align*}
\end{proof}

\begin{defi}[function-operator convolution]
\label{def-function-operator-convolution-bis}
Let $\QG$ be a locally compact quantum group. Consider a right action $\alpha \co \cal{M} \to \cal{M} \otvn \L^\infty(\QG)$. Suppose that $\QG$ admits a tracial left Haar weight $h$. If $f \in \L^1(\QG)$ and $x \in \L^\infty(\cal{M})$, we introduce the element
\begin{equation}
\label{funct-op-3}
f *_\QG x
\ov{\mathrm{def}}{=} h(f \, \cdot) *_\QG x
\ov{\eqref{funct-op-bis}}{=}(\Id \ot h(f \, \cdot)) \circ \alpha(x)
\end{equation}
in the von Neumann algebra $\L^\infty(\cal{M})$.
\end{defi}

Now, we define the operator-operator convolution.

\begin{defi}
\label{def-operator-operator-convolution-2}
Consider a right action $\alpha \co \cal{M} \to \cal{M} \otvn \L^\infty(\QG)$ of a locally compact quantum group $\QG$ on a von Neumann algebra $\cal{M}$. If $\eta \in \L^\infty(\cal{M})_*$ and $x \in \L^\infty(\cal{M})$, we introduce the element
\begin{equation}
\label{op-op}
x *^\QG \eta
\ov{\mathrm{def}}{=} (\eta \ot \Id_{}) \circ \alpha(x)
\end{equation}
in the von Neumann algebra $\L^\infty(\QG)$.
\end{defi}

In the case, where the von Neumann algebra $\cal{M}$ is semifinite equipped with a normal semifinite faithful trace $\tau$, we can introduce the following definition, since we have an isometric identification $\L^1(\cal{M}) \to \L^\infty(\cal{M})_*$, $y \mapsto \tau(y\,\cdot)$.


\begin{defi}[operator-operator convolution]
\label{def-operator-operator-convolution-2}
Consider a right action $\alpha \co \cal{M} \to \cal{M} \otvn \L^\infty(\QG)$ of a locally compact quantum group $\QG$ on a von Neumann algebra $\cal{M}$ equipped with a normal semifinite faithful trace $\tau$. If $x \in \L^\infty(\cal{M})$ and $y \in \L^1(\cal{M})$, we define the element
\begin{equation}
\label{op-op-true-bis}
x *^\QG y
\ov{\mathrm{def}}{=} (\tau_y \ot \Id_{\L^\infty(\QG)}) \circ \alpha(x)
\end{equation}
in the space $\L^\infty(\QG)$, where $\tau_y \ov{\mathrm{def}}{=} \tau(y \, \cdot)$.
\end{defi}


%
%

In this context, we can prove the following associativity.

\begin{prop}
\label{prop-other-asso}
Consider a right action $\alpha \co \cal{M} \to \cal{M} \otvn \L^\infty(\QG)$ of a locally compact quantum group $\QG$ on a finite von Neumann algebra $\cal{M}$ equipped with a normal finite faithful trace $\tau$. For any $\omega \in \L^\infty(\QG)_*$ and any $x,y \in \L^\infty(\cal{M})$, we have
\begin{equation}
\label{}
\omega \star (x *^\QG y)
= (\omega *_\QG x) *^\QG y.
\end{equation}
\end{prop}

\begin{proof}
For any $\omega \in \L^\infty(\QG)_*$ and any $x,y \in \L^\infty(\cal{M})$, we have
\begin{align*}
\MoveEqLeft
(\omega *_\QG x) *^\QG y         
\ov{\eqref{op-op-true-bis}}{=} (\tau_y \ot \Id_{}) \circ \alpha(\omega *_\QG x) 
\ov{\eqref{funct-op-bis}}{=} (\tau_y \ot \Id_{}) \circ \alpha(T_{\omega, \alpha}(x)) \\
&\ov{\eqref{commute-transference}}{=} (\tau_y \ot \Id_{}) \circ (\Id_{\cal{M}} \ot T_\omega) \circ \alpha(x) 
= (\tau_y \ot T_\omega) \circ \alpha(x) 
=T_\omega \circ (\tau_y \ot \Id) \circ \alpha(x) \\
&\ov{\eqref{op-op-true-bis}}{=} T_\omega(x *^\QG y) 
\ov{\eqref{convol-def-bis}}{=} (\Id \ot \omega) \circ \Delta(x *^\QG y) 
\ov{\eqref{action-module}}{=} \omega \star (x *^\QG y).
\end{align*}
\end{proof}

\begin{remark} \normalfont
Replacing the trace $\tau$ by the Haagerup trace associated to a normal semifinite faithful weight, we could generalize the previous result to arbitrary von Neumann algebras. 
\end{remark}

\begin{example} \normalfont
\label{QHA-1}
Consider a continuous unitary representation $u \co G \to \B(H)$ of a compact group $G$ on a complex Hilbert space $H$. Following Example \ref{Ex-classical-action} and Example \ref{rep-action}, we can introduce the associated map $\alpha \co \B(H) \to \B(H) \otvn \L^\infty(G)=\L^\infty(G,\B(H))$, $x \mapsto (s \mapsto u(s) x u(s)^*)$. Here we consider the quantum group $\QG=(\L^\infty(G),\Delta)$ of Example \ref{Ex-quantum-group-classical}. 

Each function $f \in \L^1(G)$ identifies to an element $\omega_f \in \L^\infty(G)$ defined by $\omega_f(g)\ov{\mathrm{def}}{=} \int_G f \check{g} \d \mu_G$, where we use the duality bracket \eqref{bracket-1}. For any function $f \in \L^1(G)$ and any operator $x \in \B(H)$, we deduce that
\begin{align*}
\MoveEqLeft
f *_\QG x
\ov{\eqref{funct-op-bis}}{=} (\Id_{\cal{M}} \ot \omega_f) \circ \alpha(x)
=\int_G f(s)\alpha(x)(s^{-1}) \d \mu_G(s) \\
&=\int_G f(s)u(s^{-1}) x u(s^{-1})^* \d \mu_G(s) 
=\int_G f(s)u(s)^* x u(s) \d \mu_G(s).
\end{align*}
Consequently, in this context, the function-operator convolution is the one of \eqref{convol-Ha}. So we recover the associativity property \eqref{asso-1} with Proposition \ref{prop-asso-1}. Moreover, we have
\begin{align*}
\MoveEqLeft
x *^\QG y 
\ov{\eqref{op-op-true-bis}}{=} \big(\tr(y\, \cdot)\ot \Id_{\L^\infty(G)}  \big) \circ \alpha(x)
= \big(\tr(y\, \cdot) \ot \Id_{\L^\infty(G)}\big)\big(u(\cdot) x u(\cdot)^* \big) 
= \tr\big(y u(\cdot) x u(\cdot)^* \big).
\end{align*}
Consequently, for all $s \in G$, we obtain
$$
(x *^\QG y)(s)
=\tr\big(y u(s) x u(s)^* \big).
$$
This means that the function-function convolution is identical the one of \eqref{convol-op-op}. It is easy to see with \eqref{Delta_commutatif} that the convolution product $\omega_f * \omega_g$ identifies to the one of \eqref{convol-Hal}.
\end{example}

\subsection{Young's inequalities}
\label{Section-Young}
Recall that by \cite[Corollary 2.2.3 p.~24]{EfR00} a bounded linear form $\varphi \co E \to \mathbb{C}$ on an operator space $E$ is completely bounded and that its completely bounded norm is equal to its norm, i.e.
\begin{equation}
\label{cb-linear-form}
\norm{\varphi}_{}
=\norm{\varphi}_{\cb}.
\end{equation}

\begin{prop}
Consider a right action $\alpha \co \cal{M} \to \cal{M} \otvn \L^\infty(\QG)$ of a locally compact quantum group $\QG$ on a von Neumann algebra $\cal{M}$. For any $\omega \in \L^\infty(\QG)_*$ and any $x \in \L^\infty(\cal{M})$, we have
$$
\norm{\omega *_\QG x}_{\L^\infty(\cal{M})}   
\leq \norm{\omega}_{\L^\infty(\QG)_*} \norm{x}_{\L^\infty(\cal{M})}.
$$
\end{prop}

\begin{proof}
A simple computation shows that
\begin{align*}
\MoveEqLeft
\norm{\omega *_\QG x}_{\L^\infty(\cal{M})}         
\ov{\eqref{funct-op-bis}}{=} \norm{(\Id_{\cal{M}} \ot \omega) \circ \alpha(x)}_{\L^\infty(\cal{M})} \\
&\leq \norm{\Id \ot \omega}_{\cal{M} \otvn \L^\infty(\QG) \to \L^\infty(\cal{M})} \norm{\alpha(x)}_{\cal{M} \otvn \L^\infty(\QG)} 
\ov{\eqref{cb-linear-form}}{\leq} \norm{\omega}_{\L^\infty(\QG)_*} \norm{x}_{\L^\infty(\cal{M})}.
\end{align*}
\end{proof}

\begin{prop}
Consider a trace preserving right action $\alpha \co \cal{M} \to \cal{M} \otvn \L^\infty(\QG)$ of a compact quantum group $\QG$ of Kac type on an approximately finite-dimensional von Neumann algebra $\cal{M}$ equipped with a normal semifinite faithful trace $\tau$. For any $\omega \in \L^\infty(\QG)_*$ and any $x \in \L^1(\cal{M})$, we have
$$
\norm{\omega *_\QG x}_{\L^1(\cal{M})}   
\leq \norm{\omega}_{\L^\infty(\QG)_*} \norm{x}_{\L^1(\cal{M})}.
$$
\end{prop}

\begin{proof}
Note that $\alpha$ induces an isometry $\alpha \co \L^1(\cal{M}) \to \L^1(\cal{M} \otvn \L^\infty(\QG))$. For any $\omega \in \L^1(\QG)$ and any $x \in \L^1(\cal{M})$, we have
\begin{align*}
\MoveEqLeft
\norm{\omega *_\QG x}_{\L^1(\cal{M})} 
\ov{\eqref{funct-op-bis}}{=} \norm{T_{\omega,\alpha}(x)}_{\L^1(\cal{M})} 
= \norm{\alpha \circ T_{\omega,\alpha}(x)}_{\L^1(\cal{M} \otvn \L^\infty(\QG))} \\
&\ov{\eqref{commute-transference}}{=} \norm{(\Id_{\L^1(\cal{M})} \ot T_\omega) \circ \alpha(x)}_{\L^1(\cal{M} \otvn \L^\infty(\QG))}\\
&\leq \norm{\Id_{\L^1(\cal{M})} \ot T_\omega}_{\L^1(\cal{M} \otvn \L^\infty(\QG)) \to \L^1(\L^\infty(\QG) \otvn \cal{M})} \norm{\alpha(x)}_{\L^1(\L^\infty(\QG) \otvn \cal{M})}\\
&\ov{\eqref{ine-tensorisation-os}}{\leq} \norm{T_\omega}_{\cb,\L^1(\QG) \to \L^1(\QG)} \norm{x}_{\L^1(\cal{M})} 
\ov{\eqref{borne-cb} \eqref{action-module}}{\leq} \norm{\omega}_{\L^1(\QG)} \norm{x}_{\L^1(\cal{M})}.         
\end{align*}
\end{proof}

\begin{remark} \normalfont
We could replace in the previous result <<approximately finite-dimensional>> by QWEP by using the results of \cite{Jun1} and \cite{Jun04}.
\end{remark}

By bilinear interpolation of these compatible maps, we obtain the following result.

\begin{cor}
Suppose that $1 \leq p \leq \infty$. Consider a trace preserving right action $\alpha \co \cal{M} \to \cal{M} \otvn \L^\infty(\QG)$ of a locally compact quantum group $\QG$ on an approximately finite-dimensional  von Neumann algebra $\cal{M}$ equipped with a normal semifinite faithful trace $\tau$. We have a well-defined map $* \co \L^{p}(\QG) \times \L^{p}(\cal{M}) \to \L^p(\cal{M})$ such that 
\begin{equation}
\label{interpoled}
\norm{f *_\QG x}_{\L^p(\cal{M})}
\leq \norm{f}_{\L^{p}(\QG)} \norm{x}_{\L^1(\cal{M})}, \quad f \in \L^{p}(\QG),x \in \L^1(\cal{M}).
\end{equation} 
\end{cor}

In particular, in the setting of Example \ref{QHA-1}, we recover the inequality of \cite[Proposition 4.11 (ii)]{Hal23} in the case where $G$ is compact. 
Similarly, we have the following weaker result for the operator-operator convolution.

\begin{prop}
Consider a right action $\alpha \co \cal{M} \to \cal{M} \otvn \L^\infty(\QG)$ of a locally compact quantum group $\QG$ on a von Neumann algebra $\cal{M}$. For any $\eta \in \L^\infty(\cal{M})_*$ and any $x \in \L^\infty(\cal{M})$, we have
$$
\norm{\eta *^\QG x}_{\L^\infty(\QG)}   
\leq \norm{\eta}_{\L^\infty(\cal{M})_*} \norm{x}_{\L^\infty(\cal{M})}.
$$
\end{prop}

\begin{proof}
We have
\begin{align*}
\MoveEqLeft
\norm{\eta *^\QG x}_{\L^\infty(\QG)}       
\ov{\eqref{op-op}}{=} \norm{( \eta \ot \Id_{} ) \circ \alpha(x)}_{\L^\infty(\QG)} 
\leq \norm{\eta \ot \Id}_{\L^\infty(\cal{M}) \otvn \L^\infty(\QG)\to\L^\infty(\QG)} \norm{\alpha(x)}_{\L^\infty(\cal{M}) \otvn \L^\infty(\QG)} \\ 
&\ov{\eqref{cb-linear-form}}{\leq} \norm{\eta}_{\L^\infty(\cal{M})_*} \norm{x}_{\L^\infty(\cal{M})}.
\end{align*}
%
%
\end{proof}

\subsection{Ergodic actions of quantum groups}
\label{sec-ergodic-action}

Consider a right action $\alpha \co \cal{M} \to \cal{M} \otvn \L^\infty(\QG)$ of a locally compact quantum group $\QG$ on a von Neumann algebra $\cal{M}$, as in Definition \ref{Defi-action}. We can introduce the fixed point subalgebra 
$$
\cal{M}^\alpha 
\ov{\mathrm{def}}{=} \{x \in \cal{M} : \alpha(x) = x \ot 1\},
$$ 
as in \cite[Definition 1.2]{Vae02}.  Note that this algebra is a unital sub-von Neumann algebra of the von Neumann algebra $\cal{M}$. By \cite[Proposition 1.3 p.~432]{Vae02} or \cite[Proposition 15.2.4 p.~441]{Tus22}, for any $x \in \cal{M}^+$ the element $\E(x) \ov{\mathrm{def}}{=} (\Id \ot \psi) \circ \alpha(x)$ of the extended positive part $\cal{M}_{\mathrm{ext}}^+$ belongs to $(\cal{M}^\alpha)_{\mathrm{ext}}^+$ and $\E$ is a normal faithful operator-valued weight from $\cal{M}$ to $\cal{M}^\alpha$. The action $\alpha$ is said to be integrable if the operator-valued weight is semifinite. Thus an action $\alpha \co  \cal{M} \to \cal{M} \ot \L^\infty(\QG)$ is integrable if the set
\begin{equation}
\label{integr+}
\big\{x \in \cal{M}_+ : (\Id \ot \psi) \circ \alpha(x) \in \cal{M}_+ \big\}
\end{equation}
is weak* dense in $\cal{M}_+$. Elements of \eqref{integr+} are called integrable for $\alpha$. An element $x \in \cal{M}$ is square integrable for $\alpha$ if $x^* x$ is integrable for $\alpha$. Note that integrability of a right action is defined with respect to the left Haar weight. 

 If $\QG$ is compact, this operator-valued weight is a normal faithful conditional expectation $\E \co \cal{M}\to \cal{M}$ on the subalgebra $\cal{M}^\alpha $ defined by 
\begin{equation}
\label{cond-exp-fixed}
\E
=(\Id \ot h_{\QG}) \circ \alpha.
\end{equation}
The action is said to be ergodic if $\cal{M}^\alpha$ equals $\mathbb{C}1$. If $\QG$ is \textit{compact} and equipped with its Haar state $ h_\QG$, then by adapting the reasoning found in \cite[pp.~94-95]{Boc95}, originally applied to $\mathrm{C}^*$-algebras, to the context of von Neumann algebras, the von Neumann algebra $\cal{M}$ admits a unique normal state $\varphi_\cal{M}$ determined by the following condition
\begin{equation}
\label{Def-ergodic}
(\Id_{\cal{M}} \ot h_\QG) \circ \alpha(x)
=\varphi_\cal{M}(x)1_{\cal{M}}, \quad x \in \cal{M}.
\end{equation}
Note that $\varphi_\cal{M}$ is faithful by essentially \cite[Remark 2]{Boc95}. From \eqref{Def-ergodic}, it is very easy to see that $\alpha$ is state preserving, as the following result shows.

\begin{prop}
We have
$$
(\varphi_\cal{M} \ot h_\QG) \circ \alpha
=\varphi_\cal{M}.
$$
\end{prop}

\begin{proof}
If $x \in \cal{M}$, we have
$$
(\varphi_\cal{M} \ot h_\QG) \circ \alpha(x)
=\varphi_\cal{M}\big((\Id \ot h_\QG) \circ \alpha(x)\big)
\ov{\eqref{Def-ergodic}}{=} \varphi_\cal{M}\big(\varphi_\cal{M}(x)1_{\cal{M}}\big)
=\varphi_\cal{M}(x)\varphi_\cal{M}(1_{\cal{M}})
=\varphi_\cal{M}(x).
$$
\end{proof}



\begin{example} \normalfont
Examples of ergodic actions are given by embeddable quantum homogeneous spaces of compact quantum groups. These spaces are von Neumann subalgebras $\cal{M} \subset \L^\infty(\QG)$ which are right coideals, i.e.~$\Delta(\cal{M}) \subset \cal{M} \otvn \L^\infty(\QG)$, equipped with the restriction of the coproduct. Indeed, consider some $x \in \cal{M}$ such that $\Delta(x) = x \ot 1$. We see that 
$$
x
=(\Id \ot h_\QG)(x \ot 1)
=(\Id \ot h_\QG) \circ \Delta(x)
\ov{\eqref{Def-haar-state}}{=} h_\QG(x)1.
$$ 
Hence $x$ belongs to $\mathbb{C}1$. Note that a coduality between embeddable quantum homogeneous spaces for $\QG$ and for $\hat{\QG}$ is described in the paper \cite{KaS14}. Indeed, if $\cal{M}$ is an embeddable quantum homogeneous space of $\QG$, then $\widetilde{\cal{M}} \ov{\mathrm{def}}{=} \{ y \in \L^\infty(\hat{\QG}): xy=yx \textrm{ for any } x \in \cal{M}\}$ is an embeddable quantum homogeneous space of $\hat{\QG}$ and we have $\widetilde{\widetilde{\cal{M}}}=\cal{M}$. 
\end{example}

\begin{example} \normalfont
\label{Example-classical-444}
Consider a right action $\alpha \co \cal{M} \to \cal{M} \otvn \L^\infty(G)$ of a \textit{classical} locally compact group $G$. If $\Phi \co G \to \Aut(\cal{M})$ is the associated group homomorphism, we have by \cite[(4) p.~264]{Str81}
$$
\cal{M}^\alpha=\{x \in \cal{M}: \Phi_s(x)=x \text{ for any } s \in G\}.
$$ 
The right action $\alpha$ is ergodic if and only if for any $x \in \cal{M}$ the equality $\Phi_s(x)=x$ for any $s \in G$ implies that $x$ belongs to $\mathbb{C}1$. Furthermore, if $G$ is compact, the condition \eqref{Def-ergodic} means that for any $x \in \cal{M}$ the vector-valued integral $\int_G \alpha(x)(s) \d s$ (Gelfand integral) is a scalar operator $\varphi_\cal{M}(x)1_{\cal{M}}$. Then $\varphi_\cal{M}$ is the unique $G$-invariant normal state on the von Neumann algebra $\cal{M}$. 


If $G$ is compact and if $\mu_G$ is the normalized Haar measure on $G$ then for each $x \in \cal{M}$ it is known that the average $\int_G \tilde{\alpha}_s(x) \d\mu_G(s)$ is a scalar operator $\varphi_\cal{M}(x)1$ with $\varphi_\cal{M}(x) \in \mathbb{C}$. Then $\varphi_\cal{M}$ is the unique $G$-invariant state on $\cal{M}$.

If there exists a right ergodic action of a compact group $G$ on a von Neumann algebra $\cal{M}$ then it is known \cite[Corollary 4.2]{HLS81} (and its proof) that the von Neumann algebra $\cal{M}$ is necessarily finite and injective. Moreover, the proof of \cite[Corollary 4.2]{HLS81} shows that the state $\varphi_\cal{M}$ is necessarily a trace. Finally, we refer to \cite{AHK80} and \cite{OPT80} for a complete classification of ergodic actions of \textit{abelian} compact groups\footnote{\thefootnote. Note that \cite[Lemma 1.24]{ChH1} says that abelian compact groups are ergodically rigid, i.e.~the only ergodic actions of abelian compact groups are on type I von Neumann algebras. Unfortunately, this contradicts the results of \cite{OPT80}. So the result \cite[Lemma 1.24]{ChH1} is false (confirmed by email).}. 
\end{example}

\begin{example}[projective representations] \normalfont
\label{proj-ergodic-QG}
Consider the context of Example \ref{projective-quantum-groups}. It is known \cite[Remark 3.2.7]{DMN22} that a measurable right projective representation $\alpha \co \B(H) \to \B(H) \otvn \L^{\infty}(\QG)$ of a compact quantum group $\QG$,  with implementing unitary $u$, is ergodic if and only if $u$ is irreducible in the sense of \cite[Definition 3.2.1]{DMN22}. Suppose that $u \co G \to \B(H)$ is a continuous projective unitary representation of a classical \textit{locally compact} group $G$. Consider the action $\tilde{\alpha} \co G \to \Aut(\B(H))$, $s \mapsto u(s)xu(s)^*$ of $G$ on the von Neumann algebra $\B(H)$ of Example \ref{rep-action} and the associated right action $\alpha \co \B(H) \to \B(H) \otvn \L^\infty(G)$ from Example \ref{Ex-classical-action}. From Example \ref{Example-classical-444}, we have $\B(H)^\alpha=\{x \in \B(H) : \tilde{\alpha}(x)=x \text{ for all }s \in G\}=\{x \in \B(H) : u(s)xu(s)^*=x \text{ for all }s \in G\}$. By Schur's lemma \cite[Proposition 1.35]{KaT13} \cite[Lemma 3.5]{Fol16} (and a well-known extension of this result to projective unitary representations), we deduce that the right action $\alpha$ is ergodic if and only if the projective representation $u$ is irreducible.
\end{example}


A deeper analysis of the structure of ergodic actions of compact groups has been made by Wassermann in \cite{Was89}, \cite{Was88a} and \cite{Was88b}. In particular, Wassermann remarkably showed\footnote{\thefootnote. It is an intriguing open question to know if the group $\SU(n)$ can act ergodically for some $n \geq 3$ on the unique injective factor of type $\II_1$ with separable predual.} that the compact group $\SU(2)$ only admits ergodic actions on von Neumann algebras of finite type I. Moreover, Wassermann proved \cite[Theorem 20]{Was89} that each ergodic action of a compact group $G$ on a von Neumann algebra of Type I is isomorphic to an induced action $\Ind_{H}^{G} u$, where $H$ is a closed subgroup of $G$ acting by an irreducible projective representation $u$. 

The abstract theory of ergodic actions of compact quantum groups on operator algebras has been initiated by Boca in \cite{Boc95}. One significant difference with the compact group case is that the unique invariant state $h_{\cal{M}}$ is not necessarily a trace. Indeed, Wang \cite{Wan99} gave examples of ergodic actions of universal unitary quantum groups on type III factors. However, by \cite[Corollary 5.2]{DeC11a}, the invariant state associated to an ergodic action of a compact quantum group of Kac type on a factor of type I is tracial. Note that by \cite{BRV06} this is not true for ergodic actions of compact quantum groups of Kac type on arbitrary von Neumann algebras. Finally, we refer to the preprint \cite{ChH1} for information on ergodic actions of the quantum group $\O_{-1}(2)$ and to \cite{BaG10}, \cite{Ban17} \cite{Ban20}, \cite{BSS12} and \cite{FrTS23} for other examples and information.

It is important to note that the traces are \textit{normalized}. In this situation, $\alpha$ is injective by \cite[Lemma 2.1 p.~2283]{Arh19}.

\begin{prop}
\label{prop-fundamental}
Let $\cal{M}, \cal{N}_1$ and $\cal{N}_2$ be finite von Neumann algebra equipped with normal finite faithful normalized traces. Let $\alpha \co \cal{M} \to \cal{N}_1 \otvn \cal{N}_2$ be a trace preserving normal $*$-homomorphism such that
\begin{equation}
\label{rel-fund-455}
(\Id_{\cal{N}_1} \ot \tau_{\cal{N}_2}) \circ \alpha(x)
=\tau_{\cal{M}}(x) 1_{\cal{N}_1}, \quad x \in \cal{M}.
\end{equation}
Suppose that $1 \leq p \leq \infty$. Then $\alpha$ induces a complete contraction $\beta \co \L^1(\cal{M}) \to \L^p(\cal{N}_1,\L^1(\cal{N}_2))$.
\end{prop}

\begin{proof}
In the case of a \textit{positive} element $u$ in the space $S^\infty_k(\L^1(\cal{M}))$, we can write
\begin{align}
\MoveEqLeft
\label{divers-566789-bis}
\bnorm{(\Id_{S^\infty_k} \ot \alpha)(u)}_{S^\infty_k(\L^\infty(\cal{N}_1,\L^1(\cal{N}_2)))}              
\ov{\eqref{Norm-LpL1-pos}}{=} \bnorm{(\Id_{S^\infty_k(\cal{N}_1)} \ot \tau_{\cal{N}_2})((\Id_{S^\infty_k} \ot \alpha)(u))}_{S^\infty_k(\cal{N}_1)} \\
&= \bnorm{\big[\Id_{S^\infty_k} \ot \big(\Id_{\cal{N}_1} \ot \tau_{\cal{N}_2} \big) \circ \alpha\big](u) }_{S^\infty_k(\cal{M})} 
\ov{\eqref{rel-fund-455}}{=} \bnorm{\big(\Id_{S^\infty_k} \ot \tau_{\cal{M}} \big)(z) \ot 1_{\cal{N}_1}}_{S^\infty_k(\cal{M})} \nonumber \\
&=\bnorm{\big(\Id_{S^\infty_k} \ot \tau_{\cal{M}} \big)(u)}_{S^\infty_k} 
\ov{\eqref{Norm-LpL1-pos}}{=} \norm{u}_{S^\infty_k(\L^1(\cal{M}))}\nonumber. 
\end{align} 
Now, consider an \textit{arbitrary} element $x$ in the space $S^\infty_k(\L^1(\cal{M}))$. Let $\epsi > 0$. According to \eqref{formula-Junge}, there exists element $y$ and $z$ such that $x = y z$ and 
\begin{equation}
\label{fin1}
\norm{yy^*}_{S^\infty_k(\L^1(\cal{M})}^{\frac{1}{2}} \norm{z^*z}_{S^\infty_k(\L^1(\cal{M}))}^{\frac{1}{2}} \leq \norm{x}_{S^\infty_k(\L^1(\cal{M}))} +\epsi.
\end{equation}
Now, we have
\begin{align*}
\MoveEqLeft
\norm{(\Id \ot \alpha)(y)((\Id \ot \alpha)(y))^*}_{S^\infty_k(\L^\infty(\cal{N}_1,\L^1(\cal{N}_2)))}^{\frac{1}{2}} 
\norm{((\Id \ot \alpha)(z))^*(\Id \ot \alpha)(z)}_{S^\infty_k(\L^\infty(\cal{N}_1,(\L^1(\cal{N}_2)))}^{\frac{1}{2}} \\
&=\norm{(\Id \ot \alpha)(yy^*)}_{S^\infty_k(\L^\infty(\cal{N}_1,\L^1(\cal{N}_2)))}^{\frac{1}{2}} 
\norm{(\Id \ot \alpha)(z^*z)}_{S^\infty_k(\L^\infty(\cal{N}_1,\L^1(\cal{N}_2)))}^{\frac{1}{2}} \\      
&\ov{\eqref{divers-566789-bis}}{=} \norm{yy^*}_{S^\infty_k(\L^1(\cal{M}))}^{\frac{1}{2}}\norm{z^*z}_{S^\infty_k(\L^1(\cal{M}))}^{\frac{1}{2}} 
\ov{\eqref{fin1}}{\leq} \norm{x}_{S^\infty_k(\L^1(\cal{M}))} +\epsi.
\end{align*}  
Moreover, it is immediate that 
$$
(\Id \ot \alpha)(x)
=(\Id \ot \alpha)(yz)
=(\Id \ot \alpha)(y)(\Id \ot \alpha)(z).
$$ 
From \eqref{formula-Junge}, we infer that $\norm{(\Id \ot \alpha)(x)}_{S^\infty_k(\cal{N}_1,\L^1(\cal{N}_2)))} \leq \norm{x}_{S^\infty_k(\L^1(\cal{M})} +\epsi$. Since $\epsi > 0$ is arbitrary, we obtain that the map $\alpha \co \L^1(\cal{M}) \to \L^\infty(\cal{N}_1,\L^1(\cal{N}_2))$ is a complete contraction. Now, by Lemma \ref{lemma-trace-preserving} we have a complete isometry $\alpha \co \L^1(\cal{M}) \to \L^1(\cal{N}_1,\L^1(\cal{N}_2))$. We conclude by interpolation that the map $\alpha$ induces a complete contraction $\alpha \co \L^1(\cal{M}) \to \L^p(\cal{N}_1,\L^1(\cal{N}_2))$. 
\end{proof}

For this paper, the interest of the ergodicity lies in the second part of the following result and the next theorem. 

\begin{cor}
\label{Prop-transfer-1}
Let $\QG$ be a compact group of Kac type equipped with its Haar state $h_{\QG}$ and let $\cal{M}$ be a finite von Neumann algebra. Let $\alpha \co \cal{M} \to \cal{M} \otvn \L^\infty(\QG)$ be a right action. Consider $1 \leq p <\infty$.
\begin{enumerate}
	\item If $\cal{M}$ is equipped with a trace and if $\alpha$ is trace preserving then the map $\alpha$ induces a complete isometry $\alpha \co \L^p(\cal{M}) \to \L^p(\cal{M},\L^p(\QG))$.
	\item Suppose that $\alpha$ is ergodic and that the state $h_\cal{M}$ is a trace. The map $\alpha$ induces an isometry $\alpha \co \L^1(\cal{M}) \to \L^p(\cal{M},\L^1(\QG))$ which is completely contractive.
\end{enumerate}
\end{cor}

\begin{proof}
1. Since the normal unital injective $*$-homomorphism $\alpha$ is trace preserving, it suffices to use Lemma \ref{lemma-trace-preserving}.

2. It is a consequence of Proposition \ref{prop-fundamental} and \eqref{Def-ergodic}.
For the reverse inequality, it suffices to use the contractive inclusion $\L^p(\QG,\L^1(\cal{M})) \subset \L^1(\QG,\L^1(\cal{M}))$ and the first point.
\end{proof}

\begin{remark} \normalfont
It seems to the author that this map is completely isometric with elementary additional arguments.
\end{remark}

We have the following generalization of Proposition \ref{Prop-coproduct-column}.

\begin{prop}
\label{Prop-coaction-column}
Let $\QG$ be a compact quantum group of Kac type. An ergodic right action $\alpha \co \cal{M} \to \cal{M} \otvn \L^\infty(\QG)$ induces a completely isometric map $\alpha \co \L^2(\cal{M})_c \to \L^\infty(\cal{M},\L^2(\QG)_c)$ (and similarly for the row case) and a completely contractive map $\alpha \co \L^2(\cal{M}) \to \L^\infty(\cal{M},\L^2(\QG))$.
\end{prop}

\begin{proof}
We have
\begin{equation}
\label{normLinfty-L2-bis}
\norm{x}_{\L^\infty(\cal{M},\L^2(\QG)_c)}            
=\norm{(\Id_{\cal{M}} \ot h_{\QG})(x^*x)}_{\L^\infty(\cal{M})}^{\frac{1}{2}}.
\end{equation}
We have
\begin{align*}
\MoveEqLeft
\norm{\alpha(x)}_{\L^\infty(\cal{M},\L^2(\QG)_c))}            
\ov{\eqref{normLinfty-L2-bis}}{=} \norm{(\Id \ot h_{\QG})(\alpha(x^*x))}_{\L^\infty(\cal{M})}^{\frac{1}{2}} 
\ov{\eqref{Def-ergodic}}{=} \norm{\varphi_{\cal{M}}(x^*x)1_\cal{M}}_{\L^\infty(\cal{M})}^{\frac{1}{2}} \\
&=\varphi_{\cal{M}}(x^*x)^{\frac{1}{2}}
=\norm{x}_{\L^2(\cal{M})}.
\end{align*} 
Let $x=[x_{ij}]$ be an element of $\M_n(\cal{M})$. We have
\begin{align*}
\MoveEqLeft
\bnorm{(\Id_{\M_n} \ot \alpha)(x)}_{\M_n(\L^\infty(\cal{M},\L^2(\QG)_c)))}             
=\bnorm{[\alpha(x_{ij})]}_{\M_n(\L^\infty(\cal{M},\L^2(\QG)_c))} 
 =\bnorm{[\alpha(x_{ij})]}_{\L^\infty(\M_n(\L^\infty(\cal{M})), \L^2(\QG)_c)} \\
&\ov{\eqref{normLinfty-L2}}{=}  \bnorm{(\Id_{\M_n(\cal{M})} \ot h_{\QG})([\alpha(x_{ij})]^*[\alpha(x_{ij})])}_{\L^\infty(\M_n(\cal{M}), \L^2(\QG)_c)}^{\frac{1}{2}}
\ov{\eqref{Def-ergodic}}{=} \bnorm{(\Id_{\M_n} \ot h_{\QG})(x^*x)}_{\M_n}^{\frac{1}{2}} \\
&\ov{\eqref{normLinfty-L2}}{=}  \norm{x}_{\M_n(\L^2(\cal{M})_c)}.
\end{align*}
We conclude by density. The last part is obtained by interpolation.
\end{proof}

\begin{remark} \normalfont
In \cite{DRVV10}, the authors proved that there exists a bijective correspondence between (not necessarily) ergodic actions of monoidally equivalent compact quantum groups on von Neumann algebras. Concrete examples of monoidally equivalent compact quantum groups are given in \cite[Section 4]{DRVV10} and \cite[Sections 5 and 6]{BRV06}.  
\end{remark}

\begin{remark} \normalfont
Let $G$ be a discrete group and $\alpha \co \M_n \to \VN(G) \otvn \M_n$ be an ergodic action of $(\VN(G),\Delta)$, where $\Delta$ is defined in \eqref{coproduct-VNG}, on the matrix algebra $\M_n$. By \cite[Corollary 7.2]{DeC11a}, there exists a \textit{finite} subgroup $H$ of $G$ such that $\alpha(\M_n) \subset \VN(H) \ot \M_n$ and the action $\alpha' \co \M_n \to \VN(H) \otvn \M_n$ obtained from the action $\alpha$ with range restricted to $\VN(H) \otvn \M_n$ is ergodic.
\end{remark}

\begin{remark} \normalfont
If the state $\varphi_{\cal{M}}$ is not a trace, we can use Haagerup's noncommutative $\L^p$-spaces described in \cite{Ter81}. If $D$ is the density operator belonging to $\L^1(\cal{M})=\L^1(\cal{M},\varphi_{\cal{M}})$, recall that by \cite[(1.13)]{HJX10} the Haagerup trace $\tr$ associated to $\varphi_{\cal{M}}$ satisfies
\begin{equation}
\label{HJX-1.13}
\varphi_{\cal{M}}(x)
=\tr(xD), \quad x \in \cal{M}.
\end{equation}
So using Remark \ref{Rem-Haagerup-Lp} in the first equality, for any positive $x \in \cal{M}$, we can write 
\begin{align*}
\MoveEqLeft
\label{divers-566789-bis}
\norm{\alpha(x)}_{\L^\infty(\cal{M},\L^1(\QG))}              
\ov{\eqref{Norm-LpL1-pos}}{=} \bnorm{(\Id \ot h_\cal{\QG})(\alpha(x))}_{\L^\infty(\cal{M})} 
\ov{\eqref{Def-ergodic}}{=} \norm{\varphi_\cal{M}(x) 1_{\L^\infty(\QG)}}_{\L^\infty(\cal{M})} \\
&=|\varphi_\cal{M}(x)| \norm{1}_{\L^\infty(\cal{M})} 
\ov{\eqref{HJX-1.13}}{=} |\tr(xD)| \nonumber
\leq \norm{xD}_{\L^1(\cal{M})}. \nonumber
\end{align*} 
Here we use a identification between $\L^1(\QG,\tau)$ and the associated noncommutative Haagerup $\L^1$-space. By \cite[Lemma 3]{Wat88}, the subspace $\cal{M}D$ is dense in the Banach space $\L^1(\cal{M})$. So we obtain a contractive map from $\L^1(\cal{M})$ into $\L^\infty(\cal{M},\L^1(\QG))$.
\end{remark}

The following theorem is the main result of this section.

\begin{thm}
\label{thm-Smin-transfert}
Let $\QG$ be a compact quantum group of Kac type equipped with its normalized Haar state and $\cal{M}$ be a hyperfinite finite von Neumann algebra. Let $\alpha \co \cal{M} \to \cal{M} \otvn \L^\infty(\QG)$ be an ergodic right action. Suppose that the normal state $\varphi_\cal{M}$ is a trace. Consider $1 \leq p <\infty$. Then for any $f \in \L^1(\QG)$ we have
\begin{equation}
\label{transference-norms}
\norm{T_{f,\alpha}}_{\cb,\L^1(\cal{M}) \to \L^p(\cal{M})}
\leq \norm{T_f}_{\cb,\L^1(\QG) \to \L^p(\QG)}.
\end{equation}
and if $\L^\infty(\QG)$ is in addition hyperfinite 
\begin{equation}
\label{belles-estimations}
\H_{\cb,\min,\tau}\big(T_{f,\alpha}\big)
\geq \H_{h_\QG}(f)
\end{equation}
\end{thm}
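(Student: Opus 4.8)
The plan is to establish the norm inequality \eqref{transference-norms} first and then deduce \eqref{belles-estimations} from it by passing to the derivative of operator norms, following the philosophy of \eqref{Def-intro-Scb-min} and \eqref{Smin-as-derivative-intro}. For the norm inequality, the key is the intertwining relation \eqref{commute-transference}, which says $\alpha \circ T_{f,\alpha} = (\Id_{\cal{M}} \ot T_f) \circ \alpha$, where $T_f = T_\omega$ with $\omega = \tau(f R(\cdot))$ so that $T_f$ is exactly the convolution operator $x \mapsto f*x$ (one should check this identification using \eqref{1-infty} and \eqref{action-module}, modulo the usual care about which side the convolution sits on). Combining this with Proposition \ref{Prop-transfer-1}: the point 2 gives that $\alpha$ induces a completely contractive isometry $\alpha \co \L^1(\cal{M}) \to \L^p(\QG,\L^1(\cal{M}))$ (after applying the flip, or working with $\L^p(\cal{M},\L^1(\QG))$ via Fubini \eqref{flip}), while the point 1 gives that $\alpha$ is a complete isometry $\L^p(\cal{M}) \to \L^p(\cal{M},\L^p(\QG))$.

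First I would factor $T_{f,\alpha}$ through these maps. Since $\alpha$ is an isometry on $\L^p(\cal{M})$, for $x \in \L^1(\cal{M})$ we have $\norm{T_{f,\alpha}(x)}_{\L^p(\cal{M})} = \norm{\alpha(T_{f,\alpha}(x))}_{\L^p(\cal{M},\L^p(\QG))} = \norm{(\Id_{\cal{M}} \ot T_f)(\alpha(x))}_{\L^p(\cal{M},\L^p(\QG))}$. Now $T_f \co \L^1(\QG) \to \L^p(\QG)$ is completely bounded with the stated norm (this is where hyperfiniteness of $\L^\infty(\QG)$ and Pisier's vector-valued theory enter, via \eqref{ine-tensorisation-os}), so the amplified map $\Id_{\L^p(\cal{M})} \ot T_f \co \L^p(\cal{M},\L^1(\QG)) \to \L^p(\cal{M},\L^p(\QG))$ has completely bounded norm at most $\norm{T_f}_{\cb,\L^1(\QG)\to\L^p(\QG)}$. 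Composing with the completely contractive $\alpha \co \L^1(\cal{M}) \to \L^p(\cal{M},\L^1(\QG))$ (the Fubini-transposed version of Proposition \ref{Prop-transfer-1} point 2) and the complete isometry $\alpha^{-1}$ on the range, one gets \eqref{transference-norms}. The cb-statement comes for free because all three maps in the factorization are completely bounded with the claimed norm control.

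For \eqref{belles-estimations}, I would apply \eqref{transference-norms} at exponent $p$ and take logarithms/derivatives. Since $T_{f,\alpha}$ is a quantum channel (by the lemma before the theorem, $T_{f,\alpha}$ is trace preserving and, being $(\Id \ot \omega)\circ\alpha$ with $\omega$ a state, unital completely positive, hence its preadjoint is a channel), we may invoke the definition \eqref{Def-intro-Scb-min} of $\H_{\cb,\min}$: raising \eqref{transference-norms} to the $p$-th power and differentiating at $p=1$ (using that both sides equal $1$ at $p=1$ since the maps send the normalized trace/state appropriately, and using \eqref{elem-lim} to handle the derivative cleanly), the inequality reverses sign under $-\frac{\d}{\d p}$, giving $\H_{\cb,\min}(T_{f,\alpha}) \geq -\frac{\d}{\d p}\norm{T_f}_{\cb,\L^1(\QG)\to\L^p(\QG)}^p|_{p=1}$. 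By Theorem \ref{Thm-description-multipliers-2}, $\norm{T_f}_{\cb,\L^1(\QG)\to\L^p(\QG)} = \norm{f}_{\L^p(\QG)}$, and then $-\frac{\d}{\d p}\norm{f}_{\L^p(\QG)}^p|_{p=1} = -\tau(f\log f) = \H(f)$ by the formula \eqref{deriv-norm-p} referenced in Theorem \ref{thm-entropy-cb}. For the Holevo bound, combine $\chi(T_{f,\alpha}) \leq -\H_{\min}(T_{f,\alpha})$ from \eqref{Holevo-Smin} with $\H_{\min}(T_{f,\alpha}) \geq \H_{\cb,\min}(T_{f,\alpha})$... wait — actually $\H_{\min} \geq \H_{\cb,\min}$ goes the wrong way for that; instead I would argue directly that $\H_{\min}(T_{f,\alpha}) \geq \H(f)$ by the same derivative argument applied to the non-cb norm (using \eqref{transference-norms} with cb replaced by ordinary norm, which follows a fortiori, or $\norm{T}_{\L^1\to\L^p} \le \norm{T}_{\cb,\L^1\to\L^p}$), and then $\chi(T_{f,\alpha}) \leq -\H_{\min}(T_{f,\alpha}) \leq -\H(f)$.

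The main obstacle I anticipate is the careful bookkeeping in the factorization: matching the correct version of Proposition \ref{Prop-transfer-1} (right action versus the Fubini-flipped $\L^p(\cal{M},\L^1(\QG))$ form needed to amplify $T_f$ on the $\QG$-leg) and confirming that the $\omega$ appearing in $T_f = (\Id \ot \omega)\circ\Delta$ genuinely produces the convolution $x \mapsto f*x$ whose cb-norm is controlled by Theorem \ref{Thm-description-multipliers-2} — i.e.\ that the module/antipode twist in \eqref{1-infty} is compatible with the identification in Proposition \ref{Prop-transfer-1}. A secondary subtlety is justifying the interchange of the derivative $\frac{\d}{\d p}$ with the inequality at the single point $p=1$; this is routine given \eqref{elem-lim} and the fact that both norm functions are smooth in $p$ near $1$ with common value $1$ at $p=1$, but it should be stated explicitly. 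Everything else (complete positivity and trace-preservation of $T_{f,\alpha}$, hyperfiniteness hypotheses enabling the vector-valued $\L^p$ machinery) is already in place from the preceding lemmas.
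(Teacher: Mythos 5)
Your proposal is correct and follows essentially the same route as the paper: the factorization $\alpha\circ T_{f,\alpha}=(\Id\ot T_f)\circ\alpha$ combined with the two parts of Proposition \ref{Prop-transfer-1} and \eqref{ine-tensorisation-os} gives \eqref{transference-norms}, and the entropy bounds follow by differentiating at $p=1$ via \eqref{elem-lim}. One small remark: the chain you dismissed for the Holevo bound actually does work --- since $\H_{\cb,\min}\leq\H_{\min}$ by \eqref{compar-ent-min}, one gets $\chi(T_{f,\alpha})\leq-\H_{\min}(T_{f,\alpha})\leq-\H_{\cb,\min}(T_{f,\alpha})\leq-\H(f)$, which is exactly the paper's argument, though your direct route through the non-cb norms is equally valid.
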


\begin{proof}
Using the point 1 of Proposition \ref{Prop-transfer-1} in the first equality and \cite[(3.1) p.~39]{Pis98} in the second inequality and the point 2 of Proposition \ref{Prop-transfer-1} in last inequality, we have 
\begin{align*}
\MoveEqLeft
\norm{T_{f,\alpha}}_{\cb,\L^1(\cal{M}) \to \L^p(\cal{M})}
=\norm{\alpha \circ T_{f,\alpha}}_{\cb,\L^1(\cal{M}) \to \L^p(\cal{M},\L^p(\QG))} 
\ov{\eqref{commute-transference}}{=} \bnorm{\big(\Id_{\L^p(\cal{M})} \ot T_f\big) \circ \alpha}_{\cb,\L^1(\cal{M}) \to \L^p(\cal{M},\L^p(\QG))}  \\        
&\leq \bnorm{\Id_{\L^p(\cal{M})} \ot T_f}_{\cb,\L^p(\cal{M},\L^1(\QG)) \to \L^p(\cal{M},\L^p(\QG))} \norm{\alpha}_{\cb,\L^1(\cal{M}) \to \L^p(\cal{M},\L^1(\QG))}\\
&\leq \norm{T_f}_{\cb,\L^1(\QG) \to \L^p(\QG)} \norm{\alpha}_{\cb,\L^1(\cal{M}) \to \L^p(\cal{M},\L^1(\QG))}
\ov{\eqref{norm-equality-QG}}{\leq} \norm{T_f}_{\cb,\L^1(\QG) \to \L^p(\QG)}.
\end{align*}
Moreover, $\norm{T_{f,\alpha}}_{\cb,\L^1(\cal{M}) \to \L^1(\cal{M})} = \norm{T_f}_{\cb,\L^1(\QG) \to \L^1(\QG)}=1$. Now, with the additional assumption, we have
\begin{align*}
\MoveEqLeft
\H_{\cb,\min,\tau}(T_{f,\alpha})
\ov{\eqref{Def-Scb-min-without-p}}{=} -\frac{\d}{\d p} \norm{T_{f,\alpha}}_{\cb,\L^1(\cal{M}) \to \L^p(\cal{M})}|_{p=1} 
\ov{\eqref{elem-lim}}{=} -\lim_{q \to \infty} q\ln \norm{T_{f,\alpha}}_{\cb,\L^1(\cal{M}) \to \L^{q^*}(\cal{M})} \\
&\ov{\eqref{transference-norms}}{\geq} -\lim_{q \to \infty} q\ln \norm{T_f}_{\cb,\L^1(\QG) \to \L^{q^*}(\QG)}
\ov{\eqref{elem-lim}}{=} -\frac{\d}{\d p} \norm{T_f}_{\cb,\L^1(\QG) \to \L^p(\QG)}|_{p=1}
\ov{\eqref{Def-Scb-min-without-p}}{=} \H_{\cb,\min,h_\QG}(T_f) \\
&\ov{\eqref{Scbmin}}{=} \H(f).
\end{align*} 
\end{proof}

\begin{remark} \normalfont
In particular, with \eqref{Holevo-Smin} and \eqref{compar-ent-min}, we obtain the inequality (non-optimal in general)
\begin{equation*}
\chi(T_{f,\alpha}) 
\leq -\H(f).
\end{equation*}
\end{remark}

\begin{remark} \normalfont
In the case where $\cal{M}$ is equipped with a trace, $\QG$ is a compact quantum group of Kac type and where $\alpha$ is trace preserving, a similar argument shows that 
\begin{equation}
\label{transference-norms-bis}
\norm{T_{f,\alpha}}_{\cb,\L^p(\cal{M}) \to \L^p(\cal{M})}
\leq \norm{T_f}_{\cb,\L^p(\QG) \to \L^p(\QG)}.
\end{equation} 
This observation generalizes the transference result \cite[Proposition 2.20]{ArK22}.
\end{remark}

Recall that we say that a quantum channel $T \co S^1_n \to S^1_n$ is covariant if there exists a continuous projective unitary representation $u \co G \to \M_n$ of a group $G$ such that
\begin{equation}
\label{}
T(u_sxu_s^*)
=u_sT(x)u_s, \quad s \in G,x \in \M_n.
\end{equation}
If the group $G$ is compact and if $u$ is \textit{irreducible}, Holevo gives in \cite[p.~42]{Hol05} (see also \cite{Hol16} for a nice complement) a simple argument of the equalities 
\begin{equation}
\label{Holevo-result}
\chi(T) 
=\log n -\H_{\min}(T) 
\quad \text{and} \quad
\max_{\rho} \H(T(\rho))
=\H\bigg(T\bigg(\frac{1}{n}\I\bigg)\bigg).
\end{equation}
The first equality is equally proved in \cite[Corollary 4.2 p.~385]{JuP15} (it is the case $d=1$) but only for irreducible \textit{representations}. Indeed, the first assumption of \cite[Definition 4.1 p.~383]{JuP15} is satisfied for any irreducible representation by \cite[Lemma 27.16]{HeR70}. The problem with this formula is the computation of the entropy $\H_{\min}(T)$. 

The result \cite[Corollary 15]{MSD17} (see also \cite{KMS20} for a related paper) precisely describes the quantum channels which are covariant with respect to an irreducible representation $u$ of a finite group $G$ such that the representation $u \ot u^c$ is multiplicity free\footnote{\thefootnote. Here $u^c$ is the contragredient representation.}.
In particular, such quantum channel can be written $T_{f,\alpha}$ as in \eqref{channel-commutatif} for some positive function $f \co G \to \mathbb{C}$ with $\norm{f}_{\L^1(G)}=1$. Now,
we obtain the following estimate:
$$
-\H_{\min,\tau}(T)
\ov{\eqref{compar-ent-min}}{\leq} -\H_{\cb,\min,\tau}(T)
\ov{\eqref{belles-estimations}}{\leq} -\H(f)
$$ 
where the entropies are taken for the \textit{normalized} traces \textit{contrary} to \eqref{Holevo-result}. We have the same upper bound for the classical capacity, see Remark \ref{Rem-classical-capacity-Tfalpha}.

\subsection{Systems of eigenoperators of ergodic co-actions of discrete groups}
\label{sec-eigenoperators}


\paragraph{2-cocycles of discrete groups} 
We first recall that a 2-cocycle on a discrete group $G$ with values in the torus $\T \ov{\mathrm{def}}{=} \{z \in \mathbb{C}: |z|=1 \}$ is a map $\sigma \co G \times G \to \T$ satisfying \eqref{equation-2-cocycle}. The terminologies <<multiplier>> or <<factor system>> are also used in the literature.  This implies that\footnote{\thefootnote. Indeed, the cocycle condition $\sigma(s,t)\sigma(st,r)
=\sigma(s,tr)\sigma(t,r)$ of \eqref{equation-2-cocycle} evaluated with $(s^{-1},s,e)$ instead of $(s,t,r)$ gives
$$
\sigma(s^{-1},s)\sigma(e,e)
=\sigma(s^{-1},s)\sigma(s,e).
$$ 
Hence $
\sigma(e,e) 
= \sigma(s,e)$. 
Similarly the 2-cocycle condition applied with $(e,s,s^{-1})$ instead of $(s,t,r)$ gives
$$
\sigma(e,s)\sigma(s,s^{-1})
=\sigma(e,e)\sigma(s,s^{-1}) 
$$
Thus $\sigma(e,s) = \sigma(e,e)$.}
$$
\sigma(s,e)
= \sigma(e,e) 
= \sigma(e,s) , \quad s \in G.
$$ 
The 2-cocycle $\sigma$ is said to be normalized if $\sigma(e,e)=1$. This condition entails\footnote{\thefootnote. Indeed, the cocycle condition $\sigma(s,t)\sigma(st,r)
=\sigma(s,tr)\sigma(t,r)$ of \eqref{equation-2-cocycle} evaluated with $(s,s^{-1},s)$ instead of $(s,t,r)$ gives
$$
\sigma(s,s^{-1})\sigma(e,s)
=\sigma(s,e)\sigma(s^{-1},s).
$$ } that
\begin{equation}
\label{ss-1-2-cocycle}
\sigma(s,s^{-1}) 
= \sigma(s^{-1},s), \quad s \in G.
\end{equation}
For any arbitrary function $\rho \co G \to \T$, the map $\sigma' \co G \times G\to \T$, $(s,t) \mapsto \rho(s)\rho(t)\rho(st)^{-1}\sigma(s,t)$ is again a 2-cocycle. In the case, we say that $\sigma$ and $\sigma'$ are equivalent (or cohomologous).  

\paragraph{Second cohomology group} The set of 2-cocycles on $G$ is an abelian group under pointwise multiplication, denoted by $\mathrm{Z}^2(G,\T)$. The identity element is the constant function on $G$ denoted by $1$. The inverse operation corresponds to conjugation, i.e.~$\sigma^{-1} = \ovl{\sigma}$, where $\ovl{\sigma}(s,t) \ov{\mathrm{def}}{=} \ovl{\sigma(s,t)}$ for any $s,t \in G$. The set of 2-cocycles which are equivalent to 1 is a subgroup $\mathrm{B}^2(G,\T)$ of the group $\mathrm{Z}^2(G,\T)$. Hence, we can introduce the quotient group
\begin{equation}
\label{def-H2}
\H^2(G,\T)
\ov{\mathrm{def}}{=} \mathrm{Z}^2(G,\T)/\mathrm{B}^2(G,\T),
\end{equation}
commonly referred to as the second cohomology group of $G$, also known as the Schur multiplier or multiplicator.




\paragraph{Ergodic co-actions of discrete groups}
A co-action of a discrete group $G$ is a left action $\alpha \co \cal{M} \to \cal{M} \otvn \VN(G)$ of the compact quantum group $(\VN(G),\Delta)$, defined in Example \ref{example-QG}, on a von Neumann algebra $\cal{M}$. The set of conjugacy classes (see \eqref{conjugate-actions}) of faithful ergodic co-actions of a discrete group $G$ is denoted by $[G]$. As in \cite[p.~163]{KaS82}, we can equip $[G]$ with a structure of semigroup. It is showed in \cite[Theorem 4.7 p.~170]{KaS82} that $[\Delta]$ is a unit element in the semigroup $[G]$. Moreover, the same result says if $\alpha$ is a faithful ergodic action then $[\tilde{\alpha}]$ is the inverse of $[\alpha]$, where 
$\tilde{\alpha} \co \cal{M}^\op \to \cal{M}^\op \otvn \VN(G)$, $x^\op \mapsto (\theta \ot R)\circ \alpha(x)$. Here, we use the antipode $R \co \VN(G) \to \VN(G)$, $\lambda_s \mapsto \lambda_{s^{-1}}$ and the map $\theta \co \cal{M} \to \cal{M}^\op$, $x \mapsto x^\op$. 

Consequently, $[G]$ is a group. Furthermore, it is showed in \cite[Theorem 4.6 p.~169]{KaS82} that the group $[G]$ of conjugacy classes of faithful ergodic actions is isomorphic to the group $\H^2(G,\T)$. We also refer to \cite{OPT80}, to \cite{AHK80} and to \cite{DeS83} for the case where $G$ is abelian. 
If $\alpha \co \cal{M} \to \cal{M} \otvn \VN(G)$ is a faithful action then by essentially \cite[Proposition 3.7 p.~166]{KaS82} the unique invariant state on $\cal{M}$ is a normal finite faithful trace $\tau_{\cal{M}}$.

Finally, it is known \cite[Proposition 6.2 p.~173]{KaS82} that if $\alpha \co \cal{M} \to \cal{M} \otvn \VN(G)$ is a faithful co-action then $\cal{M}$ is injective if and only if the discrete group $G$ is amenable.




\paragraph{Systems of eigenoperators}

Recall that $\mathrm{A}(G)$ is the Fourier algebra of $G$. Suppose that that the discrete group $G$ is abelian (see Remark \ref{rem-more general} for the non-abelian case). Then $\hat{G}$ is a compact abelian group with dual $G$.

\begin{defi}
We say that a family of unitary operators $(v_s)_{s \in G}$ in $\cal{M}$ is a family of eigenoperators for $\alpha \co \cal{M} \to \cal{M} \otvn \VN(G)$ if the von Neumann algebra $\cal{M}$ is generated by this family and if
\begin{equation}
\label{eigen-def}
\Phi_\chi(v_s)
=\la \chi,s \ra_{\hat{G},G} v_s, \quad s \in G, \chi \in \hat{G}
\end{equation}
where $\Phi \co \hat{G} \to \Aut(\cal{M})$ is the map associated to the ergodic action $\alpha \co \cal{M} \to \cal{M} \otvn\VN(G)=\cal{M} \otvn \L^\infty(\hat{G})$ with \eqref{classical-useful}.
\end{defi}


The existence of such system is explained in \cite[p.~240]{OPT80} (see also \cite[p.~165]{KaS82}). In this case, it is easy to check (see \cite{OPT80}) that the complex function $\sigma \co G \times G \to \mathbb{C}$ defined by
\begin{equation}
\label{calculus-Vilenkin-prod}
\sigma(s,t)
\ov{\mathrm{def}}{=} v_s v_t v_{s t}^*, 
\quad \textrm{i.e.} \quad 
v_s v_t
=\sigma(s,t) v_{s t} \quad s,t \in G,
\end{equation}
is a 2-cocycle. 
We consider the unique $G$-invariant normal state on the von Neumann algebra $\cal{M}$, which is a normal finite faithful trace $\tau$. The family $(v_s)_{s \in G}$ is clearly an orthonormal basis of the complex Hilbert space $\L^2(\cal{M})$. We have
\begin{equation}
\label{calculus-Vilenkin-inv}
v_s^* 
=c_s v_{s^{-1}}, \quad s \in G.
\end{equation}
for some complex number $c_s$. We can suppose that the 2-cocycle is normalized.
We have
\begin{equation}
\label{trace-fer}
\tau(v_s) 
= 0 \quad \text{if } s \not= e, 
\quad \text{i.e.} \quad 
\tau(v_s)
=\delta_{s,1}.
\end{equation}

\begin{remark} \normalfont
\label{rem-more general}
Our results in this section and in the two following section can be adapted to the case of ergodic faithful integrable of locally compact \textit{abelian} groups on von Neumann algebras, as in \cite{DeS83}. In this case, by \cite[Proposition 1.15 p.~275]{DeS83} the formula of \eqref{Def-ergodic} defines a normal semifinite faithful trace $\varphi_{\cal{M}}$ on $\cal{M}$. Similarly, we can generalize to the case of the case of a \textit{non-abelian} discrete group $G$ with \cite{KaS82}.
\end{remark}

\paragraph{Some maps}
Following Remark \ref{lien-actions-droite-gauche}, we introduce the map $\beta \ov{\mathrm{def}}{=} \flip \circ \alpha\co \cal{M} \to \VN(G) \otvn \cal{M}$. It is easy to check that the maps $\alpha$ and $\beta$ can be rewritten under the form
\begin{equation}
\label{actions-alpha-beta}
\alpha(v_s)
\ov{\mathrm{def}}{=} v_s \ot \lambda_s,\quad
\beta(v_s)
\ov{\mathrm{def}}{=} \lambda_s \ot v_s,\quad s \in G.
\end{equation}
Indeed, we have
\begin{equation}
\label{}
\alpha(v_s)(\chi) 
\ov{\eqref{classical-useful}}{=} \Phi_\chi(v_s)
\ov{\eqref{eigen-def}}{=} \la \chi,s \ra_{\hat{G},G} v_s.
\end{equation}

\paragraph{Multipliers on algebras of eigenoperators}
If the complex function $\phi \co G \to \mathbb{C}$ belongs to the Fourier algebra $\mathrm{A}(G)$ then for any $s \in G$ we have
\begin{align*}
\MoveEqLeft
R_{\la \phi, \cdot \ra,\beta}(\lambda_s)
\ov{\eqref{convol-def-left}}{=} (\la \phi, \cdot \ra \ot \Id) \circ \Delta(\lambda_s)         
= (\la \phi, \cdot \ra \ot \Id) (\lambda_s \ot \lambda_s) 
= \la \phi, \lambda_s \ra \lambda_s = \phi(s) \lambda_s.
\end{align*}
Consequently, $R_{\la \phi, \cdot \ra,\beta}$ is the Fourier multiplier $M_\phi \co \VN(G) \to \VN(G)$ with symbol $\phi$. Moreover
\begin{align*}
\MoveEqLeft
R_{\la \phi, \cdot \ra,\beta}(v_s)         
\ov{\eqref{convol-def-left}}{=} (\la \phi, \cdot \ra_{\mathrm{A}(G),\VN(G)} \ot \Id) \circ \beta(v_s) 
\ov{\eqref{actions-alpha-beta}}{=} (\la \phi, \cdot \ra_{\mathrm{A}(G),\VN(G)} \ot \Id) (\lambda_s \ot v_s) \\
&=\la \phi, \lambda_s \ra_{\mathrm{A}(G),\VN(G)} v_s 
=\phi(s)v_s.
\end{align*}
So the map $R_{\phi,\beta}$ can be seen as a <<multiplier>> which parallels the previous notion of Fourier multiplier. 

\begin{defi}
\label{def-multiplier-eigen}
Consider a complex function $\phi \co G \to \mathbb{C}$. A weak* continuous linear map $R_\phi \co \cal{M} \to \cal{M}$ defined by
\begin{equation}
\label{def-rad-mult}
R_\phi(v_s)
= \phi(s) v_s, \quad s \in G.
\end{equation}
is called multiplier with symbol $\phi$. 
\end{defi}

The previous maps $R_{\la \phi, \cdot \ra,\beta}$ are multipliers. Similarly, we have
\begin{align*}
\MoveEqLeft
T_{\la \phi, \cdot \ra,\alpha}(v_s)
\ov{\eqref{convol-def-bis}}{=} (\Id \ot \la \phi, \cdot \ra) \circ \alpha(v_s) 
\ov{\eqref{actions-alpha-beta}}{=} (\Id \ot \la \phi, \cdot \ra)(v_s \ot \lambda_s) \\
&=\la \phi, \lambda_s \ra_{\mathrm{A}(G),\VN(G)} v_s
=\phi(s)v_s.
\end{align*}
Hence $T_{\la \phi, \cdot \ra,\alpha}=R_\phi$. In particular, for any $\phi \in \mathrm{A}(G)$ we have the two intertwining relations
\begin{equation}
\label{Equation-Delta-66}
\beta \circ R_\phi
\ov{\eqref{commute-transference}}{=} \big(\Id_{\L^\infty(G)} \ot M_\phi\big) \circ  \beta
\end{equation}
and 
\begin{equation}
\label{Equation-Delta-op}
\alpha \circ R_\phi
\ov{\eqref{commute-transference}}{=} \big(\Id_{\cal{M}} \ot M_\phi\big) \circ \alpha.
\end{equation}

\begin{remark} \normalfont
\label{remark-trace-preserving}
It is easy to see that such a radial multiplier $R_\phi$ is unital if and only if $\phi(0)=1$. This is also equivalent to say that the map $R_\phi$ is trace preserving.
\end{remark}

\begin{prop}
\label{prop-autre-extension}
Suppose that $1 \leq p \leq \infty$. The map $\beta \co \cal{M} \to \VN(G) \otvn \cal{M}$ is a trace preserving unital injective $*$-homomorphism and induces a complete contraction $\beta \co \L^1(\cal{M}) \to \L^p(\VN(G),\L^1(\cal{M}))$.
\end{prop}

\begin{proof}
Since $\beta = \flip \circ \alpha$ and since $\alpha$ is a trace preserving unital injective $*$-homomorphism, we see that $\beta$ is trace preserving unital injective $*$-homomorphism. It is not difficult to prove that
\begin{equation}
\label{ergo-biz-biss}
(\Id_{\VN(G)} \ot \tau_{\cal{M}}) \circ \beta(x)
=\tau_{\cal{M}}(x) 1_{\VN(G)}, \quad x \in \cal{M}.
\end{equation}
Indeed, for any $s \in G$, we have
\begin{align*}
\MoveEqLeft
(\Id_{\VN(G)} \ot \tau_{\cal{M}}) \circ \beta(v_s)
\ov{\eqref{actions-alpha-beta}}{=} (\Id_{\VN(G)} \ot \tau_{\cal{M}}) (\lambda_s \ot v_s)
=\tau_{\cal{M}}(v_s) \lambda_s
=\tau_{\cal{M}}(v_s) 1_{\VN(G)}.
\end{align*}
Hence the result a consequence of Proposition \ref{prop-fundamental}.
\end{proof}

%


%

\paragraph{A $*$-homomorphism} We equally consider the linear map $\eta \co \VN(G) \to \cal{M} \otvn \cal{M}^\op$ defined by
\begin{equation}
\label{autre-eta}
\eta(\lambda_s)
\ov{\mathrm{def}}{=} v_s \ot (v_{s}^*)^\op,\quad s \in G.
\end{equation}
As the following result shows, this map allows the intertwining of multipliers on the group von Neumann algebra $\VN(G)$ and the multipliers on the von Neumann algebra $\cal{M}$.

\begin{prop}
\label{Prop-eta}
For any complex function $\phi \co G \to \mathbb{C}$ we have
\begin{equation}
\label{commute-bis}
(\Id_{\cal{M}} \ot R_\phi) \circ \eta
=\eta \circ M_\phi.
\end{equation}
Suppose that $1 \leq p \leq \infty$. The map $\eta$ is a trace preserving unital injective $*$-homomorphism and induces a complete contraction $\eta \co \L^1(\VN(G)) \to \L^p(\cal{M},\L^1(\cal{M}))$.
\end{prop}

\begin{proof}
The fact that $\eta$ is normal is easy and left to the reader. First, we prove the equality \eqref{commute-bis}. On the one hand, for any $s \in G$, we have
\begin{align*}
\MoveEqLeft
(\Id \ot R_\phi)\eta(\lambda_s)         
\ov{\eqref{autre-eta}}{=} (\Id \ot R_\phi)(v_s \ot v_s)
= v_s \ot R_\phi(v_s)
\ov{\eqref{def-rad-mult}}{=} \phi(s) v_s \ot v_s.
\end{align*}
On the other hand, we have
\begin{align*}
\MoveEqLeft
\eta M_\phi(\lambda_s)         
= \phi(s)\eta(\lambda_s) 
\ov{\eqref{autre-eta}}{=} \phi(s) v_s \ot v_s.
\end{align*}
For any $s,t \in G$, we have
\begin{align*}
\MoveEqLeft
\eta(\lambda_s\lambda_t)
\ov{\eqref{autre-eta}}{=}  v_{st} \ot (v_{st}^*)^\op
\ov{\eqref{calculus-Vilenkin-prod}}{=}  \ovl{\sigma(s,t)} v_{s} v_{t} \ot ((\ovl{\sigma(s,t)} v_{s} v_{t})^*)^\op 
= v_{s} v_{t}  \ot ((v_{s} v_{t})^*)^\op \\
&=v_s v_t \ot (v_s^*)^\op  (v_t^*)^\op 
=(v_s \ot (v_s^*)^\op)(v_t \ot (v_t^*)^\op)
\ov{\eqref{autre-eta}}{=} \eta(\lambda_s)\eta(\lambda_t)
\end{align*}
and
$$
\eta(\lambda_s^*)
= \eta(\lambda_{s^{-1}})
\ov{\eqref{autre-eta}}{=} v_{s^{-1}} \ot v_{s^{-1}}^*
\ov{\eqref{calculus-Vilenkin-inv}}{=} c_s v_{s^{-1}} \ot  v_{s}
\ov{\eqref{calculus-Vilenkin-inv}}{=} v_s^* \ot v_s
=(v_s \ot v_s^*)^*
\ov{\eqref{autre-eta}}{=} \eta(\lambda_s)^*.
$$
We deduce that $\eta$ is a $*$-homomorphism, which is clearly unital. Moreover, we also have
\begin{align*}
\MoveEqLeft
(\tau \ot \tau)(\eta(\lambda_s))
\ov{\eqref{autre-eta}}{=} (\tau \ot \tau)(v_s \ot v_s) 
=\tau(v_s)^2
\ov{\eqref{trace-fer}}{=} \delta_{s,e}
=\tau(\lambda_s).         
\end{align*}
Consequently, the map $\eta$ is trace preserving. The injectivity follows from \cite[Lemma 2.1 p.~2283]{Arh19}, but it is also possible to establish it through a direct proof. It is easy to check that
\begin{equation}
\label{ergo-biz}
(\Id_{\cal{M}} \ot \tau_{\cal{M}}) \circ \eta(f)
=\tau_{\VN(G)}(x) 1_{\cal{M}}, \quad x \in \VN(G).
\end{equation}
Indeed, for any $s \in G$, we have
\begin{align*}
\MoveEqLeft
(\Id_{\cal{M}} \ot \tau_{\cal{M}}) \circ \eta(\lambda_s)
\ov{\eqref{autre-eta}}{=}(\Id_{\cal{M}} \ot \tau_{\cal{M}}) (v_s \ot v_s) 
=\tau(v_s) v_s \\
&\ov{\eqref{trace-fer}}{=} \delta_{s,e} v_s 
=\delta_{s,e} 1_{\cal{M}}
= \tau_{\VN(G)}(\lambda_s) 1_{\cal{M}}.
\end{align*}
The last assertion is a consequence of Proposition \ref{prop-fundamental}.
\end{proof}

\subsection{Completely bounded norms $\L^1$ to $\L^p$ of multipliers on algebras of eigenoperators}
\label{sec-completely-bounded-multipliers}

If $G$ is a discrete group, recall that a function $\varphi \co G \to \mathbb{C}$ is positive definite if for any function $\xi \co G \to \mathbb{C}$ we have $\sum_{s,t \in G} \varphi(t^{-1}s) \ovl{\xi(t)} \xi(s) \geq 0$, see \cite[Proposition 13.4.4 p.~286]{Dix77}. Such a function satisfies $\varphi(s^{-1})=\ovl{\varphi(s)}$ and $|\varphi(s)| \leq \varphi(e)$ for any $s \in G$.  We use the notations that the ones of Section \ref{sec-eigenoperators}. In the next result, we characterize completely positive multipliers.

\begin{prop}
\label{Prop-cp}
Let $\phi \co G \to \mathbb{C}$ be a complex function. Then the following properties are equivalent.
\begin{enumerate}
	\item The multiplier $R_\phi \co \cal{M} \to \cal{M}$ is completely positive.
	\item The complex function $\phi \co G \to \mathbb{C}$ is definite positive on the discrete group $G$.
\end{enumerate} 
\end{prop}

\begin{proof}
1 $\Rightarrow$ 2: Suppose that the multiplier $R_\phi \co \cal{M} \to \cal{M}$ is completely positive. We denote by $\E \co \cal{M} \otvn \cal{M} \to \VN(G)$ the canonical trace preserving faithful conditional expectation associated to the trace preserving injective unital $*$-homomorphism $\eta \co \VN(G) \to \cal{M} \otvn \cal{M}$, provided by Proposition \ref{prop-existence-conditional-expectation}. Using the equality $\E \circ \eta=\Id$, we see that
$$
M_\phi
=\E \circ \eta \circ M_\phi
\ov{\eqref{commute-bis}}{=} \E \circ (\Id \ot R_\phi) \circ \eta.
$$ 
As a composition of completely positive maps, we deduce that the multiplier $M_\phi \co \VN(G) \to \VN(G)$ is (completely) positive. Consequently, the function $\phi \co G \to \mathbb{C}$ is definite positive.

2 $\Rightarrow$ 1: Suppose that the complex function $\phi \co G \to \mathbb{C}$ is definite positive. In this case, the linear operator $M_\phi \co \VN(G) \to \VN(G)$ is completely positive. Now, we consider the canonical trace preserving faithful conditional expectation $\E \co \cal{M} \otvn \VN(G) \to \cal{M}$ associated to the trace preserving injective unital $*$-homomorphism $\alpha \co \cal{M} \to \cal{M} \otvn \VN(G)$, provided by Proposition \ref{prop-existence-conditional-expectation}. With the equality $\E \circ \alpha = \Id$, we infer that
$$
R_\phi
=\E \circ \alpha \circ R_\phi
\ov{\eqref{Equation-Delta-op}}{=} \E \circ \big(\Id_{\cal{M}} \ot M_\phi\big) \circ \alpha.
$$
We obtain by composition that the radial multiplier $R_\phi \co \cal{M} \to \cal{M}$ is completely positive.
\end{proof}

\begin{thm}
\label{th-norm-cb-rad}
Suppose that $1 < p \leq \infty$. For any function $\phi \co \hat{G} \to \mathbb{C}$, we have
\begin{equation}
\label{norm-cb}
\norm{R_\phi}_{\cb, \L^1(\cal{M}) \to \L^p(\cal{M})}
= \norm{\sum_s \phi(s) \lambda_s}_{\L^p(\VN(G))}
\end{equation}
and
\begin{equation}
\label{norm-cb-2}
\norm{R_\phi^*}_{\cb, \L^{p^*}(\cal{M}) \to \L^\infty(\cal{M})}
=\norm{\sum_s \phi(s) \lambda_s}_{\L^{p}(\VN(G))}.
\end{equation}
\end{thm}

\begin{proof}
The second formula is an immediate consequence of the first formula by duality. So, it suffices to prove the first one. Note that by Proposition \ref{Prop-transfer-1}, we have
\begin{equation}
\label{Ine-div-4457}
\norm{\alpha}_{\cb, \L^1(\cal{M}) \to \L^p(\cal{M},\L^1(\VN(G)))}
\leq 1.
\end{equation}
Recall that the linear map $\alpha$ is a trace preserving unital $*$-homomorphism. We denote by $\E \co \cal{M} \otvn \VN(G) \to \cal{M}$ the associated canonical trace preserving faithful conditional expectation, provided by Proposition \ref{prop-existence-conditional-expectation}. Thus, the linear map $\alpha$ induces a complete isometry $\alpha \co \L^p(\cal{M}) \to \L^p(\cal{M} \otvn \VN(G))$ and we have a complete contraction $\E \co \L^p(\cal{M} \otvn \VN(G)) \to \L^p(\cal{M})$. On the one hand, using $\E \alpha=\Id_{\L^p(\cal{M})}$ in the first equality, we deduce that
\begin{align*}
\MoveEqLeft
\norm{R_\phi}_{\cb,\L^1(\cal{M}) \to \L^p(\cal{M})}
=\norm{\alpha R_\phi}_{\cb,\L^1(\cal{M}) \to \L^p(\cal{M} \otvn \VN(G))} \\
&\ov{\eqref{Equation-Delta-op}}{=} \norm{(\Id_{} \ot M_\phi ) \alpha}_{\cb,\L^1(\cal{M}) \to \L^p(\cal{M} \otvn \VN(G))} \\     
&\leq \norm{\Id \ot M_\phi}_{\cb,\L^p(\cal{M},\L^1(\VN(G)) \to \L^p(\cal{M} \otvn \VN(G))} \norm{\alpha}_{\cb,\L^1(\cal{M}) \to \L^p(\cal{M},\L^1(\VN(G)))} \\
&\ov{\eqref{ine-tensorisation-os}\eqref{Ine-div-4457}}{\leq} \norm{M_\phi}_{\cb, \L^1(\VN(G)) \to \L^p(\VN(G))}
\ov{\eqref{norm-equality-QG}}{=} \norm{\sum \phi(s) \lambda_s}_{\L^{p}(\VN(G))}.
\end{align*}
On the other hand, using the trace preserving unital injective $*$-homomorphism $\eta \co \VN(G) \to \cal{M} \otvn \cal{M}$ defined in \eqref{autre-eta} and a similar reasoning using its associated canonical trace preserving faithful conditional expectation $\E \co \cal{M} \otvn \cal{M} \to \VN(G)$, which satisfies $\E\eta=\Id$, we see that
\begin{align*}
\MoveEqLeft
\norm{\sum \phi(s) \lambda_s}_{\L^{p}(\VN(G))}
\ov{\eqref{norm-equality-QG}}{=}\norm{M_\phi}_{\L^1(\VN(G)) \to \L^p(\VN(G))}         
=\norm{\eta M_\phi}_{\L^1(\VN(G)) \to \L^p(\cal{M} \otvn \cal{M})}      \\
&\ov{\eqref{commute-bis}}{=} \norm{(\Id \ot R_\phi)\eta}_{\L^1(\VN(G)) \to \L^p(\cal{M} \otvn \cal{M})} \\
&\leq \norm{\Id_{} \ot R_\phi}_{\L^p(\cal{M},\L^1(\cal{M})) \to \L^p(\cal{M} \otvn \cal{M})} \norm{\eta}_{\L^1(\VN(G)) \to \L^p(\cal{M},\L^1(\cal{M}))} \\
&\ov{\eqref{ine-tensorisation-os}}{\leq}  \norm{R_\phi}_{\cb, \L^1(\cal{M}) \to \L^p(\cal{M})},
\end{align*}
where we used Proposition \ref{Prop-eta} in the last inequality.
\end{proof}

\subsection{Completely bounded minimal output entropy of the multipliers}
\label{entropy-cb-radial-multipliers}

We start with a useful lemma.

\begin{lemma}
\label{lemma-useful-678}
Let $G$ be an amenable discrete group. A function $\varphi \co G \to \mathbb{C}$ is positive definite if and only if
$\sum_{s \in G} \varphi(s)\lambda_s$ (symbolic notation, use an approximate unit) is a positive operator and we have 
\begin{equation}
\label{norm=1-789}
\norm{\sum_{s \in G} \varphi(s)\lambda_s}_{\L^1(\VN(G))}
=\varphi(e).
\end{equation}
\end{lemma}

\begin{proof}
For any function $\xi \co G \to \mathbb{C}$, we have
\begin{align*}
\MoveEqLeft
\bigg\la\sum_{s \in G} \varphi(s)\lambda_s \xi,\xi \bigg\ra_{\L^2(G)}
=\sum_{s \in G} \varphi(s) \la\lambda_s \xi,\xi \ra_{\L^2(G)}
=\sum_{s,t \in G} \varphi(s)\xi(s^{-1}t) \ovl{\xi(t)} \\
&=\sum_{s,t \in G} \varphi(ts^{-1})  \xi(s)\ovl{\xi(t)}
=\sum_{s,t \in G} \varphi(t^{-1}s)  \ovl{\check{\xi}(t)}\check{\xi}(s).
\end{align*}
Moreover, we have
$$
\norm{\sum_{s \in G} \varphi(s)\lambda_s}_{\L^1(\VN(G))}=\tau\bigg(\sum_{s \in G} \varphi(s)\lambda_s\bigg)
=\sum_{s \in G} \varphi(s)\tau(\lambda_s)
=\varphi(e).
$$
\end{proof}

Now, we can compute the completely bounded minimal output entropy of any radial multiplier which defines a quantum channel. By remark \ref{remark-trace-preserving} and Proposition \ref{Prop-cp}, this assumption means that $\phi(0)=1$ and that the complex function $\phi \co G \to \mathbb{C}$ is definite positive.

\begin{cor}
\label{Cor-ent-cb}
Consider a function $\phi \co G \to \mathbb{C}$. Suppose that the radial multiplier $R_\phi \co \cal{M} \to \cal{M}$ is a quantum channel. Assume that $\sum \phi(s) \lambda_s$ belong to $\L^p(\VN(G))$ and that the Segal entropy $\H_{\tau_{\VN(G)}}(f_\phi)$ of $\sum \phi(s) \lambda_s$ is finite. We have
\begin{equation}
\label{entropy-cb}
\H_{\cb,\min,\tau}(R_\phi)
=\H\big(\sum \phi(s) \lambda_s\big),
\end{equation}
where $\tau$ is the normalized trace defined in \eqref{trace-fer}.
\end{cor}

\begin{proof}
First note that the function $\phi$ is positive definite by Proposition \ref{Prop-cp} and consequently by Lemma \ref{lemma-useful-678}
\begin{align}
\MoveEqLeft
\label{norm-hg}
\norm{\sum \phi(s) \lambda_s}_{\L^1(\VN(G))}
\ov{\eqref{norm=1-789}}{=} 1.
\end{align}
Using Theorem \ref{th-norm-cb-rad} in the second equality and the equality $\norm{f_\phi}_{\L^1(G)}=1$ in the third equality, we obtain
\begin{align*}
\MoveEqLeft
\H_{\cb,\min,\tau}(R_\phi)
\ov{\eqref{Def-intro-Scb-min}}{=} -\frac{1}{\log 2}\frac{\d}{\d p} \big[\norm{R_\phi}_{\cb,\L^1(\cal{M}) \to \L^p(\cal{M})}\big]|_{p=1} \\
&\ov{\eqref{norm-cb}}{=} -\frac{1}{\log 2}\frac{\d}{\d p} \bigg[\norm{\sum \phi(s) \lambda_s}_{\L^p(\VN(G))}\bigg] |_{p=1} 
\ov{\eqref{deriv-norm-p}}{=} \H(f_\phi).
\end{align*}
\end{proof}

\begin{remark} \normalfont
\label{rem-cbmin-est-max}
We warn the reader that the trace used on the algebra $\cal{R}_n$ in the previous result for defining the completely bounded minimal output entropy is the \textit{normalized} one, defined in \eqref{trace-fer}. In particular, we have the inequality $\H_{\cb,\min,\tau}(R_\phi) \leq 0$ by \eqref{cbminless0}. 
\end{remark}


\begin{example} \normalfont
By the equivalence \eqref{carac-entropy-max} and the previous equality \eqref{entropy-cb}, the completely bounded minimal output entropy is maximal, i.e.~$\H_{\cb,\min,\tau}(R_\phi)=0$, if and only if $f_\phi=1$. This is equivalent to $\phi=\delta_0$, i.e.~$\phi(0)=1$ and $\phi(k)=0$ if $k \geq 1$. In this case, the associated quantum channel is the completely noisy channel.
\end{example}

\begin{remark} \normalfont
With \cite[Proposition 2.3]{LoW22}, we can express the entropy $\H(f_\phi)$ with the help of the relative entropy. We obtain the formula
$$
\H_{\cb,\min,\tau}(R_\phi)
=-\H(f_\phi\mu \| \mu),
$$  
where $f_\phi\mu$ is the probability measure on the abelian group $\Omega_n$ defined by the density $f_\phi$.
\end{remark}


%

\subsection{Completely $p$-summing multipliers and entanglement-assisted classical capacity}
\label{summing-radial-multipliers}

\paragraph{Completely $p$-summing maps}
We start by providing some background on the class of completely $p$-summing maps, where $1 \leq p <\infty$.

We begin by recalling the classical concept of $p$-summing operator, which is useful for understanding the notion of completely $p$-summing operator. Following the book \cite[p.~197]{DJT95}, we say that a bounded operator $T \co X \to Y$ is $p$-summing if there exists a constant $C \geq 0$ such that
\begin{equation}
\label{pq-summing}
\bigg( \sum_{j=1}^n \norm{T(x_j)}_Y^p \bigg)^\frac1p 
\leq C \sup_{y \in B_{X^*}} \bigg( \sum_{j=1}^n |\la y,x_j\ra_{X^*,X}|^p \bigg)^\frac1p
\end{equation}
for any integer $n \geq 1$ and any $x_1, \ldots, x_n \in X$. The $p$-summing norm of $T$, defined as the infimum of the constants $C$ as in \eqref{pq-summing}, is denoted by $\norm{T}_{\pi_{p},X \to Y}$.

Recall that the class of $p$-summing operators on Banach spaces is a Banach operator ideal by \cite[p.~128]{DeF93}. In particular, we have the inequality 
$$
\norm{R T S}_{\pi_{p}, W \to Z} \leq \norm{R}_{Y \to Z} \norm{T}_{\pi_{p},X \to Y} \norm{S}_{W \to X},
$$ 
with obvious notations. If $E$ and $F$ are operator spaces, recall that a linear map $T \co E \to F$ is said to be completely $p$-summing \cite[p.~51]{Pis98} if it induces a bounded map $\Id_{S^p} \ot T \co S^p \ot_{\min} E \to S^p(F)$. In this case, the completely $p$-summing norm is defined by
\begin{equation}
\label{def-norrm-cb-p-summing}
\norm{T}_{\pi_{p}^\circ,E \to F}
\ov{\mathrm{def}}{=} \norm{\Id_{S^p} \ot T}_{S^p \ot_{\min} E \to S^p(F)}.
\end{equation}
The class of completely $p$-summing maps satisfies a similar property where the operator norm is replaced by the completely bounded norm as the following easy lemma shows. This result \cite[(5.1) p.~51]{Pis98} is in the spirit of the definition of an operator space mapping ideal of \cite[p.~210]{EfR00}.

\begin{lemma}
\label{Lemma-is-bounded}
Suppose that $1 \leq p <\infty$. Let $E,F,G,H$ be operator spaces. Let $T \co E \to F$ be a completely $p$-summing map. If the linear maps $R \co F \to G$ and $S \co H \to E$ are completely bounded, then the linear map $R T S \co H \to G$ is completely $p$-summing and we have
\begin{equation}
\label{Ideal-ellp}
\norm{R T S}_{\pi_{p}^\circ, H \to G}
\leq \norm{R}_{\cb, F \to G} \norm{T}_{\pi_{p}^\circ, E \to F} \norm{S}_{\cb,H \to E}.
\end{equation}
\end{lemma}

The following is a simple useful observation is a noncommutative analogue of the classical result \cite[Example 2.9 (d) p.~40]{DJT95}, which says that if $\Omega$ is a \textit{finite} measure space then the canonical inclusion $i_p \co \L^\infty(\Omega) \hookrightarrow  \L^p(\Omega)$ is $p$-summing with 
$$
\norm{i_p}_{\pi_p,\L^\infty(\Omega) \to \L^p(\Omega)}=\mu(\Omega)^{\frac{1}{p}}.
$$

\begin{prop}
\label{Prop-inj-finite-avn}
Suppose that $1 \leq p < \infty$. If $\cal{M}$ is an approximately finite-dimensional finite von Neumann algebra equipped with a normal finite faithful trace $\tau$, then the canonical inclusion $i_p \co \cal{M} \hookrightarrow \L^p(\cal{M})$ is completely $p$-summing and
\begin{equation}
\label{norm-ip-bis}
\norm{i_p}_{\pi_{p}^\circ,\cal{M} \to \L^p(\cal{M})}
=(\tau(1))^{\frac{1}{p}}.
\end{equation}
\end{prop}

Let $\cal{M}$ be a von Neumann algebra equipped with a normal finite faithful trace. If $a,b \in \cal{M}$, we denote by $\Mult_{a,b} \co \cal{M} \to \L^p(\cal{M})$, $x \mapsto axb$ the two-sided multiplication map.
The following is a slight variation of \cite[Theorem 2.2 p.~360]{JuP15}, \cite[Theorem 2.4 p.~366]{JuP15} and \cite[Remark 2.2 p.~363]{JuP15}. It provides a nice characterization of the completely $p$-summing norm of linear maps acting on finite-dimensional von Neumann algebras. The proof is similar and we skip the details.
\begin{thm}
\label{thm-factorization-positivity}
Suppose that $1 \leq p < \infty$. Let $\cal{M}$ and $\cal{N}$ be finite-dimensional von Neumann algebras such that $\cal{M}$ is equipped with a faithful trace. Let $T \co \cal{M} \to \cal{N}$ be a linear map. Then, the following assertions are equivalent.
\begin{enumerate}
\item $\norm{T}_{\pi_{p}^\circ,\cal{M} \to \cal{N}} \leq C$.


\item There exist elements $a,b \in \cal{M}$ satisfying $\norm{a}_{\L^{2p}(\cal{M})} \leq 1$, $\norm{b}_{\L^{2p}(\cal{M})} \leq 1$ and a linear map $\tilde{T} \co \L^p(\cal{M}) \to \cal{N}$ such that
\begin{equation}
\label{sans-fin-33}
T
=\tilde{T} \circ \Mult_{a,b}
\quad \text{and} \quad 
\norm{\tilde{T}}_{\cb, \L^p(\cal{M}) \to \cal{N}} 
\leq C.
\end{equation}
\end{enumerate}
Furthermore, $
\norm{T}_{\pi_{p}^\circ,\cal{M} \to \cal{N}}=\inf\big\{C : C \text{ satisfies the previous conditions}\big\}$.
\end{thm}


We will use the two following results.

\begin{lemma}
\label{magic-434}
For any elements $c$ and $d$ belonging to the unit ball of the Banach space $\L^4(\cal{M})$, the linear map
\begin{equation}
\label{Theta-magic}
\L^2(\cal{M}) \to \L^2(\VN(G),\L^2(\cal{M})),\,  
x \mapsto (1 \ot c) \beta(x) (1 \ot d)
\end{equation}
is a (complete) contraction, where the map $\beta \co \cal{M} \to \VN(G) \otvn \cal{M}$ is defined in \eqref{actions-alpha-beta}.
\end{lemma}

\begin{proof}
Observe that by \cite[p.~139]{Pis03}, the operator spaces $\L^2(\cal{M})$ and $\L^2(\VN(G),\L^2(\cal{M}))$ are operator Hilbert spaces. Therefore, by \cite[Proposition 7.2 (iii) p.~127]{Pis03}, to establish complete contractivity, it is sufficient to prove that the map defined in \eqref{Theta-magic} is contractive.

Let $x \in \L^2(\cal{M})$. We will use the $*$-homomorphism $\alpha \co \cal{M} \to \cal{M} \otvn \VN(G)$ of \eqref{actions-alpha-beta} which is an ergodic action. Consequently, by Proposition \ref{Prop-transfer-1}, we have a contraction $\alpha \co \L^1(\cal{M}) \to \L^\infty(\cal{M},\L^1(\VN(G)))$. By interpolation with the compatible contractive map $\alpha \co \L^\infty(\cal{M}) \to \L^\infty(\cal{M}) \otvn \VN(G)$, we see that $\alpha$ induces a contraction $\alpha \co \L^2(\cal{M}) \to \L^\infty(\cal{M},\L^2(\VN(G)))$. By \eqref{LinftyLq-norms} applied with $p=2$, we deduce that
\begin{equation}
\label{}
\bnorm{(c \ot 1)\alpha(x)(d \ot 1)}_{\L^2(\cal{M},\L^2(\VN(G)))}   
\leq \norm{\alpha(x)}_{\L^\infty(\cal{M},\L^2(\VN(G)))}
\leq \norm{x}_{\L^2(\cal{M})}.
\end{equation}
This means that the map
\begin{equation*}
\L^2(\cal{M}) \to \L^2(\cal{M},\L^2(\VN(G))),\,  
x \mapsto (c \ot 1) \alpha(x) (d \ot 1)
\end{equation*}
is a well-defined contraction. Since $\beta=\flip \circ \alpha$, the map defined in \eqref{Theta-magic} is a contraction by Fubini's theorem, described in \eqref{Fubini}.  
\end{proof}

\begin{prop}
Suppose that $1 \leq p \leq \infty$. Assume that $a$ and $b$ are positive elements in the unit ball of the Banach space $\L^{2p}(\cal{M})$. Then the map
\begin{equation}
\label{Theta-bbis}
\Theta \co \L^p(\cal{M}) \to \L^p(\VN(G),\L^p(\cal{M})),\,  
x \mapsto (1 \ot a) \beta(x) (1 \ot b)
\end{equation}
is a well-defined complete contraction, where the $*$-homomorphism $\beta \co \cal{M} \to \VN(G) \otvn \cal{M}$ is defined in \eqref{actions-alpha-beta}.
\end{prop} 

\begin{proof}
Suppose that $2 \leq p \leq \infty$. For any complex number $z$ in the closed strip $\ovl{S} \ov{\mathrm{def}}{=} \{z \in \mathbb{C} : 0 \leq \Re z \leq 1 \}$, we consider the linear map
\begin{equation}
\label{def-Tz}
T_z \co \cal{M} \to \L^1(\VN(G),\L^1(\cal{M})),\quad x \mapsto \big(1 \ot a^{\frac{pz}{2}}\big) \beta(x) \big(1 \ot b^{\frac{pz}{2}}\big).
\end{equation}
Since $a$ and $b$ are positive elements in the unit ball of $\L^{2p}(\cal{M})$, the elements $a^{\frac{p}{2}}$ and $b^{\frac{p}{2}}$ belong to the unit ball of the Banach space $\L^4(\cal{M})$. Note that
\begin{equation}
\label{unitaries}
\bnorm{1 \ot a^{\frac{\i t p}{2}}}_{\VN(G) \otvn \cal{M}}
=\norm{1}_{\VN(G)} \bnorm{a^{\frac{\i t p}{2}}}_{\cal{M}}
=1
\end{equation}
and similarly
\begin{equation}
\label{unitaries-2}
\bnorm{1 \ot b^{\frac{\i t p}{2}}}_{\VN(G) \otvn \cal{M}}
=1.
\end{equation}
For any $t \in \R$ and any $x \in \cal{M}$, we deduce with H\"older's inequality and Lemma \ref{magic-434} that
\begin{align*}
\MoveEqLeft
\norm{T_{1+\i t}(x)}_{\L^2(\VN(G),\L^2(\cal{M}))}            
\ov{\eqref{def-Tz}}{=} \bnorm{\big(1 \ot a^{\frac{p+\i t p}{2}}\big) \beta(x) \big(1 \ot b^{\frac{p+\i t p}{2}}\big)}_{\L^2(\VN(G),\L^2(\cal{M}))} \\
&\ov{\eqref{Holder}}{\leq} \bnorm{1 \ot a^{\frac{\i t p}{2}}}_{\L^\infty(\L^\infty)} \bnorm{\big(1 \ot a^{\frac{p}{2}}\big) \beta(x) \big(1 \ot b^{\frac{p}{2}}\big)}_{\L^2(\VN(G),\L^2(\cal{M}))} \bnorm{1 \ot b^{\frac{\i t p}{2}}}_{\L^\infty(\L^\infty)} \\
&\ov{\eqref{unitaries}\eqref{unitaries-2}}{=} \bnorm{\big(1 \ot a^{\frac{p}{2}}\big) \beta(x) \big(1 \ot b^{\frac{p}{2}}\big)}_{\L^2(\VN(G),\L^2(\cal{M}))}
\ov{\eqref{Theta-magic}}{\leq} \norm{x}_{\L^2(\cal{M})}.
\end{align*}
We infer that the map $T_{1+\i t} \co \L^2(\cal{M}) \to \L^2(\VN(G),\L^2(\cal{M}))$ is contractive. Note that by \cite[p.~139]{Pis03}, the operator space $\L^2(\cal{M})$ is an operator Hilbert space. We conclude that this map is even completely contractive by \cite[Proposition 7.2 (iii) p.~127]{Pis03}.

For any $t \in \R$ and any $x \in \cal{M}$, we have
\begin{align*}
\MoveEqLeft
T_{\i t}(x)         
\ov{\eqref{def-Tz}}{=} \big(1 \ot a^{\frac{\i t p}{2}}\big) \beta(x) \big(1 \ot b^{\frac{\i t p}{2}}\big).
\end{align*} 
Note that the $*$-homomorphism $\beta \co \cal{M} \to \VN(G) \otvn \cal{M}$ is completely contractive by \cite[Proposition 1.2.4 p.~5]{BLM04}. Moreover, it is elementary with the equalities \eqref{unitaries} and \eqref{unitaries-2} to check that the two-sided multiplication map 
$$
\VN(G) \otvn \cal{M} \to \VN(G) \otvn \cal{M},\, y \mapsto (1 \ot a^{\frac{\i t p}{2}})y(1 \ot b^{\frac{\i t p}{2}})
$$ 
is also completely contractive (and even contractively decomposable by \cite[Exercise 12.1 p.~251]{Pis03}).
 As a composition of the completely contracting maps, the operator $T_{\i t} \co \L^\infty(\cal{M}) \to \VN(G) \otvn \cal{M}$ is completely contractive. 

Moreover, for any $x \in \cal{M}$, the function $\ovl{S} \to \L^1(\VN(G),\L^1(\cal{M}))$, $z \mapsto T_z(x)$ is continuous and bounded 
on the closed strip $\ovl{S}$ and analytic on the open strip $S$. 
Observe that by the reiteration theorem \cite[Theorem 4.6.1 p.~101]{BeL76} we have
\begin{equation}
\label{ident5}
\L^p(\cal{M})
=(\L^\infty(\cal{M}), \L^2(\cal{M}))_{\frac{2}{p}}
\end{equation}
and
\begin{equation}
\label{ident4}
\L^p(\VN(G),\L^p(\cal{M}))
=(\VN(G) \otvn \cal{M},\L^2(\VN(G),\L^2(\cal{M})))_{\frac{2}{p}}.
\end{equation}
By Stein's interpolation theorem (Theorem \ref{thm-Stein}), we conclude by taking $z=\frac{2}{p}$ that the linear map defined in \eqref{Theta-bbis} is a contraction. Actually, it is a complete contraction if we combine Stein's interpolation theorem with \eqref{defnormecb} and replacing the interpolation formulas \eqref{ident5} and \eqref{ident4} by the interpolation formulas 
$$
S^p(\L^p(\cal{M}))
=(S^\infty(\L^\infty(\cal{M})), S^2(\L^2(\cal{M})))_{\frac{2}{p}}
$$
and
$$
S^p(\L^p(\VN(G),\L^p(\cal{M})))
=(S^\infty(\VN(G) \otvn \cal{M}),S^2(\L^2(\VN(G),\L^2(\cal{M}))))_{\frac{2}{p}}.
$$

The case $1 \leq p \leq 2$ is similar. 
\end{proof}

\paragraph{Description of the completely $p$-summing norm of multipliers}
Here, we prove that there exists a simple expression for the completely $p$-summing norm of any multiplier $R_\phi \co \cal{M} \to \cal{M}$ viewed as an operator acting on the von Neumann algebra $\cal{M}$. It is equal to the completely bounded norm from $\L^p$ into $\L^\infty$. In the next result, we use the \textit{normalized} trace on the finite-dimensional von Neumann algebra $\cal{M}$.

\begin{thm}
\label{Th-p-sum}
Suppose that $1 \leq p < \infty$. Consider a function $\phi \co G \to \mathbb{C}$. We have
\begin{equation}
\label{norm-p-sum}
\norm{R_\phi}_{\pi_p^\circ,\cal{M} \to \cal{M}}
=\norm{R_\phi}_{\cb, \L^p(\cal{M}) \to \cal{M}}.
\end{equation}
\end{thm}

\begin{proof}
If we consider the canonical inclusion $i_p \co \cal{M} \hookrightarrow \L^p(\cal{M})$ then by Proposition \ref{Prop-inj-finite-avn}, 
we have the equality
\begin{equation}
\label{norm-ip}
\norm{i_p}_{\pi_{p}^\circ,\cal{M} \to \L^p(\cal{M})}
\ov{\eqref{norm-ip-bis}}{=} 1.
\end{equation} 
By Lemma \ref{Lemma-is-bounded}, we obtain by composition the estimate 
\begin{align*}
\MoveEqLeft
\norm{R_\phi}_{\pi_p^\circ, \cal{M} \to \cal{M}}
\ov{\eqref{Ideal-ellp}}{\leq}
\norm{R_\phi}_{\cb, \L^p(\cal{M}) \to \cal{M}} \norm{i_p}_{\pi_{p}^\circ,\cal{M} \to \L^p(\cal{M})} \\  
&\ov{\eqref{norm-ip}}{=} \norm{R_\phi}_{\cb, \L^p(\cal{M}) \to \cal{M}}.
\end{align*}

Now we prove the reverse inequality, which is more involved. Let $\epsi > 0$. 
By Theorem \ref{thm-factorization-positivity}, there exist elements $a,b \in \L^{2p}(\cal{M})$ satisfying $\norm{a}_{\L^{2p}(\cal{M})} \leq 1$ and $\norm{b}_{\L^{2p}(\cal{M})} \leq 1$ and a linear map $\tilde{R}_{\phi} \co \L^p(\cal{M}) \to \cal{M}$ such that 
\begin{equation}
\label{Facto-Fourier-multiplier}
R_{\phi}
\ov{\eqref{sans-fin-33}}{=} \tilde{R}_{\phi} \circ \Mult_{a,b}
\quad \text{and} \quad 
\norm{\tilde{R}_{\phi}}_{\cb,\L^p(\cal{M}) \to \cal{M}} 
\leq \norm{R_\phi}_{\pi_p^\circ, \cal{M} \to \cal{M}} + \epsi,
\end{equation}
where $\Mult_{a,b} \co \cal{M} \to \L^p(\cal{M})$ is the two-sided multiplication map. Replacing the map $\tilde{R}_{\phi}$ by the map $x \mapsto\tilde{R}_{\phi}(u x v^*)$ where $a=u|a|$ and $b^*=v|b^*|$ are the polar decompositions of the elements $a$ and $b^*$, we can suppose that $a$ and $b$ are positive elements. Indeed, firstly for any $x \in \L^p(\cal{M})$ we have 
$$
\tilde{R}_{\phi}\big(u \Mult_{|a|,|b^*|}(x) v^*\big)
=\tilde{R}_{\phi}\big(u |a|\,  x \, |b^*|v^*\big)
=\tilde{R}_{\phi}(a x b)
\ov{\eqref{Facto-Fourier-multiplier}}{=} R_\phi(x).
$$ 
Secondly, by \cite[Lemma 5.1]{Arh24a}, the two-sided multiplication map $\Mult_{u,v} \co \L^p(\cal{M}) \to \L^p(\cal{M})$, $x \mapsto u x v$ is decomposable with decomposable norm 
$$
\norm{\Mult_{u,v}}_{\dec,\L^p(\cal{M}) \to \L^p(\cal{M})} 
\leq \norm{u}_{\L^\infty(\cal{M})} \norm{v}_{\L^\infty(\cal{M})} 
=1,
$$ 
hence completely bounded by \cite[Proposition 3.30 p.~50]{ArK23} with completely bounded norm 
$$
\norm{\Mult_{u,v}}_{\cb,\L^p(\cal{M}) \to \L^p(\cal{M})}
\leq \norm{\Mult_{u,v}}_{\dec,\L^p(\cal{M}) \to \L^p(\cal{M})}
\leq 1
$$
(an elementary proof of this estimate is also possible). Since the linear map $\tilde{R}_{\phi} \co \L^p(\cal{M}) \to \cal{M}$ is completely bounded, we deduce the estimate 
\begin{align}
\MoveEqLeft
\label{Equa-138676}
\norm{\Id_{\L^p(G_n)} \ot \tilde{R}_{\phi}}_{\cb,\L^p(\VN(G),\L^p(\cal{M})) \to \L^p(\VN(G),\cal{M})}            
\ov{\eqref{ine-tensorisation-os}}{\leq} \norm{\tilde{R}_{\phi}}_{\cb,\L^p(\cal{M}) \to \cal{M}} \\
&\ov{\eqref{Facto-Fourier-multiplier}}{\leq}  \norm{R_{\phi}}_{\pi_p^\circ, \cal{M} \to \cal{M}} +\epsi. \nonumber
\end{align}

Let $\E \co \VN(G) \otvn \cal{M} \to \cal{M}$ be the canonical trace preserving  faithful conditional expectation associated with the trace preserving unital injective $*$-homomorphism $\beta \co \cal{M} \to \VN(G) \otvn \cal{M}$ defined in \eqref{actions-alpha-beta}, provided by Proposition \ref{prop-existence-conditional-expectation}. By Proposition \ref{prop-autre-extension}, we have a complete contraction $\beta \co \L^1(\cal{M}) \to \L^{p^*}(\VN(G),\L^1(\cal{M}))$. Using \eqref{Esp-dual}, we see that the adjoint map of this map identifies to the linear map $\E \co \L^p(\VN(G),\cal{M}) \to \cal{M}$, which is therefore a complete contraction. Now, we have
\begin{align*}
\MoveEqLeft
\beta R_{\phi}
\ov{\eqref{Equation-Delta-66}}{=} (\Id \ot R_{\phi}) \beta
\ov{\eqref{Facto-Fourier-multiplier}}{=}(\Id \ot \tilde{R}_{\phi} \Mult_{a,b}) \beta \\
&=(\Id \ot \tilde{R}_{\phi}) (\Id \ot \Mult_{a,b}) \beta
\ov{\eqref{Theta-bbis}}{=} (\Id \ot \tilde{R}_{\phi}) \Theta.           
\end{align*} 
Since $\E \beta=\Id_{\cal{M}}$, we deduce the equality 
$$
R_{\phi}=\E \beta R_{\phi}=\E (\Id \ot \tilde{R}_{\phi}) \Theta.
$$ 
That means that we have the following commutative diagram.
$$
\xymatrix @R=1.5cm @C=2cm{
\L^p(\VN(G),\L^p(\cal{M})) \ar[r]^{\Id_{\L^p(\VN(G))} \ot \tilde{R}_{\phi}}   & \ar[d]^{\E} \L^p(\VN(G),\cal{M}) \\
\L^p(\cal{M}) \ar[r]_{R_\phi} \ar[u]^{\Theta}   & \cal{M} \\
  }
$$
We infer that we have the estimate
\begin{align*}
\MoveEqLeft
\norm{R_{\phi}}_{\cb,\L^p(\cal{M}) \to \cal{M}}
=\norm{\Theta(\Id_{\L^p} \ot \tilde{R}_{\phi})\E}_{\cb, \L^p(\cal{M}) \to \cal{M}} \\
&\leq \norm{\Theta}_{\cb,\L^p \to \L^p(\L^p)} \norm{\Id_{\L^p} \ot \tilde{R}_{\phi}}_{\cb, \L^p(\L^p) \to \L^p(\L^\infty)} \norm{\E}_{\cb,\L^p(\L^\infty) \to \cal{M}} \\
&\ov{\eqref{Equa-138676}}{\leq} \norm{R_\phi}_{\pi_p^\circ, \cal{M} \to \cal{M}}+\epsi.
\end{align*}
Since $\epsi >0$ is arbitrary, we obtain the desired estimate 
$$
\norm{R_{\phi}}_{\cb,\L^p(\cal{M}) \to \cal{M}} 
\leq \norm{R_\phi}_{\pi_p^\circ, \cal{M} \to \cal{M}}.
$$
\end{proof}

If $\cal{M}=\M_n$ then Theorem \ref{Th-p-sum} combined with Theorem \ref{th-norm-cb-rad} allows us to use formula  \cite[Theorem 1.1 p.~355]{JuP15} of Junge and Palazuelos
\begin{equation}
\label{CEA-formula}
\C_{\EA}(T)
=\frac{1}{\log 2} \frac{\d}{\d p}\big[\norm{T^*}_{\pi_{p^*}^\circ, \M_n \to \M_n}\big]|_{p=1},
\end{equation}
which describes the entanglement-assisted classical capacity as a derivative of completely $p$-summing norms of the adjoint map $T^* \co \M_n \to \M_n$ of the quantum channel $T \co S^1_n \to S^1_n$. Here $p^* \ov{\mathrm{def}}{=} \frac{p}{p-1}$. \eqref{CEA-formula} to obtain the exact value of the entanglement-assisted classical capacity of these multipliers. 

\begin{thm}
\label{th-capacity-assisted}
Suppose $\cal{M}=\M_n$. Consider a function $\phi \co G \to \mathbb{C}$. Suppose that the multiplier $R_\phi \co \cal{M} \to \cal{M}$ is a quantum channel. We have
$$
\C_{\EA}(R_\phi)
=-\H(\sum_s \phi(s) \lambda_s), 
$$ 
where $\H_{\tau_{\VN(G)}}(\sum_s \phi(s) \lambda_s)$ is the Segal entropy of $(\sum_s \phi(s) \lambda_s$ defined in \eqref{Segal-entropy} with the normalized trace.
\end{thm}



\section{Examples}
\subsection{Example: projective representations of abelian finite groups}
\label{sec:mixed}

Let $G$ be a locally compact group equipped with a left Haar measure $\mu_G$.  Consider a $n$-dimensional continuous projective unitary representation $u \co G \to \M_n$ of $G$. Combining Example \ref{Ex-classical-action} and Example \ref{rep-action}, we have a right action $\alpha \co \M_n \to \M_n \otvn \L^\infty(G)=\L^\infty(G,\M_n)$ such that
\begin{equation}
\label{}
\alpha(x)(s)
=u(s)xu(s)^*, \quad s \in G, x \in \M_n.
\end{equation}
In this case, if $f \in \L^1(G)$, the map $T_{f,\alpha} \co \M_n \to \M_n$ of \eqref{convol-def-bis} is defined by 
\begin{equation}
\label{channel-commutatif}
T_{f,\alpha}(x)
=\int_G u(s)xu(s)^* f(s) \d \mu_G(s),\quad x \in \M_n.
\end{equation}
These maps are considered for finite groups in \cite[p.~9]{GJL18b} and \cite[p.~85]{GJL18a} under the notation <<$\cal{N}_f$>> and in \cite{CrN13} under the notation <<$\Theta(\mu)$>> where $\mu=f\mu_G$ and only for the right regular representation. Moreover, these maps are particular cases (for only representations) of the slightly more general maps considered in \cite{SmS05} (see also \cite[Lemma 4.4]{Sto80}). The descriptions of \cite[Example 3.2 and Example 3.3]{CrN13} show that the $n$-qubit flip channels and the bit-flip swap channels are particular cases of \eqref{channel-commutatif}. 

For a quantum channel $T \co S^1_n \to S^1_n$ which is <<covariant>> with respect to a compact group $G$ and some irreducible representation of $G$, the entanglement-assisted classical capacity $\C_{\EA}(T)$ is related to the completely bounded minimum output entropy by the formula 
$$
\C_{\EA}(T)
=\log_2 n-\H_{\cb,\min,\tr}(T)
$$ 
of \cite[p.~385]{JuP15}. Our approach allows us to give the exact values $\H_{\cb,\min,\tr}(T)$ and of $\C_{\EA}(T)$ in the case of a channel as in \eqref{channel-commutatif}, defined with an irreducible projective unitary representation of an abelian finite group.

The complex ordinary representations of finite abelian groups are relatively straightforward to understand. For instance, all irreducible uitary representations are one-dimensional. However, this simplicity does not extend to their projective representations, which present more complexity. This problem has been explored by several authors
in \cite{Mor73}, \cite{MST87} and \cite{Hig01}. 
In particular,  Higgs \cite{Hig01} constructed an irreducible representation for each 2-cocycle $\sigma \co G \times G \to \T$ of an elementary finite abelian group $G=(\mathbb{Z}/n\mathbb{Z})^k$ and any integers $k,n \geq 1$. 
However, analogous results for $(\mathbb{Z}/n^r\mathbb{Z})^k$ with $r > 1$ are still unknown. It is known \cite[Proposition 2.2 (3)]{Che15} that all finite-dimensional irreducible projective representations of a finite abelian group $G$ with 2-cocycle $\sigma$ have the same degree and \cite[Theorem 3.5]{Che15} that the degrees of the irreducible projective representations of $G$ divide the order of $G$.


\begin{example} \normalfont
Let $n \geq 2$ be an integer. By \cite[Proposition 1.3 p.~133]{Kar85}, the second cohomology group of the product group $G=\Z/n\Z \times \Z/n\Z$ is $\H^2(\Z/n\Z \times \Z/n\Z,\T)=\Z/n\Z$. Moreover, a direct consequence of \cite[Lemma 1.1 p.~359]{Kar94} and \cite[Lemma 2.23 p.~382]{Kar94} is that its elements may be represented by 
\begin{equation}
\label{sigma-k-cocycle-4}
\sigma_l((j,k),(j',k'))
=\e^{\frac{2\pi\i l}{n}k j'},
\end{equation}
where $0 \leq l \leq n-1$. Some projective unitary representations of this group are described in \cite[p.~120]{Kar85}. For example, consider the matrices 
\begin{equation}
\label{def-X-Z}
X
=X_n 
\ov{\mathrm{def}}{=}
\begin{bmatrix}
\e^{\frac{2\pi \i }{n}} & 0 & 0 & \cdots & 0 \\
0 & \e^{\frac{2\pi \i }{n}} & 0 & \cdots & 0 \\
0 & 0 & \e^{\frac{2\pi \i }{n}} & \cdots & 0 \\
\vdots & \vdots & \vdots & \ddots & \vdots \\
0 & 0 & 0 & \cdots & \e^{\frac{2\pi \i }{n}} \\
\end{bmatrix}
\quad \text{and} \quad
Z
=Z_n
\ov{\mathrm{def}}{=}
\begin{bmatrix}
0 & 1 & 0 & \cdots & 0 \\
0 & 0 & 1 & \cdots & 0 \\
\vdots & \vdots & \vdots & \ddots & \vdots \\
0 & 0 & 0 & \cdots & 1 \\
1 & 0 & 0 & \cdots & 0 \\
\end{bmatrix}.
\end{equation}
in the algebra $\M_n$. These unitary matrices are sometimes referred to as the discrete Weyl operators. Note that the commutation rule $XZ=\e^{\frac{2\pi \i}{n}}ZX$. The map $u \co \Z/n\Z \times \Z/n\Z \to \M_n$ defined by
$$
u(j,k)
\ov{\mathrm{def}}{=} X^j Z^k, \quad 0 \leq j,k \leq d-1
$$
is a projective unitary representation associated to te 2-cocycle $\sigma_1$, i.e.~we have $u(j,k)u(j',k')=\e^{\frac{2\pi\i}{n}k j'}u(j+j',k+k')$. Given a probability distribution $f \co \Z/n\Z \times \Z/n\Z \to \R$, the quantum channel of \eqref{channel-commutatif} admits the Kraus representation defined by
$$
T_f(x)
=\sum_{j,k=0}^{n-1} f(j,k)X^j Z^k x (X^jZ^k)^*, \quad x \in \M_n.
$$
These channels are the Weyl-covariant maps, see \cite[p.~217]{Wat18}.

For any $0 \leq t \leq 1$, the $d$-dimensional depolarizing channel $D_t \co \M_n \to \M_n$, $x \to t x+(1-t)\frac{\I}{n}$ can be written under this form. Indeed, we have by \cite[pp.~120-121]{Hol19}
$$
D_t(x)
=t x +\frac{1-t}{n^2} \sum_{j,k} X^j Z^k \rho(X^jZ^k)^*, \quad x \in \M_n.
$$
In particular, with \eqref{entropy-cb} we recover the formula 
of \cite[(5.5) p.~53]{DJKRB06} and \cite[Section 4.1]{GJL18a}.  
\end{example}

\begin{example} \normalfont
Consider the finite group $G=\Z_2 \times \Z_2 \times \Z_4 \times \Z_4= \la x_1 \ra \times \la x_2\ra \times \la x_3 \ra \times \la x_4 \ra$. By \cite[p.~775]{Hig01}, an irreducible projective representation $u \co G \to \M_8$ of $G$ is defined by
$$
u(s_1)
\ov{\mathrm{def}}{=} \begin{bmatrix}
   -1  & 0  \\
   0  &  1 \\
\end{bmatrix} \ot \I_4, \quad 
u(s_2)
\ov{\mathrm{def}}{=} \begin{bmatrix}
   0  &  1 \\
   1  &  0 \\
\end{bmatrix} \ot \I_4
$$
and
$$
u(s_3) \ov{\mathrm{def}}{=} \I_2 \ot \begin{bmatrix}
  \i   & 0 & 0 & 0 \\
    0 & -1 & 0 & 0 \\
    0 & 0 & -\i & 0 \\
    0 & 0 & 0 & 1 \\
\end{bmatrix}, \quad u(s_4) \ov{\mathrm{def}}{=} \I_2 \ot \begin{bmatrix}
    0 & 0 & 0 & 1 \\
   1  & 0 & 0 & 0 \\
   0  & 1 & 0 & 0 \\
    0 & 0 & 1 & 0 \\
\end{bmatrix}.
$$
It is associated to the the 2-cocycle $\sigma \co G \times G\to \T$ defined by $\sigma(s_i,s_j)
=\begin{cases}
-1&\text{ if } (i,j)=(2,1)\\
-\i & \text{ if } (i,j)=(4,3) \\
1& \text{ otherwise}
\end{cases}$.
See \cite[p.~359]{Kar94} for the other values. We refer to \cite{Hig01} for more general examples.
\end{example}

\subsection{Example: representations of quantum groups}
\label{Sec-Transference-Junge-channels}

Her, we connect our convolution operators with the quantum channels investigated in the paper \cite{GJL18b}.

If $u \in \M_{n} \ot \L^\infty(\QG)$ is a unitary representation of a compact quantum group $\QG$ (see \eqref{Def-rep}), then it is folklore \cite[(2.10)]{Wan99} \cite[Lemma 2.1]{Izu02} that the map $\alpha \co \M_{n} \to \M_{n} \ot \L^\infty(\QG)$ defined by
\begin{equation}
\label{canonical-implementation}
\alpha(x)
=u(x \ot 1)u^*, \quad x \in \M_{n}.
\end{equation}
is a right action of $\QG$ on the matrix algbra $\M_{n}$ in the sense of \eqref{Def-corep}. It is easy to check that $\alpha$ is ergodic if and only if $u$ is irreducible. The next proposition allows us to recover the quantum channel denoted by <<$N_f$>> in \cite[(9) p.~10]{GJL18b}. Here, we identify $\L^1(\QG)$ with $\L^\infty(\QG)_*$.

\begin{prop}
Let $\QG$ be finite quantum group and $u$ be a unitary representation. Let $f \in \L^1(\QG)$ be a state. The map $T_{f,\alpha} \co \M_{n} \to \M_{n}$ is given by 
\begin{equation}
\label{Quantum-group-channel}
T_{f,\alpha}(x)
=(\Id \ot \tau)\big(u(x \ot f)u^*\big), \quad x \in \M_{n}.
\end{equation}
\end{prop}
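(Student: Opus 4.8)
The plan is to prove \eqref{Quantum-group-channel} by directly unwinding the definitions, the only real work being a slice‑map computation. Recall that $T_{f,\alpha}$ is shorthand for $T_{\omega,\alpha}=(\Id_{\M_n}\ot\omega)\circ\alpha$, where $\omega\in\L^\infty(\QG)_*$ is the normal state associated to the state $f\in\L^1(\QG)$ under the identification $\L^1(\QG)\cong\L^\infty(\QG)_*$; since $\QG$ is finite it is of Kac type by \cite{VDa2}, so the Haar state $\tau$ is tracial and this identification is governed by the pairing $\langle f,y\rangle=\tau(fR(y))=\tau(yR(f))$ used throughout (see \eqref{bracket}), while $\alpha$ is the canonical action \eqref{canonical-implementation} implemented by the unitary representation $u$, that is $\alpha(x)=u(x\ot 1)u^*$.

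First I would substitute \eqref{canonical-implementation} to get $T_{f,\alpha}(x)=(\Id\ot\omega)\big(u(x\ot 1)u^*\big)$, and then convert the slice by $\omega$ into a slice by the Haar trace: for $z\in\M_n\otvn\L^\infty(\QG)$ one has $(\Id_{\M_n}\ot\omega)(z)=(\Id_{\M_n}\ot\tau)\big(z(1\ot R(f))\big)$, which is immediate on elementary tensors $z=a\ot b$ and extends by linearity and normality. This rewrites $T_{f,\alpha}(x)$ as $(\Id\ot\tau)\big(u(x\ot 1)u^*(1\ot R(f))\big)$. The remaining, and only nontrivial, step is to transport the scalar factor past $u^*$ into the centre of the expression so as to reach $(\Id\ot\tau)\big(u(x\ot f)u^*\big)$. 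I would carry this out entrywise in $\M_n$: writing $u=[u_{ij}]$ and $x=[x_{ij}]$, expanding both sides in the matrix units, and comparing term by term, using traciality of $\tau$ on the $\L^\infty(\QG)$‑leg, the unitarity relations $\sum_k u_{ik}u_{jk}^*=\delta_{ij}=\sum_k u_{ki}^*u_{kj}$, and the Kac‑type antipode identity $R(u_{ij})=(u_{ji})^*$ recorded in the preliminaries.

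As a consistency check, in the commutative case $\L^\infty(\QG)=\C(G)$ the element $1\ot f$ is central and commutes with $u$, so the identity is transparent and \eqref{Quantum-group-channel} reduces to $T_{f,\alpha}(x)=\int_G u(s)xu(s)^*f(s)\,\d\mu_G(s)$ of \eqref{channel-commutatif}, so that the formula indeed recovers the channel ``$N_f$'' of \cite{GJL1}. The main difficulty is therefore essentially one of bookkeeping: pinning down precisely which of the several candidate conventions — for the identification $\L^1(\QG)\cong\L^\infty(\QG)_*$, for left versus right actions, and for the placement of $f$ and $R(f)$ relative to $u$ and $u^*$ — is the one under which the slice‑map identity produces exactly the right‑hand side of \eqref{Quantum-group-channel}. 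Once this is fixed, the verification is an elementary computation inside the finite‑dimensional algebra $\M_n\ot\L^\infty(\QG)$, and the fact that $T_{f,\alpha}$ is a quantum channel follows from $\alpha$ being a unital normal $*$‑homomorphism and $\omega$ a normal state.
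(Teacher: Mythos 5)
Your reduction is sound and is essentially the paper's own route: you identify $f$ with $\omega=\tau(fR(\cdot))$ via \eqref{bracket}, substitute $\alpha(x)=u(x\ot 1)u^*$, and trade the slice by $\omega$ for a slice by the Haar trace, which correctly yields $T_{f,\alpha}(x)=(\Id\ot\tau)\big(u(x\ot 1)u^*(1\ot R(f))\big)$. The gap is that the remaining identity $(\Id\ot\tau)\big(u(x\ot 1)u^*(1\ot R(f))\big)=(\Id\ot\tau)\big(u(x\ot f)u^*\big)$ is the entire content of the proposition, and the entrywise verification you defer to does not close. Writing $u=\sum_{k,i}e_{ki}\ot u_{ki}$ and testing on $x=e_{ij}$, the $(k,l)$ entries of the two sides are $\tau\big(u_{ki}(u_{lj})^*R(f)\big)$ and $\tau\big(u_{ki}f(u_{lj})^*\big)$. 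Traciality, \eqref{bracket} and $R\big(u_{ki}(u_{lj})^*\big)=u_{jl}(u_{ik})^*$ turn these into $\tau\big(f\,u_{jl}(u_{ik})^*\big)$ and $\tau\big(f\,(u_{lj})^*u_{ki}\big)$; since $f$ ranges over all states and $\tau$ is faithful, you would need $u_{jl}(u_{ik})^*=(u_{lj})^*u_{ki}$ for all $i,j,k,l$. This fails already for $\QG=(\L^\infty(\Z/3\Z),\Delta)$ and $u=\mathrm{diag}(1,\chi)$ with $\chi$ a nontrivial character: for $i=k=1$, $j=l=2$ one side is $\chi$ and the other is $\overline{\chi}$. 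The unitarity relations cannot rescue the computation, because all four indices are free and no summation occurs in which orthogonality could be invoked.

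Your commutative ``consistency check'' is also incorrect and in fact exposes the problem: with $\omega=\tau(fR(\cdot))$ one computes $T_{f,\alpha}(x)=\int_Gu(s)xu(s)^*f(s^{-1})\,\d\mu_G(s)$, which differs from \eqref{channel-commutatif} by $f\mapsto\check{f}$; the relevant obstruction is the antipode hidden in the pairing, not the (non-)centrality of $1\ot f$. What your two correct steps actually prove is $T_{f,\alpha}(x)=(\Id\ot\tau)\big(u^*(x\ot f)u\big)$. So the convention issue you flag at the end is not deferrable bookkeeping: under the pairing you adopt, the right-hand side of \eqref{Quantum-group-channel} comes out with $f$ replaced by $R(f)$ (equivalently with $u$ and $u^*$ exchanged), while under the untwisted pairing $\tau(f\,\cdot)$ one instead has to commute $f$ past $(u_{lj})^*$, which again fails when $\L^\infty(\QG)$ is noncommutative. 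A complete argument must resolve this discrepancy rather than absorb it into ``bookkeeping''; as written, the final step would fail. (The paper's own proof proceeds by the same decomposition of $u$ into elementary tensors and its delicate point is exactly this removal of the antipode, so your difficulty is genuinely the crux and not a presentational detail.)
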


\begin{proof}
By finite-dimensionality, we can write $u=\sum_{i=1}^N x_i \ot a_i$ where $x_i \in \M_{n}$ and $a_i \in \L^\infty(\QG)$. For any $x \in \M_{n}$, we have using the preservation of the trace $\tau$ by $R$ in the last equality
\begin{align*}
\MoveEqLeft
T_{\tau(fR(\cdot)),\alpha}(x)            
\ov{\eqref{convol-def-left}}{=} ( (\Id \ot \tau(fR(\cdot))) \circ \alpha(x)
\ov{\eqref{canonical-implementation}}{=} ((\Id \ot \tau(fR(\cdot)))(u(x \ot 1)u^*)\\
&=((\Id \ot \tau(fR(\cdot)))\Bigg(\bigg(\sum_{i=1}^N x_i \ot a_i\bigg)(x \ot 1)\bigg(\sum_{j=1}^N x_j \ot a_j\bigg)^*\Bigg) \\
&=\sum_{i,j=1}^N ((\Id \ot \tau(fR(\cdot)))\big(x_i x x_j^* \ot a_i^*a_j)
=\sum_{i,j=1}^N (\Id \ot \tau(fR(\cdot)))\big(x_i x x_j^* \ot a_i^*a_j) \\
&=\sum_{i,j=1}^N x_i x x_j^* \ot \tau\big(fR(a_i^*a_j)\big)
=\sum_{i,j=1}^N x_i x x_j^* \ot \tau\big(f R(a_j)R(a_i)^*\big)\\
&=\sum_{i,j=1}^N x_i x x_j^* \ot \tau\big(R(a_i)^*f R(a_j)\big)
=(\Id \ot \tau(R(\cdot))\Bigg(\bigg(\sum_{i=1}^N x_i \ot a_i\bigg)(x \ot f)\bigg(\sum_{j=1}^N x_j \ot a_j\bigg)^*\Bigg)\\
&=(\Id \ot \tau(R(\cdot)))\big(u(x \ot f)u^*\big)
=(\Id \ot \tau)\big(u(x \ot f)u^*\big).
\end{align*} 
\end{proof}

\begin{remark} \normalfont
\label{Rem-classical-capacity-Tfalpha}
In the case of a unitary representation (i.e.~non projective) $u$, we recover with \eqref{Quantum-group-channel} the channel defined by \eqref{channel-commutatif}. If $k \geq 1$ is an integer then the tensor product $(T_{f,\alpha})^{\ot k} \co \M_{n}^{\ot k} \to \M_{n}^{\ot k}$ identifies to the channel $y \mapsto (\Id \ot \tau^{\ot k})\big(u^{\ot k}(y \ot f^{\ot k})(u^{\ot k})^*\big)$ associated with the product group $G^k=G \times \cdots \times G$. Moreover, $u^{\ot k}$ identifies to the outer representation defined in \cite[Definition 1.43]{KaT13}. If $G$ is compact and if $u$ is irreducible, it is well-known that this representation is irreducible. So we obtain
\begin{align*}
\MoveEqLeft
\C(T_{f,\alpha})            
\ov{\eqref{Classical-capacity}}{=} \lim_{k \to +\infty} \frac{\chi(T_{f,\alpha}^{\ot k})}{k} 
=\lim_{k \to +\infty} \frac{\chi(T_{f^{\ot k},\alpha^{\ot k}})}{k} 
\ov{\eqref{belles-estimations}}{\leq} \lim_{k \to +\infty} \frac{-\H(f^{\ot k})}{k} 
\ov{\eqref{entropy-tensor-product}}{=} -\H(f).
\end{align*} 
For the quantum group case, we could use a similar trick. 
\end{remark}

\subsection{Example: twisted group von Neumann algebras}

Now, consider the case of a classical locally compact group $G$ equipped with a measurable $2$-cocycle $\sigma$ and a left Haar measure. 
Consider the $\sigma$-twisted left regular representation of $G$ on $\lambda_\sigma\co G \to \B(\L^2(G))$ defined by
$$
(\lambda_{\sigma,s}f)(t)
\ov{\mathrm{def}}{=} \sigma(s,s^{-1}t)f(s^{-1}t), \quad f \in \L^2(G), s,t \in G.
$$
It satisfies
\begin{equation}
\label{product-adjoint-twisted}
\lambda_{\sigma,s} \lambda_{\sigma,t} 
= \sigma(s,t) \lambda_{\sigma,st}, 
\qquad 
\big(\lambda_{\sigma,s}\big)^* 
= \ovl{\sigma(s,s^{-1})} \lambda_{\sigma,s^{-1}}, \quad s,t \in G.	
\end{equation}
We define the twisted group von Neumann algebra $\VN(G,\sigma)$ as the von Neumann subalgebra of $\B(\L^2(G))$ generated by the $*$-algebra
$$
\C(G,\sigma) 
\ov{\mathrm{def}}{=} \mathrm{span}\big\{\lambda_{\sigma,s} \ : \ s \in G\big\}.
$$

\begin{example} \normalfont
If $\sigma=1$, we recover the left regular representation $\lambda \co G \to \B(\ell^2_G)$ and the von Neumann algebra $\VN(G)$ of the locally compact group $G$. 
\end{example}

If the group $G$ is discrete then the von Neumann algebra $\VN(G,\sigma)$ is a finite algebra. If $(\epsi_t)_{t \in G}$ is the canonical basis of the complex Hilbert space $\ell^2_G$ then a normalized normal finite faithful trace is given by $\tau_{G,\sigma}(x)=\big\langle\epsi_{e},x(\epsi_{e})\big\rangle_{\ell^2_G}$, where $x \in \VN(G,\sigma)$. In particular, we have $\tau_{G,\sigma}(\lambda_{\sigma,s}) = \delta_{s,e}$ for any $s \in G$. Moreover, we have
\begin{equation}
\label{}
\tau_{G,\sigma}\big(\lambda_{\sigma,s}\lambda_{\sigma,t}\big) 
= \sigma(s,t) \delta_{s,t^{-1}},\quad s,t \in G.
\end{equation}
We refer to \cite{Kle62} and \cite{BeC09} and references therein for more information for this particular case. 

We have the following result, which is proved in \cite[Proposition 4.3 p.~168]{KaS82} for a discrete group $G$.  

\begin{prop}
Let $G$ be a second countable locally compact group or a discrete group. The map $\alpha_{\sigma} \co \VN(G,\sigma) \to \VN(G,\sigma) \otvn \VN(G)$, $\lambda_{\sigma,s} \mapsto \lambda_{\sigma,s} \ot \lambda_s$ defines a well-defined co-$G$-system.
\end{prop}

\begin{proof}
As observed\footnote{\thefootnote. The coaction is a $\hat{G}$-Galois object, hence ergodic.} in \cite[p.~8]{BGNT21}, the map $\alpha_{\sigma}$ is ergodic. For any $s \in G$ and any $f \in \mathrm{A}(G)$, we have
$$
(\Id \ot \la f, \cdot \ra_{\mathrm{A}(G),\VN(G)}) \circ \alpha_{\sigma}(\lambda_{\sigma,s})
=(\Id \ot \la f, \cdot \ra_{\mathrm{A}(G),\VN(G)}) (\lambda_{\sigma,s} \ot \lambda_s)
=f(s)\lambda_{\sigma,s}.
$$
We deduce that $s$ belongs to $\Sp(\alpha_{\sigma})$. Hence $\alpha_{\sigma}$ is a co-$G$-system.
\end{proof}

In this case the operator $T_{\tau(f \cdot),\alpha_{\sigma}} \co \VN(G,\sigma) \to \VN(G,\sigma)$ defined in \eqref{convol-def-bis} identifies to a Fourier multiplier, i.e. $T_{\tau(f \cdot),\alpha_{\sigma}}(\lambda_{\sigma,s})=f(s)\lambda_{\sigma,s}$ for any $s \in G$.

\begin{remark} \normalfont
It is known \cite[Proposition 2.14 p.~309]{Oml14} that the von Neumann algebra $\VN(G,\sigma)$ is injective if and only if the discrete group $G$ is amenable.
\end{remark}

Let $\sigma \co G \times G \to \T$ be a two-cocycle of a countable discrete group $G$. An element $r$ in $G$ is called $\sigma$-regular if for any element $s \in G$ commuting with $r$ we have $\sigma(r,s) = \sigma(s,r)$. If $r$ is $\sigma$-regular, then by \cite[Lemma 3 p.~559]{Kle62}  every conjugate $t r t^{-1}$ (where $t \in G$) of $r$ is also $\sigma$-regular. We say that a conjugacy class of $G$ is $\sigma$-regular if it contains a $\sigma$-regular element. Then a particular case of \cite[Lemma 4 p.~560]{Kle62} says that the von Neumann algebra $\VN(G,\sigma)$ is a factor if and only if every non-trivial $\sigma$-regular conjugacy class of $G$ is infinite. In this case, we say that $(G,\sigma)$ satisfies the condition of Kleppner. In the case where the 2-cocycle $\sigma$ is trivial, we recover the classical criterion \cite[Proposition 7.9 p.~367]{Tak02} (or \cite[Theorem 3.2.1 (ii) p.~20]{SiS08}) on group von Neumann algebras.

\begin{example} \normalfont 
Let $n \geq 2$ be an integer. Consider the product group $G=\Z/n\Z \times \Z/n\Z$. We have seen in \eqref{sigma-k-cocycle-4} that the 2-cocycles may be represented by 
\begin{equation*}
\label{}
\sigma_k((a_1,a_2),(b_1,b_2))
=\e^{\frac{2\pi\i k}{n}a_2 b_1},
\end{equation*}
where $0 \leq k \leq n-1$. An element $(a_1,a_2)$ in $\Z/n\Z \times \Z/n\Z$ is $\sigma_k$-regular if and only if $k a_1$ and $k a_2$ belong to $n\Z$. Hence $(\Z/n\Z \times \Z/n\Z,\sigma_k)$ satisfies the condition of Kleppner if and only if $k$ and $n$ are relatively prime. In this case, we have a $*$-isomorphism
\begin{equation}
\label{}
\VN(\Z/n\Z \times \Z/n\Z,\sigma_k)
= \M_n.
\end{equation}
\end{example}

\begin{example}[integer Heisenberg group] \normalfont
Let $G$ be the integer Heisenberg group, which is isomorphic to a semi-direct product $\Z^2 \rtimes_\gamma \Z$, where the homomorphism $\gamma_q \co \Z^2 \to \Z$ is given
$$
\gamma_q(m,n) 
= (m + n q, n), \quad m,n,q \in \Z.
$$
The multiplication on $G$ in this case is given by
$$
(m_1,n_1,q_1) \cdot (m_2,n_2,q_2) 
=(m+m_2+q n_2,n+n_2,q+q_2),\quad
(m_1,n_1,q_1), (m_2,n_2,q_2) \in G.
$$
The group can be represented as the subgroup of $\mathrm{SL}(3,\Z)$ of matrices $\begin{bmatrix}
   1  & q & m  \\
   0  & 1 & n	\\
   0  & 0 & 1   \\
\end{bmatrix}$ with $m,n,q \in \Z$. We have an isomorphism $\H^2(G,\T)=\T^2$. More precisely, by \cite[Proposition 1.1 p.~44]{Pac87} (see also \cite[Example 3.6 p.~356]{Pac08}), a complete parametrization of equivalences classes of 2-cocycles of $G$ is provided by
\begin{equation}
\label{}
\sigma_{(\lambda,\mu)}((m_1,n_1,q_1),(m_2,n_2,q_2))
=\lambda^{n_2m_1+q_1\frac{n_2(n_2-1)}{2}} \mu^{q_1 m_2+n_2\frac{q_1(q_1-1)}{2}},
\end{equation}
where $\lambda,\mu \in \T$. We refer to \cite{Oml15} for the more general case of the free nilpotent group $G(n)$ of class 2 and rank $n$. 
Note that the group $G(2)$ is isomorphic to the discrete Heisenberg group.  We also refer to \cite{LeP95} for a family of generalized Heisenberg groups.
\end{example}

\subsection{Example: noncommutative tori}

Let $d \geq 1$ be an integer. It is know \cite[Theorem 4.10 p.~210]{Bab70} \cite[Theorem 3.2 p.~283]{Bac70} (see also \cite[Example 4.10, p.~251]{OPT80}) that the second cohomology group of $\Z^d$ is given by $\H^2(\Z^d,\T)=\T^{\frac{d(d-1)}{2}}$. In particular, we have $\H^2(\Z,\T)=\{1\}$ and $\H^2(\Z^2,\T)=\T$. Moreover, each cohomology class may be represented by the following cocycle
\begin{equation}
\label{cocycle-Zn}
\sigma_\theta(m,n)
=\e^{2\pi\mathrm{i} \langle m, \theta n\rangle}
=\e^{2\pi \i \sum_{1 \leq i < j \leq n} m_i \theta_{ij} n_j}, \quad m,n \in \Z^d.
\end{equation}
for some real upper triangular matrix $\theta \in \M_d(\R)$ depending on $\frac{d(d-1)}{2}$ real parameters $\theta_{ij} \in [0,1[$. The resulting algebras $\L^\infty(\T^d_{\theta}) \ov{\mathrm{def}}{=} \VN(\Z^d,\sigma_\theta)$ are the so-called $d$-dimensional noncommutative tori. We refer to the seminal paper \cite{CXY13} for a study of harmonic analysis on this algebra. The $d$-dimensional noncommutative torus $\L^\infty(\T_{\theta}^d)$ is the von Neumann subalgebra of $\B(\ell^2_{\Z^d})$ generated by the $*$-algebra $
\mathcal{P}_{\theta}
\ov{\mathrm{def}}{=}\mathrm{span} \big\{ U^m \ : \ m \in \Z^d \big\}$. Recall that for any $m,n \in \Z^d$ we have
\begin{equation}
\label{product-adjoint-twisted}
U_m U_n 
= \sigma_\theta(m,n) U_{m+n}
\quad \text{and} \quad 
\big(U_m \big)^* 
= \ovl{\sigma_\theta(m,-m)} U_{-m}.	
\end{equation}
It is known \cite[Example 1.2 p.~303]{Oml14} that the pair $(\Z^2,\sigma_\theta)$ satisfies the condition of Kleppner if and only if $\theta_{12}$ is irrational. Finally, recall that we have a $*$-isomorphism $\VN(\Z)=\L^\infty(\T)$.

In this context, the $*$-homomorphism $\alpha \co \L^\infty(\T) \to \L^\infty(\T) \otvn \L^\infty(\T_\theta)$ of \eqref{actions-alpha-beta} identifies to the map $x \mapsto \tilde{x}$ of \cite[Corollary 2.2]{CXY13}. Actually, this map is a particular case of the twisted coproduct described in \cite[(4.1.5) p.~61]{ArK23}. Moreover, the map $\eta \co \L^\infty(\T) \to \L^\infty(\T^d_{\theta}) \otvn \L^\infty(\T^d_{\theta})^\op$ of \eqref{autre-eta} identifies to the map $\pi_\theta$ introduced in the proof of \cite[Theorem 7.1]{Arh24}.


\subsection{Example: noncommutative solenoids}

Consider an integer $N \geq 2$ and the discrete abelian group 
\begin{equation}
\label{NadicRationals}
\Z[\tfrac{1}{N}]
\ov{\mathrm{def}}{=} \left\{ \frac{q}{N^k} \in \mathbb{Q} : q \in \Z, k \in \N \right\}
\end{equation}
of $N$-adic rationals. 
By \cite[Theorem 10.7]{JHDS25}, its Pontryagin dual $\widehat{\Z[\tfrac{1}{N}]}$ identifies to the $N$-adic solenoid group
\[
\mathbb{S}_N 
= \left\{ (z_n)_{n \geq 0} \in \T^{\N} : \text{ for all integer $n$ we have } z_{n+1}^N = z_n \right\},
\]
endowed with the induced topology by the compact space $\T^{\N}$. The dual pairing between the groups $\Z[\tfrac{1}{N}]$ and $\mathbb{S}_N$ is given by
\[
\big\la \tfrac{q}{N^k}, (z_n)_{n \geq 0} \big\ra_{\Z[\tfrac{1}{N}],\mathbb{S}_N}
= z_k^q, \quad \tfrac{q}{N^k} \in \Z[\tfrac{1}{N}], (z_n)_{n \in \N} \in \mathbb{S}_N.
\]
The group $\mathbb{S}_N$ is compact, connected and indecomposable. We refer to \cite{JHDS25} for a readable account on this group. 


By \cite[Theorem 2.3 p.~161]{LaP18} and \cite[Theorem-Definition 1.2]{LaP17}, the second cohomology group $\H^2(\Z[\frac{1}{N}] \times \Z[\frac{1}{N}],\T)$ of the product group $\Z[\frac{1}{N}] \times \Z[\frac{1}{N}]$ is isomorphic to the $N$-adic solenoid group $\mathbb{S}_N$. 

More precisely, each 2-cocycle is cohomologous to a 2-cocycle $\sigma_\alpha \co \Z[\frac{1}{N}] \times \Z[\frac{1}{N}] \to \T$ defined by 
\begin{equation*}
\sigma_\alpha\left(\left(\frac{q_1}{N^{k_1}},\frac{q_2}{N^{k_2}}\right),\left(\frac{q_3}{N^{k_3}},\frac{q_4}{N^{k_4}}\right)\right) 
\ov{\mathrm{def}}{=} z_{k_1+k_4}^{q_1q_4}.
\end{equation*}
for a unique $z=(z_n)_{n \geq 0}$ in $\mathbb{S}_N$. Following \cite[Definition 3.1 p.~167]{LaP18} at the von Neumann level, we define the von Neumann algebra $\L^\infty(\mathbb{S}_{N,\alpha}) \ov{\mathrm{def}}{=} \VN(\Z[\tfrac{1}{N}] \times \Z[\tfrac{1}{N}],\sigma_\alpha)$. We say that it is the algebra of the noncommutative solenoid. We refer to \cite{LaP13}, \cite{LaP17}, \cite{LaP18} and \cite{FLLP24} for more information. 

\subsection{Example: Weyl systems}
\label{sec-Weyl}


\paragraph{Self-dualities of locally compact abelian groups} Let $G$ be a locally compact abelian group $G$. Following \cite[p.~2430]{PSV10}, we say that a group isomorphism $\nabla \co G \to \hat{G}$ is a self-duality. If in addition we have $\nabla(s)(s) = 0$ for all $s \in G$, then $\nabla$ is called a symplectic self-duality. Now, we describe the basic example.

\begin{example} \normalfont
\label{standard-duality}
For any locally compact abelian group $H$, consider the product group $G=\ov{\mathrm{def}}{=}H \times \hat{H}$. Then the map $\nabla \co G \to \hat{G}$, defined by
\begin{equation*}
[\nabla(s,\chi)](t,\lambda)
\ov{\mathrm{def}}{=} \chi(t) - \lambda(s), \quad s,t \in H, \chi,\lambda \in \hat{H},
\end{equation*}
is a symplectic self-duality. It is called the standard symplectic self-duality of $G$. 
\end{example}
We say that a symplectic self-duality $\nabla$ is a standard pair (or that $\nabla$ is standard) if it is isomorphic to some standard symplectic self-duality as in Example \eqref{standard-duality}. It is known \cite[Theorem 11.2 p.~2451]{PSV10} that there exist a locally compact abelian group with symplectic self-duality not isomorphic to $H \times \widehat{H}$ for any locally compact abelian group $H$.

\paragraph{Bicharacters}

Let $G$ be an abelian locally compact group. Following \cite[p.~12]{Kle65}, we say that a continuous map $b \co G \times G \to \T$ is a bicharacter if
\begin{equation}
\label{bicarac}
b(s_1s_2,t)
=b(s_1,t)b(s_2,t) 
\quad \text{and} \quad
b(t,s_1s_2)
=b(t,s_1)b(t,s_2), \quad s_1,s_2,t \in G.
\end{equation}
The set of bicharacters is a group under pointwise multiplication. For each bicharacter $b$ there exists a unique continuous homomorphism $\Phi \co G \to \hat{G}$ such that
$$
b(s,t) 
=\la \Phi(s),t  \ra_{\hat{G},G}, \quad s,t \in G.
$$
We say that the bicharacter $b$ is anti-symmetric if $b(s,t)=b(t,s)^{-1}$ for any $s,t \in G$. If in addition $\Phi$ is an isomorphism, $b$ is said to be symplectic.

\begin{example} \normalfont
Any bicharacter is a 2-cocycle. Moreover, if $\sigma$ is a 2-cocycle then the map $b_\sigma \co G \times G \to \T$, $(s,t)\mapsto \sigma(s,t) \sigma(t,s)^{-1}$ is a bicharacter. 
\end{example} 

\paragraph{Heisenberg 2-cocycles} Let $G$ be a locally compact abelian group and $\sigma$ be a 2-cocycle of $G$. Then following \cite[p.~537]{DiV04} we say that $(G,\sigma)$ is a Heisenberg group and that $\sigma$ is a Heisenberg 2-cocycle if the associated bicharacter $\tilde{\sigma}$ is a symplectic bicharacter of $G$. If $\sigma$ is a Heisenberg 2-cocycle for $G$ then by \cite[Theorem 2 p.~537]{DiV04} (see also \cite{Var08}) there exists a unique (up to unitary equivalence) measurable irreducible unitary projective representation $\W \co G \to \B(\cal{H})$ with respect to the 2-cocycle $\sigma$ on some complex Hilbert space $\cal{H}$. Moreover, every $\sigma$-projective unitary representation of $G$ is a direct sum of copies of $\W$. We refer to \cite[Section 3.5]{DiV04} for a concrete model of $\cal{H}$ and $\W$. We call $\W$ and $\W(s)$, $s \in G$, the Weyl representation and the Weyl operators following the standard terminology. Note that the Weyl operators satisfy the canonical commutation relations (CCR)
\begin{equation*} 
\label{eq-Weyl-CCR}
\W(s) \W(t) 
= b_\sigma(s,t)\W(t)\W(s),\quad  s , t \in G.
\end{equation*}
We refer also to the paper \cite{BaK73} for related information.

\begin{example} \normalfont
Consider the group $G = H \times \widehat{H}$ for some locally compact abelian group $H$. A such group admits a canonical 2-cocycle, $\sigma_{\mathrm{can}} \co H \times \widehat{H} \to \T$ given by
\begin{equation}
\label{eq-can-2-cocycle}
\sigma_{\mathrm{can}}((s,\chi), (s', \chi')) 
\ov{\mathrm{def}}{=} \chi(s'),\quad s,s'\in H, \chi, \chi' \in \hat{H}.
\end{equation}
It is straightforward to see that $\sigma_{\mathrm{can}}$ is a Heisenberg multiplier. 
In this case, we have a simple description of the unique irreducible projective $\sigma_{\mathrm{can}}$-representation $\W = \W_{\mathrm{can}}$. We first define the translation operator $T_s \co \L^2(H) \to \L^2(H)$ and the modulation operator $\Mod_\chi \co \L^2(H) \to \L^2(H)$ for $s \in H$ and $\chi \in \hat{H}$ acting on the complex Hilbert space $\cal{H} \ov{\mathrm{def}}{=} \L^2(H)$ by
\begin{equation}
(T_s f)(t) \ov{\mathrm{def}}{=} f(t-s), \quad
(\Mod_\chi f)(t) 
\ov{\mathrm{def}}{=} \chi(t)f(t), \quad f \in \L^2(H),\quad t \in H.
\end{equation}
Then the projective representation $\W \co G \to \B(\L^2(H))$ is given by
\begin{equation*}
\W(s,\chi) 
\ov{\mathrm{def}}{=} T_s \Mod_\chi, \quad (s,\chi) \in G.
\end{equation*}
\end{example}

\begin{example}\normalfont 
\label{ex:quJit}
If $n \geq 3$ is an odd integer then consider the group $G=(\Z/n\Z)^d \times \widehat{(\Z/n\Z)^d}$. 
We have the $\widehat{(\Z/n\Z)^d} \cong (\Z/n\Z)^d$ with duality bracket
\begin{equation}
\label{e:char1}
\la y,x \ra
= \e^{\frac{2\pi \i}{n} \la x, y \ra}, \quad x,y \in (\Z/n\Z)^d.
\end{equation}
Under the canonical isometric identification $     
\ell^2((\Z/n\Z)^d)
=\ell^2(\Z_n) \ot \cdots \ot \ell^2(\Z_n)$, 
the corresponding multiplication operators $M_y \ov{\mathrm{def}}{=} M_{\gamma_y}$ satisfy
\begin{equation}
M_y
= X^{y_1} \ot \cdots \ot X^{y_n}, \quad y=(y_1,\ldots,y_d) \in (\Z/n\Z)^d,
\end{equation}
where the matrix $X$ is defined in \eqref{def-X-Z}. The translation operators are given by
\begin{equation}
T_x
= Z^{x_1} \ot \cdots \ot Z^{x_n}, \quad x=(x_1,\ldots,x_d) \in (\Z/n\Z)^d,    
\end{equation}
where the matrix $Z$ is defined in \eqref{def-X-Z}. In this case, the Weyl representation $\W \co (\Z/n\Z)^d \times (\Z/n\Z)^d \to \B((\ell^2_n)^{\ot d})$ is
\begin{equation}
\W(x,y)
= \e^{\frac{(n+1)\pi \i}{n}\la x,y\ra} Z^{x_1}X^{y_1} \ot \cdots \ot Z^{x_n}X^{y_n}, \quad x,y \in (\Z/n\Z)^d.
\end{equation}
We refer to \cite{Wer16} for more information on these systems, called Qudit systems.
\end{example}

\subsection{Example: spin systems} 
\label{sec-fermions}

Let $n \geq 1$ be an integer. We consider the finite abelian group $G=(\Z/2\Z)^n \times \widehat{(\Z/2\Z)^n} \cong (\Z/2\Z)^n \times (\Z/2\Z)^n=(\Z/2\Z)^{2n}$. 
We consider the matrix $A={ 
\begin{bmatrix} 
0 &\cdots&\cdots&\cdots&0\\ 
1& 0&&\\1&1&0&&\vdots\\ 
\vdots& \vdots& \ddots & \ddots & \\
1 & 1 & \cdots & 1 & 0 
\end{bmatrix}}$ in $\M_{2n}(\Z/2\Z)$. Following \cite{BCLPY22}, we consider the 2-cocycle $\sigma_{\mathrm{spin}} \co G \times G \to \mathbb{C}$ defined by
\begin{equation}
\label{eq-fer-2-cocycle}
\sigma_{\mathrm{spin}}(a,b) 
\ov{\mathrm{def}}{=}  (-1)^{a^T A b},\quad a,b \in G.
\end{equation}
It is easy to check that $\sigma_{\mathrm{spin}}$ is a Heisenberg 2-cocycle. Now, we recall the standard matrix representations of spin factors. Let
\[
\sigma_1
\ov{\mathrm{def}}{=} \begin{bmatrix}
   1  & 0  \\
   0  & -1  \\
\end{bmatrix}
, \quad
\sigma_2 \ov{\mathrm{def}}{=} \begin{bmatrix} 
0&1\\ 
1&0
\end{bmatrix}
, \quad
\sigma_3
\ov{\mathrm{def}}{=} \begin{bmatrix} 
0&\i\\ 
-\i&0
\end{bmatrix}
\]
be the Pauli matrices. Define the matrices
$s_1 \ov{\mathrm{def}}{=} \sigma_1 \ot \I_2^{\ot n-1},  
s_2 \ov{\mathrm{def}}{=} \sigma_2 \ot \I_2^{\ot n-1},s_3 \ov{\mathrm{def}}{=} \sigma_3 \ot \sigma_1 \ot \I_2^{\ot n-2}, 
s_4 \ov{\mathrm{def}}{=} 
\sigma_3 \ot \sigma_2\ot \I_2^{\ot n-2},\ldots,
s_{2i-1} \ov{\mathrm{def}}{=} \sigma_3^{\ot i-1} \ot \sigma_1 \ot \I_2^{\ot n-i}, 
s_{2i} \ov{\mathrm{def}}{=} 
\sigma_3^{\ot i-1} \ot \sigma_2 \ot \I_2^{\ot n-i},
\ldots,
s_{2n-1} \ov{\mathrm{def}}{=} \sigma_3^{\ot n-1} \ot \sigma_1,
s_{2n} \ov{\mathrm{def}}{=} \sigma_3^{\ot n-1 } \ot \sigma_2$ in the algebra $\M_{2^n}$.  Note that it is known that $\{s_1,\ldots,s_k\}$ is a spin system, i.e.~consists of symmetries $\not=\pm \Id$ such that $s_is_j=-s_is_j$ if $i \not= j$. If $k \in \{2n-1,2n\}$ with $k \geq 2$, the real linear span of the set $\{1,s_1,\ldots,s_k\}$ is a $\JW$-factor which is isomorphic to a <<spin factor>> by \cite[pp.~140-141]{HOS84} and \cite[pp.~103-104]{AlS03}. 

The unique irreducible projective unitary representation $\W_{\mathrm{spin}} \co G \to \M_{2^n}$ associated to the 2-cocycle $\sigma_{\mathrm{spin}}$ is given by
\begin{equation*}
\label{eq-W-eps}
\W_{\mathrm{spin}}(a) 
\ov{\mathrm{def}}{=} s^{a_1}_1 \cdots s^{a_{2n}}_{2n},\quad  a  = (a_1,\ldots,a_{2n}) \in G.    
\end{equation*}

\subsection{Example: hardcore bosons}
\label{subsubsec-hard-core-boson}

Let $n \geq 1$ be an integer. We consider again the finite abelian group $=(\Z/2\Z)^n \times \widehat{(\Z/2\Z)^n} \cong (\Z/2\Z)^n \times (\Z/2\Z)^n=(\Z/2\Z)^{2n}$. Here, we consider the canonical 2-cocycle $\sigma_{\can}$ on $G$  introduced in \eqref{eq-can-2-cocycle}. If $X$, $Z$ are the standard $2 \times 2$ Pauli matrices consider the matrices $h_k$, where $1 \leq k \leq 2n$, defined by
\begin{align*}
h_{2j-1} 
\ov{\mathrm{def}}{=} \I \ot \cdots \ot \I \ot X \ot \I \ot \cdots \ot \I, \quad
h_{2j}
\ov{\mathrm{def}}{=} \I \ot \cdots \ot \I \ot Z \ot \I \ot \cdots \ot \I,
\end{align*}
where $X$ and $Z$ appear at the $j$-th tensor component for $1 \leq j \leq n$. These matrices are selfadjoint satisfy
$$
h_k h_l 
= - h_l h_k,\quad \text{if } (k,l) = (2j-1,2j)
\quad \text{or}\quad (2j,2j-1),\quad 1 \leq j \leq n
$$
and $h_kh_l = h_lh_k$ for other choices of $(k,l)$. The associated quantum system corresponds to <<hardcore bosons>> of $n$ degrees of freedom and we refer to \cite[Section II]{CaL93} for more information. 
The associated Weyl operators are given by
\begin{equation*}
\label{eq-W-hardcore}
\W_{\mathrm{fer}}(a,b)
= X^{a_1}Z^{b_1} \ot \cdots \ot X^{a_n}Z^{b_n} 
= h^{a_1}_1 h^{b_1}_2 h^{a_2}_3 h^{b_2}_4 \cdots h^{a_n}_{2n-1} h^{b_n}_{2n}, \quad (a,b) \in G.
\end{equation*}
\subsection{Example: noncommutative Vilenkin systems}
\label{sec:Vilenkin}


\paragraph{Vilenkin groups}
For any integer $\ell \geq 2$, we denote by $\ZZ_\ell$ the cyclic group $\Z/\ell\Z = \{0,1,\ldots,\ell-1\}$. Let $m=(m_1,m_2,\ldots)$ be a sequence of integers $m_k \geq 2$ and consider the product group $G=G(m) \ov{\mathrm{def}}{=} \prod_{k=1}^{\infty} \ZZ_{m_k}$, equipped with the product topology and the normalized Haar measure. This group is an example of compact Vilenkin group, i.e.~an abelian totally disconnected second countable compact topological group. We refer to \cite[Section 3]{ArK25} for more information on more general locally compact Vilenkin groups.

The dual group is given by $
\hat{G} 
= \oplus_{k=1}^\infty \hat{\Z}_{m_k}$, where $\hat{\ZZ}_{m_k}$ is isomorphic to $\ZZ_{m_k}$. This group $\hat{G}$ can be identified with the collection of all sequences $l=(l_1,l_2,\ldots,)$ with $l_k \in \{0,1\ldots m_k-1\}$ for all $k$ and $l_k \not= 0$ for only finitely many values of $k$. The pairing between $G$ and $\hat{G}$ is given by 
$
\la x,l \ra_{G,\hat{G}}
=\psi_l(x),
$ 
where
\begin{equation*}
\label{}
\psi_l(x)
\ov{\mathrm{def}}{=} \prod_{k=1}^{\infty} \phi_k^{l_k}(x), \quad l=(l_1,l_2,\ldots) \in \hat{G}
\quad \text{with} \quad
\phi_k(x)
\ov{\mathrm{def}}{=} \e^{\frac{2\pi \i x_k}{p_k}}, \quad x=(x_1,x_2,\ldots) \in G.
\end{equation*}
The characters $(\psi_l)_{l \in \hat{G}}$ form a complete orthonormal system in the complex Hilbert space $\L^2(G)$, which is the Vilenkin system corresponding to $m=(m_1,m_2,\ldots)$. This system was introduced in the paper  \cite{Vil63}. Using a suitable ordering on $\hat{G}$, it is known \cite{Wat58} that this system becomes a Schauder basis of the Banach space $\L^p(G)$ for any $1 < p < \infty$. 

\begin{remark} \normalfont
It is worth noting that there exists a natural measure preserving identification between the group $G$ and the interval $[0,1]$, see \cite[p.~504]{SWS90}. So, we can see the Vilenkin system as a system of functions on $[0,1]$.
\end{remark}

If $m_k=2$ for all integer $k \geq 1$, the system is called the Paley-Walsh system. In this case, the group $G$ is the dyadic group and denoted by $\Omega$. The harmonic analysis of the dyadic group has been the focus of extensive research, with numerous results compiled in the comprehensive monograph \cite{SWS90}. See also \cite{SBSW15a} and \cite{SBSW15b}. For an in-depth exploration of harmonic analysis on compact Vilenkin systems, we direct the reader to \cite{Gos73}, \cite{PSTW22}, \cite{Tse24}, \cite{You76}, \cite{Wat58} and references therein. We additionally refer to the book \cite{AVDR81}, which seems to include valuable insights but is regrettably not available in English. 

If we view the elements of $G$ as sequences, we may also consider, for each integer $N \geq 1$, the truncation of these sequences. Let $
G_N 
\ov{\mathrm{def}}{=} \prod_{k=1}^{N} \ZZ_{m_k}$. We identify this group with the subgroup of truncated sequences of length $N$ in $G$. Similarly, we identify the dual group $\hat{G}_N$ with the subgroup of truncated sequences of length $N$ in $\hat{G}$.

\paragraph{Noncommutative Vilenkin systems} 
Let $\Phi \co G \to \Aut(\cal{M})$ be a faithful ergodic action of a Vilenkin group $G$ on a von Neumann algebra $\cal{M}$, with associated map $\alpha \co \cal{M} \to \cal{M} \otvn \L^\infty(G)$. We consider the unique $G$-invariant normal state $\varphi_{\cal{M}}$ on the von Neumann algebra $\cal{M}$ defined in \eqref{Def-ergodic}, which is a trace. Following essentially\footnote{\thefootnote. The authors use a trace preserving ergodic action $\alpha$.} \cite[Definition 1.1 p.~77]{DoS00}, a complete system $(v_l)_{ l \in \hat{G}}$ of eigenoperators of $\alpha$ is called a noncommutative Vilenkin system. It follows that any noncommutative Vilenkin system is a complete orthonormal system of the complex Hilbert space $\L^2(\cal{M})$. It is worth noting that, using a suitable ordering on $\hat{G}$, it is proved in \cite[Theorem 3.9 p.~423]{DFPS01} (generalizing the partial result \cite[Theorem 2.4 p.~83]{DoS00}) that this system becomes a Schauder basis of the Banach space $\L^p(\cal{M})$ for any $1 < p < \infty$. See also the survey paper \cite{DoS01}. We refer to \cite[Section 4]{DFPS01} for the classification of Vilenkin systems relying on the paper \cite{OPT80}.


Now, we give an example of concrete Vilenkin system.

\begin{example} \normalfont
Let $m=(m_1,m_2,\ldots)$ be a sequence of integers $m_k \geq 2$. Recall that the unique separable approximately finite-dimensional factor $\cal{R}$ of type $\II_1$ factor is $*$-isomorphic to the tensor product
\begin{equation*}
\label{}
(\cal{R},\tau) 
= \otvn_{k=1}^\infty (\M_{m_k},\tau_{m_k}),
\end{equation*}
where $\tau_{m_k}$ is the normalized trace on the matrix algebra $\M_{m_k}$. Note that $\tau$ is a normal finite faithful trace on the von Neumann algebra $\cal{R}$.   
For each integer $n \geq 1$, we may consider the finite truncation of this infinite tensor product by letting
\begin{equation*}
\label{def-de-cal-Rn}
(\cal{R}_n,\tau_n) 
\ov{\mathrm{def}}{=} \otvn_{k=1}^n (\M_{m_k}, \tau_{m_k}).
\end{equation*}


Let $2m = (2m_k)_{k=1}^\infty$ denote the doubled sequence given by $2m(2k-1) \ov{\mathrm{def}}{=} m_k$, and $2m(2k) \ov{\mathrm{def}}{=} m_k$, for each $k \geq 1$. Then we consider the Vilenkin group $G$ corresponding to the doubled sequence $2m$, with dual group $\hat{G}$. For each $l \in \hat{G}$, following \cite[p.~90]{DoS00} (see also \cite[p.~64]{ScS18}), we define
\begin{equation*}
\label{}
v_l
\ov{\mathrm{def}}{=} \bigotimes_{k=1}^{\infty} X_{m_k}^{l_{2k-1}}Z_{m_k}^{l_{2k}}
= X_{m_1}^{l_0} Z_{m_1}^{l_1} \ot X_{m_2}^{l_2} Z_{m_2}^{l_3} \ot \cdots,
\end{equation*}
where the matrices $X_{m_k}$ and $Z_{m_k}$ are defined in \eqref{def-X-Z}. It is known that $(v_l)_{ l \in \hat{G}}$ is a Vilenkin system associated to the group $G$ in the von Neumann algebra $\cal{R}$. In particular, the Vilenkin system $(v_l)_{ l \in \hat{G}}$ is an orthonormal basis for the complex Hilbert space $\L^2(\cal{R})$.

Let $\hat{G}_{2N}$ denote the subgroup of sequences of length $2N$ in $\hat{G}$. For each integer $N \geq 1$, the restriction $(v_l)_{ l \in \hat{G}_{2N}}$ of the Vilenkin system is a basis of $\cal{R}_N$ and an orthonormal basis of the complex Hilbert space $\L^2(\cal{R}_N)$. It is interesting to observe that the <<transference map $\pi$ >> considered in \cite[Lemma 3.6 p.~1162]{TiD24} is a very particular case of the second map introduced in \eqref{actions-alpha-beta}.
\end{example}

Another example is provided by the setting of the paper \cite{ArK3}. If $N \in \N$, we equip the abelian finite group $\{-1,1\}^N$ with its canonical Haar measure. If $1 \leq k \leq N$, we denote by $\epsi_k \co \{-1,1\}^N \to \{-1,1\}$ the $k$-th coordinate on $\{ -1 , 1 \}^N$. We can see the $\epsi_k$'s as independent Rademacher variables. We also consider a family $(s_i)_{1 \leq i \leq N}$ of selfadjoint operators on a Hilbert space $H$ satisfying the following canonical anticommutation relations
$$
s_is_j+s_js_i 
= 2\delta_{i=j}, \quad i,j \in \{1,\ldots,N\}.
$$
These operators share many properties with the Rademacher variables $\epsi_i$ although they are essentially noncommutative in nature. For a finite non-empty subset $A=\{k_1,\ldots,k_n\}$ of $\{1,\ldots,N\}$ with $k_1 < \cdots < k_n$ we let 
\begin{equation}
\label{def-sA}
s_A 
\ov{\mathrm{def}}{=} s_{k_1} \cdots s_{k_n} 
\quad \text{and} \quad 
\epsi_A 
\ov{\mathrm{def}}{=} \epsi_{k_1} \cdots \epsi_{k_n}. 
\end{equation}
We also set $s_\emptyset \ov{\mathrm{def}}{=} 1$ and $\epsi_\emptyset \ov{\mathrm{def}}{=} 1$. It is known that the involutive algebra generated by the $s_i$'s is equal to $\scr{C}=\mathrm{span}\{ s_A : A \subset{} \{1,\ldots,N\}\}$. The von Neumann algebra generated by this algebra is denoted $\scr{C}(H)$. There is a trace $\tau$ on this algebra such that $\tau(s_A) = 0$ if $A \not= \emptyset$. We refer to \cite{PlR94} and \cite[Section 9.B]{JMX06} for more information.


The following result is \cite[Proposition 4.5]{ArK3}.

\begin{prop}
\label{prop-ergodic}
Let $H$ be a finite-dimensional real Hilbert space with dimension $n \geq 2$. The map $\alpha \co \scr{C}(H) \to \scr{C}(H) \otvn \L^\infty(\Omega_n)$, $s_A\mapsto s_A \ot w_A$ defines a right action from the compact abelian group $\Omega_n$ on the fermion algebra $\scr{C}(H)$ which is ergodic. Finally, the unique $\Omega_n$-invariant normal state on the fermion algebra $\scr{C}(H)$ is the trace $\tau$ defined by \eqref{trace-fer}
\end{prop}

\begin{example} \normalfont
\label{Ex-multiplier-Fermions}
Consider the case $n=2$ and the Pauli matrices $s_1
\ov{\mathrm{def}}{=} \begin{bmatrix}
      0&1\\
      1&0
    \end{bmatrix} $ and $s_2 \ov{\mathrm{def}}{=}
    \begin{bmatrix}
      0& -\i \\
      \i&0
    \end{bmatrix} $.  
These matrices are selfadjoint unitaries and anticommute. We have $s_{\{1,2\}}\ov{\eqref{def-sA}}{=} s_1s_2=\begin{bmatrix}
      0&1\\
      1&0
    \end{bmatrix}    \begin{bmatrix}
      0& -\i \\
      \i&0
    \end{bmatrix} 
		=    \begin{bmatrix}
    \i  & 0 \\
    0  & -\i
    \end{bmatrix}=\i \sigma_3$ where $\sigma_3 \ov{\mathrm{def}}{=} \begin{bmatrix}
    1 & 0 \\
    0 & -1
\end{bmatrix}
$. 
Note that a 2x2 complex matrix $\begin{bmatrix}
    a & b \\
    c & d
\end{bmatrix}$ can be written 
\begin{equation}
\label{decompo}
\begin{bmatrix}
    a & b \\
    c & d
\end{bmatrix}
=\frac{a+d}{2}\I +\frac{b+c}{2}\sigma_1+\frac{\i(b-c)}{2}\sigma_{2}+\frac{a-d}{2\i}\i\sigma_3.
\end{equation}
Consider a complex function $\phi \co \{0,1,2\} \to \mathbb{C}$. The associated multiplier $R_\phi \co \M_2 \to \M_2$, $\alpha \I_2+\beta s_1+\gamma s_2+\zeta s_{\{1,2\}} \mapsto \phi(0)\alpha \I_2+\phi(1)\beta s_1+\phi(1)\gamma s_2+\phi(2)\zeta s_{\{1,2\}}$ of Definition \ref{def-multiplier-eigen} which are quantum channels identify to Pauli channels described in \cite[Example 2.12 p.~17]{Pet08}.
\end{example}

%
%
%

\subsection{Example: mixed commutation and anti-commutation relations}
\label{sec:mixed}



In \cite{Bia97}, Biane considered spin systems with mixed commutation and anti-commutation relations and used it to approximate some $q$-deformed systems, relying on Speicher's central limit Theorem \cite{Spe92}. Let $I=\{1,\ldots N\}$ for some integer $N$. Consider a function $\epsi \co I \times I \to \{-1,1\}$ satisfying $\epsi(i,j)=\epsi(j,i)$ and $\epsi(i,i)=-1$ for all $i,j \in I$. Let $\mathcal{A}(I,\epsi)$ be the free complex unital algebra with generators $(x_{i})_{i \in I}$ quotiented by the relations
\begin{equation}
\label{mix}
x_{i}x_{j}-\epsi(i,j) x_{j}x_{i}
=2 \delta_{i,j}, \quad i,j \in I.
\end{equation}
We can define an involution on the algebra $\mathcal{A}(I,\epsi)$ by letting $x_{i}^{*} \ov{\mathrm{def}}{=} x_{i}$. For any subset $A = \{i_{1},\dots ,i_{k}\}$ of $I$ with $i_{1}<\dots <i_{k}$ we define $x_{A} \ov{\mathrm{def}}{=} x_{i_{1}} \dots x_{i_{k}}$ with $x_{\emptyset} \ov{\mathrm{def}}{=}1$. Then the family $(x_{A})_{A\subset I}$ is a basis of the vector space $\mathcal{A}(I,\epsi)$. Let $\tau_{\epsi}$ be the trace defined by 
\begin{equation}
\label{tau-epsi}
\tau_{\epsi}(x_{A}) \ov{\mathrm{def}}{=} \delta_{A, \emptyset}
\end{equation}
for all subset $A$ in $I$. The family $(x_{A})_{A \subset I}$ is an orthonormal basis of the complex Hilbert space $\L^{2}(\mathcal{A}(I,\epsi))$. 


For each $i \in I$, define the following partial isometries $\beta_{i}^{*}$ and $\alpha_{i}^{*}$ acting on the complex Hilbert space $\L^{2}(\cal{A}(I,\epsi),\varphi^{\epsi})$ by
$$
\beta_{i}^{*}(x_{A})
\ov{\mathrm{def}}{=}
\begin{cases}
x_{i}x_{A} & \text{if}\quad i \not\in A\\
0 & \text{if} \quad i\in A
\end{cases} 
\quad \text{and} \quad
\alpha_{i}^{*}(x_{A}) 
\ov{\mathrm{def}}{=} 
\begin{cases}
x_{A}x_{i} & \text{if} \quad i \not\in A\\
0 & \text{if} \quad i\in A
\end{cases}.
$$
Note that their adjoints are given by
$$
\beta_{i}(x_{A})
=\begin{cases}
x_{i}x_{A} & \text{if}\quad i\in A\\
0 & \text{if}\quad i\not\in A
\end{cases}
\quad \text{and} \quad
\alpha_{i}(x_{A})
=\begin{cases}
x_{A}x_{i} &\text{if}\quad i\in A\\
0 & \text{if}\quad i\not\in A
\end{cases}.
$$
The operators $\beta_{i}^{*}$ and $\beta_{i}$ are called the left and right creation operators and the operators $\alpha_{i}^{*}$ and $\alpha_{i}$ are called left and right annihilation operators. 
%
It is known that the selfadjoint operators defined by $x_{i} \ov{\mathrm{def}}{=}\beta_{i}^{*}+\beta_{i}$ satisfy the  relations \eqref{mix}. 
It is possible \cite{Bia97} \cite[p.~18]{Mey95} \cite{Spe92} to find explicitly selfadjoint matrices $\beta_i$ such that $x_{i} \ov{\mathrm{def}}{=}\beta_{i}^{*}+\beta_{i}$ satisfy the relations \eqref{mix}.  
Indeed, by letting $\sigma_1 \ov{\mathrm{def}}{=} \begin{bmatrix}
  1   & 0  \\
  0   & 1 \\
\end{bmatrix}$, $\sigma_{-1} \ov{\mathrm{def}}{=} \begin{bmatrix}
 1    &  0 \\
  0   & -1  \\
\end{bmatrix}$, $b \ov{\mathrm{def}}{=} \begin{bmatrix}
  0   &  1 \\
  0   &  0 \\
\end{bmatrix}$, $b^* \ov{\mathrm{def}}{=} \begin{bmatrix}
  0   &  0 \\
  1   &  0 \\
\end{bmatrix}$. Then the matrices
$$
\beta_i
\ov{\mathrm{def}}{=} \sigma_{\epsi(1,i)} \ot \sigma_{\epsi(2,i)} \ot \cdots \ot \sigma_{\epsi(i-1,i)} \ot b \ot \sigma_1 \ot \cdots \ot \sigma_1
$$
in $\M_2^{\ot n}$ are convenient.

It is not difficult tor prove the following result which gives an ergodic action from the Walsch group $\Omega_n$. Recall that the product of two Walsh functions is given by $w_A w_B=w_{A\Delta B}$ and that \begin{equation}
\label{int-walsh}
\int_{\Omega_n} w_A=\delta_{A,\emptyset},
\end{equation}
where $\delta$ is the Kronecker symbol.

\begin{prop}
\label{prop-ergodic}
The linear map $\alpha \co \mathcal{A}(I,\epsi) \to \mathcal{A}(I,\epsi) \otvn \L^\infty(\Omega_n)$, 
\begin{equation}
\label{action-mixed}
\alpha(x_A)
\ov{\mathrm{def}}{=} x_A \ot w_A
\end{equation}
defines a right action from the compact abelian group $\Omega_n$ on the algebra $\mathcal{A}(I,\epsi)$ which is ergodic. Finally, the unique $\Omega_n$-invariant normal state on the algebra $\mathcal{A}(I,\epsi)$ is the trace $\tau_\epsi$ defined by \eqref{tau-epsi}.
\end{prop}

\begin{proof}
For any subsets $A$ and $B$ of $\{1,\ldots,n\}$, note that $x_A x_B=(-1)^{n(A,B)}x_{A \Delta B}$ and that  and $x_A^*=(-1)^{n_A} x_A$ for some integers $n(A,B)$ and $n_A$. Now, we have
\begin{align*}
\MoveEqLeft
\alpha(x_A x_B)
= (-1)^{n(A,B)} \alpha(x_{A \Delta B})
\ov{\eqref{action-mixed}}{=} (-1)^{n(A,B)} x_{A \Delta B} \ot w_{A \Delta B}
= \\
&= x_A x_B \ot w_A w_B
=(x_A \ot w_A)(s_B \ot w_B)
\ov{\eqref{action-mixed}}{=} \alpha(x_A)\alpha(x_B)
\end{align*}
and
$$
\alpha(x_A^*)
= (-1)^{n_A} \alpha(x_A)
= (-1)^{n_A} x_A \ot w_A
= (x_A \ot w_A)^*
\ov{\eqref{action-mixed}}{=} \alpha(x_A)^*.
$$
Consequently, the linear map $\alpha$ is a $*$-homomorphism, which is unital by construction. We also have
\begin{align*}
\MoveEqLeft
\bigg(\tau_{\epsi} \ot \int_{\Omega_n}\bigg)(\alpha(x_A))
\ov{\eqref{action-mixed}}{=} \bigg(\tau_{\epsi} \ot \int_{\Omega_n}\bigg)(x_A \ot w_A) 
=\tau_{\epsi}(x_A) \int_{\Omega_n} w_A
\ov{\eqref{tau-epsi} \eqref{int-walsh}}{=} 
\delta_{A,\emptyset}
\ov{\eqref{tau-epsi}}{=} \tau_{\epsi}(x_A).         
\end{align*}
Hence the map $\alpha$ is trace preserving. The injectivity is a consequence of \cite[Lemma 2.1 p.~2283]{Arh19} but can also be proved directly. For any finite subset $A$ of the set $\{1,\ldots,n\}$, we have
$$
(\alpha \ot \Id)\alpha(x_A)
\ov{\eqref{action-mixed}}{=} (\alpha \ot \Id)(x_A \ot w_A)
= \alpha(x_A) \ot w_A
\ov{\eqref{action-mixed}}{=} x_A \ot w_A \ot w_A
$$
and
$$
(\Id \ot \Delta)\alpha(x_A)
\ov{\eqref{action-mixed}}{=} (\Id \ot \Delta)(x_A \ot w_A)
= x_A \ot \Delta(w_A)
\ov{\eqref{delta-epsi}}{=} x_A \ot w_A \ot w_A.
$$
By linearity, we obtain the equality \eqref{Def-corep}. 

In order to prove the ergodicity of $\alpha$,  consider some element $y \in \mathcal{A}(I,\epsi)$ such that $\alpha(y) = y \ot 1$. If $\sum_{A} \lambda_A x_A$ is the <<Fourier series>> of $y$ in the space $\L^2(\mathcal{A}(I,\epsi))$, where each $\lambda_A$ is a complex number, we have $\alpha(\sum_{A} \lambda_A x_A) \ov{\eqref{action-mixed}}{=} \sum_{A} \lambda_A x_A \ot w_A$. The equality $\sum_{A} \lambda_A x_A \ot w_A=\big(\sum_{A} \lambda_A x_A\big) \ot 1$ is equivalent to $\sum_{A} \lambda_A x_A=1$. So the ergodicity is clear.

Finally, for any finite subset $A$ of the set $\{1,\ldots,n\}$ we have
\begin{align*}
\MoveEqLeft
\bigg(\Id \ot \int_{\Omega_n}\bigg) \circ \alpha(x_A)
\ov{\eqref{action-mixed}}{=} \bigg(\Id \ot \int_{\Omega_n}\bigg) (x_A \ot w_A)
= x_A \int_{\Omega_n} w_A \\
&\ov{\eqref{int-walsh}}{=} \delta_{A,\emptyset} x_A 
=\delta_{A,\emptyset} 1_{\mathcal{A}(I,\epsi)} 
\ov{\eqref{tau-epsi}}{=} \tau_\epsi(x_A) 1_{\mathcal{A}(I,\epsi)}.
\end{align*}
\end{proof}

\section{The non-ergodic case}
\subsection{Some general result}
We are aware that we can generalize Proposition \ref{Prop-transfer-1} to the non-ergodic case using the amalgamated $\L^p$-spaces of \cite{JuP10} and \cite{GJL20b}. Indeed, we can state the next result. However, we does not try to prove any application at the present state of the writing. We will continue in a next paper. Let $\cal{M}$ be a semifinite von Neumann algebra equipped with a normal semifinite faithful trace $\tr$. Let $\cal{N}$ be a subalgebra of a von Neumann algebra $\cal{M}$ endowed with a normal semifinite faithful trace $\tau$ such that $\tau|_{\cal{N}}$ is also semifinite. Recall that
\begin{equation}
\label{norm-cond}
\norm{x}_{\L^1_\infty(\cal{N} \subset \cal{M})}
=\inf_{x=yz}\norm{\E(yy^*)}_{\L^\infty(\cal{M})}^{\frac{1}{2}} \norm{\E(z^*z)}_{\L^\infty(\cal{M})}^{\frac{1}{2}}.
\end{equation}
It is known that 
\begin{equation}
\label{amal-useful}
\L_p^p(\cal{N} \subset \cal{M})=\L^p(\cal{M}) 
 \quad \text{and} \quad
\L_q^p(\cal{N} \subset \cal{M})
=(\L_{q_0}^{p_0}(\cal{N} \subset \cal{M}),\L_{q_1}^{p_1}(\cal{N}\subset\cal{M}))_\theta
\end{equation}
isometrically where $\frac{1-\theta}{p_0}+\frac{\theta}{p_1}=\frac{1}{p}$, $\frac{1-\theta}{q_0}+\frac{\theta}{q_1}=\frac{1}{q}$ and $(p_1-q_1)(p_2-q_2) \geq 0$.

It is worth noting that if $\E \ov{\mathrm{def}}{=} \Id \ot \tau_{\cal{N}} \co \cal{M} \otvn \cal{N} \to \cal{M}$ is the canonical trace preserving faithful conditional expectation, by essentially \cite[Proposition 2.6]{Arh25a} the space $\L^1_\infty(\cal{N} \subset \cal{M})$ is isometric to the space $\L^\infty(\cal{M},\L^1(\cal{N}))$. 

\begin{prop}
\label{Prop-transfer-non-ergodic}
Let $\alpha \co \cal{M} \to \cal{M} \otvn \L^\infty(\QG)$ be a trace preserving right action of a compact group of Kac type on a finite von Neumann algebra $\cal{M}$. Consider $1 \leq p < \infty$. The map $\alpha$ induces complete contractions $\alpha \co \L^1_p(\cal{M}^\alpha \subset \cal{M}) \to \L^p(\cal{M},\L^1(\QG))$, $\alpha \co \L^p_\infty(\cal{M}^\alpha \subset \cal{M}) \to \L^\infty(\cal{M},\L^p(\QG))$ where $\E \co \cal{M} \to \cal{M}$ is the trace preserving conditional expectation on the fixed point subalgebra $\cal{M}^\alpha$.
\end{prop}

\begin{proof}
On the one hand, we can write in the case of a \textit{positive} element $x$ in the space $\L^1(\cal{M})$
\begin{align}
\MoveEqLeft
\label{divers-5667890}
\norm{\alpha(x)}_{\L^\infty(\cal{M},\L^1(\QG))}              
\ov{\eqref{Norm-LpL1-pos}}{=} \bnorm{(\Id \ot h_\cal{\QG})(\alpha(x))}_{\L^\infty(\cal{M})} 
\ov{\eqref{cond-exp-fixed}}{=} \norm{\E(x)}_{\L^\infty(\cal{M})}. 
\end{align} 
Let $\epsi >0$. Now, consider an \textit{arbitrary} element $x$ of $\L^1_\infty(\cal{M}^\alpha \subset \cal{M})$. By \eqref{norm-cond}, there exist elements $y,z$ such that $x=yz$ and 
\begin{equation}
\label{sans-fin-987}
\norm{x}_{\L^1_\infty(\cal{M}^\alpha \subset \cal{M})}
\leq \norm{\E(yy^*)}_{\L^\infty(\cal{M})}^{\frac{1}{2}} \norm{\E(z^*z)}_{\L^\infty(\cal{M})}^{\frac{1}{2}}
\leq \norm{x}_{\L^1_\infty(\cal{M}^\alpha \subset \cal{M})}+\epsi.
\end{equation}
We have
\begin{align*}
\MoveEqLeft
\bnorm{(\Id \ot h_{\QG})(\alpha(y)\alpha(y)^*)}_{\L^\infty(\cal{M})}^{\frac{1}{2}} \bnorm{(\Id \ot h_{\QG})(\alpha(z)^*\alpha(z))}_{\L^\infty(\cal{M})}^{\frac{1}{2}} \\
&\ov{\eqref{Norm-LpL1-pos}}{=} \norm{\alpha(y)\alpha(y)^*}_{\L^\infty(\cal{M},\L^1(\QG))} ^{\frac{1}{2}} \norm{\alpha(z)^*\alpha(z)}_{\L^\infty(\cal{M},\L^1(\QG))} ^{\frac{1}{2}} \\
&\ov{\eqref{sans-fin-987}}{=} \norm{\alpha(yy^*)}_{\L^\infty(\cal{M},\L^1(\QG))} ^{\frac{1}{2}} \norm{\alpha(z^*z)}_{\L^\infty(\cal{M},\L^1(\QG))} ^{\frac{1}{2}} \\      
&\ov{\eqref{divers-5667890}}{=} \norm{\E(yy^*)}_{\L^\infty(\cal{M})}^{\frac{1}{2}}\norm{\E(z^*z)}_{\L^\infty(\cal{M})}^{\frac{1}{2}} 
\leq \norm{x}_{\L^1_\infty(\cal{M}^\alpha \subset \cal{M})}+\epsi.
\end{align*} 
Moreover, we have $\alpha(x)=\alpha(yz)=\alpha(y)\alpha(z)$. From the explanation following \eqref{norm-cond}, we deduce the inequality 
$$
\norm{\alpha(x)}_{\L^\infty(\cal{M},\L^1(\QG))} 
\leq \norm{x}_{\L^1_\infty(\cal{M}^\alpha \subset \cal{M})}+\epsi.
$$ 
We deduce a contraction $
\alpha \co \L^1_\infty(\cal{M}^\alpha \subset \cal{M}) \to \L^\infty(\cal{M},\L^1(\QG))$.  
We have complete contractions $\alpha \co \L^1_1(\cal{M}^\alpha \subset \cal{M})\ov{\eqref{amal-useful}}{=} \L^1(\cal{M}) \to \L^1(\cal{M},\L^1(\QG))$ and $\alpha \co \L^\infty_\infty(\cal{M}^\alpha \subset \cal{M}) \ov{\eqref{amal-useful}}{=} \L^\infty(\cal{M}) \to \L^\infty(\cal{M}) \otvn \L^\infty(\QG)$. We conclude by interpolation with the second equality described in \eqref{amal-useful}. The complete boundedness is similar.
\end{proof}

Recall that for finite von Neumann algebras $\cal{N} \subset \cal{M}$, it is proved in \cite[Theorem 3.9]{GJL20b} that the completely bounded relative entropy $\D_{\cb,p}(\cal{M}||\cal{N})$ satisfies
\begin{align*}
\D_{\cb,p}(\cal{M}||\cal{N})
:=p^*\log\norm{\Id}_{\cb, \L_\infty^{p^*}(\cal{N} \subset \cal{M}) \to \cal{M}}.
\end{align*}

\subsection{From Fourier multipliers to Herz-Schur multipliers:}
\label{Sec-Transference-Herz-Schur}


In this section, we use the classical notations of \cite{ArK23}. Here $G$ is discrete group. Recall that we say that a function $\varphi \co G \to \mathbb{C}$ induces a Fourier multiplier on the von Neumann algebra $\VN(G)$ if the linear map $\P_G \to \P_G$, $\lambda_s \mapsto \varphi(s)\lambda_s$ extends to a weak* continuous operator $M_\varphi \co \VN(G) \to \VN(G)$. In this case, we denote by $M_{\varphi}^{\HS} \co \B(\ell^2_G) \to \B(\ell^2_G)$, $x \mapsto [\varphi(st^{-1})x_{st}]$ the Herz-Schur multiplier on $\B(\ell^2_G)$ with associated matrix $C_\varphi \ov{\mathrm{def}}{=} [\varphi(st^{-1})]$. The operator $M_{\varphi}^{\HS}$ is denoted $\hat{\Theta}(\varphi)$ in the paper \cite{CrN13}. 

We start with a first transference principle. In the third point, we use the notations of \eqref{convol-def-bis}. Recall that the predual of the von Neumann algebra $\VN(G)$ identifies to the Fourier algebra $\mathrm{A}(G)$.

\begin{lemma}
\label{Lemma-Delta-Schur-1}
Let $G$ be a discrete group. 

\begin{enumerate}
\item The map $\alpha \co \B(\ell^2_G) \to \B(\ell^2_G) \otvn \VN(G)$, $e_{st} \mapsto e_{st} \ot \lambda_{st^{-1}}$ is a well-defined trace preserving injective unital normal $*$-homomorphism. 

\item If $\Delta$ is defined as in \eqref{coproduct-VNG}, we have $(\alpha \ot \Id) \circ \alpha=(\Id \ot \Delta) \circ \alpha$, i.e.~$\alpha$ is a right action of $\VN(G)$ on $\B(\ell^2_G)$, see \eqref{Def-corep}.

\item Let $\varphi \co G \to \mathbb{C}$ be a function belonging to $\mathrm{A}(G)$. If $\omega=\la\varphi, \cdot\ra$, we have $T_{\omega,\alpha} =M_{\varphi}^{\HS}$ and $T_{\omega}=M_{\varphi}$. In particular, we have
\begin{equation}
\label{entrelacement-transference-1}
\alpha \circ M_{\varphi}^{\HS}
=\big(\Id \ot M_{\varphi} \big) \circ \alpha.
\end{equation}

\item Suppose $1 \leq p <\infty$. The map $\alpha$ induces a complete isometry $\alpha \co S^p_G \to S^p_G(\L^p(\VN(G))$.

\item The fixed point subalgebra $\B(\ell^2_G)^\alpha$ is equal to the diagonal subalgabra of $\B(\ell^2_G)$.

\item Suppose $1 \leq p <\infty$. The map $\alpha$ induces a contraction $\alpha \co S^1_G \to S^p_G(\L^1(\VN(G)))$.

\end{enumerate}
\end{lemma}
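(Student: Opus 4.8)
```latex
\textit{Proof plan for Lemma \ref{Lemma-Delta-Schur-1}.}

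The statement is a straightforward instance of the general transference machinery of Section \ref{Sec-actions-Transference}, specialized to $\QG = \hat{G}$ (with $\L^\infty(\QG)=\VN(G)$ and the coproduct $\Delta$ of \eqref{coproduct-VNG}) and $\cal{M}=\B(\ell^2_G)$. I would treat the five points in order. For point 1, I would define $\alpha$ on matrix units by $\alpha(e_{st})=e_{st}\ot\lambda_{st^{-1}}$ and check that it is multiplicative and $*$-preserving: for the product, $\alpha(e_{st})\alpha(e_{uv})=\delta_{t=u}\,e_{sv}\ot\lambda_{st^{-1}}\lambda_{uv^{-1}} =\delta_{t=u}\,e_{sv}\ot\lambda_{sv^{-1}}=\delta_{t=u}\,\alpha(e_{sv})$, using $st^{-1}uv^{-1}=sv^{-1}$ when $t=u$; for the adjoint, $\alpha(e_{st})^*=e_{ts}\ot\lambda_{st^{-1}}^*=e_{ts}\ot\lambda_{ts^{-1}}=\alpha(e_{ts})=\alpha(e_{st}^*)$. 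Unitality follows since $\alpha(1)=\sum_s e_{ss}\ot\lambda_e=1\ot 1$, and since $G$ is finite the map is automatically normal. Trace preservation: $(\tau_{\B(\ell^2_G)}\ot\tau_{\VN(G)})(\alpha(e_{st}))=\tau_{\B(\ell^2_G)}(e_{st})\cdot\tau_{\VN(G)}(\lambda_{st^{-1}})=\delta_{s=t}\cdot\delta_{st^{-1}=e}$, which equals $\tau_{\B(\ell^2_G)}(e_{st})$ (here I use the normalized trace on $\B(\ell^2_G)$, as in the corresponding normalization of $\VN(G)$). Injectivity is clear from the formula on a basis.

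For point 2, I would simply compute on matrix units: $(\alpha\ot\Id)\alpha(e_{st})=(\alpha\ot\Id)(e_{st}\ot\lambda_{st^{-1}})=e_{st}\ot\lambda_{st^{-1}}\ot\lambda_{st^{-1}}$, while $(\Id\ot\Delta)\alpha(e_{st})=e_{st}\ot\Delta(\lambda_{st^{-1}})\overset{\eqref{coproduct-VNG}}{=}e_{st}\ot\lambda_{st^{-1}}\ot\lambda_{st^{-1}}$, so \eqref{Def-corep} holds. For point 3, with $\omega=\tau_{G,\sigma}(\varphi R(\cdot))$ (the functional associated to $\varphi=\sum_s\varphi_s\lambda_s$ under the bracket \eqref{bracket}), I would verify $T_{\varphi,\alpha}=(\Id\ot\omega)\circ\alpha$ sends $e_{st}$ to $\omega(\lambda_{st^{-1}})\,e_{st}=\varphi(st^{-1})\,e_{st}$, which is precisely $M_\varphi^{\HS}(e_{st})$; similarly $T_\varphi=(\Id\ot\omega)\circ\Delta$ sends $\lambda_s$ to $\omega(\lambda_s)\lambda_s=\varphi(s)\lambda_s=M_\varphi(\lambda_s)$. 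Then \eqref{entrelacement-transference-1} is the specialization of the first identity in \eqref{commute-transference}. Points 4 and 5 are immediate applications of Proposition \ref{Prop-transfer-1}: $\alpha$ is a trace preserving normal unital injective $*$-homomorphism, so point 1 of Proposition \ref{Prop-transfer-1} gives the complete isometry $\alpha\co S^p_G=\L^p(\B(\ell^2_G))\to\L^p(\B(\ell^2_G),\L^p(\VN(G)))=S^p_G(\L^p(\VN(G)))$; and $\alpha$ is easily seen to be ergodic (on $x=\sum_{s,t}x_{st}e_{st}$ one has $\alpha(x)=x\ot 1$ forcing $x_{st}=0$ unless $s=t$ and $x_{ss}$ independent of $s$, so $\cal{M}^\alpha=\C1$), with invariant state equal to the normalized trace $\tau_{\B(\ell^2_G)}$, hence point 2 of Proposition \ref{Prop-transfer-1} gives the completely contractive $\alpha\co S^1_G\to S^p_G(\L^1(\VN(G)))$.

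There is essentially no obstacle here: everything reduces to checking relations on matrix units and then invoking the already-established Proposition \ref{Prop-transfer-1}. The only point requiring a little care is the bookkeeping of normalizations of traces on $\B(\ell^2_G)$ versus $\VN(G)$ so that $\alpha$ is genuinely trace preserving, and the verification that the functional $\omega$ matching the duality bracket \eqref{bracket} produces the symbol $\varphi(st^{-1})$ in the Herz--Schur multiplier (rather than, say, $\varphi(ts^{-1})$ or a conjugate), which is a routine but sign-sensitive computation with \eqref{bracket} and the definition $R(\lambda_s)=\lambda_{s^{-1}}$.
```
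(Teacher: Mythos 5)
Points 1--4 of your argument are correct and essentially identical to the paper's proof: the same matrix-unit computations for the homomorphism, $*$-preservation, unitality, trace preservation and the coaction identity, the same evaluation $T_{\varphi,\alpha}(e_{st})=\varphi(st^{-1})e_{st}$, and the same appeal to the fact that a trace preserving normal unital injective $*$-homomorphism induces a complete isometry on $\L^p$.

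There is, however, a genuine gap in your point 5: the claim that $\alpha$ is ergodic is false. Since $\alpha(e_{ss})=e_{ss}\ot\lambda_{ss^{-1}}=e_{ss}\ot 1$, every diagonal matrix unit lies in the fixed-point algebra, so $\B(\ell^2_G)^\alpha$ is the diagonal subalgebra $\ell^\infty_G$, not $\C 1$ (the condition $\alpha(x)=x\ot 1$ forces $x_{st}=0$ for $s\neq t$ but imposes \emph{no} constraint tying the $x_{ss}$ together). Consequently Proposition \ref{Prop-transfer-1}(2) does not apply, and your derivation of the complete contraction $\alpha\co S^1_G\to S^p_G(\L^1(\VN(G)))$ collapses. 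The conclusion is nonetheless true, and the paper proves it by adapting the \emph{argument} of Proposition \ref{Prop-transfer-1}(2) rather than its statement: one computes $(\Id\ot\tau)\circ\alpha=\E$, where $\E\co\B(\ell^2_G)\to\B(\ell^2_G)$ is the trace preserving conditional expectation onto the diagonal, and then for positive $x$ one bounds
$\norm{\alpha(x)}_{S^\infty_G(\L^1(\VN(G)))}\ov{\eqref{Norm-LpL1-pos}}{=}\norm{\E(x)}_{S^\infty_G}\leq\norm{x}_{S^\infty_G}\leq\norm{x}_{S^1_G}$,
which replaces the ergodicity identity $(\Id\ot h_\QG)\alpha(x)=h_{\cal{M}}(x)1$; the general case then follows by the same factorization $x=yz$ via \eqref{formula-Junge} and interpolation with the isometry of point 4 (and one tensorizes with $S^1_n$ throughout to get complete contractivity). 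You should replace the ergodicity step by this conditional-expectation argument.
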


\begin{proof}
1. If $(e_s)_{s \in G}$ is the canonical orthonormal basis of the Hilbert space $\ell^2_G$ then we can consider the fundamental unitary $W \co \ell^2_{G \times G} \to \ell^2_{G \times G}$ belonging to $\B(\ell^2_{G}) \otvn \VN(G)$ defined by
\begin{equation}
\label{W-discret}
W(e_t \ot e_r)
\ov{\mathrm{def}}{=} e_t \ot e_{tr}, \quad 
W^{-1}(e_t \ot e_r)
=e_t \ot e_{t^{-1}r}, \quad t,r \in G.
\end{equation}
For any $i,j,s,t \in G$, we have
\begin{align*}
\MoveEqLeft
W (e_{st} \ot 1)W^{-1}(e_i \ot e_j)            
\ov{\eqref{W-discret}}{=} W (e_{st} \ot 1)(e_i \ot e_{i^{-1}j}) 
=W\big(e_{st}e_i \ot e_{i^{-1}j}\big) \\
&= \delta_{t=i}W\big(e_s \ot e_{i^{-1}j}\big) 
\ov{\eqref{W-discret}}{=} \delta_{t=i} e_s \ot e_{st^{-1}j}.
\end{align*} 
Hence in $\B(\ell^2_G) \otvn \VN(G)$, we have
\begin{equation}
\label{calcul-890}
W (e_{st} \ot 1)W^{-1}
= e_{st} \ot \lambda_{st^{-1}}.
\end{equation}
Consequently, we can define $\alpha$ as the injective normal unital $*$-homomorphism $\alpha \co \B(\ell^2_G) \to \B(\ell^2_G) \otvn \VN(G)$, $x \mapsto W (x \ot 1)W^{-1}$. 
For any $i,j,r,s \in G$, we have
\begin{align*}
\MoveEqLeft
\big(\tr \ot \tau_G\big)\big(\alpha(e_{rs}e_{ij})\big)           
=\delta_{s=i}\big(\tr \ot \tau_G\big)\big(\alpha(e_{rj})\big)  
=\delta_{s=i}\big(\tr \ot \tau_G\big)(e_{rj} \ot \lambda_{rj^{-1}})\\
&=\delta_{s=i} \tr(e_{rj}) \tau_G(\lambda_{rj^{-1}})
=\delta_{s=i}\delta_{r=j}
=\tr(e_{rs}e_{ij})    
\end{align*}
We conclude with \cite[Theorem 6.2 p.~83]{Str81} that $(\tr \ot \tau_G) \circ \alpha=\tr$.


2. For any $s,t \in G$, we have
\begin{align*}
\MoveEqLeft
(\alpha \ot \Id) \circ \alpha(e_{st})
=(\alpha \ot \Id)(e_{st} \ot \lambda_{st^{-1}})         
=\alpha(e_{st}) \ot \lambda_{st^{-1}} 
=e_{st} \ot \lambda_{st^{-1}} \ot \lambda_{st^{-1}}.
\end{align*} 
and
\begin{align*}
\MoveEqLeft
(\Id \ot \Delta) \circ \alpha(e_{st})         
=(\Id \ot \Delta)(e_{st} \ot \lambda_{st^{-1}}) 
=e_{st} \ot \Delta(\lambda_{st^{-1}}) 
\ov{\eqref{coproduct-VNG}}{=} e_{st} \ot \lambda_{st^{-1}} \ot \lambda_{st^{-1}}.
\end{align*}

3. For any $s,t \in G$, we have
\begin{align*}
\MoveEqLeft
T_{\omega,\alpha}(e_{st})
\ov{\eqref{convol-def-bis}}{=} (\Id \ot \omega) \circ \alpha(e_{st})            
=(\Id \ot \varphi)(e_{st} \ot \lambda_{st^{-1}})   \\
&=e_{st} \ot \varphi(\lambda_{st^{-1}}) 
=\varphi(st^{-1}) e_{st}
=M_{\varphi}^{\HS}(e_{st})
\end{align*} 
and
\begin{align*}
\MoveEqLeft
T_{\omega}(\lambda_{s})
\ov{\eqref{convol-def-bis}}{=} (\Id \ot \omega) \circ \Delta(\lambda_{s})            
= (\Id \ot \omega)(\lambda_{s} \ot \lambda_{s}) 
=\varphi(s)\lambda_{s}
=M_{\varphi}(\lambda_{s}).
\end{align*} 

4. It suffices to use Lemma \ref{lemma-trace-preserving}.


5) If $x=\sum_{st} a_{st} e_{st}$ belongs to $\B(\ell^2_G)$, we have
$$
\alpha(x)
=\alpha\bigg(\sum_{s,t} a_{st} e_{st}\bigg)
=\sum_{s,t} a_{st} \alpha(e_{st})
=\sum_{s,t} a_{st} e_{st} \ot \lambda_{st^{-1}}.
$$
So $\alpha(x)=x \ot 1$ if and only if for any $s,t \in G$ we have $a_{st}\lambda_{st^{-1}}=a_{st}1$. This is equivalent to $a_{st}=0$ if $s \not=t$.

6. Let $\E \co \B(\ell^2_G) \to \B(\ell^2_G)$ be the trace preserving normal conditional expectation on the subalgebra of diagonal operators. This is a consequence of Proposition \ref{Prop-transfer-non-ergodic} using the contraction $S^1_G \subset S^p_G$.
\end{proof}

\begin{remark} \normalfont
The complete contractivity of the map $\alpha \co S^1_G \to S^p_G(\L^1(\VN(G)))$ is unclear. A positive answer implies the inequality $\leq$ of the equality \eqref{Hcbmin-HS}.
\end{remark}

Note Lemma \ref{lemma-useful-678}.

\begin{lemma}
\label{Lemma-entropy}
Let $G$ be a finite group. Let $\varphi \co G \to \mathbb{C}$ be a positive definite function with $\varphi(e)=1$. We have
\begin{equation}
\label{equality-ent}
\H\bigg(\sum_{s \in G} \varphi(s)\lambda_s\bigg)  
=\H\bigg(\frac{1}{|G|}C_\varphi\bigg)-\log_2 |G|.
\end{equation}
\end{lemma}

\begin{proof}
Note that we have a unital $*$-homomorphism $J \co \VN(G) \to \B(\ell^2_G)$, $\lambda_s \mapsto \sum_{t \in G} e_{st,t}$. Indeed, for any $s,r \in G$, we have
$$
J(\lambda_s)J(\lambda_r)
=\bigg(\sum_{t \in G} e_{st,t}\bigg)\bigg(\sum_{u \in G} e_{ru,u}\bigg)
=\sum_{t,u \in G} e_{st,t} e_{ru,u}
=\sum_{u \in G} e_{sru,u}
=J(\lambda_{sr})
$$
and
$$
J(\lambda_s)^*
=\bigg(\sum_{t \in G} e_{st,t}\bigg)^*
=\sum_{t \in G} e_{t,st}
=\sum_{t \in G} e_{s^{-1}t,t}
=J(\lambda_{s^{-1}})
=J(\lambda_s^*).
$$
Furthermore, for any $s \in G$, we have
$$
\tr J(\lambda_s)
=\tr\bigg(\sum_{t \in G} e_{st,t}\bigg)
=\sum_{t \in G} \tr(e_{st,t})
=\sum_{t \in G} \delta_{st=t}
=\sum_{t \in G} \delta_{s=e}
=|G| \tau_G(\lambda_s).
$$
Hence $\frac{\tr}{|G|} \circ J=\tau_G$, i.e.~$J$ preserves the \textit{normalized} traces. We infer that
\begin{align*}
\MoveEqLeft
\H\bigg(\sum_{s \in G} \varphi(s)\lambda_s\bigg)            
\ov{\eqref{Preservation-entropy}}{=}\H\bigg(J\bigg(\sum_{s \in G} \varphi(s)\lambda_s\bigg),\frac{\tr}{|G|}  \bigg)
=\H\bigg(\sum_{s \in G} \varphi_sJ(\lambda_s),\frac{\tr}{|G|}  \bigg)\\
&=\H\bigg(\sum_{s,t \in G} \varphi_se_{st,t},\frac{\tr}{|G|}  \bigg)
=\H\bigg(C_\varphi,\frac{\tr}{|G|}  \bigg)
 \ov{\eqref{changement-de-trace}}{=} \H\bigg(\frac{1}{|G|}C_\varphi\bigg)-\log_2 |G|.
\end{align*}
\end{proof}

If $\varphi \co G \to \mathbb{C}$ is a positive definite function with $\varphi(e)=1$, it is well-known that the Herz-Schur multiplier $M_\varphi^\HS \co \B(\ell^2_G) \to \B(\ell^2_G)$ is a quantum channel. It is stated in \cite[p.~114]{GJL18a} that 
\begin{equation}
\label{Hcbmin-HS}
-\H_{\cb,\min}\big(M_\varphi^\HS\big)
=\log_2 |G|-\H\bigg(\frac{1}{|G|}C_\varphi\bigg).
\end{equation}

Now, we have the following transference result.

\begin{prop}
\label{Prop-transference-1}
Let $G$ be a finite group. Suppose $1 < p \leq \infty$. Let $\varphi \co G \to \mathbb{C}$ be a complex function.  
We have
\begin{equation}
\label{First-equality}
\norm{M_\varphi^\HS}_{S^1_G \to S^p_G}
\geq |G|^{\frac{1}{p}-1}\norm{\sum_{s \in G} \varphi(s)\lambda_s}_{\L^{p}(\VN(G))}. 
\end{equation} 
\end{prop}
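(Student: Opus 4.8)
\textbf{Proof plan for Proposition \ref{Prop-transference-1}.}

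The plan is to deduce \eqref{First-equality} from the transference machinery already in place, namely the intertwining relation \eqref{entrelacement-transference-1} together with the complete isometry/contraction properties of $\alpha$ recorded in points 4 and 5 of Lemma \ref{Lemma-Delta-Schur-1}. First I would prove the inequality $\norm{M_\varphi^\HS}_{\cb,S^1_G \to S^p_G} \leq \norm{M_\varphi}_{\cb, \L^1(\VN(G)) \to \L^p(\VN(G))}$. For this, observe that by the complete isometry $\alpha \co S^p_G \to S^p_G(\L^p(\VN(G)))$ of point 4, we have $\norm{M_\varphi^\HS}_{\cb,S^1_G \to S^p_G} = \norm{\alpha \circ M_\varphi^\HS}_{\cb, S^1_G \to S^p_G(\L^p(\VN(G)))}$. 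Now rewrite $\alpha \circ M_\varphi^\HS \ov{\eqref{entrelacement-transference-1}}{=} (\Id \ot M_\varphi) \circ \alpha$ and factor through $S^p_G(\L^1(\VN(G)))$: using that $\alpha \co S^1_G \to S^p_G(\L^1(\VN(G)))$ is completely contractive (point 5) and that $\Id_{S^p_G} \ot M_\varphi \co S^p_G(\L^1(\VN(G))) \to S^p_G(\L^p(\VN(G)))$ is completely bounded with cb-norm at most $\norm{M_\varphi}_{\cb,\L^1(\VN(G)) \to \L^p(\VN(G))}$ by \cite[(3.1) page 39]{Pis1} (tensorization of cb-maps on hyperfinite vector-valued $\L^p$-spaces — note $\VN(G)$ is finite-dimensional, hence hyperfinite), we get the bound by composition, exactly as in the proof of Theorem \ref{thm-Smin-transfert}.

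For the reverse inequality $\norm{M_\varphi}_{\cb, \L^1(\VN(G)) \to \L^p(\VN(G))} \leq \norm{M_\varphi^\HS}_{\cb,S^1_G \to S^p_G}$, the natural route is to exhibit $M_\varphi$ as a ``corner'' of $M_\varphi^\HS$ via a trace preserving $*$-embedding. Concretely, the diagonal-type embedding $\VN(G) \hookrightarrow \B(\ell^2_G)$, $\lambda_s \mapsto$ (a suitable conjugate of the right regular representation), or more precisely the map sending $\lambda_s$ to the matrix $[\delta_{t^{-1}u = s}]_{t,u}$, intertwines $M_\varphi$ with $M_\varphi^\HS$ and is trace preserving; the associated conditional expectation then lets one run the argument of the proof of Lemma \ref{thm-Smin-transfert}'s transference in the opposite direction, using that trace preserving normal $*$-monomorphisms induce complete isometries on all noncommutative $\L^p$-spaces (\cite[Lemma 4.1]{Arh3}) and that conditional expectations are completely contractive. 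Alternatively — and this is probably cleanest — one notes that $M_\varphi$ is a Fourier multiplier on the co-amenable compact quantum group of Kac type $\hat{G}$, so by the case $p$ versus $1$ of Theorem \ref{Thm-description-multipliers-2} (more precisely its analogue, since here $M_\varphi \co \L^1 \to \L^p$), $\norm{M_\varphi}_{\cb,\L^1(\VN(G)) \to \L^p(\VN(G))}$ already equals the ordinary norm, and then compare directly with the Herz-Schur side.

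The main obstacle I anticipate is the reverse inequality: getting a genuine trace preserving embedding $\VN(G) \to \B(\ell^2_G)$ that intertwines $M_\varphi$ and $M_\varphi^\HS$ requires a little care with normalizations (the trace on $\B(\ell^2_G)$ should be $\frac{1}{|G|}\tr$ to match $\tau_G$, and one must check the intertwining on generators $\lambda_s \mapsto [\varphi(\cdot)\cdot]$ really lands in the Herz-Schur picture rather than its transpose). Once the embedding and the associated conditional expectation are set up, the estimate is a verbatim repeat of the two-sided argument in Theorem \ref{thm-Smin-transfert}, so I would state it briefly and leave the routine verifications to the reader, as the paper does elsewhere. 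The forward inequality, by contrast, is immediate from the lemmas already proved and should occupy only a few lines.
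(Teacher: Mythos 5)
Your proposal is correct and follows essentially the same route as the paper: the forward inequality is exactly the paper's argument (complete isometry of $\alpha$ on the $S^p$ level, the intertwining relation, factorization through $S^p_G(\L^1(\VN(G)))$ via point 5 and Pisier's tensorization), and your embedding for the reverse inequality is precisely the Neuwirth--Ricard construction that the paper cites. One small caveat you already half-flagged: the correct generator formula is $\lambda_s \mapsto \sum_{t} e_{st,t}$ (i.e. $[\delta_{tu^{-1}=s}]_{t,u}$), since your $[\delta_{t^{-1}u=s}]_{t,u}$ would produce the symbol $\varphi(ts^{-1}t^{-1})$ under $M_\varphi^{\HS}$ rather than $\varphi(s)$ for non-abelian $G$.
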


\begin{proof}
Using the $*$-homomorphism $J \co \VN(G) \to \B(\ell^2_G)$ of the proof of Lemma \ref{Lemma-entropy} 
we obtain
\begin{align*}
\MoveEqLeft
\norm{\sum_{s \in G} \varphi(s)\lambda_s}_{\L^{p}(\VN(G))}         
\ov{\eqref{norm-equality-QG-2}}{=} \norm{M_\varphi}_{\L^1(\VN(G)) \to \L^p(\VN(G))}
\leq |G|^{1-\frac{1}{p}}\norm{M_\varphi^\HS}_{S^1_{G} \to S^p_{G}}.
\end{align*}
\end{proof}

In particular, by differentiation we obtain the (optimal) inequality
\begin{equation}
\label{minmin}
-\H\bigg(\frac{1}{|G|}C_\varphi\bigg) 
\leq -\H_{\min}\big(M_\varphi^\HS\big).
\end{equation} 
For the entanglement-assisted classical capacity, we only obtain the following (sharp) inequality. 

\begin{thm}
\label{Th-CEA-Herz-Schur}
Let $G$ be a finite group. The entanglement-assisted classical capacity of a Herz-Schur multiplier $M_\varphi^\HS \co S^1_G \to S^1_G$ defined by a positive definite function $\varphi$ with $\varphi(e)=1$ satisfies
\begin{equation}
\label{}
\C_{\EA}(M_\varphi^\HS)
\leq 2\log_2|G|-\H\bigg(\frac{1}{|G|}C_\varphi\bigg).
\end{equation}
\end{thm}

\begin{proof}
Let $n=|G|$. Using \cite[Theorem 1.1]{JuP15} in the first equality, we have
\begin{align*}
\MoveEqLeft
\C_{\EA}(M_\varphi^\HS)         
=\frac{1}{\log(2)}\frac{\d}{\d p} \Big[\bnorm{\big(M_\varphi^\HS\big)^*}_{\pi_{p^*,\circ}, \B(\ell^2_G) \to \B(\ell^2_G)}\Big]|_{p=1} \\
&\ov{\eqref{elem-lim}}{=} \lim_{q \to \infty} q\log_2 \bnorm{\big(M_\varphi^\HS\big)^*}_{\pi_{q,\circ}, \B(\ell^2_G) \to \B(\ell^2_G)} 
\ov{\eqref{Summing-Schur}}{\leq} \lim_{q \to \infty} q\log_2 n^{\frac{1}{q}} \bnorm{\big(M_\varphi^\HS\big)^*}_{\cb,S^q_G \to \B(\ell^2_G)} \\
&=\log_2(n)+\lim_{q \to \infty} q\log_2 \bnorm{\big(M_\varphi^\HS\big)^*}_{\cb,S^{q}_G \to \B(\ell^2_G)} 
=\log_2(n)+\lim_{q \to \infty} q\log_2 \norm{M_\varphi^\HS}_{\cb,S^{1}_G \to S^{q^*}_G} \\
&\ov{\eqref{elem-lim}}{=} \log_2(n)+\frac{1}{\log(2)}\frac{\d}{\d p}  \Big[\norm{M_\varphi^\HS}_{\cb,S^{1}_G \to S^{p}_G}\Big]|_{p=1}
\ov{\eqref{Hcbmin-HS}}{=}2\log_2(n)-\H\bigg(\frac{1}{n}C_\varphi\bigg).
\end{align*}
%
\end{proof}

\begin{example} \normalfont
If $0 \leq t \leq 1$, consider the dephasing channel $T \co \M_2 \to \M_2$, $x \mapsto (1-t)x+tZxZ$ of \cite[p.~155]{Wil17} defined with the Pauli matrix $Z
\ov{\mathrm{def}}{=} \begin{bmatrix}
   1  &  0 \\
   0  &  -1 \\
\end{bmatrix}$. With $G=\Z/2\Z=\{\ovl{0},\ovl{1}\}$, it is elementary to check that this map identifies to the Herz-Schur multiplier of symbol $C_\varphi=\begin{bmatrix}
   1  &  1-2t \\
   1-2t  &  1 \\
\end{bmatrix}$ where $\varphi \co G \to \mathbb{C}$ is defined by $\varphi(\ovl{0})=1$ and $\varphi(\ovl{1})=1-2t$. The \textit{opposite} of the von Neumann entropy of the matrix $\frac{1}{|G|}C_\varphi=\begin{bmatrix}
   \frac{1}{2}  &  \frac{1}{2}-t \\
   \frac{1}{2}-t  &  \frac{1}{2} \\
\end{bmatrix}$ is\footnote{\thefootnote. The eigenvalues of a matrix $\begin{bmatrix}
   b  &  a \\
   a &  b \\
\end{bmatrix}$ are $b-a$ and $b+a$.} $t\log_2 t+\big( 1-t\big)\log_2\big(1-t\big)$. 
Our result for the entanglement assisted capacity gives the inequality
$$
\C_{\EA}(T)
\leq 2+t\log_2 t+(1-t)\log_2(1-t).
$$
Actually, by \cite[Exercise 21.6.2 p.~597]{Wil17} this inequality is an equality. Our first method of Section \ref{Sec-Dihedral} also gives this value (and the one of Section \ref{summing-radial-multipliers} too).
Recall that the quantum capacity is equal by \cite[Exercise 24.7.1 p.~666]{Wil17} to
\begin{equation}
\label{quantum-cap-dephasing}
\Q(T)
=1+t\log_2 t+(1-t)\log_2(1-t).
\end{equation}
Example \ref{ex-estimating-quantum} gives the estimate $\Q(T) \leq \log_2 \bgnorm{\begin{bmatrix}
   1  &  |1-2t| \\
   |1-2t|  &  1 \\
\end{bmatrix}}_{\M_2}
=\log_2(1  +  |1-2t|)$. In particular, if $t=\frac{1}{2}$, we recover that the quantum capacity is zero.
\end{example}

Now, we recover a sharp inequality on the classical capacity $\C(T)$ of the dephasing channel obtained in \cite[Example 8.10 p.~153]{Hol19} (or \cite[Example 1]{Amo22}). The computation of this reference relies on the formula $\C(T)=1-\H_{\min}(T)$ for a unital qubit channel $T \co S^1_2 \to S^1_2$ (see also \cite{Kin02}).

\begin{cor}
The classical capacity $\C(T)$ of the dephasing channel $T \co S^1_2 \to S^1_2$, $x \mapsto (1-t)x+tZxZ$ satisfies
$$
\C(T)
\geq 1+t\log_2 t+(1-t)\log_2(1-t).
$$
\end{cor}
%

\begin{remark} \normalfont
\label{theta-mu}
Let $\QG$ be a locally compact quantum group. Note that the fundamental unitary operator $W$ of $\B(\L^2(\QG) \ot_2 \L^2(\QG))$ induces a map $\tilde{\Delta} \co \B(\L^2(\QG)) \to \B(\L^2(\QG) \ot_2 \L^2(\QG))$, $x \mapsto W(x \ot \Id_{\L^2(\QG)})W^*$ such that $\tilde{\Delta}|\L^\infty(\QG)=\Delta$. 
Recall that each quantum measure $\mu \in \M_u(\QG)$ induces an operator $m^r_\mu \co \L^1(\QG) \to \L^1(\QG)$, $f \mapsto f*\mu$. We consider the operator $\Phi_\mu \ov{\mathrm{def}}{=} (m^r_\mu)^*$. It follows essentially of \cite[Theorem 4.5]{JNR09} that the map $\Phi_\mu \co \L^\infty(\QG) \to \L^\infty(\QG)$ has a unique weak* continuous unital completely positive extension $\Theta(\mu) \co \B(\L^2(\QG)) \to \B(\L^2(\QG))$ such that 
$$
\tilde{\Delta} \circ \Theta(\mu)
=(\Id \ot \Phi_\mu) \circ \tilde{\Delta}.
$$
See also \cite[(2.3)]{KNR14} and \cite[Section 4]{SaS17}. A \textit{part} of the arguments of this section applies more generally to the operator $\Theta(\mu)$. 
\end{remark}

\section{Entanglement breaking and PPT multipliers}
\subsection{Multiplicative domains of Markov maps}
\label{Sec-Multiplicative}


Let $A,B$ be $\mathrm{C}^*$-algebras. Let $T \co A \to B$ be a positive map. Recall that the left multiplicative domain \cite[p.~38]{Pau02} of $T$ is the set 
\begin{equation}
\label{Def-multiplicative-domain}
\LMD(T) 
\ov{\mathrm{def}}{=} \big\{ x \in A : T(xy)=T(x)T(y) \ \text{ for all } y \in A \big\}.
\end{equation}
By \cite[Theorem 3.18]{Pau02} \cite[Definition 2.1.4]{Sto13}, the right multiplicative domain $\RMD(T)$ of a completely positive map is\footnote{\thefootnote. The result remains true if $T$ is a Schwarz map.} a subalgebra of $A$. The right multiplicative domain\footnote{\thefootnote. We warn the reader that this notion is called multiplicative domain in \cite[Definition 2.1.4]{Sto13}.} is defined by 
\begin{equation}
\label{Def-right-multiplicative-domain}
\RMD(T) 
\ov{\mathrm{def}}{=} \big\{ x \in A : T(yx)=T(y)T(x) \ \text{ for all } y \in A \big\}
\end{equation}
We have $\RMD(T)^*=\LMD(T)$. The multiplicative domain is $\MD(T) \ov{\mathrm{def}}{=}\LMD(T) \cap \RMD(T)$. If $T \co A \to A$ is a map, we introduce the fixed point set
\begin{equation}
\label{Def-fixed}
\Fix(T) 
\ov{\mathrm{def}}{=}  \{ x \in A : T(x) = x\}.
\end{equation}

Recall that the definite set $\mathrm{D}_T$ \cite[Definition 2.1.4]{Sto13} of a positive map $T \co A \to A$ on a $\mathrm{C}^*$-algebra $A$ is defined by 
\begin{equation}
\label{Def-definite-set}
\mathrm{D}_T
\ov{\mathrm{def}}{=} \big\{x \in A_\sa : T(x^2)=T(x)^2\big\}.
\end{equation}

Let $(\cal{M},\phi)$ and $(\cal{N},\psi)$ be von Neumann algebras equipped with normal faithful states $\phi$ and $\psi$, respectively. A linear map $T \co \cal{M} \to \cal{N}$ is called a $(\phi,\psi)$-Markov map if
\begin{enumerate}
\item [$(1)$] $T$ is completely positive
\item [$(2)$] $T$ is unital
\item [$(3)$] $\psi \circ T=\phi$
\item [$(4)$] for any $t \in \R$ we have $T \circ \sigma_t^{\phi}=\sigma_t^\psi \circ T$ where $(\sigma_t^{\phi})_{t \in \R}$ and $(\sigma_t^{\psi})_{t \in \R}$ denote the automorphism groups of the states $\phi$ and $\psi$, respectively.
\end{enumerate}
When $(\cal{M},\phi)=(\cal{N},\psi)$, we say that $T$ is a $\phi$-Markov map. 
Note that a linear map $T \co \cal{M} \to \cal{N}$ satisfying previous conditions $(1)-(3)$ is automatically weak* continuous. If, moreover, condition $(4)$ is satisfied, then it is well-known, see \cite[p.~556]{HaM11} that there exists a unique completely positive, unital map $T^* \co \cal{N} \to \cal{M}$ such that
\begin{equation}
\label{Def-adjoint}
\phi(T^*(y)x) 
=\psi(yT(x)), \quad x \in \cal{M}, y \in \cal{N}. 
\end{equation}
It is easy to show that $T^*$ is a $(\psi,\phi)$-Markov map.
%
%

The following observation is a generalization of \cite{Rah17} (note that the result of \cite{Rah17} was known before, see \cite{CJK09}) proved with a rather similar argument. With the terminology of \cite[p~11]{Izu02} (see also \cite{Izu04}), it says that the multiplicative domain of $T$ is the Poisson boundary of the Markov operator $T^* \circ T \co \cal{M} \to \cal{M}$. 

\begin{lemma}
\label{Lemma-MDfix} 
Let $\cal{M}$ and $\cal{N}$ be von Neumann algebras equipped with normal faithful states $\phi$ and $\psi$. Let $T \co \cal{M} \to \cal{N}$ be a $(\phi,\psi)$-Markov map. Then 
$$
\LMD(T)
=\RMD(T)
=\MD(T)
= \Fix(T^* \circ T).
$$
\end{lemma}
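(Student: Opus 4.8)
The plan is to prove the chain of equalities $\LMD(T) = \RMD(T) = \MD(T) = \Fix(T^* \circ T)$ in stages. First I would establish the easy algebraic inclusions. Since $\MD(T) = \LMD(T) \cap \RMD(T)$ by definition, and since $\RMD(T)^* = \LMD(T)$, it suffices to prove $\LMD(T) = \RMD(T)$ together with $\RMD(T) = \Fix(T^* \circ T)$; the middle term then follows automatically. To get $\LMD(T) = \RMD(T)$ I would use the $\sigma$-invariance property $(4)$ of the Markov map: if $x \in \RMD(T)$, then applying $T \circ \sigma_t^\phi = \sigma_t^\psi \circ T$ and the fact that $\sigma_t^\psi$ is a $*$-automorphism, one checks that $\sigma_t^\phi(x)$ stays in $\RMD(T)$ and, combined with the standard fact that the right multiplicative domain is a $*$-closed algebra for Markov maps (via the Kadison--Schwarz inequality $T(x)^*T(x) \leq T(x^*x)$ and its equality case), one deduces that $\RMD(T)$ is self-adjoint, hence $\RMD(T) = \RMD(T)^* = \LMD(T)$.

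The core of the argument is the identification $\RMD(T) = \Fix(T^* \circ T)$. For the inclusion $\RMD(T) \subseteq \Fix(T^* \circ T)$: take $x$ in the right multiplicative domain; I would show first that $x$ also lies in $\LMD(T)$ (by the previous paragraph) so $T(x^*x) = T(x)^*T(x)$, and then use the Kadison--Schwarz inequality for $T^* \circ T$, namely $(T^*T)(x)^*(T^*T)(x) \leq (T^*T)(x^*x)$, together with a computation using the defining relation \eqref{Def-adjoint} of $T^*$ and the state-preservation conditions to show that $\phi\big((x - T^*T(x))^*(x - T^*T(x))\big) = 0$; faithfulness of $\phi$ then forces $T^*T(x) = x$. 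The key intermediate computation is that $\phi(T^*T(x)^* \, y) = \psi(T(x)^* T(y)) = \phi(x^* y)$ for all $y$ when $x \in \LMD(T)$, which pins down $T^*T(x)$ as the "conditional expectation" of $x$ onto the fixed-point algebra.

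For the reverse inclusion $\Fix(T^* \circ T) \subseteq \RMD(T)$: suppose $T^*T(x) = x$. Since $T^* \circ T$ is itself a $\phi$-Markov map (composition of Markov maps is Markov), the set $\Fix(T^*T)$ coincides with its own multiplicative domain — this is a known fact for unital $\phi$-preserving Schwarz maps, that the fixed-point set is a von Neumann subalgebra on which the map acts as a conditional expectation, and it equals the multiplicative domain of $T^*T$. Then for $x \in \Fix(T^*T)$ and any $y \in \cal{M}$, I would estimate $\phi\big((T(yx) - T(y)T(x))^*(T(yx) - T(y)T(x))\big)$; expanding this using \eqref{Def-adjoint} to move everything back through $T^*$, and using that $x$ is in the multiplicative domain of $T^*T$ (so $T^*T(yx) = T^*T(y)\,x$ and $T^*T(x^*x) = x^*x$ etc.), one finds the expression vanishes, and faithfulness of $\psi$ gives $T(yx) = T(y)T(x)$, i.e. $x \in \RMD(T)$.

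The main obstacle I expect is bookkeeping with the modular automorphism groups: one must be careful that all the elements appearing ($x$, $x^*x$, $yx$, and their images under $T$, $T^*$, $T^*T$) genuinely lie in the domains where the Kadison--Schwarz/Cauchy--Schwarz manipulations with the (possibly non-tracial) states $\phi$, $\psi$ are valid, and that the equality cases of these inequalities are correctly characterized. A secondary technical point is justifying that $\Fix(T^*T)$ is exactly the multiplicative domain of $T^*T$ for a general $\phi$-Markov map rather than just a conditionally-expected subalgebra; I would cite the relevant statement (e.g. the theory of the Poisson boundary / noncommutative ergodic theory for Markov maps as in the references to Izumi and St{\o}rmer already invoked in the surrounding text) rather than reprove it. Once these domain and faithfulness issues are handled, the equalities follow from a sequence of short computations with \eqref{Def-adjoint}.
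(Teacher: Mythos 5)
Your forward direction is sound and essentially matches the paper: for $x$ in the multiplicative domain one gets $\phi(T^*T(x)^*y)=\psi(T(x)^*T(y))=\psi(T(x^*y))=\phi(x^*y)$ for all $y$, and faithfulness of $\phi$ then forces $T^*T(x)=x$ (the paper writes this as $\phi((x-T^*T(x))y)=0$ for all $y$ and specializes $y$, which is the same computation). The detour through the modular group to show $\LMD(T)=\RMD(T)$ is both vague ("the right multiplicative domain is a $*$-closed algebra" is not true for a general Schwarz map) and unnecessary: once each of $\LMD(T)$ and $\RMD(T)$ is identified with $\Fix(T^*\circ T)$, which is self-adjoint because $T^*T$ is positive, the equality $\LMD(T)=\RMD(T)$ is automatic.

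The genuine gap is in the reverse inclusion $\Fix(T^*\circ T)\subseteq \RMD(T)$. You propose to expand $\psi\big((T(yx)-T(y)T(x))^*(T(yx)-T(y)T(x))\big)$ and show it vanishes. That expansion does not close: the cross term $\psi\big(T(yx)^*T(y)\,T(x)\big)$ becomes, via \eqref{Def-adjoint}, $\phi\big(T^*(T(yx)^*T(y))\,x\big)$, and there is no way to simplify $T^*$ applied to a \emph{product} of two elements of the range of $T$ without already knowing multiplicativity; likewise $\psi(T(x)^*T(y)^*T(y)T(x))$ only admits a Schwarz \emph{inequality}, not the equality you would need. Knowing that $x$ lies in $\MD(T^*T)$ does not rescue these terms (and invoking $\Fix(T^*T)=\MD(T^*T)$ as a "known fact" is itself close to circular: the present lemma applied to the self-adjoint Markov map $T^*T$ identifies $\MD(T^*T)$ with $\Fix((T^*T)^2)$, and passing to $\Fix(T^*T)$ needs an extra spectral argument). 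The correct route, which the paper takes, is purely scalar: from $T^*T(x)=x$ one gets
$$
\phi(x^*x)=\psi(T(x^*x))\geq \psi\big(T(x)^*T(x)\big)\ov{\eqref{Def-adjoint}}{=}\phi\big((T^*T(x))^*x\big)=\phi(x^*x),
$$
so $\psi\big(T(x^*x)-T(x)^*T(x)\big)=0$; faithfulness of $\psi$ and the Kadison--Schwarz inequality give $T(x^*x)=T(x)^*T(x)$, and then the equality-case theorem for Schwarz maps (\cite[Theorem 3.18]{Pau}, \cite[Proposition 2.1.5]{Sto2}) yields $T(yx)=T(y)T(x)$ for all $y$. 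You cite exactly this equality-case mechanism in your first paragraph but then abandon it where it is actually needed; replacing your expansion by this argument closes the proof.
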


\begin{proof} 
Let $x \in \LMD(T)$. Using the preservation of the states in the first equality, we have for any $y \in \cal{M}$
\begin{align*} 
\MoveEqLeft
\phi(xy) 
=\psi(T(xy)) 
\ov{\eqref{Def-multiplicative-domain}}{=} \psi\big(T(x)T(y)\big)
\ov{\eqref{Def-adjoint}}{=} \phi\big(T^*(T(x))y\big).
\end{align*}
Hence $\phi((x-T^*(T(x)))y)=0$. Taking $y=(x-T^*(T(x)))^*$, we obtain $x=(T^* \circ T)(x)$ by faithfulness of $\phi$. So $x$ belongs to $\Fix(T^* \circ T)$.

Conversely, let $x \in \cal{M}$ such that $T^*\circ T(x)=x$. Using the preservation of the states in the first equality and Schwarz inequality \cite[p~14]{Sto13}, we obtain 
\begin{align*}
\MoveEqLeft
\phi(x^*x)
=\psi(T(x^*x))
\geq \psi\big(T(x)^{*}T(x)\big) 
\ov{\eqref{Def-adjoint}}{=} \phi\big(T^*(T(x)^*) x\big) 
=\phi\big( (T^*(T(x)))^* x\big)
=\phi(x^* x).
\end{align*}
Since the extreme ends of the previous equations are identical, the inequality is an equality.
Hence we have 
$$
\psi(T(x^*x))
=\psi\big(T(x)^* T(x)\big).
$$
By the faithfulness of the state $\psi$ and Schwarz inequality, we infer that $T(x^*x)=T(x)^* T(x)$. By \cite[Theorem 3.18]{Pau02} \cite[Proposition 2.1.5]{Sto13}, we conclude that $x \in \LMD(T)$. The proof is similar for $\RMD(T)$. We also can use the equality $\RMD(T)^*=\LMD(T)$.
\end{proof}


We determine the adjoint of the multiplier $R_\phi \co \cal{M} \to \cal{M}$ defined in \eqref{def-rad-mult}.

\begin{prop}
For any complex function $\phi \co G \to \mathbb{C}$, we have $(R_\phi)^*=R_\phi$ for the bracket $\la x,y \ra= \tau(x y)$ and $(R_\phi)^*=R_{\ovl{\phi}}$ for the bracket $\la x, y\ra= \tau(x y^*)$.
\end{prop}

\begin{proof}
For any $s,t \in G$, we have
\begin{align*}
\MoveEqLeft
\tau(R_\phi(v_s) v_t)         
\ov{\eqref{def-rad-mult}}{=} \phi(s) \tau(v_s v_t) 
\ov{\eqref{calculus-Vilenkin-prod}}{=} \phi(s) \sigma(s,t) \tau(v_{s t}) 
\ov{\eqref{trace-fer}}{=} \phi(s) \sigma(s,t) \delta_{st,1}
=\phi(s) \sigma(s,s^{-1}) \delta_{t,s^{-1}}.
\end{align*}
By symmetry, this computation also shows that
\begin{align*}
\MoveEqLeft
\tau(v_s R_\phi(v_t))         
=\phi(\gamma) \sigma(t,t^{-1}) \delta_{s,t^{-1}}
=\phi(s^{-1}) \sigma(s^{-1},s) \delta_{t,s^{-1}}.
\end{align*}
We conclude since the 2-cocycle $\sigma$ satisfies \eqref{ss-1-2-cocycle}. The second assertion is a consequence of \cite[(2.3)]{JMX06}.
\end{proof}


The next result describes the multiplicative domain of multipliers.

\begin{prop}
Consider a function $\phi \co G \to \mathbb{C}$. An element $\sum_s x_s v_s$ of the von Neumann algebra $\cal{M}$ belongs to the multiplicative domain $\MD(R_\phi)$ of the multiplier $R_\phi \co \cal{M} \to \cal{M}$ if and only if for any $s \in G$ the condition $x_s \not=0$ implies $|\phi(s)|=1$. 
\end{prop}

\begin{proof}
For any $s \in G$, we have
\begin{equation*} 
\big((R_\phi)^* \circ R_{\phi}\big)(v_s)
=(R_{\phi} \circ R_{\phi})(v_s)
=\phi(s) R_{\phi}(v_s)
=|\phi(s)|^2 v_s.
\end{equation*}
Thus, the fixed points of the multiplier $R_\phi^* \circ R_\phi$ are elements of the form $x = \sum_s x_s v_s$, where the coefficients $x_s$ are non-zero only for elements $s \in G$ satisfying $|\phi(s)| = 1$.
\end{proof}

Let $G$ be a locally compact group. Suppose that $\varphi \co G \to \mathbb{C}$ is  a positive definite function with $\varphi(e)=1$. Note that the Fourier multiplier $M_\varphi \co \VN(G) \to \VN(G)$ is a $\tau$-Markov map where $\tau$ is the canonical normalized trace of the group von Neumann algebra $\VN(G)$. By \cite[Corollary 32.7]{HeR70} \cite[Corollary 1.4.22]{KaL18}, the set $G_\varphi \ov{\mathrm{def}}{=} \{ s \in G : |\varphi(s)| = 1 \}$ is a subgroup of $G$ and for any $s \in G_\varphi$ and any $t \in G$, we have
\begin{equation}
\label{}
\varphi(st) 
=\varphi(ts)
=\varphi(t) \varphi(s).
\end{equation}
In particular, $\varphi$ is a character of the group $G_\varphi$.

The following result describes the multiplicative domains of Fourier multipliers which are quantum channels.

\begin{prop}
\label{Prop-mult-Fourier-multipliers}
Let $G$ be a discrete group. Let $\varphi \co G \to \mathbb{C}$ be a positive definite function with $\varphi(e)=1$. An element $\sum_s x_s\lambda_s$ of the group von Neumann algebra $\VN(G)$ belongs to the multiplicative domain $\MD(M_\varphi)$ of the Fourier multiplier $M_\varphi \co \VN(G) \to \VN(G)$ if and only if for any $s \in G$ the condition $x_s \not=0$ implies $|\varphi(s)| = 1$. That is
\begin{equation}
\label{}
\MD(M_\varphi)
=\VN(G_\varphi). 
\end{equation}
\end{prop}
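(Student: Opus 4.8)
The plan is to apply Lemma~\ref{Lemma-MDfix} with $\cal{M}=\cal{N}=\VN(G)$, $\phi=\psi=\tau$ and $T=M_\varphi$, so that $\MD(M_\varphi)=\Fix(M_\varphi^*\circ M_\varphi)$. First I would observe that $M_\varphi$ is a $\tau$-Markov map: it is unital (since $\varphi(e)=1$), completely positive (since $\varphi$ is positive definite), trace preserving (immediate on the basis $(\lambda_s)$), and commutes with the trivial modular group; moreover its adjoint with respect to $\eqref{Def-adjoint}$ is the Fourier multiplier $M_{\ovl\varphi}$ with symbol $\ovl\varphi$, which is again positive definite. Hence $M_\varphi^*\circ M_\varphi = M_{|\varphi|^2}$, the Fourier multiplier with symbol $s\mapsto |\varphi(s)|^2$.

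Next I would compute $\Fix(M_{|\varphi|^2})$ for an element $x=\sum_s x_s\lambda_s$ of $\VN(G)$. Working at the level of Fourier coefficients, $M_{|\varphi|^2}(x)=\sum_s |\varphi(s)|^2 x_s\lambda_s$, and since the map $x\mapsto (x_s)_s$ is injective on $\VN(G)$ (the $\lambda_s$ form an orthonormal family in $\L^2(\VN(G))$, so coefficients are determined by $x_s=\tau(x\lambda_s^*)$), the fixed-point equation $M_{|\varphi|^2}(x)=x$ is equivalent to $|\varphi(s)|^2 x_s = x_s$ for every $s\in G$, i.e. to the condition that $x_s\neq 0$ implies $|\varphi(s)|=1$. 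This is exactly the asserted description of $\MD(M_\varphi)$, and by Lemma~\ref{Lemma-MDfix} it also equals $\LMD(M_\varphi)=\RMD(M_\varphi)$.

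Finally I would identify this set with $\VN(G_\varphi)$. Recall $G_\varphi=\{s\in G:|\varphi(s)|=1\}$, which by \cite[Corollary 32.7]{HeR2} (or \cite[Corollary 1.4.22]{KaL1}) is a subgroup of $G$; thus $\VN(G_\varphi)$ embeds canonically as the weak* closure of $\mathrm{span}\{\lambda_s:s\in G_\varphi\}$ inside $\VN(G)$, and an element $x=\sum_s x_s\lambda_s\in\VN(G)$ lies in $\VN(G_\varphi)$ precisely when all its Fourier coefficients supported outside $G_\varphi$ vanish, i.e. when $x_s\neq 0\Rightarrow s\in G_\varphi\Leftrightarrow|\varphi(s)|=1$. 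This matches the description obtained in the previous step, giving $\MD(M_\varphi)=\VN(G_\varphi)$.

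The routine points to be careful with are the justification that Fourier coefficients separate points of $\VN(G)$ (so that the fixed-point condition can genuinely be read coefficientwise) and the identification of $M_\varphi^*$ with $M_{\ovl\varphi}$ via $\eqref{Def-adjoint}$; both are standard. The only mild subtlety — and the one place I would spell out details — is the passage between ``$x_s\neq 0\Rightarrow|\varphi(s)|=1$'' and genuine membership in the von Neumann subalgebra $\VN(G_\varphi)$, which uses that $G_\varphi$ is a subgroup so that $\VN(G_\varphi)$ is a well-defined von Neumann subalgebra characterized by the support of Fourier coefficients; this is where the cited result from \cite{HeR2} is essential.
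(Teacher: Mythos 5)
Your proposal is correct and follows essentially the same route as the paper: verify that $M_\varphi$ is a $\tau$-Markov map with adjoint $M_{\ovl\varphi}$, compute $(M_\varphi)^*\circ M_\varphi=M_{|\varphi|^2}$ on the generators $\lambda_s$, read off the fixed points coefficientwise, and invoke Lemma~\ref{Lemma-MDfix} together with the fact that $G_\varphi$ is a subgroup to identify the result with $\VN(G_\varphi)$. The only cosmetic difference is that the paper first records $(M_\varphi)^*=M_{\check\varphi}$ by a direct trace computation and then uses \cite[Theorem 32.4 (iii)]{HeR2} to get $\check\varphi=\ovl\varphi$, whereas you state the adjoint directly; your extra care about Fourier coefficients separating points of $\VN(G)$ is a welcome addition rather than a deviation.
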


\begin{proof}
It is easy to check\footnote{\thefootnote. For any $s,t \in G$, we have $\tau(M_\varphi(\lambda_s)\lambda_t)=\varphi(s)\tau(\lambda_s\lambda_t)=\varphi(s)\delta_{s=t^{-1}}$ and $\tau(\lambda_s M_{\check{\varphi}}(\lambda_t))=\varphi(t^{-1})\tau(\lambda_s\lambda_t)=\varphi(t^{-1})\delta_{s=t^{-1}}$.} that $(M_\varphi)^*=M_{\check{\varphi}}$. By \cite[Theorem 32.4 (iii)]{HeR70}, we obtain that $(M_\varphi)^*=M_{\ovl{\varphi}}$. For any $s \in G$, we have
\begin{equation*} 
\big((M_\varphi)^* \circ M_{\varphi}\big)(\lambda_s)
=(M_{\ovl{\varphi}} \circ M_{\varphi})(\lambda_s)
=\varphi(s) M_{\ovl{\varphi}}(\lambda_s)
=|\varphi(s)|^2 \lambda_s.
\end{equation*}
Hence the fixed points of the Fourier multiplier $M_\varphi^* \circ M_{\varphi}$ are elements $x=\sum_s x_s\lambda_s$ with non-zero coefficients only at indices $s$ where $|\varphi(s)|=1$, that is $s \in G_\varphi$. We conclude with Lemma \ref{Lemma-MDfix}. The last formula is a consequence of \cite[Proposition 2.4.1]{KaL18}.
\end{proof}

\begin{remark} \normalfont
Note, that by \cite[Proposition 3.2.10]{ChL02} the von Neumann algebra $\VN(G_\varphi)$ is the Poisson boundary of $\varphi$ (or $M_\varphi$ with the terminology of \cite[p.~11]{Izu02}).
\end{remark}


\subsection{PPT and entangling breaking maps}
\label{Sec-PPT}

We refer to \cite{HSR03}, \cite{Kin03}, \cite{Kur18}, \cite{Rus03}, \cite{Sto13} and references therein for more information on entanglement breaking maps. The first part of the following definition is a generalization of \cite[Theorem 4 (E)]{HSR03}.

\begin{defi}
\label{Def-PPT}
Let $A,B$ be $\mathrm{C}^*$-algebras. Let $T \co A \to B$ be a linear map.

\begin{enumerate}
\item We say that $T$ is entanglement breaking if for any $\mathrm{C}^*$-algebra $C$ and any positive map $P \co B \to C$ the map $P \circ T \co A \to C$ is completely positive. 

\item We say that $T$ is $\PPT$ if the maps $T \co A \to B$ and $T \co A \to B^\op$ are completely positive.

\end{enumerate}
\end{defi}

\begin{remark} \normalfont
\label{Rem-ent-break-PPT}
An entanglement breaking map is $\PPT$ (in particular completely positive). It suffices to compose $T$ with the identity $\Id_{B}$ and the positive map $B \to B^\op$, $x \mapsto x^\op$.
\end{remark}


The following is a generalization of \cite[Lemma 3.1]{RJV18}. We give a simpler proof.

\begin{lemma}
\label{Lemma-commutative-range}
Let $A,B$ be unital $\mathrm{C}^*$-algebra. Let $T \co A \to B$ be a unital completely positive map such that the range $\Ran(T)$ is contained in an abelian $\mathrm{C}^*$-algebra $B_1$, then $T$ is entanglement breaking.
\end{lemma}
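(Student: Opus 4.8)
The plan is to reduce the statement to the classical fact that any positive linear map defined on a commutative $\mathrm{C}^*$-algebra is automatically completely positive (see \cite[Chapter 3]{Pau}). First I would replace $B_1$ by a $\mathrm{C}^*$-subalgebra of $B$ itself: let $B_0$ denote the $\mathrm{C}^*$-subalgebra of $B$ generated by the range $\Ran(T)$. Since $\Ran(T) \subset B_1$ and $B_1$ is abelian, $B_0$ is a commutative $\mathrm{C}^*$-algebra, and it is unital because $1_B = T(1_A)$ lies in $\Ran(T) \subset B_0$ (here the unitality of $T$ is used). Corestricting, we may regard $T$ as a unital completely positive map $T_0 \co A \to B_0$; corestriction onto a $\mathrm{C}^*$-subalgebra leaves each amplification $\Id_{\M_n} \ot T$ unchanged, so $T_0$ is still unital completely positive.

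Next I would fix an arbitrary $\mathrm{C}^*$-algebra $C$ and an arbitrary positive linear map $P \co B \to C$, and consider its restriction $P_0 \co B_0 \to C$. This restriction is again positive, since the positive cone of $B_0$ is exactly $B_0 \cap B_+$. Because $B_0$ is commutative, $P_0$ is completely positive by the cited automatic complete positivity. Finally, $P \circ T = P_0 \circ T_0$ as maps from $A$ to $C$, and the composition of two completely positive maps is completely positive (amplify by $\Id_{\M_n}$ and use that the composition of positive maps is positive). Hence $P \circ T$ is completely positive; as $C$ and $P$ were arbitrary, $T$ is entanglement breaking in the sense of Definition \ref{Def-PPT}.

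The argument is essentially a two-line observation once the right ingredient is at hand: the only genuine input is the automatic complete positivity of positive maps on abelian $\mathrm{C}^*$-algebras. The one technical point deserving care is the passage to the $\mathrm{C}^*$-subalgebra $B_0 \subset B$, which guarantees that restriction and corestriction are literally defined and that they preserve complete positivity; beyond this routine bookkeeping there is no real obstacle.
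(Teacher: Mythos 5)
Your proof is correct and follows essentially the same route as the paper: restrict the arbitrary positive map $P$ to an abelian $\mathrm{C}^*$-algebra containing $\Ran(T)$, invoke the automatic complete positivity of positive maps on abelian $\mathrm{C}^*$-algebras, and compose. The only (immaterial) difference is that you work with the $\mathrm{C}^*$-subalgebra generated by $\Ran(T)$ rather than with $B_1$ itself.
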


\begin{proof}
Let $P \co B \to C$ be a positive map where $C$ is a $\mathrm{C}^*$-algebra. Note that $1$ belongs to $\Ran(T)$, hence belongs to $B_1$. The restriction $P|B_1 \co B_1 \to C$ is positive hence completely positive by \cite[Theorem 5.15]{EfR00}. We conclude that the composition $P \circ T \co A \to C$ is completely positive.
\end{proof}

The following is a generalization of \cite[Lemma 3.1]{RJV18}. We replace the complicated computations of the proof by a simple conceptual argument which gives at the same time a more general result. Recall that a positive map $T$ is faithful \cite[p~10]{Sto13} if $x \geq 0$ and $T(x)=0$ imply $x=0$.  

\begin{thm}
\label{Th-abelian-PPT}
Let $\cal{M}$ be a von Neumann algebra equipped with a normal faithful state $\phi$. Let $T \co \cal{M} \to \cal{M}$ a unital $\PPT$ map. Then $T(\MD(T))$ is an abelian algebra contained in the center of the von Neumann algebra generated by the $\Ran T$. If $T$ is in addition faithful, then $\MD(T)$ is abelian.  
\end{thm}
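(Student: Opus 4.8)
The key point is that the multiplicative domain of a completely positive map behaves well under composition with morphisms, so the plan is to reduce the statement to the interplay between the $\PPT$ property and the multiplicativity of $T$ on $\MD(T)$. First I would recall that by \cite[Theorem 3.18]{Pau} and the discussion around \eqref{Def-right-multiplicative-domain}, $\MD(T)$ is a $*$-subalgebra of $\cal{M}$ on which $T$ acts as a $*$-homomorphism: for $a,b \in \MD(T)$ we have $T(ab)=T(a)T(b)$ and $T(a^*)=T(a)^*$. Hence $T(\MD(T))$ is itself a $*$-subalgebra of $\cal{M}$. The heart of the argument is to show this subalgebra is abelian. Fix $a,b\in\MD(T)$. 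On one hand, since $T\co\cal{M}\to\cal{M}$ is completely positive and multiplicative on $\MD(T)$, we get $T(ab)=T(a)T(b)$. On the other hand, $ab$ and $ba$ both lie in the $*$-algebra $\MD(T)$ (it is a subalgebra), and now I would use that $T\co\cal{M}\to\cal{M}^{\op}$ is \emph{also} completely positive: its multiplicative domain with respect to the opposite product is related to that of $T$ with respect to the original product, and a Schwarz-type / Choi-type computation shows that $T$ is multiplicative for the opposite product on the same set, i.e. $T(ab)=T(b)\cdot_{\op}T(a)=T(b)T(a)$ in $\cal{M}$ for $a,b\in\MD(T)$. Comparing the two identities yields $T(a)T(b)=T(b)T(a)$, so $T(\MD(T))$ is commutative.

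Next I would establish the centrality claim. Let $\cal{N}$ be the von Neumann algebra generated by $\Ran T$. We need $T(a)$ to commute with every $T(x)$, $x\in\cal{M}$ (and then with their weak* closure, hence with all of $\cal{N}$). For $a\in\MD(T)$ and arbitrary $x\in\cal{M}$, since $a\in\LMD(T)$ we have $T(ax)=T(a)T(x)$, and since $a\in\RMD(T)$ we have $T(xa)=T(x)T(a)$. To compare $T(ax)$ and $T(xa)$ I would again invoke the $\PPT$ hypothesis: applying complete positivity of $T\co\cal{M}\to\cal{M}^{\op}$ together with the fact that $a\in\MD(T)$ implies $a$ lies in the multiplicative domain of $T$ viewed into $\cal{M}^{\op}$ as well, one gets $T(ax)=T(x)\cdot_{\op}T(a)=T(a)T(x)$ read in $\cal{M}^{\op}$, which unwinds to $T(x)T(a)$ in $\cal{M}$. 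Hence $T(a)T(x)=T(ax)=T(x)T(a)$ for all $x\in\cal{M}$. Taking weak* limits (using normality of $T$, which holds for any completely positive unital map between von Neumann algebras in this setting) shows $T(a)$ is central in $\cal{N}$. Combined with the first paragraph, $T(\MD(T))$ is an abelian subalgebra of $Z(\cal{N})$.

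Finally, for the faithful case, I would argue that faithfulness of $T$ forces $T$ to be injective on the positive cone and hence on self-adjoint elements, so that $T|_{\MD(T)}$ is an injective $*$-homomorphism onto the abelian algebra $T(\MD(T))$; an injective $*$-homomorphism reflects commutativity, so from $T(a)T(b)=T(b)T(a)$ and $T(ab)=T(a)T(b)=T(b)T(a)=T(ba)$ we deduce $T(ab-ba)=0$ with $ab-ba\in\MD(T)$, and writing $ab-ba$ in terms of its real and imaginary (self-adjoint) parts together with the faithfulness/injectivity of $T$ on self-adjoints yields $ab=ba$. Hence $\MD(T)$ itself is abelian. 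The main obstacle I anticipate is making precise the transfer between the multiplicative domain of $T\co\cal{M}\to\cal{M}$ and that of $T\co\cal{M}\to\cal{M}^{\op}$ — i.e. checking that $\MD(T)$ (defined via the original product) is genuinely contained in the multiplicative domain for the opposite product; this should follow from the characterization of the multiplicative domain via the Choi equality $T(x^*x)=T(x)^*T(x)$ (valid in $\cal{M}$) together with the Schwarz inequality for $T\co\cal{M}\to\cal{M}^{\op}$, but the bookkeeping with opposite products is where care is needed.
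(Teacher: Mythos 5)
Your overall strategy is the same as the paper's: use the $\PPT$ hypothesis to place elements of $\MD(T)$ in the multiplicative domain of $T \co \cal{M} \to \cal{M}^\op$ as well, and then compare the two multiplicativity identities $T(ax)=T(a)T(x)$ and $T(ax)=T(a)\cdot_{\op}T(x)=T(x)T(a)$ to get commutation with $\Ran T$. The one genuine gap is exactly the step you flag at the end: the inclusion $\MD(T)\subseteq \MD(T\co\cal{M}\to\cal{M}^\op)$ does \emph{not} follow from a one-shot combination of the Choi equality and the Schwarz inequality for the opposite map. Indeed, equality in the Schwarz inequality for $T\co\cal{M}\to\cal{M}^\op$ means $T(x^*x)=T(x)\cdot_{\op}^{}T(x)^{*\,\op}$, which read in $\cal{M}$ is $T(x^*x)=T(x)T(x)^*$; but for $x\in\MD(T)$ the Choi equality gives $T(x^*x)=T(x)^*T(x)$, and these two expressions differ unless $T(x)$ happens to be normal. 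So for a general (non-self-adjoint) $x\in\MD(T)$ your transfer claim is not immediate, and both your commutativity argument and your centrality argument rest on it.

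There are two ways to close the gap. The paper's route is to work only with self-adjoint elements (in fact projections) $p\in\MD(T)$: there $T(p)\cdot_{\op}T(p)=T(p)T(p)=T(p^2)$ holds trivially, so $p$ lies in the definite set of $T\co\cal{M}\to\cal{M}^\op$ and hence in its multiplicative domain by \cite[Proposition 2.1.5]{Sto2}; one then gets $T(p)T(x)=T(px)=T(x)T(p)$ for all $x$ and extends by linearity, since every element of the $*$-algebra $\MD(T)$ is a combination of self-adjoint elements of $\MD(T)$. Alternatively, your approach can be completed for general $x\in\MD(T)$ by applying the opposite-map Schwarz inequality to \emph{both} $x$ and $x^*$: it gives $T(x)^*T(x)=T(x^*x)\geq T(x)T(x)^*$ and $T(x)T(x)^*=T(xx^*)\geq T(x)^*T(x)$, whence $T(x)$ is normal and equality holds in the opposite Schwarz inequality, so $x\in\MD(T\co\cal{M}\to\cal{M}^\op)$ after all. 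Either fix makes your argument work; as written, the pivotal step is asserted rather than proved, and the justification you sketch for it would fail without one of these additional moves. (Two minor points: $T(b)\cdot_{\op}T(a)$ equals $T(a)T(b)$, not $T(b)T(a)$, so your intermediate expression is miswritten even though the intended conclusion is right; and in the faithful case, faithfulness does not give injectivity of $T$ on all self-adjoint elements of $\cal{M}$ -- rather, one uses that $T|_{\MD(T)}$ is a faithful $*$-homomorphism, so $T(h)=0$ with $h=h^*\in\MD(T)$ gives $T(h^2)=T(h)^2=0$ and hence $h=0$, which is how the paper concludes.)
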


\begin{proof}
Assume there exists a projection $p \in M_T$. We have $T(p)^2 \ov{\eqref{Def-multiplicative-domain}}{=} T(p)$ and $T(p)^*=T(p^*)=T(p)$. So $T(p)$ is a projection. Note that using $p \in \RMD(T)$ in the last equality
\begin{align*}
\MoveEqLeft
T(p)\cdot_{\op} T(p)
=T(p)T(p)           
\ov{\eqref{Def-right-multiplicative-domain}}{=} T(p^2).
\end{align*}
So by \eqref{Def-definite-set}, the selfadjoint element $p$ belongs to the definite set $\mathrm{D}_T$ of the map $T \co \cal{M} \to \cal{M}^\op$. By \cite[Proposition 2.1.5 p.~14]{Sto13}, we deduce that $p$ belongs to $\RMD(T \co \cal{M} \to \cal{M}^\op)$ and to $\LMD(T \co \cal{M} \to \cal{M}^\op)$ (since $p$ is selfadjoint). Using $p \in \LMD(T)$ in the first equality and $p \in \LMD(T \co \cal{M} \to \cal{M}^\op)$ in the second equality, we infer that for any $x \in \cal{M}$
\begin{equation}
\label{}
T(x)T(p)
=T(xp)
=T(x) \cdot_{\op} T(p)
=T(p)T(x).
\end{equation}
We let $q \ov{\mathrm{def}}{=} 1-p$. 
We deduce that for any $x \in \cal{M}$
\begin{align*}
\MoveEqLeft
T(x)
=T((p+q)x(p+q)) 
=T(pxp)+T(qxp)+T(pxq) +T(qxp)\\
&=T(pxp)+T(q)T(x)T(p)+T(p)T(x)T(q) +T(qxp)\\
&=T(p)T(x)T(p)+T(q)T(x)T(q)
\end{align*}
where we use $T(p)T(q)=0=T(q)T(p)$ in the last equality. We deduce easily that
$$
T(p)T(x)
=T(x)T(p).
$$
Hence each $T(p)$ belongs to the commutant of $\Ran T$ and obviously to $\Ran T$.
We conclude that $T(p)$ belongs to the center of the von Neumann generated by $\Ran T$. 

If $T$ is in addition faithful, restricting $T$ on $\MD(T)$ we obtain a $*$-isomorphism between $\MD(T)$ and $T(\MD(T))$. As $T(\MD(T))$ is abelian, it follows that $\MD(T)$ is also abelian.
\end{proof}

The following is a generalization of \cite[Theorem 3.3]{RJV18}. As before, we simplify the proof.

\begin{cor}
\label{Cor-esp-cond}
Let $\cal{M}$ be a von Neumann algebra equipped with a normal faithful state $\phi$. Let $\E \co \cal{M} \to \cal{M}$ be a state preserving normal conditional expectation. The following properties are equivalent.
\begin{enumerate}
	\item $\E$ is entangling breaking. 
	\item $\E$ is $\PPT$.
	\item the range $\Ran \E$ is abelian.
\end{enumerate}
\end{cor}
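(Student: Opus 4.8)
\textbf{Proof plan for Corollary \ref{Cor-esp-cond}.}
The plan is to prove the cyclic chain of implications $(1)\Rightarrow(2)\Rightarrow(3)\Rightarrow(1)$, using the machinery already assembled. The implication $(1)\Rightarrow(2)$ is immediate from Remark \ref{Rem-ent-break-PPT}, which records that every entanglement breaking map is in particular $\PPT$; nothing extra is needed here. For $(3)\Rightarrow(1)$, I would invoke Lemma \ref{Lemma-commutative-range}: a state preserving normal conditional expectation $\E\co\cal{M}\to\cal{M}$ is unital and completely positive, and if $\Ran\E$ is abelian then $\Ran\E$ itself is an abelian $\mathrm{C}^*$-algebra containing the range, so Lemma \ref{Lemma-commutative-range} applies verbatim to give that $\E$ is entangling breaking.

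The substantive implication is $(2)\Rightarrow(3)$. Here the key observation is that a conditional expectation is idempotent, so its range coincides with its fixed point set, and moreover $\E$ is a $\phi$-Markov map with $\E^*=\E$; hence by Lemma \ref{Lemma-MDfix} we have $\MD(\E)=\Fix(\E^*\circ\E)=\Fix(\E\circ\E)=\Fix(\E)=\Ran\E$. Now assume $\E$ is $\PPT$. Since $\E$ is a state preserving conditional expectation it is automatically faithful (if $x\geq 0$ and $\E(x)=0$ then $\phi(x)=\phi(\E(x))=0$, so $x=0$ by faithfulness of $\phi$). Therefore Theorem \ref{Th-abelian-PPT} applies and tells us that $\MD(\E)$ is abelian. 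Combining with the identification $\MD(\E)=\Ran\E$ from the previous sentence, we conclude that $\Ran\E$ is abelian, which is exactly $(3)$.

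The main obstacle, such as it is, lies entirely in correctly assembling the bookkeeping: verifying that a state preserving normal conditional expectation genuinely satisfies all four conditions defining a $\phi$-Markov map (complete positivity and unitality are part of the definition of a conditional expectation in this context; $\phi\circ\E=\phi$ is the state preservation hypothesis; and the commutation with the modular group $\E\circ\sigma^\phi_t=\sigma^\phi_t\circ\E$ is the Takesaki condition that is equivalent to the existence of a state preserving $\E$, which one should cite), and that $\E^*=\E$ with respect to the bracket \eqref{Def-adjoint} because $\E$ is idempotent and self-adjoint in the GNS sense. Once these standard facts are in place, Lemma \ref{Lemma-MDfix} and Theorem \ref{Th-abelian-PPT} do all the work, and the proof is short. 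I would close by remarking that the faithfulness hypothesis in Theorem \ref{Th-abelian-PPT} is precisely what upgrades ``$\E(\MD(\E))$ abelian'' to ``$\MD(\E)$ abelian'', which is needed since here $\E(\MD(\E))=\MD(\E)=\Ran\E$ anyway, so the two conclusions coincide.
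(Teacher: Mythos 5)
Your proof is correct and follows essentially the same route as the paper: Remark \ref{Rem-ent-break-PPT} for $(1)\Rightarrow(2)$, Lemma \ref{Lemma-commutative-range} for $(3)\Rightarrow(1)$, and faithfulness of $\E$ together with Theorem \ref{Th-abelian-PPT} for $(2)\Rightarrow(3)$. The only (harmless) difference is that where the paper cites the literature for the inclusion $\Ran\E\subseteq\MD(\E)$, you derive the equality $\MD(\E)=\Fix(\E^*\circ\E)=\Fix(\E)=\Ran\E$ from Lemma \ref{Lemma-MDfix} using $\E^*=\E$, which is a slightly more self-contained justification of the same step.
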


\begin{proof}
1. $\Rightarrow$ 2.: It is Remark \ref{Rem-ent-break-PPT}.

2. $\Rightarrow$ 3.: Such a conditional expectation is faithful. It suffices to use Theorem \ref{Th-abelian-PPT}. By \cite[p~.116]{Str81}, the range $\Ran \E$ of $\E$ is contained in $\MD(\E)$ which is abelian. 

3. $\Rightarrow$ 1.: It suffices to use Lemma \ref{Lemma-commutative-range}. 
\end{proof} 

\begin{example} \normalfont
We deduce that the identity $\Id_{\cal{M}} \co \cal{M} \to \cal{M}$ is entangling breaking if and only if it is is $\PPT$ if and only if $\cal{M}$ is abelian. We recover in particular \cite[Exercise 3.9 (vi)]{Pau02} for von Neumann algebras.
\end{example}

Now, we give a necessary condition for the property $\PPT$ of a Fourier multiplier. 

\begin{thm}
Let $G$ be a discrete group. Let $\varphi \co G \to \mathbb{C}$ be a function with $\varphi(e)=1$. If the Fourier multiplier $M_\varphi \co \VN(G) \to \VN(G)$ is a $\PPT$ quantum channel then the group $G_{\varphi}$ is abelian.
\end{thm}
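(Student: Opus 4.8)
The plan is to combine the description of the multiplicative domain of $M_\varphi$ from Proposition \ref{Prop-mult-Fourier-multipliers} with the abelianness conclusion of Theorem \ref{Th-abelian-PPT}. First I would record that when $\varphi$ is the symbol of a quantum channel $M_\varphi \co \VN(G)\to\VN(G)$, the function $\varphi$ is automatically positive definite with $\varphi(e)=1$: indeed a quantum channel is unital and completely positive and preserves the canonical trace $\tau$, and a trace-preserving unital positive Fourier multiplier has positive definite symbol (this is the discrete-group analogue of the fact recalled before Proposition \ref{Prop-mult-Fourier-multipliers}). So $M_\varphi$ is a $\tau$-Markov map and all the machinery of Section \ref{Sec-Multiplicative} applies; in particular by Proposition \ref{Prop-mult-Fourier-multipliers} we have $\MD(M_\varphi)=\VN(G_\varphi)$.

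Next I would invoke Theorem \ref{Th-abelian-PPT} with $\cal{M}=\VN(G)$ and $\phi=\tau$ the normalized trace. Since $M_\varphi$ is assumed $\PPT$ and unital, and since it is faithful (a trace-preserving positive map on a finite von Neumann algebra with $\tau$ faithful is faithful: if $x\geq 0$ and $M_\varphi(x)=0$ then $\tau(x)=\tau(M_\varphi(x))=0$, hence $x=0$), Theorem \ref{Th-abelian-PPT} yields that $\MD(M_\varphi)$ is abelian. Combining the two facts, $\VN(G_\varphi)$ is an abelian von Neumann algebra. It remains to deduce that the group $G_\varphi$ itself is abelian, which follows from the standard fact that $\VN(H)$ is abelian if and only if $H$ is abelian (the $*$-algebra spanned by the $\lambda_s$, $s\in H$, is commutative iff $\lambda_s\lambda_t=\lambda_t\lambda_s$ for all $s,t\in H$ iff $st=ts$ for all $s,t$, by linear independence of the $\lambda_s$). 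This gives the claim.

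The only genuinely delicate point is the very first step: verifying that a Fourier multiplier which is a quantum channel has positive definite symbol with value $1$ at the identity, so that Section \ref{Sec-Multiplicative} and Proposition \ref{Prop-mult-Fourier-multipliers} are applicable. I expect this to be routine — it follows by testing complete positivity of $M_\varphi$ against the matrix $[\lambda_{s_j^{-1}s_i}]_{i,j}$ (a positive element of $\M_n(\VN(G))$) and applying the tracial state, exactly as in the equivalence ``$\varphi$ positive definite $\iff \sum_s\varphi(s)\lambda_s\in\L^1(\VN(G))_+$'' recalled earlier, together with $\varphi(e)=\tau(M_\varphi(\lambda_e))=\tau(1)=1$ — but it is the hinge on which the whole short argument turns, since everything else is a direct citation of Proposition \ref{Prop-mult-Fourier-multipliers} and Theorem \ref{Th-abelian-PPT}. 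Once that is in place the proof is essentially two lines: $\VN(G_\varphi)=\MD(M_\varphi)$ is abelian by Theorem \ref{Th-abelian-PPT}, hence $G_\varphi$ is abelian.
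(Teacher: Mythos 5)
Your proof is correct and takes essentially the same route as the paper: both arguments combine Theorem \ref{Th-abelian-PPT} with the identification $\MD(M_\varphi)=\VN(G_\varphi)$ of Proposition \ref{Prop-mult-Fourier-multipliers}. The only (harmless) divergence is that the paper invokes the first conclusion of Theorem \ref{Th-abelian-PPT}, namely that $M_\varphi(\MD(M_\varphi))$ is abelian, together with the observation that $M_\varphi(\VN(G_\varphi))=\VN(G_\varphi)$ (because $\varphi$ has modulus one on $G_\varphi$), whereas you use the faithfulness clause after the one-line check that a trace-preserving positive map is automatically faithful; both routes are equally valid. One small caveat on your preliminary step: to extract positive definiteness of $\varphi$ from complete positivity you should pair the positive matrix $(\Id \ot M_\varphi)\big([\lambda_{s_i s_j^{-1}}]_{i,j}\big)$ against the vectors $\delta_{s_j}$, i.e.\ form the scalar matrix $[\langle x_{ij}\delta_{s_j},\delta_{s_i}\rangle]_{i,j}$, rather than apply the tracial state entrywise, since $\tau$ annihilates every off-diagonal entry $\lambda_{s_i s_j^{-1}}$ with $s_i\neq s_j$; the fact itself is standard and is also used without comment in the paper's own proof.
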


\begin{proof}
By Theorem \ref{Th-abelian-PPT}, $M_\varphi(\MD(M_\varphi))$ is an abelian algebra. By Proposition \ref{Prop-mult-Fourier-multipliers}, we infer that $M_\varphi(\VN(G_\varphi))$ is abelian and it is easy to check that $M_\varphi(\VN(G_\varphi))=\VN(G_\varphi)$. We conclude that $\VN(G_\varphi)$ is abelian, hence $G_\varphi$ also.
\end{proof}

\begin{example} \normalfont
Let $G$ be discrete group and $H$ be a subgroup of $G$. We can identify the von Neumann algebra $\VN(H)$ as a subalgebra of $\VN(G)$. We can consider the canonical trace preserving normal conditional expectation $\E \co \VN(G) \to \VN(G)$, $\sum_{s \in G} a_s \lambda_s \mapsto \sum_{s \in H} a_s \lambda_s$ onto $\VN(H)$. It is obvious that the map $\E$ is a Fourier multiplier with symbol $1_H$. We have $G_{1_H}=\{ s \in G : |1_H(s)| = 1 \}=H$. We deduce with the results of this section that the quantum channel $\E$ is $\PPT$ if and only if $\E$ is entangling breaking if and only if $H$ is abelian.
\end{example}

\section{Appendices}
\label{Appendices}

\subsection{Appendix: output minimal entropy as derivative of operator norms}
\label{Appendix}

Here, we give a proof of \eqref{Smin-as-derivative-intro} in the appendix for finite-dimensional von Neumann algebras, see Theorem \ref{Th-Smin-as-derivative}. This part does not aim at originality. Indeed, the combination of \cite{ACN00} and \cite{Aud09} gives a proof of this result for algebras of matrices. So, our result is a mild generalization. 

Suppose $1 \leq p,q \leq \infty$. Let $\cal{M},\cal{N}$ be von Neumann algebras equipped with normal semifinite faithful traces. Consider a positive linear map $T \co \L^q(\cal{M}) \to \L^p(\cal{N})$. By an obvious generalization of the proof of \cite[Proposition 2.18]{ArK23}, such a map is bounded. We define the positive real number
\begin{equation}
\label{Def-T+qp}
\norm{T}_{+,\L^q(\cal{M}) \to \L^p(\cal{N})}
\ov{\mathrm{def}}{=}
\sup_{ x \geq 0, \norm{x}_{q}=1} \norm{T(x)}_p.
\end{equation}
We always have $\norm{T}_{+,\L^q(\cal{M}) \to \L^p(\cal{N})} \leq \norm{T}_{\L^q(\cal{M}) \to \L^p(\cal{N})}$. We start to show that if $T$ is in addition 2-positive (e.g. completely positive) then we have an equality. See \cite{Aud09} and \cite{Sza09} for a version of this result for maps acting on Schatten spaces.

\begin{prop}
\label{prop-norm-pos} 
Let $\cal{M},\cal{N}$ be von Neumann algebras equipped with normal semifinite faithful traces. Suppose $1 \leq p,q \leq \infty$. Let  $T \co \L^q(\cal{M}) \to \L^p(\cal{N})$ be a 2-positive map. We have
\begin{equation}
\label{qp=qp+}
\norm{T}_{\L^q(\cal{M}) \to \L^p(\cal{N})} 
=\norm{T}_{+,\L^q(\cal{M}) \to \L^p(\cal{N})}.
\end{equation}
\end{prop}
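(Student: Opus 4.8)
The inequality $\norm{T}_{+,\L^q(\cal{M}) \to \L^p(\cal{N})} \leq \norm{T}_{\L^q(\cal{M}) \to \L^p(\cal{N})}$ is trivial since the supremum defining the left-hand side is taken over a smaller set. So the whole content is the reverse inequality, and the plan is to reduce the norm of $T$ on a general element $x \in \L^q(\cal{M})$ to its values on positive elements by a dilation/$2\times 2$-matrix trick, exactly as in the matrix case treated in \cite{Aud1} and \cite{Sza1}. First I would fix $x \in \L^q(\cal{M})$ with $\norm{x}_{\L^q(\cal{M})} = 1$ and, using the polar decomposition (or the density of $\L^q(\cal{M}) \cap \cal{M}$), assume $x$ is nice enough for the manipulations below; by homogeneity it suffices to estimate $\norm{T(x)}_{\L^p(\cal{N})}$.

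The key step is to pass to the von Neumann algebras $\M_2(\cal{M})$ and $\M_2(\cal{N})$ equipped with the traces $\tr_2 \ot \tau_{\cal{M}}$ and $\tr_2 \ot \tau_{\cal{N}}$, and to consider the amplification $\Id_{\M_2} \ot T \co \L^q(\M_2(\cal{M})) \to \L^p(\M_2(\cal{N}))$, which is $2$-positive as a tensor of a $2$-positive map with the identity (here I use $2$-positivity of $T$ — this is the one place the hypothesis is needed). The element
$$
\widetilde{x} \ov{\mathrm{def}}{=} \begin{bmatrix} 0 & x \\ x^* & 0 \end{bmatrix}
$$
is selfadjoint; applying $\Id_{\M_2} \ot T$ gives $\begin{bmatrix} 0 & T(x) \\ T(x)^* & 0 \end{bmatrix}$, whose $\L^p$-norm equals $2^{\frac1p}\norm{T(x)}_{\L^p(\cal{N})}$ (and similarly $\norm{\widetilde{x}}_{\L^q} = 2^{\frac1q}\norm{x}_{\L^q(\cal{M})}$), using the standard formula for the norm of an off-diagonal $2\times 2$ operator matrix in a noncommutative $\L^p$-space. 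Now $\widetilde{x}$ is selfadjoint but not positive; I would write $\widetilde{x} = \widetilde{x}_+ - \widetilde{x}_-$ — but that does not immediately help. The cleaner route, following Audenaert, is to observe that the $2\times 2$ matrix
$$
y \ov{\mathrm{def}}{=} \begin{bmatrix} |x^*| & x \\ x^* & |x| \end{bmatrix}
= \begin{bmatrix} u \\ 1 \end{bmatrix}|x^*|\begin{bmatrix} u^* & 1 \end{bmatrix}
$$
(where $x = u|x|$ is the polar decomposition and $|x^*| = u|x|u^*$) is \emph{positive} in $\M_2(\cal{M})$, with $\norm{y}_{\L^q(\M_2(\cal{M}))} = 2^{\frac1q}\norm{x}_{\L^q(\cal{M})}$ after a short computation. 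Then
$$
(\Id_{\M_2} \ot T)(y) = \begin{bmatrix} T(|x^*|) & T(x) \\ T(x^*) & T(|x|) \end{bmatrix},
$$
which is positive, hence its $\L^p$-norm dominates the $\L^p$-norm of its off-diagonal corner up to a constant; more precisely, for a positive operator matrix $\begin{bmatrix} a & b \\ b^* & c \end{bmatrix} \geq 0$ in $\L^p$ one has $\norm{b}_{\L^p} \leq \frac12\norm{a + c}_{\L^p} \leq \norm{\begin{bmatrix} a & b \\ b^* & c\end{bmatrix}}_{\L^p}$ (the first from $\pm\begin{bmatrix} 0 & b \\ b^* & 0\end{bmatrix} \leq \begin{bmatrix} a & 0 \\ 0 & c \end{bmatrix}$ and operator monotonicity/triangle inequality, the second from positivity). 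Combining, $\norm{T(x)}_{\L^p(\cal{N})} \leq \norm{(\Id_{\M_2}\ot T)(y)}_{\L^p} \leq \norm{\Id_{\M_2}\ot T}_{+}\,\norm{y}_{\L^q} = 2^{\frac1q}\norm{\Id_{\M_2}\ot T}_{+}\,\norm{x}_{\L^q}$, so the issue becomes comparing the positive norm of the amplification with that of $T$ itself.

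That comparison is the main obstacle, and I would handle it by an inductive/limiting argument: iterating the construction gives $\norm{T}_{\L^q \to \L^p} \leq 2^{\frac{1}{q}}\norm{\Id_{\M_2} \ot T}_{+}$ and then $\norm{\Id_{\M_{2^k}} \ot T}_{+} \leq 2^{\frac{k}{q}}\ldots$, which alone is not enough — one needs the sharper fact that for a positive element $y \in \L^q(\M_2(\cal{M}))$ with $\norm{y}_{\L^q} \le 1$, after absorbing the factor $2^{1/q}$ correctly by normalizing, the positive norm of the amplification is \emph{exactly} $\norm{T}_{+}$. The cleanest way is: for positive $y = \begin{bmatrix} a & b \\ b^* & c\end{bmatrix} \geq 0$, one has the factorization $y = d^* d$ with $d = \begin{bmatrix} d_1 & d_2 \\ 0 & 0\end{bmatrix}$; since $(\Id \ot T)(y)$ being evaluated only in its scalar-summed form reduces, via the estimate above, to $\frac12\norm{T(a) + T(c)}_{\L^p} = \frac12\norm{T(a+c)}_{\L^p}$ with $a + c \geq 0$ and $\norm{a+c}_{\L^q}$ controlled by $\norm{y}_{\L^q}$ — here I would invoke the fact that the diagonal of a positive $\L^q$ operator matrix has smaller trace-norm expression, i.e. the diagonal compression is a contraction on $\L^q$ of the $2\times 2$ algebra. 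This collapses the amplification back to $T$ on a positive element and yields $\norm{T(x)}_{\L^p(\cal{N})} \leq \norm{T}_{+,\L^q(\cal{M}) \to \L^p(\cal{N})}\,\norm{x}_{\L^q(\cal{M})}$, completing the proof. The remaining technical care is purely about making the polar decomposition and the factorizations legitimate at the level of (possibly unbounded) elements of $\L^q(\cal{M})$, which is routine via the density of $\cal{M} \cap \L^q(\cal{M})$ and continuity of $T$.
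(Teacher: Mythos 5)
Your overall strategy --- feed a positive $2\times 2$ operator matrix with $x$ in the off-diagonal corner to the $2$-positive map and read $\norm{T(x)}_{\L^p}$ off the positivity of the output --- is exactly the skeleton of the paper's proof, but the pivotal inequality you use to do the reading-off is false, and the long detour you then take to repair it does not close. Concretely, for a positive matrix $\begin{bmatrix} a & b\\ b^* & c\end{bmatrix}\geq 0$ the estimate $\norm{b}_{\L^p}\leq \frac12\norm{a+c}_{\L^p}$ fails for every $p>1$: take $\cal{N}=\M_2$ with its usual trace, $a=e_{11}$, $c=e_{22}$, $b=e_{12}$, so that $\begin{bmatrix} a&b\\b^*&c\end{bmatrix}=\sum_{i,j}E_{ij}\ot e_{ij}$ is twice a rank-one projection, hence positive, while $\norm{b}_{\L^p}=1$ and $\frac12\norm{a+c}_{\L^p}=2^{\frac1p-1}<1$. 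What your sandwich $\pm\begin{bmatrix}0&b\\b^*&0\end{bmatrix}\leq\begin{bmatrix}a&0\\0&c\end{bmatrix}$ actually yields is $2\norm{b}_{\L^p}^p\leq\norm{a}_{\L^p}^p+\norm{c}_{\L^p}^p$, hence $\norm{b}_{\L^p}\leq\max\big\{\norm{a}_{\L^p},\norm{c}_{\L^p}\big\}$; one also has the sharper Cauchy--Schwarz form $\norm{b}_{\L^p}\leq\norm{a}_{\L^p}^{\frac12}\norm{c}_{\L^p}^{\frac12}$, which is \cite[Lemma 2.13]{ArK}, the lemma the paper invokes. Because of the wrong inequality you are driven to bound the full matrix norm $\norm{(\Id_{\M_2}\ot T)(y)}_{\L^p}$ by $\norm{\Id_{\M_2}\ot T}_{+}\norm{y}_{\L^q}$ and then to compare $\norm{\Id_{\M_2}\ot T}_{+}$ with $\norm{T}_{+}$; that comparison is never actually established (the ``diagonal compression'' sketch in your last paragraph is not a proof), and it is also unnecessary. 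Two smaller slips: the factorization $\begin{bmatrix}u\\1\end{bmatrix}|x^*|\begin{bmatrix}u^*&1\end{bmatrix}$ has $(1,2)$-entry $u|x^*|$, which is not $x$ in general (the correct one is $\begin{bmatrix}u|x|^{1/2}\\|x|^{1/2}\end{bmatrix}\begin{bmatrix}|x|^{1/2}u^*&|x|^{1/2}\end{bmatrix}$), and $\norm{y}_{\L^q(\M_2(\cal{M}))}=2\norm{x}_{\L^q(\cal{M})}$, not $2^{1/q}\norm{x}_{\L^q(\cal{M})}$.

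With the correct off-diagonal estimate your own matrix finishes the proof in one line and the whole amplification discussion evaporates: $(\Id_{\M_2}\ot T)(y)=\begin{bmatrix}T(|x^*|)&T(x)\\T(x^*)&T(|x|)\end{bmatrix}\geq 0$ gives $\norm{T(x)}_{\L^p}\leq\norm{T(|x^*|)}_{\L^p}^{\frac12}\norm{T(|x|)}_{\L^p}^{\frac12}\leq\norm{T}_{+}\norm{x}_{\L^q}$, since $|x|$ and $|x^*|$ are positive with $\norm{|x|}_{\L^q}=\norm{|x^*|}_{\L^q}=\norm{x}_{\L^q}$; at no point do you need the $\L^p$-norm of the full $2\times 2$ output or the quantity $\norm{\Id_{\M_2}\ot T}_{+}$. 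The paper runs essentially this argument but replaces the polar decomposition by the H\"older factorization \eqref{inverse-Holder}: it writes $x=y^*z$ with $\norm{y}_{\L^{2q}}=\norm{z}_{\L^{2q}}=1$, applies $\Id_{\M_2}\ot T$ to the positive matrix $\begin{bmatrix}y^*y&y^*z\\z^*y&z^*z\end{bmatrix}$, and uses the Cauchy--Schwarz lemma with the diagonal entries $T(y^*y)$, $T(z^*z)$; this sidesteps any discussion of polar decompositions of unbounded $\tau$-measurable elements of $\L^q(\cal{M})$, which is the only other loose end in your write-up.
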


\begin{proof}
Suppose $\norm{T}_{+,q \to p} \leq 1$. Let $x \in \L^q(\cal{M})$ with $\norm{x}_q=1$. By \eqref{inverse-Holder} (and a suitable multiplication by a constant), there exist $y,z \in \L^{2q}(\cal{M})$ such that
\begin{equation}
\label{Ref-divers-9877} 
x=y^*z 
\quad \text{with} \quad
\norm{y}_{2q} 
=\norm{z}_{2q}
=1.
\end{equation}
We have
$
\begin{bmatrix} 
0   & 0
\\   y   & z 
\end{bmatrix}^*
\begin{bmatrix} 
0   & 0
\\   y   & z 
\end{bmatrix}
=
\begin{bmatrix} 
0   & y^*
\\   0   & z^* 
\end{bmatrix}
\begin{bmatrix} 0   & 0
\\   y   & z 
\end{bmatrix}
=\begin{bmatrix} 
y^*y   & y^*z
\\   z^*y   & z^*z 
\end{bmatrix}$. We infer that $
\begin{bmatrix} 
y^*y   & y^*z
\\   z^*y   & z^*z 
\end{bmatrix}$ is a positive element of $S^q_2(\L^q(\cal{M}))$. Since $T$ is 2-positive, the matrix 
$
(\Id_{\M_2} \ot T)\left(\begin{bmatrix} 
y^*y   & y^*z
\\   z^*y   & z^*z 
\end{bmatrix}\right)
=
\begin{bmatrix}
T(y^*y)    & T(y^*z) \\   
T(z^*y)    & T(z^*z) \end{bmatrix}
$ 
is positive. Moreover, we have $
\norm{T(y^*y)}_p 
\leq \norm{T}_{+,q \to p} \norm{y^*y}_{q}
\leq \norm{y^*y}_{q}
= \norm{y}_{2q}^2 
\ov{\eqref{Ref-divers-9877} }{=} 1$ and similarly $\norm{T(z^*z)}_p \leq 1$. Using \cite[Lemma 2.13]{ArK23} in the first inequality, we deduce that $
\norm{T(x)}_{p}
\ov{\eqref{Ref-divers-9877}}{=} \norm{T(y^*z)}_{p} \leq \sqrt{\norm{T(y^*y)}_p\norm{T(z^*z)}_p}
\leq 1$. Hence $\norm{T}_{q \to p} \leq 1$. By homogeneity, we conclude that $\norm{T}_{q \to p} \leq \norm{T}_{+,q \to p}$. The reverse inequality is obvious.
\end{proof}

The following proposition relying on Dini's theorem is elementary and left to the reader. The first part is already present in \cite[p.~625]{Seg60} without proof.

\begin{prop}
\label{prop-deriv-norm-p}
Let $\cal{M}$ be a finite von Neumann algebra equipped with a normal finite faithful trace $\tau$. For any positive element $x \in \cal{M}$ with $\norm{x}_{\L^1(\cal{M})}=1$ and $-\tau(x \log_2 x) > -\infty$, we have
\begin{equation}
\label{deriv-norm-p}
\frac{1}{\log 2}\frac{\d}{\d p} \norm{x}_{\L^p(\cal{M})}|_{p=1}
=\tau(x \log_2 x)
\ov{\eqref{Segal-entropy}}{=} -\H(x).
\end{equation}
Moreover, if $\cal{M}$ is finite-dimensionaal the monotonically decreasing family of functions $x \mapsto \frac{1-\norm{x}_{\L^p(\cal{M})}^p}{p-1}$ converge uniformly to the function $x \mapsto -\tau(x \log x)=\H(x)$ when $p \to 1^{+}$ on the subset of positive element $x \in \cal{M}$ with $\norm{x}_1=1$.
\end{prop}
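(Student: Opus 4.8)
\textbf{Proof proposal for Proposition \ref{prop-deriv-norm-p}.}

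The plan is to reduce to the commutative case by spectral calculus, so that everything becomes a statement about functions of the eigenvalues of $x$. Since $\cal{M}$ is finite-dimensional, a positive element $x$ with $\tau(x)=1$ has a finite spectral decomposition $x=\sum_{j} \lambda_j p_j$ with $\lambda_j > 0$ (on the support of $x$; the zero eigenspace contributes nothing to either side of \eqref{deriv-norm-p} since $g(0)=0$) and $\tau(p_j)=w_j$, so that $\sum_j w_j \lambda_j = 1$. Then $\norm{x}_{\L^p(\cal{M})}^p = \tau(x^p) = \sum_j w_j \lambda_j^p$, and $\tau(x\log x) = \sum_j w_j \lambda_j \log \lambda_j$. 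Thus $\frac{\d}{\d p}\norm{x}_{\L^p(\cal{M})}^p = \sum_j w_j \lambda_j^p \log\lambda_j$, which at $p=1$ equals $\sum_j w_j\lambda_j\log\lambda_j = \tau(x\log x) = -\H(x)$ by the definition of the Segal entropy and of $g$. This establishes the first formula \eqref{deriv-norm-p}.

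For the second part, I would first write, for $p>1$,
\[
\frac{1-\norm{x}_{\L^p(\cal{M})}^p}{p-1}
= \frac{\tau(x)-\tau(x^p)}{p-1}
= \sum_j w_j \lambda_j \cdot \frac{1-\lambda_j^{p-1}}{p-1}.
\]
For each fixed $t>0$, the scalar function $p \mapsto \frac{1-t^{p-1}}{p-1}$ extends continuously at $p=1$ with value $-\log t$, and one checks by an elementary computation (differentiating in $p$, or using convexity of $s\mapsto t^s$) that $p\mapsto \frac{1-t^{p-1}}{p-1}$ is monotonically decreasing as $p$ decreases to $1$; hence $p\mapsto \frac{1-\norm{x}_{\L^p}^p}{p-1}$ is a monotonically decreasing family of continuous functions of $x$ (continuity in $x$ following from continuity of the eigenvalue functionals and of $t\mapsto t^p$ on the relevant compact range of eigenvalues) which converges pointwise to $x\mapsto -\tau(x\log x)=\H(x)$. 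The limit function is continuous on the compact set $K = \{x\in\cal{M} : x\geq 0,\ \tau(x)=1\}$ — here I use that on this compact set the eigenvalues lie in a fixed bounded interval and $t\mapsto t\log t$ is continuous on $[0,\infty)$ with the convention $0\log 0 = 0$. By Dini's theorem, a monotone sequence of continuous functions on a compact space converging pointwise to a continuous function converges uniformly; applying this along any sequence $p_n \downarrow 1$ (and using the monotonicity to upgrade from sequences to the net $p\to 1^+$) gives the claimed uniform convergence.

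The only mildly delicate point — and the one I expect to be the main obstacle to write cleanly — is the continuity (in $x$) and the monotonicity (in $p$) of the family $x\mapsto \frac{1-\norm{x}_{\L^p(\cal{M})}^p}{p-1}$ uniformly over $K$, together with the continuity of the limit at the "boundary" where some eigenvalue of $x$ tends to $0$. This is where one must be careful with the $0\log 0$ convention and with the fact that $\frac{1-t^{p-1}}{p-1}\to -\log t \to +\infty$ as $t\to 0^+$: one has to note that this blow-up is harmless because it is weighted by $w_j\lambda_j\to 0$, and more precisely that $x\mapsto \tau(x^p)$ and $x\mapsto \tau(x\log x)$ are genuinely continuous on $K$ since $t\mapsto t^p$ and $t\mapsto t\log t$ are continuous up to $0$. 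Once this is granted, Dini's theorem does the rest and the argument is routine. Since the statement is explicitly flagged as elementary and left to the reader, I would keep the writeup to the spectral reduction plus the invocation of Dini, relegating the scalar monotonicity computation to a one-line remark.
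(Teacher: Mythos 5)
Your proof is correct and follows exactly the route the paper intends: spectral reduction to a finite sum over eigenvalues for the derivative formula, and Dini's theorem on the compact set of states for the uniform convergence (the paper itself only says the result "rel[ies] on Dini's theorem" and leaves the details to the reader). One small wording point: the family $p\mapsto \frac{1-t^{p-1}}{p-1}$ is decreasing \emph{in} $p$ (by convexity of $s\mapsto t^{s}$), so its values \emph{increase} to $-\log t$ as $p\downarrow 1$; this does not affect the application of Dini's theorem, which only needs monotonicity.
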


\begin{proof}
Consider the spectral representation $x = \int_{0}^{\infty} \lambda e(\d\lambda)$ of $x$. Note that
\begin{align*}
\MoveEqLeft
\frac{\norm{x}_{\L^p(\cal{M})}^p -1}{p - 1}
=\frac{1}{p - 1}[\tau(x^p) -\tau(x)]
=\frac{1}{p - 1}\bigg[\int_{0}^{\infty} \lambda^p \tau(e(\d \lambda))-\int_{0}^{\infty}\lambda \tau(e(\d \lambda))\bigg] \\
&=\int_{0}^{\infty} \frac{\lambda^p - \lambda}{p - 1} \tau(e(\d \lambda)).
\end{align*}
Consider the net $(f_p)_{p > 1}$  of functions defined by $f_p(\lambda) \ov{\mathrm{def}}{=}  \frac{\lambda^p - \lambda}{p - 1} $. A computation shows that for any $\lambda \geq 0$, we have $f_p(\lambda) \leq f_q(\lambda)$ if $p < q$. Hence this net is increasing. Moreover, we have
$$
\lim_{p \to 1^+} f_p(\lambda) 
= \lambda \log \lambda.
$$
Lebesgue Monotone Convergence Theorem gives
$$
\lim_{p \to 1^+} \int_{0}^{\infty} f_p(\lambda) \tau(e(\d\lambda)) 
= \int_{0}^{\infty} \lambda \log \lambda \tau(e(\d\lambda)) 
= \H(x).
$$
Hence, we have shown that the function $p \mapsto \tau(x^p)$ is differentiable at $p = 1$, and the formula \eqref{deriv-norm-p}.
\end{proof}

\begin{thm}
\label{Th-Smin-as-derivative}
Let $\cal{M},\cal{N}$ be finite-dimensional von Neumann algebras equipped with finite faithful trace. Let $T \co \cal{M} \to \cal{N}$ be a trace preserving 2-positive map. We have
\begin{equation}
\label{}
\H_{\min}(T)
=-\frac{\d}{\d p} \norm{T}_{\L^1(\cal{M}) \to \L^p(\cal{N})}^p|_{p=1}. 
\end{equation}
\end{thm}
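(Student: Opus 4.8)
The plan is to combine the two ingredients already assembled in the excerpt. First I would observe that by Proposition \ref{prop-norm-pos} applied with $q=1$, the hypothesis that $T$ is $2$-positive gives the identity
$$
\norm{T}_{\L^1(\cal{M}) \to \L^p(\cal{N})}
=\norm{T}_{+,\L^1(\cal{M}) \to \L^p(\cal{N})}
=\sup_{x \geq 0,\ \norm{x}_{\L^1(\cal{M})}=1} \norm{T(x)}_{\L^p(\cal{N})},
$$
so the norm can be computed by testing only on positive density elements. Raising to the $p$-th power,
$$
\norm{T}_{\L^1(\cal{M}) \to \L^p(\cal{N})}^p
=\sup_{x \geq 0,\ \norm{x}_{\L^1(\cal{M})}=1} \norm{T(x)}_{\L^p(\cal{N})}^p.
$$
Since $T$ is trace preserving and positive, for every state $x$ of $\L^1(\cal{M})$ the element $T(x)$ is a state of $\L^1(\cal{N})$, and conversely $T$ maps the (compact, in finite dimensions) convex set of states of $\L^1(\cal{M})$ into the states of $\L^1(\cal{N})$.

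Next I would rewrite the quantity $1-\norm{T}_{\L^1(\cal{M}) \to \L^p(\cal{N})}^p$ using the above sup as an infimum:
$$
\frac{1-\norm{T}_{\L^1(\cal{M}) \to \L^p(\cal{N})}^p}{p-1}
=\inf_{x \geq 0,\ \norm{x}_{\L^1(\cal{M})}=1} \frac{1-\norm{T(x)}_{\L^p(\cal{N})}^p}{p-1}.
$$
By the second part of Proposition \ref{prop-deriv-norm-p} applied to the finite-dimensional algebra $\cal{N}$, the functions $\rho \mapsto \frac{1-\norm{\rho}_{\L^p(\cal{N})}^p}{p-1}$ converge \emph{uniformly} on the set of states of $\L^1(\cal{N})$ to $\rho \mapsto \H(\rho)$ as $p \to 1^+$, and the convergence is monotone. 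Composing with the continuous map $x \mapsto T(x)$ from the states of $\cal{M}$ to the states of $\cal{N}$, the functions $x \mapsto \frac{1-\norm{T(x)}_{\L^p(\cal{N})}^p}{p-1}$ converge uniformly on the (compact) set $K$ of states of $\L^1(\cal{M})$ to $x \mapsto \H(T(x))$. Then I would invoke \eqref{minimum-uniform-convergence} to exchange the limit $p \to 1^+$ with the infimum over $K$:
$$
\lim_{p \to 1^+} \inf_{x \in K} \frac{1-\norm{T(x)}_{\L^p(\cal{N})}^p}{p-1}
=\inf_{x \in K} \H(T(x))
=\H_{\min}(T),
$$
the last equality being the definition \eqref{Def-minimum-output-entropy} (with the infimum over the compact set $K$ actually attained, hence a minimum).

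Finally I would identify the left-hand side with the derivative: since $p \mapsto \norm{T}_{\L^1(\cal{M}) \to \L^p(\cal{N})}^p$ equals $1$ at $p=1$ (because $\norm{T}_{\L^1 \to \L^1}=1$ for a trace preserving positive map, $T$ being a contraction on $\L^1$ that fixes the $\L^1$-norm of positive elements), we have
$$
\lim_{p \to 1^+} \frac{1-\norm{T}_{\L^1(\cal{M}) \to \L^p(\cal{N})}^p}{p-1}
=-\frac{\d}{\d p}\,\norm{T}_{\L^1(\cal{M}) \to \L^p(\cal{N})}^p\Big|_{p=1},
$$
provided this one-sided derivative exists — which it does as the limit of the monotone family computed above. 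Combining the two displays yields $\H_{\min}(T)=-\frac{\d}{\d p}\norm{T}_{\L^1(\cal{M}) \to \L^p(\cal{N})}^p|_{p=1}$. The main obstacle, and the only genuinely delicate point, is justifying the uniform convergence in Proposition \ref{prop-deriv-norm-p} and checking that it survives precomposition with $T$ and that \eqref{minimum-uniform-convergence} applies; the latter is immediate once uniform convergence on a compact set is in hand, so the real content is the elementary-but-careful Dini-type argument already relegated to Proposition \ref{prop-deriv-norm-p}. Everything else is bookkeeping about which maps are contractions, positive, and trace preserving.
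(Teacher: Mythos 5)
Your proposal is correct and follows essentially the same route as the paper: reduce to positive unit vectors via Proposition \ref{prop-norm-pos} with $q=1$, turn $1-\sup$ into $\inf(1-\cdot)$, invoke the uniform (Dini-type) convergence of Proposition \ref{prop-deriv-norm-p} together with \eqref{minimum-uniform-convergence} to exchange the limit $p\to 1^+$ with the minimum over states, and finish with \eqref{deriv-norm-p} and the normalization $\norm{T}_{\L^1\to\L^1}=1$. Your explicit remark that the uniform convergence must be precomposed with the (continuous, state-preserving) map $x\mapsto T(x)$ is a point the paper passes over silently, and it is the right thing to check.
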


\begin{proof}
We can suppose that $\cal{M}$ and $\cal{N}$ are $\not=\{0\}$. Note that $\norm{T}_{\L^1(\cal{M}) \to \L^1(\cal{N})}\leq 1$ by a classical argument \cite[Theorem 5.1]{HJX10}. Since $\norm{T(1)}_1=\tau_{\cal{N}}(T(1))=\tau_{\cal{M}}(1)=\norm{1}_1$, we have $\norm{T}_{\L^1(\cal{M}) \to \L^1(\cal{N})}=1$. Using the uniform convergence of Proposition \ref{prop-deriv-norm-p} in the fifth equality and the fact that $T$ is trace preserving in the sixth equality, we obtain 
\begin{align*}
\MoveEqLeft
-\frac{\d}{\d p} \norm{T}_{\L^1(\cal{M}) \to \L^p(\cal{N})}^p|_{p=1}
=\lim_{p \to 1^{+}} \frac{1 - \norm{T}_{1 \to p}^p}{p-1}
\ov{\eqref{qp=qp+}}{=} \lim_{p \to 1^{+}} \frac{1 -\norm{T}_{+,1 \to p}^p}{p-1} \\
&\ov{\eqref{Def-T+qp}}{=} \lim_{p \to 1^{+}} \frac{1-\max_{x \geq 0, \norm{x}_1=1} \norm{T(x)}_{\L^p(\cal{N})}^p}{p-1} 
=\lim_{p \to 1^{+}} \min_{x \geq 0, \norm{x}_{1}=1} \frac{1-\norm{T(x)}_{\L^p(\cal{N})}^p}{p-1} \\
&\ov{\eqref{minimum-uniform-convergence}}{=} \min_{x \geq 0, \norm{x}_1=1} \lim_{p \to 1^{+}} \frac{1-\norm{T(x)}_{\L^p(\cal{N})}^p}{p-1} 
=\min_{ x \geq 0, \norm{x}_1=1} \frac{\d}{\d p} -\norm{T(x)}_{\L^p(\cal{N})}^p|_{p=1} \\
&\ov{\eqref{deriv-norm-p}}{=} \min_{ x \geq 0, \norm{x}_1=1} \H(T(x))
\ov{\eqref{Def-minimum-output-entropy}}{=} \H_{\min}(T).
\end{align*}
\end{proof}

\begin{remark} \normalfont
\label{Remark-without-p}
We also have the formulas $\H_{\min}(T)
=-\frac{\d}{\d p} \norm{T}_{\L^1(\cal{M}) \to \L^p(\cal{N})}|_{p=1}$ and
\begin{equation}
\label{deriv-norm-p-bis}
\frac{\d}{\d p} \norm{x}_{\L^p(\cal{M})}|_{p=1}
=\tau(x \log x)
=-\H(x).
\end{equation}
for any positive element $x \in \cal{M}$ \textit{with} $\norm{x}_1=1$. Indeed, if $f$ is a differentiable strictly positive function with $f(1)=1$ and if $F(p)=f(p)^{\frac{1}{p}}$, an elementary computation\footnote{\thefootnote. We have $g'(p)=f(p)\big(-\frac{1}{p^2}\log f(p)+\frac{1}{p}\frac{f'(p)}{f(p)}\big)$.}  show that $F'(1)=f'(1)$.
\end{remark}

\begin{remark} \normalfont
With obvious notations, a standard argument gives
\begin{equation}
\label{subadditivity-Hmin}
\H_{\min}(T_1 \ot T_2)
\leq \H_{\min}(T_1)+\H_{\min}(T_2).
\end{equation}
\end{remark}

\subsection{Appendix: multiplicativity of some completely bounded operator norms}
\label{Appendix2}

In this appendix, we prove in Theorem \ref{thm:multqp} the multiplicativity of the completely bounded  norms $\norm{\cdot}_{\cb,\L^q \to \L^p}$ if $1 \leq q \leq p \leq \infty$ and we deduce the additivity of $\H_{\cb,\min}(T)$ in Theorem \ref{Additivity-Scb-min}. The multiplicativity result is stated in \cite[Theorem 11]{DJKRB06} for Schatten spaces (which imply the additivity result for matrix spaces). Unfortunately, the second part\footnote{\thefootnote. Note that the first part of the proof of \cite[Theorem 11]{DJKRB06} is also incorrect but can be corrected without additional ideas, see the proof of Theorem \ref{thm:multqp}.} of the proof of \cite[Theorem 11]{DJKRB06} is loosely false and consequently is not convincing. The only contribution of this section is to provide a correct proof. We also take this opportunity for generalize the result to the more general setting of hyperfinite von Neumann algebras. However, it does not remove the merit of the authors of \cite{DJKRB06} to introduce this kind of questions. See also \cite{Jen06} for a related paper. 

The new ingredient is the following result which is crucial for the proof of Theorem \ref{thm:multqp}. Note that this result is in the same spirit of shuffles of  \cite[Theorem 4.1 and Corollary 4.2]{EKR1}, \cite[Theorem 6.1]{EfR03}\footnote{\thefootnote. If $E_1,E_2,F_1,F_2$ are operator spaces, we have a complete contraction $(E_1 \ot_{\mathrm{eh}} F_1) \ot_{\mathrm{nuc}} (E_2 \ot_{\mathrm{eh}} F_2) \to (E_1 \ot_{\mathrm{nuc}} E_2) \ot_{\mathrm{eh}} (F_1 \ot_{\mathrm{nuc}} F_2)$.} and \cite{EKR93} which are well-known to the experts.

\begin{prop}
\label{Prop-magic-flip}
Let $\cal{M}_1$, $\cal{M}_2$, $\cal{M}_3$ and $\cal{M}_4$ be hyperfinite von Neumann algebras equipped with normal semifinite faithful trace. Suppose $1 \leq q \leq p \leq \infty$. The map
\begin{equation}
\label{magic-flip}
\Id \ot \mathcal{F} \ot \Id \co \L^p\big(\cal{M}_1,\L^q(\cal{M}_2)\big) \times \L^p(\cal{M}_3,\L^q(\cal{M}_4)\big) \to \L^p\big(\cal{M}_1 \otvn \cal{M}_3,\L^q(\cal{M}_2 \otvn \cal{M}_4)\big)
 \end{equation}
which maps $(x \ot y, z \ot t)$ on $x \ot z \ot y \ot t$ is a well-defined contraction.
\end{prop}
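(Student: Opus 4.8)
The statement is a bilinear version of the "shuffle" map, and the natural strategy is bilinear complex interpolation between the two endpoints $p=q$ and $q=1$ (equivalently $p=\infty$), combined with the fact that both cases reduce to the flip \eqref{flip} and the known multiplicativity of norms on vector-valued $\L^p$-spaces. First I would record the two endpoint cases. When $p=q$, the space $\L^p(\cal{M}_1,\L^p(\cal{M}_2))$ is just $\L^p(\cal{M}_1\otvn\cal{M}_2)$, and the map $\Id\ot\mathcal F\ot\Id$ becomes the composition of the obvious identification $\L^p(\cal{M}_1\otvn\cal{M}_2)\ot \L^p(\cal{M}_3\otvn\cal{M}_4)\to \L^p((\cal{M}_1\otvn\cal{M}_2)\otvn(\cal{M}_3\otvn\cal{M}_4))$ (a contraction, since multiplication against a fixed tensor leg is contractive, or simply by the fact that the multiplication map $\L^p\ot\L^p\to\L^p$ of a von Neumann algebra with itself has norm one as long as one uses the spatial tensor product) with the flip of the middle two legs, which is a $*$-isomorphism of von Neumann algebras and hence an isometry on $\L^p$. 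When $q=1$ (hence also, by duality, one should check $p=\infty$ separately), one uses \eqref{formula-Junge}: for $x\in\L^\infty(\cal{M}_1,\L^1(\cal{M}_2))$ write $x=x_1x_2$ with $\bnorm{\E(x_1x_1^*)}^{1/2}\bnorm{\E(x_2^*x_2)}^{1/2}$ close to $\norm{x}$, do the same for $z=z_1z_2$, and observe that $x\ot z=(x_1\ot z_1)(x_2\ot z_2)$ after the flip, while the partial traces tensorize; this shows the map is contractive at $q=1$, $p=\infty$. The general $q=1$, $1\le p\le\infty$ case then follows from these by a reiteration/interpolation argument exactly as in the proof of Lemma \ref{Lemma-useful}.

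Next I would set up the interpolation. Fix $1\le q\le p\le\infty$ and choose $\theta\in[0,1]$ so that the pair $(\frac1q,\frac1p)$ lies on the segment joining $(\frac1p,\frac1p)$ (the $p=q$ endpoint, with exponent pair $(p,p)$) and $(1,\frac1p)$ (the $q=1$ endpoint, with exponent pair $(1,p)$); concretely $\theta$ is determined by $\frac1q=(1-\theta)\frac1p+\theta\cdot 1$. Using that $\M_n(\L^r(\cal M))=(\M_n(\L^\infty(\cal M)),\M_n(\L^1(\cal M)))_{1/r}$ and the behaviour of vector-valued noncommutative $\L^p$-spaces under complex interpolation (here the hyperfiniteness hypothesis is exactly what makes \eqref{Def-vector-valued-Lp-non-com} and its reiteration available), one has the interpolation identities
$$
\L^p(\cal M_1,\L^q(\cal M_2))=\big(\L^p(\cal M_1,\L^p(\cal M_2)),\L^p(\cal M_1,\L^1(\cal M_2))\big)_{\theta}
$$
and similarly for the target with $\cal M_1\otvn\cal M_3$, $\cal M_2\otvn\cal M_4$ in place of $\cal M_1$, $\cal M_2$. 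Since the bilinear map $\Id\ot\mathcal F\ot\Id$ is defined by the same algebraic formula in all cases and is contractive on the two endpoint pairs of spaces, bilinear complex interpolation (e.g.\ \cite[page 57]{Pis2}) gives that it is contractive on the interpolated spaces, which is precisely \eqref{magic-flip}. One small point to verify: the algebraic map is well-defined and compatible across the interpolation couple, i.e.\ the various copies of $\Pol$ or finite-rank elements on which it is initially defined are dense and the map agrees — this is routine and parallels the compatibility discussion around Proposition \ref{Prop-compatible}.

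The main obstacle I anticipate is the $q=1$, $p=\infty$ endpoint, i.e.\ establishing that
$$
\norm{x\ot z}_{\L^\infty(\cal M_1\otvn\cal M_3,\L^1(\cal M_2\otvn\cal M_4))}\le \norm{x}_{\L^\infty(\cal M_1,\L^1(\cal M_2))}\,\norm{z}_{\L^\infty(\cal M_3,\L^1(\cal M_4))}
$$
after applying the flip, because here one cannot simply invoke that a $*$-homomorphism is isometric — the flip does interchange legs isometrically at the von Neumann algebra level, but the norm \eqref{formula-Junge} is a genuine infimum over factorizations and one must check the product factorization of $x\ot z$ behaves correctly under the conditional expectations $\E_{\cal M_1\otvn\cal M_3}=\E_{\cal M_1}\otimes\E_{\cal M_3}$. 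The verification is: if $x=x_1x_2$ and $z=z_1z_2$ then after the flip $x\ot z=(x_1\ot z_1)(x_2\ot z_2)$, and $\E((x_1\ot z_1)(x_1\ot z_1)^*)=\E_{\cal M_1}(x_1x_1^*)\otimes \E_{\cal M_3}(z_1z_1^*)$, whose norm is the product of the norms; taking infima over factorizations on each side gives the inequality. Once this endpoint and the $p=q$ endpoint are in hand, the reiteration to general $q=1$ and then the bilinear interpolation to general $1\le q\le p\le\infty$ are formal, so the whole argument is short modulo these checks.
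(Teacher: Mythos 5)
Your proof is correct and follows the same overall skeleton as the paper's: establish the bilinear contraction at the extreme exponent pairs and then conclude by bilinear complex interpolation. The genuine difference is in how the crucial $\L^\infty(\,\cdot\,,\L^1(\cdot))$ corner is handled. The paper identifies $\cal{M}_1 \ot_{\min} \L^1(\cal{M}_2)$ completely isometrically with the weak* continuous completely bounded maps $\CB_{\w^*}(\cal{M}_2,\cal{M}_1)$ via \eqref{belle-injection-2}, observes that the shuffle sends $x \ot y$ to the tensor map $u \ot v$, and invokes $\norm{u \ot v}_{\cb} \leq \norm{u}_{\cb}\norm{v}_{\cb}$; you instead work directly with the factorization formula \eqref{formula-Junge} and check that the conditional expectations tensorize on product factorizations. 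Both are valid; the paper's route avoids handling the infimum over factorizations (and the attendant question of which spaces $x_1,x_2$ live in), while yours is more hands-on and makes the positivity/tensorization mechanism explicit. The interpolation paths through the square of exponents also differ (the paper goes $(\infty,1),(\infty,\infty)\rightsquigarrow(\infty,q)$, then $(\infty,q),(q,q)\rightsquigarrow(p,q)$ using $\frac{1}{p}=\frac{1-q/p}{\infty}+\frac{q/p}{q}$; you go $(\infty,1),(1,1)\rightsquigarrow(p,1)$, then $(p,p),(p,1)\rightsquigarrow(p,q)$), but both paths stay inside the region $q\leq p$ and are equally legitimate. The only point you should make explicit is the reiteration identity $\L^p(\cal{M}_1,\L^q(\cal{M}_2))=\bigl(\L^p(\cal{M}_1,\L^p(\cal{M}_2)),\L^p(\cal{M}_1,\L^1(\cal{M}_2))\bigr)_{\theta}$ in the \emph{inner} variable, which is where hyperfiniteness and Pisier's compatibility results are really used; this is exactly the analogue of the identity the paper cites from \cite{Pis1} for its own path.
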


\begin{proof}
Recall that by \eqref{belle-injection-2} we have an isometry $\cal{M}_1 \ot_{\min} \L^1(\cal{M}_2) \xhookrightarrow{} \CB_{\w^*}(\cal{M}_2,\cal{M}_1)$, $a \ot b \mapsto (z \mapsto  \la b,z\ra a)$ where the subscript $\w^*$ means weak* continuous. Similarly, we have an isometry $\cal{M}_3 \ot_{\min} \L^1(\cal{M}_4) \xhookrightarrow{} \CB_{\w^*}(\cal{M}_4,\cal{M}_3)$ and $(\cal{M}_1 \otvn \cal{M}_3) \ot_{\min} \L^1(\cal{M}_2 \otvn \cal{M}_4) \xhookrightarrow{} \CB_{\w^*}(\cal{M}_2 \otvn \cal{M}_4,\cal{M}_1 \otvn \cal{M}_3)$. If the tensors $x \in \cal{M}_1 \ot \L^1(\cal{M}_2)$ and $y \in \cal{M}_3 \ot \L^1(\cal{M}_4)$ identify to the maps $u \co \cal{M}_2 \to \cal{M}_1$ and $v \co \cal{M}_4 \to \cal{M}_3$ then it is easy to check that the element $(\Id \ot \mathcal{F} \ot \Id)(x \ot y)$ of $(\cal{M}_1 \otvn \cal{M}_3) \ot_{\min} \L^1(\cal{M}_2 \otvn \cal{M}_4)$ identify to the map $u \ot v \co \cal{M}_2 \otvn \cal{M}_4 \to \cal{M}_1 \otvn \cal{M}_3$. By \cite[p~40]{BLM04}, we have
\begin{align*}
\norm{u \ot v}_{\cb,\cal{M}_2 \otvn \cal{M}_4 \to \cal{M}_1 \otvn \cal{M}_3}
& \leq \norm{u}_{\cb,\cal{M}_2 \to \cal{M}_1} \norm{v}_{\cb,\cal{M}_4 \to \cal{M}_3}.
\end{align*}
We deduce that
$$ 
\bnorm{(\Id \ot \mathcal{F} \ot \Id)(x \ot y)}_{(\cal{M}_1 \otvn \cal{M}_3) \ot_{\min} \L^1(\cal{M}_2 \otvn \cal{M}_4))} 
\leq \norm{x}_{\cal{M}_1 \ot_{\min} \L^1(\cal{M}_2)}\norm{y}_{\cal{M}_3 \ot_{\min} \L^1(\cal{M}_4))}. 
$$
Hence by \eqref{Def-L0-infty} the map $\L^\infty(\cal{M}_1,\L^1(\cal{M}_2)) \times \L^\infty(\cal{M}_3,\L^1(\cal{M}_4)) \xrightarrow{} \L^\infty(\cal{M}_1 \otvn \cal{M}_3,\L^1(\cal{M}_2 \otvn \cal{M}_4))$ is contractive. It is not difficult to prove  that we have a contractive bilinear map 
$$ 
\L^\infty(\cal{M}_1,\cal{M}_2) \times \L^\infty(\cal{M}_3,\cal{M}_4)
\xrightarrow{}
\L^\infty(\cal{M}_1 \otvn \cal{M}_3,\cal{M}_2 \otvn \cal{M}_4).
$$
By interpolating these two maps with \cite[(3.5)]{Pis98}, we obtain a contractive bilinear map
$$ 
\L^\infty\big(\cal{M}_1,\L^q(\cal{M}_2)\big)\times \L^\infty\big(\cal{M}_3,\L^q(\cal{M}_4)\big) \xrightarrow{}
\L^\infty\big(\cal{M}_1 \otvn \cal{M}_3,\L^q(\cal{M}_2 \otvn \cal{M}_4)\big).
$$
Note that by \cite[(3.6)]{Pis98} we have a contractive bilinear map
$$ 
\L^q\big(\cal{M}_1,\L^q(\cal{M}_2)\big)\times \L^q\big(\cal{M}_3,\L^q(\cal{M}_4)\big) \xrightarrow{}
\L^q\big(\cal{M}_1 \otvn \cal{M}_3,\L^q(\cal{M}_2 \otvn \cal{M}_4)\big).
$$
Since $\frac{1}{p} = \frac{1-\frac{q}{p}}{\infty}+\frac{\frac{q}{p}}{q}$, we conclude by interpolation that the bilinear map \eqref{magic-flip} is contractive
\end{proof}

\begin{remark} \normalfont
The case $p \leq q$ seems to us true but we does not need it. So we does not give the proof.
\end{remark}

\begin{lemma}
\label{Lemma-ine-mult}
Let $\cal{M}_1, \cal{M}_2, \cal{N}_1, \cal{N}_2$ be hyperfinite von Neumann algebras. Suppose $1 \leq q \leq p \leq \infty$. Let $T_1 \co \L^q(\cal{M}_1) \to \L^p(\cal{N}_1)$ and $T_2 \co \L^q(\cal{M}_2) \to \L^p(\cal{N}_2)$ be linear maps. Then if $T_1 \ot  T_2 \co \L^q(\L^q) \to \L^p(\L^p)$ is completely bounded then $T_1$ and $T_2$ are completely bounded and we have
\begin{align*}
\norm{T_1 \ot  T_2}_{\cb,\L^q(\L^q) \to \L^p(\L^p)} 
\geq \norm{T_1}_{\cb,\L^q \to \L^p} \norm{T_2}_{\cb,\L^q \to \L^p}.
\end{align*}
\end{lemma}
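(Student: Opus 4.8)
The statement to prove is the ``easy direction'' of multiplicativity: if $T_1 \ot T_2$ is completely bounded from $\L^q(\L^q)$ to $\L^p(\L^p)$, then each $T_i$ is completely bounded and the completely bounded norm of the tensor product dominates the product of the individual completely bounded norms. The plan is to exploit the factorization structure of the tensor product through the vector-valued $\L^p$-spaces and to use the flip-type contraction of Proposition \ref{Prop-magic-flip}. First I would observe that, since $\L^q(\cal{M}_i)$ is a dense subspace of $\L^q(\cal{M}_1 \otvn \cal{M}_2)$ upon tensoring with a fixed norm-one element, the mere boundedness of $T_1 \ot T_2$ already forces each $T_i$ to be bounded; the same argument applied at the matrix level (replacing $T_i$ by $\Id_{\M_n} \ot T_i$ and using that $\M_n(\L^q(\cal{M}_i)) = S^q_n(\L^q(\cal{M}_i))$ embeds isometrically into $S^q_n(\L^q(\cal{M}_1 \otvn \cal{M}_2))$) shows that each $T_i$ is completely bounded. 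So the real content is the norm inequality.

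For the norm inequality, fix $n, m \geq 1$ and elements $x \in S^q_n(\L^q(\cal{M}_1))$, $y \in S^q_m(\L^q(\cal{M}_2))$ of norm at most one. The key step is to interpret $S^q_n(\L^q(\cal{M}_1))$ and $S^q_m(\L^q(\cal{M}_2))$ as the vector-valued spaces $\L^q(\cal{M}_1, S^q_n)$ and $\L^q(\cal{M}_2, S^q_m)$ (so that, after a flip, they become $\L^q(S^q_n \otvn \cal{M}_1)$-type objects), and then to apply the contraction \eqref{magic-flip} of Proposition \ref{Prop-magic-flip} — with the four algebras taken to be $\M_n$, $\cal{M}_1$, $\M_m$, $\cal{M}_2$ (using that matrix algebras are hyperfinite) — to combine $(\Id_{S^q_n} \ot T_1)(x)$ and $(\Id_{S^q_m} \ot T_2)(y)$ into a single element of $\L^p\big(\cal{N}_1 \otvn \cal{N}_2, S^p_n \otvn S^p_m\big) = S^p_{nm}(\L^p(\cal{N}_1 \otvn \cal{N}_2))$, whose norm is at most $\norm{(\Id_{S^q_n} \ot T_1)(x)}\,\norm{(\Id_{S^q_m} \ot T_2)(y)}$. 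On the other hand, this combined element is, up to the canonical shuffle identifying $S^p_n \otvn S^p_m$ with $S^p_{nm}$ and $S^q_n \otvn S^q_m$ with $S^q_{nm}$, exactly $(\Id_{S^q_{nm}} \ot (T_1 \ot T_2))(x \otimes y)$ evaluated on the tensor $x \otimes y \in S^q_{nm}(\L^q(\cal{M}_1 \otvn \cal{M}_2))$, whose norm is at most $\norm{x}\,\norm{y} \leq 1$. Hence
$$
\norm{(\Id_{S^q_n} \ot T_1)(x)}\,\norm{(\Id_{S^q_m} \ot T_2)(y)} \geq \norm{(\Id_{S^q_{nm}} \ot (T_1 \ot T_2))(x \otimes y)},
$$
which does not immediately give what we want; the correct direction is obtained by instead bounding $\norm{(\Id \ot (T_1 \ot T_2))(x \otimes y)}$ from above by the product and then taking suprema over $x$ and $y$ separately — concretely, one shows $\norm{(\Id_{S^q_{nm}} \ot (T_1 \ot T_2))(x \otimes y)} \leq \norm{T_1 \ot T_2}_{\cb}\,\norm{x \otimes y} \leq \norm{T_1 \ot T_2}_{\cb}$, so using Proposition \ref{Prop-magic-flip} in the reverse factorization (that $x \otimes z \otimes y \otimes t$ recovers a norm comparable to the pair of vector-valued norms) one extracts the lower bound on $\norm{T_1 \ot T_2}_{\cb}$.

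Let me streamline the argument. I would pick, for given $\epsi > 0$, elements $x, y$ witnessing $\norm{\Id_{S^q_n} \ot T_1}_{S^q_n(\L^q(\cal{M}_1)) \to S^p_n(\L^p(\cal{N}_1))} \geq \norm{T_1}_{\cb,\L^q \to \L^p} - \epsi$ (with $\norm{x} \leq 1$) and similarly for $T_2$ (with $\norm{y}\leq 1$). Then the flip identity gives that $(\Id_{S^q_n} \ot T_1)(x) \ot (\Id_{S^q_m} \ot T_2)(y)$, viewed inside $S^p_{nm}(\L^p(\cal{N}_1 \otvn \cal{N}_2))$ via the shuffle, equals $(\Id_{S^q_{nm}} \ot (T_1 \ot T_2))$ applied to the corresponding shuffle of $x \ot y$, and the norm of the latter is at most $\norm{T_1 \ot T_2}_{\cb,\L^q(\L^q) \to \L^p(\L^p)} \cdot \norm{x \ot y}_{S^q_{nm}(\L^q(\cal{M}_1 \otvn \cal{M}_2))}$. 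Since the two shuffles are complete isometries (identifying $S^p_n \otvn S^p_m \cong S^p_{nm}$ and $\L^q(\cal{M}_1) \otvn \L^q(\cal{M}_2) \subset \L^q(\cal{M}_1 \otvn \cal{M}_2)$ isometrically, and likewise on the $S^q$ side — using \cite[(3.6)]{Pis1} for the tensor identification of vector-valued $\L^q$-spaces), we get $\norm{x \ot y}_{S^q_{nm}(\L^q)} = \norm{x}\,\norm{y} \leq 1$, whereas the left side, being a norm in $\L^p\big(\cal{N}_1 \otvn \cal{N}_2, S^p_n \otvn S^p_m\big)$, is bounded below by the product $\norm{(\Id_{S^q_n} \ot T_1)(x)}\,\norm{(\Id_{S^q_m} \ot T_2)(y)}$ via the easy lower-bound half of the shuffle (or directly, since $\norm{a}_{S^p_n(\L^p(\cal{N}_1))}\norm{b}_{S^p_m(\L^p(\cal{N}_2))} = \norm{a \ot b}_{S^p_{nm}(\L^p(\cal{N}_1 \otvn \cal{N}_2))}$ for $p$ the common exponent inside and out when the element factors). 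Combining,
$$
(\norm{T_1}_{\cb} - \epsi)(\norm{T_2}_{\cb} - \epsi) \leq \norm{T_1 \ot T_2}_{\cb,\L^q(\L^q) \to \L^p(\L^p)},
$$
and letting $\epsi \to 0$ finishes the proof. The main obstacle I anticipate is keeping the bookkeeping of the three kinds of tensor factors straight — the matrix amplification indices $n, m$, the inner von Neumann algebras $\cal{M}_i, \cal{N}_i$, and the distinction between the $\L^q$ exponent (inside) and the $\L^p$ exponent (outside) — and in particular verifying that the relevant shuffle maps are genuine (complete) isometries, not just contractions, at exactly the exponent pattern $\L^p(\cdot, \L^q(\cdot))$; this is precisely where Proposition \ref{Prop-magic-flip} together with the elementary identity $S^r_n(S^r_m(E)) = S^r_{nm}(E)$ does the work, and the only delicate point is that we use $q \leq p$ so that the bilinear flip of \eqref{magic-flip} applies.
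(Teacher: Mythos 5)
Your proposal assembles the right ingredients (Proposition \ref{Prop-magic-flip}, the shuffle identifications, and Pisier's characterization of the completely bounded norm via $S^r_n$-amplifications), but the execution has a genuine gap coming from the choice of amplification index, and the writeup is internally inconsistent about it: you amplify the $T_i$ with $S^q_n$ on the domain, yet at various points the target is written as $S^p_n(\L^p(\cal{N}_1))$, the combined element is placed in $S^p_{nm}(\L^p(\cal{N}_1\otvn\cal{N}_2))$, and the tensor map is amplified as $\Id_{S^q_{nm}}\ot(T_1\ot T_2)$ — these do not fit together. If you amplify with $S^q_n$ consistently, the domain side $S^q_n(\L^q(\cal{M}_1))=\L^q(\M_n\otvn\cal{M}_1)$ is indeed exactly multiplicative, but the target becomes $S^q_{nm}(\L^p(\cal{N}_1\otvn\cal{N}_2))$, i.e.\ the exponent pattern is outer $q$, inner $p$ with $q\leq p$, and there you need the \emph{lower} bound $\norm{a}\,\norm{b}\leq\norm{a\ot b}$ for general (non-elementary) $a,b$. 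That is a reverse-shuffle statement in the opposite exponent position to Proposition \ref{Prop-magic-flip}; it is not what your parenthetical justifies (the exact multiplicativity ``for $p$ the common exponent inside and out'' only applies when the amplification index space carries the exponent $p$), and it would require an extra argument, e.g.\ dualizing Proposition \ref{Prop-magic-flip} with norming functionals in $\L^{q^*}(\L^{p^*})$. Likewise, the mixed quantity $\norm{\Id_{S^q_n}\ot T_1}_{S^q_n(\L^q)\to S^p_n(\L^p)}$ appearing in your streamlined version is not one of the standard amplifications whose supremum over $n$ is known to equal the cb norm.

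The paper's proof avoids all of this by amplifying with $S^p_n$ throughout: take $x\in S^p_n(\L^q(\cal{M}_1))$, $y\in S^p_m(\L^q(\cal{M}_2))$ of norm at most one. Then the target $S^p_n(\L^p(\cal{N}_1))=\L^p(\M_n\otvn\cal{N}_1)$ is a plain $\L^p$-space of a tensor product algebra, so the product of the two output norms \emph{equals} the $\L^p$-norm of the tensor of the outputs, which after the (isometric) $\L^p$-flip is $(\Id_{S^p_{nm}}\ot(T_1\ot T_2))$ applied to the shuffled $x\ot y$; this is bounded by $\norm{T_1\ot T_2}_{\cb}$ times $\norm{(\Id\ot\cal{F}\ot\Id)(x\ot y)}_{S^p_{nm}(\L^q(\cal{M}_1\otvn\cal{M}_2))}$, and the only nontrivial estimate is precisely the domain-side shuffle contraction of Proposition \ref{Prop-magic-flip} with the exponents in the order it covers (outer $p$, inner $q$, $q\leq p$), applied with $\cal{M}_1=\M_n$, $\cal{M}_3=\M_m$. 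Taking suprema and invoking \cite[Corollary 1.2]{Pis1} finishes. In short: put the exact multiplicativity on the \emph{target} side by choosing the $S^p$ amplification, so that the one shuffle inequality you need is the one you actually have.
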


\begin{proof}
Let $n,m \geq 1$ be some integers. Consider some element $x \in S^p_n(\L^q(\cal{M}_1))$ and $y \in S^p_m(\L^q(\cal{M}_2))$ such that
\begin{equation}
\label{Ine-divers-345}
\norm{x}_{S^p_n(\L^q(\cal{M}_1))} 
\leq 1 
\quad \text{and} \quad
\norm{y}_{S^p_m(\L^q(\cal{M}_2))} 
\leq 1. 
\end{equation}
Using the isometric flip $\cal{F}_p \co \L^p(\cal{M}_1 \otvn \M_m) \to \L^p(\M_m \otvn \cal{M}_1)$ and the flip $\cal{F} \co \L^q(\cal{M}_1,S^p_m) \to S^p_m(\L^q(\cal{M}_1))$ and Proposition \ref{Prop-magic-flip} in the third inequality, we have
\begin{align*}
\lefteqn{
\norm{(\Id_{S^p_n} \ot T_1)(x)}_{S^p_n(\L^p(\cal{M}_1))} \norm{(\Id_{S^p_m} \ot T_2)(y)}_{S^p_m(\L^p(\cal{M}_2))}} \\
&=\bnorm{\big((\Id_{S^p_n} \ot T_1)(x)\big) \ot \big((\Id_{S^p_m} \ot T_2)(y)\big)}_{\L^p(\M_n \otvn \cal{M}_1 \otvn \M_m \otvn \cal{M}_2)} \\
&=\bnorm{(\Id_{S^p_n} \ot T_1 \ot \Id_{S^p_m} \ot T_2)(x \ot y)}_{\L^p(\M_n \otvn \cal{M}_1 \otvn \M_m \otvn \cal{M}_2)} \\
&=\bnorm{(\Id_{} \ot \cal{F}_p \ot \Id_{})(\Id_{S^p_n} \ot T_1 \ot \Id_{S^p_m} \ot T_2)(x \ot y)}_{\L^p(\M_n \otvn \M_m \otvn \cal{M}_1 \otvn \cal{M}_2)}\\
&=\bnorm{(\Id_{S^p_n} \ot \Id_{S^p_m} \ot T_1 \ot T_2)\big((\Id_{S^p_n} \ot \cal{F} \ot \Id_{\L^q(\cal{M}_2)})(x \ot y)\big)}_{\L^p(\M_n \otvn \M_m \otvn \cal{M}_1 \otvn \cal{M}_2)} \\
&\leq\bnorm{\Id_{S^p_n} \ot \Id_{S^p_m} \ot T_1 \ot T_2}_{(p,p,q,q) \to(p,p,p,p)} \norm{(\Id_{S^p_n} \ot \cal{F} \ot \Id_{\L^q(\cal{M}_2)})(x \ot y)}_{(p,p,q,q)} \\
&\leq \norm {T_1 \ot T_2}_{\cb,\L^q(\L^q) \to \L^p(\L^p)} \norm{(\Id_{S^p_n} \ot \cal{F} \ot \Id_{\L^q(\cal{M}_2)})(x \ot y)}_{(p,p,q,q)} \\
&\leq \norm {T_1 \ot T_2}_{\cb,\L^q(\L^q) \to \L^p(\L^p)} \norm{x}_{S^p_n(\L^q(\cal{M}_1))} \norm{y}_{S^p_m(\L^q(\cal{M}_2))} \\
&\ov{\eqref{Ine-divers-345}}{\leq} \norm {T_1 \ot T_2}_{\cb,\L^q(\L^q) \to \to \L^p(\L^p)}.
\end{align*}
Taking the supremum two times, we obtain
\begin{align*}
\bnorm{\Id_{S^p_n} \ot T_1}_{S^p_n(\L^q)  \to S^p_n(\L^p)} \bnorm{\Id_{S^p_m} \ot T_2}_{S^p_m(\L^q) \to S^p_m(\L^p) }
\leq
\norm{T_1 \ot T_2}_{\cb,\L^q(\L^q) \to \L^p(\L^p)}. 
\end{align*}
Taking again the supremum two times, we conclude by \cite[Corollary 1.2]{Pis98} that
\begin{align*}
\norm{T_1}_{\cb,\L^q \to \L^p} \norm{T_2}_{\cb,\L^q \to \L^p}
\leq
\norm{T_1 \ot T_2}_{\cb,\L^q(\L^q) \to \L^p(\L^p)}. 
\end{align*}
\end{proof}

The main result of this section is the following theorem of multiplicativity.

\begin{thm}
\label{thm:multqp}
Let $\cal{M}_1, \cal{M}_2, \cal{N}_1, \cal{N}_2$ be hyperfinite von Neumann algebras equipped with normal semifinite faithful trace. Suppose $1 \leq q \leq p \leq \infty$. Let $T_1 \co \L^q(\cal{M}_1) \to \L^p(\cal{N}_1)$ et $T_2 \co \L^q(\cal{M}_2) \to \L^p(\cal{N}_2)$ be completely bounded maps. Then the map $T_1 \ot T_2 \co \L^q(\cal{M}_1 \ot \cal{N}_2) \to \L^p(\cal{N}_1 \ot \cal{N}_2)$ is completely bounded and we have
\begin{align}
\label{multiplicativity-T}
\norm{T_1 \ot T_2}_{\cb,\L^q(\L^q) \to \L^p(\L^p)} 
=\norm{T_1}_{\cb,\L^q \to \L^p} \norm{T_2}_{\cb,\L^q \to \L^p}.
\end{align}
\end{thm}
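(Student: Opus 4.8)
The statement asserts multiplicativity of the completely bounded norm $\norm{\cdot}_{\cb,\L^q \to \L^p}$ for $1 \leq q \leq p \leq \infty$. One inequality, namely $\norm{T_1 \ot T_2}_{\cb,\L^q(\L^q) \to \L^p(\L^p)} \geq \norm{T_1}_{\cb,\L^q \to \L^p} \norm{T_2}_{\cb,\L^q \to \L^p}$, is exactly the content of Lemma \ref{Lemma-ine-mult}, so that half is already done. The remaining task is the upper bound $\norm{T_1 \ot T_2}_{\cb,\L^q(\L^q) \to \L^p(\L^p)} \leq \norm{T_1}_{\cb,\L^q \to \L^p} \norm{T_2}_{\cb,\L^q \to \L^p}$.

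For the upper bound, the plan is to factor $T_1 \ot T_2$ as a composition $(T_1 \ot \Id) \circ (\Id \ot T_2)$ and to estimate each factor. First I would test $\Id_{S^p_n} \ot T_1 \ot T_2$ on an element $w$ of the unit ball of $S^p_n(\L^q(\cal{M}_1 \otvn \cal{M}_2)) = S^p_n(\L^q(\cal{M}_1,\L^q(\cal{M}_2)))$. Applying $\Id \ot \Id \ot T_2$ first: by \eqref{ine-tensorisation-os} the map $\Id_{\L^q(\cal{M}_1)} \ot T_2 \co \L^q(\cal{M}_1,\L^q(\cal{M}_2)) \to \L^q(\cal{M}_1,\L^p(\cal{M}_2))$ is completely bounded with cb-norm at most $\norm{T_2}_{\cb,\L^q \to \L^p}$, and hence the same holds after tensoring with $\Id_{S^p_n}$, landing in $S^p_n(\L^q(\cal{M}_1,\L^p(\cal{N}_2)))$. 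The subtle point is the order of the mixed $\L^q(\L^p)$ norm; here I would use the completely contractive flip \eqref{flip}, valid precisely because $q \leq p$, to rewrite $\L^q(\cal{M}_1,\L^p(\cal{N}_2))$ as (a subspace norm of) $\L^p(\cal{N}_2,\L^q(\cal{M}_1))$ so that the outer space is the $\L^p$-variable. Then applying $T_1$ in the inner variable via \eqref{ine-tensorisation-os} again (now over the hyperfinite algebra $\cal{N}_2 \otvn \M_n$, using reiteration $S^p_n(\L^p(\cal{N}_2)) = \L^p(\cal{N}_2 \otvn \M_n)$) costs a factor $\norm{T_1}_{\cb,\L^q \to \L^p}$, and a final flip back via \eqref{flip} returns us to $S^p_n(\L^p(\cal{N}_1 \otvn \cal{N}_2))$. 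Multiplying the two estimates and taking the supremum over $n$ and over $w$ in the unit ball yields the desired bound.

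The main obstacle, and the place where the original argument of \cite{DJKRB} went wrong, is the careful bookkeeping of the mixed vector-valued norms $\L^q(\L^p)$ versus $\L^p(\L^q)$ and the legitimacy of the flips that interchange them — this is exactly why the hypothesis $q \leq p$ is essential and why Proposition \ref{Prop-magic-flip} (the shuffle map) was isolated. In fact the cleanest route may be to bypass the composition argument entirely and argue as in Lemma \ref{Lemma-ine-mult} in the reverse direction: given $w$ in the unit ball of $S^p_n(\L^q(\cal{M}_1 \otvn \cal{M}_2))$, use the factorization of elements of mixed $\L^p(\L^q)$-spaces (the inverse Hölder-type factorization implicit in \eqref{Norm-LpLq} and \eqref{formula-Junge}) together with Proposition \ref{Prop-magic-flip} to split $w$, up to the constant $1$, into a tensor-like object to which $\Id_{S^p_n} \ot T_1$ and $\Id_{S^p_m} \ot T_2$ can be applied separately, then recombine. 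Either way, the technical heart is Proposition \ref{Prop-magic-flip} and the flip \eqref{flip}; once these are in hand the two inequalities close and \eqref{multiplicativity-T} follows. Finally, as an immediate consequence, combining \eqref{multiplicativity-T} with the definition \eqref{Def-intro-Scb-min} and the limit formula \eqref{elem-lim} gives the additivity $\H_{\cb,\min}(T_1 \ot T_2) = \H_{\cb,\min}(T_1) + \H_{\cb,\min}(T_2)$, recorded as Theorem \ref{Additivity-Scb-min}.
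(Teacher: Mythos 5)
Your proposal is correct and follows essentially the same route as the paper: the lower bound is delegated to Lemma \ref{Lemma-ine-mult}, and the upper bound comes from factoring $T_1 \ot T_2$ into two steps, each controlled by \eqref{ine-tensorisation-os}, with the completely contractive flip \eqref{flip} (this is exactly where $q \leq p$ enters) used to reorder the mixed norm; the only cosmetic difference is that the paper applies $T_1$ first while you apply $T_2$ first. One small correction: Proposition \ref{Prop-magic-flip} is needed only for the lower bound (inside Lemma \ref{Lemma-ine-mult}), not for the upper bound, which uses just \eqref{flip} and \eqref{ine-tensorisation-os}.
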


\begin{proof}
Using the completely contractive flip $\cal{F} \co \L^q(\L^p) \to \L^p(\L^q)$, $x \ot y \mapsto y \ot x$, we have using \cite[(3.1)]{Pis98} in the last inequality
\begin{align*}
\MoveEqLeft
\norm{T_1 \ot T_2}_{\cb,\L^q(\L^q) \to \L^p(\L^p)}
= \bnorm{(\Id_{\L^p} \ot T_2) (T_1 \ot \Id_{\L^q})}_{\cb,\L^q(\L^q) \to \L^p(\L^p)} \\
&\leq \bnorm{\Id_{\L^p} \ot T_2}_{\cb,\L^p(\L^q) \to \L^p(\L^p)} \bnorm{T_1 \ot \Id_{\L^q}}_{ \cb,\L^q(\L^q) \to \L^p(\L^q) } \\
&= \bnorm{\Id_{\L^p} \ot T_2}_{\cb,\L^p(\L^q) \to \L^p(\L^p)} \bnorm{\cal{F}(\Id_{\L^q} \ot T_1)}_{\cb,\L^q(\L^q) \to \L^p(\L^q)}  \\
&= \bnorm{\Id_{\L^p} \ot T_2}_{\cb,\L^p(\L^q) \to \L^p(\L^p)} \bnorm{\cal{F}}_{\cb,\L^q(\L^p) \to \L^p(\L^q)} \bnorm{\Id_{\L^q} \ot T_1}_{\cb,\L^q(\L^q) \to \L^q(\L^p)} \\
&\ov{\eqref{flip}}{\leq} \bnorm{\Id_{\L^p} \ot T_2}_{\cb,\L^p(\L^q) \to \L^p(\L^p)} \bnorm{\Id_{\L^q} \ot T_1}_{\cb,\L^q(\L^q) \to \L^q(\L^p)}  \\
&\leq \norm{T_1}_{\cb,\L^q \to \L^p} \norm{T_2}_{\cb,\L^q \to \L^p}.
\end{align*}
The reverse inequality is Lemma \ref{Lemma-ine-mult}.
\end{proof}

We show how deduce the additivity of the completely bounded minimal entropy (to compare with \eqref{subadditivity-Hmin}). The proof follows the one of \cite{DJKRB06} for matrix spaces. 

\begin{thm}
\label{Additivity-Scb-min}
Let $\cal{M}_1, \cal{M}_2, \cal{N}_1, \cal{N}_2$ be finite-dimensional von Neumann algebras equipped with finite faithful trace. Let $T_1 \co \L^1(\cal{M}_1) \to \L^1(\cal{N}_1)$ et $T_2 \co \L^1(\cal{M}_2) \to \L^1(\cal{N}_2)$ be quantum channels. Then we have
\begin{equation}
\label{Scbmin-additive}
\H_{\cb,\min}(T_1 \ot T_2)  
=\H_{\cb,\min}(T_1) + \H_{\cb,\min}(T_2).
\end{equation}
\end{thm}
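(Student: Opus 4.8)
\textbf{Proof plan for Theorem \ref{Additivity-Scb-min}.} The strategy is to reduce the additivity of $\H_{\cb,\min}$ to the multiplicativity of the completely bounded norms $\norm{\cdot}_{\cb,\L^1 \to \L^p}$ established in Theorem \ref{thm:multqp}. First I would recall the definition \eqref{Def-intro-Scb-min}, which expresses
\[
\H_{\cb,\min}(T)
=-\frac{\d}{\d p} \norm{T}_{\cb,\L^1 \to \L^p}^p\big|_{p=1}.
\]
By Theorem \ref{thm:multqp} applied with $q=1$ (all four algebras being finite-dimensional, hence certainly hyperfinite), for every $1 \leq p \leq \infty$ we have the exact equality
\[
\norm{T_1 \ot T_2}_{\cb,\L^1 \to \L^p}
=\norm{T_1}_{\cb,\L^1 \to \L^p}\,\norm{T_2}_{\cb,\L^1 \to \L^p}.
\]
Raising to the $p$-th power gives $\norm{T_1 \ot T_2}_{\cb,\L^1 \to \L^p}^p = \norm{T_1}_{\cb,\L^1 \to \L^p}^p \cdot \norm{T_2}_{\cb,\L^1 \to \L^p}^p$, which is a product of two functions of $p$, each equal to $1$ at $p=1$ since $T_1,T_2$ are quantum channels (so $\norm{T_i}_{\cb,\L^1 \to \L^1}=1$, using that a trace preserving completely positive map is an $\L^1$-contraction attaining norm $1$ on the identity, as in the proof of Theorem \ref{Th-Smin-as-derivative}).

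The second step is the elementary calculus fact that the logarithmic derivative of a product is the sum of the logarithmic derivatives: if $F_1,F_2$ are differentiable functions of $p$ near $p=1$ with $F_1(1)=F_2(1)=1$, then $\frac{\d}{\d p}(F_1F_2)|_{p=1}=F_1'(1)+F_2'(1)$. Applying this with $F_i(p)=\norm{T_i}_{\cb,\L^1 \to \L^p}^p$ yields immediately
\[
\frac{\d}{\d p} \norm{T_1 \ot T_2}_{\cb,\L^1 \to \L^p}^p\Big|_{p=1}
=\frac{\d}{\d p} \norm{T_1}_{\cb,\L^1 \to \L^p}^p\Big|_{p=1}
+\frac{\d}{\d p} \norm{T_2}_{\cb,\L^1 \to \L^p}^p\Big|_{p=1},
\]
and negating both sides gives \eqref{Scbmin-additive}. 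One small technical point to address is the existence of the derivatives; this is handled exactly as in Remark \ref{Remark-without-p} and Proposition \ref{prop-deriv-norm-p}, noting that $p \mapsto \norm{T_i}_{\cb,\L^1 \to \L^p}^p$ is monotone and that by the formula \eqref{Def-Scb-min-without-p}/\eqref{elem-lim} relating the derivative at $p=1$ to $\lim_{q\to\infty} q\ln\norm{T_i}_{\cb,\L^1\to\L^{q^*}}$, the limit in question exists whenever $\H_{\cb,\min}(T_i)$ is finite, which holds in the finite-dimensional setting.

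I do not expect a serious obstacle here: all the heavy lifting is in Theorem \ref{thm:multqp} (and behind it Proposition \ref{Prop-magic-flip}), which I am allowed to assume. The only thing requiring a little care is making the ``logarithmic derivative of a product'' argument rigorous at the level of the $p$-th powers rather than the norms themselves, and checking that the tensor product $\L^1(\cal{M}_1) \otimes \L^1(\cal{M}_2)$ is identified with $\L^1(\cal{M}_1 \otvn \cal{M}_2)$ and that $T_1 \otimes T_2$ is again a quantum channel (completely positive and trace preserving), so that its $\H_{\cb,\min}$ is defined by the same formula. Both are routine. Thus the proof amounts to: invoke Theorem \ref{thm:multqp} with $q=1$, take $p$-th powers, differentiate at $p=1$ using the product rule for logarithmic derivatives, and negate.
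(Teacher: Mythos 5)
Your proposal is correct and follows essentially the same route as the paper: the paper's proof also invokes Theorem \ref{thm:multqp} with $q=1$, uses $\norm{T_i}_{\cb,\L^1\to\L^1}=1$ for quantum channels, and carries out the product-rule-for-derivatives step explicitly via the difference quotient $\frac{1-F_1(p)F_2(p)}{p-1}=\frac{1-F_1(p)}{p-1}+F_1(p)\frac{1-F_2(p)}{p-1}$ before letting $p\to 1^+$. No gap.
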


\begin{proof}
For any $p>1$, we have
\begin{align*}
\MoveEqLeft
\frac{1 -\norm{T_1 \ot T_2}_{\cb, \L^1(\L^1) \to \L^p(\L^p)}^p }{p-1} 
\ov{\eqref{multiplicativity-T}}{=} \frac{1 - \norm{T_1}_{\cb, \L^1 \to \L^p}^p \norm{T_2}_{\cb, \L^1 \to \L^p}^p }{p-1} \\
&=\frac{1 -\norm{T_1}_{\cb, \L^1 \to \L^p}^p + \norm{T_1}_{\cb, \L^1 \to \L^p}^p-\norm{T_1}_{\cb, \L^1 \to \L^p}^p \, \norm{T_2}_{\cb, \L^1 \to \L^p}^p }{p-1} \\
&= \frac{1 -\norm{T_1}_{\cb, \L^1 \to \L^p}^p }{p-1}+ \norm{T_1}_{\cb, \L^1 \to \L^p}^p \frac{1 -\norm{T_2}_{\cb, \L^1 \to \L^p}^p  }{p-1}. 
\end{align*}
We can suppose that the von Neumann algebras are $\not=\{0\}$. Note that similarly to the beginning of the proof of Theorem \ref{Th-Smin-as-derivative}, we have $\norm{T_1}_{\cb,\L^1 \to \L^1}= 1$ and $\norm{T_2}_{\cb,\L^1 \to \L^1}= 1$. Taking the limit when $p \to 1^+$ and using $ \lim_{p \to 1^+} \norm{T_1}_{\cb, \L^1 \to \L^p}^p =\norm{T_1}_{\cb, \L^1 \to \L^1}=1$, we obtain with \eqref{Def-intro-Scb-min} the additivity formula \eqref{Scbmin-additive}. 
\end{proof}

Note that the derivative of \eqref{Def-intro-Scb-min} exists and there is a concrete description of the completely bounded minimal entropy $\H_{\cb,\min}(T)$. The proof of the following result (which generalize \cite[Theorem 3.4]{JuP16}) is left to the reader. 


\begin{thm}
\label{thm-entropy-concrete}
Let $\cal{M}$ be a finite-dimensional von Neumann algebra equipped with a faithful trace. Let $T \co \L^1(\cal{M}) \to \L^1(\cal{M})$ be a quantum channel. The function $p \mapsto \norm{T}_{\cb,1 \to p}^p$ is differentiable at $p=1$ and the opposite of its derivative is given by
\begin{equation}
\label{Smincb-as-inf}
\H_{\cb,\min}(T)
=\inf_{\rho \in \L^1(\cal{M}) \ot \L^1(\cal{M})} \Big\{\H\big[(\Id \ot T)(\rho)\big]- \H\big[(\Id \ot \tau)(\rho)\big] \Big\}
\end{equation}
where the infimum is taken on all states $\rho$. We can restrict the infimum to the pure states.
\end{thm}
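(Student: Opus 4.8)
The statement to prove is Theorem~\ref{thm-entropy-concrete}: that for a quantum channel $T \co \L^1(\cal{M}) \to \L^1(\cal{M})$ on a finite-dimensional von Neumann algebra, the function $p \mapsto \norm{T}_{\cb,1 \to p}^p$ is differentiable at $p=1$ with derivative given by the variational formula \eqref{Smincb-as-inf}. The plan is to mirror the strategy already used for $\H_{\min}$ in Theorem~\ref{Th-Smin-as-derivative}, but now working at the level of $\Id_{\L^1(\cal{N})} \ot T$ instead of $T$ itself, and to extract the derivative via the uniform convergence of Proposition~\ref{prop-deriv-norm-p}. The key point is that the completely bounded norm $\norm{T}_{\cb,\L^1(\cal{M}) \to \L^p(\cal{M})}$ is attained by testing against a single auxiliary finite-dimensional matrix block, so it equals $\sup_{\cal{N}} \norm{\Id_{\L^1(\cal{N})} \ot T}_{\L^1(\cal{N} \otvn \cal{M}) \to \L^p(\cal{N} \otvn \cal{M})}$, and since $T$ is completely positive, so is each amplification $\Id \ot T$, so Proposition~\ref{prop-norm-pos} applies: this norm equals the positive-part norm $\norm{\Id \ot T}_{+, \L^1 \to \L^p}$, i.e. the supremum over positive $\rho$ with $\norm{\rho}_1=1$ of $\norm{(\Id \ot T)(\rho)}_{\L^p}$.

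\textbf{Key steps.} First I would record the reduction: for finite-dimensional $\cal{M}$ one may restrict the auxiliary algebra to matrix algebras $\M_k$, and by $2$-positivity of $\Id_{\M_k} \ot T$ invoke \eqref{qp=qp+} to write
$$
\norm{T}_{\cb,\L^1(\cal{M}) \to \L^p(\cal{M})}
= \sup_{k \geq 1} \sup_{\rho \geq 0,\ \norm{\rho}_{\L^1(\M_k \otvn \cal{M})}=1} \norm{(\Id_{\M_k} \ot T)(\rho)}_{\L^p(\M_k \otvn \cal{M})}.
$$
Second, I would apply Proposition~\ref{prop-deriv-norm-p} to the finite-dimensional von Neumann algebra $\M_k \otvn \cal{M}$: the family of functions $\sigma \mapsto \frac{1 - \norm{\sigma}_{\L^p}^p}{p-1}$ on the states of $\M_k \otvn \cal{M}$ converges uniformly as $p \to 1^+$ to $\sigma \mapsto \H(\sigma)$. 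Applying this to $\sigma = (\Id_{\M_k} \ot T)(\rho)$ and using \eqref{minimum-uniform-convergence} together with the fact that $T$, hence $\Id \ot T$, is trace preserving so that $\norm{(\Id \ot T)(\rho)}_1 = 1$ for a state $\rho$, I obtain for each fixed $k$
$$
\lim_{p \to 1^+} \frac{1 - \norm{\Id_{\M_k} \ot T}_{+,1 \to p}^p}{p-1}
= \inf_{\rho} \H\big((\Id_{\M_k} \ot T)(\rho)\big) - \H\big((\Id_{\M_k} \ot \tau)(\rho)\big),
$$
where the second term comes from the normalization $\H(\rho) = 0$ automatically vanishing once one writes things relative to the reduced state $(\Id \ot \tau)(\rho)$, using the change-of-trace identity \eqref{changement-de-trace} to account for the normalization on the $\M_k$ factor. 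Third, I would pass the limit through the supremum over $k$: the uniform convergence in $p$ is uniform in $k$ as well (it depends only on the uniform estimate of Proposition~\ref{prop-deriv-norm-p}, which is dimension-free on states), so
$$
-\frac{\d}{\d p} \norm{T}_{\cb,1 \to p}^p\big|_{p=1}
= \inf_{k} \inf_{\rho} \Big( \H\big((\Id_{\M_k} \ot T)(\rho)\big) - \H\big((\Id_{\M_k} \ot \tau)(\rho)\big) \Big),
$$
and by taking $k = \dim \cal{M}$ (purification), it suffices to range over states $\rho$ on $\L^1(\cal{M}) \ot \L^1(\cal{M})$, and indeed over pure states of that space, giving exactly \eqref{Smincb-as-inf} and the definition \eqref{Def-intro-Scb-min}. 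Differentiability at $p=1$ then follows because the one-sided limit exists and, combined with Remark~\ref{Remark-without-p}, equals the derivative of $p \mapsto \norm{T}_{\cb,1\to p}$ as well.

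\textbf{Main obstacle.} The delicate point is the interchange of $\sup_k$ with $\lim_{p \to 1^+}$: one must check that the uniform convergence supplied by Proposition~\ref{prop-deriv-norm-p} is genuinely uniform over the whole scale of auxiliary dimensions $k$, not merely for a fixed $\M_k \otvn \cal{M}$. This is true because the estimate there is a statement about the function $t \mapsto \frac{1-t^{p-1}}{p-1}$ on $[0,1]$ applied to eigenvalues of a density, and the bound $0 \leq \frac{1-\norm{\sigma}_p^p}{p-1} \leq \H(\sigma) \leq \log(\dim)$ is not uniform in dimension on its face --- so the argument must instead use that for states the quantities are squeezed between $\frac{1-\norm{\sigma}_p^p}{p-1}$ and $\H(\sigma)$ monotonically, and that on the relevant compact set of states of $\M_k \otvn \cal{M}$ the convergence is monotone (Dini), while the supremum over $k$ of the \emph{limits} is finite (it is $\leq -\H_{\min}(T) + $ a correction, or directly $\leq \log \dim \cal{M}$ by \eqref{Smincb-as-inf} restricted to purifications). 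Handling this carefully --- showing the double limit/supremum commutes --- is where the real work lies; the rest is a transcription of the proof of Theorem~\ref{Th-Smin-as-derivative} with $T$ replaced by $\Id \ot T$ together with the reduction to purifications, exactly as in \cite[Theorem 3.4]{JuP2}.
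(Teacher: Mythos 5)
The paper gives no proof of this theorem (it is explicitly ``left to the reader'' with a pointer to \cite[Theorem 3.4]{JuP2}), so there is no in-paper argument to compare against; your overall strategy — differentiate a variational expression for the completely bounded norm using Proposition \ref{prop-deriv-norm-p} and \eqref{minimum-uniform-convergence}, in parallel with Theorem \ref{Th-Smin-as-derivative} — is indeed the intended one. But your first key step contains a genuine error that the rest of the argument cannot repair. You identify
$\norm{T}_{\cb,\L^1(\cal{M})\to\L^p(\cal{M})}$ with
$\sup_k\sup\{\norm{(\Id_{\M_k}\ot T)(\rho)}_{\L^p(\M_k\otvn\cal{M})}:\rho\geq 0,\ \norm{\rho}_{\L^1(\M_k\otvn\cal{M})}=1\}$.
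The right-hand side is $\sup_k\norm{\Id_{\M_k}\ot T}_{\L^1(\M_k\otvn\cal{M})\to\L^p(\M_k\otvn\cal{M})}$, where the amplifications are measured with the ordinary Banach $\L^1$-norm on the domain; the completely bounded norm instead requires Pisier's vector-valued norm there, $\norm{T}_{\cb,1\to p}=\sup_k\norm{\Id_{S^p_k}\ot T}_{S^p_k(\L^1(\cal{M}))\to S^p_k(\L^p(\cal{M}))}$ (this is how the cb norm is used in Lemma \ref{Lemma-ine-mult}, via \cite[Corollary 1.2]{Pis1}), and $S^p_k(\L^1(\cal{M}))\neq\L^1(\M_k\otvn\cal{M})$. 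For positive $\rho$ one has $\norm{\rho}_{S^p_k(\L^1(\cal{M}))}=\norm{(\Id\ot\tau)(\rho)}_{S^p_k}$ by \eqref{Norm-LpL1-pos}, and it is exactly the differentiation of this \emph{domain} norm at $p=1$ that produces the subtracted term $\H[(\Id\ot\tau)(\rho)]$ in \eqref{Smincb-as-inf}. Your attempt to recover that term ``from the normalization'' via \eqref{changement-de-trace} cannot succeed: with your expression the derivative computation yields only $\inf_\rho\H[(\Id\ot T)(\rho)]$. The case $T=\Id$ on $\M_d$ makes the failure concrete: your formula gives $0$ (the sup over $k$ of the $\L^1\to\L^p$ norms of the amplifications is $1$ for every $p$), whereas $\H_{\cb,\min}(\Id)=-\log d$, attained at a maximally entangled pure state where $\H(\rho_{12})=0$ and $\H[(\Id\ot\tau)(\rho)]=\log d$.

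A secondary gap is the interchange of $\sup_k$ with $\lim_{p\to1^+}$, which you flag as the main obstacle but do not resolve; the uniform convergence in Proposition \ref{prop-deriv-norm-p} is over the states of a \emph{fixed} finite-dimensional algebra and is not dimension-free. The standard way out (as in \cite{DJKRB} and \cite{JuP2}) is not a uniformity claim but a stabilization claim: for finite-dimensional $\cal{M}$ a Schmidt-decomposition/purification argument shows the supremum over $k$ is already attained at $k=\dim\cal{M}$, so only one finite-dimensional algebra is ever involved, \eqref{minimum-uniform-convergence} applies directly, and the restriction of the infimum to pure states on $\L^1(\cal{M})\ot\L^1(\cal{M})$ drops out of the same argument. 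You invoke purification only parenthetically at the end; it needs to come first, together with the correct (vector-valued) expression for the cb norm.
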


\begin{remark} \normalfont
It should be not particularly difficult to give a second proof of the additivity of the completely bounded minimal entropy with the strong subadditivity of Segal's entropy \cite{Pod21} and the explicit formula \eqref{Smincb-as-inf}.
\end{remark}

With Remark \ref{Remark-without-p}, we also have the formula
\begin{equation}
\label{Def-Scb-min-without-p}
\H_{\cb,\min(T)}
= -\frac{\d}{\d p} \norm{T}_{\cb,\L^1(\mathcal{M}) \to \L^p(\mathcal{M})}|_{p=1}. 
\end{equation}

\begin{prop}
Let $\cal{M}$ be a finite-dimensional von Neumann algebra equipped with a faithful trace. Let $T \co \L^1(\cal{M}) \to \L^1(\cal{M})$ be a quantum channel.
\begin{equation}
\label{compar-ent-min}
\H_{\cb,\min}(T) 
\leq \H_{\min}(T) 
\end{equation}
\end{prop}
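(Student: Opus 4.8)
The inequality is an immediate consequence of the two derivative formulas already at our disposal together with the trivial domination of the bounded norm by the completely bounded norm. Concretely, set
$$
f(p) \ov{\mathrm{def}}{=} \norm{T}_{\L^1(\cal{M}) \to \L^p(\cal{M})}^p
\quad \text{and} \quad
g(p) \ov{\mathrm{def}}{=} \norm{T}_{\cb,\L^1(\cal{M}) \to \L^p(\cal{M})}^p,
\qquad 1 \leq p \leq \infty.
$$
From the very definition of the completely bounded norm we have $\norm{T}_{\L^1(\cal{M}) \to \L^p(\cal{M})} \leq \norm{T}_{\cb,\L^1(\cal{M}) \to \L^p(\cal{M})}$, hence $f(p) \leq g(p)$ for every $p$. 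On the other hand, since $T$ is a trace preserving completely positive map, the argument used at the beginning of the proof of Theorem \ref{Th-Smin-as-derivative} (resp. of Theorem \ref{Additivity-Scb-min}) gives $\norm{T}_{\L^1(\cal{M}) \to \L^1(\cal{M})} = 1$ (resp. $\norm{T}_{\cb,\L^1(\cal{M}) \to \L^1(\cal{M})} = 1$), so that $f(1) = g(1) = 1$.

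The next step is to compare the one-sided derivatives at $p=1$. For $p > 1$ we have, using $f(1)=g(1)$ and $f \leq g$,
$$
\frac{g(p) - g(1)}{p-1}
= \frac{g(p) - 1}{p-1}
\geq \frac{f(p) - 1}{p-1}
= \frac{f(p) - f(1)}{p-1}.
$$
Letting $p \to 1^+$, and using that both limits exist — the limit of the right-hand side by Theorem \ref{Th-Smin-as-derivative} (equivalently Proposition \ref{prop-deriv-norm-p}), and the limit of the left-hand side by the differentiability at $p=1$ asserted in Theorem \ref{thm-entropy-concrete} — we obtain $g'(1) \geq f'(1)$, that is,
$$
-\H_{\cb,\min}(T)
\ov{\eqref{Def-intro-Scb-min}}{=} g'(1)
\geq f'(1)
= -\H_{\min}(T),
$$
whence $\H_{\cb,\min}(T) \leq \H_{\min}(T)$, as claimed.

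The only point requiring any attention is the existence of the derivative of $g$ at $p = 1$; this is precisely the content of Theorem \ref{thm-entropy-concrete}, so there is in fact no genuine obstacle. I would simply remark that, alternatively, one can avoid invoking differentiability of $g$ altogether by replacing $g'(1)$ with the corresponding $\liminf_{p \to 1^+}$ of the difference quotient, which is automatically $\geq f'(1)$ by the displayed inequality; combined with $\H_{\cb,\min}$ being well defined (again by Theorem \ref{thm-entropy-concrete}) this yields the same conclusion. $\qed$
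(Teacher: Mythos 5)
Your proof is correct and follows essentially the same route as the paper: both arguments rest on the domination $\norm{T}_{\L^1 \to \L^p} \leq \norm{T}_{\cb,\L^1 \to \L^p}$ together with the common value $1$ at $p=1$, and then pass to the derivative at $p=1$. The paper phrases this by rewriting the derivative as $\lim_{q \to \infty} q\ln \norm{T}_{\L^1 \to \L^{q^*}}$ via \eqref{elem-lim} and comparing these limits, whereas you compare the difference quotients of $f(p)=\norm{T}_{\L^1\to\L^p}^p$ and $g(p)=\norm{T}_{\cb,\L^1\to\L^p}^p$ directly; this is a cosmetic difference only, and your remark on replacing $g'(1)$ by a $\liminf$ is a harmless extra observation.
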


\begin{proof}
We have
\begin{align*}
\MoveEqLeft
\H_{\cb,\min(T)} 
\ov{\eqref{Def-Scb-min-without-p}}{=} -\frac{\d}{\d p} \norm{T}_{\cb,\L^1(\mathcal{M}) \to \L^p(\mathcal{M})}|_{p=1}           
\ov{\eqref{elem-lim}}{=} -\lim_{q \to \infty} q\ln \norm{T}_{\cb,\L^1(\mathcal{M}) \to \L^{q^*}(\mathcal{M})}\\
&\leq -\lim_{q \to \infty} q\ln \norm{T}_{\L^1(\mathcal{M}) \to \L^{q^*}(\mathcal{M})}\ov{\eqref{elem-lim}}{=} -\frac{\d}{\d p} \norm{T}_{\L^1(\mathcal{M}) \to \L^p(\mathcal{M})} |_{p=1}
= \H_{\min}(T).
\end{align*}  
\end{proof}

\begin{remark} \normalfont
Suppose that $\tau$ is normalized. We can prove the inequality $\H_{\cb,\min}(T) \leq 0$ with the following reasoning. If $\rho$ is a state, we let $\sigma_{12} \ov{\mathrm{def}}{=} (\Id \ot T)(\rho)$. Using the subadditivity of the entropy \cite[Theorem 8]{Pod21} in the first inequality, we obtain
\begin{align*}
\MoveEqLeft
\H\big[(\Id \ot T)(\rho)\big]- \H\big[(\Id \ot \tau)(\rho)\big]          
=\H(\sigma_{12})- \H(\sigma_1) \leq \H(\sigma_2) \leq 0.
\end{align*}

Finally, if the trace $\tau$ is \textit{normalized}, we have
\begin{equation}
\label{cbminless0}
\H_{\cb,\min,\tau}(T) \leq 0
\end{equation}
for any quantum channel $T \co \L^1(\mathcal{M}) \to \L^1(\cal{M})$.
\end{remark}

\subsection{Appendix: some computer codes}
\label{Appendix3}

The following Maple codes are used in Section \ref{Sec-Examples}.

\begin{verbatim}
> restart:
> with(linalg):
> z:=exp(2*I*Pi/3):
> A:=matrix([[1,1,1,0,0,1],[1,1,z,0,0,1/z],[1,1,1/z,0,0,z],
                    [1,-1,0,1,1,0],[1,-1,0,1/z,z,0],[1,-1,0,z,1/z,0]]):
> B:=transpose(A):
> C:=inverse(B):
> L:=array(1..36):
> E:=array(1..6):
> E[1]:=1: E[2]:=2 :E[3]:=11: E[4]:=12: E[5]:=21: E[6]:=22:
> for i from 1 to 6 do
   print('Delta','e',E[i],'egal');
   for j from 1 to 6 do
    for l from 1 to 6 do 
     L[i]:=simplify(sum(C[k,i]*B[j,k]*B[l,k],k=1..6)):
     if not(L[i]=0) then
      print(L[i],`e`,E[j],`ot`,`e`,E[l]);
     fi;
    od;
   od;
  od;
\end{verbatim}

\begin{verbatim}
> restart:
> with(linalg):
> A:=matrix([[1,1,1,1,1,0,0,1],[1,1,1,1,-1,0,0,-1],[1,1,-1,-1,I,0,0,-I],
        [1,1,-1,-1,-I,0,0,I],[1,-1,1,-1,0,1,-1,0],[1,-1,1,-1,0,-1,1,0],
        [1,-1,-1,1,0,I,I,0],[1,-1,-1,1,0,-I,-I,0]]):
> B:=transpose(A):
> C:=inverse(B):
> L:=array(1..48):
> E:=array(1..8):
> E[1]:=1: E[2]:=2:  E[3]:=3: E[4]:=4: E[5]:=11: E[6]:=12: E[7]:=21: E[8]:=22:
> for i from 1 to 8 do
   print('Delta','e',E[i],'egal');
   for j from 1 to 8 do
    for l from 1 to 8 do 
     L[i]:=simplify(sum(C[k,i]*B[j,k]*B[l,k],k=1..8)):
     if not(L[i]=0) then
      print(L[i],`e`,E[j],`ot`,`e`,E[l]);
     fi;
    od;
   od;
  od;
\end{verbatim}

\begin{verbatim}
> restart:
> with(linalg):
> A:=matrix([[1,1,1,1,1,0,0,1],[1,-1,-1,1,0,1,1,0],
             [1,1,-1,-1,I,0,0,-I],[1,-1,1,-1,0,-I,I,0],
           						 [1,1,1,1,-1,0,0,-1],[1,-1,-1,1,0,-1,-1,0],
             [1,1,-1,-1,-I,0,0,I],[1,-1,1,-1,0,I,-I,0]]):
> B:=transpose(A):
> C:=inverse(B):
> L:=array(1..48):
> E:=array(1..8):
> E[1]:=1: E[2]:=2:  E[3]:=3: E[4]:=4: E[5]:=11: E[6]:=12: E[7]:=21: E[8]:=22:
> for i from 1 to 8 do
   print('Delta','e',E[i],'egal');
   for j from 1 to 8 do
    for l from 1 to 8 do 
     L[i]:=simplify(sum(C[k,i]*B[j,k]*B[l,k],k=1..8)):
     if not(L[i]=0) then
      print(L[i],`e`,E[j],`ot`,`e`,E[l]);
     fi;
    od;
   od;
  od;
\end{verbatim}

\section{Future directions and open questions}
\label{sec-future}
We plan to generalize certain results in a future version and to improve and expand the text.

\section{Acknowledgement}
\label{sec-acknowledgement}

The author acknowledges support by the grant ANR-18-CE40-0021 (project HASCON) of the French National Research Agency ANR. The author wishes to express his thanks to Vijay Kodiyalam, Debbie Leung, Zhengwei Liu, Luis Rodriguez-Piazza, Carlos Palazuelos, Hanna Podsedkowska, Piotr Soltan and Ami Viselter for very short discussions and Li Gao for a short but very instructive discussion on von Neumann entropy and capacities of a quantum channel. He also thanks Adrian Gonzalez P\'erez for an exciting discussion on the topic of Section \ref{completely-bounded-description} and the reference \cite{GJP17} and Ion Nechita for a very interesting discussion at the conference ``Entangling Non-commutative Functional Analysis and Geometry of Banach Spaces'' and to a Seminar in Toulouse and Adam Skalski for some questions. The author wishes to thank Sebastien Palcoux for a profound discussion on subfactor planar algebras and encouragement. The author would like to express gratitude to Franz Luef for his invaluable encouragement in writing Section \ref{sec-Werner} and to Stefan Wagner for some useful information on ergodic actions. The author wishes to thank Ryszard Nest for kindly answering several naive questions regarding the paper \cite{DMN22}. Finally, he thanks Christian Le Merdy for some old very instructive discussions on some results of the Appendices.

\small

\end{document}